\newtheorem{theorem}{Theorem}[section]
\newtheorem{lemma}[theorem]{Lemma}
\newtheorem{corollary}[theorem]{Corollary}
\newtheorem{remark}[theorem]{Remark}
\newtheorem{proposition}[theorem]{Proposition}
\newcommand{\uline}[1]{{\hbox to 0pt{\underline{\hphantom{{#1}}}\hss}{#1}}}
\newcommand\E\Sigma
\newcommand\ol\overline
\newcommand\T\Theta
\newcommand\Om\Omega
\newcommand\modphi{{\left|\varphi\right|}}
\newcommand\al\alpha
\newcommand\be\beta
\newcommand{\NN}{\mathbb{N}}
\newcommand{\ZZ}{\mathbb{Z}}
\newcommand{\R}{\mathbb{R}}
\newcommand{\Cyc}{\mathop{\mathrm{Cyc}}}
\newcommand{\Su}{\mathop{\mathrm{Dc}}}
\newcommand{\Pd}{\Su^{-1}}
\newcommand{\kSu}{\mathop{\mathrm{kDc}}}
\newcommand{\LL}{\mathop{\mathrm{LL}}}
\newcommand{\RL}{\mathop{\mathrm{RL}}}
\newcommand{\Fg}{\mathop{\mathrm{Fg}}}
\newcommand{\Ker}{\mathop{\mathrm{Ker}}}
\newcommand{\nker}{\mathop{\mathrm{nker}}}
\newcommand{\cKer}{\mathop{\mathrm{cKer}}\nolimits}
\newcommand{\ncker}{\mathop{\mathrm{ncker}}}
\newcommand{\RA}{\mathop{\mathrm{RA}}}
\newcommand{\LA}{\mathop{\mathrm{LA}}}
\newcommand{\CA}{\mathop{\mathrm{A}}}
\newcommand{\LB}{\mathop{\mathrm{LB}}}
\newcommand{\RB}{\mathop{\mathrm{RB}}}
\newcommand{\Cr}{\mathop{\mathrm{C\hspace{0em}}}}
\newcommand{\LpreP}{\mathop{\mathrm{LpreP}}\nolimits}
\newcommand{\RpreP}{\mathop{\mathrm{RpreP}}\nolimits}
\newcommand{\LR}{\mathop{\mathrm{LR}}}
\newcommand{\RR}{\mathop{\mathrm{RR}}}
\newcommand{\IpR}{\mathop{\mathrm{IpR}}}
\newcommand{\LpR}{\mathop{\mathrm{LpR}}\nolimits}
\newcommand{\RpR}{\mathop{\mathrm{RpR}}\nolimits}
\newcommand{\LP}{\mathop{\mathrm{LP}}}
\newcommand{\RP}{\mathop{\mathrm{RP}}}
\newcommand{\LBS}{\mathop{\mathrm{LBS}}}
\newcommand{\RBS}{\mathop{\mathrm{RBS}}}
\newcommand{\finmax}{\mathbf{L}}
\newcommand{\swa}[2]{{\al_{#1\ldots #2}}}
\begin{document}

\title{On Subword Complexity of Morphic Sequences}
\author{Rostislav Devyatov,\\
\small Moscow State University  and Independent University of Moscow,\\
\texttt{\small deviatov1@rambler.ru}}

\maketitle

\begin{abstract}
We study structure of pure morphic and morphic sequences and prove the following result: the subword complexity
of arbitrary morphic sequence is either $\T(n^{1+1/k})$ for some $k\in\NN$, 
or is $O(n \log n)$.
\end{abstract}

\section{Introduction}

Morphisms and morphic sequences are well known and well studied in
combinatorics on words (e.~g., see \cite{AlShall03}). We study their
subword complexity.

Let $\E$ be a finite alphabet. A mapping $\varphi\colon \E^*\to
\E^*$ is called a \emph{morphism} if $\varphi(\gamma\delta) =
\varphi(\gamma)\varphi(\delta)$ for all $\gamma,\delta\in \E^*$. A morphism is 
%obviously
determined by its values on single-letter words. A morphism is
called \emph{nonerasing} if $|\varphi(a)| \ge 1$ for each $a \in \E$, and
is called \emph{coding} if $|\varphi(a)| = 1$ for each $a \in \E$.
Let $\modphi$ denote $\max_{a \in \Sigma} |\varphi(a)|$.

Let $\varphi(a) = a\gamma$ for some $a\in \E$, $\gamma\in \E^*$, and suppose
$\forall n\ \varphi^n(\gamma)$ is not empty. Then an infinite sequence
$\varphi^\infty(a) = \lim_{n \to \infty} \varphi^n(a)$
is well-defined and is
called \emph{pure morphic}. Sequences of the form
$\psi(\varphi^\infty(a))$ with coding $\psi$ are called \emph{morphic}.

In this paper we study a natural combinatorial characteristics of
sequences, namely subword complexity. The \emph{subword complexity} of a
sequence $\be$ is a function $p_\be\colon \NN \to \NN$ where
$p_\be(n)$ is the number of all different $n$-length subwords
occurring in~$\be$.
For a survey on subword complexity, see, e.~g., \cite{Fer}.
Pansiot showed~\cite{Pans84} that the subword
complexity of an arbitrary pure morphic sequence adopts one of the five
following asymptotic behaviors: $O(1)$,  $\Theta(n)$, $\Theta(n \log
\log n)$, $\Theta(n \log n)$, or $\Theta(n^2)$. Since codings can
only decrease subword complexity, the subword
complexity of every morphic sequence is $O(n^2)$.
We formulate the following main result.

\begin{theorem}\label{maintheorem}
The subword complexity $p_\be$ of a morphic sequence $\be$ is either
$p_\be(n) = \T(n^{1 + 1/k})$ for some $k \in \NN$, or $p_\be(n) =
O(n\log n)$.
\end{theorem}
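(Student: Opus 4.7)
The plan is to reduce to the pure morphic underlying sequence $\al = \varphi^\infty(a)$ with $\be = \psi(\al)$ for some coding $\psi$, and to exploit Pansiot's five-case classification of $p_\al$. In four of the cases, namely $p_\al \in \{O(1), \T(n), \T(n\log\log n), \T(n\log n)\}$, one has $p_\al = O(n\log n)$, and since a coding cannot increase subword complexity (as noted in the introduction), $p_\be = O(n\log n)$ falls out immediately. Thus the entire content of the theorem lies in the quadratic case: given $p_\al = \T(n^2)$ and a coding $\psi$, one must show that $p_\be$ is either $\T(n^{1+1/k})$ for some $k\in\NN$ or $O(n\log n)$.

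In the quadratic case, Pansiot's structural analysis yields a hierarchy of nested blocks $B_1 \sqsubset B_2 \sqsubset \cdots$ of geometrically increasing length, where each $B_i$ is built from $B_{i-1}$ by one application of $\varphi$ with controlled insertion of slowly growing letters. The count $p_\al(n) = \T(n^2)$ reflects that at the scale $i$ where $|B_i|$ first exceeds $n$, one can slide an $n$-window across roughly $n$ distinct offsets while independently shifting markers inside, giving $\T(n^2)$ distinct factors. The plan is to introduce, for each $k \in \NN$, an equivalence relation $\sim_k$ on $\E$ capturing when $\psi(\varphi^k(c))$ and $\psi(\varphi^k(d))$ agree (together with the requisite structural data inside those images), and to locate the smallest $k$ at which the sequence $\sim_1, \sim_2, \dots$ stabilises; a descending-chain argument on partitions of the finite set $\E$ shows this is well-defined, or else no finite stabilisation exists. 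The central dichotomy we would then establish is: if a finite stabilisation level $k$ exists, the surviving $\sim_k$-distinctions produce complexity exactly $\T(n^{1+1/k})$; otherwise every scale eventually collapses and the accumulated reductions drive $p_\be(n)$ down to $O(n\log n)$.

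The main obstacle will be proving the matching upper and lower bounds of $\T(n^{1+1/k})$ in the first alternative. The lower bound $\Om(n^{1+1/k})$ can be obtained by exhibiting explicit families of pairwise distinct $n$-length factors of $\be$: slide a window over the $k$th-scale block while independently varying one surviving marker at each level below, relying on the failure of $\sim_k$-collapse at level $k$ to ensure $\psi$ cannot merge these. The upper bound $O(n^{1+1/k})$ is the delicate part: one must show that every $n$-length factor of $\be$, after all $\psi$-collapses are accounted for, is determined by $O(n^{1/k})$ free choices at each of the $k$ relevant scales. This calls for a canonical decomposition of factors of $\varphi^\infty(a)$ into nested $B_i$-blocks and careful bookkeeping of how $\psi$ interacts with block boundaries; presumably much of the technical machinery developed later in the paper, that is, block decompositions, growth data, and refined equivalence relations on letters of $\E$, is built precisely for this bookkeeping.
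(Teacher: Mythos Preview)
Your reduction via Pansiot to the quadratic case $p_\al=\Theta(n^2)$ is a legitimate observation (and one the paper does not use), but the analysis you propose for that case does not work, and this is where the real content of the theorem lies.

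First, the equivalence relations $\sim_k$ defined by $\psi(\varphi^k(c))=\psi(\varphi^k(d))$ do not form a descending chain: equality of $\psi$-images at level $k$ says nothing about level $k+1$, because $\varphi$ acts on the underlying letters, not on their $\psi$-images. Your parenthetical ``together with the requisite structural data'' does not fix this without specifying what that data is. Worse, even granting a descending chain of partitions of the finite set $\E$, such a chain \emph{always} stabilises in at most $|\E|$ steps, so your dichotomy collapses: the ``no finite stabilisation'' branch is empty, and the $O(n\log n)$ outcome can never arise. Yet it does arise (e.g.\ whenever $\al$ itself already has complexity $\Theta(n\log n)$ after a nontrivial coding is applied to a quadratic $\al$).

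Second, and more fundamentally, the integer $k$ in $\Theta(n^{1+1/k})$ is not a stabilisation level of any letter-identification relation. In the paper's framework $k$ is determined by the \emph{order hierarchy} of letters (order $k$ means growth rate $\Theta(n^{k-1})$): one defines $k$-blocks as maximal runs of letters of order $\le k$ bounded by letters of order $>k$, tracks their evolution under iterated $\varphi$, and asks whether the $\psi$-images of the growing ``regular parts'' inside each such evolution are eventually periodic with compatible periods (``continuously periodic''). The critical $k$ is the largest one for which \emph{every} evolution of $k$-blocks is continuously periodic; this gives $O(n^{1+1/k})$, and if some $(k{+}1)$-block evolution fails the condition one extracts a family of aperiodic factors (``obstacles'') witnessing $\Omega(n^{1+1/k})$. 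A single letter can sit inside $k$-blocks with completely different periodicity behaviour depending on its neighbours and on which evolution it belongs to, so no equivalence relation on $\E$ alone can capture the dichotomy. Your sketch of the block structure (``geometrically increasing length'') is also off: in the quadratic case the relevant blocks grow polynomially, and it is precisely the interplay between several polynomial growth rates that produces the exponents $1+1/k$.
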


Note that for each $k$ the complexity class 
%$\T(n^{1 + \frac1k})$ 
$\T(n^{1 + 1/k})$ 
is non-empty~\cite{NicMasters}.

We give an example of a morphic sequence $\be$ with $p_\be =
\T(n^{3/2})$ in Section~\ref{An_Example}.

Let $\E$ be a finite alphabet, $\varphi\colon\E^*\to\E^*$ be a
morphism, $\psi\colon\E^*\to\E^*$ be a coding, $a\in\E$ be 
a letter such that $\varphi(a)$ starts with $a$, $\al=\varphi^\infty(a)$ be the pure
morphic sequence generated by $\varphi$ from $a$, and $\be = \psi(\al)$ be a
morphic sequence. By Theorem~7.7.1 from \cite{AlShall03} every
morphic sequence can be generated by a  nonerasing morphism, so
further we assume that $\varphi$ is nonerasing. To prove Theorem~\ref{maintheorem},
we will first replace $\varphi$ by its power so that it will have 
better properties, see Section \ref{morphperiodicity}.
It is already clear from the definition of a pure morphic sequence
that if we replace $\varphi$ by its power, then $\al$ and $\be$ will not change.

Possibly, we will also add several (at most two) "new" letters to $\E$
so that $\varphi$ and $\psi$ will be defined on the "old" letters as previously, 
and will map the "new" letters to the "new" letters only. This will not modify 
$\al$ and $\be$, and the only reason why we do that is that this simplifies formulations 
of some statements. For example, we may want to say that a (finite) subword $\gamma$ of 
$\be$ can be written as a finite word $\lambda\in\E^*$ repeated several times, where $\lambda$ 
belongs to a prefixed finite set.
In a particular case it can turn out that $\gamma$ is the empty word, and then it can 
be written as any word $\lambda\in\E^*$ repeated zero times. However, to 
ease the formulation of this statement, it is convenient to know that the set where 
we are allowed to take $\lambda$ from is nonempty, even if all letters of all possible words 
$\lambda$ are not present in $\be$ at all.

% The largest part of the proof of Theorem~\ref{maintheorem} consists of
% the proofs of the following two
% propositions:

To prove Theorem~\ref{maintheorem}, we will have to develop some "structure theory"
of pure morphic and morphic sequences (see Sections \ref{sectionblocks}--\ref{sectionevolutions}).
We will introduce and study the notions of a letter of order $k$, of a $k$-block, of a $k$-multiblock, 
of a stable $k$-(multi)block, 
of an evolution, and of a continuously periodic evolution.
Actually, these notions 
will be defined correctly only after we replace $\varphi$ with $\varphi^n$ for an appropriate 
$n\in\NN$ and possibly add several letters to $\E$ as explained in Section \ref{morphperiodicity}
(more precisely, if $\varphi$ is a strongly 1-periodic morphism with long images, and if $\E$ contains 
at least one periodic letter of order 1 and at least one periodic letter of order 2).
These studies of the structure of pure morphic and morphic sequences may be of independent interest.

Using these notions, we can formulate the following two propositions, which the proof of 
Theorem~\ref{maintheorem} is based on:

\begin{proposition}\label{largecompl}
Let $k\in\NN$.
If $a\in\E$ is a letter such that $\varphi(a)=a\gamma$ for some $\gamma\in\E^*$, 
and there are evolutions of $k$-blocks arising in $\al=\varphi^\infty(a)$ that are not
continuously periodic, then the subword complexity of $\be=\psi(\al)$ is
$\Om(n^{1+1/(k-1)})$.
\end{proposition}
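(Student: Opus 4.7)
The plan is to exhibit, for arbitrarily large $n$, at least $\Om(n^{1+1/(k-1)})$ pairwise distinct length-$n$ subwords of $\be$, parametrised by two indices: a stage in the non-continuously periodic evolution and an internal offset inside a sufficiently large $k$-block. Varying the stage should contribute $\Om(n^{1/(k-1)})$ independent ``shapes'' of $k$-block, while varying the offset should contribute $\Om(n)$ positions within it; multiplying gives the required bound.

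First I would fix an evolution $E$ of $k$-blocks arising in $\al$ that is not continuously periodic, as the hypothesis provides, and extract from $E$ an infinite sequence of witness stages $j_1<j_2<\cdots$ at which the $k$-block produced by $E$ genuinely fails to continue any prescribed periodic pattern. The structure theory of Sections \ref{sectionblocks}--\ref{sectionevolutions} should guarantee that distinct witness $k$-blocks admit pairwise distinguishing contexts of uniformly bounded size in $\al$. Next I would calibrate the scale using the key quantitative feature of the structure theory underlying the exponent $1+1/(k-1)$: the length of the $k$-block at stage $m$ of $E$ should grow polynomially in $m$ of degree $k-1$. Given $n$, I would take $m=m(n)$ of order $n^{1/(k-1)}$ so that the $k$-block at stage $m$ has length $\T(n)$; for each $j_i\le m$ and each offset $t$ inside the corresponding block I would associate the length-$n$ subword of $\al$ starting at position $t$ inside the $k$-block at stage $j_i$. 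Counting gives $\Om(n\cdot n^{1/(k-1)})=\Om(n^{1+1/(k-1)})$ distinct subwords of $\al$. To transfer the bound to $\be=\psi(\al)$ I would argue that, since $\psi$ is a letter-to-letter coding, the structure theory bounds the number of $\al$-preimages of any length-$n$ $\be$-subword by a constant depending only on $\varphi$, $\psi$ and $k$, preserving the lower bound.

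The main obstacle I expect is the distinctness step: converting the abstract non-continuous periodicity of $E$ into a concrete combinatorial statement that different witness stages produce $\al$-subwords which remain distinguishable after being shifted by $t$ and after $\psi$ is applied. This will likely hinge on a Fine and Wilf-style uniqueness lemma applied inside each $k$-block together with a short neighbourhood, combined with a careful bookkeeping of the ``fresh'' content introduced at each non-periodic transition, so as to guarantee that it is not hidden by the coding.
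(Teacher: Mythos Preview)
Your approach has a genuine gap at its quantitative core. By Lemma~\ref{kblocklengtho} and Lemma~\ref{regpartasym}, the $k$-block at evolutionary stage $m$ has length $\Theta(m^{k})$ (when Case~I holds on at least one side), not $\Theta(m^{k-1})$ as you assume. With the correct exponent, choosing $m\sim n^{1/k}$ so that the block has length $\Theta(n)$ yields only $\Om(n^{1/k})$ stages and hence at best $\Om(n^{1+1/k})$, strictly weaker than the required $\Om(n^{1+1/(k-1)})$. The exponent $1/(k-1)$ does not come from the size of the $k$-blocks themselves; it comes from structure one level \emph{below}.

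This is exactly what the paper exploits. Since all evolutions of $1$-blocks are continuously periodic (Remark~\ref{onecontper}) but some evolution of $k$-blocks is not, one takes the largest $k'\le k-1$ such that all $k'$-block evolutions are continuously periodic. Proposition~\ref{contpergreatalt} then produces a $k'$-series of obstacles: a sequence of occurrences whose $\psi$-images are maximal weakly $p$-periodic runs (for some fixed $p\le\finmax$) of strictly increasing length $\Theta(l^{k''})$ with $k''\le k'$. The distinctness argument you were worried about is then handled cleanly in Lemma~\ref{obstcompl}: because each obstacle is a \emph{maximal} $p$-periodic run flanked by non-matching letters, two shifted length-$n$ windows around different obstacles cannot coincide, by a Fine--and--Wilf style overlap argument (Corollary~\ref{overlapperiod}). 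This yields $\Om(n^{1+1/k'})\ge\Om(n^{1+1/(k-1)})$ directly for $\be$, with no separate step needed to ``pass through $\psi$'' --- the obstacles are already defined in terms of $\psi$-images. Your proposed preimage-counting transfer from $\al$ to $\be$ is both unnecessary and false in general: a coding can collapse unboundedly many $\al$-words to a single $\be$-word.
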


\begin{proposition}\label{smallcompl}
Let $k\in\NN$.
If $a\in\E$ is a letter of order at least $k+2$ such that $\varphi(a)=a\gamma$ for some $\gamma\in\E^*$, 
and all evolutions of $k$-blocks arising in $\al=\varphi^\infty(a)$ are continuously periodic,
then the subword complexity of $\be=\psi(\al)$ is $O(n^{1+1/k})$.
\end{proposition}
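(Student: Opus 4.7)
Since $\psi$ is a coding, $p_\be(n)\le p_\al(n)$, so it suffices to show $p_\al(n)=O(n^{1+1/k})$. I would use a Pansiot-style renormalization at a scale $j=j(n)$ chosen so that a typical $j$-(multi)block in $\al$ has length $\T(n^{1/k})$; since block lengths grow geometrically with $j$ at a rate bounded by $|\varphi|$, this amounts to $j\approx k+\frac{\log n}{k\log|\varphi|}$, and in particular $j\ge k$ for all sufficiently large $n$.

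Each length-$n$ factor $w$ of $\al$ is then determined by two pieces of data: (i) the offset of its left endpoint inside the first $j$-(multi)block it meets, contributing $O(n^{1/k})$ choices; and (ii) the sequence of consecutive $j$-(multi)blocks that $w$ overlaps, which is a word of length $O(n^{1-1/k})$ over the finite alphabet of $j$-(multi)block types. The pair $(\mathrm{i}), (\mathrm{ii})$ determines $w$, so the task reduces to bounding the number of possible sequences in $(\mathrm{ii})$ by $O(n)$; multiplying then yields $p_\al(n)\le O(n^{1/k})\cdot O(n)=O(n^{1+1/k})$.

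To bound the count in $(\mathrm{ii})$, I would use the hypothesis that every evolution of a $k$-block in $\al$ is continuously periodic. This should let one describe the sequence of $j$-(multi)blocks appearing in $\al$ via a bounded amount of symbolic data together with a ``phase'' inside a periodic template: each $j$-(multi)block is the image under $\varphi^{j-k}$ of a $k$-(multi)block whose evolution follows a fixed periodic word, so only a controlled number of length-$L$ factors of the $j$-block sequence can occur. Quantitatively, the number of distinct length-$L$ factors should grow as $O(L)$ plus the period at level $j$, which is itself bounded by a polynomial in $n^{1/k}$; combining these two contributions with $L=O(n^{1-1/k})$ yields the required $O(n)$ bound.

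The main obstacle will be the passage from the local statement ``every $k$-block has a continuously periodic evolution'' (one block at a time) to a global statement about how consecutive $j$-blocks are arranged along $\al$, with enough rigidity to give only $O(n^{1-1/k})$ distinct factors at the working scale. The hypothesis that $a$ has order at least $k+2$ is used here to absorb the initial transient of $\al$ inside the first few high-order letters, so that the structural description applies uniformly to every length-$n$ window. Careful handling of multiblocks is also needed to ensure that consecutive $k$-blocks sharing a common ancestor at level $k+1$ or higher are grouped coherently, and that the boundary effects between levels do not spoil the final count.
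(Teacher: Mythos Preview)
Your very first reduction is already a genuine error. The hypothesis ``all evolutions of $k$-blocks are continuously periodic'' is a statement about $\psi$-images: the definition requires that $\psi(\LpR_{k,m}(\mathcal E_l))$ and $\psi(\RpR_{k,m}(\mathcal E_l))$ be weakly $\lambda$-periodic for a final period $\lambda$, and final periods themselves are defined via $\psi$. This gives no control over periodicity inside $\al$ itself, so bounding $p_\al$ is not a valid reduction. Indeed, Pansiot's classification shows $p_\al$ can only be $O(1)$, $\Theta(n)$, $\Theta(n\log\log n)$, $\Theta(n\log n)$, or $\Theta(n^2)$; the hypothesis certainly does not exclude $p_\al=\Theta(n^2)$ (the coding can collapse a complicated $\al$-block to a $\psi$-periodic one), so $p_\be\le p_\al$ is useless here.

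There is also a conceptual mismatch in your use of ``$j$-(multi)block'' and of ``continuously periodic.'' In this paper $k$ is fixed, and a $k$-block is a maximal run of letters of order $\le k$; there is no meaningful hierarchy of $j$-blocks for $j=j(n)\to\infty$, since only finitely many orders occur. More importantly, ``continuously periodic'' does \emph{not} mean the evolution $(\mathcal E_l)_l$ is a periodic sequence of abstract words; it means that inside each stable block $\mathcal E_l$, the left and right pseudoregular parts have $\psi$-images which are weakly periodic with a fixed final period, with a residue independent of $l$. So the step ``each $j$-(multi)block is the image under $\varphi^{j-k}$ of a $k$-(multi)block whose evolution follows a fixed periodic word'' misreads the hypothesis and does not give the claimed $O(L)$ factor count.

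The paper's argument runs differently. One proves a general lemma: if $f$ satisfies (a) $f(n)\ge 3k$, (b) whenever $l\ge f(n)$ and Case~I holds, $|\Fg(\LR_k(\mathcal E_l))|>n$ and $|\Fg(\RR_k(\mathcal E_l))|>n$, and (c) whenever $l\ge f(n)$ and $b$ has order $>k$, $|\varphi^l(b)|>n$, then $p_\be=O(nf(n))$. Since regular parts and high-order images grow like $\Theta(l^k)$, one may take $f(n)=\Theta(n^{1/k})$, giving $O(n^{1+1/k})$. The proof of the lemma writes $\al=\varphi^q(\al_0)\varphi^q(\al_1)\ldots$ at depth $q\le l_0=\lceil f(n)\rceil+1$, chosen so that the window $\swa ij$ straddles at least two chunks. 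If $q<l_0$ the window is determined by a short word $\swa st$ with $|\swa st|\le|\varphi|$, the depth $q\le f(n)+2$, and an offset, giving $O(nf(n))$ words. If $q=l_0$, conditions (b)--(c) force the window to lie essentially inside a single $k$-block $\swa uv$ of evolutional sequence number $\ge l_0$, whose regular parts already exceed $n$; continuous periodicity then says $\psi(\swa uv)$ has the shape $\LpreP\cdot(\text{weakly }\lambda\text{-periodic})\cdot\cKer\cdot(\text{weakly }\mu\text{-periodic})\cdot\RpreP$, and the possible boundary spillover is controlled by the bounding sequences $\LBS$, $\RBS$ when Case~II holds. A finite case analysis yields $O(n)$ possibilities for $\psi(\swa ij)$ in this regime. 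The crucial point you are missing is that at the correct scale the window sees essentially \emph{one} $k$-block, not many, and continuous periodicity is used to describe the $\psi$-internal structure of that one block.
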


% % Most 
% A large
% part of the paper is devoted to formulation of what letters of order $k$,
% $k$-blocks, evolutions and continuously periodic evolutions are.

However, these two propositions do not cover all cases needed to prove Theorem \ref{maintheorem}.
This is not clear right now, before we give the definitions, but, for example, if $a$ is a letter of order $k+2$, where 
$k\in\NN$, such that $\varphi(a)=a\gamma$ for some $\gamma\in\E^*$, and evolutions of $(k+1)$-blocks that 
are not continuously periodic do not exist (as we will see later, in this case evolutions of $(k+1)$-blocks do not exist 
at all), then Proposition \ref{largecompl} does not give us any upper estimate, 
and we cannot use Proposition \ref{smallcompl} either, because if we want to use it for $(k+1)$-blocks, 
$a$ has to be a letter of order at least $k+3$. Also, Propositions
\ref{largecompl} and \ref{smallcompl} do not say anything about complexity $O(n\log n)$. The following three propositions 
will help us to prove Theorem \ref{maintheorem} in these cases:

\begin{proposition}\label{finordercomplstop}
Let $k\in\NN$.
Suppose that $\varphi$ is a strongly 1-periodic morphism with long images
and $a\in\E$ is a letter of order $k+2$ such that $\varphi(a)=a\gamma$ for some $\gamma\in\E^*$.
Suppose that all evolutions of $k$-blocks arising in $\al=\varphi^\infty(a)$ are continuously periodic.

Let $\al_i$ be the rightmost letter of order $k+1$ in $\varphi^{3k+1}(a)$, and let $\al_j$
be the rightmost letter of order $k+1$ in $\varphi^{3k+2}(a)$.

If there exists a final period $\lambda$ such that $\psi(\swa{i+1}j)$ is a completely $|\lambda|$-periodic
word with period $\lambda$, then the subword complexity of $\be=\psi(\al)$ is $O(1)$, otherwise it is $\Theta(n^{1+1/k})$.
\end{proposition}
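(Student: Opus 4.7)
This proposition is a dichotomy that fills a gap between Propositions \ref{largecompl} and \ref{smallcompl}, and the outcome is determined by a single finite combinatorial test: whether $\psi(\swa{i+1}{j})$ is completely periodic with a final period $\lambda$. My plan is first to interpret $\swa{i+1}{j}$ structurally, and then to prove the two branches of the dichotomy separately.

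The fragment $\swa{i+1}{j}$ is a specific tail of $\varphi^{3k+2}(a)$: since $\varphi(a)=a\gamma$, every $\varphi^m(a)$ is a prefix of $\varphi^{m+1}(a)$, and $\swa{i+1}{j}$ lies strictly between the last order-$(k+1)$ letter at iteration level $3k+1$ and the last one at level $3k+2$. I would first establish that applying $\varphi$ once more maps this fragment to another similar fragment, up to the action of the continuously periodic $k$-block evolutions on the letters of order at most $k$ inside it. This "self-reproducing" structure allows the entire tail of $\al$ after position $i$ to be decomposed as concatenated $\varphi$-iterates of $\swa{i+1}{j}$, each modified only in its $k$-block content, which by hypothesis evolves in a continuously periodic way.

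\textbf{The $O(1)$ branch.} If $\psi(\swa{i+1}{j})$ is completely $|\lambda|$-periodic with a final period $\lambda$, then the self-reproducing structure combined with the continuous periodicity of the $k$-block evolutions should propagate $\lambda$-periodicity from one iterated copy to the next. The precise role of the word "final" in "final period" is exactly to guarantee that $\lambda$-periodicity survives the action of $\varphi$ and of the $k$-block evolutions. Consequently the entire suffix of $\be$ after $\psi(\al_i)$ is $|\lambda|$-periodic, $\be$ is ultimately periodic, and its subword complexity is bounded by $|\lambda|$, hence $O(1)$.

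\textbf{The $\Theta(n^{1+1/k})$ branch.} For the upper bound, Proposition \ref{smallcompl} cannot be quoted directly, because applying it to $(k+1)$-blocks would require $a$ to have order at least $k+3$, which is one more than assumed. I would instead adapt its proof: since all $k$-block evolutions are continuously periodic, the same recursive counting used in Proposition \ref{smallcompl} at level $k$ (rather than $k+1$) bounds the number of length-$n$ subwords of $\be$ by $O(n^{1+1/k})$. For the lower bound $\Omega(n^{1+1/k})$, I would use the failure of $\psi(\swa{i+1}{j})$ to be $\lambda$-periodic for every final period $\lambda$ to pin down an explicit non-periodic mismatch inside one copy of $\psi(\swa{i+1}{j})$; successive $\varphi$-iterations rescale this mismatch and produce infinitely many distinct translates of it at geometrically growing scales, from which I would extract $\Omega(n^{1+1/k})$ pairwise distinct length-$n$ windows in $\be$ by a standard sliding argument. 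The sharpest step, and the main obstacle, is this lower bound: one must check that the mere failure of $\lambda$-periodicity for a single finite word forces full polynomial growth $\Omega(n^{1+1/k})$ rather than some weaker super-linear complexity. This calibration is exactly what the definition of "final period" should be designed to deliver, and it is where I expect the deepest reliance on the (multi)block and evolution machinery developed earlier in the paper.
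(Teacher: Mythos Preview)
Your reading of Proposition~\ref{smallcompl} is off, and this matters for the upper bound. That proposition requires $a$ to have order at least $k+2$ and all evolutions of $k$-blocks (not $(k+1)$-blocks) to be continuously periodic; both hypotheses are exactly those of Proposition~\ref{finordercomplstop}, so it applies verbatim and gives $O(n^{1+1/k})$ with no adaptation needed. The paper does exactly this.

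For the lower bound, your plan is too vague at the crucial point you yourself flag. The paper does not try to extract $\Omega(n^{1+1/k})$ windows directly from a single mismatch; instead it argues by contradiction through the $k$-series-of-obstacles machinery. Concretely, the paper sets up an evolution $\mathcal F$ of stable $k$-multiblocks with $\mathcal F_l=\al[<,s_{l+3k+1}+1\ldots s_{l+3k+2},<]_k$ (so $\Fg(\mathcal F_0)=\swa{i+1}{j}$), checks $\nker_k(\mathcal F)>1$, and then assumes no $k$-series of obstacles exists. Applying Lemma~\ref{atleastonetotal} to the three consecutive shifted evolutions $\mathcal F,\mathcal F',\mathcal F''$ yields that one of them is totally periodic; Lemma~\ref{oneperenoughfortotleft} pushes this to $\mathcal F$ itself; Lemma~\ref{atleastonereallytotal} then forces the same $\lambda$ to be a total right period as well; finally Lemma~\ref{leftrightcompleteperiod} converts simultaneous left and right $\lambda$-periodicity of $\psi(\Fg(\mathcal F_0))$ into complete $\lambda$-periodicity, contradicting the hypothesis. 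Hence a $k$-series of obstacles exists, and Lemma~\ref{obstcompl} delivers $\Omega(n^{1+1/k})$. Your ``sliding argument from a mismatch'' is morally what Lemma~\ref{obstcompl} does, but the hard work is showing that the obstacle series exists at all, and that step genuinely needs the multiblock lemmas; your proposal does not indicate how to bridge from ``$\psi(\swa{i+1}{j})$ is not completely $\lambda$-periodic for any final $\lambda$'' to an actual obstacle series.

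Your $O(1)$ branch is right in outline but again relies on unstated lemmas. The paper uses Lemma~\ref{oneperenoughfortotleft}: a single instance of weak left $\lambda$-periodicity of $\psi(\Fg(\mathcal F_{l_0}))$ forces $\lambda$ to be a total left period of the whole evolution $\mathcal F$; the residue condition then propagates complete $\lambda$-periodicity to every $\psi(\Fg(\mathcal F_l))$, and since $\al=\swa{0}{s_{3k+1}}\Fg(\mathcal F_0)\Fg(\mathcal F_1)\cdots$, eventual periodicity of $\be$ follows.
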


\begin{proposition}\label{ordertwocompl}
If $a\in\E$ is a letter of order 2 such that $\varphi(a)=a\gamma$ for some $\gamma\in\E^*$,
then the subword complexity of $\be=\psi(\varphi^\infty(a))$ is $O(1)$.
\end{proposition}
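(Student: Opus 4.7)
The plan is to show that $\al=\varphi^\infty(a)$ is itself eventually periodic. Once that is established, $\be=\psi(\al)$ is eventually periodic as well (codings preserve eventual periodicity), and any eventually periodic infinite word has subword complexity $O(1)$.

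To prove eventual periodicity of $\al$, I would first analyze which letters can occur in $\gamma$. Since $a$ has order $2$ and $\varphi(a)=a\gamma$, the lengths $|\varphi^n(a)|$ must grow at most linearly in $n$. An occurrence of $a$ inside $\gamma$ would force $|\varphi^n(a)|$ to grow exponentially, and an occurrence of any letter of order $\geq 2$ in $\gamma$ would raise the order of $a$ above $2$. Hence every letter of $\gamma$ is of order at most $1$.

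Next, I would invoke the normalization carried out in Section \ref{morphperiodicity}: after replacing $\varphi$ by a suitable power and possibly adjoining dummy letters, one may assume $\varphi$ is strongly $1$-periodic with long images. Under this normalization, every order-$1$ letter is periodic with a common period $p$, and its $\varphi$-image is a factor of a fixed infinite $p$-periodic word with a prescribed phase. Writing
\[
\al = a\cdot\gamma\cdot\varphi(\gamma)\cdot\varphi^2(\gamma)\cdots,
\]
each block $\varphi^n(\gamma)$ is a factor of an infinite $p$-periodic word, and the strongly $1$-periodic structure guarantees that the phases of consecutive blocks match at the boundaries. Consequently the tail of $\al$ beyond the initial $a$ is a single $p$-periodic infinite word, so $\al$ is eventually periodic, and the $O(1)$ bound on $p_\be$ follows at once.

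The main delicate point is the phase-matching: one must check that the periods of the pieces $\varphi^n(\gamma)$ and $\varphi^{n+1}(\gamma)$ align without offset at the splice, not only that each piece is locally $p$-periodic. This is precisely what the strongly $1$-periodic hypothesis on $\varphi$ is engineered to ensure, so once the appropriate alignment statement from the structural theory is in hand the verification is essentially routine bookkeeping on positions and phases of the order-$1$ letters of $\gamma$.
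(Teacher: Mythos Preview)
Your overall plan---show $\al$ is eventually periodic and conclude $p_\be=O(1)$---is correct and is exactly what the paper does. But your execution of the middle step misreads the paper's definitions and is unnecessarily complicated.

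You invoke strong $1$-periodicity and assert that ``every order-$1$ letter is periodic with a common period $p$, and its $\varphi$-image is a factor of a fixed infinite $p$-periodic word with a prescribed phase.'' None of this is what the paper's normalization says: there is no ``common period $p$'' or ``phase'' structure attached to order-$1$ letters, and strong $1$-periodicity concerns the leftmost/rightmost higher-order letters in iterated images, not any periodicity of words made of order-$1$ letters. So the property you plan to rely on for the alignment step is not actually available.

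The paper's argument is much simpler and needs only \emph{weak} $1$-periodicity. Since $a$ has order $2$ and $\varphi(a)=a\gamma$, the word $\gamma$ consists entirely of letters of order $1$. By weak $1$-periodicity, every letter of $\varphi(\gamma)$ is a \emph{periodic} letter of order $1$; and for any periodic order-$1$ letter $c$, the image $\varphi(c)$ contains exactly one letter of order $1$, namely $c$ itself, while all letters of $\varphi(c)$ have order $\le 1$. Hence $\varphi(c)=c$. Applying this letterwise gives $\varphi^2(\gamma)=\varphi(\gamma)$, so
\[
\al=a\,\gamma\,\varphi(\gamma)\,\varphi(\gamma)\,\varphi(\gamma)\cdots
\]
is eventually periodic with period $\varphi(\gamma)$. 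Your ``main delicate point'' about phase-matching at the splices evaporates: the consecutive blocks $\varphi^n(\gamma)$ for $n\ge1$ are not merely locally periodic with compatible phases, they are literally identical abstract words, so there is nothing to align.
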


\begin{proposition}\label{infordercomplstop}
Let $k\in\NN$.
Let $a\in\E$ be a letter of order $\infty$ such that $\varphi(a)=a\gamma$ for some $\gamma\in\E^*$, and let $\al=\varphi^\infty(a)$.
Suppose that if $b\in\E$ is a letter of finite order $k'$ and $b$ occurs in $\al$, then $k'<k$.
Suppose that all evolutions of $k$-blocks arising in $\al$ are continuously periodic.

Then
the subword complexity of $\be=\psi(\al)$ is
$O(n\log n)$.
\end{proposition}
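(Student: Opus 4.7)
The plan is to bound $p_\be(n)$ by $O(n\log n)$ through a hierarchical block decomposition of $\al$ induced by iterates of $\varphi$. Since $a$ has order $\infty$, the image length $|\varphi^m(b)|$ grows exponentially in $m$ for every letter $b$ of order $\infty$ that occurs in $\al$. Hence, for a given $n$, there is a canonical scale $m = \Theta(\log n)$ at which the blocks $\varphi^m(b)$ coming from order-$\infty$ letters all have length comparable to~$n$. Because $\al = \varphi^m(\al)$, this gives a decomposition $\al = \varphi^m(\al_0)\varphi^m(\al_1)\cdots$, and any length-$n$ factor of $\al$ meets at most $O(1)$ of these blocks that come from order-$\infty$ letters, plus possibly some ``low-order'' blocks $\varphi^m(\al_j)$ (with $\al_j$ of order less than $k$) lying between them.

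Next, I would invoke the hypotheses that every finite-order letter occurring in $\al$ has order less than $k$ and that all $k$-block evolutions in $\al$ are continuously periodic. By an argument analogous to the one establishing Proposition~\ref{smallcompl}, this forces the stretches of $\al$ sitting between consecutive order-$\infty$ letters to be completely periodic, with periods drawn from a prefixed finite set. Consequently, the local shape of a length-$n$ factor around the order-$\infty$ letters it contains is determined by $O(1)$ discrete parameters plus a starting offset within the relevant block.

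Then I would count. At the canonical scale $m$, for each of the $O(1)$ possible configurations of local shape, a length-$n$ factor is determined by its starting offset within the corresponding block, yielding $O(n)$ factors per configuration. A length-$n$ factor may, however, legitimately register at any of the $\Theta(\log n)$ scales $m' \le m$ depending on which order-$\infty$ ancestor anchors it; assigning each factor to the deepest scale at which its order-$\infty$ skeleton first becomes visible inside a single block image gives a canonical encoding, and summing the $O(n)$ contributions across the $O(\log n)$ relevant scales yields $p_\al(n) = O(n\log n)$. Applying the coding $\psi$ cannot increase this count, so $p_\be(n) = O(n\log n)$ as required.

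I expect the main obstacle to be the second step, where the continuous-periodicity hypothesis must be leveraged to pin down the periodicity of every low-order filling uniformly by periods drawn from a single finite set. Without this hypothesis, Proposition~\ref{largecompl} shows that the complexity can be as large as $\Om(n^{1+1/(k-1)})$, so continuous periodicity is precisely what prevents large complexity. Propagating the periodicity through all scales below $k$ and combining it with the hierarchical decomposition at scales $1, \dots, m$ to obtain the clean $O(n\log n)$ bound is the main technical content of the argument.
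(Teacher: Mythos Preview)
Your proposal is essentially correct and follows the same strategy as the paper, which in fact packages exactly this argument into Lemma~\ref{smallcompllemma}: one chooses a scale function $f$ so that $\varphi^{f(n)}(b)$ is longer than $n$ for every relevant $b$, and then the proof of that lemma carries out precisely your ``anchor-plus-offset'' count, producing the bound $O(nf(n))$. The paper's proof of Proposition~\ref{infordercomplstop} is therefore very short: it simply takes $f(n)=\log_{q_0}(n/x)$ for a suitable $q_0>1$ coming from the exponential growth of order-$\infty$ letters, and checks the hypotheses of Lemma~\ref{smallcompllemma}.

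One point worth highlighting, which your sketch slightly obscures: because every finite-order letter in $\al$ has order strictly less than $k$, there are no letters of order exactly $k$ at all, and hence Case~II holds both at the left and at the right for \emph{every} evolution of $k$-blocks. Consequently the regular parts are all empty, and the second hypothesis of Lemma~\ref{smallcompllemma} (about $|\Fg(\LR_k(\mathcal E_l))|>n$ and $|\Fg(\RR_k(\mathcal E_l))|>n$) is vacuously satisfied. So the situation is actually simpler than your second paragraph suggests: you do not need to argue that the low-order stretches are ``completely periodic''---rather, the continuous-periodicity hypothesis is consumed exactly once, inside Lemma~\ref{smallcompllemma}'s case analysis at the top scale $q=l_0$, where it bounds the number of abstract words arising from factors lying inside a single stable $k$-block (via the preperiod/central-kernel/pseudoregular-part decomposition, not via complete periodicity).
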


\section{Preliminaries}\label{prelim}

When we speak about finite words or about words infinite to the right, 
their letters are enumerated by nonnegative integer indices
(starting from 0). The length of a finite word $\gamma$ is denoted by $|\gamma|$.

We will speak about occurrences in $\al=\al_0\al_1\al_2\ldots\al_i\ldots$. 
Strictly speaking, we call a pair of a word $\gamma$ and a location
$i$ in $\al$ \textit{an occurrence} if the subword of $\al$ that starts from position $i$
in $\al$ and is of length $|\gamma|$ is $\gamma$. This occurrence is denoted by $\swa ij$ if
$j$ is the index of the last letter that belongs to the occurrence. In particular, $\swa ii$ denotes
a single-letter occurrence, and $\swa i{i-1}$ denotes an occurrence of the empty word between
the $(i-1)$-th and the $i$-th letters. Since
$\al=\al_0\al_1\al_2\ldots=\varphi(\al)=\varphi(\al_0)\varphi(\al_1)\varphi(\al_2)\ldots$,
$\varphi$ might be considered either as a morphism on words (which we
call abstract words sometimes), or as
a mapping on the set of occurrences in $\al$. % into itself.
Usually we speak of
the latter, unless stated otherwise. Sometimes we write $\varphi^0$ for the identity 
morphism.

A finite word $\delta$ is called a \textit{prefix} of a (finite or infinite to the right) 
word $\gamma$ if $\gamma_{0\ldots|\delta|-1}=\delta$. A finite word $\delta$ is called
a \textit{suffix} of a finite word $\gamma$ if $\gamma_{|\gamma|-|\delta|\ldots|\gamma|-1}=\delta$.

We call a finite word $\gamma$ \emph{weakly $p$-periodic}
with a \textit{left} (resp.\ \textit{right}) period $\delta$ (where $p\in\NN$)
if $|\delta|=p$ and $\gamma = \delta\delta\ldots \delta\delta_{0\ldots r-1}$ (resp.\ 
$\gamma = \delta_{p-r\ldots p-1}\delta\ldots\delta$), 
where $r$ is the remainder of $|\gamma|$ modulo $p$, $r=0$ is allowed here.
%If $\delta$ is known, we will shortly call $\gamma$ a \textit{left} (resp.\ \textit{right},
%\textit{completely}) \textit{$\delta$-periodic} word.
We shortly say "a weakly left (resp.\ right) $\delta$-periodic word"
instead of "a weakly $|\delta|$-periodic word with left (resp.\ right)
period $\delta$".
$\delta$ will be always considered as an abstract word.
The subword $\gamma_{|\gamma|-r\ldots |\gamma|-1}$ 
(resp.\ $\gamma_{0\ldots r-1}$)
is called the \textit{incomplete occurrence}. All the
same is with sequences of symbols or numbers.
% Clearly, $\gamma$ is $p$-periodic with left period $\delta$ if 
% and only if it is $p$-periodic with right period 
% $\delta_{r\ldots p-1}\delta_{0\ldots r-1}$. 
If $r=0$, then 
$\gamma$ is called \textit{completely $p$-periodic} 
with period $\delta$ (which is both left period and right period in this case, 
so we sometimes call it a \textit{complete} period).
Again, we shortly say "a completely $\delta$-periodic word"
instead of
"a completely $|\delta|$-periodic word with period $\delta$".

%We call a finite word $\gamma$ \emph{weakly $p$-periodic}
%with a \textit{left} (resp.\ \textit{right}, \textit{complete}) period $\delta$
%if $|\delta|=p$ and $\gamma = \delta\delta\ldots \delta\delta_{0\ldots k-1}$ (resp.\
%$\gamma = \delta_{p-k\ldots p-1}\delta\ldots\delta$,
%$\gamma = \delta\ldots\delta$), where $k$ is the remainder of $|\gamma|$ modulo $p$.
%If $\delta$ is known, we will shortly call $\gamma$ a \textit{left} (resp.\ \textit{right},
%\textit{completely}) \textit{$\delta$-periodic} word.
%$\delta$ will be always considered as an abstract word.
%The subword $\gamma_{|\gamma|-k\ldots |\gamma|-1}$ is
%called the \textit{incomplete occurrence}, where $0\le k<|\delta|$. All the
%same is with sequences of symbols or numbers.
%Clearly, $\gamma$ is $p$-periodic with left period $\delta$ if 
%and only if it is $p$-periodic with right period 
%$\delta_{k\ldots p-1}\delta_{0\ldots k-1}$.

Clearly, a weakly $p$-periodic word with some left period
always is also weakly $p$-periodic with some right period, and these periods 
are cyclic shifts of each other. So, we introduce some notation for 
cyclic shifts. If $\delta$ is a finite word and $0\le r<|\delta|$, 
we denote the cyclic shift of $\delta$ that begins with the 
last $|\delta|-r$ letters of $\delta$ and 
ends with the first $r$ letters of $\delta$ by $\Cyc_r(\delta)$.
In other words, $\Cyc_r(\delta)=\delta_{r\ldots |\delta|-1}\delta_{0\ldots r-1}$.
If $n\in\ZZ$ and $r$ is the residue of $n$ modulo $|\delta|$, 
we denote $\Cyc_n(\delta)=\Cyc_r(\delta)$. In particular, if $0<n<|\delta|$, 
then $\Cyc_{-n}(\delta)=\Cyc_{|\delta|-n}(\delta)=\delta_{|\delta|-n\ldots|\delta|-1}\delta_{0\ldots|\delta|-n-1}$, 
in other words, $\Cyc_{-n}(\delta)$ is the cyclic shift of $\delta$ that begins with 
the last $n$ letters of $\delta$ and ends with the first $|\delta|-n$ letters of $\delta$.

We widely use the following easy properties of periods and cyclic shifts:
\begin{remark}
\begin{enumerate}
\item If $n,m\in\ZZ$, then $\Cyc_{n+m}(\delta)=\Cyc_n(\Cyc_m(\delta))$.
\item If a finite word $\gamma$ is weakly $p$-periodic with left period $\delta$, 
where $\delta$ is a word of length $p$, then $\gamma$ is also weakly $p$-periodic
with right period $\delta'=\Cyc_{|\gamma|}(\delta)$.
\item If a finite word $\gamma$ is weakly $p$-periodic with right period $\delta$, 
where $\delta$ is a word of length $p$, then $\gamma$ is also weakly $p$-periodic
with left period $\delta'=\Cyc_{-|\gamma|}(\delta)$.
\item If $\delta$ is a word of length $p$, two finite words $\gamma$ and $\gamma'$ 
are weakly $p$-periodic, and 
$\gamma$ (resp.\ $\gamma'$) is weakly $p$-periodic with right (resp.\ left) period $\delta$, 
then the concatenation $\gamma\gamma'$ is weakly $p$-periodic
with left period $\delta'=\Cyc_{-|\gamma|}(\delta)$
and is also weakly $p$-periodic with right period $\delta''=\Cyc_{|\gamma'|}(\delta)$.
\end{enumerate}
\end{remark}

The following lemma, informally speaking, shows that if we know a finite word 
is "long enough" and is weakly $p$-periodic for some $p$, 
which we maybe don't know itself, but we know that $p$ is "small enough", 
then these data determine $p$ and the left, right or complete period uniquely.

\begin{lemma}\label{finwordperiods}
Let $\gamma$ be a finite word. Suppose that $\gamma$ is weakly $p_1$-periodic 
with a left period $\delta$ and is weakly $p_2$-periodic with a left period $\sigma$ at the same time.
Suppose also that $|\gamma|\ge 2p_1$ and $|\gamma|\ge 2p_2$.
Then there
exists a finite word $\lambda$ such that $\delta$ is $\lambda$ repeated $k$ times and
$\sigma$ is $\lambda$ repeated $l$ times for some $k,l\in\mathbb N$.
\end{lemma}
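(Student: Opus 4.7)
The plan is to recognise this lemma as (a mild weakening of) the classical Fine and Wilf theorem, and prove it by a short descent argument on $p_1+p_2$. Reading off the definition of a weakly $p$-periodic word with left period, the hypothesis says that $\gamma_i=\gamma_{i+p_1}$ for all $0\le i\le|\gamma|-p_1-1$ and $\gamma_i=\gamma_{i+p_2}$ for all $0\le i\le|\gamma|-p_2-1$. Once I establish the analogous relation with $d=\gcd(p_1,p_2)$ in place of $p_1$ or $p_2$, I set $\lambda=\gamma_{0\ldots d-1}$; since $d\mid p_1$ and $|\gamma|\ge p_1$, the prefix $\delta=\gamma_{0\ldots p_1-1}$ is forced to equal $\lambda$ repeated $p_1/d$ times, and similarly $\sigma=\lambda$ repeated $p_2/d$ times, which is exactly the desired conclusion.

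So it remains to show: under the hypotheses of the lemma, $\gamma_i=\gamma_{i+d}$ whenever both indices are in range. I would argue by induction on $p_1+p_2$. If $p_1=p_2$, then $d=p_1$ and there is nothing to prove. Otherwise I may assume $p_1>p_2$ and it suffices to show that $\gamma$ is also weakly periodic with period $p_1-p_2$: then the pair $(p_1-p_2,p_2)$ has the same gcd $d$, strictly smaller sum, and still satisfies $|\gamma|\ge 2\max(p_1-p_2,p_2)$ (because $|\gamma|\ge 2p_1>2(p_1-p_2)$ and $|\gamma|\ge 2p_1\ge 2p_2$), so the inductive hypothesis yields period $d$.

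To verify period $p_1-p_2$, fix $0\le i\le|\gamma|-(p_1-p_2)-1$. If $i+p_1<|\gamma|$, then $\gamma_i=\gamma_{i+p_1}=\gamma_{i+p_1-p_2}$ by the periods $p_1$ and $p_2$ respectively. Otherwise $i+p_1\ge|\gamma|$; but $|\gamma|\ge 2p_1\ge p_1+p_2$, hence $i\ge|\gamma|-p_1\ge p_2$, so $i-p_2\ge 0$ and $i-p_2+p_1\le|\gamma|-1$, giving $\gamma_i=\gamma_{i-p_2}=\gamma_{i-p_2+p_1}=\gamma_{i+(p_1-p_2)}$. This is the only place where the strength of the hypothesis $|\gamma|\ge 2\max(p_1,p_2)$ is needed: it guarantees enough room to compensate for a shift by $p_1$ that would otherwise fall off the right end by going left by $p_2$ first.

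The main (rather minor) obstacle is just this boundary bookkeeping. The argument could alternatively be phrased by invoking Fine and Wilf directly, since $2\max(p_1,p_2)\ge p_1+p_2\ge p_1+p_2-\gcd(p_1,p_2)$, but the descent above is short enough that I would include it for self-containedness.
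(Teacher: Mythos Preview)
Your proof is correct and takes essentially the same approach as the paper: both reduce from the pair $(p_1,p_2)$ to $(p_1-p_2,p_2)$ via exactly the same two-case verification (using $|\gamma|\ge p_1+p_2$ for the boundary case), and then descend to $\gcd(p_1,p_2)$. The only cosmetic difference is that the paper phrases the descent as running Euclid's algorithm explicitly, while you package it as induction on $p_1+p_2$.
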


\begin{proof}
If $p_1=p_2$, then the statement is obvious. Otherwise,
without loss of generality we may suppose that $p_1>p_2$. Then $\sigma=\gamma_{0\ldots p_2-1}=\delta_{0\ldots p_2-1}$. 

Note that the fact that
$\gamma$ is weakly $p$-periodic can be written as follows: 
for all $0\le i<|\gamma|-p$ one has $\gamma_i=\gamma_{i+p}$.
Let us prove that $\gamma$ is weakly $(p_1-p_2)$-periodic with a left period $\delta_{0\ldots p_1-p_2-1}$.
Choose an index $i$, $0\le i<|\gamma|-(p_1-p_2)$.
If $i<|\gamma|-p_1$, then $\gamma_i=\gamma_{i+p_1}$ (since $\gamma$ is weakly $p_1$-periodic) and 
$\gamma_{i+p_1}=\gamma_{i+p_1-p_2}$ (since $\gamma$ is weakly $p_2$-periodic). If $i\ge |\gamma|-p_1$, 
then $i\ge p_2$ since $2p_1\le|\gamma|$, so $p_1+p_2\le|\gamma|$. So $\gamma_i=\gamma_{i-p_2}=\gamma_{i+p_1-p_2}$.

Note that if $p_1$ is divisible by $p_2$, then the claim is also clear.
Otherwise set $q_2=\lfloor p_1/p_2\rfloor$, and write $p_1=q_2p_2+p_3$, where $0<p_3<p_2$. 
If we repeat the argument above $q_2$ times, we will see that $\gamma$ is weakly $p_3$-periodic with a left period 
$\delta_{0\ldots p_3-1}$.

Finally, we write Euclid algorithm for $p_1$ and $p_2$:\\
$p_1=p_2q_2+p_3$\\
$p_2=p_3q_3+q_4$\\
$\cdots$\\
$p_{m-2}=p_{m-1}q_{m-1}+p_m$\\
$p_{m-1}=p_mq_m$.\\
If we repeat all arguments above for each of the pairs $(p_1,p_2), (p_2,p_3),\ldots,(p_{m-2},p_{m-1})$, 
we will finally see that $\gamma$ is weakly $p_m$-periodic with a left period $\lambda=\delta_{0\ldots p_m-1}$, 
where $p_m$ is the g. c. d. of $p_1$ and $p_2$. In particular, since $\gamma$ is also 
weakly $p_1$-periodic with left period $\delta$ and $|\gamma|>p_1$, this also means that $\delta$
is $\lambda$ repeated $p_1/p_m$ times. Similarly, $\sigma$ is $\lambda$ repeated $p_2/p_m$ times.
\end{proof}

The same lemma for right periods instead of left ones can be proved in completely the same way. 
After we have this lemma, it is reasonable to give the following definition. A finite word $\lambda$
is called the \textit{minimal left (resp.\ right) period} of a finite word $\gamma$ if $2|\lambda|\le|\gamma|$,
$\gamma$ is weakly 
%$|\lambda|$-periodic with left (resp.\ right) period $\lambda$, 
left (resp.\ right) $\lambda$-periodic
and 
$\gamma$ is not weakly $p$-periodic if $p<|\lambda|$. The following corollary provides more properties 
of the minimal periods if they exist.

\begin{corollary}
Let $\gamma$ be a finite word. If there exists $p\in\NN$ such that $\gamma$ is weakly $p$-periodic 
and $2p\le|\gamma|$, then there exist minimal left and right periods of $\gamma$. 

If $\lambda$ is 
the minimal left (resp.\ right) period of $\gamma$, and $\gamma$ is weakly $p$-periodic with left (resp.\ right) 
period $\delta$, where $2p\le|\gamma|$, then $p$ is divisible by $|\lambda|$ and 
$\delta$ is $\lambda$ repeated $p/|\lambda|$ times.\qed
\end{corollary}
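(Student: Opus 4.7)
The plan is to deduce both parts directly from Lemma \ref{finwordperiods} together with well-ordering of $\NN$; no new ideas beyond the lemma just proved are needed.

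For the existence of the minimal left period I would introduce the set $P=\{p\in\NN:\gamma\text{ is weakly }p\text{-periodic}\}$. The hypothesis supplies some $p_0\in P$ with $2p_0\le|\gamma|$, so $P$ is nonempty, and I set $p^\ast=\min P$. Then $p^\ast\le p_0\le|\gamma|/2$, hence $2p^\ast\le|\gamma|$, and by definition of weak periodicity a left period $\lambda$ of length $p^\ast$ exists. By the choice of $p^\ast$, the word $\gamma$ is not weakly $p$-periodic for any $p<|\lambda|$, so $\lambda$ satisfies every requirement of the definition. The argument for the minimal right period is identical; alternatively one can invoke the earlier remark that a left and a right period of the same length always exist simultaneously as cyclic shifts of one another.

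For the second claim, suppose $\gamma$ is weakly $p$-periodic with left period $\delta$ and $2p\le|\gamma|$. Combined with $2|\lambda|\le|\gamma|$, this allows me to apply Lemma \ref{finwordperiods} to the pair $(p,|\lambda|)$, obtaining a finite word $\mu$ and integers $k,l\in\NN$ such that $\delta$ is $\mu$ repeated $k$ times and $\lambda$ is $\mu$ repeated $l$ times. Expanding $\gamma$ in terms of $\lambda=\mu\cdots\mu$ shows at once that $\gamma$ is also weakly $|\mu|$-periodic with left period $\mu$. Minimality of $\lambda$ then forces $|\mu|\ge|\lambda|$, while $|\mu|$ divides $|\lambda|$ because $\lambda$ is $\mu$ repeated $l$ times; hence $|\mu|=|\lambda|$, $l=1$, and $\mu=\lambda$. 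Therefore $\delta$ is $\lambda$ repeated $k$ times, so $|\lambda|$ divides $p=k|\lambda|$ and $k=p/|\lambda|$. The right-period case is symmetric.

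I do not expect any real obstacle here. The only mild delicacy is that the definition of minimality quantifies over all $p<|\lambda|$ without the side condition $2p\le|\gamma|$, while Lemma \ref{finwordperiods} requires that condition; but any $p<|\lambda|$ automatically satisfies $2p<2|\lambda|\le|\gamma|$, so the mismatch is harmless and the existence argument above is legitimate.
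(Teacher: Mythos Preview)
Your proposal is correct and matches the paper's intent: the statement is marked with \qed because it follows immediately from Lemma~\ref{finwordperiods} and well-ordering, exactly as you argue. The paper gives no explicit proof, and your write-up is precisely the routine verification one would supply if asked.
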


A similar statement in the case of complete $p$-periodicity follows directly since a word is 
completely $p$-periodic exactly if it is weakly $p$-periodic and its length is divisible by $p$. 
A finite word $\lambda$ 
is called the \textit{minimal complete period} of a finite word $\lambda$ if $2|\lambda|\le|\gamma|$,
$\gamma$ is 
%completely $|\lambda|$-periodic with period $\lambda$, 
completely $\lambda$-periodic,
and $\gamma$ is not 
weakly $p$-periodic if $p<|\lambda|$.

\begin{corollary}\label{finwordmincompleteperiod}
Let $\gamma$ be a finite word. If there exists $p\in\NN$ such that $\gamma$ is completely $p$-periodic 
and $2p\le|\gamma|$, then there exist a minimal complete period of $\gamma$. 

If $\lambda$ is 
the complete period of $\gamma$, and $\gamma$ is weakly $p$-periodic with left (resp.\ right) 
period $\delta$, where $2p\le|\gamma|$, then $p$ is divisible by $|\lambda|$ and 
$\delta$ is $\lambda$ repeated $p/|\lambda|$ times.\qed
\end{corollary}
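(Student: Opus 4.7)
The plan is to show that the minimal complete period of $\gamma$ must coincide with the minimal left period of $\gamma$, after which everything reduces to the preceding corollary for weak periods. Given the hypothesis, $\gamma$ is in particular weakly $p$-periodic with $2p\le|\gamma|$, so that corollary produces a minimal left period $\mu$ of $\gamma$; write $q=|\mu|$. The key verification is that $\mu$ is itself a complete period. By the weak-period corollary, $q$ divides $p$; and since $\gamma$ is completely $p$-periodic, $p$ divides $|\gamma|$; hence $q\mid|\gamma|$, so $\gamma$ is completely $\mu$-periodic. Together with $2q\le 2p\le|\gamma|$ and the defining property of a minimal left period (no weak $r$-periodicity for $r<q$), this shows that $\mu$ satisfies all three requirements in the definition of a minimal complete period. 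Uniqueness is automatic because $q$ is the smallest weak-period length with $2q\le|\gamma|$ and is therefore uniquely determined, and the period word is then forced to be $\gamma_{0\ldots q-1}$.

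For the second assertion, I first note that $\mu$ is also the minimal right period of $\gamma$: since $\gamma=\mu\mu\cdots\mu$, the last $q$ letters of $\gamma$ form $\mu$, so $\mu$ is a right period of $\gamma$, and the set of lengths for which $\gamma$ admits a weak left period equals the set of lengths for which it admits a weak right period (the two kinds of periods of the same length being cyclic shifts of each other, as recorded in the remark preceding Lemma~\ref{finwordperiods}). Consequently, whether $\delta$ is a left or right weak period of $\gamma$ with $2p\le|\gamma|$, the preceding corollary applies and yields $|\lambda|\mid p$ together with $\delta$ equal to $\lambda$ repeated $p/|\lambda|$ times.

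I do not anticipate any substantive obstacle; essentially the whole argument is bookkeeping on top of the weak-period corollary, which is presumably why the statement is marked \emph{qed}-in-the-statement by the author. The only step that really requires a moment of care is noticing that the minimal left period of a completely $p$-periodic word automatically has length dividing $|\gamma|$, which comes for free from the divisibility chain $|\mu|\mid p\mid|\gamma|$, and the symmetric transfer from left to right periods, which is immediate from the cyclic-shift relationship between the two.
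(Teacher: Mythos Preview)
Your argument is correct and is precisely the elaboration the paper has in mind: the sentence preceding the corollary says it ``follows directly since a word is completely $p$-periodic exactly if it is weakly $p$-periodic and its length is divisible by $p$,'' and the statement is given without further proof. Your divisibility chain $|\mu|\mid p\mid|\gamma|$ is exactly the point being alluded to, and your transfer to right periods via the complete-period decomposition $\gamma=\mu\cdots\mu$ is the natural way to handle the ``resp.'' clause.
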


\begin{corollary}\label{overlapperiod}
Let $\gamma$ be a finite word, let $\gamma_{i\ldots j}$ and $\gamma_{i'\ldots j'}$ be two 
occurrences in $\gamma$. Suppose that $\gamma_{i\ldots j}$ is weakly $p$-periodic, 
and $\gamma_{i'\ldots j'}$ is weakly $p'$-periodic. Suppose also that these two occurrences overlap, 
and their intersection (denote it by $\gamma_{s\ldots t}$) has length at least $2\max(p,p')$. 
In other words, $s=\max(i,i')$, $t=\min(j,j')$, and $t-s+1\ge 2\max(p,p')$. 

Then the union of these two occurrences (i.~e.\ the occurrence $\gamma_{s'\ldots t'}$, where 
$s'=\min(i,i')$ and $t'=\max(j,j')$) is a weakly $\gcd(p,p')$-periodic word.
\end{corollary}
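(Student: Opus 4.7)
The plan is to first extract $\gcd$-periodicity on the intersection via Lemma \ref{finwordperiods}, and then propagate that periodicity outward using the individual $p$- and $p'$-periodicities of the two occurrences.

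First, observe that the intersection $\gamma_{s\ldots t}$, being a subword of both $\gamma_{i\ldots j}$ and $\gamma_{i'\ldots j'}$, is simultaneously weakly $p$-periodic and weakly $p'$-periodic, the two periods being appropriate cyclic shifts of the periods of the containing occurrences. Since $t - s + 1 \ge 2\max(p, p')$, the hypothesis of Lemma \ref{finwordperiods} is satisfied; moreover, inspection of its proof shows that the common refined period produced there has length $g := \gcd(p, p')$. Hence the intersection is weakly $g$-periodic.

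Next I propagate this $g$-periodicity to the union. Assume without loss of generality $i \le i'$, so $s' = i$ and $s = i'$; if $j' \le j$ then the second occurrence lies inside the first, so that the union coincides with $\gamma_{i\ldots j}$, and this case is strictly easier, so suppose $j \le j'$. For a fixed index $m$ with $s' \le m \le t' - g$, I claim both $m$ and $m+g$ lie inside one of the two periodic occurrences. Indeed, if $m + g > j$ then $m > j - g$, and using $g \le p$ together with $j - i' + 1 \ge 2p$ one obtains $m \ge i'$, placing both $m$ and $m+g$ inside $[i', j']$; the other edge case is analogous.

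Finally, assume $m, m + g \in [i, j]$; the case $m, m + g \in [i', j']$ is symmetric. Because the intersection has length at least $2p \ge p + g$, it contains an index $m'$ congruent to $m$ modulo $p$ with $m' + g$ still in the intersection. By $p$-periodicity of $\gamma_{i\ldots j}$ one gets $\gamma_m = \gamma_{m'}$, and, using $g \mid p$, also $\gamma_{m+g} = \gamma_{m'+g}$; by the $g$-periodicity of the intersection one gets $\gamma_{m'} = \gamma_{m'+g}$. Chaining these equalities yields $\gamma_m = \gamma_{m+g}$, as required. The real content lies in the first step (an application of Fine--Wilf via Lemma \ref{finwordperiods}); the rest is geometric bookkeeping, whose only mildly delicate point is the existence of the representative $m'$, which is where the factor $2$ in the hypothesis $t - s + 1 \ge 2\max(p,p')$ is used essentially.
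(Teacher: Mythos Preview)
Your proof is correct and follows essentially the same strategy as the paper: apply Lemma~\ref{finwordperiods} on the intersection to extract $\gcd(p,p')$-periodicity, then propagate to the union by index-chasing. The paper's propagation step is slightly cleaner than yours: rather than only concluding $g$-periodicity on the intersection and then hopping back via a representative $m'$, the paper observes that Lemma~\ref{finwordperiods} actually shows each period $\delta,\delta'$ is a power of a length-$g$ word, so each of the two original occurrences is itself weakly $g$-periodic; then for any $m$ in the union, the pair $m,m+g$ lies in one of the two blocks and equality is immediate. Your detour through $m'$ works but is unnecessary, and your remark that the factor~$2$ is used ``essentially'' in finding $m'$ is slightly off---the paper notes that for the propagation step alone, $t-s+1\ge g$ would already suffice; the factor~$2$ is needed only for Lemma~\ref{finwordperiods}.
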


\begin{proof}
Without loss of generality, $i\le i'$. Then $s=i'$ and $s'=i$. Let $\delta$ be the left period of 
$\gamma_{i\ldots j}$ (so that $|\delta|=p$), and let $\delta'$ be the left period of $\gamma_{i'\ldots j'}$
(so that $|\delta'|=p'$). Denote the residue of $i'-i$ modulo $p$ by $r$. Then, if we write 
$\gamma_{i\ldots j}$ as $\delta$ repeated several times, $\gamma_{i'}$ will be $\delta_r$. Moreover, 
$\gamma_{s\ldots t}$ becomes a weakly $p$-periodic word with left period 
$\delta''=\delta_{r\ldots|\delta|-1}\delta_{0\ldots r-1}=\Cyc_r(\delta)$.
Since $\gamma_{s\ldots t}$ is a prefix of $\gamma_{i'\ldots j'}$, $\gamma_{s\ldots t}$ is also a weakly 
$p'$-periodic word with left period $\delta'$. Now, by Lemma \ref{finwordperiods}, 
there exists a word $\lambda$ of length $\gcd(p,p')$ such that $\delta'$ is $\lambda$ repeated $p'/\gcd(p,p')$
times and $\delta''$ is $\lambda$ repeated $p/\gcd(p,p')$ times. But then $\delta$ can also be written as 
a cyclic shift of $\lambda$ repeated $p/\gcd(p,p')$ times.

Now, since $\gamma_{i\ldots j}$ is weakly $p$-periodic with left period $\delta$, it is also 
weakly $\gcd(p,p')$-periodic. Since $\gamma_{i'\ldots j'}$ is weakly $p'$-periodic with 
left period $\delta'$, it is also 
weakly $\gcd(p,p')$-periodic. In other words, if $k$ and $k+\gcd(p,p')$ are two indices such that 
$i\le k$ and $k+\gcd(p,p')\le j$, then $\gamma_k=\gamma_{k+\gcd(p,p')}$ as an abstract letter.
Also, if $k$ and $k+\gcd(p,p')$ are two indices such that 
$i'\le k$ and $k+\gcd(p,p')\le j'$, then again $\gamma_k=\gamma_{k+\gcd(p,p')}$ as an abstract letter.

If $j\ge j'$, we are done. Otherwise $t=j$ and $t'=j'$, and we have $j-i'+1\ge 2\max(p,p')\ge 2\gcd(p,p')$.
So, if $k+\gcd(p,p')\le t'=j'$, but $k+\gcd(p,p')>j$, then $k+\gcd(p,p')\ge j+1$, $k\ge j+1-\gcd(p,p')\ge i'$, 
and $\gamma_k=\gamma_{k+\gcd(p,p')}$ anyway. Hence, $\gamma_{s'\ldots t'}=\gamma_{i\ldots j'}$ is 
weakly $\gcd(p,p')$-periodic.
\end{proof}

Note that in the last computation an inequality $t-s+1\ge \gcd(p,p')$ instead of $t-s+1\ge 2\max(p,p')$
would be enough, but we cannot replace $t-s+1\ge 2\max(p,p')$ with $t-s+1\ge \gcd(p,p')$
in the statement of the corollary, because we also need the inequality $t-s+1\ge 2\max(p,p')$
in Lemma \ref{finwordperiods}, and there it cannot be a priori replaced by $t-s+1\ge \gcd(p,p')$.

An infinite %sequence or a 
word 
%(i.~e. sequence of letters) 
%$\gamma=\gamma_0,\gamma_1,\ldots,\gamma_i,\ldots$ 
$\gamma=\gamma_0\gamma_1\ldots\gamma_i\ldots$ 
(where $\gamma_i\in\E$) is called 
periodic with a period $\delta$ (where 
%$\delta=\delta_0,\ldots,\delta_{p-1}$,
$\delta=\delta_0\ldots\delta_{p-1}$, 
$\delta_i\in\E$) 
if 
%$\gamma=\delta,\delta,\delta,\ldots$, 
$\gamma=\delta\delta\delta\ldots$, 
in other words, if $\gamma_{ip+j}=\delta_j$ for all $i=0,1,2,\ldots$, $j=0,1,\ldots,p-1$.
An infinite 
%sequence 
word
%$\gamma=\gamma_0,\gamma_1,\ldots,\gamma_i,\ldots$ 
$\gamma=\gamma_0\gamma_1\ldots\gamma_i\ldots$ 
(where $\gamma_i\in\E$) is called 
eventually periodic with a period $\delta$ 
(where 
%$\delta=\delta_0,\ldots,\delta_{p-1}$, 
$\delta=\delta_0\ldots\delta_{p-1}$, 
$\delta_i\in\E$) 
and a preperiod $\delta'$ 
(where 
%$\delta'=\delta'_0,\ldots,\delta'_{p'-1}$, 
$\delta'=\delta'_0\ldots\delta'_{p'-1}$, 
$\delta_i\in\E$) 
if 
%$\gamma=\delta',\delta,\delta,\delta,\ldots$, 
$\gamma=\delta'\delta\delta\delta\ldots$, 
in other words, 
if $\gamma_i=\delta'_i$ for $i=0,1,\ldots,p'-1$ and 
$\gamma_{p'+ip+j}=\delta_j$ for all $i=0,1,2,\ldots$, $j=0,1,\ldots,p-1$.

Sometimes we will also speak about words infinite to the left. We enumerate indices in
such words by nonpositive indices, i.~e.\ such a word can be written as 
$\gamma=\ldots\gamma_{-i}\ldots\gamma_{-1}\gamma_0$ (where $\gamma_{-i}\in\E$, $i\in\ZZ_{\ge 0}$).
Such a word
is called 
periodic with a period $\delta$ (where 
%$\delta=\delta_0,\ldots,\delta_{p-1}$,
$\delta=\delta_0\ldots\delta_{p-1}$, 
$\delta_i\in\E$) 
if 
%$\gamma=\delta,\delta,\delta,\ldots$, 
$\gamma=\ldots\delta\delta\delta$, 
in other words, if $\gamma_{-ip+j+1}=\delta_j$ for all $i=1,2,\ldots$, $j=0,1,\ldots,p-1$.

%\section{Orders and Growth Rates of Letters}

\section{Periodicity properties of morphisms}\label{morphperiodicity}

For each letter $b\in\E$, the function $r_b\colon\mathbb N\to\mathbb N$, $r_b(n) =
|\varphi^n(b)|$ is called \emph{the growth rate} of~$b$. Let us define
\textit{orders of letters} with respect to $\varphi$. We say that
$b \in \E$ has \emph{order $k$} if $r_b(n) = \T(n^{k-1})$, and has
\emph{order $\infty$} if $r_b(n) = \Om(q^n)$ for some $q >
1$ ($q\in\mathbb R$).

Consider a directed graph $G$ defined as follows. Vertices of $G$ are
letters of~$\E$. For every $b,c \in \E$, for each occurrence of $c$
in $\varphi(b)$, construct an edge $b \to c$. For instance, if
$\varphi(b) = bccbc$, we construct two edges $b \to b$ and three
edges~$b \to c$.
Fig.~\ref{Gexample} shows an example of graph $G$.

\begin{figure}[!h]
\centering
\includegraphics{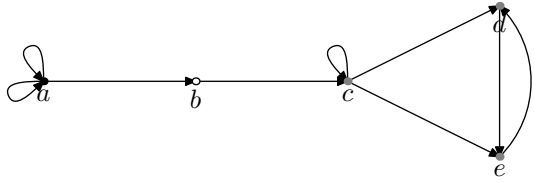}
\caption{An example of graph $G$ for the following morphism $\varphi$:
$\varphi(a)=aab$, $\varphi(b)=c$, $\varphi(c)=cde$, $\varphi (d)=e$, $\varphi (e)=d$.
Here $a$ is a letter of order $\infty$, $b$ is a preperiodic letter of order 2,
$c$ is a periodic letter of order 2, $d$ and $e$ are periodic letters of order 1.}
\label{Gexample}
\end{figure}

Using the graph $G$, let us prove the following lemma.

\begin{lemma}\label{rates}
For every $b\in\E$, either $b$ has some order $k < \infty$, or has
order $\infty$. If $b$ is a letter of order $k$, then $\varphi(b)$ contains 
at least one letter of order $k$. For every $b$ of order $k < \infty$, either $b$
never appears in $\varphi^n(b)$ (and then $b$ is called
\emph{preperiodic}), or for each $n$ a unique letter $c_n$ of order
$k$ occurs in $\varphi^n(b)$, and the sequence $(c_n)_{n \in \mathbb Z_{{}\ge0}}$ is
periodic (then $b$ is called \emph{periodic}).
%and the period length of the sequence 
%$(b_n)_{n \in \mathbb Z_{{}\ge0}}$ is called the \emph{return length} of $a$

If $b$ is a letter of order $\infty$, then $\varphi(b)$ contains at least one letter of order $\infty$, 
and $\varphi^n(b)$ contains at least two letters of order $\infty$ if $n$ is large enough.

If $b$ is a periodic
letter of order $k > 1$ and $b$ occurs in $\varphi^n(b)$, then at least one letter of order $k - 1$
occurs in $\varphi^n(b)$.
\end{lemma}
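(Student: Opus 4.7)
The plan is to work via the directed multigraph $G$ and to classify its strongly connected components (SCCs). Since $\varphi$ is nonerasing, every vertex of $G$ has out-degree at least $1$, so any infinite path exists and every vertex eventually reaches a non-trivial SCC. I would call an SCC \emph{purely cyclic} if each of its vertices has exactly one outgoing edge inside the SCC (so the SCC is a simple directed cycle and each letter of it has $\varphi$-image containing exactly one letter of the SCC), and \emph{branching} otherwise. This is the central dichotomy.

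First I would establish the polynomial/exponential dichotomy. If from $b$ one can reach a branching SCC $C$, then pick a vertex $c \in C$ at which there are two distinct edges inside $C$ returning to $c$ after some cycle length $N$; iterating gives $|\varphi^{nN}(c)| \ge 2^n$, hence $b$ has order $\infty$. Conversely, if every SCC reachable from $b$ is purely cyclic, I would define $k(b)$ inductively as $1$ plus the maximum value of $k(c)$ over letters $c$ of order ${<}k(b)$ that appear in $\varphi(b')$ for some $b'$ in the SCC of $b$, and prove by induction on $k$ that $|\varphi^n(b)| = \T(n^{k-1})$. The induction step uses the recursion $|\varphi^{n+1}(b)| = |\varphi^n(b)| + \sum |\varphi^n(d)|$ where the sum ranges over the letters of smaller order produced once per traversal of the cycle, giving the $\T(n^{k-1})$ behavior. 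This simultaneously shows that $\varphi(b)$ contains a letter of order $k$ (the next letter of the cycle, if $b$ lies on one) or, if $b$ is off-cycle but reaches such a cycle, that $\varphi(b)$ contains a letter of the same order $k$ lying on a cycle (taking the letter of $\varphi(b)$ heading toward the deepest cycle).

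For the periodic/preperiodic classification: $b \in \varphi^n(b)$ for some $n \ge 1$ exactly when $b$ itself lies on a purely cyclic SCC at the top of its condensation chain. In that case the SCC has cycle length $p$, the successor in the cycle is uniquely determined, and $c_n$ is the unique letter of order $k$ in $\varphi^n(b)$ --- uniqueness because each vertex of the SCC has exactly one outgoing edge to an order-$k$ letter. Thus $(c_n)$ is $p$-periodic. The order-$\infty$ claim that $\varphi^n(b)$ has at least two letters of order $\infty$ for large $n$ follows similarly: once a branching vertex $c$ inside a branching SCC is reached and one returns to $c$, $\varphi^N(c)$ contains $\ge 2$ occurrences of $c$, each of which has order $\infty$.

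The hardest part will be the last claim: if $b$ is periodic of order $k>1$ and $b \in \varphi^n(b)$, then $\varphi^n(b)$ contains a letter of order $k-1$. My plan here is to use the definition of $k$ via the condensation DAG: since $k(b) = k > 1$, by the inductive definition above there must exist a letter $b'$ in the (simple cyclic) SCC of $b$ such that $\varphi(b')$ contains a letter $d$ of order $k-1$. Since $b$ lies on this simple cycle of length $p$ and $b \in \varphi^n(b)$, we have $p \mid n$, and $\varphi^n(b)$ is obtained by going $n/p$ times around the cycle; in particular every letter of the cycle, including $b'$, occurs in $\varphi^{n-1}(b)$, so $\varphi^n(b)$ contains $\varphi(b')$ as a subword and therefore contains $d$. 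The subtle point to verify is precisely that $d$ has order exactly $k-1$ and not lower, which will follow from the induction hypothesis in Step 2 that $k(b) = 1 + \max_{d} k(d)$ over such children $d$.
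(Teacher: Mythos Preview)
Your approach via the SCCs of $G$, distinguishing purely cyclic from branching components and assigning orders inductively up the condensation DAG, is essentially the same as the paper's. The paper formalizes the same idea through auxiliary subgraphs $\overline G{}' \supseteq \overline G{}''$ of the condensation (your ``purely cyclic'' SCCs are exactly the vertices in $\overline G{}' \setminus \overline G{}''$, and trivial SCCs form $\overline G{}''$), but the substance is identical, including the induction for the growth-rate dichotomy and the periodic/preperiodic split.

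There is, however, a genuine error in your argument for the final claim. You write that ``every letter of the cycle, including $b'$, occurs in $\varphi^{n-1}(b)$'', but this contradicts what you yourself established a few lines earlier: $\varphi^{n-1}(b)$ contains exactly \emph{one} letter of order $k$, namely $c_{n-1}$; if the cycle has length $p>1$, the other cycle letters do not appear there. What is actually true is that since $n \ge p$ and the sequence $(c_m)_{0\le m<n}$ traverses the whole cycle, we have $b' = c_m$ for some $0 \le m < n$, so $b'$ occurs in $\varphi^m(b)$ and hence $d$ occurs in $\varphi^{m+1}(b)$. You then need a further step that your sketch omits: $\varphi^n(b)$ contains $\varphi^{n-m-1}(d)$ as a subword, and since the image of any letter of order $k-1$ again contains a letter of order $k-1$ (your already-established second assertion of the lemma), $\varphi^{n-m-1}(d)$ and hence $\varphi^n(b)$ contains a letter of order $k-1$. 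The paper is careful about precisely this propagation step. The ``subtle point'' you flag (that $d$ has order exactly $k-1$) is not the issue --- that follows from your max-based definition of $k(b)$ --- but the two points above are.
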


\begin{proof}
Consider also the following graph $\ol G$. Vertices of $\ol G$ are
%in one-to-one correspondence with
strongly connected components of
$G$. There is an edge from $v\in\ol G$ to $u\in\ol G$ iff there is
an edge from some of $v$ vertices (in $G$) to some of $u$ vertices.
Fig.~\ref{Gpexample} shows an example of the corresponding graph $\ol G$.

\begin{figure}[!h]
\centering
\includegraphics{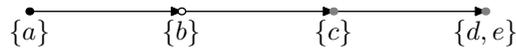}
\caption{An example of the graph $\ol G$ corresponding to the graph $G$ from the previous 
example.}
\label{Gpexample}
\end{figure}

Let $\ol G{}'$ be the subgraph of $\ol G$ induced by vertices $v\in\ol G$ such that for all
vertices $a\in v\subset G$
%of the corresponding component (here and further the word
%'component' will mean 'strongly connected component') of $G$
there is at most one edge
outgoing from $b$ to a vertex $c\in v$. Let $\ol G{}''$ be the subgraph of
$\ol G{}'$ induced by vertices $v\in\ol G{}'$ such that for all vertices $b\in v\subset G$
%of the corresponding component of $G$
there are no edges outgoing from $b$ to a
vertex $c\in v$. In Fig.~\ref{Gpexample} and \ref{Gexample} 
the vertices of $\ol G{}\setminus \ol G{}'$ (resp.\ the 
corresponding vertices of $G$) are black,
the vertices of $\ol G{}'\setminus \ol G{}''$ (resp.\ the 
corresponding vertices of $G$) are gray, and the vertices of $\ol G{}''$ (resp.\ the 
corresponding vertices of $G$) are white.
We will now assign orders (natural numbers or infinity) to the vertices
of $\ol G$ (hence, to the vertices of $G$ too).

A vertex $v\in\ol G{}'$ is called a vertex of order one if it does not have outgoing edges 
(in $\ol G{}'$, not in $G$). Then assign order one to the vertices (if any) of graph
$\ol G{}''$ that have outgoing edges to the vertices that are already of order one only.
Repeat this operation until there are no new vertices of order one.

Suppose some vertices already are of order $k$ (and we don't want to assign order $k$ to
any other vertex of $\ol G$). Then a vertex $v\in\ol G{}'$ is called vertex of order $k+1$
if all the edges outgoing from it lead to vertices of order $k$ or less. Then, 
consider a vertex $w\in\ol G{}''$ that has not been currently assigned to be of some order.
If all its outgoing edges lead to vertices of orders $\le k+1$, assign
$w$ to be of order $k+1$. Repeat this operation
until there are no new vertices of order $k+1$.

All vertices that currently have no order assigned (after completing the above procedure
for each $k$), are called vertices of order $\infty$.

We have assigned orders to the vertices of $\ol G$, hence also to the vertices of $G$ (that are 
the letters of $\Sigma$). It follows directly from the definition of the order of a vertex
that if $b\in G$ is a vertex of order $k$, then there is an edge going from $a$ to (possibly another) vertex
of order $k$.
One can prove by induction on $k$ that

\textit{Any letter of finite order $k$ has the rate of growth $\T(n^{k-1})$. Any letter of
infinite order has the rate of growth $\Om(\gamma^n)$ for some $\gamma>1$.}

Thus, two definitions of the order of a letter are equivalent.

Vertices $v$ of $\ol G$ of order $\infty$ are exactly 
the vertices of $\ol G$ such that there exists a path from $v$ to a vertex 
$w\in\ol G\setminus \ol G'$. It is already clear that if $b$ is a letter of 
order $\infty$, then $\varphi(b)$ contains a letter of order $\infty$. To prove that 
if $n$ is large enough, then $\varphi^n(b)$ contains at least two letters of order $\infty$, 
we may assume without loss of generality that $b$ already belongs to a strongly connected component 
$v$ of $G$ such that $v\notin\ol G'$. Then there exists a vertex $c\in v$ such that there are 
at least two edges leading from $c$ to vertices of $G$ in $v\in\ol G$. This means that 
$\varphi(c)$ contains at least two letters of order $\infty$, and $\varphi^{n_0}(b)$ contains $c$ 
for some $n_0$. Then $\varphi^n(b)$ contains at least two letters of order $\infty$ if 
$n>n_0$.

A vertex of $\ol G{}'$ of finite order is called \emph{preperiodic} if it actually 
belongs to $\ol G{}''$, otherwise it is called \emph{periodic}. A vertex of $G$ (i. e.\ a letter) is 
called \emph{periodic} (resp.\ \emph{preperiodic}) iff the corresponding vertex of $\ol G$ is periodic 
(resp.\ preperiodic). If $b\in G$ is a periodic vertex of order $k<\infty$, it has exactly 
one outgoing edge to a vertex of order $k$. These two vertices correspond to the same
vertex $v\in\ol G$, and all vertices of $G$ that correspond to $v$ (i. e.\ that belong 
to the strongly connected component $v$) actually form a directed loop. Unlike that, 
any edge that starts in a preperiodic vertex $b\in G$ of order $k$, leads to a vertex 
that had been assigned to be of some order $\le k$ before $a$.
Hence, this definition 
of a periodic letter and the definition from the lemma statement are equivalent.

To prove the last claim, observe that if $v\in\ol G{}'$ is a periodic letter of order $k$, 
then there must be an edge going from $v$ to a vertex of order $k-1$, otherwise we would have 
assigned $v$ to be a vertex of order $k-1$ or less. Therefore, there is a vertex $b\in G$ corresponding 
to $v$ such such that there is an edge going from $b$ to a vertex of $G$ of order $k-1$. 
In other words, $\varphi (b)$ contains a letter of order $k-1$. Let $c$ be (possibly another) vertex of $G$ 
corresponding to $v\in\ol G{}'$. Then we already know that $b$ and $c$ are 
contained in a directed loop in $G$. If $c$ occurs in $\varphi^n(c)$, then $n$ is divisible 
by the length of this loop, hence $n$ is greater than or equal to the length of this loop, 
and there exists $m$ ($0\le m<n$) such that $\varphi^m(c)$ contains $b$. Then $\varphi(\varphi^m(c))$ contains 
a letter $d$ of order $k-1$, and $\varphi^n(c)$ contains $\varphi^{n-m-1}(d)$. The image of a letter of 
order $k-1$ always contains a letter of order $k-1$, so a letter of order $k-1$ occurs in 
$\varphi^{n-m-1}(d)$ and hence in $\varphi^n(c)$.
\end{proof}

In the example of a graph $G$ in Fig.~\ref{Gexample}, 
$d$ and $e$ are vertices of order one. We cannot assign any other vertex to be 
of order one, so we assign then $c$ to be of order two. It is a periodic vertex. Then we can see 
that $b$ has a single outgoing edge, and it leads to $c$. Thus, $b$ should be a 
preperiodic vertex of order two. The remaining vertex $a$ cannot be of finite order since it does 
not belong to $\ol G{}'$. It is a vertex of order $\infty$.

% In most of the examples we will consider, letters of finite order will be present. However, 
% in general it is possible that all letters in $\E$ are of order $\infty$.
In general, it is possible that all letters in $\E$ have order $\infty$. However, it will be 
convenient for us if at least one periodic letter of order 1 and at least one periodic letter of order 2 exists.
So, first, if periodic letters of order 1 do not exist in $\E$ (then it follows from the construction above that 
all letters in $\E$ have order $\infty$), we add one more letter (that we temporarily denote by $b$) to $\E$ and 
set $\varphi(b)=b$, $\psi(b)=b$ (without varying $\varphi$ and $\psi$ on other letters). Then $b$ is a periodic letter 
of order 1. From now on, we suppose that periodic letters of order 1 exist in $\E$.

Second, suppose that periodic letters of order 1 exist in $\E$, but periodic letters of order 2 do not exist (it follows from the
above construction that in this case all letters in $\E$ have either order 1, or order $\infty$).
Let $b\in\E$ be a periodic letter of order 1. We add one more letter to $\E$ (denote it temporarily by $c$)
and set $\varphi(c)=bc$, $\psi(c)=c$ (again, we do not change $\varphi$ and $\psi$ on other letters).
Then $c$ is a periodic letter of order 2. From now on, we suppose that periodic letters of order 2 exist in $\E$.

Now we are going to replace $\varphi$ by $\varphi^n$ for some $n\in\NN$ to get a morphism satisfying better 
properties. 
%Clearly, if we replace $\varphi$ by $\varphi^n$, the orders of all letters will stay the same.
Namely, first let us call a nonerasing morphism $\varphi'$ \textit{weakly 1-periodic} if:
\begin{enumerate}
\item If $b$ is a preperiodic letter of order $k$, then all letters of order $k$ in $\varphi'(b)$ are periodic.
\item If $b$ is a periodic letter of order $k$, then the letter of order $k$ contained in $\varphi'(b)$
is $b$.
\end{enumerate}
% In particular, if all letters in $\E$ are of order $\infty$, then we don not have any requirements, and 
% $\varphi$ is alerady a weakly 1-periodic morphism.

We would like to choose $n$ so that $\varphi^n$ is a weakly 1-periodic morphism.
Note first that the orders of letters with respect to $\varphi^n$ are the same as their orders
with respect to $\varphi$. Periodic and preperiodic letters with respect to $\varphi$ remain periodic 
and preperiodic (respectively) with respect to $\varphi^n$. If the first letter in $\varphi(a)$ is $a$ for some $a\in\E$, then 
$\varphi^n(a)$ begins with $a$ as well, and $(\varphi^n)^\infty(a)=\varphi^\infty (a)$.

\begin{lemma}
There exists $n\in\NN$ such that $\varphi^n$ is a weakly 1-periodic morphism.
\end{lemma}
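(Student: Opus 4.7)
The plan is to pick $n$ as a multiple of a certain number $P$ (to handle periodic letters) that is also sufficiently large (to handle preperiodic letters). Throughout, I use that orders and the periodic/preperiodic classification of letters are invariant under passing from $\varphi$ to $\varphi^n$.

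For condition 2, by Lemma \ref{rates} each periodic letter $b$ of finite order $k$ gives rise to a periodic sequence $(c_n)_{n\ge 0}$ of letters of order $k$ in $\varphi^n(b)$ with $c_0=b$; let $p_b$ denote its minimal period. Set $P=\mathrm{lcm}\{p_b : b \text{ periodic of finite order in }\E\}$, which is finite because $\E$ is finite. Then whenever $n$ is divisible by $P$, we have $c_n=c_0=b$, which is exactly condition 2 for $\varphi^n$.

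For condition 1, I will show that there exists $N\in\NN$ such that for every $n\ge N$ and every preperiodic letter $b$ of any finite order $k$, every letter of order $k$ occurring in $\varphi^n(b)$ is periodic. The key observation, coming from the construction of $\ol G$ and $\ol G{}''$ in the proof of Lemma \ref{rates}, is that a preperiodic letter $b$ has no self-loops within its strongly connected component; therefore every edge from $b$ in $G$ goes to a vertex in a strongly connected component that lies strictly below $b$'s component in the DAG $\ol G$. Consequently, any letter $c$ of order $k=\mathrm{order}(b)$ appearing in $\varphi(b)$ is either already periodic or is preperiodic of order $k$ and lies in an SCC strictly lower than $b$'s in the topological order of $\ol G{}''$. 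I then proceed by induction on the height of $b$'s SCC in the finite DAG $\ol G{}''$ restricted to components of order $k$: for $b$ at height 0 the claim holds for $n\ge 1$ (because for a periodic letter the letter of order $k$ in its image is itself periodic, and letters of order $<k$ contribute no letters of order $k$ to any further iterate); for the inductive step one writes $\varphi^n(b)=\varphi^{n-1}(\varphi(b))$ and applies the inductive hypothesis to each preperiodic letter of order $k$ appearing in $\varphi(b)$, together with the periodic case for the periodic letters of order $k$ there. Taking the maximum of the heights gives a uniform $N$.

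Finally, choose $n$ to be any common multiple of $P$ with $n\ge N$. Then $\varphi^n$ satisfies both conditions, hence is weakly 1-periodic. The main obstacle is the inductive step for condition 1, where one must be careful that letters of order $<k$ produced in intermediate iterations can never later give rise to new letters of order $k$, so that control of the letters of order $k$ really propagates through further applications of $\varphi$; this is handled using the fact established in Lemma \ref{rates} that the order of a letter is the maximum order of letters reachable from it in $G$.
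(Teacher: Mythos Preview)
Your proof is correct and follows essentially the same approach as the paper: bound the preperiodic contribution by iterating long enough, and handle periodic letters by taking an iterate divisible by all cycle lengths. The only differences are organizational: the paper first passes to $\varphi''=\varphi^n$ satisfying condition 1 and then to $(\varphi'')^m$ for condition 2 (noting condition 1 is preserved), whereas you pick a single $n$ satisfying both constraints at once; and your inductive argument on the height in the DAG of preperiodic letters of order $k$ makes explicit what the paper compresses into the sentence ``$\varphi^n(b)$ does not contain $b$ for any $n\ge 1$, therefore there exists $n_0$ such that all letters of order $k$ in $\varphi^n(b)$ are periodic for $n\ge n_0$.''
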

\begin{proof}
If $b$ is a preperiodic letter of order $k$, then $\varphi^n(b)$ does not contain $b$ for any $n\ge 1$.
Therefore, there exists $n_0\in\NN$ such that if $n\ge n_0$, then all letters of order $k$ in $\varphi^n(b)$ 
are periodic. Take any $n\in\NN$ such that $n\ge n_0$ for all these numbers $n_0$ for all preperiodic letters $b\in\E$ of 
finite order. (Clearly, $n=|\E|$ is sufficient, in the example above we can take $n=1$.) Set $\varphi''=\varphi^n$. 
If $b$ is a preperiodic letter of order $k$, all letters of order $k$ in $\varphi''(b)$ are periodic, and 
all letters of order $k$ in $\varphi''^m(b)$ are also periodic. So, now it is sufficient 
to choose $m$ so that if $b$ is a periodic letter of order $k$, then the letter of order $k$ occurring 
in $\varphi''^m(b)$ is $b$ again. By the definition of a periodic letter, for each individual 
periodic letter $b$ there exists $m_0\in \NN$ such that if $m$ is divisible by $m_0$, then 
the letter of order $k$ contained in $\varphi''^m(b)$ is $b$. Now let us take $m\in \NN$ divisible
by all numbers $m_0$ for all periodic letters $b$. (E. g., we can always take $m=|\E|!$, and 
in the example above we can take $m=2$.) Then $\varphi'=\varphi''^m$ is a weakly 1-periodic morphism.
\end{proof}

From now on, we replace $\varphi$ by $\varphi'$ from the proof 
and assume that $\varphi$ is a weakly 1-periodic morphism.

Actually, we want to improve $\varphi$ more. For each $k\in\NN$ and for each letter $b\in\E$ of order $>k$,
the leftmost and rightmost letters of order $>k$ in $\varphi(b)$ will be important for us.
If $k\in \NN$ and $\gamma$ is a finite word in $\E$ containing at least one 
letter of order $>k$, denote the leftmost (resp.\ rightmost) letter of order $>k$ 
in $\gamma$ by $\LL_k(\gamma)$ (resp.\ by $\RL_k(\gamma)$). Observe that if $b\in\E$, then
$\LL_k(\varphi^n(b))=\LL_k(\varphi(\LL_k(\varphi^{n-1}(b))))$ since if $c$ is a letter of order $k$ or less, 
then $\varphi(c)$ consists of letters of order $k$ or less only. Hence, 
$b,\LL_k(\varphi(b)),\LL_k(\varphi^2(b)),\ldots,\LL_k(\varphi^n(b)),\ldots$ is an 
eventually periodic sequence. Similarly, 
$b,\RL_k(\varphi(b)),\RL_k(\varphi^2(b)),\ldots,\RL_k(\varphi^n(b)),\ldots$ 
is also an eventually periodic sequence. We want to make these sequence as simple as possible, 
so we call a morphism $\varphi$ \textit{strongly 1-periodic} if
for each $k\in\NN$ and for each letter $b\in\E$ of order 
$>k$, one has $\LL_k(\varphi(b))=\LL_k(\varphi^2(b))=\ldots=\LL_k(\varphi^n(b))=\LL_k(\varphi^{n+1}(b))=\ldots$
and $\RL_k(\varphi(b))=\RL_k(\varphi^2(b))=\ldots=\RL_k(\varphi^n(b))=\RL_k(\varphi^{n+1}(b))=\ldots$, 
in other words, the sequences
$b,\LL_k(\varphi(b)),\LL_k(\varphi^2(b)),\ldots,\LL_k(\varphi^n(b)),\ldots$ and 
$b,\RL_k(\varphi(b)),\RL_k(\varphi^2(b)),\ldots,\RL_k(\varphi^n(b)),\ldots$ 
are both eventually periodic with periods of length \textbf{\textit{one}} and preperiods of 
length 1.

Observe that the definition of a weakly 1-periodic morphism guarantees that if $b$ is a letter 
of \textbf{\textit{finite}} order, then these sequences have periods of length 1, but we cannot 
say anything about the length of the preperiods. Also, we cannot say anything
about the length of the period if all letters 
$b,\LL_k(\varphi(b)),\LL_k(\varphi^2(b)),\ldots,\LL_k(\varphi^n(b)),\ldots$ have order $\infty$.
% This time we do have some requirements even if all letters in $\E$ are of order $\infty$.

\begin{lemma}
There exists $n\in\NN$ such that $\varphi^n$ is a strongly 1-periodic morphism.
\end{lemma}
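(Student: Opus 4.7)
The plan is to exploit the eventual periodicity of the sequences $(\LL_k(\varphi^m(b)))_{m\ge 0}$ and $(\RL_k(\varphi^m(b)))_{m\ge 0}$ that was observed immediately before the statement, and to choose $n$ as a common multiple of all the periods of these sequences, large enough to land past every preperiod. I then verify that the resulting $\varphi^n$ satisfies the defining condition of strong $1$-periodicity.

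First I would reduce to finitely many conditions. Since $\E$ is finite, the set of distinct orders attained by letters of $\E$ is finite: some finite integers bounded by some $K$, together possibly with $\infty$. For any $k\ge K$, ``letter of order $>k$'' just means ``letter of order $\infty$'', so $\LL_k$ and $\RL_k$ are independent of $k$ in that range. Consequently only finitely many pairs $(k,b)$ with $b\in\E$ of order $>k$ give essentially distinct conditions to be checked.

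For each such pair, the text already establishes that $(\LL_k(\varphi^m(b)))_{m\ge 0}$ is eventually periodic; let $p_{k,b}$ and $q_{k,b}$ denote its period and preperiod lengths, and analogously $p'_{k,b}$, $q'_{k,b}$ for $\RL_k$. I take $n\in\NN$ to be a common multiple of all the finitely many numbers $p_{k,b}$ and $p'_{k,b}$, chosen large enough that $n\ge q_{k,b}$ and $n\ge q'_{k,b}$ throughout. Then for every relevant $(k,b)$ and every $m\ge 1$ the index $nm$ is past the preperiod and divisible by $p_{k,b}$, so $\LL_k(\varphi^{nm}(b))$ is independent of $m$; the same applies to $\RL_k$. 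Since $\LL_k((\varphi^n)^m(b))=\LL_k(\varphi^{nm}(b))$, the sequences $b,\LL_k(\varphi^n(b)),\LL_k((\varphi^n)^2(b)),\ldots$ and $b,\RL_k(\varphi^n(b)),\RL_k((\varphi^n)^2(b)),\ldots$ become constant from index $1$ onwards, which is precisely strong $1$-periodicity of $\varphi^n$.

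The only part that needs a bit of care is the reduction to finitely many values of $k$, justified above by the fact that $\LL_k$ and $\RL_k$ eventually stop depending on $k$ once $k$ exceeds all finite orders; everything else is direct bookkeeping based on the eventual periodicity recorded in the text.
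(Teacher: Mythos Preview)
Your proof is correct and follows essentially the same approach as the paper: exploit the eventual periodicity of the sequences $(\LL_k(\varphi^m(b)))_m$ and $(\RL_k(\varphi^m(b)))_m$ and choose a power of $\varphi$ that lands past every preperiod and hits the cycle in phase. The paper presents this in two stages (first take $\varphi''=\varphi^n$ with $n=|\E|$ to force preperiod length~$1$, then $\varphi'=\varphi''^m$ with $m=|\E|!$ to force period length~$1$), using uniform bounds valid for every $k$ rather than your explicit reduction to finitely many~$k$; but this is only a cosmetic difference.
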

\begin{proof}
The proof is similar to the proof of the previous lemma. Namely, if $n\in\NN$ is large enough, then 
for $\varphi''=\varphi^n$ sequences $b,\LL_k(\varphi''(b)),\LL_k(\varphi''^2(b)),\ldots,\LL_k(\varphi''^l(b)),\ldots$ 
and $b,\RL_k(\varphi''(b)),{}$\linebreak$\RL_k(\varphi''^2(b)),\ldots,\RL_k(\varphi''^l(b)),\ldots$ 
are eventually periodic with preperiods of length 1
for all $k\in\NN$ and for all letters $a\in\E$ of order $>k$. 
Again, $n=|\E|$ is sufficient for this purpose.

Now, if we take a large enough $m\in\NN$ and set $\varphi'=\varphi''^m$, then 
the sequences
$b,\LL_k(\varphi'(b)),{}$\linebreak$\LL_k(\varphi'^2(b)),\ldots$ and 
$b,\RL_k(\varphi'(b)),\RL_k(\varphi'^2(b)),\ldots$ 
will become eventually periodic with periods of length 1
for all $k\in\NN$ and for all letters $a\in\E$ of order $>k$.
This time, $m=|\E|!$ is sufficient. Clearly, the preperiods of length 1 will 
remain the same.
\end{proof}

From now on, we replace $\varphi$ by $\varphi'$ from the proof 
and assume that $\varphi$ is strongly 1-periodic.

Our final improvement of the morphism $\varphi$ will guarantee that 
the image of each letter is "sufficiently long". 
%As we will see, this
%will guarantee that the images of single letters will have some periodicity 
%properties.
%
%Given a strongly 1-periodic morphism $\varphi$ and a number $n\in\NN$, consider the following 
%alphabet $\Susp_k(\E)$ (which we will call the \textit{$k$th suspension of $\E$})
%and the following morphism $\Susp_k(\varphi)\colon\Susp_k(\E)\to\Susp_k(\E)$ 
%(which we will call the \textit{$k$th suspension of $\varphi$}).
%$\Susp_k(\E)$ consists of all letters of $\E$ of order at least $k$ (with respect to $\varphi$).
%If $a\in\E$ is a letter of order at least $k$, then $\Susp_k(\varphi)(a)$ consists of 
%all letters of order at least $k$ in the word $\varphi(a)$ taken in the same order. 
%For example, if $a$, $b$ and $c$ are letters of order 2, $d$ is a letter of order 1, and 
%$\varphi(a)=dbbddccb$, then $\Susp_2(\varphi)(a)=bbccb$. In other words, 
%we remove all letters of order $<k$ from the alphabet and from the morphism.
%In particular, $\Susp_1(\E)=\E$ and $\Susp_1(\varphi)=\varphi$. It follows directly 
%from the consideration of the graph $G$ above that if $a\in\E$ is a letter of order $m$
%for $\varphi$ and $m\ge k$, then $a\in\Susp_k(\E)$ is a letter of order $m-k+1$ for 
%$\Susp_k(\E)$.
Namely, first we are going to define the set of \textit{final periods}.
Let $b\in\E$ be a letter such that $\LL_1(\varphi(b))=b$. Then the prefix of $\varphi(b)$
to the left of the leftmost occurrence of $b$ in $\varphi(b)$ consists of letters of order 1 only, 
denote it by $\gamma$. That is, if $\varphi(b)_i=b$ and $\varphi(b)_j\ne b$ for $0\le j<i$, 
then $\gamma=\varphi(b)_{0\ldots i-1}$. Suppose that $\gamma$ is nonempty. 
Then $\varphi(\gamma)$ consists of periodic letters of order 1 only. 
Recall that to construct a morphic sequence, we use $\varphi$ and also a coding $\psi$.
Consider the word $\psi(\varphi(\gamma))\psi(\varphi(\gamma))$. Since we have repeated
$\psi(\varphi(\gamma))$ twice, we can apply Corollary \ref{finwordmincompleteperiod}
and conclude that there exists the minimal complete period of $\psi(\varphi(\gamma))\psi(\varphi(\gamma))$, 
denote it by $\lambda$. We call $\lambda$, as well as all its cyclic shifts, \textit{final periods}.
Similarly, we can define a final period using a letter $b$ such that 
$\RL_1(\varphi(b))=b$ and considering the suffix of $\varphi(b)$ to the right of the rightmost occurrence of $b$.
These are all words we call final periods, i. e.\ a \textit{final period} is 
a word obtained from a letter $b\in\E$ such that $\LL_1(\varphi(b))=b$ and $\varphi(b)_0\ne b$
by the procedure described above or a word obtained from a letter $b\in\E$ such that $\RL_1(\varphi(b))=b$ 
and $\varphi(b)$ does not end with $b$ by a similar procedure.

% If all letters in $\E$ have order $\infty$, then all words $\gamma$ considered above are actually empty,
% and final periods do not exist. 
% %Another situation when they do not exist is the situation when 
% %there are letters of order 1 in $\al$, 
% There are other situations when final periods do not exist at all. They are listed by the following lemma.
% 
% \begin{lemma}
% Final period do not exist if and only if all of the following conditions are satisfied:
% \begin{enumerate}
% \item All letters in $\al$ have either order 1, or order $\infty$.
% \item If $a\in\E$ is a letter of order $\infty$, then the first (the leftmost) and the 
% last (the rightmost) letters in $\varphi(a)$ are of order $\infty$.
% \end{enumerate}
% \end{lemma}
% 
% \begin{proof}
% \end{proof}

\begin{lemma}\label{finalnomorethanonce}
If $\lambda$ is a final period, then $\lambda$ cannot be written as a finite word repeated more than once.
\end{lemma}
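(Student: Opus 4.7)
The plan is to unwind the definition of a final period and then invoke the minimality built into it. Recall that, by construction, every final period is either the minimal complete period $\lambda_0$ of some word of the form $w=\psi(\varphi(\gamma))\psi(\varphi(\gamma))$, or a cyclic shift of such a $\lambda_0$.

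The first step is to reduce to the case $\lambda=\lambda_0$. Suppose $\lambda=\Cyc_r(\lambda_0)$ and, for contradiction, $\lambda=\mu^k$ for some $k\ge 2$. Writing $r=s|\mu|+t$ with $0\le t<|\mu|$ and splitting $\mu=AB$ with $|A|=t$, a short calculation from the very definition of $\Cyc$ gives $\lambda_0=\Cyc_{-r}(\lambda)=(BA)^k$, so $\lambda_0$ is itself a nontrivial $k$-th power of the word $BA$. Hence it suffices to derive a contradiction in the case when $\lambda$ is the minimal complete period of $w$ itself.

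For that case, I would argue as follows. By the definition of the minimal complete period (and Corollary \ref{finwordmincompleteperiod}), $w$ is completely $\lambda$-periodic, $2|\lambda|\le|w|$, and $w$ is not weakly $p$-periodic for any $p<|\lambda|$. On the other hand, if $\lambda=\mu^k$ with $k\ge 2$, then $w$ is completely $\mu$-periodic, and in particular weakly $|\mu|$-periodic with $|\mu|=|\lambda|/k<|\lambda|$, which directly contradicts minimality.

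I do not expect any real obstacle here; the argument is essentially an unwinding of definitions. The only step requiring genuine care is the cyclic shift reduction, where one must verify that a cyclic shift of $\mu^k$ is still a $k$-th power of a cyclically shifted copy of $\mu$; this is the one small computation in the proof.
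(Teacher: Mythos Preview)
Your proposal is correct and follows essentially the same route as the paper: reduce from an arbitrary cyclic shift back to the base word $\lambda_0$ that is the minimal complete period of $\psi(\varphi(\gamma))\psi(\varphi(\gamma))$, observe that a cyclic shift of a $k$-th power is again a $k$-th power, and then contradict minimality. One small slip: with $r=s|\mu|+t$ and $\mu=AB$, the decomposition that yields $\lambda_0=(BA)^k$ requires $|B|=t$, not $|A|=t$ (equivalently, $\Cyc_{-r}(\mu^k)=(\Cyc_{-t}(\mu))^k$, not $(\Cyc_t(\mu))^k$); this does not affect the argument since all you need is that $\lambda_0$ is a $k$-th power of some word of length $|\mu|$.
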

\begin{proof}
Since $\lambda$ is a final period, there exists a finite word $\lambda'$ (which is also a final period) and a 
finite word $\gamma$ such that $\lambda'$ is the minimal complete period of 
$\psi(\varphi(\gamma))\psi(\varphi(\gamma))$
and $\lambda=\Cyc_r(\lambda')$ for some $r$ ($0\le r < |\lambda'|$). 
% By Corollary \ref{finwordmincompleteperiod}, 
% $\psi(\varphi(\gamma))$ can be written as $\lambda'$ repeated several times, in other words,

Suppose that $\lambda$ can be written as a finite word $\mu$ repeated more than once, in other words, 
that $\lambda$ is a completely $\mu$-periodic word and $|\mu|<|\lambda|$. But then 
$\lambda'=\Cyc_{-r}(\lambda)$ is a completely $\mu'$-periodic word, where $\mu'=\Cyc_{-r}(\mu)$.
Then the word $\psi(\varphi(\gamma))\psi(\varphi(\gamma))$, which is $\lambda'$ repeated several times, 
is also a completely $\mu'$-periodic word. But $|\mu'|=|\mu|<|\lambda|=|\lambda'|$, and this is a contradiction 
with the fact that $\lambda'$ is the minimal complete period of $\psi(\varphi(\gamma))\psi(\varphi(\gamma))$.
\end{proof}

Note that final periods always exist if $\varphi$ is a strongly 1-periodic morphism and 
there is a periodic letter of order 2 in $\E$ (we have already assumed that this is true).
Indeed, if $b$ is a periodic letter of order 2, then $\varphi(b)$ contains exactly 
one occurrence of order 2, which is $b$, and at least one letter of order 1. In other 
words, $\varphi(b)$ can be written as $\gamma b\gamma'$, where the words $\gamma$ and $\gamma'$ consist of 
letters of order 1 only, and at least one of these words is nonempty. We can use this nonempty word 
to construct a final period.

Clearly, the amount of final periods is finite and their lengths are bounded.
Denote the maximal length of a final period by $\finmax$.

\begin{lemma}\label{finalperiodsstaythesame}
Let $k\in\NN$. Then the sets of final periods for $\varphi$ and for $\varphi'=\varphi^k$ are the same.

If $\gamma b$ (resp.\ $b \gamma$) is a prefix (resp.\ suffix) of $\varphi(b)$, where $b\in\E$ and 
$\gamma$ is a finite word consisting 
of letters of order 1 only, then 
$\varphi(\gamma)\ldots\varphi(\gamma)\gamma b$ (resp.\ $b\gamma\varphi(\gamma)\ldots\varphi(\gamma)$), 
where $\varphi(\gamma)$ is repeated $k-1$ times, is a prefix (resp.\ suffix) of $\varphi^k(b)$.
\end{lemma}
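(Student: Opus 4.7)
The plan is to first establish the structural (second) claim by induction on $k$, and then deduce the claim about final periods from it.

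The key preliminary observation will be that $\varphi(\gamma)$ consists of periodic letters of order $1$ only, each of which is a fixed point of $\varphi$. Indeed, a periodic letter $c$ of order $1$ has $|\varphi(c)|=1$ and, by weak 1-periodicity, contains $c$ in its image, so $\varphi(c)=c$; and a preperiodic letter of order $1$ is mapped to a periodic letter of order $1$. Hence $\varphi^n(\gamma)=\varphi(\gamma)$ for every $n\ge 1$, and equivalently $\varphi^{n}(\varphi(\gamma))=\varphi(\gamma)$ for every $n\ge 0$. Given this identity, the structural claim becomes an easy induction: assuming $\varphi^k(b)=\varphi(\gamma)^{k-1}\gamma b\rho$ for some $\rho$, applying $\varphi$ gives $\varphi^{k+1}(b)=\varphi(\varphi(\gamma))^{k-1}\varphi(\gamma)\varphi(b)\varphi(\rho)=\varphi(\gamma)^{k}\gamma b\cdots$ by using $\varphi(\varphi(\gamma))=\varphi(\gamma)$ and $\varphi(b)=\gamma b\cdots$. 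The suffix version is handled by a mirror-image argument.

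For the first claim, since $\varphi$ is strongly 1-periodic, $\LL_1(\varphi^n(b))$ is constant in $n\ge 1$, so $\LL_1(\varphi(b))=b$ is equivalent to $\LL_1(\varphi^k(b))=b$; a similar check shows that $\varphi(b)_0=b$ iff $\varphi^k(b)_0=b$ (using that $\varphi(\gamma)$ is a word of order-$1$ letters, hence contains no $b$ of order $>1$). Therefore the same letters $b$ generate final periods under $\varphi$ and under $\varphi'=\varphi^k$. For such a $b$, the structural claim identifies the prefix $\gamma'$ of $\varphi'(b)$ before the first occurrence of $b$ as $\gamma'=\varphi(\gamma)^{k-1}\gamma$, and then $\varphi'(\gamma')=\varphi^k(\varphi(\gamma))^{k-1}\varphi^k(\gamma)=\varphi(\gamma)^k$, so the word whose minimal complete period defines the final period is $\psi(\varphi'(\gamma'))\psi(\varphi'(\gamma'))=\psi(\varphi(\gamma))^{2k}$, a power of the corresponding word $\psi(\varphi(\gamma))\psi(\varphi(\gamma))$ for $\varphi$.

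The last step is to verify that the minimal complete period $\lambda$ of $\psi(\varphi(\gamma))^2$ is also the minimal complete period of $\psi(\varphi(\gamma))^{2k}$: complete $\lambda$-periodicity and the length requirement $2|\lambda|\le 2k|\psi(\varphi(\gamma))|$ are immediate, and any weak $p$-period with $p<|\lambda|$ of $\psi(\varphi(\gamma))^{2k}$ would restrict to a weak $p$-period of its prefix $\psi(\varphi(\gamma))^2$ (whose length is at least $2|\lambda|>2p$), contradicting the minimality of $\lambda$. The symmetric argument for $\RL_1$ and suffixes then gives equality of the full sets of final periods (including cyclic shifts). I expect the main obstacle to be the bookkeeping with the two distinct prefixes $\gamma$ and $\gamma'$, and in particular the identity $\varphi^k(\gamma')=\varphi(\gamma)^k$, which is what lets the $\varphi'$-based definition of a final period reduce exactly to the $\varphi$-based one on a longer power of the same underlying word.
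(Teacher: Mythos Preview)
Your proof is correct and follows essentially the same approach as the paper: both establish the prefix formula $\varphi(\gamma)^{k-1}\gamma b$ by induction on $k$ via the identity $\varphi^2(\gamma)=\varphi(\gamma)$ (which comes from weak 1-periodicity on order-1 letters), then identify $\gamma'=\varphi(\gamma)^{k-1}\gamma$, compute $\varphi'(\gamma')=\varphi(\gamma)^k$, and conclude that $\psi(\varphi'(\gamma'))^2=\psi(\varphi(\gamma))^{2k}$ has the same minimal complete period as $\psi(\varphi(\gamma))^2$. You are a bit more explicit than the paper in verifying that the same letters $b$ qualify under $\varphi$ and $\varphi^k$ (the $\LL_1$ and first-letter equivalences), and you argue the equality of minimal complete periods directly by restriction to the prefix $\psi(\varphi(\gamma))^2$ rather than citing Corollary~\ref{finwordmincompleteperiod}, but these are cosmetic differences.
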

\begin{proof}
Choose a letter $b\in\E$ such that $\LL_1(\varphi(b))=b$. (The case $\RL_1(\varphi(b))=b$ is completely symmetric.)
Suppose that $\varphi(b)_0\ne b$ and denote the prefix of $\varphi(b)$ to the left of the leftmost 
occurrence of $b$ by $\gamma$. Then $\gamma b$ is a prefix of $\varphi(b)$ and $\gamma$ consists of 
letters of order 1 only. Let us prove by induction on $k\in\NN$ that $\varphi(\gamma)\ldots\varphi(\gamma)\gamma b$, 
where $\varphi(\gamma)$ is repeated $k-1$ times, is a prefix of $\varphi^k(b)$. For $k=1$ we already
know this. If $\varphi(\gamma)\ldots\varphi(\gamma)\gamma b$, 
where $\varphi(\gamma)$ is repeated $k-1$ times, is a prefix of $\varphi^k(b)$, then 
$\varphi^2(\gamma)\ldots\varphi^2(\gamma)\varphi(\gamma)\varphi(b)$, where $\varphi^2(\gamma)$ is repeated $k-1$ times, 
is a prefix of $\varphi^{k+1}(b)$. Recall that $\gamma b$ is a prefix of $\varphi(b)$, 
so $\varphi^2(\gamma)\ldots\varphi^2(\gamma)\varphi(\gamma)\gamma b$, where $\varphi^2(\gamma)$ is repeated $k-1$ times, 
is also a prefix of $\varphi^{k+1}(b)$. Finally, recall that $\varphi$ is (in particular) weakly 1-periodic, 
so $\varphi(\gamma)$ consists of periodic letters of order 1 only, and 
$\varphi^2(\gamma)=\varphi(\gamma)$. Therefore, 
$\varphi(\gamma)\ldots\varphi(\gamma)\gamma b$, where $\varphi(\gamma)$ is repeated $k$ times, 
is a prefix of $\varphi^{k+1}(b)$.

So, the prefix of $\varphi'(b)=\varphi^k(b)$ to the left of the leftmost occurrence 
of $b$ is $\gamma'=\varphi(\gamma)\ldots\varphi(\gamma)\gamma$, where $\varphi(\gamma)$ is repeated $k-1$ times.
If we apply $\varphi'$ to this prefix, we will get $\varphi(\gamma)$ repeated $k$ times 
(here we again use the fact that $\varphi^2(\gamma)=\varphi(\gamma)$).
Finally, $\psi(\varphi'(\gamma'))\psi(\varphi'(\gamma'))$ is $\psi(\varphi(\gamma))$ repeated $2k$ times, 
and, by Corollary \ref{finwordmincompleteperiod}, $\psi(\varphi'(\gamma'))\psi(\varphi'(\gamma'))$
has a minimal complete period, and it coincides with the minimal complete period of 
$\psi(\varphi(\gamma))\psi(\varphi(\gamma))$.
\end{proof}

After we have this lemma, it is reasonable to give the following definition: A strongly 1-periodic morphism 
is called \textit{a strongly 1-periodic morphism with long images} if the following holds:
\begin{enumerate}
\item For each letter $b\in\E$ such that $\LL_1(\varphi(b))=b$ and the prefix $\gamma$ of $\varphi(b)$ to the 
left of the leftmost occurrence of $b$ is nonempty, we have $|\gamma|\ge 2\finmax$.
\item For each letter $b\in\E$ such that $\RL_1(\varphi(b))=b$ and the suffix $\gamma$ of $\varphi(b)$ to the 
right of the rightmost occurrence of $b$ is nonempty, we have $|\gamma|\ge 2\finmax$.
\end{enumerate}
\begin{lemma}\label{finalperiodsarecomplete}
Let $\varphi$ be a strongly 1-periodic morphism with long images.
Then for each letter $b\in\E$ such that $\LL_1(\varphi(b))=b$ (resp.\ $\RL_1(\varphi(b))=b$) and the prefix (resp.\ suffix) 
$\gamma$ of $\varphi(b)$ to the 
left (resp.\ to the right) of the leftmost (resp.\ rightmost) occurrence of $b$ is nonempty, there exists a minimal complete period of 
$\psi(\varphi(\gamma))$, and it is a final period.
\end{lemma}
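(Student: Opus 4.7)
The plan is to leverage the long-images hypothesis to show that $W=\psi(\varphi(\gamma))$ is itself long enough to carry the same complete period that the doubled word $WW$ carries by the definition of a final period. I treat the prefix case; the suffix case is entirely symmetric, with left and right periods interchanged.

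First, let $\lambda$ be the minimal complete period of $WW$, so $\lambda$ is a final period by definition. Since $\varphi$ is nonerasing and $\psi$ is a coding, and since long images give $|\gamma|\ge 2\finmax\ge 2|\lambda|$, I obtain $|W|=|\varphi(\gamma)|\ge|\gamma|\ge 2|\lambda|$.

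The heart of the argument is to show that $W$ itself is completely $\lambda$-periodic. Write $WW=\lambda^k$ with $k|\lambda|=2|W|$; one must rule out the case of odd $k$. If $k$ were odd, then $|\lambda|$ would be even, say $|\lambda|=2m$, and $W$ would read $\lambda^{(k-1)/2}\lambda_{0\ldots m-1}$. The second copy of $W$ inside $WW=\lambda^k$ starts at position $|W|=km$, which lies at offset $m$ inside a $\lambda$-block because $k$ is odd. Matching the second copy against the first forces $\lambda_{0\ldots m-1}=\lambda_{m\ldots 2m-1}$, which would make $\lambda$ itself completely $m$-periodic; but then $WW$ would be weakly $m$-periodic with $m<|\lambda|$, contradicting the minimality of $\lambda$ as a complete period of $WW$. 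Hence $k$ is even and $W=\lambda^{k/2}$ is completely $\lambda$-periodic.

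Finally, to verify that $\lambda$ is the minimal complete period of $W$ (so that the minimal complete period of $W$ is indeed a final period), I would apply Corollary \ref{finwordmincompleteperiod} to $W$: since $2|\lambda|\le|W|$, $W$ has a minimal complete period $\mu$, and $\lambda$ is $\mu$ repeated $|\lambda|/|\mu|$ times. Lemma \ref{finalnomorethanonce} forbids a final period from being a nontrivial repetition, so $|\lambda|/|\mu|=1$ and $\mu=\lambda$. The one real obstacle is ruling out the odd-$k$ case; the remainder is essentially a direct application of the two cited results combined with the length bound coming from the ``long images'' condition.
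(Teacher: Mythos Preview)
Your proof is correct, but it takes a slightly different route from the paper's, and in one step it works harder than necessary.

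For the step showing that $W=\psi(\varphi(\gamma))$ is completely $\lambda$-periodic, the paper does not do a parity analysis on $k$. Instead it simply applies Corollary~\ref{finwordmincompleteperiod} to the word $WW$: since $WW$ is trivially weakly $|W|$-periodic with left period $W$, and $2|W|\le|WW|$ holds with equality, the corollary gives at once that $|W|$ is divisible by $|\lambda|$ and that $W$ is $\lambda$ repeated $|W|/|\lambda|$ times. Your odd/even argument is correct, but it is essentially redoing by hand a special case of the Fine--Wilf type reasoning already packaged in that corollary.

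For the minimality step, the paper also argues differently: once one knows $W$ has a minimal complete period $\lambda'$ dividing $\lambda$, the paper observes that $WW$ is then completely $\lambda'$-periodic as well, and minimality of $\lambda$ for $WW$ forces $|\lambda'|\ge|\lambda|$, hence $\lambda'=\lambda$. You instead invoke Lemma~\ref{finalnomorethanonce}, which is a legitimate alternative and perhaps conceptually cleaner (it uses the fact that $\lambda$ is a final period rather than circling back to $WW$). Both closures are valid; the paper's avoids the extra lemma, yours avoids reconsidering the doubled word.
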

\begin{proof}
The claim for $\psi(\varphi(\gamma))\psi(\varphi(\gamma))$ instead of $\psi(\varphi(\gamma))$ is just the definition of a final 
period. Let $\lambda$ be the minimal complete period of $\psi(\varphi(\gamma))\psi(\varphi(\gamma))$. By Corollary 
\ref{finwordmincompleteperiod}, $\psi(\varphi(\gamma))$ is $\lambda$ repeated $|\psi(\varphi(\gamma))|/|\lambda|$
times, i.~e.\ $\psi(\varphi(\gamma))$ is 
%completely $|\lambda|$-periodic with period $\lambda$.
completely $\lambda$-periodic.
Since $|\gamma|\ge 2\finmax$, we also have $|\psi(\varphi(\gamma))|\ge 2\finmax\ge 2|\lambda|$, and 
by Corollary \ref{finwordmincompleteperiod} again, there exists a minimal complete period $\lambda'$ of $\psi(\varphi(\gamma))$
and $\lambda$ is $\lambda'$ repeated several times. But then $\psi(\varphi(\gamma))\psi(\varphi(\gamma))$ 
is also completely 
%$|\lambda'|$-periodic with period $\lambda'$, 
$\lambda'$-periodic,
but $\lambda$ was the 
minimal complete period of $\psi(\varphi(\gamma))\psi(\varphi(\gamma))$, so $\lambda'=\lambda$.
\end{proof}

Again, let us prove that we can make a strongly 1-periodic morphism with long images
out of $\varphi$ by replacing it with $\varphi^n$.

\begin{lemma}
There exists $n\in\NN$ such that $\varphi^n$ is a strongly 1-periodic morphism with long images.
\end{lemma}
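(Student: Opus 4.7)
The plan is to choose $n$ large enough that iterating $\varphi$ inflates the critical prefixes and suffixes past the threshold $2\finmax$, while the structural properties already established for $\varphi$ transfer automatically to $\varphi^n$. Three things must be verified: that $\varphi^n$ is still strongly 1-periodic, that the set of final periods (hence the constant $\finmax$) is preserved, and that the prefixes and suffixes around the relevant letters become long enough.

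The first two points are essentially free. Since $\varphi$ is strongly 1-periodic, for every $k\in\NN$ and every $b\in\E$ of order $>k$ the sequence $\LL_k(\varphi^m(b))$, $m\ge1$, is constant (equal to $\LL_k(\varphi(b))$), and likewise for $\RL_k$; any subsequence indexed by $n,2n,3n,\ldots$ is then still constant, so $\varphi^n$ is strongly 1-periodic for every $n\ge1$. By Lemma \ref{finalperiodsstaythesame} the set of final periods for $\varphi^n$ coincides with that for $\varphi$, so $\finmax$ does not change when we replace $\varphi$ by $\varphi^n$. In particular $\LL_1(\varphi^n(b))=\LL_1(\varphi(b))$, so the collection of letters $b$ at which the long-image condition needs to be checked for $\varphi^n$ is the same as for $\varphi$, and symmetrically for $\RL_1$.

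The real content is to make the prefixes and suffixes long. Fix $b\in\E$ with $\LL_1(\varphi(b))=b$, and let $\gamma$ be the prefix of $\varphi(b)$ to the left of the leftmost occurrence of $b$; if $\gamma$ is empty there is nothing to check, so assume $\gamma$ is nonempty. By Lemma \ref{finalperiodsstaythesame}, the prefix of $\varphi^n(b)$ to the left of the leftmost occurrence of $b$ is $\varphi(\gamma)\cdots\varphi(\gamma)\gamma$ with $n-1$ copies of $\varphi(\gamma)$, hence has length $(n-1)\,|\varphi(\gamma)|+|\gamma|$. Since $\varphi$ is nonerasing and $\gamma$ is nonempty, $|\varphi(\gamma)|\ge1$, so this length is at least $n$; taking $n\ge 2\finmax$ forces it to be at least $2\finmax$. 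The same argument, applied via $\RL_1$, handles the suffix case. Because $\E$ is finite, a single $n$ works simultaneously for every letter $b$ that triggers either condition.

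I do not expect a genuine obstacle: everything reduces to the clean description of the relevant prefix of $\varphi^n(b)$ supplied by Lemma \ref{finalperiodsstaythesame}, together with the nonerasing assumption, which forces these prefixes to grow at least linearly in $n$. The only point that requires a little care is that $\LL_1$, $\RL_1$, the set of letters triggering the condition, and the constant $\finmax$ are all preserved on passage from $\varphi$ to $\varphi^n$, but each of these is already covered by the preceding lemmas on strong 1-periodicity and on final periods.
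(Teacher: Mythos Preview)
Your proof is correct and follows essentially the same route as the paper: use Lemma~\ref{finalperiodsstaythesame} to identify the prefix of $\varphi^n(b)$ before the leftmost $b$ as $\varphi(\gamma)^{n-1}\gamma$, invoke nonerasingness to see its length grows at least linearly in $n$, and take $n\ge 2\finmax$. You are somewhat more explicit than the paper about why the set of relevant letters and the constant $\finmax$ are unchanged on passing to $\varphi^n$, but the argument is the same.
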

\begin{proof}
Observe first that if $\varphi$ is strongly 1-periodic, then $\varphi^n$ is also strongly 1-periodic.

Choose a letter $b\in\E$ such that $\LL_1(\varphi(b))=b$. (The case $\RL_1(\varphi(b))=b$ is completely symmetric.)
Suppose that $\varphi(b)_0\ne b$. Then, by the second statement of Lemma \ref{finalperiodsstaythesame}, 
if $n$ is large enough, then the length of the prefix of $\varphi^n(b)$ to the left of 
the leftmost occurrence of $b$ is at least $2\finmax$ (here we also use the fact that $\varphi$ is 
nonerasing, so in the statement of Lemma \ref{finalperiodsstaythesame} we have $|\varphi(\gamma)|\ge|\gamma|$).

Let $n_0$ be the maximum of all these numbers $n$ for all letters $a\in\E$ and for the left and the right side. 
($n_0=2\finmax$ is sufficient for this purpose, but a smaller $n_0$ can also work.) Then, by Lemma \ref{finalperiodsstaythesame}, 
$\varphi'=\varphi^n$ is a strongly 1-periodic morphism with long images.
\end{proof}

From now on, we replace $\varphi$ by $\varphi'$ from the proof and assume that $\varphi$ is a 
strongly 1-periodic morphism with long images.

\section{Blocks}\label{sectionblocks}

A (possibly empty) finite occurrence $\al_{i\ldots j}$ is \emph{a $k$-block} if it consists of
letters of order $\le k$, $i>0$, and the letters $\al_{i-1}$ and
$\al_{j+1}$ both have order $>k$. The occurrence of a single letter $\al_{i-1}$ is called
the \emph{left border} of this block and is denoted by $\LB(\swa ij)$.
The occurrence of a single letter $\al_{j+1}$ is called the \emph{right border} of this block
and is denoted by $\RB(\swa ij)$. Observe that if we have constructed 
the whole morphic sequence starting with a letter 
%(denote it temporarily by $a\in\E$, 
%i.~e. $\al=\varphi^\infty(a)$), 
$a\in\E$ (i.~e.\ $\al=\varphi^\infty(a)$),
and $a$ is a letter of a finite order $k$, then all letters in 
$\al$ are of order $\le k$. So it makes no sense to define "$k$-blocks" 
of the form $\al_{0\ldots j}$ since it is not possible that all letters 
$\al_0=a,\al_1,\ldots,\al_j$ have orders $\le k$, and $\al_{j+1}$ has order $>k$.

Note that even if there are no letters of order $k$ in $\E$, $k$-blocks 
still may exist, then all letters in $k$-blocks will be of order $<k$ (or a $k$-block
can also be empty), and the borders of such a $k$-block will be letters of order $>k$ 
(in fact, as one can deduce from the assignment of orders to letters in the previous section, these 
letters must have order $\infty$). A problem that can arise is that letters 
of order $<k$ (or $\le k$) may form an infinite sequence, then they do not form a 
$k$-block by definition. Later we will see that this can really happen if all letters in $\al$ 
have finite orders, we will discuss this in Lemma \ref{splitconcatenation}.

The image under $\varphi$ of a letter of order $\le k$ cannot
contain letters of order~$>k$. Let $\al_{i\ldots j}$ be a $k$-block.
Then $\varphi(\al_{i\ldots j})$
is a suboccurrence of some $k$-block which is called
the \emph{descendant} of $\al_{i\ldots j}$ and is denoted by
$\Su_k(\al_{i\ldots j})$.
(We use the subscript $k$ here to underline that the same occurrence $\al_{i\ldots j}$ 
can be a $k$-block and an $m$-block for some $m\ne k$ at the same time, for example, 
if $\LB(\swa ij)$ and $\RB(\swa ij)$ are both of order $>k+1$, then 
$\swa ij$ is a $(k+1)$-block as well. In this case, 
$\Su_k(\al_{i\ldots j})$ and $\Su_{k+1}(\al_{i\ldots j})$ could be different occurrences in $\al$.)
The $l$-th superdescendant (denoted by $\Su^l_k(\al_{i\ldots j})$) is
the descendant of \textellipsis of the descendant of $\al_{i\ldots j}$ ($l$
times).

Let $\al_{s\ldots t}$ be a $k$-block in $\al$. Then 
if there exists a $k$-block $\al_{i\ldots j}$ such that $\Su_k(\al_{i\ldots j}) =
\al_{s\ldots t}$, it is unique. Indeed, otherwise there would be a letter of order 
$>k$ between those two $k$-blocks, and its image would contain a letter of order $>k$ again. 
But this letter would belong to $\swa st$.
If the $k$-block $\swa ij$ exists, is called the \emph{ancestor} of $\al_{s\ldots t}$
and is denoted $\Su^{-1}_k(\al_{s\ldots t})$. The $l$-th
superancestor (denoted by $\Su^{-l}_k(\al_{s\ldots t})$) is
the ancestor of \textellipsis of the ancestor of
$\al_{s\ldots t}$ ($l$ times). If $\Su^{-1}_k(\al_{s\ldots t})$ does
not exist (this can happen only if $\al_{s-1}$ and $\al_{t+1}$ belong to the
image of the same letter), then $\al_{s\ldots t}$ is called an \emph{origin}.
A sequence $\mathcal E$ of $k$-blocks, $\mathcal E_0 = \al_{i\ldots
j}, \mathcal E_1 = \Su_k(\al_{i\ldots j}),$ $\mathcal
E_2=\Su^2(\al_{i\ldots j}), \ldots, \mathcal E_l =
\Su^l_k(\al_{i\ldots j}), \ldots$, where $\al_{i\ldots j}$ is an
origin, is called an \emph{evolution}. The number $l$ is called 
the \textit{evolutional sequence number} of a $k$-block $\mathcal E_l$.

Let $\mathcal E$ be an evolution of $k$-blocks. The letter $\LB(\mathcal E_{l+1})$ is
the rightmost letter of order $>k$ in $\varphi(\LB(\mathcal E_l))$, 
i.~e.\ $\LB(\mathcal E_{l+1})=\RL_k(\varphi(\LB(\mathcal E_l)))$.
%In particular, the abstract letter $\LB(\mathcal E_{l+1})$ depends on 
%$\LB(\mathcal E_l)$ only. 
%Consider the sequence
%$\LB(\mathcal E_0), \ldots, \LB(\mathcal E_l), \ldots$ 
%All these
%letters are of order $>k$, and the letter $\LB(\mathcal E_{l+1})$ is
%the rightmost letter of order $>k$ in $\varphi(\LB(\mathcal E_l))$.
Since $\varphi$ is a strongly 1-periodic morphism, this means that 
$\LB(\mathcal E_l)$ does not depend on $l$ if $l\ge 1$.
Similarly, $\RB(\mathcal E_l)$ does not depend on $l$ if $l\ge 1$.
We call the abstract letter $\LB(\mathcal E_l)$ (resp.\ $\RB(\mathcal E_l)$)
for any $l\ge 1$ the \textit{left (resp right.) border of $\mathcal E$}
and denote it by $\LB(\mathcal E)$ (resp.\ by $\RB(\mathcal E)$).
%Not later than starting from $\LB(\mathcal E_\modE)$, this
%sequence (of abstract letters) becomes therefore periodic. Its period length is
%denoted by $\LBP(\mathcal E)$. Notice that $\LBP(\mathcal E)\le\modE$. The same can be said about the
%sequence of right borders (the period length is $\RBP(\mathcal E)$).
%Their l.~c.~m. is denoted by $\BP(\mathcal E)$. The exact place of
%the evolution where both sequences become periodic (i.~e. both of them
%have reached at least the first positions of their first periods) is denoted by
%$\Fx(\mathcal E)$. The block $\mathcal E_{\Fx(\mathcal E)}$ is
%called \emph{first pre-stable}. The block $\mathcal E_{\Fx(\mathcal E)}$
%and all its superdescendants are called \emph{pre-stable}.

\begin{lemma}
The set of all abstract words that can be origins in $\al$, is finite.
\end{lemma}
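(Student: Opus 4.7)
The plan is to show that any origin, viewed as an abstract word, must appear as a contiguous subword of $\varphi(b)$ for some $b\in\E$, and then to conclude finiteness from the fact that $\E$ is finite and each $\varphi(b)$ has only finitely many subwords. Since the union is taken over all $k$ and all $b$, the resulting bound will be independent of $k$.

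To carry this out, I would first unpack the definition of an origin. Let $\swa{s}{t}$ be an origin of a $k$-block in $\al$. By the definition given in the text, this means $\Su_k^{-1}(\swa{s}{t})$ does not exist, and the excerpt states explicitly that this can happen only when the two border letters $\al_{s-1}$ and $\al_{t+1}$ both lie inside the image $\varphi(\al_i)$ of the same letter $\al_i$ under $\varphi$ (indeed, otherwise the letters strictly between the preimages of $\al_{s-1}$ and $\al_{t+1}$ in $\al$ would form a $k$-block serving as the ancestor). Since $\varphi$ is applied letter-by-letter, the image of $\al_i$ occupies a contiguous range $\al_{p\ldots q}$ in $\al$ with $p\le s-1$ and $t+1\le q$, so the entire origin $\swa{s}{t}$ is a contiguous subword of the finite word $\varphi(\al_i)=\varphi(b)$, where $b:=\al_i\in\E$.

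Once this is established, the finiteness statement is immediate: as $k$ ranges over $\NN$ and origins range over all $k$-blocks with no ancestor, each resulting abstract word lies in the set
\[
\bigcup_{b\in\E}\{\text{contiguous subwords of }\varphi(b)\},
\]
which is finite because $\E$ is finite and each $\varphi(b)$ is a finite word with at most $|\varphi(b)|(|\varphi(b)|+1)/2+1$ distinct (possibly empty) subwords.

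I do not anticipate any serious obstacle: the lemma is essentially bookkeeping against the definition of "origin" provided just above the statement, together with the observation that "both borders come from the image of a single letter" forces the entire origin to sit inside one fixed finite word $\varphi(b)$. The only mildly nontrivial point worth spelling out is the equivalence between the abstract nonexistence of the ancestor and the concrete geometric condition that $\al_{s-1}$ and $\al_{t+1}$ come from the same letter's image, but this is already asserted in the preparatory text.
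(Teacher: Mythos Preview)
Your proposal is correct and follows exactly the paper's approach: an origin must be a subword of $\varphi(b)$ for some $b\in\E$ because its two borders lie in the image of a single letter, and the finite alphabet then gives finitely many candidates. The paper's proof is essentially a one-line version of yours (it also notes, as your argument implicitly shows, that the origin is neither a prefix nor a suffix of $\varphi(b)$), so there is nothing further to add.
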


\begin{proof}
Each origin is a subword of $\varphi(b)$ where $b$ is a single letter. Moreover, this occurrence inside 
$\varphi(b)$ cannot be a prefix or a suffix.
\end{proof}

\begin{corollary}\label{finite-number-of-evolutions}
The set of all possible evolutions in $\al$ (considered as sequences
of abstract words rather than sequences of occurrences in $\al$), is
finite.
\end{corollary}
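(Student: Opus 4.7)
The plan is to observe that each evolution, viewed as a sequence of abstract words, is completely determined by a triple drawn from a finite set, after which the corollary is immediate. The key point I would isolate first is that $\Su_k(\mathcal{E}_l)$ as an abstract word depends only on (a) the abstract word $\mathcal{E}_l$ and (b) the abstract border letters $\LB(\mathcal{E}_l)$ and $\RB(\mathcal{E}_l)$. Indeed, by the definition of the descendant, $\Su_k(\mathcal{E}_l)$ is $\varphi(\mathcal{E}_l)$ extended on the left by the suffix of $\varphi(\LB(\mathcal{E}_l))$ lying strictly to the right of $\RL_k(\varphi(\LB(\mathcal{E}_l)))$, and symmetrically on the right by the corresponding prefix of $\varphi(\RB(\mathcal{E}_l))$; both of these appendages are functions of the abstract border letters alone.

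Next, I would invoke strong $1$-periodicity of $\varphi$, already used in the paragraph preceding the statement, to conclude that $\LB(\mathcal{E}_l)=\LB(\mathcal{E})$ and $\RB(\mathcal{E}_l)=\RB(\mathcal{E})$ do not depend on $l$ once $l\ge 1$. Combined with the previous observation, this yields a deterministic abstract recursion $\mathcal{E}_l\mapsto\mathcal{E}_{l+1}$ for all $l\ge 1$. The single step from $\mathcal{E}_0$ to $\mathcal{E}_1$ (together with the identification of $\LB(\mathcal{E})$ and $\RB(\mathcal{E})$) is determined by the same recipe applied once to the triple $(\mathcal{E}_0,\LB(\mathcal{E}_0),\RB(\mathcal{E}_0))$. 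Hence the entire abstract evolution is a function of this triple.

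Finally, by the preceding lemma the set of possible abstract origins $\mathcal{E}_0$ is finite, and $\E$ itself is finite, so only finitely many such triples exist, and therefore only finitely many abstract evolutions can occur in $\al$. I do not anticipate any real obstacle in this argument: it amounts to bookkeeping of the finite data needed to specify an abstract evolution, the only nontrivial ingredient being the constancy of the border letters for $l\ge 1$, which has already been obtained from strong $1$-periodicity.
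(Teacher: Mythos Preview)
Your proposal is correct and follows essentially the same approach as the paper: both arguments note that the abstract evolution is determined by a deterministic recursion on the triple $(\mathcal E_l,\LB(\mathcal E_l),\RB(\mathcal E_l))$, and that the initial triple ranges over a finite set (the paper parametrizes this finite set as a choice of abstract letter $b$ together with a suboccurrence inside $\varphi(b)$, which amounts to the same thing). The one small difference is that you invoke strong $1$-periodicity to freeze the borders for $l\ge 1$, whereas the paper simply carries the full triple through the recursion without using that fact; your step is harmless but not needed for the finiteness conclusion.
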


\begin{proof}
Let $\mathcal E_0$ be an origin, which is a suboccurrence of $\varphi(\al_i)$. Here 
$\al_i$ is a letter of order $>k$. Then $\varphi(\al_i)$ also contains $\LB(\mathcal E_0)$ 
and $\RB(\mathcal E_0)$. $\LB(\mathcal E_{l+1})$, $\RB(\mathcal E_{l+1})$ and $\mathcal E_{l+1}$ 
itself depend on \textit{abstract words} $\LB(\mathcal E_l)$, $\RB(\mathcal E_l)$ and 
$\mathcal E_l$ only. Thus, all these words became known after we had selected an abstract 
letter $b=\al_i$ and a suboccurrence inside $\varphi(b)$.
\end{proof}

\begin{lemma}\label{splitconcatenation}
Let $\al=\varphi^\infty(a)$, where $a\in\E$.
Then:
\begin{enumerate}
\item If $a$ is a letter of a finite order $K$, then $a$ is the only letter of 
order $\ge K$ in $\al$, and it only occurs once, as $\al_0$. For each $k<K-1$, $k\in\NN$, 
$\al$ splits into a concatenation of $k$-blocks and letters of order $>k$.
\item If $a$ is a letter of order $\infty$, then for each $k\in\NN$, $\al$
splits into a concatenation of $k$-blocks and letters of order $>k$.
\end{enumerate}
\end{lemma}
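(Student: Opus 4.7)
For the first statement (finite order $K$), I would invoke the weak $1$-periodicity of $\varphi$: since $\varphi(a)=a\gamma$ makes $a$ occur in $\varphi(a)$, the letter $a$ is periodic, and the weakly $1$-periodic property forces $a$ to be the unique letter of order $K$ in $\varphi(a)$. Hence $\gamma$ consists entirely of letters of order $<K$. Since $\varphi$ applied to a letter of order $<K$ can only produce letters of order $<K$, unrolling $\varphi^n(a)=a\gamma\varphi(\gamma)\cdots\varphi^{n-1}(\gamma)$ yields $\al = a\gamma\varphi(\gamma)\varphi^2(\gamma)\cdots$, with $\al_0=a$ the unique letter of order $\ge K$.

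Both splitting claims reduce to showing that letters of order $>k$ occur infinitely often in $\al$ (the requirement that $\al_0$ have order $>k$ is immediate: for finite $K$ one has $K>k$ because $k<K-1$, and order $\infty$ always exceeds $k$). For the finite order case with $k<K-1$ (so $K\ge 3$), the last claim of Lemma \ref{rates} applied to the periodic letter $a$ of order $K>1$ with $a\in\varphi(a)$ gives a letter $b$ of order $K-1$ inside $\gamma$. By the second claim of the same lemma, each iterate $\varphi^n(b)$ continues to contain a letter of order $K-1$. Since $\varphi^n(b)\subseteq\varphi^n(\gamma)$, each block $\varphi^{n-1}(\gamma)$ in the decomposition $\al=a\gamma\varphi(\gamma)\varphi^2(\gamma)\cdots$ contributes at least one letter of order $K-1>k$, producing infinitely many such letters at unboundedly large positions.

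For the infinite order case, I would use a conservation argument. Letters of finite order map under $\varphi$ to words of only finite-order letters, so every letter of order $\infty$ in $\varphi(\al)=\al$ must descend from a letter of order $\infty$ in $\al$, with images sitting at disjoint positions. By Lemma \ref{rates}, for each letter $b$ of order $\infty$ there exists $n_0(b)$ with $\varphi^{n_0(b)}(b)$ containing at least two letters of order $\infty$; taking the maximum over the finitely many such letters in $\E$ produces a uniform $n_0$. If $\al$ contained only $r<\infty$ letters of order $\infty$, then $\al=\varphi^{n_0}(\al)$ would contain at least $2r$ of them, and since $\al_0=a$ forces $r\ge 1$ we reach the contradiction $2r>r$. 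Hence letters of order $\infty$, and therefore of order $>k$, occur infinitely often.

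The main obstacle is the infinite order case: the finite case is essentially bookkeeping from weak $1$-periodicity and the existence of a descendant letter of order $K-1$, whereas the infinite case requires the non-trivial observation that letters of order $\infty$ cannot be consumed (they lie in the image only of themselves) combined with the "at least two" growth statement from Lemma \ref{rates}. Once infinitely many letters of order $>k$ are secured (and $\al_0$ is one of them), partitioning $\al$ at these letters gives exactly the desired concatenation of (possibly empty) $k$-blocks with letters of order $>k$ as separators.
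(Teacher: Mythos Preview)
Your proposal is correct. For the finite-order case it is essentially the paper's argument: both decompose $\al=a\gamma\varphi(\gamma)\varphi^2(\gamma)\cdots$, locate a letter of order $K-1$ in $\gamma$ via Lemma~\ref{rates}, and propagate it through all $\varphi^l(\gamma)$. A minor remark: you invoke weak $1$-periodicity to force the unique order-$K$ letter in $\varphi(a)$ to be $a$ itself, whereas the paper just cites the periodic-letter clause of Lemma~\ref{rates} (uniqueness of the order-$K$ letter in $\varphi^n(a)$), which already suffices since $\varphi(a)$ begins with $a$.

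For the infinite-order case your route is genuinely different. The paper stays with the decomposition $\al=a\gamma\varphi(\gamma)\cdots$, finds some $\varphi^{l_0-1}(\gamma)$ containing an order-$\infty$ letter (from the ``at least two'' clause of Lemma~\ref{rates} applied to $a$), and then observes that every later $\varphi^l(\gamma)$ inherits one. You instead run a global doubling/conservation count: a uniform $n_0$ makes every order-$\infty$ letter produce at least two under $\varphi^{n_0}$, so finiteness of the order-$\infty$ positions would force $r\ge 2r$ with $r\ge 1$. Your argument is shorter and more conceptual; the paper's has the mild advantage of pinning down \emph{where} the order-$\infty$ letters live (at least one in each tail block $\varphi^l(\gamma)$ for $l$ large), which is closer in spirit to the block decomposition language used later but is not actually needed for the lemma as stated.
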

\begin{proof}
First assume that $a$ is a letter of finite order $K$. Then $a$ is a periodic letter of order $K$
since $\varphi(a)$ begins with $a$. Then each word $\varphi^l(a)$ ($l\in\NN$) contains only 
one letter of order $\ge K$ by a property of periodic letters. To prove the claim in this case, 
it suffices to prove that $\al$ contains infinitely many letters of order $K-1$. Let $\gamma$ be 
the finite word such that $\varphi(a)=a\gamma$. Then $\al=a\gamma\varphi(\gamma)\varphi^2(\gamma)\ldots\varphi^l(\gamma)\ldots$.
Since $a$ is a letter of order $k$, $\varphi(a)$ contains at least one letter of order $k-1$. But then 
$\varphi^l(\gamma)$ contains at least one letter of order $k-1$ for each $l$.

Now let us consider the case when $a$ is a letter of order $\infty$. Then it is sufficient to prove that 
$\al$ contains infinitely many letters of order $\infty$. By Lemma \ref{rates}, 
$\varphi^l(a)$ contains at least two letters of order $\infty$ if $l$ is large enough.
Again write $\varphi(a)=a\gamma$, then $\varphi^l(a)=a\gamma\varphi(\gamma)\varphi^2(\gamma)\ldots\varphi^{l-1}(\gamma)$
and $\al=a\gamma\varphi(\gamma)\varphi^2(\gamma)\ldots\varphi^l(\gamma)\ldots$.
If $\varphi^{l_0}(a)$ contains at least two letters of order $\infty$, then at least one of the words 
$\gamma, \varphi(\gamma), \ldots, \varphi^{l_0-1}(\gamma)$ contains a letter of order $\infty$.
But then, by Lemma \ref{rates} again, all words $\varphi^l(\gamma)$ for $l\ge l_0$ also contain a letter of 
order $\infty$, and $\al$ contains infinitely many letters of order $\infty$.
\end{proof}

Now, when we know that $\al$ can be split into a concatenation of alternating 
letters of order $\ge k$ and $k$-blocks
(at least for some $k\in\NN$), it is convenient to consider concatenations of finitely many 
$k$-blocks and letters of order $>k$ between them. However, it is not very convenient 
to consider them as just occurrences in $\al$, because $k$-blocks can be empty occurrences, 
and we want to distinguish clearly whether we include a $k$-block of the form $\swa {i+1}i$ 
(as it was pointed out above, this notation denotes the occurrence of the empty word between $\al_i$ and
$\al_{i+1}$) into a concatenation of the form $\swa {i+1}j$ or no. Also, we will need to consider possibly 
empty concatenations of $k$-blocks, and their exact locations will be important for us, 
in particular, if $\swa {i+1}i$ is an empty $k$-block, we want to distinguish
"the empty concatenation located directly to the left of $\swa {i+1}i$" from 
"the empty concatenation located directly to the right of $\swa {i+1}i$".
So we start with the following 
definition:

A pair of occurrences $(\swa ij,\swa {j+1}s)$ is called a \textit{$k$-delimiter} ($k\in\NN$) in $\al$
in one of the two cases:
\begin{enumerate}
\item if exactly one of these two occurrences is a (possibly empty) $k$-block, and the other 
one is a single letter of order $>k$, or
\item if $\swa ij=\swa 0{-1}$, the occurrence of the empty word before the actual beginning of 
$\al$, and $\swa {j+1}s=\swa 00$, a letter of order $>k$ (it follows from Lemma
\ref{splitconcatenation} that $\al_0$ cannot be contained in a $k$-block since $k$-blocks are finite 
by definition).
\end{enumerate}
Here $\swa ij$ is called \textit{the left part}
of the $k$-delimiter and $\swa {j+1}s$ is called \textit{the right part} of the $k$-delimiter.
%If $\swa {i'}{j'}$ is a $k$-block 
%or a single letter of order $>k$, then $\swa {i'}{j'}$ is said to be \textit{located at the right-hand side}
%of this $k$-delimiter if either $i'=j+1$ and $j'=s$, or $i'-1\ge s$. Similarly, 
%$\swa {i'}{j'}$ is said to be \textit{located at the left-hand side}
%of this $k$-delimiter if either $i'=i$ and $j'=j$, or $j'+1\le s$.
%For example, if $a$ is a letter of order 2, and $\al_5=a$ and $\al_6=a$, 
%then we know two occurrences of a single letter of order $>1$, namely 
%$\swa 55$ and $\swa 66$ and one empty 1-block, namely $\swa 65$.
%So, we have two 1-delimiters, $(\swa 55,\swa 65)$ and $(\swa 65,\swa 66)$.
%Here $\swa 55$ is located at the left-hand side of 
%$(\swa 55,\swa 65)$, $\swa 65$ is located at the right-hand side of $(\swa 55,\swa 65)$, 
%and $\swa 66$ is also located at the right-hand side of $(\swa 55,\swa 65)$
%since $6-1\ge 5$. And $\swa 65$ is \textbf{\textit{not}} located 
%at the left-hand side of $(\swa 55,\swa 65)$ since $j'+1=5+1=6>s=5$.
Split $\al$ into a concatenation of $k$-blocks and letters of order $>k$. 
Write all these occurrences in $\al$ in an infinite sequence, mentioning each empty 
$k$-block explicitly. For example, if $\E=\{a,b,c\}$, 
$\varphi(a)=abb$, $\varphi(b)=bcc$, $\varphi(c)=c$, then 
the orders of letters $a,b,c$ are $3,2,1$, respectively, 
$\al=abbbccbccbccccbcccc\ldots$, and this sequence 
of occurrences is: 
$\swa 00,\swa 10,\swa 11,\swa 21,\swa 22,\swa 32, \swa 33, \swa 45, \swa 66, \swa 78,\ldots$.
As abstract words, the nonempty words in this sequence are: 
$\swa 00=a, \swa 11=b,\swa 22=b,\swa 33=b,\swa 45=cc, \swa 66=b, \swa 78=cc,\ldots$. The occurrences $\swa 10$, 
$\swa 21$, and $\swa 32$ here are empty 1-blocks.
Informally speaking, a $k$-delimiter is the "empty space" between two members of this sequence (the left and the right 
parts of the $k$-delimiter) or the "empty space" to the left of the whole sequence.
We say that a $k$-block or a single letter of order $>k$ $\swa ij$ is located \textit{strictly to the left}
from a $k$-block or a single letter of order $>k$ $\swa st$ if 
$\swa ij$ is written in this sequence before $\swa st$. In terms of indices 
this means that either $i<s$ ("the position where $\swa ij$ starts is before the 
position where $\swa st$ starts") or $i=s$ and $j<t$ ("the positions where $\swa ij$ and $\swa st$ start
coincide, but $\swa ij$ ends before $\swa st$ ends"), this is possible only if $\swa ij$ is an occurrence of 
the empty word ($j=i-1$) since $k$-blocks and letters of order $>k$ do not overlap.
We also say that a $k$-block or a single letter of order $>k$ $\swa ij$ is located \textit{strictly to the right}
from a $k$-block or a single letter of order $>k$ $\swa st$ if $\swa st$ is located strictly 
to the left from $\swa ij$. Clearly, if $\swa ij$ is a $k$-block or a letter of order $>k$ and $\swa st$
is also a $k$-block or a letter of order $>k$, then either $\swa ij$ is located strictly to the 
left from $\swa st$, or $\swa ij=\swa st$, or $\swa ij$ is located strictly to the 
right from $\swa st$.

Now let $(\swa ij,\swa {j+1}s)$ be a $k$-delimiter, and let $\swa {i'}{j'}$ be a $k$-block or a single 
letter of order $>k$. Then we want to define when 
$\swa {i'}{j'}$ is \textit{located at the right-hand side}
of $(\swa ij,\swa {j+1}s)$. If $(\swa ij,\swa {j+1}s)=(\swa 0{-1},\swa 00)$, then 
we always say that $\swa {i'}{j'}$ is located at the right-hand side
of $(\swa 0{-1},\swa 00)$. Otherwise we say that 
$\swa {i'}{j'}$ is located at the right-hand side
of $(\swa ij,\swa {j+1}s)$ if either 
$\swa {i'}{j'}=\swa {j+1}s$ as occurrences in $\al$,
or $\swa {i'}{j'}$ is located strictly to the right
from $\swa {j+1}s$.
In terms of indices this means that either $j+1=i'$ and $s=j'$, or $j+1<i'$, 
or $j+1=i'$ and $s<j'$. This can be rewritten shorter as follows: either 
$j+1=i'$ and $s\le j'$, or $s<j'$.
Similarly, if 
$(\swa ij,\swa {j+1}s)=(\swa 0{-1},\swa 00)$, then 
we never say that $\swa {i'}{j'}$ is \textit{located at the left-hand side of} 
$(\swa 0{-1},\swa 00)$. If $(\swa ij,\swa {j+1}s)\ne (\swa 0{-1},\swa 00)$, 
then we say that 
$\swa {i'}{j'}$ is located at the left-hand side
of $(\swa ij,\swa {j+1}s)$ if either 
$\swa {i'}{j'}=\swa ij$ as occurrences in $\al$, 
or $\swa {i'}{j'}$ is located strictly to the left
from $\swa ij$.
In terms of indices this means that either $i'=i$ and $j'=j$, or $i'<i$, 
or $i'=i$ and $j'<j$. This can be rewritten shorter as follows: either 
$i'=i$ and $j'\le j$, or $i'<i$.
Again, if $(\swa ij,\swa {j+1}s)$ is a $k$-delimiter, and $\swa {i'}{j'}$ is a $k$-block or a single 
letter of order $>k$, then either 
$\swa {i'}{j'}$ is located at the left-hand side of $(\swa ij,\swa {j+1}s)$,
or $\swa {i'}{j'}$ is located at the right-hand side of $(\swa ij,\swa {j+1}s)$.

If $(\swa ij,\swa{j+1}s)$ and $(\swa{i'}{j'},\swa{j'+1}{s'})$ are $k$-delimiters, 
we say that $(\swa{i'}{j'},\swa{j'+1}{s'})$ is 
\textit{located at the right-hand side of} 
$(\swa ij,\swa{j+1}s)$ if $\swa{i'}{j'}$ is located at the 
right-hand side of $(\swa ij,\swa{j+1}s)$. 
And $(\swa ij,\swa{j+1}s)$ is said to be 
\textit{located at the left-hand side of}
$(\swa{i'}{j'},\swa{j'+1}{s'})$ if 
$(\swa{i'}{j'},\swa{j'+1}{s'})$ is 
located at the right-hand side of
$(\swa ij,\swa{j+1}s)$. And again, if we have two $k$-delimiters, then either they coincide, 
or one of them is located at the left-hand side of the other one,
or one of them is located at the right-hand side of the other one.
Finally, we say that 
a $k$-block or a letter of order $>k$ is \textit{located between} one $k$-delimiter 
and another $k$-delimiter if this $k$-block or this letter of order $>k$ 
is located at the right-hand side of the first $k$-delimiter and 
at the left-hand side of the second $k$-delimiter.
%(and \textbf{\textit{not}} vice versa, if the first $k$-delimiter
%is not located at the left-hand side of the second $k$-delimiter, then there are no $k$-blocks 
%or letters of order $>k$ between the first $k$-delimiter and the second one).

Now we are ready to define $k$-multiblocks. We say that a \textit{$k$-multiblock}
is defined by the following data:
\begin{enumerate}
\item Two $k$-delimiters $(\swa ij, \swa{j+1}s)$ and $(\swa{i'}{j'}, \swa{j'+1}{s'})$, where 
$(\swa ij, \swa{j+1}s)$ either coincides with $(\swa{i'}{j'}, \swa{j'+1}{s'})$, 
or is located at the left-hand side of $(\swa{i'}{j'}, \swa{j'+1}{s'})$. Here 
$(\swa ij, \swa{j+1}s)$ (resp.\ $(\swa{i'}{j'}, \swa{j'+1}{s'})$) is 
called the \textit{left} (resp.\ \textit{right}) $k$-delimiter of the $k$-multiblock, 
\item The set of all $k$-blocks and letters of order $>k$ located between 
$(\swa ij, \swa{j+1}s)$ and $(\swa{i'}{j'}, \swa{j'+1}{s'})$.
\end{enumerate}

Two $k$-multiblocks are called \textit{consecutive} if the right $k$-delimiter of first 
$k$-multiblock coincides with the left $k$-delimiter of the second $k$-multiblock.
The $k$-multiblock whose left (resp.\ right) $k$-delimiter is the left (resp.\ right) 
$k$-delimiter of the first (resp.\ second) $k$-multiblock is called their \textit{concatenation}.
A $k$-multiblock is called \textit{empty} if the left and the right $k$-delimiters coincide, in other 
words, if the set of $k$-blocks and letters of order $k$ is empty. 
A $k$-multiblock consisting of a single empty $k$-block is not called an empty 
$k$-multiblock.

We need to introduce some convenient notation for $k$-multiblocks. First, 
a $k$-multiblock is determined by two $k$-delimiters
$(\swa ij, \swa{j+1}s)$ and $(\swa{i'}{j'}, \swa{j'+1}{s'})$, so we can denote it 
by $[(\swa ij, \swa{j+1}s), (\swa{i'}{j'}, \swa{j'+1}{s'})]$. Second, each $k$-delimiter
is determined by its left or right part, so we can denote the same 
$k$-multiblock by $[\swa{j+1}s, \swa{i'}{j'}]$ (and this notation agrees with 
the fact that if $(\swa ij, \swa{j+1}s)$ and $(\swa{i'}{j'}, \swa{j'+1}{s'})$ are two 
different $k$-delimiters, then the set of $k$-blocks and letters of order $>k$ 
in this $k$-multiblock 
is the subsequence of the sequence of all $k$-blocks and letters of order $>k$ 
in $\al$ that starts at $\swa{j+1}s$ and ends at $\swa{i'}{j'}$, inclusively).
Moreover, if $\swa{j+1}j$ is not a $k$-block, then the occurrence 
of the form $\swa{j+1}s$, which is a $k$-block or a letter of order $>k$, is 
determined uniquely by the index $j+1$. However, if $\swa{j+1}j$ is a $k$-block, 
then there are two occurrences of the form $\swa{j+1}s$ that we can use as a right part of
a $k$-delimiter: the empty $k$-block $\swa{j+1}j$ and also $\swa{j+1}{j+1}$, 
which must be a letter of order $>k$ in this case. In this case we denote the $k$-delimiter
whose right part is $\swa{j+1}j$ (i.~e.\ the \textbf{\textit{leftmost}} $k$-delimiter 
whose right part is of the form $\swa{j+1}s$) by $<,j+1$, and 
the $k$-delimiter
whose right part is $\swa{j+1}{j+1}$ 
(i.~e.\ the \textbf{\textit{rightmost}} $k$-delimiter 
whose right part is of the form $\swa{j+1}s$) by $>,j+1$. If $\swa{j+1}j$ is not 
a $k$-block, we say that $<,j+1$ and $>,j+1$ denote the same $k$-delimiter, 
namely, the unique $k$-delimiter whose right part is of the form $\swa {j+1}s$.
Similarly, if $\swa{j'+1}{j'}$ is a $k$-block, then there are two 
occurrences of the form $\swa{i'}{j'}$ that are $k$-blocks or letters of order $>k$:
the empty $k$-block $\swa{j'+1}{j'}$ and a letter $\swa{j'}{j'}$ of order $>k$.
And we denote the $k$-delimiter whose left part is $\swa{j'+1}{j'}$
(i.~e.\ the rightmost $k$-delimiter whose left part is of the form $\swa{i'}{j'}$) 
by $j',>$, and the $k$-delimiter whose left part is $\swa{j'}{j'}$
(i.~e.\ the leftmost $k$-delimiter whose left part is of the form $\swa{i'}{j'}$) 
by $j',<$. 
If $\swa{j'+1}{j'}$ is not a $k$-block, then there exists at most one occurrence of the 
form $\swa{i'}{j'}$ that can be the left part of a $k$-delimiter, and if it exists, 
we denote the $k$-delimiter with this left part by both $j',<$ and $j',>$.
Now we denote the same $k$-multiblock as before 
by $\al[\mathfrak x\ldots\mathfrak y]_k$, where $\mathfrak x$ (resp.\ $\mathfrak y$)
is a notation for a $k$-delimiter of the form $<,j+1$ or $>,j+1$ 
(resp.\ $j',<$ or $j',>$).

For example, if $\swa ij$ is a non-empty $k$-block, then the $k$-multiblock whose set of $k$-blocks 
and letters of order $>k$ between the $k$-delimiters consists of $\swa ij$ only, 
is denoted by $\al[<,i\ldots j,>]_k$ or by ${\al[>,i\ldots j,<]_k}$ (and two more possibilities).
If $\al_i$ is a letter of order $>k$, then the $k$-multiblock that consists of this letter itself
if denoted by $\al[>,i\ldots i,<]_k$ (and here the signs $>$ and $<$ are important if 
$\al_{i-1}$ or $\al_{i+1}$ is also a letter of order $>k$). Let us consider the 
example of an empty $k$-block $\swa{i+1}i$. In this case, 
$\al[<,i+1\ldots i,>]_k$ is the $k$-multiblock that consists of the empty $k$-block 
$\swa{i+1}i$ only (the $k$-block is located between the two $k$-delimiters), 
$\al[<,i+1\ldots i,<]_k$ is the empty $k$-multiblock "located directly at the left" 
of the $k$-block $\swa{i+1}i$ (the two $k$-delimiters coincide and are located at the left-hand side of 
$\swa{i+1}i$), $\al[>,i+1\ldots i,>]_k$ is the empty $k$-multiblock "located directly at the right" 
of the empty $k$-block, and $\al[>,i+1\ldots i,<]_k$ denotes nothing 
since the two $k$-delimiters do not coincide and are not in the correct order.
The $k$-multiblocks $\al[<,i+1\ldots i,<]_k$
and $\al[<,i+1\ldots i,>]_k$ are consecutive (and their concatenation is $\al[<,i+1\ldots i,>]_k$
again), and 
$\al[<,i+1\ldots i,<]_k$ and $\al[>,i+1\ldots i,>]_k$ are not.

More generally, if we know that there exists an (empty or nonempty) $k$-block 
of the form $\swa ij$ and $\swa st$ is a $k$-block or letter of order $>k$ that coincides with $\swa ij$ or located strictly to 
the right from $\swa ij$, then 
$\al[<,i\ldots t,?]_k$, where $?$ is one of the signs $<$ and $>$, always denotes 
a $k$-multiblock that includes $\swa ij$. And if we know that $\al_i$ is a letter of order $>k$, 
and and $\swa st$ is a $k$-block or letter of order $>k$ that coincides with $\al_i$ or located strictly to 
the right from $\al_i$, then 
then $\al[>,i\ldots t,?]_k$, where $?$ is one of the signs $<$ and $>$
always denotes a $k$-multiblock that begins with $\al_i$ as a set of consecutive $k$-blocks and letters of order $>k$.

For each $k$-multiblock one can consider the concatenation of all $k$-blocks and letters of order $>k$
between the two $k$-delimiters, this is an occurrence in $\al$. As we noted before, if the right part of the first 
$k$-delimiter is of the form $\swa {j+1}s$, and the left part of the second $k$-delimiter 
is of the form $\swa{i'}{j'}$, then this concatenation is $\swa {j+1}{j'}$. 
Therefore, if $\al[?,i\ldots j,?]_k$ is a $k$-multiblock, where each question mark denotes 
one of the signs $<$ or $>$, then this occurrence in $\al$ is $\swa ij$.
We call it the \textit{forgetful occurrence} of the $k$-multiblock and denote it by 
$\Fg(\al[?,i\ldots j,?]_k)$.

We did not define (and we are not going to define) any 0-blocks and 0-delimiters, however, it is convenient 
to have uniform notation and terminology for 0-multiblocks. We say that a \textit{$0$-multiblock}
is just a (possibly empty) finite occurrence in $\al$. We denote an occurrence $\swa ij$
by $\al[?,i\ldots j,?]_0$, where each question mark is one of the signs $<$ or $>$ (these signs do not play any role here).
The notions of consecutiveness and concatenation here are the usual notions of consecutiveness and concatenation for 
occurrences in $\al$. A 0-multiblock is called empty if it is an occurrence of the empty word.

Now we are ready to define descendants of $k$-multiblocks. 
%It turns out to be useful to slightly extend the notation $\Su_k$ as follows. 
First, let $\al_i$ be a letter of order $>k$ ($k\in\NN\cup 0$). 
Then the occurrence $\varphi(\al_i)$ contains at least one letter of order $>k$. 
%contains exactly one letter of order $k$. 
Let $\al_j$ (resp.\ $\al_{j'}$) be the leftmost (resp.\ the rightmost) occurrence of a letter of order $>k$ in 
$\varphi(\al_i)$.
%(If $\al_i$ is a periodic letter of order $k+1$, then $j=j'$, since $\varphi$ is (in particular) 
%weakly 1-periodic, $\al_j$ coincides with $\al_i$ as an abstract letter 
%(as an element of $\E$), but now this letter is important for us as an occurrence in 
%$\al$.) 
Then $\al[>,j\ldots j',<]_k$ is a $k$-multiblock that begins with $\al_j$ and ends with 
$\al_{j'}$ (and does not contain $k$-blocks of the form $\swa j{j-1}$ or $\swa {j'+1}{j'}$ even if these empty occurrences 
are $k$-blocks). We call $\al[>,j\ldots j',<]_k$ the descendant of the $k$-multiblock $\al[>,i\ldots i,<]_k$
(which consists of a single letter $\al_i$) and denote $\al[>,j\ldots j',<]_k$
by 
%Denote the occurrence $\swa j{j'}$
%by 
%$\Su_k(\al_i)$. 
$\Su_k(\al[>,i\ldots i,<]_k)$. 
%This occurrence is also called the \textit{descendant} of $\al_i$. 
%If $l\in\NN$ and $\al_i$ is periodic, we also write $\Su^l_{k-1}(\al_i)=\Su_{k-1}(\ldots\Su_{k-1}(\al_i)\ldots)$, 
%where $\Su_{k-1}$ is repeated $l$ times. We call $\Su^l_{k-1}(\al_i)$ the 
%\textit{$l$-th superdescendant} of $\al_i$.
%(If $\al_i$ is preperiodic, we have not defined yet how to apply $\Su_{k-1}$ to 
%$\Su_{k-1}(\al_i)$ yet, since $\Su_{k-1}(\al_i)$ does not have to be a single letter, 
%but are going define it now, and then we will be able to speak about, in particular, 
%superdescendants of preperiodic letters of order $k$.)

\begin{remark}\label{descendantofperiodic}
If $\al_i$ is a periodic letter of order $k+1$, then $\Su_k(\al[>,i\ldots i,<]_k)$
consists of a single letter of order $k+1$, namely, the unique
letter of order $k+1$ in $\varphi(\al_i)$. Moreover, since $\varphi$ is
(in particular) weakly 1-periodic, this letter coincides with $\al_i$ 
as an abstract letter.
\end{remark}

%First, let $\al_i$ be a letter of order $k\in\NN$. Then the occurrence $\varphi(\al_i)$
%contains exactly one letter of order $k$. Since $\varphi$ is (in particular) 
%weakly 1-periodic, this letter coincides with $\al_i$ as an abstract letter 
%(as an element of $\E$), but now this letter is important for us as an occurrence in 
%$\al$. Denote the unique occurrence of a letter of order $k$ in $\al$ inside $\varphi(\al_i)$
%by $\Su_{k-1}(\al_i)$. This occurrence is also called the \textit{descendant} of $\al_i$. 
%If $l\in\NN$, we also write $\Su^l_{k-1}(\al_i)=\Su_{k-1}(\ldots\Su_{k-1}(\al_i)\ldots)$, 
%where $\Su_{k-1}$ is repeated $l$ times. We call $\Su^l_{k-1}(\al_i)$ the 
%\textit{$l$-th superdescendant} of $\al_i$.

%Second, suppose that $\al_i$ is a preperiodic letter of order $k\in\NN$. 
%Then the occurrence $\varphi(\al_i)$ can contain several letters of order $k$. 
%Let $\al_j$ (resp.\ $\al_{j'}$) be the leftmost (resp.\ the rightmost) occurrence of a letter of order $k$ in 
%$\varphi(\al_i)$. Denote $\Su_{k-1}(\al_i)=\swa j{j'}$ and call 
%$\Su_{k-1}(\al_i)$ the \textit{descendant} of $\al_i$. Also denote 
%$\Su^l_{k-1}(\al_i)=\Su_{k-1}(\ldots\Su_{k-1}(\al_i)\ldots)$, 
%where $\Su_{k-1}$ is repeated $l$ times, and call $\Su^l_{k-1}(\al_i)$ the 
%\textit{$l$-th superdescendant} of $\al_i$.

Second, as we have already noted, if $\swa ij$ is a $k$-block ($k\in \NN$), then 
$\al[<,i\ldots j,>]_k$ is always the $k$-multiblock that consists of $\swa ij$ only, 
independently of whether $\swa ij$ is empty or no. If $\Su_k(\swa ij)=\swa st$, 
then we say that $\Su_k(\al[<,i\ldots j,>]_k)=\al[<,s\ldots t,>]_k$
(the $k$-block that consists of $\swa st$ only).

Third, let us define the descendants of empty $k$-multiblocks ($k>0$). An empty 
$k$-multiblock is determined by a delimiter $(\swa ij, \swa st)$ repeated twice, both as the 
left and as the right delimiter of the $k$-multiblock. If $\swa ij=\swa0{-1}$, we say that the 
descendant of this $k$-multiblock is this $k$-multiblock itself. Otherwise either 
$\swa ij$ or $\swa st$ is a $k$-block. If $\swa ij$ is a $k$-block, then 
$\swa st$ is the right border of $\swa ij$, and we say that the descendant of
$[(\swa ij, \swa st),(\swa ij, \swa st)]$ is 
$[(\Su_k(\swa ij), \RB(\Su_k(\swa ij))),(\Su_k(\swa ij), \RB(\Su_k(\swa ij)))]$.
Similarly, if 
$\swa st$ is a $k$-block, then $\swa ij$ is its left border, and 
we say that the descendant of
$[(\swa ij, \swa st),(\swa ij, \swa st)]$ is 
$[(\LB(\Su_k(\swa st)),{}$\linebreak$\Su_k(\swa st)),(\LB(\Su_k(\swa st)), \Su_k(\swa st))]$.

%Second,
Finally, let $k\in\NN\cup 0$,
%and $\swa ij$ is 
%a (finite) concatenation 
%of letters of order $k+1$ and $k$-blocks 
%or if $k=0$ and $\swa ij$ is a finite occurrence in $\al$ consisting of letters of order 1, 
and let $\al[\mathfrak x, i\ldots j,\mathfrak y]_k$, where $\mathfrak x,\mathfrak y\in\{<,>\}$,
be a non-empty $k$-multiblock. It consists of consecutive letters of order $>k$ and (if $k>0$) $k$-blocks,
%then the descendants of these letters of order $k+1$ and (if $k>0$) $k$-blocks
and their descendants according to the definitions above are also consecutive 
$k$-multiblocks.
%also form a continuous occurrence in $\al$ (there are no other letters
%between them, for example, if $\al_i$ is a periodic letter of order $k+1$ and 
%$\swa {i+1}j$ is a $k$-block, then there are no letters in $\al$ between $\Su_k(\al_i)$ and 
%$\Su_k(\swa {i+1}j)$). 
We 
%denote 
call
the concatenation of these $k$-multiblocks
%descendants of these letters of order $k+1$ and (if $k>0$) $k$-blocks
%by $\Su_k(\swa ij)$ and call it 
the \textit{descendant} of 
%$\swa ij$.
$\al[\mathfrak x, i\ldots j,\mathfrak y]_k$.
Denote it by $\Su_k(\al[\mathfrak x, i\ldots j,\mathfrak y]_k)$.
One checks easily using the particular cases of the definition of 
the descendant of a $k$-multiblock above that 
$\Su_k(\al[\mathfrak x, i\ldots j,\mathfrak y]_k)$
can be written as $\al[\mathfrak x,s\ldots t,\mathfrak y]_k$, 
where the indices $s$ and $t$ may differ from $i$ and $j$, but the signs 
$\mathfrak x$ and $\mathfrak y$ stay the same.
%Note that if $\swa i{i-1}$ (the occurrence of the empty word 
%between $\al_{i-1}$ and $\al_i$) is a $k$-block, then the result 
%(the descendant of the concatenation) depends on whether we include this 
%$k$-block in ine concatenation or no since $\Su_k(\swa i{i-1})$ is also a 
%$k$-block, and it does not have to be empty. The same can happen 
%if $\swa {j+1}j$ is a $k$-block. 
%Whenever in what follows we 
%consider the descendant of a concatenation $\swa ij$ of $k$-blocks and 
%letters of order $k+1$, and it is possible that $\swa i{i-1}$ or 
%$\swa {j+1}j$ is a $k$-block, we will specify explicitly wheher we include it into the concatenation or no.
%Again, 
If $l\in\NN$, we also write 
%$\Su^l_k(\swa ij)=\Su_k(\ldots\Su_k(\swa ij)\ldots)$,
$\Su^l_k(\al[\mathfrak x, i\ldots j,\mathfrak y]_k)=\Su_k(\ldots\Su_k(\al[\mathfrak x, i\ldots j,\mathfrak y]_k)\ldots)$, 
where $\Su_k$ is repeated $l$ times. We call $\Su^l_k(\al[\mathfrak x, i\ldots j,\mathfrak y]_k)$ the 
\textit{$l$-th superdescendant} of $\al[\mathfrak x, i\ldots j,\mathfrak y]_k$. 
%When we speak about superdescendants 
%of a concatenation $\swa ij$ such that $\swa i{i-1}$ or 
%$\swa {j+1}j$ is a $k$-block, it is possible that for one of the superdescendants, 
%say, $\Su_k^l(\swa ij)=\swa st$, $\swa s{s-1}$ or $\swa {t+1}t$ is also a $k$-block 
%(this happens if and only if $\Su_k^l(\swa i{i-1})$ or $\Su_k(\swa {j+1}j)$ is 
%still an empty word). In this case, when we construct subsequent superdescendants, 
%we say that we include $\swa s{s-1}$ or $\swa {t+1}t$ into the concatenation 
%if and only if we have included $\swa i{i-1}$ or 
%$\swa {j+1}j$ into the concatenation in the beginning.

Observe that if $k=0$, then 
$\Su_0(\al[\mathfrak x, i\ldots j,\mathfrak y]_0)$ is just 
$\varphi(\al[\mathfrak x, i\ldots j,\mathfrak y]_0)$, but it will be useful to have 
$\Su_k$ as a uniform notation later, for example, when we will define atoms inside blocks.
%WRONG: in the first paragraph, we cannot define supersecendants if the lerre is preperiodic
%Observe also that if $\al_i$ is a preperiodic letter of order $k$, then 
%$\Su_{k-1}(\al_i)$ can be not just an occurrence of a single letter of order $k$, 
%but a concatenation of letters of order $k$ and (if $k>0$) $(k-1)$-blocks,
%but our notations $\Su_{k-1}$ for single letters of order $k$ and for $(k-1)$-blocks 
%agree in these cases in the sense that $\Su_{k-1}^{l'}(\Su_{k-1}^l(\al_i))=\Su_{k-1}^{l+l'}(\al_i)$.

\begin{remark}
The descendants of two consecutive $k$-multiblocks ($k\in\NN\cup 0$)
are always consecutive, even if they contain several $k$-blocks and letters of order $>k$ 
or one or two of them is empty.
\end{remark}

Consider the following example: let $\E=\{a,b,c,d\}$, $\varphi(a)=ab$, 
$\varphi(b)=cdcdd$, $\varphi(c)=cdd$, $\varphi(d)=d$. 
The orders of letters $a, b, c, d$ are $3, 2, 2, 1$, respectively, and 
$b$ is a preperiodic letter, all other letters are periodic. This 
morphism $\varphi$ is strongly 1-periodic, $\finmax=1$, so 
$\varphi$ is also a strongly 1-periodic morphism with long images.
Consider the corresponding pure morphic sequence 
$\al=\varphi^\infty (a)=a\,b\,cdcdd\,cdddcdddd\,cdddddcdddddd\ldots$ and 
%an occurrence $\al_1=b$. 
a 1-multiblock $\al[>,1\ldots 1,<]_1$ consisting of a single letter $b$ of order $2>1$.
Here $\swa 10$ is an empty 1-block, and $\swa 21$ 
is also an empty 1-block, but we do not include them into the 
%concatenation and consider $\al_1$ as a single letter of order 2. 
1-multiblock.
We have 
%$\Su_1(\al_1)=\al_{2\ldots 4}=cdc$
$\Su_1(\al[>,1\ldots 1,<]_1)=\al[>,2\ldots 4,<]_1$ ($\swa 24=cdc$)
and 
%$\Su_1^2(\al_1)=\Su_1(\al_{2\ldots 4})=\al_{7\ldots 11}=cdddc$.
$\Su_1^2(\al[>,1\ldots 1,<]_1)=\Su_1(\al[>,2\ldots 4,<]_1)=\al[>,7\ldots 11,<]_1$ ($\swa 7{11}=cdddc$).
If we include both $\swa 10$ and $\swa 21$ into the 1-multiblock and 
consider a 1-multiblock $\al[<,1\ldots 1,>]_1$, 
we will get 
%$\Su_1(\al_1)=\al_{2\ldots 6}=cdcdd$ 
$\Su_1(\al[<,1\ldots 1,>]_1)=\al[<,2\ldots 6,>]_1$ ($\swa 26=cdcdd$)
and 
%$\Su_1^2(\al_1)=\Su_1(\al_{2\ldots 6})=\al_{5\ldots 15}=ddcdddcdddd$.
$\Su_1^2(\al[<,1\ldots 1,>]_1)=\Su_1(\al[<,2\ldots 6,>]_1)=\al[<,5\ldots 15,>]_1$ ($\swa 5{15}=ddcdddcdddd$).

\begin{lemma}\label{inclusionsurvivesbase}
If $k\in\NN$,
%$\swa st$ is a $(k-1)$-block (if $k>1$) or a single letter of order $k$ 
$\al[\mathfrak x,s\ldots t,\mathfrak y]_{k-1}$ is a $(k-1)$-multiblock
consisting of a single letter of order $\ge k$ or (if $k>1$) a single $(k-1)$-block,
$\swa st$ is a suboccurrence of a $k$-block $\swa ij$, 
and $\Su_{k-1}(\al[\mathfrak x,s\ldots t,\mathfrak y]_{k-1})=\al[\mathfrak x,s'\ldots t',\mathfrak y]_{k-1}$,
%then $\Su_{k-1}(\swa st)$ 
then $\swa {s'}{t'}$
is a suboccurrence of $\Su_k(\swa ij)$.
\end{lemma}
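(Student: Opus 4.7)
The plan is to dispatch the two subcases of the hypothesis separately, since $\Su_{k-1}$ is given by slightly different recipes in each.

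First suppose $\al[\mathfrak{x},s\ldots t,\mathfrak{y}]_{k-1}$ is a single letter $\al_s$ of order $\ge k$ (so $s=t$). Because $\al_s$ lies in the $k$-block $\swa ij$, whose letters all have order $\le k$, its order is exactly $k$. By the definition of the descendant of a single-letter multiblock, $\swa{s'}{t'}$ is a suboccurrence of $\varphi(\al_s)$: when $k=1$ it is all of $\varphi(\al_s)$, and when $k\ge 2$ it is the segment of $\varphi(\al_s)$ bounded by its outermost letters of order $>k-1$. In either case,
\[\swa{s'}{t'}\subseteq \varphi(\al_s)\subseteq \varphi(\swa ij)\subseteq \Su_k(\swa ij),\]
and we are done.

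Now suppose $k>1$ and $\al[\mathfrak{x},s\ldots t,\mathfrak{y}]_{k-1}$ is a single $(k-1)$-block $\swa st$. Then $\swa{s'}{t'}=\Su_{k-1}(\swa st)$ is the unique $(k-1)$-block containing $\varphi(\swa st)$. Write $\Su_k(\swa ij)=\swa pq$. From $\swa st\subseteq \swa ij$ we get $\varphi(\swa st)\subseteq \varphi(\swa ij)\subseteq \swa pq$, so $\swa{s'}{t'}$ and $\swa pq$ overlap (or, in degenerate empty cases, are located at the same position). I then argue by contradiction that $\swa{s'}{t'}$ cannot extend beyond $\swa pq$ on either side. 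If, say, $s'<p$, then $\al_{p-1}$, being the left border of $\Su_k(\swa ij)$, has order $>k$; yet it would sit as an interior letter of the $(k-1)$-block $\swa{s'}{t'}$, whose letters all have order $\le k-1$, a contradiction. A symmetric argument bounds $t'\le q$, so $\swa{s'}{t'}\subseteq \swa pq=\Su_k(\swa ij)$.

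The hard part is not any individual calculation but parsing the notational conventions: one must remember that the descendant of a single letter of order $>k-1$ is trimmed inside its $\varphi$-image by the outermost letters of order $>k-1$, whereas the descendant of a $(k-1)$-block is the possibly larger surrounding $(k-1)$-block of its $\varphi$-image. Once this is unpacked, both containments reduce to the basic fact that a $k$-block is a maximal run of letters of order $\le k$ between two letters of order $>k$.
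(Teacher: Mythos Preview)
Your proof is correct and follows the same approach as the paper, which simply states that the claim ``follows directly from the definitions of the descendant of a $k$-block and of a $(k-1)$-multiblock consisting of a single $(k-1)$-block or of a single letter of order $>(k-1)$.'' You have carried out exactly this unpacking in detail, handling the single-letter case via $\swa{s'}{t'}\subseteq\varphi(\al_s)\subseteq\varphi(\swa ij)\subseteq\Su_k(\swa ij)$ and the single-$(k-1)$-block case via the border argument.
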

\begin{proof}
The claim follows directly from the definitions of the descendant of a
$k$-block
%, of a $(k-1)$-block and of a letter of order $k$.
and of a $(k-1)$-multiblock consisting of a single $(k-1)$-block
or of a single letter of order $>(k-1)$.
\end{proof}

\begin{corollary}\label{inclusionsurvives}
%If $k\in\NN$ and $\swa st$ is a concatenation of $(k-1)$-blocks or a single letter of order $k$, 
%and $\swa st$ is a suboccurrence of a $k$-block $\swa ij$, 
%then $\Su_{k-1}(\swa st)$ is a suboccurrence of $\Su_k(\swa ij)$.
%If $\swa s{s-1}$ or $\swa {t+1}t$ is a $(k-1)$-block, 
%one can include it into the concatenation or no, the statement is 
%true in each of the (up to four) possible cases.
If $k\in\NN$,
%$\swa st$ is a $(k-1)$-block (if $k>1$) or a single letter of order $k$ 
$\al[\mathfrak x,s\ldots t,\mathfrak y]_{k-1}$ is a $(k-1)$-multiblock,
$\swa st$ is a suboccurrence of a $k$-block $\swa ij$, 
and $\Su_{k-1}(\al[\mathfrak x,s\ldots t,\mathfrak y]_{k-1})=\al[\mathfrak x,s'\ldots t',\mathfrak y]_{k-1}$,
%then $\Su_{k-1}(\swa st)$ 
then $\swa {s'}{t'}$
is a suboccurrence of $\Su_k(\swa ij)$.\qed
\end{corollary}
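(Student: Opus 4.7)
The plan is to reduce the statement to Lemma~\ref{inclusionsurvivesbase} by decomposing the $(k-1)$-multiblock into its elementary components and using that $\Su_{k-1}$ commutes with concatenation of consecutive $(k-1)$-multiblocks.

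First suppose $\al[\mathfrak x,s\ldots t,\mathfrak y]_{k-1}$ is non-empty. By the definition of a $(k-1)$-multiblock it is the concatenation of consecutive sub-multiblocks of two elementary types: single $(k-1)$-blocks $\al[<,i'\ldots j',>]_{k-1}$ (possible only when $k>1$) and single letters $\al[>,i'\ldots i',<]_{k-1}$ with $\al_{i'}$ of order $>k-1$. Each such elementary piece sits inside $\swa st$, hence inside the $k$-block $\swa ij$; since every letter of a $k$-block has order $\le k$, the letters $\al_{i'}$ appearing here have order exactly $k$, so they satisfy the hypothesis ``order $\ge k$'' of Lemma~\ref{inclusionsurvivesbase}. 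Applying that lemma to each elementary piece, its descendant is a suboccurrence of $\Su_k(\swa ij)$.

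Next, by the definition of the descendant of a non-empty $(k-1)$-multiblock (the concatenation of the descendants of its components) together with the remark that descendants of consecutive multiblocks are themselves consecutive, the forgetful occurrence $\Fg(\Su_{k-1}(\al[\mathfrak x,s\ldots t,\mathfrak y]_{k-1}))$ is the concatenation of the forgetful occurrences of the descendants of the elementary pieces. Each of those lies inside $\Su_k(\swa ij)$, so their concatenation lies inside $\Su_k(\swa ij)$ as well, which is exactly the claim.

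Finally, consider the empty case, where the two delimiters coincide. Since $\swa st$ lies inside the $k$-block $\swa ij$ and $i>0$, the exceptional delimiter $\swa 0{-1}$ cannot occur, so at least one side of the coinciding delimiter is a $(k-1)$-block $\swa{i'}{j'}$ which is itself a suboccurrence of $\swa ij$. By the explicit formula for the descendant of an empty $k$-multiblock combined with Lemma~\ref{inclusionsurvivesbase} applied to $\al[<,i'\ldots j',>]_{k-1}$, the descendant of our empty multiblock is an empty $(k-1)$-multiblock whose forgetful occurrence still lies inside $\Su_k(\swa ij)$. The only minor bookkeeping is checking that the signs $\mathfrak x,\mathfrak y$ track correctly through the concatenation, but this is precisely the observation, already recorded after the definition of $\Su_{k-1}$, that $\Su_{k-1}(\al[\mathfrak x,i\ldots j,\mathfrak y]_{k-1})$ has the same outer signs $\mathfrak x,\mathfrak y$. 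I do not expect any substantive obstacle; the proof is essentially a ``glue'' argument built on top of the base lemma.
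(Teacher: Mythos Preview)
Your proposal is correct and matches the paper's intended argument: the paper marks the corollary with \qed, treating it as an immediate consequence of Lemma~\ref{inclusionsurvivesbase} by decomposing the $(k-1)$-multiblock into its elementary constituents and concatenating the resulting descendants. Your write-up simply spells out the details that the paper leaves implicit, including the empty-multiblock case and the observation that letters of order $>k-1$ inside a $k$-block must have order exactly $k$.
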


Now we define \textit{atoms} inside $k$-blocks ($k\in\NN$). 
Let $\mathcal E=\mathcal E_0,\mathcal E_1,\mathcal E_2,\ldots$ be an evolution of $k$-blocks.
The $l$th left and right atoms exist in a $k$-block $\mathcal E_m$
%where $\mathcal E$ is an evolution, 
iff $m\ge l>0$. We will also define the zeroth atom, but there will be 
only one zeroth atom in each $k$-block $\mathcal E_m$, it will not be left or right.
First, define the $l$-th atoms inside the $k$-block
$\mathcal E_l$ ($l>0$).
Let $\mathcal E_l=\al_{i\ldots j}$. 
Its ancestor $\swa st=\Pd_k(\swa ij)$ is a $k$-block, so it is a concatenation of 
letters of order $k$ and (if $k>1$) $(k-1)$-blocks, 
and we can consider a $(k-1)$-multiblock $\al[<,s\ldots t,>]_{k-1}$.
%and we can apply $\Su_{k-1}$ to this concatenation. 
If $\swa s{s-1}$ or $\swa {t+1}t$ is a $(k-1)$-block, we 
include it into the 
%concatenation, 
$(k-1)$-block, 
so we are considering 
%$\swa st$ as a concatenation
a $(k-1)$-block, which starts with a $(k-1)$-block and ends with a $(k-1)$-block. 
Now consider 
%an occurrence $\Su_{k-1}(\Pd_k(\swa ij))$ 
a $(k-1)$-block
$\Su_{k-1}(\al[<,s\ldots t,>]_{k-1})$
and denote it by 
%$\swa {i'}{j'}$.
$\al[<,i'\ldots j',>]_{k-1}$.

By Corollary \ref{inclusionsurvives}, $\swa {i'}{j'}$ is a suboccurrence of $\swa ij$.
It also follows from the definition of the descendant of a $(k-1)$-block 
that $\varphi(\swa st)$ is a suboccurrence of $\swa {i'}{j'}$ and that $\al_{i'-1}$ and $\al_{j'+1}$ are 
letters of order $>(k-1)$, more precisely, $\al_{i'-1}$ (resp.\ $\al_{j'+1}$) is the rightmost
(resp.\ the leftmost) letter of order $>(k-1)$ in $\varphi(\al_{s-1})$ (resp.\ in $\varphi(\al_{t+1})$).
The 
%occurrence $\swa i{i'-1}$ 
$(k-1)$-multiblock $\al[<,i\ldots i'-1,<]_{k-1}$
that
comes from the image of the left border of the ancestor, is called
\emph{the $l$th left atom} of the block and is denoted by $\LA_{k,l}(\swa ij)$.
\begin{remark}\label{atomsimplestructleft}
If $k>1$, then this $(k-1)$-multiblock is either empty (does not contain any letters of order $>(k-1)$ or $k$-blocks, 
even empty ones) if $i=i'$, or it begins with a (possibly empty) $(k-1)$-block of the form $\swa i{i''}$ and 
ends with a single letter $\al_{i'-1}$ of order $k$ if $i<i'$.
If $k=1$, then $\al_{i'-1}$ is the rightmost letter in $\varphi(\al_{s-1})$, and 
$\LA_{k,l}(\swa ij)=\swa i{i'-1}$ is an empty occurrence in $\al$ if and only if $i=i'$ if and 
only if the rightmost letter in $\varphi(\al_{s-1})$ is of order $>1$.
\end{remark}
Similarly, the 
%occurrence $\swa {j'+1}j = \RA_{k,l}(\swa ij)$ 
$(k-1)$-multiblock $\al[>,j'+1\ldots j,>]_{k-1} = \RA_{k,l}(\swa ij)$
is called
\emph{the $l$-th right atom} of the $k$-block $\swa ij$. 
\begin{remark}\label{atomsimplestructright}
If $k>1$, then it is either an empty $(k-1)$-multiblock if $j'=j$, or 
it begins with a single letter $\al_{j'+1}$ of order $k$ and 
ends with a (possibly empty) $(k-1)$-block of the form $\swa {j''}j$ if 
$j'<j$. 
If $k=1$, then $\al_{j'+1}$ is the leftmost letter in $\varphi(\al_{t+1})$, and 
$\LA_{k,l}(\swa ij)=\swa {j'+1}j$ is an empty occurrence in $\al$ if and only if $j'=j$ if and 
only if the leftmost letter in $\varphi(\al_{t+1})$ is of order $>1$.
\end{remark}
Fig.~\ref{kblockstruct} illustrates this construction.

\begin{figure}[!h]
\centering
\includegraphics{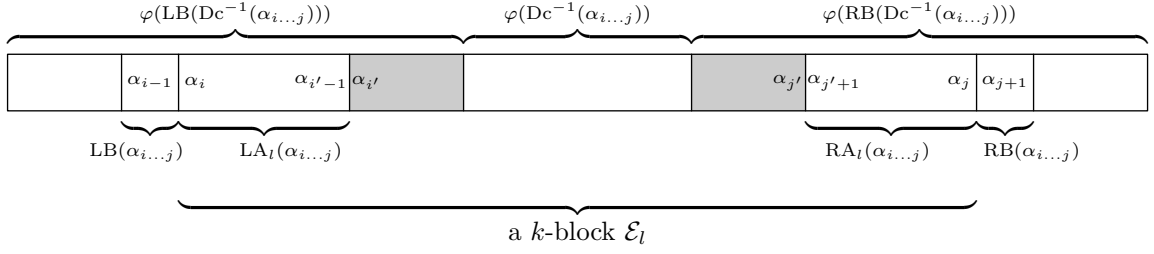}
\caption{Structure of a $k$-block: $\al_{i'-1}$ and $\al_{j'+1}$ are letters of order $k$, 
all letters in the grayed areas are of order $\le k-1$.}
\label{kblockstruct}
\end{figure}

Then,
%$\LA_l(\mathcal E_m)=\varphi^{m-l}(\LA_l(\mathcal E_l))$, 
if $l>m$, the $l$th left and right atoms of $\mathcal E_m$ are defined as follows:
$\LA_{k,l}(\mathcal E_m)=\Su_{k-1}^{m-l}(\LA_{k,l}(\mathcal E_l))$,
$\RA_{k,l}(\mathcal E_m)=\Su_{k-1}^{m-l}(\RA_{k,l}(\mathcal E_l))$.
%and the same for right atoms.
Then, using Remarks \ref{atomsimplestructleft} and \ref{atomsimplestructright} 
and the definitions of the descendant of a $(k-1)$-block or of a 
$(k-1)$-multiblock that consists of a single letter of order $>(k-1)$, we note the following:
\begin{remark}\label{allatomsimplestruct}
If $k>1$, then each left (resp.\ right) atom in any $k$-block is either an empty $(k-1)$-multiblock 
(it does not contain any letters of order $>(k-1)$ or $k$-blocks, 
even empty ones), or it begins with a (possibly empty) $(k-1)$-block 
(resp.\ with a a single letter of order $k$) and 
ends with a single letter of order $k$
(resp.\ with a (possibly empty) $(k-1)$-block).
\end{remark}

Finally, if $\mathcal E_0=\swa ij$, then the zeroth atom of $\mathcal E_0$ is 
$\CA_{k,0}(\mathcal E_0)=\al[<,i\ldots j,>]_{k-1}$, i.~e.\ it is the largest (including 
all possible empty $(k-1)$-blocks if $k>1$) $(k-1)$-multiblock whose forgetful 
occurrence is $\mathcal E_0$. The zeroth atoms of other blocks in the 
evolution are defined by $\CA_{k,0}(\mathcal E_m)=\Su_{k-1}^m(\CA_{k,0}(\mathcal E_0))$.
\begin{remark}\label{zerothatomsimplestruct}
If $k>1$, then each zeroth atom begins with a (possibly empty) $(k-1)$-block
and ends with a (possibly empty) $(k-1)$-block. However, these two $(k-1)$-blocks may be the same, 
i.~e.\ the zeroth atom can consist of a single $(k-1)$-block. The zeroth 
atom is never empty as a $(k-1)$-multiblock, i.~e.\ it contains at least
one (maybe, empty) $(k-1)$-block, but the forgetful occurrence of 
the zeroth atom may be empty.
\end{remark}

Therefore, if $\mathcal E_m=\swa ij$, then $\al[<,i\ldots j,>]_{k-1}$ (the largest 
$(k-1)$-multiblock whose forgetful occurrence is $\mathcal E_m$) splits into the concatenation of all 
atoms in $\mathcal E_m$: 
$\al[<,i\ldots j,>]_{k-1}=
\LA_{k,m}(\mathcal E_m)\LA_{k,m-1}(\mathcal E_m)\ldots\LA_{k,1}(\mathcal E_m)\CA_{k,0}(\mathcal E_m)
\RA_{k,1}(\mathcal E_m)\ldots\RA_{k,m-1}(\mathcal E_m)\RA_{k,m}(\mathcal E_m)$.

\begin{lemma}\label{lastatomlocation}
Let $l\ge 0$ and $m\ge 1$. Consider an occurrence $\varphi^m(\LB(\mathcal E_l))$ in $\al$. 
Let $\al_{i-1}$ be the rightmost occurrence of a letter of order $>k$ in $\varphi^m(\LB(\mathcal E_l))$
and let $\al_j$ be the rightmost occurrence of a letter of order $\ge k$ in $\varphi^m(\LB(\mathcal E_l))$.

Then $\al_{i-1}=\LB(\mathcal E_{l+m})$ and $\swa ij=\Fg(\LA_{k,l+m}(\mathcal E_{l+m})\ldots\LA_{k,l+1}(\mathcal E_{l+m}))$
as occurrences in $\al$.
\end{lemma}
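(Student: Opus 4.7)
My plan is to prove both claims simultaneously by induction on $m \ge 1$, with $l \ge 0$ and the evolution $\mathcal E$ fixed but arbitrary. For the base case $m=1$ the first assertion is exactly the observation already recorded earlier in the paper that $\LB(\mathcal E_{l+1}) = \RL_k(\varphi(\LB(\mathcal E_l)))$, and the second is a direct unwinding of the definition of $\LA_{k,l+1}(\mathcal E_{l+1})$: since the ancestor of $\mathcal E_{l+1}$ is $\mathcal E_l$, the forgetful occurrence of $\LA_{k,l+1}(\mathcal E_{l+1})$ runs from the position just to the right of $\LB(\mathcal E_{l+1})$ (this is the lemma's $\al_i$) to the rightmost letter of order $>k-1$, i.e.\ of order $\ge k$, in $\varphi(\LB(\mathcal E_l))$ (this is $\al_j$).

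For the inductive step I write $\varphi^{m+1}(\LB(\mathcal E_l)) = \varphi(\varphi^m(\LB(\mathcal E_l)))$ and apply the inductive hypothesis: inside $\varphi^m(\LB(\mathcal E_l))$ the rightmost letter of order $>k$ is $\al_{i-1} = \LB(\mathcal E_{l+m})$, the rightmost letter of order $\ge k$ is $\al_j$, and $\swa{i}{j} = \Fg(A)$ with $A := \LA_{k,l+m}(\mathcal E_{l+m})\ldots\LA_{k,l+1}(\mathcal E_{l+m})$. Every letter strictly to the right of $\al_{i-1}$ inside $\varphi^m(\LB(\mathcal E_l))$ has order $\le k$ and every letter strictly to the right of $\al_j$ has order $<k$, so under $\varphi$ no new letters of order $>k$ or $\ge k$ appear to the right of $\varphi(\al_{i-1})$ or $\varphi(\al_j)$ respectively. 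Hence the rightmost letter of order $>k$ in $\varphi^{m+1}(\LB(\mathcal E_l))$ lies inside $\varphi(\al_{i-1})$ and equals $\RL_k(\varphi(\LB(\mathcal E_{l+m}))) = \LB(\mathcal E_{l+m+1})$, settling the first claim at level $m+1$; similarly the rightmost letter of order $\ge k$ lies inside $\varphi(\al_j)$ and is the rightmost letter of order $\ge k$ there.

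To close the second claim at level $m+1$, I combine the identity $\LA_{k,q}(\mathcal E_{l+m+1}) = \Su_{k-1}(\LA_{k,q}(\mathcal E_{l+m}))$ for $l+1 \le q \le l+m$ with the remark that descendants of consecutive $(k-1)$-multiblocks are again consecutive, which yields $\Su_{k-1}(A) = \LA_{k,l+m}(\mathcal E_{l+m+1})\ldots\LA_{k,l+1}(\mathcal E_{l+m+1})$; it therefore remains to check that prepending $\LA_{k,l+m+1}(\mathcal E_{l+m+1})$ produces the forgetful occurrence we want. For $k>1$ and $A$ nonempty, Remark \ref{allatomsimplestruct} tells us that $A$ begins with a (possibly empty) $(k-1)$-block whose left border is $\al_{i-1} = \LB(\mathcal E_{l+m})$ and ends with the single letter $\al_j$ of order $k$; the descendant of this leading $(k-1)$-block starts just past the rightmost letter of order $\ge k$ in $\varphi(\LB(\mathcal E_{l+m}))$, which is exactly the right endpoint of $\LA_{k,l+m+1}(\mathcal E_{l+m+1})$, so the two pieces abut without gap, while the descendant of the terminal letter $\al_j$ (viewed as a single letter of order $>k-1$) ends at the rightmost letter of order $\ge k$ in $\varphi(\al_j)$, i.e.\ at the target endpoint identified in the previous paragraph. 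The degenerate case $A$ empty (when $j = i-1$) and the $k = 1$ case (where $\Su_0$ is literally $\varphi$ applied to an occurrence) collapse to the same endpoint identifications. The main obstacle is precisely this endpoint bookkeeping --- matching the endpoints of $\Su_{k-1}(A)$ with the ones dictated by the ancestor-based definition of $\LA_{k,l+m+1}(\mathcal E_{l+m+1})$, while watching for empty atoms, possibly empty boundary $(k-1)$-blocks, and the $k=1$ versus $k>1$ split; once these identifications are verified the induction closes immediately.
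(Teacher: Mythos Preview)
Your proposal is correct and follows essentially the same approach as the paper: induction on $m$, with the base case read off from the definition of the $(l+1)$th left atom (Remark~\ref{atomsimplestructleft}), and the inductive step tracking how the rightmost letters of order $>k$ and $\ge k$ move when one more $\varphi$ is applied. The only difference is emphasis: the paper observes that the left endpoint of $\Fg(\LA_{k,l+m+1}(\mathcal E_{l+m+1})\ldots\LA_{k,l+1}(\mathcal E_{l+m+1}))$ is automatically the position just after $\LB(\mathcal E_{l+m+1})$ (by the very definition of the outermost atom $\LA_{k,l+m+1}$), and therefore concentrates the induction entirely on the right endpoint, reducing it to the invariant ``$\al_j$ is the rightmost letter of order $k$ in the forgetful occurrence''; you instead verify both endpoints explicitly via the abutment of $\LA_{k,l+m+1}(\mathcal E_{l+m+1})$ with the leading $(k-1)$-block of $\Su_{k-1}(A)$, which is a slightly longer but equally valid route.
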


\begin{proof}
The first equality is proved directly by induction on $m$ using the definition of the descendant of a $k$-block 
and the fact that the image of a letter of order $\le k$ consists of letters of order $\le k$ only.

The second equality for $m=1$ it follows directly from the definitions of the $(l+1)$th atom and of the descendant
of a $(k-1)$-block (see Remark \ref{atomsimplestructleft}). 
The second equality in general will follow from the first one and the fact that either 
$\LA_{k,l+m}(\mathcal E_{l+m})\ldots\LA_{k,l+1}(\mathcal E_{l+m})$ is an empty $(k-1)$-multiblock and $j=i-1$, 
or $\LA_{k,l+m}(\mathcal E_{l+m})\ldots\LA_{k,l+1}(\mathcal E_{l+m})$ is not empty, and $\al_j$ is the 
rightmost occurrence of a letter of order $k$ in its forgetful occurrence.
We already know this for $m=1$, 
to prove 
%the last equality
this
in general, we use induction on $m$. By the definition of 
a descendant of a single letter $\al_s$ of order $>(k-1)$, the rightmost letter in the forgetful 
occurrence of $\Su_{k-1}(\al[>,s\ldots s,<]_{k-1})$ is the rightmost letter of order $>(k-1)$ in 
$\varphi(\al_s)$. Therefore, if 
$\LA_{k,l+m}(\mathcal E_{l+m})\ldots\LA_{k,l+1}(\mathcal E_{l+m})$
is not an empty $(k-1)$-multiblock and the rightmost letter of its forgetful occurrence is $\al_j$, 
a letter of order $k$, then 
$\LA_{k,l+m}(\mathcal E_{l+m+1})\ldots\LA_{k,l+1}(\mathcal E_{l+m+1})=
\Su_{k-1}(\LA_{k,l+m}(\mathcal E_{l+m})\ldots\LA_{k,l+1}(\mathcal E_{l+m}))$
is also a nonempty $k$-multiblock, and the rightmost letter in its forgetful 
occurrence is the rightmost letter of order $>(k-1)$ in $\varphi(\al_j)$.
By the induction hypothesis, $\al_j$ is the rightmost occurrence of a letter of order 
$\ge k$ in $\varphi^m(\LB(\mathcal E_l))$, so, since images of letters of order $\le(k-1)$ consist of 
letters of order $\le(k-1)$ only if $k>1$, we get that 
the rightmost occurrence of a letter of order $>(k-1)$ in $\varphi(\al_j)$
and rightmost occurrence of a letter of order $>(k-1)$ in $\varphi^{m+1}(\LB(\mathcal E_l))$ coincide.
If $\LA_{k,l+m}(\mathcal E_{l+m})\ldots\LA_{k,l+1}(\mathcal E_{l+m})$ 
is an empty $k$-multiblock, then 
$\LA_{k,l+m+1}(\mathcal E_{l+m+1})\LA_{k,l+m}(\mathcal E_{l+m+1})\ldots\LA_{k,l+1}(\mathcal E_{l+m+1})=
\LA_{k,l+m+1}(\mathcal E_{l+m+1})$, and (by the induction hypothesis) the rightmost occurrence of a letter of order $\ge k$ in 
$\varphi^m(\LB(\mathcal E_l))$ is $\LB(\mathcal E_{l+m})$. Now it suffices to use the claim for $l+m$ instead of $l$ and 1 instead of $m$, 
but we have already considered this case before.
\end{proof}

\begin{lemma}
Let $l\ge 0$ and $m\ge 1$. Consider an occurrence $\varphi^m(\LB(\mathcal E_l))$ in $\al$. 
Let $\al_{i+1}$ be the leftmost occurrence of a letter of order $>k$ in $\varphi^m(\LB(\mathcal E_l))$
and let $\al_j$ be the leftmost occurrence of a letter of order $\ge k$ in $\varphi^m(\LB(\mathcal E_l))$.

Then $\al_{i+1}=\RB(\mathcal E_{l+m})$ and $\swa ji=\Fg(\LA_{k,l+1}(\mathcal E_{l+m})\ldots\LA_{k,l+m}(\mathcal E_{l+m}))$
as occurrences in $\al$.
\end{lemma}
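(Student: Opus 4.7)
This lemma is the left--right mirror image of Lemma \ref{lastatomlocation}: as written, the starting occurrence should be $\varphi^m(\RB(\mathcal E_l))$ rather than $\varphi^m(\LB(\mathcal E_l))$ and the atoms in the conclusion should be the right atoms $\RA_{k,\cdot}(\mathcal E_{l+m})$ (these must be transcription slips, since otherwise $\RB(\mathcal E_{l+m})$ sits outside $\varphi^m(\LB(\mathcal E_l))$ and the second identity cannot possibly hold ``as occurrences in $\al$''). I will therefore mirror the proof of Lemma \ref{lastatomlocation}, running one induction on $m$ that establishes both identities at once.

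For the base case $m=1$, the first identity is exactly the definition of the descendant of the single-letter $k$-multiblock $\al[>,s\ldots s,<]_k$ with $\al_s=\RB(\mathcal E_l)$: by construction $\RB(\mathcal E_{l+1})$ is the leftmost letter of order $>k$ in $\varphi(\RB(\mathcal E_l))$. The second identity is Remark \ref{atomsimplestructright} re-read as a formula: the first right atom $\RA_{k,1}(\mathcal E_{l+1})=\al[>,j'+1\ldots j,>]_{k-1}$ is precisely the portion of $\varphi(\RB(\mathcal E_l))$ lying between the leftmost letter of order $\ge k$ (inclusive) and $\RB(\mathcal E_{l+1})$ (exclusive), which matches $\swa ji$ in the statement. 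The degenerate sub-case in which this atom is empty (so $j=i+1$) is handled by the $k=1$ vs.\ $k>1$ split in that same remark.

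For the inductive step I decompose $\varphi^m(\RB(\mathcal E_l))=\varphi(\varphi^{m-1}(\RB(\mathcal E_l)))$. By the induction hypothesis, the leftmost letter of order $>k$ in $\varphi^{m-1}(\RB(\mathcal E_l))$ is $\RB(\mathcal E_{l+m-1})$ and every letter to its left in that occurrence has order $\le k$; since $\varphi$ sends letters of order $\le k$ to words whose letters all have order $\le k$, the leftmost letter of order $>k$ in $\varphi^m(\RB(\mathcal E_l))$ must lie inside $\varphi(\RB(\mathcal E_{l+m-1}))$, and the base case applied there identifies it with $\RB(\mathcal E_{l+m})$. For the atom identity I apply $\varphi$ to the inductive equality and use the propagation rule $\RA_{k,t}(\mathcal E_{l+m})=\Su_{k-1}(\RA_{k,t}(\mathcal E_{l+m-1}))$ for $t\le l+m-1$: the inductive prefix $\Fg(\RA_{k,l+1}(\mathcal E_{l+m-1})\cdots\RA_{k,l+m-1}(\mathcal E_{l+m-1}))$ becomes $\Fg(\RA_{k,l+1}(\mathcal E_{l+m})\cdots\RA_{k,l+m-1}(\mathcal E_{l+m}))$, and the extra trailing factor $\RA_{k,l+m}(\mathcal E_{l+m})$ is supplied by the base case applied to $\mathcal E_{l+m-1}$, which pins down exactly the portion of $\varphi(\RB(\mathcal E_{l+m-1}))$ sitting to the left of $\RB(\mathcal E_{l+m})$. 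The main bookkeeping burden is propagating empty $(k-1)$-multiblock atoms through $\Su_{k-1}$, which I expect to dispose of by the same $k=1$/$k>1$ case analysis (via Remarks \ref{atomsimplestructright} and \ref{allatomsimplestruct}) that handles it in the proof of Lemma \ref{lastatomlocation}.
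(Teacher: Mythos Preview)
Your proposal is correct and takes essentially the same approach as the paper, which simply states that the proof is completely symmetric to that of Lemma~\ref{lastatomlocation}. You have also correctly identified the transcription slips in the statement (it should read $\RB(\mathcal E_l)$ and $\RA_{k,\cdot}$ rather than $\LB(\mathcal E_l)$ and $\LA_{k,\cdot}$), and your explicit write-up of the mirror induction matches the structure of the original proof.
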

\begin{proof}
The proof is completely symmetric to the proof of the previous lemma.
\end{proof}

\begin{corollary}\label{atomsperiodicitybig}
If $l\ge 2$ and $m,n\ge 0$, then 
$\Fg(\LA_{k,l+m}(\mathcal E_{l+m})\ldots\LA_{k,l}(\mathcal E_{l+m}))$ 
is the same abstract word as 
$\Fg(\LA_{k,l+m+n}(\mathcal E_{l+m+n})\ldots\LA_{k,l+n}(\mathcal E_{l+m+n}))$
and
$\Fg(\RA_{k,l}(\mathcal E_{l+m})\ldots\RA_{k,l+m}(\mathcal E_{l+m}))$ 
is the same abstract word as 
$\Fg(\RA_{k,l+n}(\mathcal E_{l+m+n})\ldots\RA_{k,l+m+n}(\mathcal E_{l+m+n}))$.
In other words, if $l\ge 2$ and $m\ge 0$, then the abstract words
$\Fg(\LA_{k,l+m}(\mathcal E_{l+m})\ldots\LA_{k,l}(\mathcal E_{l+m}))$ 
and 
$\Fg(\RA_{k,l}(\mathcal E_{l+m})\ldots\RA_{k,l+m}(\mathcal E_{l+m}))$ 
do not depend on $l$.
\end{corollary}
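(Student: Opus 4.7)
The plan is to reduce this corollary to the preceding Lemma \ref{lastatomlocation} and its symmetric counterpart, by exploiting the strong 1-periodicity of $\varphi$ which makes the left/right border of an evolution stabilize from evolutional sequence number $1$ onward.

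First I would apply Lemma \ref{lastatomlocation} with index substitution $l \mapsto l-1$ and $m \mapsto m+1$ (legal since $l\ge 2$ gives $l-1\ge 0$ and $m\ge 0$ gives $m+1\ge 1$). The sum of the two indices is still $l+m$, and the ``bottom'' atom-index in the product becomes $(l-1)+1=l$. This yields
\[
\Fg\bigl(\LA_{k,l+m}(\mathcal E_{l+m})\cdots\LA_{k,l}(\mathcal E_{l+m})\bigr) \;=\; \al_{i\ldots j},
\]
where $\al_{i-1}$ is the rightmost letter of order ${>}k$ and $\al_j$ the rightmost letter of order ${\ge}k$ in the occurrence $\varphi^{m+1}(\LB(\mathcal E_{l-1}))$. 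Applying the same substitution with $l\mapsto l-1+n$ gives the analogous description of the second forgetful word as a subword of $\varphi^{m+1}(\LB(\mathcal E_{l-1+n}))$, determined by the positions of its rightmost letters of orders ${>}k$ and ${\ge}k$.

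The key observation is then that both $l-1$ and $l-1+n$ are at least $1$ (because $l\ge 2$ and $n\ge 0$). Recall from Section \ref{sectionblocks} that $\LB(\mathcal E_{l'})=\RL_k(\varphi(\LB(\mathcal E_{l'-1})))$, and the strong 1-periodicity of $\varphi$ was used there precisely to conclude that $\LB(\mathcal E_{l'})$ is independent of $l'$ as an abstract letter, provided $l'\ge 1$. Hence $\LB(\mathcal E_{l-1})$ and $\LB(\mathcal E_{l-1+n})$ are the same abstract letter, so $\varphi^{m+1}(\LB(\mathcal E_{l-1}))$ and $\varphi^{m+1}(\LB(\mathcal E_{l-1+n}))$ are literally the same abstract word. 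The positions (and hence the identities) of the rightmost letter of order ${>}k$ and the rightmost letter of order ${\ge}k$ inside an abstract word are functions of the abstract word alone, so the two subwords extracted agree as abstract words, which is what is claimed for the left atoms.

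The right-atom equality is proved by the entirely symmetric argument, using the symmetric version of Lemma \ref{lastatomlocation} (stated and proved immediately after it) together with the fact that $\RB(\mathcal E_{l'})$ is also constant in $l'$ for $l'\ge 1$. There is no real obstacle beyond careful index bookkeeping; the main thing to verify is that the shifted index $l-1$ still lies in the ``stabilized'' range, which is exactly why the hypothesis of the corollary is $l\ge 2$ rather than $l\ge 1$.
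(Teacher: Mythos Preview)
Your proof is correct and follows essentially the same approach as the paper: apply Lemma \ref{lastatomlocation} with the shift $l\mapsto l-1$, $m\mapsto m+1$, use strong 1-periodicity to get $\LB(\mathcal E_{l-1})=\LB(\mathcal E_{l-1+n})$ as abstract letters (this is where $l\ge 2$ is needed), and read off the common subword of $\varphi^{m+1}$ of that letter; the right-atom case is symmetric.
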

\begin{proof}
Since $l\ge 2$ and $\varphi$ is a strongly 1-periodic morphism, 
$\LB(\mathcal E_{l-1})=\LB(\mathcal E_{l-1+n})$ as abstract letters.
Denote this abstract letter by $b$. Denote $\varphi^m(b)=\gamma$, this is 
a finite abstract word. Let $\gamma_{i-1}$ (resp.\ $\gamma_j$) be the rightmost occurrence of a letter
of order $>k$ (resp.\ $\ge k$) in $\gamma$. By the previous lemma, 
$\gamma_{i\ldots j}=\Fg(\LA_{k,l+m}(\mathcal E_{l+m})\ldots\LA_{k,l}(\mathcal E_{l+m}))$ 
as abstract words and 
$\gamma_{i\ldots j}=\Fg(\LA_{k,l+m+n}(\mathcal E_{l+m+n})\ldots\LA_{k,l+n}(\mathcal E_{l+m+n}))$ as abstract
words. The proof for right atoms is analogous.
\end{proof}

\begin{corollary}\label{atomsperiodicitysmallleft}
If $l\ge 2$ and $m,n\ge 0$, then 
$\Fg(\LA_{k,l}(\mathcal E_{l+m}))$ is the same abstract word as 
$\Fg(\LA_{k,l+n}(\mathcal E_{l+m+n}))$. Moreover, if $\mathcal E_{l+m}=\swa ij$, 
$\Fg(\LA_{k,l}(\mathcal E_{l+m}))=\swa st$, $\mathcal E_{l+m+n}=\swa{i'}{j'}$, and $\Fg(\LA_{k,l+n}(\mathcal E_{l+m+n}))=\swa{s'}{t'}$, 
then $s-i=s'-i'$. In other words, if $l\ge 2$ and $m\ge 0$, then $\Fg(\LA_{k,l}(\mathcal E_{l+m}))$ does not depend on $l$, 
as an abstract word, and the numbers of letters in $\al$ between $\LB(\mathcal E_l)$ and $\Fg(\LA_{k,l}(\mathcal E_{l+m}))$ 
also does not depend on $l$.
\end{corollary}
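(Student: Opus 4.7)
The plan is to reduce this corollary to Corollary~\ref{atomsperiodicitybig} by a cancellation argument on concatenations of atoms. Concretely, I would invoke Corollary~\ref{atomsperiodicitybig} twice: first with the given parameters $(l,m,n)$, yielding that the long concatenation $\Fg(\LA_{k,l+m}(\mathcal E_{l+m})\ldots\LA_{k,l}(\mathcal E_{l+m}))$ coincides, as an abstract word, with $\Fg(\LA_{k,l+m+n}(\mathcal E_{l+m+n})\ldots\LA_{k,l+n}(\mathcal E_{l+m+n}))$; and second with the shifted parameters $(l+1,m-1,n)$, which are admissible since $l\ge 2$ forces $l+1\ge 2$, and we are in the case $m\ge 1$. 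The second invocation gives the equality of the one-atom-shorter concatenations $\Fg(\LA_{k,l+m}(\mathcal E_{l+m})\ldots\LA_{k,l+1}(\mathcal E_{l+m})) = \Fg(\LA_{k,l+m+n}(\mathcal E_{l+m+n})\ldots\LA_{k,l+n+1}(\mathcal E_{l+m+n}))$. Since in each block the shorter concatenation is a prefix of the longer one (by the splitting of $\al[<,i\ldots j,>]_{k-1}$ into atoms recorded after Remark~\ref{zerothatomsimplestruct}), cancelling this common prefix on both sides yields the required abstract-word equality $\Fg(\LA_{k,l}(\mathcal E_{l+m})) = \Fg(\LA_{k,l+n}(\mathcal E_{l+m+n}))$.

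For the offset claim $s-i = s'-i'$, the same splitting is the key: in $\mathcal E_{l+m}=\swa ij$ the forgetful occurrence of $\LA_{k,l+m}(\mathcal E_{l+m})\ldots\LA_{k,l+1}(\mathcal E_{l+m})$ is exactly $\swa i{s-1}$, so its length is $s-i$, and the analogous length in $\mathcal E_{l+m+n}$ is $s'-i'$. The second application of Corollary~\ref{atomsperiodicitybig} above already identified these two $(k-1)$-multiblocks as the same abstract word, so their forgetful occurrences have the same length, forcing $s-i = s'-i'$.

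The edge case $m=0$ must be handled separately, since then the shorter concatenation is empty and the subtraction argument is vacuous. But there $\LA_{k,l}(\mathcal E_l)$ is simply the first atom in the atom-splitting of $\al[<,i\ldots j,>]_{k-1}$, so $s=i$ and $s'=i'$ trivially, and the abstract-word equality is exactly the $m=0$ case of Corollary~\ref{atomsperiodicitybig}. I do not anticipate any serious obstacle: the proof is pure bookkeeping on top of Corollary~\ref{atomsperiodicitybig}, and the only mild subtleties are checking the hypothesis $l+1\ge 2$ of the shifted invocation and dispatching $m=0$ by inspection.
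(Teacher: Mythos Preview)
Your proposal is correct and follows essentially the same approach as the paper: both apply Corollary~\ref{atomsperiodicitybig} twice (once for the full concatenation down to index $l$, once for the concatenation down to index $l+1$) and cancel the common prefix, handling $m=0$ separately. Your version is slightly more explicit about the parameter shift $(l+1,m-1,n)$ in the second invocation, but the argument is otherwise identical.
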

\begin{proof}
If $m=0$, then this is just the previous corollary. If $m>0$, then 
by the previous corollary, 
$\Fg(\LA_{k,l+m}(\mathcal E_{l+m})\ldots\LA_{k,l}(\mathcal E_{l+m}))$ 
is the same abstract word as 
$\Fg(\LA_{k,l+m+n}(\mathcal E_{l+m+n})\ldots\LA_{k,l+n}(\mathcal E_{l+m+n}))$, 
denote this abstract word by $\gamma$,
and 
$\Fg(\LA_{k,l+m}(\mathcal E_{l+m})\ldots\LA_{k,l+1}(\mathcal E_{l+m}))$ 
is the same abstract word as 
$\Fg(\LA_{k,l+m+n}(\mathcal E_{l+m+n})\ldots\LA_{k,l+n+1}(\mathcal E_{l+m+n}))$,
denote this abstract word by $\delta$. Clearly, $\delta$ is a prefix of $\gamma$, 
so write $\gamma=\delta\delta'$ for some finite abstract word $\delta'$. But then 
$\Fg(\LA_{k,l}(\mathcal E_{l+m}))=\delta'$ as abstract words, 
$\Fg(\LA_{k,l}(\mathcal E_{l+m+n}))=\delta'$ as abstract words, 
$s-i=|\delta|$ and $s'-i'=|\delta|$.
\end{proof}

\begin{corollary}\label{atomsperiodicitysmallright}
If $l\ge 2$ and $m,n\ge 0$, then 
$\Fg(\RA_{k,l}(\mathcal E_{l+m}))$ is the same abstract word as 
$\Fg(\RA_{k,l+n}(\mathcal E_{l+m+n}))$. Moreover, if $\mathcal E_{l+m}=\swa ij$, 
$\Fg(\RA_{k,l}(\mathcal E_{l+m}))=\swa st$, $\mathcal E_{l+m+n}=\swa{i'}{j'}$, and $\Fg(\RA_{k,l+n}(\mathcal E_{l+m+n}))=\swa{s'}{t'}$, 
then $j-t=j'-t'$. In other words, if $l\ge 2$ and $m\ge 0$, then $\Fg(\RA_{k,l}(\mathcal E_{l+m}))$ does not depend on $l$, 
as an abstract word, and the numbers of letters in $\al$ between $\Fg(\RA_{k,l}(\mathcal E_{l+m}))$ and $\LB(\mathcal E_l)$
also does not depend on $l$.
\end{corollary}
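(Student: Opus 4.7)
The plan is to mirror the proof of Corollary \ref{atomsperiodicitysmallleft}, carefully flipping the roles of ``prefix'' and ``suffix'' because the ordering conventions for right atoms and left atoms are opposite. If $m=0$, the statement for the abstract word reduces immediately to Corollary \ref{atomsperiodicitybig}, and the index claim in this case is degenerate: if $\mathcal E_{l+m}=\swa ij$ and $\Fg(\RA_{k,l}(\mathcal E_{l+m}))=\swa st$, then because $\RA_{k,l}$ is the outermost right atom in $\mathcal E_l$, we must have $t=j$, so $j-t=0$ and similarly $j'-t'=0$.

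Now assume $m>0$. Apply Corollary \ref{atomsperiodicitybig} with parameters $(l,m,n)$ to obtain an abstract word
\[
\gamma=\Fg(\RA_{k,l}(\mathcal E_{l+m})\ldots\RA_{k,l+m}(\mathcal E_{l+m}))
=\Fg(\RA_{k,l+n}(\mathcal E_{l+m+n})\ldots\RA_{k,l+m+n}(\mathcal E_{l+m+n})).
\]
Apply the same corollary with parameters $(l+1,m-1,n)$ (legal since $l+1\ge 2$ and $m-1\ge 0$) to obtain
\[
\delta=\Fg(\RA_{k,l+1}(\mathcal E_{l+m})\ldots\RA_{k,l+m}(\mathcal E_{l+m}))
=\Fg(\RA_{k,l+n+1}(\mathcal E_{l+m+n})\ldots\RA_{k,l+m+n}(\mathcal E_{l+m+n})).
\]
Unlike the left-atom case, in the concatenation of right atoms the innermost atom $\RA_{k,l}$ appears first and the outermost $\RA_{k,l+m}$ appears last, so $\delta$ is a \emph{suffix} of $\gamma$. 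Writing $\gamma=\gamma'\delta$, we conclude $\Fg(\RA_{k,l}(\mathcal E_{l+m}))=\gamma'=\Fg(\RA_{k,l+n}(\mathcal E_{l+m+n}))$ as abstract words, which is the first assertion.

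For the index statement, recall from the zeroth-atom decomposition that $\Fg(\RA_{k,1}(\mathcal E_{l+m})\ldots\RA_{k,l+m}(\mathcal E_{l+m}))$ ends at index $j$ (the last letter of $\mathcal E_{l+m}=\swa ij$), and hence so does its subword $\Fg(\RA_{k,l}(\mathcal E_{l+m})\ldots\RA_{k,l+m}(\mathcal E_{l+m}))=\gamma$. Therefore, if $\Fg(\RA_{k,l}(\mathcal E_{l+m}))=\swa st$, then $t=j-|\delta|$, i.e., $j-t=|\delta|$. The identical computation inside $\mathcal E_{l+m+n}$ yields $j'-t'=|\delta|$, so $j-t=j'-t'$.

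There is no real obstacle here: the previous corollary already performs the same bookkeeping on the left, and the only thing that requires attention is the prefix/suffix flip and the fact that $\RA_{k,l+m}$ (not $\RA_{k,l}$) is the atom abutting the right border of the block, so that the constant distance is measured from the right end $j$ rather than from the left end $i$.
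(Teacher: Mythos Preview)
Your proof is correct and is precisely the symmetric version of the argument for Corollary~\ref{atomsperiodicitysmallleft}; the paper itself simply says ``The proof is completely symmetric to the proof of the previous corollary,'' and you have carried out that symmetry explicitly, correctly handling the prefix/suffix flip and the fact that the outermost right atom abuts the right end $j$ of the block.
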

\begin{proof}
The proof is completely symmetric to the proof of the previous corollary.
\end{proof}

%The fact that $\LB(\mathcal E_l)$ and $\RB(\mathcal E_l)$ do not depend on $l$ if $l\ge 1$ 
Observe that the condition $l\ge 2$ cannot be omitted
since in the proof of Corollary \ref{atomsperiodicitybig}
we used the fact that $\LB(\mathcal E_{l-1})=\LB(\mathcal E_{l-1+n})$
as abstract letters. Moreover, $\LA_{k,1}(\mathcal E_1)$ is a $(k-1)$-multiblock 
contained in the image of $\LB(\mathcal E_0)$, and $\LA_{k,l}(\mathcal E_l)$ for $l>1$
is contained in the image of $\LB(\mathcal E_l)$, a letter which does not have to be equal to 
$\LB(\mathcal E_0)$, so the letters of order $k$ and (if $k>1$) $(k-1)$-blocks
in $\LA_{k,1}(\mathcal E_1)$ and $\LA_{k,l}(\mathcal E_l)$ may be different.
And the first left atoms of other $k$-blocks in the evolution 
are superdescendants of $\LA_{k,1}(\mathcal E_1)$, while the $l$th 
atoms of other $k$-blocks in the evolution are superdescendants of $\LA_{k,l}(\mathcal E_l)$
for $l>1$. So, the $(k-1)$-blocks in $\LA_{k,1}(\mathcal E_m)$ may belong to 
totally different evolutions than $(k-1)$-blocks in $\LA_{k,l}(\mathcal E_n)$ for $l>1$ belong to, 
while the $(k-1)$-blocks in $\LA_{k,l}(\mathcal E_m)$ 
and in $(k-1)$-blocks in $\LA_{k,l'}(\mathcal E_n)$ by Corollary \ref{atomsperiodicitysmallleft} 
belong to the same evolutions 
if evolutions are understood as sequences of abstract words 
(as in Lemma \ref{finite-number-of-evolutions}).
%since $\Fg(\LA_{k,l}(\mathcal E_m))=\Fg\LA_{k,l'}(\mathcal E_n)$.

These observations
and these corollaries
justify
the following definitions. If $l\ge 1$, we call the concatenation of the $(k-1)$-multiblocks 
$\LA_{k,1}(\mathcal E_l)\CA_{k,0}(\mathcal E_l)\RA_{k,1}(\mathcal E_l)$ 
%Now we define 
the \emph{core} of 
%a pre-stable $k$-block (notation: $\Cr$).
$\mathcal E_l$. The core of $\mathcal E_l$ is denoted by $\Cr_k(\mathcal E_l)$.
%Consider the first pre-stable $k$-block of the evolution. Its core is the
%whole block.
%Then, consider two cases. First, if $k=1$, the core of $\mathcal E_{l+1}$ is $\varphi(\Cr(\mathcal
%E_l))$. Thus, $\Cr(\mathcal E_{\Fx(\mathcal E)+l})=\varphi^l(\mathcal E_{\Fx(\mathcal
%E)})$. 
%If $k>1$, denote $\mathcal E_{\Fx(\mathcal E)}$ by $\swa ij$. $\al_{i-1}$ is a letter of order $>k$, 
%hence it may be considered as a left border of some $(k-1)$-block $\swa iu$, where $u\le j$. This 
%$(k-1)$-block is called \emph{the left inner $(k-1)$-block} and is denoted by $\LI(\swa ij)$.
%The right inner $(k-1)$-block ($\RI$) is defined similarly. Then, set 
%$\LI(\mathcal E_{l+1})=\Su(\LI(\mathcal E_l))$ and $\RI(\mathcal E_{l+1})=\Su(\RI(\mathcal E_l))$. 
%Finally, the core of a block $\mathcal E_l$ is the occurrence that embraces its left and right 
%inner $(k-1)$-blocks (and nothing else). I. e., if $\LI(\mathcal E_l)=\swa iu$ and $\RI(\mathcal E_l)=
%\swa vj$, then $\Cr(\mathcal E_l)=\swa ij$.
If $l\ge 2$, the 
%suboccurrence in the block between the core and its left
%(right) border 
concatenation of the $(k-1)$-multiblocks 
$\LA_{k,l}(\mathcal E_l)\LA_{k,l-1}(\mathcal E_l)\ldots\LA_{k,2}(\mathcal E_l)$
(resp.\ $\RA_{k,2}(\mathcal E_l)\ldots\RA_{k,l-1}(\mathcal E_l)\RA_{k,l}(\mathcal E_l)$)
is called the \emph{left (resp.\ right) component}.

By Remark \ref{atomsimplestructleft}, $\LA_{k,l}(\mathcal E_l)$ is either an empty $(k-1)$-multiblock, 
or it contains (actually, the rightmost letter of its forgetful occurrence is)
a letter of order $k$. By Corollary \ref{atomsperiodicitysmallleft}, either for all $l>1$ 
$\LA_{k,l}(\mathcal E_l)$ is an empty $(k-1)$-multiblock, 
or for all $l>1$ $\LA_{k,l}(\mathcal E_l)$ contains a letter of order $k$. So, if 
each atom $\LA_{k,l}(\mathcal E_l)$ for $l>1$ contains a letter of order $k$, we say that 
\textit{Case I holds for $\mathcal E$ at the left}. If all atoms $\LA_{k,l}(\mathcal E_l)$
for $l>1$ are empty $(k-1)$-blocks, we say that 
\textit{Case II holds for $\mathcal E$ at the left}.
Similarly, cases I
%, 
and II
% and III 
are defined for right 
%borders and
atoms. These cases happen independently at right and at
left, in any combination. 
\begin{remark}
The left (resp.\ right) component 
is empty if and only if Case II holds at the left (resp.\ at the right).
If Case II holds both at the left and at the right for an evolution $\mathcal E$
of $k$-blocks and $l\ge 1$, then $\mathcal E_l=\Fg(\Cr_k(\mathcal E_l))$.
\end{remark}

Note that if $k\in\NN$, then $k$-blocks may exist by definition even if all letters in $\al$ 
have either order $<k$, or order $\infty$ (see also Lemma \ref{splitconcatenation}). In this situation, 
Case II holds for all evolutions of $k$-blocks both at the left and at the right.

\section{1-Blocks}\label{sectiononeblocks}

Now we will consider 1-blocks more accurately. The fact that 
$\varphi$ is a strongly 1-periodic morphism makes the structure of 
a 1-block quite easy. During this section, it will be useful to keep in mind that 
0-multiblocks are just occurrences in $\al$ and their descendants are just 
their images under $\varphi$.

\begin{lemma}\label{oneblockstructlemmai}
Let $\mathcal E$ be an evolution of 1-blocks. Then:

If $l>1$, then $\Cr_1(\mathcal E_l)$ does not depend on $l$ as an abstract word and consists of periodic letters only.

If $l>1$, then $\LA_{1,l}(\mathcal E_l)$ and $\RA_{1,l}(\mathcal E_l)$ do not depend on $l$ as abstract words.

If $l>1$ and $m\ge 1$, then $\LA_{1,l}(\mathcal E_{l+m})$ and $\RA_{1,l}(\mathcal E_{l+m})$
as abstract words
depend neither on $l$ nor on $m$. They equal $\varphi(\LA_{1,l}(\mathcal E_l))$ and 
$\varphi(\RA_{1,l}(\mathcal E_l))$ as abstract words, respectively
and consist of periodic letters of order 1 only.
\end{lemma}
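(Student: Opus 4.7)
The plan is to first establish the key structural fact that $\varphi$ fixes every periodic letter of order $1$, and then to trace through the descendant formulas for $1$-multiblocks.

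First I would observe that if $b$ is a periodic letter of order $1$, then all letters of $\varphi(b)$ have order $\le 1$, i.e.\ order $1$; by Lemma \ref{rates} exactly one of them is of order $1$, and by weak 1-periodicity this unique letter is $b$ itself. Hence $|\varphi(b)|=1$ and $\varphi(b)=b$. Similarly, weak 1-periodicity tells us that if $b$ is a preperiodic letter of order $1$ then $\varphi(b)$ consists entirely of periodic letters of order $1$. Consequently, for any finite abstract word $\gamma$ whose letters all have order $1$, $\varphi(\gamma)$ consists of periodic order-$1$ letters only, and so $\varphi^n(\gamma)=\varphi(\gamma)$ for every $n\ge 1$.

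Next, unwinding the atom definition in the special case $k=1$ (where "letter of order $>k-1=0$" just means "any letter"), I would verify that $\LA_{1,l}(\mathcal E_l)$ is the suffix of $\varphi(\LB(\mathcal E_{l-1}))$ starting immediately after the rightmost letter of order $>1$ in that word. Since $\varphi$ is strongly $1$-periodic, for $l\ge 2$ we have $\LB(\mathcal E_{l-1})=\LB(\mathcal E)$ as an abstract letter, so this suffix is an abstract word independent of $l$. The analogous statement for $\RA_{1,l}(\mathcal E_l)$ is symmetric. This proves the second assertion of the lemma.

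For the third assertion, the superdescendant formula (using $\Su_0=\varphi$) gives $\LA_{1,l}(\mathcal E_{l+m})=\varphi^m(\LA_{1,l}(\mathcal E_l))$; since $\LA_{1,l}(\mathcal E_l)$ consists of order-$1$ letters, the first paragraph shows that $\varphi^m(\LA_{1,l}(\mathcal E_l))=\varphi(\LA_{1,l}(\mathcal E_l))$ for every $m\ge 1$ and that this word consists only of periodic letters of order $1$. Combined with the second assertion this establishes part 3, with the $\RA$ case symmetric.

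Finally, for the first assertion I would expand $\Cr_1(\mathcal E_l)=\LA_{1,1}(\mathcal E_l)\,\CA_{1,0}(\mathcal E_l)\,\RA_{1,1}(\mathcal E_l)=\varphi^{l-1}(\LA_{1,1}(\mathcal E_1))\,\varphi^l(\mathcal E_0)\,\varphi^{l-1}(\RA_{1,1}(\mathcal E_1))$, using that $\CA_{1,0}(\mathcal E_0)=\mathcal E_0$ is a $1$-block and hence consists of order-$1$ letters. For $l\ge 2$ each of the three factors is $\varphi$ applied to an order-$1$ word followed by at least zero further applications of $\varphi$, so by the first paragraph each factor equals $\varphi$ of the corresponding initial atom, consists only of periodic letters of order $1$, and is independent of $l$. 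The only mild subtlety is tracking that strong $1$-periodicity of $\LB(\mathcal E_\cdot)$ requires the subscript to be $\ge 1$, which is precisely what forces the hypothesis $l>1$ (rather than $l\ge 1$) throughout.
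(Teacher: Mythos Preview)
Your proof is correct and follows essentially the same approach as the paper: both rely on the key observation that $\varphi$ fixes periodic letters of order $1$ (so $\varphi^n(\gamma)=\varphi(\gamma)$ for any order-$1$ word $\gamma$ and $n\ge 1$), and then use $\LA_{1,l}(\mathcal E_{l+m})=\varphi^m(\LA_{1,l}(\mathcal E_l))$ and $\Cr_1(\mathcal E_l)=\varphi^{l-1}(\Cr_1(\mathcal E_1))$ to derive the statements. The only cosmetic differences are that the paper handles $\Cr_1$ as a single word rather than three concatenated pieces, and cites Corollaries~\ref{atomsperiodicitysmallleft}/\ref{atomsperiodicitysmallright} for the second assertion where you unwind the atom definition directly.
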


\begin{proof}
Since $\varphi$ is (in particular) weakly 1-periodic, the image of a preperiodic letter of order 1
consists of periodic letters of oder 1 only. The image of a periodic letter of order 1 is a (single) 
periodic letter of order 1. We have $\Cr_1(\mathcal E_l)=\LA_{1,1}(\mathcal E_l)\CA_{1,0}(\mathcal E_l)\RA_{1,1}(\mathcal E_l)=
\Su_0^{l-1}(\LA_{1,1}(\mathcal E_1)\CA_{1,0}(\mathcal E_1)\RA_{1,1}(\mathcal E_1))=\varphi^{l-1}(\Cr_1(\mathcal E_1))$.
So, if $l>1$, all letters in $\varphi^{l-1}(\Cr_1(\mathcal E_1))$ are periodic letters of order 1.
By weak 1-periodicity again, $\varphi(\varphi^{l-1}(\Cr_1(\mathcal E_1)))=\varphi^{l-1}(\Cr_1(\mathcal E_1))$
as abstract words. But $\Cr_1(\mathcal E_l)=\varphi^l(\Cr_1(\mathcal E_1))=\varphi(\varphi^{l-1}(\Cr_1(\mathcal E_1)))$, 
so we have the first claim.

The second claim is just a particular case of Corollaries \ref{atomsperiodicitysmallleft} and \ref{atomsperiodicitysmallright}.
For the third claim, we write $\LA_{1,l}(\mathcal E_l+m)=\Su_0^m(\LA_{1,l}(\mathcal E_l))=\varphi^m(\LA_{1,l}(\mathcal E_l))$. 
Using the second claim, we see that it is sufficient to prove that $\varphi^m(\LA_{1,l}(\mathcal E_l))$
does not depend on $m$ as an abstract word if $m\ge 1$ (for $m=1$ it clearly equals $\varphi(\LA_{1,l}(\mathcal E_l))$). 
Again, since $\varphi$ is weakly 1-periodic, 
$\varphi^m(\LA_{1,l}(\mathcal E_l))$ consists of periodic letters of order 1 only if $m\ge 1$, and, 
by weak 1-periodicity again, $\varphi^{m+1}(\LA_{1,l}(\mathcal E_l))=\varphi^m(\LA_{1,l}(\mathcal E_l))$
as an abstract words if $m\ge 1$. The computation for the right atoms is the same.
\end{proof}

After we have this lemma, we can give the following definitions:

Given an evolution $\mathcal E$ of 1-blocks, we call the abstract word $\Cr_1(\mathcal E_l)$ for 
any $l>1$ the \textit{core of $\mathcal E$} and denote it by $\Cr_1(\mathcal E)$.
The abstract word $\LA_{1,l}(\mathcal E_l)$ (resp.\ $\RA_{1,l}(\mathcal E_l)$) for any $l>1$ is called 
the \textit{left (resp.\ right) preperiod of $\mathcal E$} and is denoted by $\LpreP_1(\mathcal E)$
(resp.\ by $\RpreP_1(\mathcal E)$). The $l$th left (resp.\ right) atom 
of a particular 1-block $\mathcal E_l$, where $l>1$ is called 
the \textit{left (resp.\ right) preperiod of $\mathcal E_l$}
and is denoted by
by $\LpreP_1(\mathcal E_l)$ (resp.\ by $\RpreP_1(\mathcal E_l)$.
The abstract word
$\LA_{1,l}(\mathcal E_{l+m})$ 
(resp.\ $\RA_{1,l}(\mathcal E_{l+m})$) for any $l>1$ and $m\ge 1$ is called 
the \textit{left (resp.\ right) period of $\mathcal E$} and is denoted by 
$\LP_1(\mathcal E)$ (resp.\ by $\RP_1(\mathcal E)$).
By Lemma \ref{oneblockstructlemmai}, it equals $\varphi(\LpreP_1(\mathcal E))$
(resp.\ $\varphi(\RpreP_1(\mathcal E))$).
If $l>1$, the occurrence between $\LpreP_1(\mathcal E_l)$ and $\Cr_1(\mathcal E_l)$
(resp.\ between $\Cr_1(\mathcal E_l)$ and $\RpreP_1(\mathcal E_l)$)
is called the \textit{left (resp.\ right) regular part} of $\mathcal E_l$
and is denoted by $\LR_1(\mathcal E_l)$ (resp.\ by $\RR_1(\mathcal E_l)$).
If $l=2$, it is an occurrence of the empty word, and if $l\ge 3$, it is the concatenation 
of left atoms $\LA_{1,l-1}(\mathcal E_l)\ldots \LA_{1,2}(\mathcal E_l)$
(resp.\ of right atoms $\RA_{1,2}(\mathcal E_l)\ldots \RA_{1,l-1}(\mathcal E_l)$),
all these atoms equal $\LP_1(\mathcal E)$ (resp.\ $\RP_1(\mathcal E)$) as abstract words.

Using this terminology, we formulate the following corollary.
\begin{corollary}
If $\mathcal E$ is an evolution of 1-blocks and $l>1$, 
then the 1-block $\mathcal E_l$ equals the following
abstract word: 
$\LpreP_1(\mathcal E)\LP_1(\mathcal E)\ldots \LP_1(\mathcal E)\Cr_1(\mathcal E)\RP_1(\mathcal E)\ldots \RP_1(\mathcal E)\RpreP_1(\mathcal E)$, 
where $\LP_1(\mathcal E)$ and $\RP_1(\mathcal E)$ are repeated $l-2$ times each.

$\LpreP_1(\mathcal E)$ (resp.\ $\RpreP_1(\mathcal E)$) is an empty word if and only if 
Case II holds at the left (resp.\ at the right) for $\mathcal E$.

$\LP_1(\mathcal E)$ (resp.\ $\RP_1(\mathcal E)$) is an empty word if and only if 
Case II holds at the left (resp.\ at the right) for $\mathcal E$.

The left (resp.\ right) regular part of $\mathcal E_l$ consists 
of periodic letters of order 1 only. It is an empty word if 
and only if Case II holds at the left (resp.\ at the right) for $\mathcal E$ or 
$l=2$.\qed
\end{corollary}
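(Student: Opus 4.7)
The plan is to derive all four bullets directly from Lemma \ref{oneblockstructlemmai} together with the general decomposition of a $k$-block into its atoms; essentially the corollary just repackages that lemma in the newly introduced terminology, so the main task is bookkeeping rather than fresh combinatorics.

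First I would recall that for any $k$-block $\mathcal E_l$ in an evolution $\mathcal E$ one has the atomic decomposition
\[
\al[<,i\ldots j,>]_{k-1}=\LA_{k,l}(\mathcal E_l)\LA_{k,l-1}(\mathcal E_l)\ldots\LA_{k,1}(\mathcal E_l)\CA_{k,0}(\mathcal E_l)\RA_{k,1}(\mathcal E_l)\ldots\RA_{k,l}(\mathcal E_l),
\]
stated before Lemma \ref{lastatomlocation}. Specializing to $k=1$ and $\mathcal E_l=\al_{i\ldots j}$, the forgetful occurrence is just $\swa ij$ itself. The outermost atoms $\LA_{1,l}(\mathcal E_l)$ and $\RA_{1,l}(\mathcal E_l)$ are by definition $\LpreP_1(\mathcal E)$ and $\RpreP_1(\mathcal E)$. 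For each inner index $1<j<l$, writing $\mathcal E_l=\mathcal E_{j+(l-j)}$ with $m=l-j\ge 1$, the third claim of Lemma \ref{oneblockstructlemmai} gives $\LA_{1,j}(\mathcal E_l)=\LP_1(\mathcal E)$ and $\RA_{1,j}(\mathcal E_l)=\RP_1(\mathcal E)$ as abstract words; there are exactly $l-2$ such values of $j$. Finally the middle factor $\LA_{1,1}(\mathcal E_l)\CA_{1,0}(\mathcal E_l)\RA_{1,1}(\mathcal E_l)$ is by definition the core, equal to $\Cr_1(\mathcal E)$ by the first claim of the same lemma. Concatenating yields the desired formula.

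For bullets two and three, Case II at the left was defined as the condition that every atom $\LA_{1,l}(\mathcal E_l)$ with $l>1$ is an empty $0$-multiblock, that is, an empty occurrence in $\al$. Since $\LpreP_1(\mathcal E)=\LA_{1,l}(\mathcal E_l)$ as an abstract word for every $l>1$, this is equivalent to $\LpreP_1(\mathcal E)$ being the empty word. By Lemma \ref{oneblockstructlemmai} we have $\LP_1(\mathcal E)=\varphi(\LpreP_1(\mathcal E))$, and $\varphi$ is nonerasing, so $\LP_1(\mathcal E)$ is empty exactly when $\LpreP_1(\mathcal E)$ is empty. The statements for right preperiods and periods are obtained by the symmetric argument.

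For bullet four, the left regular part of $\mathcal E_l$ is by definition the occurrence between $\LpreP_1(\mathcal E_l)$ and $\Cr_1(\mathcal E_l)$, which in atomic terms is $\LA_{1,l-1}(\mathcal E_l)\ldots\LA_{1,2}(\mathcal E_l)$. For $l=2$ this concatenation is empty; for $l\ge 3$ it is $\LP_1(\mathcal E)$ repeated $l-2$ times by bullet one. The third claim of Lemma \ref{oneblockstructlemmai} asserts precisely that for $l>1$, $m\ge 1$ the atom $\LA_{1,l}(\mathcal E_{l+m})$ consists of periodic letters of order $1$ only, so the regular part does too. Hence the regular part is empty if and only if $l=2$ or $\LP_1(\mathcal E)$ is empty, and by bullet three the latter is equivalent to Case II holding at the left. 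The right-hand statements are symmetric. There is no real obstacle here — all analytic content is in Lemma \ref{oneblockstructlemmai} — the only care needed is invoking nonerasure of $\varphi$ to get the ``iff'' in the emptiness claims.
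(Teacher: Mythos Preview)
Your proof is correct and is precisely the argument the paper intends: the corollary is marked \qed\ with no written proof, and your derivation from the atomic decomposition together with Lemma \ref{oneblockstructlemmai} (plus the nonerasing property of $\varphi$ for the emptiness equivalences) is exactly the intended unpacking of the definitions.
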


The terminology we introduced and the structure of a 1-block is illustrated by Fig. \ref{onestruct}.

\begin{figure}[!h]
\centering
\includegraphics{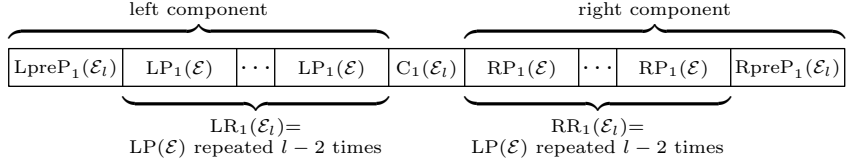}
\caption{Detailed structure of a 1-block $\mathcal E_l$.}
\label{onestruct}
\end{figure}

We call a 1-block $\mathcal E_l$ \textit{stable} if $l\ge 3$, 
otherwise it is called \textit{unstable}. If a 1-block is stable, then its 
left and right components, preperiods and regular parts, as well as its core, are defined.
The following corollary about lengths of subwords inside 1-blocks follows directly
from what we already know about the structure of 1-blocks and from 
Corollary \ref{finite-number-of-evolutions}.

\begin{corollary}\label{oneblocklengths}
The lengths of all unstable 1-blocks are bounded by a single constant that depends on 
$\E$ and $\varphi$ only. The lengths of all cores and left and right 
preperiods of all stable 1-blocks are bounded by a single constant that depends on 
$\E$ and $\varphi$ only.

The the left (resp.\ right) regular part
of a stable 1-block $\mathcal E_l$
is a nonempty word if and only if Case I holds at the left (resp.\ at the right).
Moreover, it is 
% completely $|\LP_1(\mathcal E)|$-periodic 
% (resp.\ $|\RP_1(\mathcal E)|$-periodic)
% with period $\LP_1(\mathcal E)$ (resp.\ $\RP_1(\mathcal E)$), 
completely $\LP_1(\mathcal E)$-periodic 
(resp.\ $\RP_1(\mathcal E)$-periodic),
and the length of the left (resp.\ right) regular part equals $(l-2)|\LP_1(\mathcal E)|$
(resp.\ $(l-2)|\LP_1(\mathcal E)|$).

In particular, 
the length of the left (resp.\ right)
regular part of a 1-block $\mathcal E_l$, as well as 
the length of the left (resp.\ right) component
is either $\Theta(l)$ if Case I holds at the left (resp.\ at the right), 
or $0$ if Case II holds at the left (resp.\ at the right).

The length of the whole 1-block $\mathcal E_l$ is always $O(l)$. 
It is $\Theta(l)$ if Case I holds at the left or at the right, and 
is $O(1)$ if Case II holds both at the left and at the right.
All constants
in the $\Theta$- and $O$-notations in this corollary depend on $\E$ and $\varphi$ only.\qed
\end{corollary}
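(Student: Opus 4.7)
The plan is to assemble the statement from Lemma~\ref{oneblockstructlemmai}, the preceding corollary, and the finiteness of evolutions given by Corollary~\ref{finite-number-of-evolutions}; I do not expect any new idea to be required. I would first deal with the two boundedness claims by observing that both the unstable 1-blocks $\mathcal E_0,\mathcal E_1,\mathcal E_2$ and the abstract words $\Cr_1(\mathcal E)$, $\LpreP_1(\mathcal E)$, $\RpreP_1(\mathcal E)$ depend only on $\mathcal E$ viewed as a sequence of abstract words (the latter three by Lemma~\ref{oneblockstructlemmai}), and there are only finitely many such $\mathcal E$ by Corollary~\ref{finite-number-of-evolutions}. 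Taking the maxima of these finitely many lengths produces the desired constants depending only on $\E$ and $\varphi$.

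For the claims about stable blocks I would then invoke the decomposition from the corollary preceding the statement: for $l\ge 3$ the block $\mathcal E_l$ equals $\LpreP_1(\mathcal E)\cdot \LP_1(\mathcal E)\cdots\LP_1(\mathcal E)\cdot \Cr_1(\mathcal E)\cdot \RP_1(\mathcal E)\cdots\RP_1(\mathcal E)\cdot \RpreP_1(\mathcal E)$ with $l-2$ copies of each period. By the definition of $\LR_1(\mathcal E_l)$ as the concatenation $\LA_{1,l-1}(\mathcal E_l)\cdots \LA_{1,2}(\mathcal E_l)$, the left regular part is precisely these $l-2$ consecutive copies of $\LP_1(\mathcal E)$, so it is completely $\LP_1(\mathcal E)$-periodic of length $(l-2)|\LP_1(\mathcal E)|$; the same preceding corollary characterizes $\LP_1(\mathcal E)$ as the empty word exactly in Case~II at the left, which immediately gives the equivalence ``nonempty $\iff$ Case~I''. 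The right regular part is handled symmetrically.

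The asymptotic bounds then drop out by arithmetic. The length of the left component equals $|\LpreP_1(\mathcal E)|+(l-2)|\LP_1(\mathcal E)|$; in Case~I at the left, $|\LP_1(\mathcal E)|\ge 1$ forces this to be $\Theta(l)$, while in Case~II both summands vanish. Writing $|\mathcal E_l|$ as the sum of the left component, the core, and the right component, and using the bounded-length facts from the first step (which in particular absorb the finitely many blocks with $l\le 2$), one gets $|\mathcal E_l|=O(l)$ in general, $\Theta(l)$ as soon as Case~I holds on at least one side, and $O(1)$ when Case~II holds on both sides. The only real obstacle is bookkeeping: carefully routing the Case~I/II dichotomy of Lemma~\ref{oneblockstructlemmai} through the characterization of when $\LP_1(\mathcal E)$ and $\RP_1(\mathcal E)$ are empty, and verifying that the constants in the $O$- and $\Theta$-notation can be chosen uniformly, which reduces to the finitely many maxima produced in the first step.
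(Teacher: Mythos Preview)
Your proposal is correct and follows exactly the approach the paper intends: the paper presents this corollary with a \qed and explicitly says it ``follows directly from what we already know about the structure of 1-blocks and from Corollary~\ref{finite-number-of-evolutions}.'' Your breakdown---boundedness from the finiteness of evolutions as sequences of abstract words, the explicit $(l-2)$-fold decomposition of the regular parts from the preceding corollary, and the arithmetic for the asymptotics---is precisely the bookkeeping the paper leaves implicit.
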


Now let us recall the definition of a strongly 1-periodic morphism 
with long images. Let $\mathcal E$ be an evolution of 1-blocks 
As we already noted, $\LB(\mathcal E_{l+1})=\RL_1(\varphi(\LB(\mathcal E_l)))$
for all $l\ge 0$. Moreover, suppose now that $l\ge 1$ and
$\LB(\mathcal E_{l+1})=\RL_1(\varphi(\LB(\mathcal E_l)))=\LB(\mathcal E)$ as
an abstract letter. Then $\varphi(\LB(\mathcal E_l))$
has a suffix $\LB(\mathcal E_{l+1})\LpreP_1(\mathcal E_{l+1})$.
Hence, 
the word $\gamma$ we used in the definition of a final 
period for $a=\LB(\mathcal E)$ is $\LpreP_1(\mathcal E)$,
and $\varphi(\gamma)=\varphi(\LpreP_1(\mathcal E))=\LP_1(\mathcal E)$
by Lemma \ref{oneblockstructlemmai} (and by the definitions of 
the left preperiod and the left period of an evolution).
So, the following lemma follows now directly from Lemma \ref{finalperiodsarecomplete}
and from the definition of a a strongly 1-periodic morphism 
with long images.

\begin{lemma}\label{regpartabslargeone}
If $\mathcal E$ is an evolution of 1-blocks and Case I holds at the left (resp.\ at the right), 
then $\psi(\LP_1(\mathcal E))$ (resp.\ $\psi(\RP_1(\mathcal E))$)
has a minimal complete period $\lambda$, and $\lambda$ is a final period. 
$|\LP_1(\mathcal E)|\ge 2\finmax$ and $|\RP_1(\mathcal E)|\ge 2\finmax$.

If $\mathcal E_l$ is a stable 1-block and Case I holds at the left (resp.\ at the right)
for $\mathcal E$, then $\lambda$ is the minimal complete period of
$\psi(\LR_1(\mathcal E))$ (resp.\ $\psi(\RR_1(\mathcal E))$).
$|\LR_1(\mathcal E)|\ge 2\finmax$ and $|\RR_1(\mathcal E)|\ge 2\finmax$.
\qed
\end{lemma}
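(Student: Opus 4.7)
The plan is to recognise the hypotheses of the lemma as a concrete instance of the setup used to define a final period, then to invoke Lemma~\ref{finalperiodsarecomplete} directly; the author's preparatory paragraph just before the statement already does most of the identification, and I will make it fully explicit. I treat Case~I at the left, the right-hand case being completely symmetric.

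First, strong $1$-periodicity of $\varphi$ makes the abstract letter $b=\LB(\mathcal E_l)=\LB(\mathcal E)$ constant for $l\ge 1$ and guarantees $\RL_1(\varphi(b))=b$. Combining the discussion preceding the lemma with Lemma~\ref{oneblockstructlemmai} shows that $\varphi(b)$ has $b\,\LpreP_1(\mathcal E)$ as a suffix, so the suffix $\gamma$ of $\varphi(b)$ to the right of the rightmost occurrence of $b$ is precisely $\LpreP_1(\mathcal E)$, with $\varphi(\gamma)=\LP_1(\mathcal E)$ as abstract words. By Remark~\ref{atomsimplestructleft}, Case~I at the left forces $\LpreP_1(\mathcal E)$ to be nonempty, so $\gamma$ is nonempty and the ``long images'' hypothesis applies to the pair $(b,\gamma)$, yielding $|\LpreP_1(\mathcal E)|=|\gamma|\ge 2\finmax$. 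Since $\varphi$ is nonerasing and $\psi$ is a coding, we also obtain $|\psi(\LP_1(\mathcal E))|=|\LP_1(\mathcal E)|=|\varphi(\gamma)|\ge 2\finmax$. Lemma~\ref{finalperiodsarecomplete} now supplies a minimal complete period $\lambda$ of $\psi(\varphi(\gamma))=\psi(\LP_1(\mathcal E))$ which is itself a final period; this settles the first assertion of the lemma together with the length bound on $\LP_1(\mathcal E)$.

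For a stable $1$-block $\mathcal E_l$ (that is, $l\ge 3$), Corollary~\ref{oneblocklengths} and Lemma~\ref{oneblockstructlemmai} give that $\LR_1(\mathcal E_l)$ equals $\LP_1(\mathcal E)$ repeated $l-2\ge 1$ times as an abstract word. Hence $\psi(\LR_1(\mathcal E_l))$ is $\psi(\LP_1(\mathcal E))$ repeated $l-2$ times and is therefore completely $\lambda$-periodic; its length is at least $|\LP_1(\mathcal E)|\ge 2\finmax\ge 2|\lambda|$, which provides the length bound. Minimality of $\lambda$ for $\psi(\LR_1(\mathcal E_l))$ then follows from Corollary~\ref{finwordmincompleteperiod}: any weakly $p$-periodic structure on $\psi(\LR_1(\mathcal E_l))$ with $p<|\lambda|$ would restrict to a weakly $p$-periodic structure on its prefix $\psi(\LP_1(\mathcal E))$, whose length is already at least $2|\lambda|>2p$, contradicting the already-established minimality of $\lambda$ for $\psi(\LP_1(\mathcal E))$.

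The only delicate point is the identification $\gamma=\LpreP_1(\mathcal E)$, which relies on the eventual constancy of $\LB(\mathcal E_l)$ guaranteed by strong $1$-periodicity; once this is in place the lemma is, as the author essentially remarks, a translation of Lemma~\ref{finalperiodsarecomplete} into the language of evolutions, and no substantive obstacle remains.
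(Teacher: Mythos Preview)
Your proposal is correct and follows essentially the same route as the paper. The paper marks this lemma with \qed and treats it as an immediate consequence of the identification $\gamma=\LpreP_1(\mathcal E)$, $\varphi(\gamma)=\LP_1(\mathcal E)$ made in the preceding paragraph together with Lemma~\ref{finalperiodsarecomplete} and the definition of a strongly $1$-periodic morphism with long images; you have simply written this out in full and added the straightforward verification for the regular part $\LR_1(\mathcal E_l)$, which the paper leaves implicit.
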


The core of a stable 1-block is called its (unique) \emph{prime central kernel}.
It is also called its (unique) \emph{composite central kernel}.
If $\mathcal E$ is an evolution of 1-blocks and $l\ge 3$ (so that $\mathcal E_l$ is stable), 
then the prime (resp.\ composite) central kernel of $\mathcal E_{l+1}$ is called the 
\textit{descendant} of the prime (resp.\ composite) central kernel of $\mathcal E_l$.

\section{Stable $k$-Blocks}\label{sectionstableblocks}

Now we are going to consider $k$-blocks more accurately. 
%First, let us give definitions concerning 2-blocks
%that are necessary to define continuously periodic evolutions.
In this section we mostly focus on $k$-blocks for $k>1$, referring 
to the previous section for similar results for $k=1$.
Through this section, we will give examples based on $\E=\{a, \mathfrak b, b, \mathfrak c, c, \mathfrak d, d, \mathfrak e, e, \mathfrak f, f\}$ 
and on the following morphism $\varphi$:
$\varphi(a)=a\mathfrak{bdb}$, $\varphi(\mathfrak b)=\mathfrak cb\mathfrak{ee}$, $\varphi(b)=\mathfrak cb\mathfrak{ee}$,
$\varphi(\mathfrak c)=\mathfrak{ee}c\mathfrak{ee}$, $\varphi(c)=\mathfrak{ee}c\mathfrak{ee}$, 
$\varphi(\mathfrak d)=\mathfrak{ff}d\mathfrak{ff}$, $\varphi(d)=\mathfrak{ff}d\mathfrak{ff}$, 
$\varphi(\mathfrak e)=e$, $\varphi(e)=e$, $\varphi(\mathfrak f)=f$, $\varphi(f)=f$. Then
$\varphi^\infty(a)=\alpha=a\,\mathfrak{bdb}\,\mathfrak cb\mathfrak{eeff}d\mathfrak{ffc}b\mathfrak{ee}
\,\mathfrak{ee}c\mathfrak{eec}b\mathfrak{ee}ee\!f\mskip-4.5mu f\mskip-1.5mu\mathfrak{ff}d
\mathfrak{ff}\mskip-1.5mu f\mskip-4.5mu f\!\mathfrak{ee}c\mathfrak{eec}b\mathfrak{ee}ee\ldots$.
Here $a$ is a 
periodic letter
of order 4, $\mathfrak b$ is a preperiodic letter of order 3, $b$ is a periodic letter of order 3, 
$\mathfrak c$ and $\mathfrak d$ are preperiodic letters of order 2, $c$ and $d$ are periodic letters of order 2, 
$\mathfrak e$ and $\mathfrak f$
are preperiodic letters of order 1,
and $e$ and $f$
are periodic letters of order 1. Consider an evolution $\mathcal E$ of 2-blocks, whose origin is
$\alpha_{2\ldots 2}=\mathfrak d$. A 2-block $\mathcal E_l$ where $l$ is large enough looks as follows:
%$$\underbrace{dd..d}_{\llap{\scriptsize left component\strut\quad}}%
%\underbrace{ee..ecee..e}_{\mbox{\scriptsize\enskip core\strut}}%
%\underbrace{dd..db \ldots bdd..dbdd..db\ldots ddbdddbdb}_{\mbox{\scriptsize right component\strut}}.$$
$$
\underbrace{\mathfrak{ee}ee..e\!f\mskip-4.5mu f\!..\mskip-1.5mu f\mskip-1.5mu\mathfrak{ff}
d\mathfrak{ff}\mskip-1.5mu f\mskip-4.5mu f\!..\mskip-1.5mu f\!ee..ee\mathfrak{ee}c\mathfrak{ee}ee..ee\mathfrak{ee}}_{\mbox{\scriptsize core\strut}}
%\underbrace{b \ldots bdd..dbdd..db\ldots ddbdddbdb}_{\mbox{\scriptsize right component\strut}}.
\underbrace{\vphantom{f}c\mathfrak{ee}ee..ee\mathfrak{ee}c\mathfrak{ee}ee..ee\mathfrak{ee}c \ldots 
c\mathfrak{ee}ee..ee\mathfrak{ee}c\ldots ee\mathfrak{ee}c\mathfrak{ee}ee\mathfrak{ee}c\mathfrak{eec}}_{\mbox{\scriptsize right component\strut}}.
$$
Here Case I holds at the right and Case II holds at the left (and the left component is empty). 
Intervals denoted by $\ldots$ may contain many intervals denoted by $..$
The (forgetful occurrence of) the zeroth atom is 
%$ee..e\!f\mskip-4.5mu f\!..\mskip-1.5mu f\!df\mskip-4.5mu f\!..\mskip-1.5mu f\!ee..e$,
$\mathfrak{ee}ee..e\!f\mskip-4.5mu f\!..\mskip-1.5mu f\mskip-1.5mu\mathfrak{ff}
d\mathfrak{ff}\mskip-1.5mu f\mskip-4.5mu f\!..\mskip-1.5mu f\!ee..ee\mathfrak{ee}$,
the (forgetful occurrence of) the $m$th right atom, where $0<m<l-1$ is of the form $c\mathfrak{ee}e..e\mathfrak{ee}$, where $e$ is repeated 
$4(l-m)-2$ times,
the (forgetful occurrence of) the $(l-1)$th right atom is $c\mathfrak{ee}$, and the 
(forgetful occurrence of) the $l$th right atom is $\mathfrak c$, the $l$th atom itself also includes the empty 
1-block located immediately to the right of this $\mathfrak c$.

First, let us define \textit{stable $k$-blocks}. 
%Let $\mathcal E=\mathcal E_0,\mathcal E_1,\mathcal E_2,\ldots$ be an evolution of $k$-blocks.
A $k$-block 
%$\mathcal E_l$ 
is called \textit{stable} if %$l\ge 3k$. 
its evolutional sequence number is at least $3k$.
(For $k=1$ we get 
%$l\ge 3$ as in 
exactly the definition from
the 
previous section.) 
Let $\mathcal E=\mathcal E_0,\mathcal E_1,\mathcal E_2,\ldots$ be an evolution of $k$-blocks.
If $\mathcal E_l$ is a stable $k$-block (i.~e.\ if $l\ge 3k$), the concatenation of atoms 
$\LA_{k,l}(\mathcal E_l)\LA_{k,l-1}(\mathcal E_l)\ldots \LA_{k,l-3k+3}(\mathcal E_l)$
(resp.\ $\RA_{k,l-3k+3}(\mathcal E_l)\ldots\RA_{k,l-1}(\mathcal E_l)\RA_{k,l}(\mathcal E_l)$)
is called the \textit{left (resp.\ right) preperiod} of $\mathcal E_l$
and is denoted by $\LpreP_k(\mathcal E_l)$ (resp.\ by $\RpreP_k(\mathcal E_l)$).
The concatenation of all atoms between the left preperiod and the core
(resp.\ between the core and the right preperiod),
i.~e.\ the concatenation 
$\LA_{k,l-3k+2}(\mathcal E_l)\LA_{k,l-3k+1}(\mathcal E_l)\ldots \LA_{k,2}(\mathcal E_l)$
(resp.\ $\RA_{k,2}(\mathcal E_l)\ldots\RA_{k,l-3k+1}(\mathcal E_l)\RA_{k,l-3k+2}(\mathcal E_l)$)
is called the 
\textit{left (resp.\ right) regular part} of $\mathcal E_l$.
It is denoted by $\LR_k(\mathcal E_l)$ (resp.\ by $\RR_k(\mathcal E_l)$).
Again, these definitions for $k=1$ coincide with the definition from the previous section.
The following remark is a particular case of Corollary \ref{atomsperiodicitybig}.
\begin{remark}\label{preperiodfixed}
If $\mathcal E_l$ is a stable $k$-block, then 
$\Fg(\LpreP_k(\mathcal E_l))$ and $\Fg(\RpreP_k(\mathcal E_l))$ do not depend on $l$ as abstract words if $l\ge 3k$.
\end{remark}
So, we call the abstract word $\Fg(\LpreP_k(\mathcal E_l))$ (resp.\ $\Fg(\RpreP_k(\mathcal E_l))$) 
for any $l\ge 3k$ \textit{the left (resp.\ right) preperiod of $\mathcal E$} and denote it by 
$\LpreP_k(\mathcal E)$ (resp.\ by $\RpreP_k(\mathcal E)$). In the example above, 
$\LpreP_k(\mathcal E)$ is empty since Case II holds for $\mathcal E$ at the left, 
and $\RpreP_k(\mathcal E)=c\mathfrak{ee}eeeeee\mathfrak{ee}c\mathfrak{ee}ee\mathfrak{ee}c\mathfrak{eec}$.

\begin{corollary}\label{finitelrprep}
The lengths of all left and right preperiods 
of all evolutions of $k$-blocks arising in $\al$
are bounded by a single constant that depends on $\E$, $\varphi$, and $k$ only.
In particular, only finitely many abstract words can equal 
left and right preperiods of evolutions 
of $k$-blocks arising in $\al$.
\end{corollary}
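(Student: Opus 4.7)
The plan is to derive this corollary essentially as an immediate consequence of Corollary \ref{finite-number-of-evolutions} together with Remark \ref{preperiodfixed}.

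First I would observe that for each evolution $\mathcal E$ of $k$-blocks arising in $\al$ and for each $l\ge 3k$, the $(k-1)$-multiblock $\LpreP_k(\mathcal E_l)=\LA_{k,l}(\mathcal E_l)\LA_{k,l-1}(\mathcal E_l)\ldots \LA_{k,l-3k+3}(\mathcal E_l)$ has a well-defined forgetful occurrence, and by Remark \ref{preperiodfixed} this forgetful occurrence, viewed as an abstract word, depends only on the evolution $\mathcal E$ (and not on the specific choice of $l\ge 3k$). In other words, the abstract word $\LpreP_k(\mathcal E)$ is determined by the sequence of abstract words forming $\mathcal E$. The same holds for $\RpreP_k(\mathcal E)$ by the symmetric argument.

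Next I would invoke Corollary \ref{finite-number-of-evolutions}: the set of all possible evolutions of $k$-blocks in $\al$, considered as sequences of abstract words, is finite. Since $\LpreP_k(\mathcal E)$ and $\RpreP_k(\mathcal E)$ are each determined as abstract words by $\mathcal E$, the collection of all abstract words that can appear as $\LpreP_k(\mathcal E)$ or $\RpreP_k(\mathcal E)$ for some evolution $\mathcal E$ of $k$-blocks in $\al$ is a finite set. This directly gives the second assertion of the corollary.

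Finally, a finite set of finite abstract words has bounded length, and this maximum length depends only on the data determining the set of evolutions, namely on $\E$, $\varphi$, and $k$. Taking this maximum over both left and right preperiods yields the uniform bound claimed in the first assertion. I do not anticipate a real obstacle here; the only point worth double-checking is that the correspondence between an evolution (as a sequence of abstract words) and its preperiod (as an abstract word) is genuinely well-defined, which is precisely the content of Remark \ref{preperiodfixed} applied to the concatenation $\LA_{k,l}(\mathcal E_l)\ldots\LA_{k,l-3k+3}(\mathcal E_l)$.
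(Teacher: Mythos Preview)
Your overall strategy---combine Corollary \ref{finite-number-of-evolutions} with Remark \ref{preperiodfixed}---matches the paper's, but there is a real gap in the step where you write ``the abstract word $\LpreP_k(\mathcal E)$ is determined by the sequence of abstract words forming $\mathcal E$.'' Remark \ref{preperiodfixed} only tells you that $\Fg(\LpreP_k(\mathcal E_l))$ is independent of $l$ for a \emph{given} evolution $\mathcal E$ (a sequence of occurrences in $\al$); it does not say that two evolutions which agree as sequences of abstract words must have the same preperiod. In fact, the preperiod is built from atoms which come from images of the \emph{borders} $\LB(\mathcal E),\RB(\mathcal E)$, and two evolutions with identical blocks can in principle have different borders. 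The paper explicitly flags this point immediately after the corollary: it does \emph{not} claim that $\mathcal E_l=\mathcal E'_l$ for all $l$ implies $\LpreP_k(\mathcal E)=\LpreP_k(\mathcal E')$.

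The paper closes this gap by a slightly different route: rather than arguing that the preperiod is determined by the abstract evolution, it simply observes that $\LpreP_k(\mathcal E)$ and $\RpreP_k(\mathcal E)$ are subwords of $\mathcal E_{3k}$, and Corollary \ref{finite-number-of-evolutions} gives a uniform bound $x$ on $|\mathcal E_{3k}|$ over all evolutions of $k$-blocks. This yields the length bound without needing the preperiod to be a function of the abstract evolution. Your argument is easily repaired along the same lines, or alternatively by noting that the finiteness in Corollary \ref{finite-number-of-evolutions} really comes from the finite set of origin data (a letter $b$ and a position inside $\varphi(b)$), which \emph{does} determine the borders and hence the preperiods.
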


\begin{proof}
By Corollary \ref{finite-number-of-evolutions}, only finitely many 
sequences of abstract words can be evolutions in $\al$. Therefore, there exists 
a single constant $x$ that depends on $\E$, $\varphi$ and $k$ only
such that if $\mathcal E$ is an evolution of $k$-blocks, then $|\mathcal E_{3k}|\le x$.
By Remark \ref{preperiodfixed}, $\LpreP_k(\mathcal E)$ and $\RpreP_k(\mathcal E)$ are 
subwords of 
$\mathcal E_{3k}$, so $|\LpreP_k(\mathcal E)|\le x$ and $|\RpreP_k(\mathcal E)|\le x$.
\end{proof}

Note that we \textit{\textbf{do not}} claim that if $\mathcal E$ and $\mathcal E'$ are two evolutions of $k$-blocks such 
that $\mathcal E_l=\mathcal E'_l$ as an abstract word for all $l\ge 0$, 
then $\LpreP_k(\mathcal E)=\LpreP_k(\mathcal E')$ and $\RpreP_k(\mathcal E)=\RpreP_k(\mathcal E')$.

Now let us prove some facts about atoms inside the regular parts of a stable $k$-block
(or about atoms of the form $\LA_{k,l}(\mathcal E_{l+m})$, where $m$ is large enough).
\begin{lemma}\label{klettersbecomeperiodic}
Let $\mathcal E$ be an evolution of $k$-blocks. If $l\ge 1$ and $m\ge 1$, 
then all letters of order $k$ in $\LA_{k,l}(\mathcal E_{l+m})$ (resp.\ in 
$\RA_{k,l}(\mathcal E_{l+m})$) are periodic, and there is at least one such 
letter if Case I holds at the left (resp.\ at the right). If $m\ge 1$, then all letters of 
order $k$ in $\CA_{k,0}(\mathcal E_m)$ are periodic.
\end{lemma}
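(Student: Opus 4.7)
The plan is to reduce every assertion to the weak $1$-periodicity of $\varphi$, via the identities $\LA_{k,l}(\mathcal E_{l+m})=\Su_{k-1}^m(\LA_{k,l}(\mathcal E_l))$, $\RA_{k,l}(\mathcal E_{l+m})=\Su_{k-1}^m(\RA_{k,l}(\mathcal E_l))$, and $\CA_{k,0}(\mathcal E_m)=\Su_{k-1}^m(\CA_{k,0}(\mathcal E_0))$. The key observation is that every $(k-1)$-multiblock contained in a $k$-block consists of letters of order $\le k$, so its letters of order $>k-1$ are precisely its letters of order $k$. When $k>1$ such a multiblock decomposes into $(k-1)$-blocks (which contain only letters of order $\le k-1$) and single letters of order $k$; when $k=1$ it is simply an occurrence of letters of order $1$.

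For the periodicity claim I would first treat $m=1$ directly and then induct on $m$. Let $M$ be any of $\LA_{k,l}(\mathcal E_l)$, $\RA_{k,l}(\mathcal E_l)$, or $\CA_{k,0}(\mathcal E_0)$. By the definition of $\Su_{k-1}$, the descendants of the $(k-1)$-blocks inside $M$ consist only of letters of order $\le k-1$ and contribute no letters of order $k$ to $\Su_{k-1}(M)$, whereas the descendant of a single letter $b$ of order $k$ occurring in $M$ is a suboccurrence of $\varphi(b)$. Hence every letter of order $k$ in $\Su_{k-1}(M)$ lies inside $\varphi(b)$ for some letter $b$ of order $k$ occurring in $M$. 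Now apply weak $1$-periodicity of $\varphi$: if $b$ is preperiodic, the first clause forces every letter of order $k$ in $\varphi(b)$ to be periodic; if $b$ is periodic, the second clause says that the unique letter of order $k$ in $\varphi(b)$ is $b$ itself, again periodic. This settles $m=1$. For $m\ge 2$ the inductive hypothesis says every letter of order $k$ in $\Su_{k-1}^{m-1}(M)$ is already periodic, so the next application of $\Su_{k-1}$ only acts on such letters via the second clause, keeping them periodic.

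For the existence clause under Case I at the left (the right-hand case being completely symmetric using Remark \ref{atomsimplestructright}): Case I guarantees that $\LA_{k,l}(\mathcal E_l)$ is nonempty for $l\ge 2$, and by Remark \ref{atomsimplestructleft} its last component is then a single letter $b$ of order $k$. By Lemma \ref{rates}, $\varphi(b)$ contains at least one letter of order $k$, so the rightmost letter of order $>k-1$ in $\varphi(b)$ is itself of order $k$. Consequently the descendant of $b$ (viewed as a single-letter $(k-1)$-multiblock) ends, in its forgetful occurrence, with a letter of order $k$, and therefore so does $\Su_{k-1}(\LA_{k,l}(\mathcal E_l))=\LA_{k,l}(\mathcal E_{l+1})$. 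A straightforward induction on $m$ then gives the existence claim for all $m\ge 1$.

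The only real subtlety is keeping straight which components of a $(k-1)$-multiblock inside a $k$-block can contribute letters of order $k$ to its descendant; once one observes that only the single letters of order $k$ do, everything else is a direct appeal to the two clauses of the weak $1$-periodicity of $\varphi$ together with Remarks \ref{atomsimplestructleft} and \ref{atomsimplestructright}.
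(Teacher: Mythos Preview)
Your proof is correct and follows essentially the same route as the paper's: both arguments trace the letters of order $k$ in $\Su_{k-1}(M)$ back to the $\varphi$-images of the letters of order $k$ in $M$, invoke weak $1$-periodicity to make them periodic after one step, and then observe (via Remark~\ref{descendantofperiodic} in the paper, via the second clause of weak $1$-periodicity in your version) that periodicity persists under further applications of $\Su_{k-1}$. The only organizational difference is that the paper dispatches $k=1$ separately by citing Lemma~\ref{oneblockstructlemmai}, whereas you handle it uniformly; your write-up is also somewhat more explicit about the induction on $m$ and about which components of the multiblock can contribute letters of order $k$.

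One small remark: in your existence paragraph you write that Case~I guarantees $\LA_{k,l}(\mathcal E_l)$ is nonempty for $l\ge 2$, and then induct on $m$. As written this leaves the case $l=1$ of the lemma's existence claim untreated. The paper's proof glosses over the same point (it simply asserts that Case~I gives a letter of order $k$ in $\LA_{k,l}(\mathcal E_l)$ without restricting $l$), so this is not a divergence from the paper but rather a shared minor imprecision; the downstream applications (Corollary~\ref{amountkletterssame}, Lemma~\ref{caseoneexistsleft}) only use $l\ge 2$ anyway.
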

\begin{proof}
For $k=1$ we already know this by Lemma \ref{oneblockstructlemmai}. Suppose that $k>1$.
By the definition of the descendant of a ($(k-1)$-multiblock that consists of a) 
single letter $\al_i$ of order $>(k-1)$, it is a $(k-1)$-multiblock that consists of 
$(k-1)$-blocks and letters of order $>(k-1)$ inside $\varphi(\al_i)$.
Hence, all letters of order $k$ in $\LA_{k,l}(\mathcal E_{l+1})$
and in $\RA_{k,l}(\mathcal E_{l+1})$ ($l\ge 1$) are periodic since they are 
contained in the images of letters of order $k$
in $\LA_{k,l}(\mathcal E_l)$ and in $\RA_{k,l}(\mathcal E_l)$. If Case I holds at the 
left (resp.\ at the right), then there is at least one letter of order $k$ in 
in $\LA_{k,l}(\mathcal E_l)$ (resp.\ in $\RA_{k,l}(\mathcal E_l)$), 
and its descendant gives at least one letter of order $k$ for 
$\LA_{k,l}(\mathcal E_{l+1})$
(resp.\ for $\RA_{k,l}(\mathcal E_{l+1})$).
Also, 
all letters of order $k$ in $\CA_{k,0}(\mathcal E_1)$
are periodic since they are 
contained in the images of letters of order $k$
in $\CA_{k,0}(\mathcal E_0)$. Now the claim follows from 
Remark \ref{descendantofperiodic} and the definition of a left (right, zeroth) atom.
\end{proof}

\begin{corollary}\label{amountkletterssame}
Let $\mathcal E$ be an evolution of $k$-blocks. If $l\ge 2$ and $m\ge 1$, then 
the amounts of letters of order $k$ in $\LA_{k,l}(\mathcal E_{l+m})$ and in 
$\RA_{k,l}(\mathcal E_{l+m})$ do not depend on $l$ and $m$.
The amounts of letters of order $k$ in $\LA_{k,1}(\mathcal E_{1+m})$, in 
$\RA_{k,1}(\mathcal E_{1+m})$ and in $\CA_{k,0}(\mathcal E_m)$
do not depend on $m$ (but may differ from the amounts of letters of 
order $k$ in $\LA_{k,l}(\mathcal E_{l+m})$ and in 
$\RA_{k,l}(\mathcal E_{l+m})$ for $l\ge 2$).
\end{corollary}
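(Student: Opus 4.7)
The plan is to split the statement into two assertions and handle them separately.

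\emph{First, the $l$-independence (for $l\ge 2$).} This is essentially free from Corollaries \ref{atomsperiodicitysmallleft} and \ref{atomsperiodicitysmallright}: those corollaries say that, as abstract words, $\Fg(\LA_{k,l}(\mathcal E_{l+m}))$ and $\Fg(\RA_{k,l}(\mathcal E_{l+m}))$ do not depend on $l$ for $l\ge 2$. Since the number of letters of order $k$ in a $(k-1)$-multiblock depends only on its forgetful occurrence as an abstract word, $l$-independence follows immediately.

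\emph{Second, the $m$-independence.} The key recursion is $\LA_{k,l}(\mathcal E_{l+m+1})=\Su_{k-1}(\LA_{k,l}(\mathcal E_{l+m}))$, and similarly for right and zeroth atoms. So I want to prove a preservation lemma: if a $(k-1)$-multiblock $M$ contained in a $k$-block has the property that all its letters of order $k$ are periodic, then $\Su_{k-1}(M)$ contains the same number of letters of order $k$ as $M$. By Lemma~\ref{klettersbecomeperiodic}, the hypothesis on periodicity is satisfied for $\LA_{k,l}(\mathcal E_{l+m})$ and $\RA_{k,l}(\mathcal E_{l+m})$ as soon as $l\ge 1$ and $m\ge 1$, and for $\CA_{k,0}(\mathcal E_m)$ as soon as $m\ge 1$. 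Granting preservation, the count is constant in $m$ by a one-line induction, which settles the $m$-independence (and in particular covers both the $l\ge 2$ case and the boundary cases $l=1$ and the zeroth atom).

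\emph{The preservation lemma itself.} Unfolding the definition, $M$ is a concatenation of $(k-1)$-blocks and single-letter $(k-1)$-multiblocks $\al[>,i\ldots i,<]_{k-1}$ whose letters $\al_i$ have order $>k-1$; because $M$ sits inside a $k$-block, these letters have order exactly $k$, and by hypothesis they are periodic. The descendant of a $(k-1)$-block is a $(k-1)$-block, so it contributes no letter of order $k$; the descendant of a single periodic letter of order $k$, by Remark~\ref{descendantofperiodic} (with $k$ there replaced by $k-1$), consists of a single letter of order $k$. Since $\Su_{k-1}$ is defined as the concatenation of the descendants of the pieces, the total count of letters of order $k$ is preserved exactly. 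The right-atom and zeroth-atom cases go through identically by symmetry (with Corollary~\ref{atomsperiodicitysmallright} replacing Corollary~\ref{atomsperiodicitysmallleft}).

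The main thing to be careful about is bookkeeping: checking that no order-$k$ letter can be "created" in a gap when one takes $\Su_{k-1}$ of a $(k-1)$-multiblock (it cannot, since descendants of $(k-1)$-blocks stay within $(k-1)$-blocks, and the only order-$k$ letters in $\Su_{k-1}(M)$ come from order-$k$ letters of $M$) and that for the $l=1$ case Lemma~\ref{klettersbecomeperiodic} already guarantees periodicity of all relevant letters, so the same argument applies without modification. I do not expect any serious obstacle beyond this routine verification.
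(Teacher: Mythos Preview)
Your proposal is correct and follows essentially the same approach as the paper's proof: $l$-independence from Corollaries~\ref{atomsperiodicitysmallleft} and~\ref{atomsperiodicitysmallright}, and $m$-independence from Lemma~\ref{klettersbecomeperiodic} combined with Remark~\ref{descendantofperiodic}. The paper's proof is more terse (it compresses your preservation lemma into the single phrase ``follows now from Remark~\ref{descendantofperiodic}'') and separates out the $k=1$ case via Lemma~\ref{oneblockstructlemmai}, whereas your argument handles $k=1$ uniformly using the $0$-multiblock conventions; both are fine.
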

\begin{proof}
For $k=1$ this follows from Lemma \ref{oneblockstructlemmai}. If $k>1$, then 
the fact that these amounts do not depend on $m$ follows now from Remark 
\ref{descendantofperiodic}, and the fact that they do not depend on $l$ if $l\ge 2$
follows from Corollaries \ref{atomsperiodicitysmallleft} (for left atoms)
and \ref{atomsperiodicitysmallright} (for right atoms).
\end{proof}

\begin{corollary}\label{amountkminusoneblockssame}
Let $\mathcal E$ be an evolution of $k$-blocks, where $k>1$. 
If $l\ge 2$ and $m\ge 1$, then the amounts of (possibly empty) $(k-1)$-blocks 
in $\LA_{k,l}(\mathcal E_{l+m})$ and in $\RA_{k,l}(\mathcal E_{l+m})$ 
do not depend on $l$ and $m$ and equal the amounts of letters of order $k$
in $\LA_{k,l}(\mathcal E_{l+m})$ and in $\RA_{k,l}(\mathcal E_{l+m})$, respectively.
The amounts of (possibly empty) $(k-1)$-blocks 
in $\LA_{k,1}(\mathcal E_{1+m})$ and in $\RA_{k,1}(\mathcal E_{1+m})$ 
do not depend on $m$ and equal the amounts of letters of order $k$
in $\LA_{k,1}(\mathcal E_{1+m})$ and in $\RA_{k,1}(\mathcal E_{1+m})$, respectively
(but may differ from the amounts of $(k-1)$-blocks 
in $\LA_{k,l}(\mathcal E_{l+m})$ and in 
$\RA_{k,l}(\mathcal E_{l+m})$ for $l\ge 2$).
\end{corollary}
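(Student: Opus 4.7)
The plan is to deduce this corollary almost immediately from the structural description of atoms given in Remark \ref{allatomsimplestruct} together with the already-proved Corollary \ref{amountkletterssame}. The key point is that an atom is nothing more than an alternating concatenation of $(k-1)$-blocks and letters of order $k$, arranged in a shape dictated by Remark \ref{allatomsimplestruct}, so its count of $(k-1)$-blocks is pinned down by its count of letters of order $k$.

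First I would verify the equality of the two counts for each individual atom. Fix an atom $\LA_{k,l}(\mathcal E_{l+m})$ with $l\ge 1$, $m\ge 1$; by Remark \ref{allatomsimplestruct}, either it is empty as a $(k-1)$-multiblock (in which case both counts are $0$ and there is nothing to prove), or it begins with a (possibly empty) $(k-1)$-block and ends with a single letter of order $k$. Since a $(k-1)$-multiblock is an alternating sequence of $(k-1)$-blocks and letters of order $k$, this boundary condition (block on the left, letter on the right) forces the numbers of the two ingredient types to be equal. The analogous argument for $\RA_{k,l}(\mathcal E_{l+m})$ uses the mirror statement of Remark \ref{allatomsimplestruct}: the atom either is empty, or starts with a letter of order $k$ and ends with a $(k-1)$-block, giving the same count equality.

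Once the per-atom equality of the two counts is established, the independence assertions in the corollary follow by transport from Corollary \ref{amountkletterssame}. That corollary says the number of letters of order $k$ in $\LA_{k,l}(\mathcal E_{l+m})$ and $\RA_{k,l}(\mathcal E_{l+m})$ is independent of $l$ and $m$ when $l\ge 2$, and independent of $m$ when $l=1$; combining with the equality proved in the previous step gives exactly the independence statement for $(k-1)$-blocks. The warning in the statement that the $l=1$ counts may differ from the $l\ge 2$ counts is inherited verbatim from Corollary \ref{amountkletterssame}.

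I do not foresee any real obstacle: the only subtlety is keeping track of the empty-atom case (where the ``begins with / ends with'' description of Remark \ref{allatomsimplestruct} does not apply literally), but in that case both counts are simply zero, so the claimed equality and independence hold trivially. Everything else is a one-line quotation of earlier results.
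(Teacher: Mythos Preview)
Your proposal is correct and follows essentially the same approach as the paper's proof: invoke Remark \ref{allatomsimplestruct} for the boundary structure of atoms, use the alternation of $(k-1)$-blocks and letters of order $>k-1$ inside a $(k-1)$-multiblock to conclude the two counts agree, and then quote Corollary \ref{amountkletterssame} for the independence statements. The paper is slightly terser (it leaves the reduction to Corollary \ref{amountkletterssame} implicit), but the logic is the same.
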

\begin{proof}
Recall that by Remark \ref{allatomsimplestruct}, a left (resp.\ right) atom 
is either an empty $(k-1)$-multiblock, or it begins with a (possibly empty) $(k-1)$-block 
(resp.\ with a a single letter of order $k$) and 
ends with a single letter of order $k$
(resp.\ with a (possibly empty) $(k-1)$-block).
It follows from the general definition of a $(k-1)$-multiblock
that $(k-1)$-blocks and letters of order $>(k-1)$
always alternate inside a $(k-1)$-multiblock.
Hence, the amount of letters 
of order $k$ in a left (resp.\ right) atom is 
always the same as the amount of $(k-1)$-blocks in it.
\end{proof}

\begin{corollary}\label{amountkminusoneblockssamezero}
Let $\mathcal E$ be an evolution of $k$-blocks, where $k>1$. 
If $m\ge 1$, then the amount of (possibly empty) $(k-1)$-blocks 
in $\CA_{k,0}(\mathcal E_m)$
does not depend on $m$ and equals one plus the amount of letters of order $k$
in $\CA_{k,0}(\mathcal E_m)$.
\end{corollary}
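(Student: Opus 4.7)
The plan is to combine the structural description of the zeroth atom from Remark \ref{zerothatomsimplestruct} with the alternation property of $(k-1)$-multiblocks already exploited in the proof of Corollary \ref{amountkminusoneblockssame}, and then invoke Corollary \ref{amountkletterssame} for the $m$-independence. No real obstacle is expected since both ingredients are already in place; the only thing to be careful about is that the letters of order $>(k-1)$ occurring in the zeroth atom are exactly the letters of order $k$.

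First, I would recall that $\CA_{k,0}(\mathcal E_m)$ is by construction a $(k-1)$-multiblock lying inside the $k$-block $\mathcal E_m$, so every letter in its forgetful occurrence has order $\le k$. Consequently, a letter in the atom has order $>(k-1)$ if and only if it has order exactly $k$.

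Next, I would use Remark \ref{zerothatomsimplestruct}, which tells us that $\CA_{k,0}(\mathcal E_m)$ begins with a (possibly empty) $(k-1)$-block and ends with a (possibly empty) $(k-1)$-block. Combined with the general fact (used in the proof of Corollary \ref{amountkminusoneblockssame}) that inside any $(k-1)$-multiblock the $(k-1)$-blocks and the letters of order $>(k-1)$ strictly alternate, this forces the zeroth atom to have the shape
\[
B_0\,a_1\,B_1\,a_2\,B_2\,\ldots\,a_n\,B_n,
\]
where each $B_i$ is a (possibly empty) $(k-1)$-block and each $a_j$ is a letter of order $k$. Hence the number of $(k-1)$-blocks in $\CA_{k,0}(\mathcal E_m)$ is exactly one plus the number of letters of order $k$ in it.

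Finally, to conclude $m$-independence, I would apply Corollary \ref{amountkletterssame}, which states that the number of letters of order $k$ in $\CA_{k,0}(\mathcal E_m)$ does not depend on $m$ when $m\ge 1$. Substituting this into the relation from the previous step yields that the number of $(k-1)$-blocks in $\CA_{k,0}(\mathcal E_m)$ is likewise independent of $m$, completing the proof.
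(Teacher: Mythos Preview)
Your proof is correct and follows essentially the same route as the paper: invoke Remark~\ref{zerothatomsimplestruct} for the boundary structure of the zeroth atom, use the alternation of $(k-1)$-blocks and letters of order $>(k-1)$ to get the ``plus one'' relation, and then appeal to Corollary~\ref{amountkletterssame} for $m$-independence. You are slightly more explicit than the paper in noting why the letters of order $>(k-1)$ in the atom are exactly those of order $k$, but otherwise the arguments coincide.
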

\begin{proof}
By Remark \ref{zerothatomsimplestruct}, the zeroth atom
either consists of a single (possibly empty) $(k-1)$-block, 
or it begins with a (possibly empty) $(k-1)$-block 
and 
ends 
with another (possibly empty) $(k-1)$-block.
Again, it follows from the general definition of a $(k-1)$-multiblock
that $(k-1)$-blocks and letters of order $>(k-1)$
always alternate inside a $(k-1)$-multiblock.
Hence, the amount of $(k-1)$-blocks 
in the zeroth atom always equals one plus 
the amount of letters 
of order $k$ in it.
\end{proof}

\begin{lemma}\label{evolseqnolarge}
Let $\mathcal E$ be an evolution of $k$-blocks, where $k>1$. 
Let $m\ge 1$.
Let $\swa ij$ be a $(k-1)$-block in a left atom $\LA_{k,l}(\mathcal E_{l+m})$, 
in a right atom $\RA_{k,l}(\mathcal E_{l+m})$, 
or in a zeroth atom $\CA_{k,0}(\mathcal E_m)$.
%Suppose that $\swa ij$ belongs to an evolution $\mathcal E'$. Then 
%$\swa ij=\mathcal E'_m$ or $\swa ij=\mathcal E'_{m-1}$.
Then the evolutional sequence number of $\swa ij$ is either $m$ or $m-1$.
\end{lemma}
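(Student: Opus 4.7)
My plan is to track how $(k-1)$-blocks arise and evolve as we iterate $\Su_{k-1}$ on the three initial atoms $\LA_{k,l}(\mathcal E_l)$, $\RA_{k,l}(\mathcal E_l)$, and $\CA_{k,0}(\mathcal E_0)$. First, I would establish the following structural observation: every $(k-1)$-block in any of these three initial atoms is an origin, i.e.\ has evolutional sequence number $0$. For the left atom this is because, by construction, the atom together with its left border $\LB(\mathcal E_l)$ lies inside $\varphi(\LB(\mathcal E_{l-1}))$, so the two borders of every $(k-1)$-block inside it fall inside the image of the single letter $\LB(\mathcal E_{l-1})$; the right atom case is symmetric. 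For $\CA_{k,0}(\mathcal E_0)$ the same conclusion is immediate from the definition of an origin: $\mathcal E_0$ together with $\LB(\mathcal E_0)$ and $\RB(\mathcal E_0)$ all sit inside $\varphi(b)$ for some single letter $b$.

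Next I would analyze how $\Su_{k-1}$ acts on an arbitrary $(k-1)$-multiblock. By the recursive definition of the descendant as a concatenation of pieces, each constituent $(k-1)$-block is replaced by its own descendant, a single $(k-1)$-block whose evolutional sequence number is exactly one larger, while each constituent letter $\ell$ of order $\geq k$ is replaced by $\Su_{k-1}(\al[>,i\ldots i,<]_{k-1})$. Any $(k-1)$-blocks appearing inside this latter piece are brand new origins, because their borders all lie inside the image $\varphi(\ell)$ of the single letter $\ell$.

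The crucial step, and the one I expect to carry the real content of the proof, is showing that such newborn origin $(k-1)$-blocks can appear only at the very first application of $\Su_{k-1}$. By Lemma \ref{klettersbecomeperiodic}, every letter of order $k$ in $\LA_{k,l}(\mathcal E_{l+m})$, $\RA_{k,l}(\mathcal E_{l+m})$, or $\CA_{k,0}(\mathcal E_m)$ with $m\geq 1$ is periodic. For a periodic letter $\ell$ of order $k$, the weak $1$-periodicity of $\varphi$ forces $\varphi(\ell)$ to contain exactly one letter of order $\geq k$, namely $\ell$ itself, so $\Su_{k-1}(\al[>,i\ldots i,<]_{k-1})$ is a singleton multiblock consisting of $\ell$ alone, with no interior $(k-1)$-blocks at all. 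Hence from the second application of $\Su_{k-1}$ onwards, the $(k-1)$-blocks present are exactly the descendants of the previously present ones, with no new origins injected.

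Combining these three observations finishes the proof. After $m$ applications of $\Su_{k-1}$ to an initial atom, the $(k-1)$-blocks present split into two groups: the $m$-th superdescendants of the origin $(k-1)$-blocks in the initial atom, whose evolutional sequence number is $m$; and the $(m-1)$-th superdescendants of the origin $(k-1)$-blocks born at the first application, whose evolutional sequence number is $m-1$. Nothing else can appear. Hence every $(k-1)$-block in $\LA_{k,l}(\mathcal E_{l+m})$, $\RA_{k,l}(\mathcal E_{l+m})$, or $\CA_{k,0}(\mathcal E_m)$ has evolutional sequence number $m$ or $m-1$, as claimed.
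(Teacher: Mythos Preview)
Your proposal is correct and follows essentially the same approach as the paper's proof: you show that the $(k-1)$-blocks in the initial atoms $\LA_{k,l}(\mathcal E_l)$, $\RA_{k,l}(\mathcal E_l)$, $\CA_{k,0}(\mathcal E_0)$ are origins, that the first application of $\Su_{k-1}$ may introduce new origins from letters of order $k$, and that by Lemma~\ref{klettersbecomeperiodic} and Remark~\ref{descendantofperiodic} no further origins appear in subsequent steps, so an induction on $m$ yields the claim. The paper organizes the argument as an explicit base case $m=1$ followed by the inductive step, but the logical content is identical.
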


\begin{proof}
Observe first that all $(k-1)$-blocks in $\LA_{k,l}(\mathcal E_l)$
and in $\RA_{k,l}(\mathcal E_l)$ are origins since by Lemma 
\ref{lastatomlocation} they are contained in $\varphi(\LB(\mathcal E_{l-1}))$
and $\varphi(\RB(\mathcal E_{l-1}))$, respectively, and cannot be 
prefixes or suffixes of $\varphi(\LB(\mathcal E_{l-1}))$
and $\varphi(\RB(\mathcal E_{l-1}))$, respectively. Also, 
all $(k-1)$-blocks in $\CA_{k,0}(\mathcal E_0)$ are origins since 
$\Fg(\CA_{k,0}(\mathcal E_0))=\mathcal E_0$ is an origin, so it is contained 
in the image of a single letter and cannot be a prefix or a suffix there.

Now, let 
%$\mathcal E'_n$ 
$\swa ij$
be a $(k-1)$-block in 
$\LA_{k,l+1}(\mathcal E_l)=\Su_{k-1}(\LA_{k,l}(\mathcal E_l))$, 
in $\RA_{k,l+1}(\mathcal E_l)=\Su_{k-1}(\RA_{k,l}(\mathcal E_l))$, 
or in $\CA_{k,0}(\mathcal E_1)=\Su_{k-1}(\CA_{k,0}(\mathcal E_0))$.
Then there are two possibilities for 
%$\mathcal E'_n$:
$\swa ij$:
The first possibility is that 
%$\mathcal E'_n$ 
$\swa ij$
is the descendant of a $(k-1)$-block 
in $\LA_{k,l}(\mathcal E_l)$, in $\RA_{k,l}(\mathcal E_l)$, 
or in $\CA_{k,0}(\mathcal E_0)$, respectively, and
then 
%$n=1$.
the evolutional sequence number of $\swa ij$ is 1.
The second possibility is that
%$\mathcal E'_n$ 
$\swa ij$
is a suboccurrence 
of the descendant of a letter $\al_s$ of order $k$ in 
$\LA_{k,l}(\mathcal E_l)$, in $\RA_{k,l}(\mathcal E_l)$, or in 
$\CA_{k,0}(\mathcal E_0)$, respectively. It follows from the definition 
of the descendant of a $(k-1)$-multiblock consisting of a single letter 
of order $>(k-1)$ only, that in this case 
%$\mathcal E'_n$ 
$\swa ij$
is a 
suboccurrence of $\varphi(\al_s)$, and it cannot be a prefix or a suffix of 
$\varphi(\al_s)$. 
%Then $n=0$.
Then $\swa ij$ is an origin, and its evolutional sequence number is 0.

Finally, we do induction on $m$. By Lemma \ref{klettersbecomeperiodic}, 
all letters of order $k$ in $\LA_{k,l}(\mathcal E_{l+m})$, 
in $\RA_{k,l}(\mathcal E_{l+m})$, and in $\CA_{k,0}(\mathcal E_m)$
are periodic. 
Then it follows from Remark \ref{descendantofperiodic} that
all $(k-1)$-blocks in $\LA_{k,l}(\mathcal E_{l+m+1})$, 
in $\RA_{k,l}(\mathcal E_{l+m+1})$, and in $\CA_{k,0}(\mathcal E_{m+1})$
are the descendants of 
$(k-1)$-blocks in $\LA_{k,l}(\mathcal E_{l+m})$, 
in $\RA_{k,l}(\mathcal E_{l+m})$, and in $\CA_{k,0}(\mathcal E_m)$, 
respectively. So, if 
%$\mathcal E'_n$ 
$\swa ij$
is a $(k-1)$-block in 
$\LA_{k,l}(\mathcal E_{l+m+1})$, 
in $\RA_{k,l}(\mathcal E_{l+m+1})$, or in $\CA_{k,0}(\mathcal E_{m+1})$, 
then 
%$\mathcal E'_{n-1}=\Pd_{k-1}(\mathcal E'_n)$
$\Pd_{k-1}(\swa ij)$
is a $(k-1)$-block in $\LA_{k,l}(\mathcal E_{l+m})$, 
in $\RA_{k,l}(\mathcal E_{l+m})$, or in $\CA_{k,0}(\mathcal E_m)$.
By induction hypothesis, 
%$n-1=m$ or $n-1=m-1$, hence $n=m+1$ or $n=m$.
the evolutional sequence number of $\Pd_{k-1}(\swa ij)$ is $m$ or $m-1$, 
so the evolutional sequence number of $\swa ij$ is $m+1$ or $m$.
\end{proof}

Note that a $(k-1)$-block with evolutional sequence number $m-1$ can appear in 
the $l$th atom of $\mathcal E_{l+m}$ only if there is a letter $b$ of order $k$ in
the $l$th atom of $\mathcal E_l$ whose image contains several letters of order $k$. 
In particular, $b$ must be preperiodic. In our example of an evolution of $2$-blocks, 
the $l$th atoms of blocks $\mathcal E_l$ contain preperiodic letters of order 1, but their
images contain only one letter of order 1, so in our example, 
each $(k-1)$-block the $l$th atom of $\mathcal E_{l+m}$ has evolutional sequence number $m$, 
not $m-1$.

\begin{corollary}\label{regpartstablekminusone}
If $\mathcal E_l$ is a stable $k$-block, where $k>1$, then all letters of 
order $k$ in $\LR_k(\mathcal E_l)$, in $\RR_k(\mathcal E_l)$ and in $\Cr_k(\mathcal E_l)$ 
are periodic, and all $(k-1)$-blocks in $\LR_k(\mathcal E_l)$, in $\RR_k(\mathcal E_l)$ 
and in $\Cr_k(\mathcal E_l)$ are stable.\qed
\end{corollary}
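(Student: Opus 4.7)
My plan is to observe that this corollary is essentially a bookkeeping consequence of Lemmas \ref{klettersbecomeperiodic} and \ref{evolseqnolarge}, once the atoms making up $\LR_k(\mathcal E_l)$, $\RR_k(\mathcal E_l)$ and $\Cr_k(\mathcal E_l)$ are rewritten in the form to which those lemmas apply. Since $\mathcal E_l$ is stable, I have $l\ge 3k$; by definition the core is $\Cr_k(\mathcal E_l)=\LA_{k,1}(\mathcal E_l)\CA_{k,0}(\mathcal E_l)\RA_{k,1}(\mathcal E_l)$ and the left regular part is the concatenation $\LA_{k,l-3k+2}(\mathcal E_l)\cdots\LA_{k,2}(\mathcal E_l)$ (the right regular part being symmetric). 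Every atom appearing here is of the shape $\LA_{k,l'}(\mathcal E_{l'+m})$, $\RA_{k,l'}(\mathcal E_{l'+m})$ or $\CA_{k,0}(\mathcal E_m)$ with $m\ge 1$: for atoms in the regular parts one has $2\le l'\le l-3k+2$ and $m=l-l'\ge 3k-2\ge 1$; for $\LA_{k,1}(\mathcal E_l)$ and $\RA_{k,1}(\mathcal E_l)$ one has $l'=1$ and $m=l-1\ge 3k-1\ge 1$; and for $\CA_{k,0}(\mathcal E_l)$ one has $m=l\ge 3k\ge 1$.

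For the first assertion I would then apply Lemma \ref{klettersbecomeperiodic} directly to each of these atoms: it guarantees that every letter of order $k$ occurring in any $\LA_{k,l'}(\mathcal E_{l'+m})$ or $\RA_{k,l'}(\mathcal E_{l'+m})$ with $l',m\ge 1$, as well as in any $\CA_{k,0}(\mathcal E_m)$ with $m\ge 1$, is periodic. Concatenating over the atoms making up $\LR_k(\mathcal E_l)$, $\Cr_k(\mathcal E_l)$ and $\RR_k(\mathcal E_l)$ gives the desired periodicity.

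For the second assertion, Lemma \ref{evolseqnolarge} tells me that any $(k-1)$-block lying inside one of the atoms above has evolutional sequence number either $m$ or $m-1$, so it suffices to check in each case that $m-1\ge 3(k-1)=3k-3$. For atoms in the regular parts, $m\ge 3k-2$ gives $m-1\ge 3k-3$, which is exactly the stability threshold (so the choice $3k$ in the definition of stability is tight for this step). For $\LA_{k,1}(\mathcal E_l)$ and $\RA_{k,1}(\mathcal E_l)$, $m=l-1\ge 3k-1$ gives $m-1\ge 3k-2>3k-3$. For $\CA_{k,0}(\mathcal E_l)$, $m=l\ge 3k$ gives $m-1\ge 3k-1>3k-3$. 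Hence every $(k-1)$-block inside $\LR_k(\mathcal E_l)$, $\RR_k(\mathcal E_l)$ or $\Cr_k(\mathcal E_l)$ has evolutional sequence number at least $3(k-1)$, i.e. is stable.

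There is no real obstacle: the only thing to be careful about is the arithmetic in the last paragraph, which confirms that the numerical constant $3k$ in the definition of a stable $k$-block was chosen precisely so that this inductive step down to stable $(k-1)$-blocks goes through for the extreme atom $\LA_{k,l-3k+2}(\mathcal E_l)$ at the boundary between the regular part and the left preperiod. The proof itself is then a one-line reference to the two lemmas after the atom indexing is made explicit.
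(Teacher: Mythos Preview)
Your proposal is correct and matches the paper's intent: the corollary is stated with a bare \qed\ precisely because it follows by the bookkeeping you describe, combining Lemma~\ref{klettersbecomeperiodic} for the periodicity of order-$k$ letters and Lemma~\ref{evolseqnolarge} for the evolutional sequence numbers of the $(k-1)$-blocks. Your arithmetic showing that the smallest relevant value of $m$ is $3k-2$ (attained on the outermost atom of the regular part), whence $m-1\ge 3(k-1)$, is exactly the check that makes the stability threshold $3k$ propagate down to $3(k-1)$.
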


\begin{lemma}\label{caseoneexistsleft}
Let $\mathcal E$ be an evolution of $k$-blocks ($k>1$) such 
that Case I holds at the left. Let $l\ge 2$ and $m\ge 2$. 
There exists a $(k-1)$-block $\swa ij$ in $\LA_{k,l}(\mathcal E_{l+m})$
such that Case I holds at the left or at the right for the evolution of $\swa ij$.
\end{lemma}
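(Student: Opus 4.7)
The plan is to exhibit such a $(k-1)$-block by picking an appropriate letter of order $k$ inside $\LA_{k,l}(\mathcal E_{l+m})$ and examining the $(k-1)$-block immediately adjacent to it. Because Case I holds at the left for $\mathcal E$ and $l,m\ge 2$, Lemma~\ref{klettersbecomeperiodic} guarantees at least one (periodic) letter of order $k$ in the atom; let $a_1$ be the leftmost such. By Lemma~\ref{rates}, $\varphi(a_1)$ contains a letter of order $k-1$, and since $a_1$ is periodic of order $k$ it occurs in $\varphi(a_1)$ exactly once, so every letter of order $k-1$ in $\varphi(a_1)$ lies strictly to the left or strictly to the right of $a_1$ inside $\varphi(a_1)$.

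If some letter of order $k-1$ lies to the left of $a_1$ in $\varphi(a_1)$, I take $B_L$ to be the $(k-1)$-block of $\LA_{k,l}(\mathcal E_{l+m})$ immediately to the left of $a_1$, which exists by the alternating structure in Remark~\ref{atomsimplestructleft}. Its right border is $a_1$, so for each stable index $n$ the $n$-th right atom of $B_L$'s evolution begins at the leftmost letter of order $>k-2$ in $\varphi(a_1)$; since $a_1$ is the unique letter of order $\ge k$ in $\varphi(a_1)$ and some letter of order $k-1$ lies strictly to its left, that leftmost letter is itself a letter of order $k-1$, and hence Case~I holds at the right for the evolution of $B_L$. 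In the symmetric situation where every letter of order $k-1$ in $\varphi(a_1)$ lies strictly to the right of $a_1$ but $a_1$ is not the only letter of order $k$ in the atom, the same argument applied to the $(k-1)$-block immediately to the right of $a_1$ gives Case~I at the left for its evolution.

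The main obstacle is the remaining case, in which $a_1$ is the unique letter of order $k$ in $\LA_{k,l}(\mathcal E_{l+m})$ and every letter of order $k-1$ in $\varphi(a_1)$ lies strictly to the right of $a_1$. Then the right atom of the unique $(k-1)$-block $B_0$ of $\LA_{k,l}(\mathcal E_{l+m})$ is empty, so Case~I at the right fails for $B_0$ and one must instead establish Case~I at the left. Because $m\ge 2$, the preceding atom $\LA_{k,l+1}(\mathcal E_{l+m})$ is available and the left border of $B_0$ in $\al$ is exactly its last letter; using Corollary~\ref{atomsperiodicitysmallleft} and the fact that for every $n\ge 2$ the last letter of $\LA_{k,n}(\mathcal E_n)$ equals $\RL_{k-1}(\varphi(\LB(\mathcal E)))$ and is preserved under taking descendants, the last letters of $\LA_{k,l}(\mathcal E_{l+m})$ and $\LA_{k,l+1}(\mathcal E_{l+m})$ coincide as abstract letters, so the left border of $B_0$ is abstractly equal to $a_1$. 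By the case assumption $\varphi(a_1)$ has a letter of order $k-1$ strictly to the right of $a_1$, so the rightmost letter of order $>k-2$ in $\varphi(a_1)$ is a letter of order $k-1$; this letter is the last letter of every sufficiently deep left atom of $B_0$'s evolution, so that atom contains a letter of order $k-1$, giving Case~I at the left for the evolution of $B_0$.
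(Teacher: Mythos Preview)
Your argument is correct and follows essentially the same approach as the paper, with only a cosmetic difference in the case split: you branch first on whether a letter of order $k-1$ in $\varphi(a_1)$ lies to the left of $a_1$, and only afterwards on whether $a_1$ is the sole order-$k$ letter in the atom; the paper branches first on the number of order-$k$ letters and then on the position of the order-$(k-1)$ letter. The substantive step in the single-letter case---showing that both borders of the unique $(k-1)$-block equal $a_1$ as abstract letters via Corollary~\ref{atomsperiodicitysmallleft} and preservation of the rightmost order-$k$ letter under $\Su_{k-1}$---is identical in both proofs.

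One small point worth making explicit: when you assert that the right (resp.\ left) border of your chosen $(k-1)$-block is $a_1$ and then compute atoms of its \emph{evolution} using $\varphi(a_1)$, you are implicitly using that the border of the evolution agrees with the border of this particular block. This requires the evolutional sequence number of the block to be at least $1$, which follows from Lemma~\ref{evolseqnolarge} (it is $m$ or $m-1$, and $m\ge 2$). The paper invokes this lemma explicitly; you should too. Also, the structural fact you cite as Remark~\ref{atomsimplestructleft} is, for an atom $\LA_{k,l}(\mathcal E_{l+m})$ with $m>0$, more precisely Remark~\ref{allatomsimplestruct}.
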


\begin{proof}
By Lemma \ref{klettersbecomeperiodic}, there is at least one letter of order $k$
in $\LA_{k,l}(\mathcal E_{l+m})$, and all these letters of order $k$ are periodic.

Let us first assume that there are at least
two periodic letters of order $k$ in $\LA_{k,l}(\mathcal E_{l+m})$. Let $\al_s$ 
be the leftmost of these letters. Then by Remark \ref{allatomsimplestruct}, 
$\LA_{k,l}(\mathcal E_{l+m})$ contains a $(k-1)$-block of the form 
$\swa i{s-1}$ and a $(k-1)$-block of the form $\swa {s+1}j$. 
We have $\RB(\swa i{s-1})=\al_s$ and $\LB(\swa {s+1}j)=\al_s$.
By Lemma \ref{evolseqnolarge}, the evolutional sequence numbers of these blocks are 
at least 1, and the evolutional sequence numbers 
of the blocks $\Su_{k-1}(\swa i{s-1})$ and $\Su_{k-1}(\swa {s+1}j)$ are at least 2. 
Since $\al_s$ is a periodic letter of order $k$, $\varphi(\al_s)$
contains at least one letter of order $k-1$. If it is located to the left 
from (the unique) letter of order $k$ in $\varphi(\al_s)$, then
by
Remark \ref{atomsimplestructright} 
the $n$th right atom of $\Su_{k-1}(\swa i{s-1})$, where $n\ge 2$ is 
the evolutional sequence number of $\Su_{k-1}(\swa i{s-1})$, contains a letter of order $k-1$, 
and Case I holds at the right for the evolution of $\swa i{s-1}$.
Similarly, if there is a letter of order $k-1$ in $\varphi(\al_s)$
located to the right
from the letter of order $k$ in $\varphi(\al_s)$, then 
by Remark \ref{atomsimplestructleft}, Case I holds at the left for the evolution of
$\swa {s+1}j$.

Now suppose that there is exactly one periodic letter of order $k$ in 
$\LA_{k,l}(\mathcal E_{l+m})$.
By Corollary \ref{amountkletterssame}, 
$\LA_{k,l}(\mathcal E_{l+m})$ contains exactly one $(k-1)$-block, denote it by $\swa ij$. 
By Lemma \ref{klettersbecomeperiodic} and by 
Corollaries \ref{amountkletterssame} and \ref{amountkminusoneblockssame}, 
$\LA_{k,l}(\mathcal E_{l+m-1})$ also consists of one 
$(k-1)$-block and one periodic letter of order $k$. Moreover, the 
periodic letter of order $k$ in 
$\LA_{k,l}(\mathcal E_{l+m})=\Su_{k-1}(\LA_{k,l}(\mathcal E_{l+m-1}))$
is the descendant of the periodic letter of order $k$ in 
$\LA_{k,l}(\mathcal E_{l+m-1})$, so they coincide as abstract letters. 
Recall also that by Remark \ref{allatomsimplestruct}, 
these letters of order $k$ are the rightmost letters in the forgetful 
occurrences of $\LA_{k,l}(\mathcal E_{l+m-1})$ and of 
$\LA_{k,l}(\mathcal E_{l+m})$. By Corollary \ref{atomsperiodicitysmallleft},
the rightmost letter in $\Fg(\LA_{k,l+1}(\mathcal E_{l+m}))$
is the same letter of order $k$. Therefore, 
$\LB(\swa ij)$ and $\RB(\swa ij)$ coincide as abstract letters, 
denote this abstract letter by $b\in\E$. By Lemma \ref{evolseqnolarge}, 
the evolutional sequence number of $\swa ij$ is at least 1.
So, again, $\varphi(b)$ contains at least one letter of order $k-1$.
If it is located to the left (resp.\ to the right) of the unique 
occurrence of $b$ in $\varphi(b)$, then by Remark \ref{atomsimplestructright}
(resp.\ \ref{atomsimplestructleft}) and by the definition of 
Case I, Case I holds at the right (resp.\ at the left) 
for the evolution of $\Su_{k-1}(\swa ij)$, i.~e.\ for the evolution of 
$\swa ij$.
\end{proof}

\begin{lemma}\label{caseoneexistsright}
Let $\mathcal E$ be an evolution of $k$-blocks ($k>1$) such 
that Case I holds at the right. Let $l\ge 2$ and $m\ge 2$. 
There exists a $(k-1)$-block $\swa ij$ in $\RA_{k,l}(\mathcal E_{l+m})$
such that Case I holds at the left or at the right for the evolution of $\swa ij$.
\end{lemma}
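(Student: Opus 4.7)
The plan is to mirror the proof of Lemma \ref{caseoneexistsleft} step by step, replacing references to left atoms with right atoms, to Remark \ref{atomsimplestructleft} with Remark \ref{atomsimplestructright} (and vice versa), and to Corollary \ref{atomsperiodicitysmallleft} with Corollary \ref{atomsperiodicitysmallright}.

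First, I would invoke Lemma \ref{klettersbecomeperiodic} applied to the right side: since Case I holds at the right and $l\ge 1$, $m\ge 1$, the right atom $\RA_{k,l}(\mathcal E_{l+m})$ contains at least one letter of order $k$, and all such letters are periodic. Then I split into two cases according to how many such letters appear.

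In the first case, if $\RA_{k,l}(\mathcal E_{l+m})$ contains at least two periodic letters of order $k$, I pick the rightmost one, call it $\al_s$. By Remark \ref{allatomsimplestruct} applied to right atoms, $\al_s$ is flanked by two $(k-1)$-blocks inside $\RA_{k,l}(\mathcal E_{l+m})$, say $\swa i{s-1}$ (with $\RB=\al_s$) and $\swa{s+1}j$ (with $\LB=\al_s$). By Lemma \ref{evolseqnolarge}, their evolutional sequence numbers are $\ge 1$, hence the evolutional sequence numbers of their descendants $\Su_{k-1}(\swa i{s-1})$ and $\Su_{k-1}(\swa{s+1}j)$ are $\ge 2$. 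Since $\al_s$ is periodic of order $k$, Lemma \ref{rates} guarantees that $\varphi(\al_s)$ contains a letter of order $k-1$; depending on whether this letter lies to the left or right of the unique letter of order $k$ in $\varphi(\al_s)$, Remarks \ref{atomsimplestructright} and \ref{atomsimplestructleft} force Case I to hold at the right (resp.\ at the left) for the evolution of $\swa i{s-1}$ (resp.\ $\swa{s+1}j$).

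In the second case, $\RA_{k,l}(\mathcal E_{l+m})$ contains exactly one periodic letter of order $k$, hence by Corollary \ref{amountkminusoneblockssame} it contains exactly one $(k-1)$-block, call it $\swa ij$. By Remark \ref{allatomsimplestruct} for right atoms, this letter of order $k$ is the leftmost letter in $\Fg(\RA_{k,l}(\mathcal E_{l+m}))$, and likewise in $\Fg(\RA_{k,l}(\mathcal E_{l+m-1}))$; the descendant relation together with weak 1-periodicity shows these coincide as abstract letters. Appealing now to Corollary \ref{atomsperiodicitysmallright} (in place of \ref{atomsperiodicitysmallleft}), this leftmost letter also agrees with the leftmost letter in $\Fg(\RA_{k,l+1}(\mathcal E_{l+m}))$, which forces $\LB(\swa ij)=\RB(\swa ij)$ as abstract letters; denote this letter by $b$. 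By Lemma \ref{evolseqnolarge} the evolutional sequence number of $\swa ij$ is $\ge 1$, and since $b$ is periodic of order $k$, $\varphi(b)$ contains a letter of order $k-1$ lying either to the left or to the right of the unique occurrence of $b$ in $\varphi(b)$; by Remarks \ref{atomsimplestructright} and \ref{atomsimplestructleft} respectively, Case I holds at the right or at the left for the evolution of $\swa ij$, completing the proof.

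The argument is a pure mirror of Lemma \ref{caseoneexistsleft}, so there is no new conceptual difficulty; the only point requiring care is keeping track of the left/right asymmetry in the structural remarks (since the ``unique letter of order $k$'' in a right atom sits at the \emph{left} end, whereas in a left atom it sits at the \emph{right} end), and correspondingly swapping which of Remarks \ref{atomsimplestructleft}/\ref{atomsimplestructright} applies to each of the two adjacent $(k-1)$-blocks.
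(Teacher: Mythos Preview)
Your proposal is correct and is precisely the mirror argument the paper intends: its own proof of this lemma says only that ``the proof is completely similar to the proof of the previous lemma,'' and you have faithfully carried out that symmetry, including the correct swap of Remarks \ref{atomsimplestructleft}/\ref{atomsimplestructright} and of Corollary \ref{atomsperiodicitysmallleft} for \ref{atomsperiodicitysmallright}. No further work is needed.
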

\begin{proof}
The proof is completely similar to the proof of the previous lemma.
\end{proof}

\begin{lemma}\label{regpartabslargek}
Let $\mathcal E$ be an evolution of $k$-blocks such 
that Case I holds at the left (resp.\ at the right).
Let $\mathcal E_l$ be a stable $k$-block. Then 
$|\Fg(\LR_k(\mathcal E_l))|\ge 2\finmax$ (resp.\ $|\Fg(\RR_k(\mathcal E_l))|\ge 2\finmax$).
\end{lemma}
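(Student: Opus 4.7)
The plan is to prove both the left and the right statement simultaneously by induction on $k$. For $k=1$ the assertion is exactly the second part of Lemma \ref{regpartabslargeone}. So assume $k>1$ and the lemma is known for $k-1$; we treat only the left-hand case, since the right-hand case is completely symmetric (using Lemma \ref{caseoneexistsright} in place of Lemma \ref{caseoneexistsleft}).

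The idea is to exhibit inside $\LR_k(\mathcal E_l)$ a stable $(k-1)$-block whose evolution has Case I holding on at least one side, and then apply the inductive hypothesis to its regular part. To this end, recall that $\LR_k(\mathcal E_l)$ is the concatenation $\LA_{k,l-3k+2}(\mathcal E_l)\ldots \LA_{k,2}(\mathcal E_l)$. Since $\mathcal E_l$ is stable we have $l\ge 3k$, hence $l-3k+2\ge 2$, so the atom $\LA_{k,2}(\mathcal E_l)$ is one of the atoms in the concatenation defining $\LR_k(\mathcal E_l)$. Writing $\LA_{k,2}(\mathcal E_l)=\LA_{k,2}(\mathcal E_{2+(l-2)})$, we can invoke Lemma \ref{caseoneexistsleft} with the parameters $l'=2$ and $m=l-2$, noting that $l-2\ge 3k-2\ge 4\ge 2$. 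This yields a $(k-1)$-block $\alpha_{i\ldots j}$ inside $\LA_{k,2}(\mathcal E_l)$ such that Case I holds for the evolution of $\alpha_{i\ldots j}$ either at the left or at the right.

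By Corollary \ref{regpartstablekminusone}, every $(k-1)$-block inside $\LR_k(\mathcal E_l)$ is stable, so in particular $\alpha_{i\ldots j}$ is a stable $(k-1)$-block. Applying the induction hypothesis to the evolution of $\alpha_{i\ldots j}$ on whichever side Case I holds, we conclude that $|\Fg(\LR_{k-1}(\alpha_{i\ldots j}))|\ge 2\finmax$ or $|\Fg(\RR_{k-1}(\alpha_{i\ldots j}))|\ge 2\finmax$. In either case this regular part is a suboccurrence of the $(k-1)$-block $\alpha_{i\ldots j}$, which in turn is contained in $\Fg(\LA_{k,2}(\mathcal E_l))\subseteq \Fg(\LR_k(\mathcal E_l))$. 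Hence $|\Fg(\LR_k(\mathcal E_l))|\ge 2\finmax$, completing the induction.

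The only real obstacle is the bookkeeping: checking that the indices in Lemma \ref{caseoneexistsleft} (namely $l\ge 2$, $m\ge 2$) are met, and that the chosen atom $\LA_{k,2}(\mathcal E_l)$ genuinely lies inside $\LR_k(\mathcal E_l)$ rather than in $\LpreP_k(\mathcal E_l)$. Both conditions follow from the stability assumption $l\ge 3k$ and $k\ge 2$. The transition from the inductive bound on the $(k-1)$-block's regular part to a bound on $\Fg(\LR_k(\mathcal E_l))$ is then just a containment of occurrences, so no further estimation is needed.
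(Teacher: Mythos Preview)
Your proof is correct and follows exactly the same route as the paper's: induction on $k$ with base case Lemma \ref{regpartabslargeone}, and for the step using Corollary \ref{regpartstablekminusone} together with Lemma \ref{caseoneexistsleft} (resp.\ \ref{caseoneexistsright}) to locate a stable $(k-1)$-block with Case I inside the regular part. You have simply spelled out explicitly which atom ($\LA_{k,2}(\mathcal E_l)$) to use and verified the index constraints $l'\ge 2$, $m\ge 2$, which the paper leaves implicit.
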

\begin{proof}
For $k=1$ we already know this by Lemma \ref{regpartabslargeone}.
For $k>1$ note first that 
$\LR_k(\mathcal E_l)=\LA_{k,l-3k+2}(\mathcal E_l)\LA_{k,l-3k+1}(\mathcal E_l)\ldots \LA_{k,2}(\mathcal E_l)$
(resp.\ $\RR_k(\mathcal E_l)=\RA_{k,2}(\mathcal E_l)\ldots\RA_{k,l-3k+1}(\mathcal E_l)\RA_{k,l-3k+2}(\mathcal E_l)$)
is nonempty since $l\ge 3k$, and $l-3k+2\ge 2$. Now 
the claim follows by induction on $k$
from Corollary \ref{regpartstablekminusone} and from Lemmas \ref{caseoneexistsleft}
and \ref{caseoneexistsright}.
\end{proof}

\begin{lemma}\label{regpartabsgrows}
Let $\mathcal E$ be an evolution of $k$-blocks such 
that Case I holds at the left (resp.\ at the right).
Let $\mathcal E_l$ be a stable $k$-block. Then 
$|\Fg(\LR_k(\mathcal E_{l+1}))|>|\Fg(\LR_k(\mathcal E_l))|$ 
(resp.\ $|\Fg(\RR_k(\mathcal E_{l+1}))|>|\Fg(\RR_k(\mathcal E_l))|$).
\end{lemma}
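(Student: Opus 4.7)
The plan is to show that $\LR_k(\mathcal E_{l+1})$ has exactly the same atoms as $\LR_k(\mathcal E_l)$ except for one additional atom at the top index, and that this extra atom has strictly positive length. Directly from the definitions,
\[
\LR_k(\mathcal E_l) = \LA_{k,l-3k+2}(\mathcal E_l)\ldots \LA_{k,2}(\mathcal E_l),
\]
\[
\LR_k(\mathcal E_{l+1}) = \LA_{k,l-3k+3}(\mathcal E_{l+1})\LA_{k,l-3k+2}(\mathcal E_{l+1})\ldots \LA_{k,2}(\mathcal E_{l+1}),
\]
so the only atom in $\LR_k(\mathcal E_{l+1})$ whose first index does not appear in $\LR_k(\mathcal E_l)$ is $\LA_{k,l-3k+3}(\mathcal E_{l+1})$.

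First, I would invoke Corollary \ref{atomsperiodicitysmallleft} to match the remaining atoms: for each $j$ with $2\le j\le l-3k+2$, the abstract word $\Fg(\LA_{k,j}(\mathcal E_l))$ equals $\Fg(\LA_{k,j}(\mathcal E_{l+1}))$ (take the corollary's first index to be $j\ge 2$, its $n$ to be $0$, and its $m$ to be $l-j$ or $l+1-j$, both admissible since $j\le l$). Since the forgetful occurrence of a concatenation of multiblocks is the concatenation of their individual forgetful occurrences, it follows that
\[
|\Fg(\LR_k(\mathcal E_{l+1}))| - |\Fg(\LR_k(\mathcal E_l))| = |\Fg(\LA_{k,l-3k+3}(\mathcal E_{l+1}))|.
\]

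Second, I would apply Lemma \ref{klettersbecomeperiodic} with its $l$ equal to $l-3k+3$ and its $m$ equal to $3k-2$. Both parameters are at least $1$ (using $l\ge 3k$ from the stability of $\mathcal E_l$ and $k\ge 1$), so the lemma applies, and because Case I holds at the left the atom $\LA_{k,l-3k+3}(\mathcal E_{l+1})$ contains at least one letter of order $k$. Therefore $|\Fg(\LA_{k,l-3k+3}(\mathcal E_{l+1}))|\ge 1$, and the displayed equation above gives the desired strict inequality. The right-hand version is obtained by a completely symmetric argument, substituting Corollary \ref{atomsperiodicitysmallright} for Corollary \ref{atomsperiodicitysmallleft}.

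The proof is essentially bookkeeping; the only real thing to watch is the parameter check when invoking the two previous results, i.e., verifying that the single ``new'' atom at index $l-3k+3$ lies in the admissible range where Corollary \ref{atomsperiodicitysmallleft} identifies abstract words and where Lemma \ref{klettersbecomeperiodic} forces the presence of a letter of order $k$ under Case I.
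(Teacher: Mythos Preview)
Your first step misapplies Corollary~\ref{atomsperiodicitysmallleft}. That corollary says $\Fg(\LA_{k,l}(\mathcal E_{l+m}))=\Fg(\LA_{k,l+n}(\mathcal E_{l+m+n}))$: the atom index and the block index shift by the \emph{same} amount $n$, so the invariant quantity is the difference $m$ between them. For $\LA_{k,j}(\mathcal E_l)$ this difference is $l-j$, whereas for $\LA_{k,j}(\mathcal E_{l+1})$ it is $l+1-j$; these are different, and taking $n=0$ with two different values of $m$ gives two separate trivial identities, not an equality between the two atoms. In fact $\LA_{k,j}(\mathcal E_{l+1})=\Su_{k-1}(\LA_{k,j}(\mathcal E_l))$ by definition, and for $k>1$ the $(k-1)$-blocks inside genuinely grow under $\Su_{k-1}$, so the equality of abstract words you assert is false in general. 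Consequently your displayed formula for the length difference is incorrect, and your identification of the ``extra'' atom as $\LA_{k,l-3k+3}(\mathcal E_{l+1})$ is the wrong one.

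The paper's proof makes the shift in the right direction: it uses Corollary~\ref{atomsperiodicitybig} to match
\[
\Fg\bigl(\LA_{k,l-3k+2}(\mathcal E_l)\ldots\LA_{k,2}(\mathcal E_l)\bigr)
\quad\text{with}\quad
\Fg\bigl(\LA_{k,l+1-3k+2}(\mathcal E_{l+1})\ldots\LA_{k,3}(\mathcal E_{l+1})\bigr),
\]
i.e.\ each atom index and the block index both increase by $1$. The leftover atom is then $\LA_{k,2}(\mathcal E_{l+1})$, which contains a letter of order $k$ by Case~I, giving the strict inequality. Your bookkeeping instinct is right; you just matched the atoms along the wrong diagonal.
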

\begin{proof}
We can write
$\LR_k(\mathcal E_l)=\LA_{k,l-3k+2}(\mathcal E_l)\LA_{k,l-3k+1}(\mathcal E_l)\ldots \LA_{k,2}(\mathcal E_l)$
(resp.\ $\RR_k(\mathcal E_l)=\RA_{k,2}(\mathcal E_l)\ldots\RA_{k,l-3k+1}(\mathcal E_l)\RA_{k,l-3k+2}(\mathcal E_l)$)
and
$\LR_k(\mathcal E_{l+1})=\LA_{k,l+1-3k+2}(\mathcal E_{l+1})\LA_{k,l+1-3k+1}(\mathcal E_{l+1})\ldots \LA_{k,2}(\mathcal E_{l+1})$
(resp.\ $\RR_k(\mathcal E_{l+1})=\RA_{k,2}(\mathcal E_{l+1})\ldots\RA_{k,l+1-3k+1}(\mathcal E_{l+1})\RA_{k,l+1-3k+2}(\mathcal E_{l+1})$).
By Corollary \ref{atomsperiodicitybig}, 
$\Fg(\LA_{k,l-3k+2}(\mathcal E_l)\ldots \LA_{k,2}(\mathcal E_l))$
and 
$\Fg(\LA_{k,l+1-3k+2}(\mathcal E_{l+1})\ldots \LA_{k,3}(\mathcal E_{l+1}))$
(resp.\ $\Fg(\RA_{k,2}(\mathcal E_l)\ldots\RA_{k,l-3k+2}(\mathcal E_l))$
and
$\Fg(\RA_{k,3}(\mathcal E_{l+1})\ldots\RA_{k,l+1-3k+2}(\mathcal E_{l+1}))$)
coincide as abstract words, and 
$\LA_{k,2}(\mathcal E_{l+1})$ (resp.\ $\RA_{k,2}(\mathcal E_{l+1})$)
contains at least one letter of order $k$ since Case I holds at the left (resp.\ at the right).
\end{proof}

\begin{lemma}\label{kblocklengtho}
Let $\mathcal E$ be an evolution of $k$-blocks. Then $|\mathcal E_l|$ is $O(l^k)$ and 
$|\Fg(\Cr_k(\mathcal E_l))|$ is $O(l^{k-1})$ (for $l\to\infty$), 
and the constants in the $O$-notation depend on $\varphi$, $\E$, and $k$ only, but not on $\mathcal E$.
\end{lemma}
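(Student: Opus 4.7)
The plan is to prove both bounds simultaneously by induction on $k$. For the base case $k=1$, both statements follow immediately from Corollary \ref{oneblocklengths}: the length of any 1-block is $O(l)$, unstable 1-blocks have uniformly bounded length, and the cores of stable 1-blocks are uniformly bounded, giving $|\Fg(\Cr_1(\mathcal E_l))| = O(1) = O(l^0)$. The constants are uniform in $\mathcal E$ since they depend only on $\E$ and $\varphi$.

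For the induction step, assume both bounds for evolutions of $(k-1)$-blocks. I would decompose the whole $k$-block $\mathcal E_l$ into its atoms:
$$|\mathcal E_l| = |\Fg(\CA_{k,0}(\mathcal E_l))| + \sum_{j=1}^{l}\bigl(|\Fg(\LA_{k,j}(\mathcal E_l))| + |\Fg(\RA_{k,j}(\mathcal E_l))|\bigr).$$
Each atom is a $(k-1)$-multiblock consisting of (possibly empty) $(k-1)$-blocks alternating with letters of order $k$. By Corollaries \ref{amountkletterssame}, \ref{amountkminusoneblockssame}, and \ref{amountkminusoneblockssamezero}, together with the finiteness of evolutions (Corollary \ref{finite-number-of-evolutions}), there is a constant $C$ depending only on $\E$, $\varphi$, $k$ that bounds the number of $(k-1)$-blocks and letters of order $k$ in every atom of every evolution of $k$-blocks.

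The core of the argument is to control the length of each $(k-1)$-block appearing in an atom via its evolutional sequence number. By Lemma \ref{evolseqnolarge}, for $1\le j\le l-1$ every $(k-1)$-block in $\LA_{k,j}(\mathcal E_l)$ or $\RA_{k,j}(\mathcal E_l)$ has evolutional sequence number $l-j$ or $l-j-1$; for $j=l$ they are origins (sequence number $0$, bounded length by the lemma preceding Corollary \ref{finite-number-of-evolutions}); and for $\CA_{k,0}(\mathcal E_l)$ with $l\ge 1$ the sequence numbers are $l$ or $l-1$. By the induction hypothesis, a $(k-1)$-block with evolutional sequence number $s$ has length $O(s^{k-1})$ with a constant uniform in its evolution. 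Hence
$$|\Fg(\LA_{k,j}(\mathcal E_l))|,\ |\Fg(\RA_{k,j}(\mathcal E_l))| \ = \ O\bigl((l-j+1)^{k-1}\bigr), \qquad |\Fg(\CA_{k,0}(\mathcal E_l))| = O(l^{k-1}).$$

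Plugging into the decomposition,
$$|\mathcal E_l| = O(l^{k-1}) + O\Bigl(\sum_{j=1}^{l}(l-j+1)^{k-1}\Bigr) = O(l^{k-1}) + O\Bigl(\sum_{s=1}^{l}s^{k-1}\Bigr) = O(l^k),$$
which is the first bound. For the second, $\Cr_k(\mathcal E_l) = \LA_{k,1}(\mathcal E_l)\CA_{k,0}(\mathcal E_l)\RA_{k,1}(\mathcal E_l)$, and all three summands are $O(l^{k-1})$ by the estimates above, yielding $|\Fg(\Cr_k(\mathcal E_l))| = O(l^{k-1})$. All constants depend only on $\E$, $\varphi$, $k$ since the atom counts and origin lengths are uniformly bounded across evolutions. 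The main (and only mild) obstacle is bookkeeping the evolutional sequence numbers of the $(k-1)$-blocks inside each atom — in particular handling the boundary cases $j=l$ and $l=0$ — but this is exactly what Lemma \ref{evolseqnolarge} together with the origin bound was designed to provide.
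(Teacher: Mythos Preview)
Your proof is correct and follows essentially the same approach as the paper: induction on $k$ with base case from Corollary~\ref{oneblocklengths}, bounding the number of $(k-1)$-blocks per atom via finiteness of evolutions, controlling their evolutional sequence numbers via Lemma~\ref{evolseqnolarge}, and applying the induction hypothesis. The only cosmetic difference is that the paper groups the block into preperiod, regular part, and core and uses the cruder bound that every $(k-1)$-block in the regular parts has sequence number at most $l-2$ (giving $O(l)\cdot O(l^{k-1})$), whereas you sum atom-by-atom with the sharper per-atom bound to get $\sum_{s=1}^{l} s^{k-1}=O(l^k)$; both arrive at the same estimate by the same mechanism.
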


\begin{proof}
For $k=1$ we already know this by Corollary \ref{oneblocklengths}. For $k>1$, we do induction on $k$.
Without loss of generality, we may consider only the values of $l$ grater than or equal to $3k$.
By Remark \ref{preperiodfixed}, the lengths of the forgetful occurrences of 
$\LpreP_{k,l}(\mathcal E_l)$ and of $\RpreP_{k,l}(\mathcal E_l)$
are constants (they do not depend on $l$), and since the total number of 
different evolutions present in $\al$, understood as sequences as abstract 
words, is finite (Corollary \ref{finite-number-of-evolutions}), 
the lengths of all left and right preperiods of all $k$-blocks
are bounded by a single constant that depends on $\E$, $\varphi$ and $k$ only.

By Corollaries \ref{amountkminusoneblockssame} and \ref{amountkminusoneblockssamezero}, 
the amount of $(k-1)$-blocks in $\Cr_k(\mathcal E_l)$
does not depend on $l$, and using Corollary \ref{finite-number-of-evolutions}
again, we conclude that all amounts of $(k-1)$-blocks in $\Cr_k(\mathcal E_l)$
are bounded by a single constant that depends on $\E$, $\varphi$ and $k$ only.
By Lemma \ref{evolseqnolarge}, the evolutional sequence numbers of these $(k-1)$-blocks
can be $l$, $l-1$ or $l-2$ (since $\Cr_k(\mathcal E_l)$ is the concatenation of the zeroth
and the first atoms). Similarly, it follows from Corollary \ref{amountkletterssame} and from 
Corollary \ref{finite-number-of-evolutions} that the amount of letters of order $k$ in 
$\Cr_k(\mathcal E_l)$ is bounded by a single constant 
that depends on $\E$, $\varphi$ and $k$ only. Therefore, it follows from the induction hypothesis 
for $k-1$ that $|\Fg(\Cr_k(\mathcal E_l))|$ is $O(l^{k-1})$.

Let us count the total amount of $(k-1)$-blocks in $\LR_k(\mathcal E_l)$ and in 
$\RR_k(\mathcal E_l)$. By Corollary \ref{amountkminusoneblockssame}, 
the amount of $(k-1)$-blocks in the $n$th left atom, if this atom is inside $\LR_k(\mathcal E_l)$
(i.~e.\ if $2\le n\le l-3k+2$), does not depend on $l$ and $n$, denote this amount
by $x$. Similarly, denote by $y$ the 
amount of $(k-1)$-blocks in the $n$th right atom if this atom is inside $\RR_k(\mathcal E_l)$.
The total amount of $(k-1)$-blocks in $\LR_k(\mathcal E_l)$ and in 
$\RR_k(\mathcal E_l)$ is $(x+y)(l-3k+1)$. By Corollary \ref{amountkminusoneblockssame}, 
the total amount of letters of order $k$ in $\LR_k(\mathcal E_l)$ and in 
$\RR_k(\mathcal E_l)$ is also $(x+y)(l-3k+1)$. Using 
Corollary \ref{finite-number-of-evolutions} 
again, we can write this number as $O(l)$. By Lemma \ref{evolseqnolarge}, 
the evolutional sequence numbers of all $(k-1)$-blocks in 
$\LR_k(\mathcal E_l)$ and in 
$\RR_k(\mathcal E_l)$ are at most $l-2$.

Now observe that it follows from the definition of the descendant of a $(k-1)$-block
and from the fact that $\varphi$ is nonerasing that the 
length of a $(k-1)$-block is less than or equal to the length of its descendant.
Hence, it follows from the induction hypothesis for $k-1$ that if we have 
a $(k-1)$-block whose evolutional sequence number is at most $l$, then 
its length is bounded by a constant (that depends on $\E$, $\varphi$ and $k$ only) 
multiplied by $l^{k-1}$. Therefore, the total length of 
$\Fg(\LR_k(\mathcal E_l))$ and $\Fg(\RR_k(\mathcal E_l))$ 
is $O(l)O(l^{k-1})+O(l)=O(l^k)$. Finally, $|\mathcal E_l|$ is $O(1)+O(l^{k-1})+O(l^k)=O(l^k)$.
\end{proof}

\begin{lemma}\label{regpartasym}
Let $\mathcal E$ be an evolution of $k$-blocks.
If Case I holds at the left (resp.\ at the right) for $\mathcal E$, then 
$|\Fg(\LR_k(\mathcal E_l))|$ (resp.\ $|\Fg(\RR_k(\mathcal E_l))|$) is $\Theta(l^k)$.
If Case I holds for $\mathcal E$ at least at one side (at the left or at the right), 
then $|\mathcal E_l|$ is $\Theta(l^k)$.
The constants in the $\Theta$-notation here depend on $\varphi$, $\E$, and $k$ only, but not on $\mathcal E$.
\end{lemma}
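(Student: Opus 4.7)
The plan is to proceed by induction on $k$, the lower bound being the non-trivial part since Lemma~\ref{kblocklengtho} already supplies all needed upper bounds. For $k=1$ the base case is a direct restatement of Corollary~\ref{oneblocklengths}, noting that on $0$-multiblocks $\Fg$ acts as the identity.

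For the inductive step with $k>1$, assume Case~I holds at the left for $\mathcal E$ and let $l\ge 3k$. I decompose
\[
\LR_k(\mathcal E_l) \;=\; \LA_{k,l-3k+2}(\mathcal E_l)\,\LA_{k,l-3k+1}(\mathcal E_l)\cdots \LA_{k,2}(\mathcal E_l).
\]
For each $l'\in\{2,3,\dots,l-3k+2\}$, setting $m:=l-l'\ge 3k-2\ge 2$, Lemma~\ref{caseoneexistsleft} produces a $(k-1)$-block $\mathcal B_{l'}$ located inside $\LA_{k,l'}(\mathcal E_l)$ such that Case~I holds on at least one side for the evolution of $\mathcal B_{l'}$. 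Lemma~\ref{evolseqnolarge} then guarantees that the evolutional sequence number of $\mathcal B_{l'}$ is at least $m-1=l-l'-1$.

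By the induction hypothesis applied to each of these $(k-1)$-evolutions, there exists a constant $C>0$, depending only on $\varphi,\E,k$, such that $|\mathcal B_{l'}|\ge C(l-l'-1)^{k-1}$ whenever $l-l'-1$ exceeds some fixed threshold. Uniformity of $C$ across the various evolutions that arise is automatic from the induction hypothesis (whose $\Theta$-constants are already independent of the evolution), and in any case Corollary~\ref{finite-number-of-evolutions} makes this uniformity explicit. Since the atoms $\LA_{k,l'}(\mathcal E_l)$ for distinct $l'$ are pairwise disjoint sub-occurrences of $\Fg(\LR_k(\mathcal E_l))$, summing yields
\[
|\Fg(\LR_k(\mathcal E_l))| \;\ge\; \sum_{l'=2}^{l-3k+2} |\mathcal B_{l'}| \;\ge\; \sum_{m=3k-3}^{l-3} C\,m^{k-1} \;=\; \Omega(l^k).
\]
Combined with the upper bound from Lemma~\ref{kblocklengtho} this gives $|\Fg(\LR_k(\mathcal E_l))|=\Theta(l^k)$; the right-hand case is completely symmetric via Lemma~\ref{caseoneexistsright}. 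Because $\Fg(\LR_k(\mathcal E_l))$ and $\Fg(\RR_k(\mathcal E_l))$ are sub-occurrences of $\mathcal E_l$, the second assertion $|\mathcal E_l|=\Theta(l^k)$ (whenever Case~I holds on at least one side) follows from the first together with the upper bound $|\mathcal E_l|=O(l^k)$ from Lemma~\ref{kblocklengtho}.

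The main obstacle is not the arithmetic of the sum but rather verifying that a \emph{good} sub-block $\mathcal B_{l'}$ (one whose own evolution again has Case~I on some side, so that the induction hypothesis is applicable) can be extracted from every atom of the regular part; this is exactly the content of Lemmas~\ref{caseoneexistsleft} and~\ref{caseoneexistsright}, so once those are in hand the inductive step presents no further real difficulty.
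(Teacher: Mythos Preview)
Your proof is correct and follows essentially the same approach as the paper's: induction on $k$ with base case Corollary~\ref{oneblocklengths}, upper bound from Lemma~\ref{kblocklengtho}, and lower bound obtained by extracting from each left atom in $\LR_k(\mathcal E_l)$ a $(k-1)$-block with Case~I on some side (via Lemma~\ref{caseoneexistsleft}) whose evolutional sequence number is controlled by Lemma~\ref{evolseqnolarge}. The only cosmetic difference is that the paper restricts attention to the atoms $\LA_{k,l'}(\mathcal E_l)$ with $2\le l'\le\lfloor l/2\rfloor$, so that every extracted $(k-1)$-block has sequence number at least $l/2-1$ and a single uniform lower bound $x(l/2-1)^{k-1}$ applies to all of them, whereas you sum over all atoms and handle the threshold by simply discarding the finitely many terms with $l-l'-1$ below the inductive constant $l_0$; both organisations of the sum give $\Omega(l^k)$.
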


\begin{proof}
For $k=1$ this is true by Corollary \ref{oneblocklengths}. For $k>1$ we are going to prove this by induction on $k$.
Since we already have Lemma \ref{kblocklengtho}, 
it is sufficient to prove that 
if Case I holds at the left (resp.\ at the right) for $\mathcal E$,
then 
$|\Fg(\LR_k(\mathcal E_l))|$ (resp.\ $|\Fg(\RR_k(\mathcal E_l))|$) is $\Omega(l^k)$.

By the induction hypothesis for $k-1$, there exist numbers $l_0\in\NN$ and $x\in\R_{>0}$
such that if the evolutional sequence number of a $(k-1)$-block is $l\ge l_0$ and 
Case I holds at the left or at the right for its evolution, then the 
length of this $(k-1)$-block is at least $xl^{k-1}$. Set $l_1=6k+2l_0+4$.
Suppose that we are considering a $k$-block $\mathcal E_l$ such that $l\ge l_1$.
Note first that $l-\lfloor l/2\rfloor\ge l/2\ge l_1/2\ge 3k$ and $\lfloor l/2\rfloor\ge\lfloor l_1/2\rfloor\ge 2$.
Hence, if Case I holds at the left (resp.\ at the right) for $\mathcal E$, 
then the concatenation of left atoms
$\LA_{k,\lfloor l/2\rfloor}(\mathcal E_l)\ldots \LA_{k,2}(\mathcal E_l)$
(resp.\ $\RA_{k,2}(\mathcal E_l)\ldots \RA_{k,\lfloor l/2\rfloor}(\mathcal E_l)$)
is contained in the left (resp.\ right) regular part of $\mathcal E_l$.
By Lemma \ref{evolseqnolarge}, the smallest possible evolutional sequence number
of a $(k-1)$-block contained in one of these atoms is 
$l-\lfloor l/2\rfloor-1\ge l/2-1\ge l_1/2-1=3k+l_0+1\ge l_0$.
By Lemma \ref{caseoneexistsleft} (resp.\ \ref{caseoneexistsright}), each
of these atoms contains at least one $(k-1)$-block such 
that Case I holds at the left or at the right for its evolution. 
So, we have at least $\lfloor l/2\rfloor-2+1$ such $(k-1)$-blocks in this 
concatenation of atoms, and by the induction hypothesis, each of these $(k-1)$-blocks
has length at least $x(l-\lfloor l/2\rfloor-1)^{k-1}$. Hence, 
the length of the forgetful occurrence of the whole left (resp.\ right) regular part is 
at least $x(l-\lfloor l/2\rfloor-1)^{k-1}(\lfloor l/2\rfloor-1)=\Omega(l^k)$.
\end{proof}

These two lemmas explain why letters of order $k$ were called letters of order $k$, not letter 
of order $k-1$. Despite the growth rates of individual letters of order $k$ (in the sense of 
their repeated images under $\varphi$) are $\Theta(l^{k-1})$, the "growth rates" of $k$-blocks 
(i.~e.\ occurrences consisting of letters of order at most $k$) in the sense of their 
superdescendants and evolutions are $O(l^k)$ and sometimes $\Theta(l^k)$.

Now we are going to define stable $k$-multiblocks and (prime and composite) kernels in 
stable $k$-multiblocks. Here we also allow $k=0$.
We call a $k$-multiblock \textit{stable} if 
it consists of periodic letters of order $k+1$ and (if $k\ge 1$) stable $k$-blocks only. 
In particular, an empty $k$-multiblock is always stable.
Note that 
it is not true in general that if $l$ is large enough, then the $l$th superdescendant of 
a $k$-multiblock is stable, namely, if a $k$-multiblock contains a letter of order $>k+1$, then 
its superdescendants never become stable. On the other hand, we can say the following:
\begin{remark}
If a $k$-multiblock $\al[\mathfrak x,i\ldots j,\mathfrak y]_k$, where $\mathfrak x,\mathfrak y\in\{<,>\}$, 
is stable, then
each letter of order $>k$ in $\Su_k([\mathfrak x,i\ldots j,\mathfrak y]_k)$ is 
periodic, has order $k+1$ and is the descendant of (more precisely, is the only 
letter of order $>k$ or $k$-block in the descendant of) a letter of order $k$ in 
$[\mathfrak x,i\ldots j,\mathfrak y]_k$.
If $\al[\mathfrak x,i\ldots j,\mathfrak y]_k$ is stable and $k\ge 1$, then
each $k$-block in $\Su_k(\al[\mathfrak x,i\ldots j,\mathfrak y]_k)$
is stable and is the descendant of a $k$-block in $[\mathfrak x,i\ldots j,\mathfrak y]_k$.
In other words, the operation of taking the descendant of a 
$k$-block or of a letter of order $>k$ establishes a bijection between the letters of order $>k$ 
and (if $k\ge 1$) the $k$-blocks in $[\mathfrak x,i\ldots j,\mathfrak y]_k$
and the letters of order $>k$ and (if $k\ge 1$) the $k$-blocks 
in $\Su_k([\mathfrak x,i\ldots j,\mathfrak y]_k)$.
In particular, $\Su_k([\mathfrak x,i\ldots j,\mathfrak y]_k)$ is also a stable 
$k$-multiblock.
\end{remark}
Note that if $k\ge 1$, then the requirement that all letters of order $>k$ in a stable $k$-multiblock 
are of order exactly $k+1$ and are periodic is essential in the sense that 
if we only know that all $k$-blocks in a given $k$-multiblock are stable, then 
it is not true in general that all $k$-blocks in its descendant are also stable. 
The descendants of letters of order $>(k+1)$ (in the sense of the definition of the descendant of 
a $k$-multiblock) or of preperiodic letters of order $k+1$ can contain $k$-blocks with 
evolutional sequence number 0 (i.~e.\ origins), and they are unstable.

In particular, a stable 0-multiblock is just an occurrence in $\al$ consisting of periodic letters of order 1 
only.

It will also be convenient for us now to introduce the notion of an evolution of 
stable $k$-multiblocks,
%We will also consider evolutions of $k$-multiblocks, 
but 
we will not introduce ancestors and origins. Instead, we say the following:
A sequence of \textit{\textbf{stable}} 
$k$-multiblocks $\mathcal F_0,\mathcal F_1,\mathcal F_2,\ldots$
is called an \textit{evolution} if $\mathcal F_{l+1}=\Su_k(\mathcal F_l)$
for all $l\ge 0$. 
An evolution containing a given stable $k$-multiblock
always exists (for example, one can take this block as $\mathcal F_0$ 
and set $\mathcal F_l=\Su_k^l(\mathcal F_0)$ for $l>0$),
but is not necessarily unique, for example, 
if $\mathcal F$ is an evolution of $k$-multiblocks, then 
a $k$-multiblock $\mathcal F_l$ with $l>0$ 
is also contained in the following evolution $\mathcal F'$: $\mathcal F'_m=\mathcal F_{m+1}$
for all $m\ge 0$.

%If two $k$-multiblocks are consecutive, then their descendants are also consecutive. 
%So, 
We call two evolutions $\mathcal F$ and $\mathcal F'$ of stable $k$-multiblocks 
\textit{consecutive}
if $\mathcal F_l$ and $\mathcal F'_l$ are consecutive $k$-multiblocks for all $l\ge 0$.
If $\mathcal F$ and $\mathcal F'$ are two consecutive evolutions of stable $k$-multiblocks, 
we call the evolution $\mathcal F''$, where $\mathcal F''_l$ is the concatenation 
of $\mathcal F_l$ and $\mathcal F'_l$, the \textit{concatenation} of $\mathcal F$
and $\mathcal F'$.

Now we define prime kernels in a stable $k$-multiblock ($k\ge 0$). 
They will be suboccurrences in the forgetful occurrence of the 
$k$-multiblock. More precisely, we are going to define them 
by induction on $k$. 
The only prime kernel of a stable 0-multiblock is just the 
0-multiblock itself (which is already an occurrence in $\al$.

Suppose that $k\ge 1$.
Let 
%$\al[\mathfrak x,i\ldots j,\mathfrak y]_k$, where $\mathfrak x,\mathfrak y\in\{<,>\}$,
$\mathcal F_l$
be a stable $k$-multiblock. 
We say that a suboccurrence $\swa st$ in 
its forgetful occurrence 
%$\swa ij$ 
$\Fg(\mathcal F_l)$
is a 
prime kernel of 
%$\al[\mathfrak x,i\ldots j,\mathfrak y]_k$ 
$\mathcal F_l$
if one of the following conditions holds:
\begin{enumerate}
\item $\swa st$ is the forgetful occurrence of the left (resp.\ right)
preperiod of a $k$-block in 
%$\al[\mathfrak x,i\ldots j,\mathfrak y]_k$ 
$\mathcal F_l$
such that 
Case I holds at the left (resp.\ at the right) for its evolution.
\item $\swa st$ is a single letter of order $k+1$.
\item There exists a 
$k$-block 
%$\swa {i'}{j'}$ 
$\swa ij$
in 
%$\al[\mathfrak x,i\ldots j,\mathfrak y]_k$
$\mathcal F_l$
such that 
$\swa st$ is a prime kernel of 
%$\Cr_k(\swa {i'}{j'})$ 
$\Cr_k(\swa ij)$ 
(recall that the core of a 
$k$-block is by definition a $(k-1)$-multiblock, so its prime kernels are already defined 
by the induction hypothesis).
\end{enumerate}

\begin{lemma}\label{primekernelsdiffer}
Each suboccurrence of 
%$\swa ij$ 
$\mathcal F_l$
is listed in this list at most once, and kernels of a $k$-multiblock do 
not overlap.
\end{lemma}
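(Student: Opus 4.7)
The plan is to induct on $k$. The case $k=0$ is immediate, since by definition the only prime kernel of a stable $0$-multiblock is the multiblock itself, so neither double-listing nor overlap is possible. For $k\ge 1$ I assume the lemma for $(k-1)$-multiblocks and exploit the layered decomposition of each stable $k$-block $\mathcal E_{l'}$ in $\mathcal F_l$ as the concatenation $\LpreP_k(\mathcal E_{l'})\,\LR_k(\mathcal E_{l'})\,\Cr_k(\mathcal E_{l'})\,\RR_k(\mathcal E_{l'})\,\RpreP_k(\mathcal E_{l'})$, together with the fact that $\mathcal F_l$ itself is an alternating concatenation of stable $k$-blocks and single periodic letters of order $k+1$. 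By Corollary~\ref{regpartstablekminusone} (and by Lemma~\ref{oneblockstructlemmai} when $k=1$) the core $\Cr_k(\mathcal E_{l'})$ is itself a stable $(k-1)$-multiblock, so the inductive hypothesis applies to it.

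First, I would separate the three clauses of the definition by location in $\al$. A clause~(2) kernel is a single letter of order $k+1$, which sits strictly between $k$-blocks of $\mathcal F_l$; clause~(1) and clause~(3) kernels both lie inside some $k$-block of $\mathcal F_l$ and thus consist of letters of order $\le k$. So clause~(2) kernels are pairwise disjoint and disjoint from the other two types. Kernels from clause~(1) or (3) that originate in \emph{different} $k$-blocks of $\mathcal F_l$ are disjoint because the blocks themselves are disjoint occurrences in~$\al$.

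The only remaining issue, and the step I would treat most carefully, is two kernels living inside the same $k$-block $\mathcal E_{l'}$. The key point is that clause~(1) only declares a preperiod to be a kernel when Case~I holds on the corresponding side; under that hypothesis Lemma~\ref{regpartabslargek} (and Lemma~\ref{regpartabslargeone} for $k=1$) yields $|\Fg(\LR_k(\mathcal E_{l'}))|\ge 2\finmax$ on the relevant side, so the matching regular part genuinely separates the preperiod's forgetful occurrence from $\Fg(\Cr_k(\mathcal E_{l'}))$. The two preperiods, when both occur as kernels, sit at opposite ends of the block separated at the very least by the two nonempty regular parts, hence a clause~(1) kernel of $\mathcal E_{l'}$ cannot coincide with or overlap another clause~(1) kernel or a clause~(3) kernel of the same block. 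Finally, two clause~(3) kernels of the same core are disjoint and each listed at most once by the inductive hypothesis applied to $\Cr_k(\mathcal E_{l'})$. Combining these disjointness statements yields both conclusions of the lemma.
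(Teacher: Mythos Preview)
Your proof is correct and follows essentially the same approach as the paper: induction on $k$, separating the three clauses by location (letters of order $k+1$ versus suboccurrences of $k$-blocks), then within a single $k$-block using Lemmas~\ref{regpartabslargeone} and~\ref{regpartabslargek} to show that the nonempty regular part separates a preperiod kernel from the core kernels, and finally invoking the inductive hypothesis on the core. If anything, your write-up is slightly more careful than the paper's: you explicitly invoke Corollary~\ref{regpartstablekminusone} to justify that the core is a stable $(k-1)$-multiblock (so that the inductive hypothesis applies), and you explicitly note that the two preperiods, when both present, are separated by both regular parts.
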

\begin{proof}
For $k=0$ the claim is trivial since there is only one kernel in each 0-multiblock.

If $k>0$, then a stable $k$-multiblock consists of periodic letters of order $k+1$ and stable $k$-blocks,
and prime kernels are either these letters of order $k+1$ themselves (and they are listed only once), 
or are suboccurrences of the $k$-blocks. Clearly, suboccurrences of the $k$-blocks cannot overlap with 
letters of order $k+1$. Suppose that an occurrence $\swa st$ is a prime kernel contained in a $k$-block 
%$\swa {i'}{j'}$. 
$\swa ij$.
It can be listed in the list above as a left (resp.\ right) preperiod only if Case I holds at 
the left (resp.\ at the right) for the evolution of 
%$\swa {i'}{j'}$. 
$\swa ij$.
But then the left (resp.\ right) 
regular part of 
%$\swa{i'}{j'}$ 
$\swa ij$
is nonempty by Lemmas \ref{regpartabslargeone} and \ref{regpartabslargek},
so $\swa st$ cannot overlap or coincide with the prime kernels of 
%$\Cr_k(\swa{i'}{j'})$.
$\Cr_k(\swa ij)$. 
By the induction hypothesis, the prime kernels of 
%$\Cr_k(\swa{i'}{j'})$
$\Cr_k(\swa ij)$ 
also do not overlap, and each of them is mentioned in the definition exactly once.
\end{proof}
Note that we do not claim (and this is not true in general) that prime kernels are nonempty 
occurrences. The most trivial counterexample is an empty 0-multiblock, i.~e.\ an empty occurrence 
in $\al$, then it is its prime kernel. A bit more general example is a 1-multiblock consisting 
of a single 1-block whose core is empty. This can happen if both borders of the origin 
of an evolution of 1-blocks are, for example, preperiodic letters of order 2, and their 
images consist of exactly one periodic letter of order 2 each. More generally, 
it can happen that the core of a $k$-block 
%$\swa{i'}{j'}$,
$\swa ij$, 
where $k>1$, consists of a single empty 
$(k-1)$-block, then using our definition of a prime kernel, we get by induction that 
this empty $(k-1)$-block is a prime kernel inside 
%$\swa{i'}{j'}$.
$\swa ij$. 
Moreover, if 
Case II holds, say, at the right for the evolution of 
%$\swa{i'}{j'}$, 
$\swa ij$, 
and the $k$-multiblock
whose prime kernels we are defining contains the right border 
%$\al_{j'+1}$ 
$\al_{j+1}$
of 
%$\swa{i'}{j'}$
$\swa ij$ 
as well as 
%$\swa{i'}{j'}$, 
$\swa ij$, 
then this empty prime kernel (which is 
%$\swa{j'+1}{j'}$
$\swa{j+1}j$ 
in this case) 
and 
%$\swa{j'+1}{j'+1}$
$\swa{j+1}{j+1}$ 
are both prime kernels. Thus, it is possible that two prime kernels are consecutive
occurrences in $\al$, 
and one of them is an empty occurrence. However, it is not possible (and the above lemma proves 
that this is not possible) that this empty 
occurrence is called a prime kernel twice, 
and this is guaranteed (in particular) by the fact that we call the forgetful occurrence of 
the right preperiod a prime kernel only if Case I holds at the right. Otherwise in the example
above, we would have called 
%$\swa{j'+1}{j'}$
$\swa{j+1}j$ 
a prime kernel twice: first as a prime kernel 
of 
%$\Cr_k(\swa{i'}{j'})$
$\Cr_k(\swa ij)$ 
and second as the forgetful occurrence of 
%$\RpreP_k(\swa{i'}{j'})$.
$\RpreP_k(\swa ij)$.

One more remark we make about this definition is the following.
\begin{remark}\label{primekernelsexist}
An empty 0-multiblock (i.~e.\ an empty occurrence in $\al$) 
has a prime kernel, which is the 0-multiblock itself, 
while an empty $k$-multiblock for $k>0$ (which is really empty in the $k$-multiblock sense, 
i.~e.\ it does not contain any letters of order $>k$ and $k$-blocks, even empty ones) does not have 
any prime kernels according to this definition. However, if a $k$-multiblock, where $k>0$, 
consists of a single empty $k$-block (which must be stable, otherwise prime kernels are not defined anyway)
does have one prime kernel, which is this $k$-block itself.
\end{remark}

For example, let us find all prime kernels in the 2-multiblock consisting of a single stable 2-block in the 
example above.
The core of this 2-block consists the following 
1-blocks and letters of order 2:
\begin{enumerate}
\item A 1-block $\mathfrak{ee}ee..e\!f\mskip-4.5mu f\!..\mskip-1.5mu f\mskip-1.5mu\mathfrak{ff}$, 
where the amount of letters $e$ equals the amount of letters $f$ and is at least 10 if 
the ambient 2-block is stable.
\item A letter $d$ of order 2.
\item A 1-block $\mathfrak{ff}\mskip-1.5mu f\mskip-4.5mu f\!..\mskip-1.5mu f\!ee..ee\mathfrak{ee}$, 
where the amount of letters $e$ equals two plus the amount of letters $f$. There are at least 8 letters $f$ 
if the ambient 2-block is stable.
\item A letter $c$ of order 2.
\item A 1-block $\mathfrak{ee}ee..ee\mathfrak{ee}$, where $e$ is repeated an even 
number of times, and at least 18 times if the 2-block is stable.
\end{enumerate}
So, the prime kernels of the 2-multiblock are:
\begin{enumerate}
\item The left preperiod of the 1-block $\mathfrak{ee}ee..e\!f\mskip-4.5mu f\!..\mskip-1.5mu f\mskip-1.5mu\mathfrak{ff}$, 
which is $\mathfrak{ee}$.
\item The core of this 1-block, which is the occurrence $ee\!f\mskip-4.5mu f$ in the middle.
\item The right preperiod of this 1-block, which is $\mathfrak{ff}$.
\item The letter $d$ of order 2.
\item The left preperiod of the 1-block $\mathfrak{ff}\mskip-1.5mu f\mskip-4.5mu f\!..\mskip-1.5mu f\!ee..ee\mathfrak{ee}$, 
which is $\mathfrak{ff}$.
\item The core of this 1-block, which is the rightmost occurrence of the word $f\mskip-4.5mu f$ in this 1-block.
\item The right preperiod of this 1-block, which is $\mathfrak{ee}$.
\item The letter $c$ of order 2.
\item The left preperiod of the 1-block $\mathfrak{ee}ee..ee\mathfrak{ee}$, which is $\mathfrak{ee}$.
\item The core of this 1-block, which is the occurrence of the word $ee$ located exactly 
in the middle of this 1-block (recall that the number of letters $e$ in this 1-block is even).
\item The right preperiod of this 1-block, which is $\mathfrak{ee}$.
\item The right preperiod of the whole 2-block, which is $c\mathfrak{ee}eeeeee\mathfrak{ee}c\mathfrak{ee}ee\mathfrak{ee}c\mathfrak{eec}$.
\end{enumerate}

Now we define \textit{descendants} of prime kernels of a stable $k$-multiblock 
%$\al[\mathfrak x,i\ldots j,\mathfrak y]_k$, where $\mathfrak x,\mathfrak y\in\{<,>\}$.
$\mathcal F_l$, where $\mathcal F$ is an evolution of stable $k$-multiblocks. 
In general, 
they are prime kernels of 
%$\Su_k(\al[\mathfrak x,i\ldots j,\mathfrak y]_k)$.
$\mathcal F_{l+1}=\Su_k(\mathcal F_l)$.
More precisely, if $k=0$, then the only prime kernel of 
%$\al[\mathfrak x,i\ldots j,\mathfrak y]_k$
$\mathcal F_l$
is 
%$\swa ij$, 
$\mathcal F_l$ itself,
and we say that 
%$\Su_k(\al[\mathfrak x,i\ldots j,\mathfrak y]_k)=\varphi(\swa ij)$
$\mathcal F_{l+1}=\varphi(\mathcal F_l)$ 
is the 
descendant of the prime kernel 
%$\swa ij$. 
$\mathcal F_l$.
For $k>0$, we define the descendants of 
prime kernels by induction on $k$. Let $\swa st$ be a prime kernel of 
%$\al[\mathfrak x,i\ldots j,\mathfrak y]_k$.
$\mathcal F_l$.
Consider three cases (we can do that since we know the statement of Lemma \ref{primekernelsdiffer}).

If $\swa st$ is the forgetful occurrence of the left (resp.\ right) preperiod of a $k$-block 
%$\swa{i'}{j'}$
$\swa ij$
contained in 
%$\al[\mathfrak x,i\ldots j,\mathfrak y]_k$ 
$\mathcal F_l$
and such that Case I holds at the left (resp.\ at the right)
for its evolution, then we say that the descendant of the prime kernel $\swa st$ is the forgetful occurrence
of 
%$\LpreP_k(\Su_k(\swa{i'}{j'}))$
$\LpreP_k(\Su_k(\swa ij))$ 
(resp.\ of 
%$\RpreP_k(\Su_k(\swa{i'}{j'}))$
$\RpreP_k(\Su_k(\swa ij))$).

If $\swa st$ is a single periodic letter of order $k+1$, then its descendant in the sense of $k$-multiblocks, 
i.~e.\ the only periodic letter of order $k+1$ in $\varphi(\swa st)$, is called the descendant of $\swa st$ as 
a prime kernel.

Finally, if $\swa st$ is a prime kernel of the $(k-1)$-block 
%$\Cr_k(\swa{i'}{j'})$,
$\Cr_k(\swa ij)$, 
where 
%$\swa{i'}{j'}$
$\swa ij$
is a $k$-block inside 
%$\al[\mathfrak x,i\ldots j,\mathfrak y]_k$, 
$\mathcal F_l$,
then the descendant of 
$\swa st$ as of a prime kernel of a $(k-1)$-multiblock is already defined by the induction hypothesis
(and is a suboccurrence in 
%$\Fg(\Su_{k-1}(\Cr_k(\swa{i'}{j'})))=\Fg(\Cr_k(\Su_k(\swa{i'}{j'})))$
$\Fg(\Su_{k-1}(\Cr_k(\swa ij)))=\Fg(\Cr_k(\Su_k(\swa ij)))$), 
and we say that the descendant of $\swa st$ as of a prime kernel of 
%$\Su_k(\al[\mathfrak x,i\ldots j,\mathfrak y]_k)$
$\mathcal F_l$
is the same suboccurrence of 
%$\Fg(\Su_{k-1}(\Cr_k(\swa{i'}{j'})))$.
$\Fg(\Su_{k-1}(\Cr_k(\swa ij)))$.

The descendant of a prime kernel $\swa st$ of a stable $k$-multiblock is denoted 
by $\kSu_k(\swa st)$. 

\begin{remark}\label{suprimekerbijection}
A trivial induction on $k$ shows that the operation of taking the descendant 
of a prime kernel establishes a bijection between the prime kernels of a stable $k$-multiblock and 
the prime kernels of its descendant.
\end{remark}

\begin{lemma}\label{borderkernelsleft}
Let $\al[\mathfrak x,i\ldots j,\mathfrak y]_k$, where $\mathfrak x,\mathfrak y\in\{<,>\}$,
be a stable $k$-multiblock. If $k>0$, suppose also that it is nonempty 
in the $k$-multiblock sense.
Then there exists a prime kernel of $\al[\mathfrak x,i\ldots j,\mathfrak y]_k$
of the form $\swa i{i'}$.

If $\Su_k(\al[\mathfrak x,i\ldots j,\mathfrak y]_k)=\al[\mathfrak x,s\ldots t,\mathfrak y]_k$, then 
$\kSu_k(\swa i{i'})$ is a prime kernel of the form $\swa s{s'}$.
\end{lemma}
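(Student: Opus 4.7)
The plan is induction on $k$. The base case $k=0$ is immediate: the unique prime kernel of the 0-multiblock $\swa ij$ is the multiblock itself, and its descendant $\varphi(\swa ij)=\swa st$ starts at $s$ by definition of $\Su_0$.

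For $k\ge 1$, I would examine the leftmost element of the nonempty stable $k$-multiblock $\al[\mathfrak x,i\ldots j,\mathfrak y]_k$; since the forgetful occurrence starts at position $i$, this element is either a letter $\al_i$ of order $k+1$, a nonempty stable $k$-block $\swa i{i''}$, or an empty stable $k$-block $\swa i{i-1}$. In the first case, stability forces $\al_i$ to be periodic of order $k+1$, so $\swa ii$ is a prime kernel by the second clause of the definition, and by Remark \ref{descendantofperiodic} its descendant is the unique order-$(k+1)$ letter in $\varphi(\al_i)$, which is the leftmost element of $\al[\mathfrak x,s\ldots t,\mathfrak y]_k$ and hence equal to $\swa ss$. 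When the leftmost element is a stable $k$-block $E$ with Case I at the left, the first clause makes $\Fg(\LpreP_k(E))$ a prime kernel starting at $i$, and its descendant $\Fg(\LpreP_k(\Su_k(E)))$ starts at $s$ because $\Su_k(E)$ is the leftmost element of the descendant multiblock (descendants of consecutive elements remain consecutive and in order).

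The remaining situation is that $E$ is a stable $k$-block with Case II at the left; this subsumes the empty case because an empty stable $k$-block must have Case II at both ends (otherwise each atom $\LA_{k,l'}(\mathcal E_{l'})$ for $l'>1$ contains a letter of order $k$, already forcing $|\mathcal E_l|>0$ for $l\ge 2$, which combines with Lemma \ref{regpartasym} to rule out emptiness at the stable level $l\ge 3k$). Under Case II at the left, $\LpreP_k(E)$ and $\LR_k(E)$ are empty, so $E$ begins with its core $\Cr_k(E)$, a stable $(k-1)$-multiblock by Corollary \ref{regpartstablekminusone} that is nonempty in the $(k-1)$-multiblock sense thanks to Remark \ref{zerothatomsimplestruct}. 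The induction hypothesis applied to $\Cr_k(E)$ then yields a prime kernel of the form $\swa i{i'}$, which is promoted to a prime kernel of the ambient $k$-multiblock by the third clause. Its descendant, transported through $\Cr_k(\Su_k(E))=\Su_{k-1}(\Cr_k(E))$, is by induction a prime kernel of the form $\swa s{s'}$ inside $\Cr_k(\Su_k(E))$; since Case I/Case II is a property of the whole evolution, $\Su_k(E)$ also has Case II at the left, so its core begins at $s$ and the claim follows.

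The main obstacle I anticipate is the bookkeeping around empty forgetful occurrences in the Case II branch: when $E$ is an empty $k$-block, $\Cr_k(E)$ has empty forgetful occurrence located at position $i$ yet remains nonempty in the $(k-1)$-multiblock sense, and one has to verify that the induction hypothesis still produces the (possibly empty) kernel $\swa i{i-1}$ with its descendant correctly placed at $s$. Once the case analysis is set up, Remark \ref{suprimekerbijection} ensures that the descendant of the kernel is itself a prime kernel of the descendant multiblock, making the second half of the statement routine.
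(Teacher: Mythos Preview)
Your proof is correct and follows essentially the same approach as the paper: induction on $k$ with the $k=0$ base case immediate, and for $k\ge 1$ a case split on whether the leftmost constituent is a letter of order $k+1$, a $k$-block with Case I at the left (use the left preperiod), or a $k$-block with Case II at the left (recurse into the core via Remark~\ref{zerothatomsimplestruct}). Your explicit handling of the empty $k$-block is extra care the paper leaves implicit (it simply lets the Case I/II dichotomy absorb it), and your minor appeal to Remark~\ref{zerothatomsimplestruct} for nonemptiness is only needed when $k-1>0$, since the $k=0$ base case already covers empty 0-multiblocks.
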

\begin{proof}
For $k=0$ the claim follows directly from the definition. Suppose that $k>0$.
We use induction on $k$.

If the leftmost $k$-block or letter of order $k+1$ contained 
in $\al[\mathfrak x,i\ldots j,\mathfrak y]_k$ is actually 
a letter of order $k+1$, then this letter is $\al_i$, and it is a prime kernel itself.
If $\Su_k(\al[\mathfrak x,i\ldots j,\mathfrak y]_k)=\al[\mathfrak x,s\ldots t,\mathfrak y]_k$, 
then $\al_s$ is a periodic letter of order $k+1$,
and $\kSu_k(\al_i)=\al_s$.

If the leftmost $k$-block or letter of order $k+1$ contained 
in $\al[\mathfrak x,i\ldots j,\mathfrak y]_k$ is actually 
a $k$-block, denote it by $\swa i{i''}$. If Case I holds at the left 
for the evolution of $\swa i{i''}$, then the forgetful occurrence of $\LpreP_k(\swa i{i''})$
is a prime kernel, and it starts from position $i$ in $\al$.
Again, if $\Su_k(\al[\mathfrak x,i\ldots j,\mathfrak y]_k)=\al[\mathfrak x,s\ldots t,\mathfrak y]_k$, 
then $\kSu_k(\Fg(\LpreP_k(\swa i{i''})))=\Fg(\LpreP_k(\Su_k(\swa i{i''})))$ starts from 
position $s$ in $\al$.

If Case II holds at the left, then the left component of $\swa i{i''}$ 
is empty, and the core of $\swa i{i''}$ is a $(k-1)$-multiblock of the 
form $\al[<,i\ldots i''',>]_{k-1}$ for some $i'''$, and, by Remark \ref{zerothatomsimplestruct}, 
it is a non-empty $(k-1)$-multiblock if $k-1>0$. By the induction hypothesis, 
there exists a prime kernel of $\al[<,i\ldots i''',>]_{k-1}$
of the form $\swa i{i'}$. Then 
$\swa i{i'}$ is also a kernel of 
$\al[\mathfrak x,i\ldots j,\mathfrak y]_k$.
And again, if $\Su_k(\al[\mathfrak x,i\ldots j,\mathfrak y]_k)=\al[\mathfrak x,s\ldots t,\mathfrak y]_k$, 
then $\Su_{k-1}(\al[<,i\ldots i''',>]_{k-1})=\al[<,s\ldots s''',>]_{k-1}$ for some $s'''$, and 
by the induction hypothesis, $\kSu_{k-1}(\swa i{i'})$
is a prime kernel of $\al[<,s\ldots s''',>]_{k-1}$ of the form $\swa s{s'}$
for some $s'$. By the definition of the descendant of a prime kernel in this case, 
we also have $\kSu_k(\swa i{i'})=\swa s{s'}$.
\end{proof}

\begin{lemma}\label{borderkernelsright}
Let $\al[\mathfrak x,i\ldots j,\mathfrak y]_k$, where $\mathfrak x,\mathfrak y\in\{<,>\}$,
be a stable $k$-multiblock. If $k>0$, suppose also that it is nonempty 
in the $k$-multiblock sense.
Then there exists a prime kernel of $\al[\mathfrak x,i\ldots j,\mathfrak y]_k$
of the form $\swa{j'}j$.

If $\Su_k(\al[\mathfrak x,i\ldots j,\mathfrak y]_k)=\al[\mathfrak x,s\ldots t,\mathfrak y]_k$, then 
$\kSu_k(\swa {j'}j)$ is a prime kernel of the form $\swa {t'}t$.
\end{lemma}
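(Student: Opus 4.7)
The plan is to mirror the proof of Lemma \ref{borderkernelsleft} exactly, exchanging ``leftmost'' for ``rightmost'', $\LpreP_k$ for $\RpreP_k$, $\LB$ for $\RB$, and ``Case I/II at the left'' for ``Case I/II at the right'' throughout. The argument proceeds by induction on $k$, and the structure is identical: a base case $k=0$ that is essentially tautological, followed by an inductive step with three subcases depending on the nature of the rightmost $k$-block or letter of order $k+1$ contained in $\al[\mathfrak x,i\ldots j,\mathfrak y]_k$.

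For $k=0$, the multiblock is the occurrence $\swa ij$ and its unique prime kernel is $\swa ij$ itself, which has the required form $\swa{j'}j$ with $j'=i$; the descendant is $\varphi(\swa ij)=\swa st$ and the corresponding descendant prime kernel is $\swa st$ itself. For the inductive step, I would consider the rightmost element of $\al[\mathfrak x,i\ldots j,\mathfrak y]_k$ among $k$-blocks and letters of order $k+1$. If that element is a letter of order $k+1$, it must be $\al_j$, and $\al_j$ itself is a prime kernel of the form $\swa jj$; its descendant as a prime kernel is the unique periodic letter of order $k+1$ in $\varphi(\al_j)$, which is $\al_t$, of the form $\swa tt$. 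If it is a $k$-block $\swa{i''}j$ for which Case I holds at the right, then $\Fg(\RpreP_k(\swa{i''}j))$ is by definition a prime kernel of the multiblock and its rightmost position is $j$, so it has the form $\swa{j'}j$; its descendant as a prime kernel is $\Fg(\RpreP_k(\Su_k(\swa{i''}j)))$, whose rightmost position is $t$.

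The remaining subcase is when Case II holds at the right for $\swa{i''}j$. Then the right component is empty, and the core of $\swa{i''}j$ is a $(k-1)$-multiblock of the form $\al[<,i'''\ldots j,>]_{k-1}$ for some $i'''$, which by Remark \ref{zerothatomsimplestruct} is nonempty in the $(k-1)$-multiblock sense whenever $k-1>0$ (and for $k-1=0$ the base case applies directly). Applying the inductive hypothesis to this core yields a prime kernel of the form $\swa{j'}j$; by the definition of prime kernels of a $k$-multiblock, this is also a prime kernel of $\al[\mathfrak x,i\ldots j,\mathfrak y]_k$. The descendant statement in this subcase reduces, via the definition of $\kSu_k$ on kernels coming from cores and via $\Su_{k-1}(\Cr_k(\swa{i''}j))=\Cr_k(\Su_k(\swa{i''}j))$, to the induction hypothesis applied to $\Su_{k-1}(\al[<,i'''\ldots j,>]_{k-1})=\al[<,s'''\ldots t,>]_{k-1}$.

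The main obstacle, as in the leftward version, is the Case II subcase: one must verify that when the right component of the rightmost $k$-block is empty the right endpoint of the core really coincides with $j$, and that the induction hypothesis then produces a prime kernel sharing that endpoint. Everything else is bookkeeping that follows the pattern of Lemma \ref{borderkernelsleft} after a routine left/right swap, using the definition of $\kSu_k$ for each of the three kinds of prime kernels.
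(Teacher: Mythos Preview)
Your proposal is correct and matches the paper's approach exactly: the paper's proof is the single line ``The proof is completely symmetric to the proof of the previous lemma,'' and your plan spells out precisely that symmetry by swapping left/right, $\LpreP_k/\RpreP_k$, and the relevant cases throughout the induction on $k$.
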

\begin{proof}
The proof is completely symmetric to the proof of the previous lemma.
\end{proof}

\begin{lemma}\label{kernelfixed}
If $\swa st$ is a prime kernel of 
a stable $k$-multiblock 
%$\al[\mathfrak x,i\ldots j,\mathfrak y]_k$, 
$\mathcal F_l$,
then $\kSu_k(\swa st)$
coincides with $\swa st$ as an abstract word.
\end{lemma}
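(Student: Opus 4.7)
I would prove this by induction on $k$, splitting the inductive step according to the three types of prime kernels listed in the definition.

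\textbf{Base case $k=0$.} A stable $0$-multiblock is, by definition, an occurrence in $\al$ consisting entirely of periodic letters of order $1$; its unique prime kernel is the multiblock itself, and its descendant is simply $\varphi$ applied to that occurrence. Since $\varphi$ is weakly $1$-periodic, for each periodic letter $b$ of order $1$ the only letter of order $1$ in $\varphi(b)$ is $b$ itself, and $\varphi(b)$ contains only letters of order $\le 1$; since $b$ has order $1$, the growth rate forces $\varphi(b)=b$ as abstract letters. Applying this letter by letter gives that $\varphi$ fixes the stable $0$-multiblock as an abstract word, which is exactly the claim.

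\textbf{Inductive step, $k\ge 1$.} Assume the statement for $k-1$ and let $\swa st$ be a prime kernel of a stable $k$-multiblock $\mathcal F_l$. There are three cases to treat, corresponding to the three bullets in the definition of prime kernel. If $\swa st$ is a single letter of order $k+1$, then by stability it is periodic, and by Remark \ref{descendantofperiodic} the descendant of $\al[>,s\ldots s,<]_k$ consists of a single letter of order $k+1$ that coincides with $\al_s$ as an abstract letter; this descendant is precisely $\kSu_k(\swa st)$ in this case. If $\swa st=\Fg(\LpreP_k(\swa ij))$ (or the symmetric right version) for some $k$-block $\swa ij$ in $\mathcal F_l$ with Case I at the left, then $\swa ij$ is stable and so is $\Su_k(\swa ij)$, so Remark \ref{preperiodfixed} gives $\Fg(\LpreP_k(\swa ij))=\Fg(\LpreP_k(\Su_k(\swa ij)))$ as abstract words, which is exactly $\swa st=\kSu_k(\swa st)$ by definition of the descendant of this kind of prime kernel.

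\textbf{The recursive case.} The remaining case is when $\swa st$ is a prime kernel of $\Cr_k(\swa ij)$ for some $k$-block $\swa ij$ in $\mathcal F_l$. The main thing to check is that we can actually invoke the inductive hypothesis at level $k-1$: I would argue that $\Cr_k(\swa ij)$ is a \emph{stable} $(k-1)$-multiblock. This uses Corollary \ref{regpartstablekminusone}, which asserts that for a stable $k$-block every letter of order $k$ in the core is periodic and every $(k-1)$-block in the core is stable; that is exactly the defining property of stability for a $(k-1)$-multiblock. Then, unpacking the definition of $\Su_k$ on a $k$-multiblock and the definition of $\Cr_k$ in terms of the zeroth and first atoms, one checks the identity $\Su_{k-1}(\Cr_k(\swa ij))=\Cr_k(\Su_k(\swa ij))$, so $\Cr_k(\swa ij)$ embeds into an evolution of stable $(k-1)$-multiblocks. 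The inductive hypothesis applied to $\swa st$ as a prime kernel of $\Cr_k(\swa ij)$ then yields $\kSu_{k-1}(\swa st)=\swa st$ as abstract words, and by the third clause of the definition of $\kSu_k$ this is the same as $\kSu_k(\swa st)$.

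The only real obstacle is the recursive case: specifically, the bookkeeping that the core of a stable $k$-block is a stable $(k-1)$-multiblock and that taking descendants commutes with taking cores. Once those two facts are isolated (and both are essentially immediate from Corollary \ref{regpartstablekminusone} together with the definition of the descendant of a $k$-multiblock via its atoms), the rest of the proof is a clean three-case verification driven by Remarks \ref{descendantofperiodic} and \ref{preperiodfixed}.
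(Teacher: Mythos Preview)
Your proof is correct and follows essentially the same approach as the paper: induction on $k$ with a three-case split matching the definition of prime kernel, invoking Remark~\ref{preperiodfixed} for preperiods, Remark~\ref{descendantofperiodic} for periodic letters of order $k+1$, and the induction hypothesis for kernels coming from the core. The only difference is that you spell out explicitly why $\Cr_k(\swa ij)$ is a stable $(k-1)$-multiblock and why $\Su_{k-1}(\Cr_k(\swa ij))=\Cr_k(\Su_k(\swa ij))$; the paper treats these as already established (the former by Corollary~\ref{regpartstablekminusone}, the latter noted in the definition of $\kSu_k$) and simply says ``the claim follows from the induction hypothesis.''
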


\begin{proof}
For $k=0$ this is true by the definitions of a periodic letter of order 1 and a weakly 1-periodic morphism.
If $k>0$, we again use induction on $k$. Consider the three cases from the definition of a prime kernel.

If there exists a $k$-block 
%$\swa{i'}{j'}$
$\swa ij$ 
in 
%$\al[\mathfrak x,i\ldots j,\mathfrak y]_k$
$\mathcal F_l$
such that 
%$\swa st=\Fg(\LpreP_k(\swa{i'}{j'}))$
$\swa st=\Fg(\LpreP_k(\swa ij))$ 
(resp.\ 
%$\swa st=\Fg(\RpreP_k(\swa{i'}{j'}))$
$\swa st=\Fg(\RpreP_k(\swa ij))$) 
and Case I holds at the left (resp.\ at the right) for the evolution of 
%$\swa{i'}{j'}$,
$\swa ij$, 
then 
%$\kSu_k(\swa st)=\Fg(\LpreP_k(\Su_k(\swa{i'}{j'})))$ 
$\kSu_k(\swa st)=\Fg(\LpreP_k(\Su_k(\swa ij)))$ 
(resp.\ 
%$\kSu_k(\swa st)=\Fg(\RpreP_k(\Su_k(\swa{i'}{j'})))$
$\kSu_k(\swa st)=\Fg(\RpreP_k(\Su_k(\swa ij)))$). 
By Remark \ref{preperiodfixed},
%$\Fg(\LpreP_k(\swa{i'}{j'}))$ and $\Fg(\LpreP_k(\Su_k(\swa{i'}{j'})))$
$\Fg(\LpreP_k(\swa ij))$ and $\Fg(\LpreP_k(\Su_k(\swa ij)))$ 
(resp.\ 
%$\Fg(\RpreP_k(\swa{i'}{j'}))$ and $\Fg(\RpreP_k(\Su_k(\swa{i'}{j'})))$
$\Fg(\RpreP_k(\swa ij))$ and $\Fg(\RpreP_k(\Su_k(\swa ij)))$)
coincide as abstract words.

If $\swa st$ is a single periodic letter of order $k+1$, then by Remark \ref{descendantofperiodic}, 
%$\swa{s'}{t'}$ 
$\kSu_k(\swa st)$
is also a single letter, and it coincides with $\swa st$ as an abstract letter.

Finally, if there is a $k$-block 
%$\swa{i'}{j'}$ 
$\swa ij$ 
in 
%$\al[\mathfrak x,i\ldots j,\mathfrak y]_k$
$\mathcal F_l$
such that $\swa st$ is a kernel of 
%$\Cr_k(\swa{i'}{j'})$,
$\Cr_k(\swa ij)$, 
then the claim follows from the induction hypothesis.
\end{proof}

\begin{lemma}\label{conskernelsurvives}
Let 
%$\al[\mathfrak x,i\ldots j,\mathfrak y]_k$, where $\mathfrak x,\mathfrak y\in\{<,>\}$,
$\mathcal F_l$
be a stable $k$-multiblock, 
%$\swa st$ and $\swa{s'}{t'}$
$\swa ij$ and $\swa st$ 
be its two prime kernels. 
Suppose that 
%$\swa st$
$\swa ij$ 
is located to the left from 
%$\swa{s'}{t'}$ 
$\swa st$ 
and that there are 
no other prime kernels between 
%$\swa st$ and $\swa{s'}{t'}$.
$\swa ij$ and $\swa st$.

Then they are either consecutive occurrences in $\al$, or 
%the amount of letters between them 
%in $\al$ is at least $2\finmax$. 
the occurrence 
%$\swa{t+1}{s'-1}$ 
$\swa{j+1}{s-1}$
between them is the 
forgetful occurrence of the (left or right) regular part of 
a stable $k'$-block $\swa uv$ ($1\le k'\le k$) such that Case I holds for its evolution 
at the left or at the right, respectively.

%$\swa st$ and $\swa{s'}{t'}$ 
$\swa ij$ and $\swa st$ 
are consecutive occurrences in 
$\al$ if and only if 
%$\kSu_k(\swa st)$ and $\kSu_k(\swa{s'}{t'})$ 
$\kSu_k(\swa ij)$ and $\kSu_k(\swa st)$ 
are consecutive occurrences.
If 
%$\swa{t+1}{s'-1}=\Fg(\LR_{k'}(\swa uv))$
$\swa{j+1}{s-1}=\Fg(\LR_{k'}(\swa uv))$ 
(resp.\ 
%$\swa{t+1}{s'-1}=\Fg(\RR_{k'}(\swa uv))$
$\swa{j+1}{s-1}=\Fg(\RR_{k'}(\swa uv))$)
and Case I holds at the left (resp.\ at the right) for the evolution of $\swa uv$, then 
the occurrence between 
%$\kSu_k(\swa st)$ and $\kSu_k(\swa{s'}{t'})$
$\kSu_k(\swa ij)$ and $\kSu_k(\swa st)$ 
is $\Fg(\LR_{k'}(\Su_{k'}(\swa uv)))$ (resp.\ $\Fg(\RR_{k'}(\Su_{k'}(\swa uv)))$).
\end{lemma}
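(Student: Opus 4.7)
The proof proceeds by induction on $k$. The base case $k = 0$ is vacuous since a stable $0$-multiblock has a single prime kernel (itself), so two distinct prime kernels cannot exist in it.

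For the inductive step ($k \ge 1$), recall that a stable $k$-multiblock decomposes as an alternation of stable $k$-blocks and periodic letters of order $k+1$, and its prime kernels are distributed as follows: each letter of order $k+1$ is itself a kernel, while a $k$-block $\swa uv$ contributes $\Fg(\LpreP_k(\swa uv))$ when Case I holds at the left of its evolution, then the prime kernels of the $(k-1)$-multiblock $\Cr_k(\swa uv)$ (stable by Corollary \ref{regpartstablekminusone}), then $\Fg(\RpreP_k(\swa uv))$ when Case I holds at the right. I classify the pair $(\swa ij, \swa st)$ of consecutive prime kernels into three types. Type (A), both lying inside $\Cr_k(\swa uv)$ for a common $k$-block $\swa uv$: the induction hypothesis applied to the $(k-1)$-multiblock $\Cr_k(\swa uv)$ immediately yields the dichotomy with $1 \le k' \le k-1$. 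Type (B), one being a preperiod kernel and the other the adjacent kernel of the core: the existence of $\Fg(\LpreP_k(\swa uv))$ as a kernel forces Case I at the left for $\swa uv$, and the gap is exactly $\Fg(\LR_k(\swa uv))$, nonempty by Lemma \ref{regpartabslargek}, giving $k' = k$; the right-preperiod case is symmetric. Type (C), the two kernels lying in distinct constituents of the multiblock: I show they are consecutive in $\al$ by verifying that the rightmost kernel of any constituent ends at its right boundary and the leftmost kernel at its left boundary, combined with the fact that adjacent constituents of the multiblock are adjacent occurrences in $\al$.

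The boundary claim of type (C) is handled as follows: for a letter of order $k+1$ it is trivial, and for a $k$-block $\swa uv$ with Case I at the right, the last kernel $\Fg(\RpreP_k(\swa uv))$ ends at position $v$ by the structural decomposition $\LpreP_k \cdot \LR_k \cdot \Cr_k \cdot \RR_k \cdot \RpreP_k$; if instead Case II holds at the right, then $\RR_k(\swa uv)$ and $\RpreP_k(\swa uv)$ are empty, so $\Cr_k(\swa uv)$ extends to position $v$, and Lemma \ref{borderkernelsright} supplies a prime kernel of $\Cr_k(\swa uv)$ ending at $v$. The left-boundary case is symmetric via Lemma \ref{borderkernelsleft}. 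For the assertion about descendants, I verify each type separately: in type (A) it reduces to the induction hypothesis applied to $\Cr_k(\swa uv)$ and its descendant $\Cr_k(\Su_k(\swa uv))$; in type (B) the definition of $\kSu_k$ sends $\Fg(\LpreP_k(\swa uv))$ to $\Fg(\LpreP_k(\Su_k(\swa uv)))$ and sends the adjacent core kernel to the corresponding core kernel of $\Su_k(\swa uv)$, and the new gap between them is $\Fg(\LR_k(\Su_k(\swa uv)))$ by the structural decomposition of the descendant $k$-block; in type (C) the descendant constituents of $\Su_k(\mathcal F_l)$ are again adjacent in $\al$, and since the Case I/Case II status is an invariant of the evolution, the same boundary-kernel argument applies to the descendants (using also Remark \ref{suprimekerbijection} to ensure that consecutive descendant kernels come from consecutive original kernels).

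The main technical obstacle will be the bookkeeping in type (C), where one must check consistently across the four sub-cases (Case I vs.\ Case II, at left and at right) that the extremal prime kernel of a $k$-block always sits exactly at the corresponding boundary of the block, and that this adjacency property is preserved when passing to the descendant. Once these boundary facts are pinned down, the remainder of the argument is a direct unwinding of the definitions of $\kSu_k$ and of the atomic decomposition of a $k$-block.
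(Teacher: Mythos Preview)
Your proposal is correct and takes essentially the same approach as the paper: induction on $k$, with the same case split (both kernels in a common core, one kernel a preperiod and the other the extremal core kernel, or kernels in adjacent constituents), and the same use of Lemmas~\ref{borderkernelsleft}/\ref{borderkernelsright} to pin kernels at block boundaries. One small point: in your Type~(B) you assert that the gap equals $\Fg(\LR_k(\swa uv))$, but this relies on the fact that the leftmost prime kernel of $\Cr_k(\swa uv)$ begins at the left end of $\Fg(\Cr_k(\swa uv))$ (and likewise for descendants), which is precisely Lemma~\ref{borderkernelsleft}; you invoke that lemma explicitly only in Type~(C), so cite it in Type~(B) as well.
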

\begin{proof}
For $k=0$ the statement is clear since each 0-multiblock has only one prime kernel.
For $k>0$, we prove the statement by induction on $k$.

%$\swa st$ and $\swa{s'}{t'}$ 
$\swa ij$ and $\swa st$ 
cannot be occurrences in two different $k$-blocks, otherwise there would be 
a letter of order $k+1$ between these two $k$-blocks, and this letter of order $k+1$ would also be located between 
%$\swa st$ and $\swa{s'}{t'}$. 
$\swa ij$ and $\swa st$.
So, there are two possible cases: either 
%$\swa st$ and $\swa{s'}{t'}$ 
$\swa ij$ and $\swa st$ 
are both 
occurrences in the same $k$-block, or one of these occurrences is located in a $k$-block, and the other 
is the left or the right border of this $k$-block.

Suppose that 
%$\swa st$ 
$\swa ij$
is an occurrence in a $k$-block 
$\swa{i'}{j'}$, 
and 
%$\swa{s'}{t'}$
$\swa st$
is the right border of $\swa{i'}{j'}$, i.~e.\ 
%$s'=t'=j'$.
$s=t=j'$. 
By Lemma \ref{borderkernelsright}, 
the $k$-multiblock consisting of the $k$-block $\swa{i'}{j'}$ has a prime kernel of the form 
$\swa{j''}{j'}$, i.~e.\ 
%$s=j''$ and $t=j'$.
$i=j''$ and $j=j'$. 
Then 
%$\swa st$ and $\swa{s'}{t'}$ 
$\swa ij$ and $\swa st$ 
are consecutive, 
and 
$\kSu_k(\swa{j''}{j'})$ and $\kSu_k(\swa{j'}{j'})=\RB(\Su_k(\swa{i'}{j'}))$ are consecutive 
by the second part of Lemma \ref{borderkernelsright}.

The case when 
%$\swa{s'}{t'}$ 
$\swa st$
is an occurrence in a $k$-block $\swa{i'}{j'}$, and 
%$\swa st$
$\swa ij$ 
is the left border of $\swa{i'}{j'}$ is similar to the previous one. In this case, 
we have 
%$s=t=i'$,
$i=j=i'$, 
and, by Lemma \ref{borderkernelsleft}, 
%$\swa{s'}{t'}$
$\swa st$ 
starts from position $i'$ in 
$\al$, i.~e.\ 
%$s'=i'$. 
$s=i'$. 
Then 
%$\swa st$ and $\swa{s'}{t'}$ 
$\swa ij$ and $\swa st$ 
are consecutive occurrences in $\al$, 
and, by the second part of Lemma \ref{borderkernelsleft}, 
%$\kSu_k(\swa st)=\RB(\Su_k(\swa{i'}{j'}))$ and $\kSu_k(\swa{s'}{t'})$
$\kSu_k(\swa ij)=\RB(\Su_k(\swa{i'}{j'}))$ and $\kSu_k(\swa st)$
are also consecutive.

Suppose now that 
%$\swa st$ and $\swa{s'}{t'}$ 
$\swa ij$ and $\swa st$ 
are both occurrences in a $k$-block $\swa{i'}{j'}$. Again, there are several 
possibilities:

First, it is possible that Case I holds at the left for the evolution of $\swa{i'}{j'}$, and 
%$\swa st=\Fg(\LpreP_k(\swa{i'}{j'}))$.
$\swa ij=\Fg(\LpreP_k(\swa{i'}{j'}))$.
Then 
%$\swa{s'}{t'}$
$\swa st$ 
is a prime kernel of $\Cr_k(\swa{i'}{j'})$ 
(recall that there are no prime kernels between 
%$\swa st$ and $\swa{s'}{t'}$, 
$\swa ij$ and $\swa st$,
that 
$\Cr_k(\swa{i'}{j'})$ is a nonempty $(k-1)$-multiblock if $k-1>0$ by Remark \ref{zerothatomsimplestruct}, 
so by Remark \ref{primekernelsexist}, $\Cr_k(\swa{i'}{j'})$ has at least one prime kernel).
By Lemma \ref{borderkernelsleft}, then the forgetful occurrence of $\Cr_k(\swa{i'}{j'})$
starts from position 
%$s'$ 
$s$
in $\al$, so 
%$\swa{t+1}{s'-1}=\Fg(\LR_k(\swa {i'}{j'}))$.
$\swa{j+1}{s-1}=\Fg(\LR_k(\swa {i'}{j'}))$.
%Then $\Fg(\LR_k(\swa{i'}{j'}))$
%is located between $\swa st$ and $\swa{s'}{t'}$. By Lemma \ref{regpartabslargek}, 
%$|\Fg(\LR_k(\swa{i'}{j'}))|\ge 2\finmax$. 
We also have 
%$\kSu_k(\swa st)=\Fg(\LR_k(\swa{i'}{j'}))$,
$\kSu_k(\swa ij)=\Fg(\LR_k(\swa{i'}{j'}))$,
and 
%$\kSu_k(\swa{s'}{t'})$
$\kSu_k(\swa st)$ 
is the leftmost prime kernel of $\Cr_k(\Su_k(\swa{i'}{j'}))$, so 
%$\Fg(\LR_k(\Su_k(\swa{i'}{j'})))$ is located between $\kSu_k(\swa st)$ and
%$\kSu_k(\swa{s'}{t'})$, and the length of $\Fg(\LR_k(\Su_k(\swa{i'}{j'})))$
%is at least $2\finmax$.
the occurrence between 
%$\kSu_k(\swa st)$ and $\kSu_k(\swa{s'}{t'})$
$\kSu_k(\swa ij)$ and $\kSu_k(\swa st)$
is $\Fg(\LR_k(\Su_k(\swa {i'}{j'})))$.

Second, it is possible that Case I holds at the right for the evolution of $\swa{i'}{j'}$, 
and 
%$\swa{s'}{t'}=\Fg(\RpreP_k(\swa{i'}{j'}))$,
$\swa st=\Fg(\RR_k(\swa{i'}{j'}))$, 
but this case is completely 
symmetric to the previous one.

The remaining possibility is that both 
%$\swa st$ and $\swa{s'}{t'}$ 
$\swa ij$ and $\swa st$
are prime kernels of 
$\Cr_k(\swa{i'}{j'})$, which is a $(k-1)$-multiblock, 
but then the claim follows from the induction hypothesis.
\end{proof}

This lemma (together with Lemma \ref{regpartabslargek}, which implies that 
the forgetful occurrence of the left or right regular part is nonempty if Case I holds at the left or at the right, 
respectively) enables us to define \textit{composite kernels} of stable $k$-multiblocks as 
maximal (by inclusion) concatenations of consecutive prime kernels. In other words, 
if 
%$\al[\mathfrak x,i\ldots j,\mathfrak y]_k$, where $\mathfrak x,\mathfrak y\in\{<,>\}$,
$\mathcal F_l$
is a stable $k$-multiblock, then an occurrence $\swa st$ is called a composite kernel, 
if it is a concatenation of consecutive prime kernels of 
%$\al[\mathfrak x,i\ldots j,\mathfrak y]_k$, 
$\mathcal F_l$,
and letters $\al_{s-1}$ (if $s>0$) and $\al_{t+1}$ do not belong to any prime kernels
of 
%$\al[\mathfrak x,i\ldots j,\mathfrak y]_k$. 
$\mathcal F_l$.
(Empty occurrences are allowed by this definition, so, if $\swa s{s-1}$ is an empty prime kernel, 
and letters $\al_{s-1}$ and $\al_s$ do not belong to any prime kernel, then $\swa s{s-1}$ is also a composite kernel).
If 
%$\al[\mathfrak x,i\ldots j,\mathfrak y]_k$, where $\mathfrak x,\mathfrak y\in\{<,>\}$,
$\mathcal F_l$
is a stable $k$-multiblock, we can write its composite kernels in a 
list, as they occur in 
%$\swa ij$ 
$\Fg(\mathcal F_l)$
from the right to the left. 
Denote the number of these composite kernels by 
%$\nker_k(\al[\mathfrak x,i\ldots j,\mathfrak y]_k)$
$\nker_k(\mathcal F_l)$
We refer to the elements of this list as to the first, the second, \textellipsis, 
the 
%$\nker_k(\al[\mathfrak x,i\ldots j,\mathfrak y]_k)$th 
$\nker_k(\mathcal F_l)$th
composite 
kernel of 
%$\al[\mathfrak x,i\ldots j,\mathfrak y]_k$, 
$\mathcal F_l$,
and denote them by 
%$\Ker_{k,1}(\al[\mathfrak x,i\ldots j,\mathfrak y]_k), \Ker_{k,2}(\al[\mathfrak x,i\ldots j,\mathfrak y]_k), 
%\ldots, \Ker_{k,\nker_k(\al[\mathfrak x,i\ldots j,\mathfrak y]_k)}(\al[\mathfrak x,i\ldots j,\mathfrak y]_k)$.
$\Ker_{k,1}(\mathcal F_l), \Ker_{k,2}(\mathcal F_l), 
\ldots, \Ker_{k,\nker_k(\mathcal F_l)}(\mathcal F_l)$.

We also can define the 
\textit{descendant} of a composite kernel as the concatenation of the 
descendants of all prime kernels inside this composite kernel, they are consecutive 
by Lemma \ref{conskernelsurvives}. In other words, 
if $\Ker_{k,m}(\mathcal F_l)=\swa{s_1}{t_1}\ldots \swa{s_n}{t_n}$, where $1\le m\le \nker_k(\mathcal F_l)$ 
and $\swa{s_i}{t_i}$ is a prime 
kernel for $1\le i\le n$,
%(and $s_1=s$, $s_{m+1}=t_m+1$ for $1\le m<n$, and $t_n=t$), 
then we say that the descendant of $\mathcal F_l$ is 
$\kSu_k(\swa{s_1}{t_1})\ldots \kSu_k(\swa{s_n}{t_n})$, these descendants are consecutive 
by Lemma \ref{conskernelsurvives}.
Lemma \ref{kernelfixed} guarantees that empty prime kernels do not lead to any ambiguity in the notation 
here since their descendants are also empty.
Denote the descendant of a 
composite kernel $\Ker_{k,m}(\mathcal F_l)$ by $\kSu_k(\Ker_{k,m}(\mathcal F_l))$.

So, now we have split each stable $k$-multiblock into a concatenation of alternating 
(possibly empty) composite kernels and (nonempty) forgetful occurrences of 
left or right regular parts of $k'$-blocks ($1\le k'\le k$) such that Case I 
holds for their evolutions at the left or at the right, respectively.

We are going to call these left or right regular parts, as well as some concatenations, 
the inner pseudoregular parts of the $k$-multiblock. Here is the precise definition:
Let $\mathcal F_l$ be a stable $k$-multiblock. Suppose that $\Fg(\mathcal F_l)=\swa ij$.
First, if $1\le m<\nker_k(\mathcal F_l)$, $\Ker_{k,m}(\mathcal F_l)=\swa st$ and 
$\Ker_{k,m+1}(\mathcal F_l)=\swa{s'}{t'}$, then we call the occurrence 
$\swa{t+1}{s'-1}$ between these two composite kernels the \textit{$(m,m+1)$th inner pseudoregular
part} of $\mathcal F_l$ and denote it by $\IpR_{k,m,m+1}(\mathcal F_l)$. Second, we say that the \textit{$(0,1)$th} 
(resp.\ the \textit{$(\nker_k(\mathcal F_l),\nker_k(\mathcal F_l)+1)$th}) \textit{inner pseudoregular part}
of $\mathcal F_l$ is the empty occurrence $\swa i{i-1}$ (resp.\ $\swa {j+1}j$) at the beginning
(resp.\ at the end) of the forgetful occurrence of $\mathcal F_l$. Denote it by $\IpR_{k,0,1}(\mathcal F_l)$
(resp.\ by $\IpR_{k,\nker_k(\mathcal F_l),\nker_k(\mathcal F_l)+1}$).
Finally, choose indices $m$ and $m'$ so that $0\le m<m'\le \nker_k(\mathcal F_l)+1$.
We call the concatenation $\IpR_{k,m,m+1}(\mathcal F_l)\Ker_{k,m+1}(\mathcal F_l)\ldots 
\Ker_{k,m'-1}(\mathcal F_l)\IpR_{k,m'-1,m'}(\mathcal F_l)$ the 
\textit{$(m,m')$th inner pseudoregular part} of $\mathcal F_l$ and denote it by 
$\IpR_{k,m,m'}(\mathcal F_l)$. (By Lemmas \ref{borderkernelsleft} and \ref{borderkernelsright}, 
these words are really consecutive even if $m=0$ or $m'=\nker_k(\mathcal F_l)+1$.)
In particular, $\IpR_{k,0,\nker_k(\mathcal F)+1}(\mathcal F_l)=\Fg(\mathcal F_l)$.
If $\Ker_{k,1}(\mathcal F_l)$ (resp.\ $\Ker_{k,\nker_k(\mathcal F_l)}(\mathcal F_l)$)
is an empty occurrence, then it coincides with $\IpR_{k,0,1}(\mathcal F_l)$
(resp.\ with $\IpR_{k,\nker_k(\mathcal F_l),\nker_k(\mathcal F_l)+1}$)
as an occurrence in $\al$, and
in this case $\IpR_{k,0,m'}(\mathcal F_l)=\IpR_{k,1,m'}(\mathcal F_l)$
(resp.\ $\IpR_{k,m,\nker_k(\mathcal F_l)}(\mathcal F_l)=\IpR_{k,m,\nker_k(\mathcal F_l)+1}(\mathcal F_l)$)
for $1<m'\le\nker_k(\mathcal F_l)+1$ (resp.\ for $0\le m < \nker_k(\mathcal F_l)$) 
as an occurrence in $\al$.

For example, let us list the composite kernels and the regular parts between them for the 2-multiblock
consisting of a single 2-block from the example above. We have already listed its prime kernels,
and its composite kernels and regular parts between them are:

\begin{enumerate}
\item The first prime kernel from the list above, which is $\mathfrak{ee}$.
\item The left regular part of $\mathfrak{ee}ee..e\!f\mskip-4.5mu f\!..\mskip-1.5mu f\mskip-1.5mu\mathfrak{ff}$, 
which is $ee..e$.
\item The second prime kernel from the list above, which is $ee\!f\mskip-4.5mu f$.
\item The left regular part of $\mathfrak{ee}ee..e\!f\mskip-4.5mu f\!..\mskip-1.5mu f\mskip-1.5mu\mathfrak{ff}$, 
which is $f\mskip-4.5mu f\!..\mskip-1.5mu f$.
\item The 
concatenation of the third, fourth and fifth prime kernels, which is $\mathfrak{ff}d\mathfrak{ff}$.
\item The left regular part of $\mathfrak{ff}\mskip-1.5mu f\mskip-4.5mu f\!..\mskip-1.5mu f\!ee..ee\mathfrak{ee}$, 
which is $f\mskip-4.5mu f\!..\mskip-1.5mu f$.
\item The sixth element of the list above, $f\mskip-4.5mu f$.
\item The right regular part of $\mathfrak{ff}\mskip-1.5mu f\mskip-4.5mu f\!..\mskip-1.5mu f\!ee..ee\mathfrak{ee}$, 
which is $ee..e$.
\item The 
concatenation of the seventh, eighth and ninth prime kernels, $\mathfrak{ee}c\mathfrak{ee}$.
\item The left regular part of $\mathfrak{ee}ee..ee\mathfrak{ee}$, 
which is $ee..e$.
\item The tenth prime kernel, $ee$.
\item The right regular part of $\mathfrak{ee}ee..ee\mathfrak{ee}$, 
which is $ee..e$.
\item The eleventh prime kernel, $\mathfrak{ee}$.
\item The right regular part of the whole 2-block, an occurrence of the form 
$c\mathfrak{ee}ee..ee\mathfrak{ee}c\mathfrak{ee}ee..ee\mathfrak{ee}c \ldots 
c\mathfrak{ee}ee..ee\mathfrak{ee}c\ldots ee\mathfrak{ee}c\mathfrak{ee}eeeeeeeeee\mathfrak{ee}$.
\item The twelfth prime kernel, $c\mathfrak{ee}eeeeee\mathfrak{ee}c\mathfrak{ee}ee\mathfrak{ee}c\mathfrak{eec}$.
\end{enumerate}

We already know (Remark \ref{suprimekerbijection}) that the operation of taking the descendant 
of a prime kernel establishes a bijection between the prime kernels of a stable $k$-multiblock and 
the prime kernels of its descendant. The same is true for composite kernels:
\begin{remark}\label{sucompkerbijection}
The operation of taking the descendant 
of a composite kernel establishes a bijection between the composite kernels of a stable $k$-multiblock and 
the composite kernels of its descendant.

In other words, 
if $\mathcal F$ is an evolution of stable $k$-multiblocks, then 
$\nker_k(\mathcal F_l)$ does not depend on $l$
and $\Ker_{k,m}(\mathcal F_{l+1})=\kSu_k(\Ker_{k,m}(\mathcal F_l))$ 
for $l\ge 0$ and for $1\le m\le\nker_k(\mathcal F_l)$.
\end{remark}

%With this notation, Remark \ref{sucompkerbijection} can be reformulated as follows:
%\begin{remark}\label{sucompkerbijectionnot}
%If $\al[\mathfrak x,i\ldots j,\mathfrak y]_k$, where $\mathfrak x,\mathfrak y\in\{<,>\}$,
%is a stable $k$-multiblock, then 
%$\nker_k(\Su_k(\al[\mathfrak x,i\ldots j,\mathfrak y]_k))=\nker_k(\al[\mathfrak x,i\ldots j,\mathfrak y]_k)$
%and $\Ker_{k,m}(\Su_k(\al[\mathfrak x,i\ldots j,\mathfrak y]_k))=\kSu_k(\Ker_{k,m}(\al[\mathfrak x,i\ldots j,\mathfrak y]_k))$ 
%as an abstract word for $1\le m\le\nker_k(\al[\mathfrak x,i\ldots j,\mathfrak y]_k)$.
%\end{remark}
%\begin{remark}\label{sucompkerbijectionnot}
%If $\mathcal F$ is an evolution of stable $k$-multiblocks, then 
%$\nker_k(\mathcal F_l)$ does not depend on $l$
%and $\Ker_{k,m}(\mathcal F_{l+1})=\kSu_k(\Ker_{k,m}(\mathcal F_l)$ 
%for $l\ge 0$ and for $1\le m\le\nker_k(\mathcal F_l)$.
%\end{remark}
So if $\mathcal F$ is an evolution of stable $k$-multiblocks, we can denote the number 
$\nker_k(\mathcal F_l)$ for arbitrary $l$ by $\nker_k(\mathcal F)$.

%\begin{lemma}\label{compkernelfixed}
%If $\swa st$ is a composite kernel of
%a stable $k$-multiblock $\al[\mathfrak x,i\ldots j,\mathfrak y]_k$, then $\kSu_k(\swa st)$
%coincides with $\swa st$ as an abstract word.
%\end{lemma}
\begin{lemma}\label{compkernelfixed}
If $\mathcal F$ is an evolution of stable $k$-multiblocks, then 
$\Ker_{k,m}(\mathcal F_l)$ does not depend on $l$
as an abstract word 
for $1\le m\le\nker_k(\mathcal F)$.
\end{lemma}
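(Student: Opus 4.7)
The plan is to show that $\Ker_{k,m}(\mathcal F_{l+1})$ and $\Ker_{k,m}(\mathcal F_l)$ coincide as abstract words for each $l \ge 0$, after which the lemma follows by a trivial induction on $l$. By Remark \ref{sucompkerbijection}, we already know that $\Ker_{k,m}(\mathcal F_{l+1}) = \kSu_k(\Ker_{k,m}(\mathcal F_l))$ as an occurrence in $\al$, so the task reduces to proving that applying $\kSu_k$ to a composite kernel preserves the underlying abstract word.

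To do this, I will write $\Ker_{k,m}(\mathcal F_l)$ as a concatenation $\swa{s_1}{t_1}\ldots\swa{s_n}{t_n}$ of consecutive prime kernels (as provided by the definition of a composite kernel). By the definition of the descendant of a composite kernel, $\kSu_k(\Ker_{k,m}(\mathcal F_l)) = \kSu_k(\swa{s_1}{t_1})\ldots\kSu_k(\swa{s_n}{t_n})$, where the pieces are consecutive occurrences by Lemma \ref{conskernelsurvives}. Now Lemma \ref{kernelfixed} asserts exactly that each individual prime kernel $\swa{s_i}{t_i}$ agrees with its descendant $\kSu_k(\swa{s_i}{t_i})$ as an abstract word. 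Concatenating these equalities yields that $\kSu_k(\Ker_{k,m}(\mathcal F_l))$ and $\Ker_{k,m}(\mathcal F_l)$ are equal as abstract words.

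Combining the two observations, $\Ker_{k,m}(\mathcal F_{l+1}) = \Ker_{k,m}(\mathcal F_l)$ as abstract words for every $l \ge 0$, and induction on $l$ gives the statement. There is no real obstacle here since all the substantive work has already been done: Remark \ref{sucompkerbijection} takes care of the bijection between composite kernels in $\mathcal F_l$ and $\mathcal F_{l+1}$ (including matching the indices $m$), Lemma \ref{conskernelsurvives} ensures we stay within a single concatenation after passing to descendants, and Lemma \ref{kernelfixed} provides the prime-kernel-level invariance. The only thing to be careful about is the possibility of empty prime kernels inside the composite kernel, but Lemma \ref{kernelfixed} applies uniformly to empty prime kernels as well (their descendants are also empty, matching them trivially as abstract words), so the concatenation argument is unaffected.
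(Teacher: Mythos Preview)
Your proof is correct and follows the same approach as the paper, which simply states that the lemma follows directly from Lemma~\ref{kernelfixed} and the definitions of a composite kernel and its descendant. You have spelled out the details (decomposing into prime kernels, invoking Remark~\ref{sucompkerbijection} for the indexing, and handling empty prime kernels) more explicitly than the paper does, but the underlying argument is identical.
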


\begin{proof}
This follows directly from Lemma \ref{kernelfixed} and the definitions of a composite kernel and its descendant.
\end{proof}

So, we can denote the abstract word $\Ker_{k,m}(\mathcal F_l)$
for arbitrary $l\ge 0$ by $\Ker_{k,m}(\mathcal F)$
and call it \textit{the $m$th composite kernel of $\mathcal F$}. 
The number $\nker_k(\mathcal F)$ is then called 
\textit{the number of composite kernels of $\mathcal F$}.

\begin{lemma}\label{betweenkernelsreg}
Let $\mathcal F$ be an evolution of stable $k$-multiblocks, $k>0$. Let $1\le m<\nker_k(\mathcal F)$.

Then there exist an evolution of $k'$-blocks $\mathcal E$ ($1\le k'\le k$) and a number $l_0\ge 3k'$ such that 
one of the following statement holds:
\begin{enumerate}
\item Case I holds for $\mathcal E$ at the left, and for all $l\ge 0$
we have $\IpR_{k,m,m+1}(\mathcal F_l)=\Fg(\LR_{k'}(\mathcal E_{l+l_0}))$.
\item Case I holds for $\mathcal E$ at the right, and for all $l\ge 0$ 
we have $\IpR_{k,m,m+1}(\mathcal F_l)=\Fg(\RR_{k'}(\mathcal E_{l+l_0}))$.
\end{enumerate}
\end{lemma}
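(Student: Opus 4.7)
The plan is to reduce the statement directly to Lemma \ref{conskernelsurvives}, which already identifies the occurrence between two non-adjacent consecutive prime kernels as the forgetful occurrence of the left or right regular part of some stable $k'$-block (with $1\le k'\le k$) where Case I holds on the corresponding side. What remains is to verify that, as $l$ varies, the same side is used and the sequence of $k'$-blocks provided by Lemma \ref{conskernelsurvives} is a single evolution.

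First I would fix the two ``boundary'' prime kernels at $l=0$: let $\swa st$ be the rightmost prime kernel making up $\Ker_{k,m}(\mathcal F_0)$ and let $\swa{s'}{t'}$ be the leftmost prime kernel making up $\Ker_{k,m+1}(\mathcal F_0)$. Since composite kernels are maximal concatenations of consecutive prime kernels, no prime kernel of $\mathcal F_0$ lies strictly between them, and the two cannot be consecutive occurrences in $\al$ (otherwise they would have been merged into a single composite kernel). By the definition of the inner pseudoregular part, the occurrence between them is precisely $\IpR_{k,m,m+1}(\mathcal F_0)$. Applying Lemma \ref{conskernelsurvives} to this pair produces a stable $k'$-block $\swa uv$ with $1\le k'\le k$ such that either Case I holds at the left and $\IpR_{k,m,m+1}(\mathcal F_0) = \Fg(\LR_{k'}(\swa uv))$, or Case I holds at the right and $\IpR_{k,m,m+1}(\mathcal F_0) = \Fg(\RR_{k'}(\swa uv))$. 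I would then let $\mathcal E$ be the unique evolution of $k'$-blocks containing $\swa uv$ and let $l_0$ be the evolutional sequence number of $\swa uv$, so $\swa uv=\mathcal E_{l_0}$; stability of $\swa uv$ forces $l_0\ge 3k'$.

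Next I would promote the $l=0$ case to general $l$ by induction. By the definition of the descendant of a composite kernel (together with Remark \ref{sucompkerbijection} and Remark \ref{suprimekerbijection}), the rightmost prime kernel of $\Ker_{k,m}(\mathcal F_{l+1})$ is the $\kSu_k$-image of the rightmost prime kernel of $\Ker_{k,m}(\mathcal F_l)$, and analogously for the leftmost prime kernel of $\Ker_{k,m+1}$. Reapplying the second half of Lemma \ref{conskernelsurvives} to this pair of descendants identifies $\IpR_{k,m,m+1}(\mathcal F_{l+1})$ with the forgetful occurrence of the same-side regular part of $\Su_{k'}(\mathcal E_{l+l_0})$; and since $\mathcal E_{l+l_0+1} = \Su_{k'}(\mathcal E_{l+l_0})$, this closes the induction with the stated $\mathcal E$ and $l_0$.

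The only real subtlety is the persistence of the side (left versus right) and of the evolution $\mathcal E$ across the step $l\mapsto l+1$. Both are packaged directly into the second half of Lemma \ref{conskernelsurvives}: an $\LR$-type between-region descends to an $\LR$-type between-region (and likewise for $\RR$), with the underlying $k'$-block replaced by its $\Su_{k'}$-successor. This rules out the side flipping or the $k'$-blocks belonging to different evolutions, so the claim follows without any fresh computation beyond bookkeeping.
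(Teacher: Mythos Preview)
Your proof is correct and follows essentially the same route as the paper: both reduce the statement to Lemma~\ref{conskernelsurvives}, with you spelling out in detail the bookkeeping (identifying the two boundary prime kernels, extracting the $k'$-block and its evolutional index $l_0$ at $l=0$, and then inducting on $l$ via the second half of Lemma~\ref{conskernelsurvives}) that the paper compresses into a single sentence. Your version is in fact a faithful unpacking of what ``follows directly'' means here.
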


\begin{proof}
This follows directly from the last statement of Lemma \ref{conskernelsurvives}.
\end{proof}

\begin{lemma}\label{betweenkernelgrows}
Let $\mathcal F$ be an evolution of stable $k$-multiblocks, $k>0$. Let $1\le m<m'\le\nker_k(\mathcal F)$.
For each $l\ge 0$, denote 
$n_l=|\IpR_{k,m,m'}(\mathcal F_l)|$.

Then $n_l\ge 2\finmax$, $n_l$ strictly grows as $l$ grows, and there exists $k'$ ($1\le k'\le k$) such that 
$n_l$ is $\Theta (l^{k'})$ for $l\to\infty$.
\end{lemma}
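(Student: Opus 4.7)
The plan is to decompose $\IpR_{k,m,m'}(\mathcal F_l)$ as a concatenation of composite kernels (whose lengths are constants in $l$) and inner pseudoregular parts of the form $\IpR_{k,i,i+1}(\mathcal F_l)$, and then handle each summand using the tools already established for regular parts of $k'$-blocks.

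First I would expand, using the definition just before Remark \ref{sucompkerbijection},
\[
|\IpR_{k,m,m'}(\mathcal F_l)| = \sum_{i=m}^{m'-1} |\IpR_{k,i,i+1}(\mathcal F_l)| + \sum_{i=m+1}^{m'-1} |\Ker_{k,i}(\mathcal F_l)|.
\]
By Lemma \ref{compkernelfixed} the second sum is a constant $C$ independent of $l$. For each term in the first sum, Lemma \ref{betweenkernelsreg} produces an evolution $\mathcal E^{(i)}$ of $k_i$-blocks (with $1 \le k_i \le k$) and a shift $l_0^{(i)} \ge 3k_i$ such that $\IpR_{k,i,i+1}(\mathcal F_l)$ coincides with the forgetful occurrence of either $\LR_{k_i}(\mathcal E^{(i)}_{l+l_0^{(i)}})$ or $\RR_{k_i}(\mathcal E^{(i)}_{l+l_0^{(i)}})$, with Case I holding for $\mathcal E^{(i)}$ on the corresponding side.

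With this identification, the three claims reduce directly to properties of regular parts of stable $k'$-blocks proved earlier. The bound $n_l \ge 2\finmax$ follows already from applying Lemma \ref{regpartabslargek} to any single summand $\IpR_{k,m,m+1}(\mathcal F_l)$, since $l + l_0^{(m)} \ge 3k_m$ guarantees the $k_m$-block is stable and Case I holds. Strict monotonicity in $l$ follows from Lemma \ref{regpartabsgrows} applied termwise: each $|\IpR_{k,i,i+1}(\mathcal F_l)|$ strictly increases with $l$, and $C$ stays fixed, so $n_{l+1} > n_l$.

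For the asymptotic, Lemma \ref{regpartasym} says each $|\IpR_{k,i,i+1}(\mathcal F_l)|$ is $\Theta((l+l_0^{(i)})^{k_i}) = \Theta(l^{k_i})$, with constants depending only on $\varphi, \E, k_i$. Setting $k' := \max_{m \le i \le m'-1} k_i$, the largest summand contributes $\Theta(l^{k'})$ from below, while the sum of finitely many $O(l^{k_i}) \subseteq O(l^{k'})$ terms plus the constant $C$ yields the matching upper bound, giving $n_l = \Theta(l^{k'})$ with $1 \le k' \le k$. I do not foresee a genuine obstacle here; the only point requiring mild care is confirming that $l_0^{(i)}$ being shifts (not depending on $l$) really lets one absorb them into the $\Theta$-constants, which is immediate since $(l+l_0^{(i)})^{k_i}/l^{k_i} \to 1$.
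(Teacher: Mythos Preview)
Your proposal is correct and follows essentially the same argument as the paper: decompose $\IpR_{k,m,m'}(\mathcal F_l)$ into the elementary pieces $\IpR_{k,i,i+1}(\mathcal F_l)$ and the intermediate kernels, invoke Lemma~\ref{betweenkernelsreg} to identify each elementary piece with a regular part, and then apply Lemmas~\ref{regpartabslargek}, \ref{regpartabsgrows}, and~\ref{regpartasym} termwise, taking $k'=\max_i k_i$.
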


\begin{proof}
By the previous lemma, there exist numbers $k_m, \ldots, k_{m'-1}$ ($1\le k_i\le k$), evolutions 
$\mathcal E^{(m)},\ldots \mathcal E^{(m'-1)}$ ($\mathcal E^{(i)}$ is an evolution of 
$k_i$-blocks), and numbers $l_m,\ldots, l_{m'-1}$ ($l_i\ge 3k_i$) such that 
for each $i$ ($m\le i<m'$) one of the following holds:
\begin{enumerate}
\item Case I holds for $\mathcal E^{(i)}$ at the left, and for each $l\ge 0$
% the subword of $\Fg(\mathcal F_l)$ between $\Ker_{k,i}(\mathcal F_l)$
% and $\Ker_{k,i+1}(\mathcal F_l)$
% is $\Fg(\LR_{k_i}(\mathcal E^{(i)}_{l+l_i}))$.
we have $\IpR_{k,i,i+1}(\mathcal F_l)=\Fg(\LR_{k_i}(\mathcal E^{(i)}_{l+l_i}))$.
\item Case I holds for $\mathcal E^{(i)}$ at the right, and for each $l\ge 0$
% the subword of $\Fg(\mathcal F_l)$ between $\Ker_{k,i}(\mathcal F_l)$
% and $\Ker_{k,i+1}(\mathcal F_l)$
% is $\Fg(\RR_{k_i}(\mathcal E^{(i)}_{l+l_i}))$.
we have $\IpR_{k,i,i+1}(\mathcal F_l)=\Fg(\RR_{k_i}(\mathcal E^{(i)}_{l+l_i}))$.
\end{enumerate}
By Lemmas \ref{regpartabslargek}, \ref{regpartabsgrows}, and \ref{regpartasym}, 
we see that in each of these cases, 
% the length of the subword of $\Fg(\mathcal F_l)$ between $\Ker_{k,i}(\mathcal F_l)$
% and $\Ker_{k,i+1}(\mathcal F_l)$
% is at least $2\finmax$, that it strictly grows as $l$ grows, and that it is 
$|\IpR_{k,i,i+1}(\mathcal F_l)|\ge 2\finmax$, that $|\IpR_{k,i,i+1}(\mathcal F_l)|$
strictly grows as $l$ grows, and that
$|\IpR_{k,i,i+1}(\mathcal F_l)|$ is 
$\Theta((l+l_i)^{k_i})=\Theta(l^{k_i})$ for $l\to\infty$.

Now, $n_l$ is the sum of $m'-m-1\ge 0$ summands $|\Ker_{k,i}(\mathcal F)|$ that do not depend of 
$l$, and of $m'-m\ge 1$ summands
% each of which is 
% the length of the subword of $\Fg(\mathcal F_l)$ between $\Ker_{k,i}(\mathcal F_l)$
% and $\Ker_{k,i+1}(\mathcal F_l)$.
$|\IpR_{k,i,i+1}(\mathcal F_l)|$.
Therefore, $n_l\ge 2\finmax$, and $n_l$ strictly grows as $l$ grows. The asymptotic
of $n_l$ for $l\to\infty$ is 
$$
\sum_{i=m+1}^{m'-1} |\Ker\nolimits_{k,i}(\mathcal F)|+\sum_{i=m}^{m'-1}\Theta(l^{k_i})=\Theta(l^{k'}),
$$
where $k'=\max(k_m,\ldots,k_{m'-1})$.
\end{proof}

%\begin{lemma}
%Let $\mathcal F$ be an evolution of stable $k$-multiblocks, $k>0$. Suppose that $\nker_k(\mathcal F)>1$. 
%Let $0\le m<m'<\nker_k(\mathcal F)+1$.
%For each $l\ge 0$, denote 
%$n_l=|\IpR_{k,m,m'}(\mathcal F_l)|$.
%
%Then $n_l\ge 2\finmax$, $n_l$ strictly grows as $l$ grows, and there exists $k'$ ($1\le k'\le k$) such that 
%$n_l$ is $\Theta (l^{k'})$ for $l\to\infty$.
%\end{lemma}
%
%\begin{proof}
%The only cases when 
%\end{proof}

\begin{corollary}\label{betweenkernelgrowsgood}
Let $\mathcal F$ be an evolution of stable $k$-multiblocks, $k>0$. Suppose that $\nker_k(\mathcal F)>1$.
%Let $1\le m<m'\le\nker_k(\mathcal F)$.
Let $m,m'\in\ZZ$ be two indices such that $0\le m<\nker_k(\mathcal F)$, $m<m'$ and $1<m'\le\nker_k(\mathcal F)+1$.
For each $l\ge 0$, denote 
$n_l=|\IpR_{k,m,m'}(\mathcal F_l)|$. (In particular, if $m=0$ and $m'=\nker_k(\mathcal F)+1>2$, then $n_l=|\Fg(\mathcal F_l)|$.)

Then $n_l\ge 2\finmax$, $n_l$ strictly grows as $l$ grows, and there exists $k'$ ($1\le k'\le k$) such that 
$n_l$ is $\Theta (l^{k'})$ for $l\to\infty$.
\end{corollary}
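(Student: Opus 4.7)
The plan is to reduce everything to Lemma \ref{betweenkernelgrows} by peeling off at most two boundary composite kernels whose lengths are constant in $l$. Recall that by definition $\IpR_{k,m,m'}(\mathcal F_l)$ is the concatenation $\IpR_{k,m,m+1}(\mathcal F_l)\Ker_{k,m+1}(\mathcal F_l)\cdots\Ker_{k,m'-1}(\mathcal F_l)\IpR_{k,m'-1,m'}(\mathcal F_l)$, and that the two ``boundary'' pieces $\IpR_{k,0,1}(\mathcal F_l)$ and $\IpR_{k,\nker_k(\mathcal F),\nker_k(\mathcal F)+1}(\mathcal F_l)$ are by definition empty occurrences. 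So if $m=0$ then the factor $\IpR_{k,0,1}(\mathcal F_l)$ contributes nothing, and similarly at the right end.

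Set $m''=\max(m,1)$ and $m'''=\min(m',\nker_k(\mathcal F))$. My first step is to observe that $\IpR_{k,m,m'}(\mathcal F_l)$ equals $\IpR_{k,m'',m'''}(\mathcal F_l)$ with at most one extra composite kernel $\Ker_{k,1}(\mathcal F)$ prepended (if $m=0$) and/or one extra $\Ker_{k,\nker_k(\mathcal F)}(\mathcal F)$ appended (if $m'=\nker_k(\mathcal F)+1$), using the empty-boundary remark above. Second, I check that $m''<m'''$ in every case: if $m\ge 1$ and $m'\le\nker_k(\mathcal F)$ this is the assumption $m<m'$; if $m=0$ and $m'\le\nker_k(\mathcal F)$, then $m''=1<m'=m'''$ from $1<m'$; if $m\ge 1$ and $m'=\nker_k(\mathcal F)+1$, then $m''=m<\nker_k(\mathcal F)=m'''$ from $m<\nker_k(\mathcal F)$; and the remaining case $m=0$, $m'=\nker_k(\mathcal F)+1$ gives $m''=1<\nker_k(\mathcal F)=m'''$, which is exactly where the hypothesis $\nker_k(\mathcal F)>1$ is used.

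Once $1\le m''<m'''\le\nker_k(\mathcal F)$ is secured, Lemma \ref{betweenkernelgrows} says that $|\IpR_{k,m'',m'''}(\mathcal F_l)|\ge 2\finmax$, strictly grows in $l$, and is $\Theta(l^{k'})$ for some $1\le k'\le k$. By Lemma \ref{compkernelfixed} the (at most two) boundary summands $|\Ker_{k,1}(\mathcal F)|$ and $|\Ker_{k,\nker_k(\mathcal F)}(\mathcal F)|$ do not depend on $l$, so adding them in preserves all three conclusions: strict monotonicity survives because we add constants to a strictly growing sequence, the lower bound $n_l\ge 2\finmax$ survives because $n_l\ge|\IpR_{k,m'',m'''}(\mathcal F_l)|$, and the asymptotic $n_l=\Theta(l^{k'})$ survives because the boundary terms are $O(1)$. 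There is no real obstacle here beyond careful index bookkeeping; the only substantive point is the case $m=0$, $m'=\nker_k(\mathcal F)+1$, which is precisely the reason the hypothesis $\nker_k(\mathcal F)>1$ is imposed.
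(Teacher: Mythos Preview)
Your proof is correct and follows essentially the same approach as the paper: both reduce to Lemma~\ref{betweenkernelgrows} by peeling off the empty boundary pieces $\IpR_{k,0,1}$ and $\IpR_{k,\nker_k(\mathcal F),\nker_k(\mathcal F)+1}$ together with the adjacent composite kernels, whose lengths are constant in $l$ by Lemma~\ref{compkernelfixed}. The paper handles the three boundary cases ($m=0$ only, $m'=\nker_k(\mathcal F)+1$ only, both) separately, while your introduction of $m''=\max(m,1)$ and $m'''=\min(m',\nker_k(\mathcal F))$ unifies them cleanly; the content is identical.
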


\begin{proof}
The only cases in the statement of this corollary not covered by the previous lemma are the cases when 
$m=0$ or $m'=\nker_k(\mathcal F)+1$.

If $m=0$ and $m'<\nker_k(\mathcal F)+1$, then denote 
$n'_l=|\IpR_{k,1,m'}(\mathcal F_l)|$ (recall that we assume that $m'>1$). By the previous lemma, 
$n'_l\ge 2\finmax$, $n'_l$ strictly grows as $l$ grows, and there exists $k'$ ($1\le k'\le k$) such that 
$n'_l$ is $\Theta (l^{k'})$ for $l\to\infty$. But $\IpR_{k,0,1}(\mathcal F_l)$ is always an empty occurrence, 
so $\IpR_{k,0,m'}(\mathcal F_l)=\Ker_{k,1}(\mathcal F_l)\IpR_{k,1,m'}(\mathcal F_l)$, and 
$n_l=|\Ker_{k,1}(\mathcal F)|+n'_l$, and the first summand does not depend on $l$. 
Hence, $n_l\ge 2\finmax+|\Ker_{k,1}(\mathcal F)|\ge 2\finmax$, $n_l$ strictly grows as $l$ grows, 
and $n_l$ is $\Theta (l^{k'})+|\Ker_{k,1}(\mathcal F)|=\Theta (l^{k'})$ 
for $l\to\infty$.

The case when $m>0$, but $m'=\nker_k(\mathcal F)+1$ is completely analogous.

Finally, if $m=0$ and $m'=\nker_k(\mathcal F)+1$, then, since $\nker_k(\mathcal F)>1$ by assumption, 
we can apply the previous lemma to $n'_l=|\IpR_{k,1,\nker_k(\mathcal F)}(\mathcal F_l)|$. 
Again, $n'_l\ge 2\finmax$, $n'_l$ strictly grows as $l$ grows, and there exists $k'$ ($1\le k'\le k$) such that 
$n'_l$ is $\Theta (l^{k'})$ for $l\to\infty$. And again, since 
$\IpR_{k,0,1}(\mathcal F_l)$ and $\IpR_{k,\nker_k(\mathcal F),\nker_k(\mathcal F)+1}(\mathcal F_l)$ 
are always empty occurrences, 
we have $n_l=|\Ker_{k,1}(\mathcal F)|+n'_l+|\Ker_{k,\nker_k(\mathcal F)}(\mathcal F)|$.
The first and the last summands do not depend on $l$, so 
$n_l\ge 2\finmax+|\Ker_{k,1}(\mathcal F)|+|\Ker_{k,\nker_k(\mathcal F)}(\mathcal F)|\ge 2\finmax$, $n_l$ strictly grows as $l$ grows, 
and $n_l$ is $\Theta (l^{k'})+|\Ker_{k,1}(\mathcal F)|+|\Ker_{k,\nker_k(\mathcal F)}(\mathcal F)|=\Theta (l^{k'})$ 
for $l\to\infty$.
\end{proof}

The following lemma shows how composite kernels and 
inner pseudoregular parts behave for concatenations of consecutive evolutions.

\begin{lemma}\label{concatcompker}
Let $\mathcal F$ and $\mathcal F'$ be two consecutive evolutions of \textbf{nonempty}
stable $k$-multiblocks ($k\ge 0$), and let $\mathcal F''$ be the concatenation of 
$\mathcal F$ and $\mathcal F'$. Then:
\begin{enumerate}
\item $\nker_k(\mathcal F'')=\nker_k(\mathcal F)+\nker_k(\mathcal F')-1$
\item If $1\le m< \nker_k(\mathcal F)$, then $\Ker_{k,m}(\mathcal F''_l)=\Ker_{k,m}(\mathcal F_l)$ for all $l\ge 0$
as an occurrence in $\al$ and $\Ker_{k,m}(\mathcal F'')=\Ker_{k,m}(\mathcal F)$ as an 
abstract word. If $m=\nker_k(\mathcal F)$, then $\Ker_{k,m}(\mathcal F''_l)=\Ker_{k,m}(\mathcal F_l)\Ker_{k,1}(\mathcal F'_l)$ for all $l\ge 0$
as an occurrence in $\al$ and $\Ker_{k,m}(\mathcal F'')=\Ker_{k,m}(\mathcal F)\Ker_{k,1}(\mathcal F')$
as an abstract word. If $\nker_k(\mathcal F)<m\le \nker_k(\mathcal F'')$, then 
$\Ker_{k,m}(\mathcal F''_l)=\Ker_{k,m-\nker_k(\mathcal F)+1}(\mathcal F'_l)$ for all $l\ge 0$
as an occurrence in $\al$ and 
$\Ker_{k,m}(\mathcal F'')=\Ker_{k,m-\nker_k(\mathcal F)+1}(\mathcal F')$ 
as an abstract word.
\item If $0\le m<m'\le\nker_k(\mathcal F)$, then $\IpR_{k,m,m'}(\mathcal F'')=\IpR_{k,m,m'}(\mathcal F)$
for all $l\ge 0$
as an occurrence in $\al$.
If $\nker_k(\mathcal F)\le m<m'\le \nker_k(\mathcal F'')+1$, then 
$\IpR_{k,m,m'}(\mathcal F'')=\IpR_{k,m-\nker_k(\mathcal F)+1,m'-\nker_k(\mathcal F)+1}(\mathcal F')$
for all $l\ge 0$
as an occurrence in $\al$.
If $0\le m<\nker_k(\mathcal F)<m'\le \nker_k(\mathcal F'')+1$, then 
$\IpR_{k,m,m'}(\mathcal F'')=\IpR_{k,m,\nker_k(\mathcal F)+1}(\mathcal F)\IpR_{k,0,m'-\nker_k(\mathcal F)+1}(\mathcal F')$
for all $l\ge 0$
as an occurrence in $\al$.
\end{enumerate}
\end{lemma}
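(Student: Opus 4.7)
The plan is to proceed in three steps, exploiting the local nature of prime kernels, invoking the border lemmas at the boundary between $\mathcal F_l$ and $\mathcal F'_l$, and then unfolding definitions to read off the pseudoregular part identities.

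First I would show that for $k\ge 1$ the set of prime kernels of $\mathcal F''_l$ is the disjoint union of the set of prime kernels of $\mathcal F_l$ and of the set of prime kernels of $\mathcal F'_l$, and that each is located (as an occurrence in $\al$) within the forgetful occurrence of the summand it came from. This is by induction on $k$ directly from the inductive definition of a prime kernel: each of the three types of prime kernel (the forgetful occurrence of a left/right preperiod of a $k$-block with Case I at the appropriate side, a single letter of order $k+1$, or a prime kernel of the core of a $k$-block) depends only on the individual $k$-block or single letter of order $k+1$ it is associated with, and the $k$-blocks and letters of order $k+1$ contained in $\mathcal F''_l$ are exactly the disjoint union of those in $\mathcal F_l$ and those in $\mathcal F'_l$. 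The case $k=0$ is degenerate --- each nonempty stable $0$-multiblock is its own unique prime kernel, $\nker_0=1$ throughout, and all three bullets reduce to trivial verifications --- so I would handle it separately at the start.

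Next I would apply Lemmas \ref{borderkernelsleft} and \ref{borderkernelsright} to $\mathcal F_l$ and $\mathcal F'_l$ (the nonemptiness hypothesis being satisfied by assumption) to conclude that $\mathcal F_l$ has a prime kernel ending at the last position of $\Fg(\mathcal F_l)$ and $\mathcal F'_l$ has a prime kernel starting at the first position of $\Fg(\mathcal F'_l)$. Since $\mathcal F$ and $\mathcal F'$ are consecutive, these two prime kernels are consecutive occurrences in $\al$; combined with step~1, this shows that in $\mathcal F''_l$ the last composite kernel of $\mathcal F_l$ and the first composite kernel of $\mathcal F'_l$ form a single maximal chain of consecutive prime kernels, and hence by the maximality requirement in the definition of a composite kernel they coalesce into one composite kernel of $\mathcal F''_l$, while all other composite kernels of $\mathcal F_l$ and $\mathcal F'_l$ persist unchanged. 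This immediately yields $\nker_k(\mathcal F'')=\nker_k(\mathcal F)+\nker_k(\mathcal F')-1$ and the three occurrence-level identities of bullet~2; the corresponding abstract-word identities then follow from Lemma \ref{compkernelfixed}.

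For bullet~3 I would unfold the definition
\[
\IpR_{k,m,m'}(\mathcal F''_l)=\IpR_{k,m,m+1}(\mathcal F''_l)\,\Ker_{k,m+1}(\mathcal F''_l)\cdots\Ker_{k,m'-1}(\mathcal F''_l)\,\IpR_{k,m'-1,m'}(\mathcal F''_l)
\]
and substitute the identifications from step~2, treating the three index configurations separately. The two ``one-sided'' cases are immediate once one notes that each single-step pseudoregular part $\IpR_{k,j,j+1}(\mathcal F''_l)$ with $j+1$ strictly below $\nker_k(\mathcal F)$ or with $j$ strictly above $\nker_k(\mathcal F)$ coincides with the corresponding single-step pseudoregular part of $\mathcal F_l$ or $\mathcal F'_l$ respectively, because the two bounding composite kernels lie inside the same summand. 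The straddling case rests on the observation that the trailing pseudoregular part $\IpR_{k,\nker_k(\mathcal F),\nker_k(\mathcal F)+1}(\mathcal F_l)$ and the leading pseudoregular part $\IpR_{k,0,1}(\mathcal F'_l)$ are both empty occurrences by definition, so splicing $\IpR_{k,m,\nker_k(\mathcal F)+1}(\mathcal F_l)$ with $\IpR_{k,0,m'-\nker_k(\mathcal F)+1}(\mathcal F'_l)$ exactly rebuilds the merged composite kernel $\Ker_{k,\nker_k(\mathcal F)}(\mathcal F''_l)$ inside the middle of the pseudoregular part, together with the correct flanking material on each side.

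The main obstacle is not conceptual but bookkeeping: ensuring in the straddling case that the substitution genuinely produces the merged composite kernel with no gap and that no case gets double-counted; the locality of prime kernels combined with the border lemmas essentially forces the whole structure, so the remaining work is clerical.
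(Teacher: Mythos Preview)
Your proposal is correct and follows essentially the same approach as the paper, which simply states that the first two claims follow from Lemmas~\ref{borderkernelsleft} and~\ref{borderkernelsright} together with the definition of a composite kernel, and that the third claim follows from the first two and the definition of an inner pseudoregular part. You have spelled out the details the paper leaves implicit (the locality of prime kernels, the merging at the boundary, and the handling of the empty trailing and leading pseudoregular parts in the straddling case), but the underlying argument is the same.
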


\begin{proof}
The first two claims follow directly from Lemmas \ref{borderkernelsleft} and \ref{borderkernelsright} and the definition of a composite central kernel.
The third claim follows from the first two claims and the definition of an inner pseudoregular part.
\end{proof}

A particular case of prime and composite kernels will be especially important for us.
If $\swa ij$ is a stable $k$-block ($k\ge 1$), we call the prime (resp.\ composite) 
kernels of $\Cr_k(\swa ij)$ the \textit{prime (resp.\ composite) central kernels} of $\swa ij$.
Denote the number of the composite central kernels of a stable $k$-block $\swa ij$ by 
$\ncker_k(\swa ij)$.
Observe that this definition coincides with the definition we gave in the previous 
section for 1-blocks. Again, we can write the composite central kernels of 
$\swa ij$ in a list as they occur in $\mathcal E_l$, from the 
left to the right. We call the elements of this list the first, the second, \textellipsis, 
the $\ncker_k(\swa ij)$th composite central kernel of $\swa ij$ and denote them 
by $\cKer_{k,1}(\swa ij), \cKer_{k,2}(\swa ij), \ldots, \cKer_{k,\ncker_k(\swa ij)}(\swa ij)$.
It follows from Lemma \ref{borderkernelsleft} and from Remark \ref{zerothatomsimplestruct} that $\ncker_k(\swa ij)\ge 1$.
Note that, for example, the first composite central kernel of $\swa ij$ can be the first 
or the second composite kernel of the $k$-multiblock consisting of the $k$-block $\swa ij$ 
only, depending on whether Case II or Case I holds for the evolution of $\swa ij$ at the left.

Let $\mathcal E$ is an evolution of $k$-blocks, and let $\mathcal E_l$ be a stable $k$-block.
We have defined the descendants of 
these composite central kernels,
and they are composite central kernels of $\mathcal E_{l+1}$.
% By Remark \ref{sucompkerbijectionnot}, 
By Remark \ref{sucompkerbijection}, 
$\ncker_k(\mathcal E_l)=\nker_{k-1}(\Cr_k(\mathcal E_l))=\nker_{k-1}(\Cr_k(\mathcal E_{l+1}))=\ncker_k(\mathcal E_{l+1})$, and
by Lemma \ref{compkernelfixed}, 
$\cKer_{k,m}(\mathcal E_{l+1})$ is the same abstract word as $\cKer_{k,m}(\mathcal E_l)$
for $1\le m\le \ncker_k(\mathcal E_l)$. In other words, the number
$\ncker_k(\mathcal E_l)$ and the abstract words 
$\cKer_{k,1}(\mathcal E_l), \cKer_{k,2}(\mathcal E_l), \ldots, \cKer_{k,\mathcal E_l}(\mathcal E_l)$
do not depend on $l$ if $l\ge 3k$. 
We call these abstract words \textit{the composite central kernels of $\mathcal E$},
denote the number of them by $\ncker_k(\mathcal E)$, and 
denote the composite central kernels of $\mathcal E$ 
themselves by $\cKer_{k,1}(\mathcal E), \cKer_{k,2}(\mathcal E), \ldots, \cKer_{k,\mathcal E}(\mathcal E)$.

In the example above, the composite central kernels of the evolution of 2-blocks are: 
$\mathfrak{ee}$, $ee\!f\mskip-4.5mu f$, $\mathfrak{ff}d\mathfrak{ff}$,
$f\mskip-4.5mu f$, $\mathfrak{ee}c\mathfrak{ee}$, $ee$, $\mathfrak{ee}$.

The structure of a 2-block in a bit more general case is shown by Fig. \ref{twostruct}.

\begin{figure}[!h]
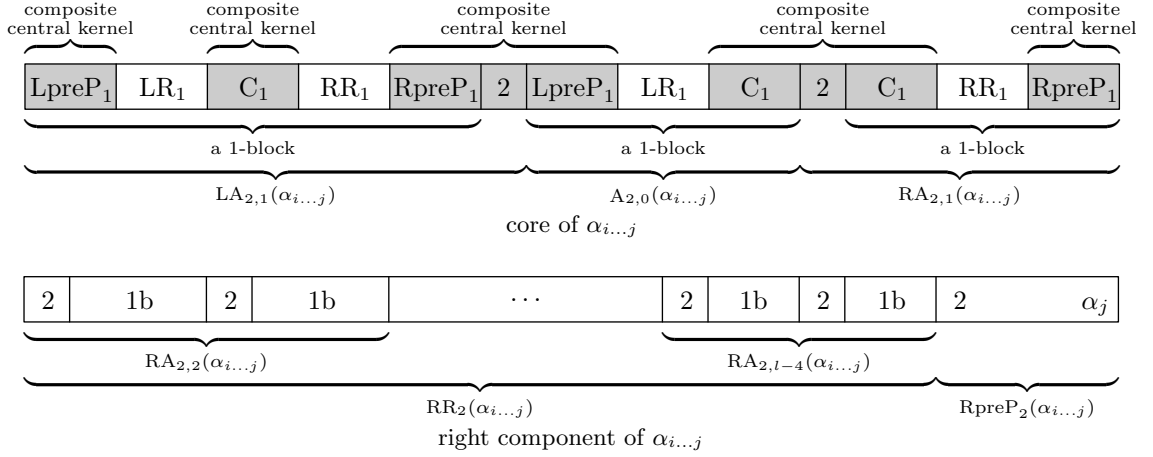

\centering
\includegraphics{sequence-5.mps}\\[\bigskipamount]
\includegraphics{sequence-6.mps}
\caption{Detailed structure of a 2-block $\mathcal E_l=\swa ij$, where Case II holds at the left and Case I
holds at the right: 2 denotes a letter of order 2, 1b denotes a 1-block. Each individual grayed box is a prime 
central kernel.}
\label{twostruct}
\end{figure}

\begin{lemma}\label{finitecker}
The lengths of all composite central kernels 
of all evolutions of $k$-blocks arising in $\al$
are bounded by a single constant that depends on $\E$, $\varphi$, and $k$ only.
In particular, only finitely many abstract words can equal 
central kernels of evolutions 
of $k$-blocks arising in $\al$.
\end{lemma}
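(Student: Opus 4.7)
The plan is to deduce the lemma essentially as a direct corollary of two facts already established: Corollary \ref{finite-number-of-evolutions} (finiteness of the set of evolutions up to abstract-word equality) and Lemma \ref{compkernelfixed} (the composite kernels of an evolution of stable $k$-multiblocks are abstract words that do not depend on the evolutional sequence number). Taking the maximum of finitely many finite quantities will produce the uniform bound.

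In more detail, the first step is to observe that, by Corollary \ref{finite-number-of-evolutions}, only finitely many sequences of abstract words can occur as evolutions of $k$-blocks in $\al$; let $\mathcal V$ denote this finite collection. Fix any $\mathcal E\in\mathcal V$. The stable $k$-block $\mathcal E_{3k}$ has a core $\Cr_k(\mathcal E_{3k})$, which is a stable $(k-1)$-multiblock by Corollary \ref{regpartstablekminusone}. The sequence $\mathcal F_l=\Cr_k(\mathcal E_{l+3k})$ is an evolution of stable $(k-1)$-multiblocks (since the descendant of the core of a $k$-block is the core of its descendant, as follows from the definitions of the core and of $\Su_k$ on $(k-1)$-multiblocks). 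Lemma \ref{compkernelfixed}, applied to $\mathcal F$, then gives that both the number $\nker_{k-1}(\mathcal F_l)=\ncker_k(\mathcal E_{l+3k})$ and each composite kernel $\Ker_{k-1,m}(\mathcal F_l)=\cKer_{k,m}(\mathcal E_{l+3k})$ are independent of $l$ as abstract words, justifying the definition of $\ncker_k(\mathcal E)$ and of the abstract words $\cKer_{k,m}(\mathcal E)$.

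The second step is the actual bound. For any fixed $\mathcal E\in\mathcal V$, each composite central kernel $\cKer_{k,m}(\mathcal E)$, as an abstract word, appears as a subword-occurrence inside the fixed finite abstract word $\Fg(\Cr_k(\mathcal E_{3k}))$, so its length is at most $|\Fg(\Cr_k(\mathcal E_{3k}))|$; since composite kernels are pairwise disjoint suboccurrences, the number $\ncker_k(\mathcal E)$ is also at most $|\Fg(\Cr_k(\mathcal E_{3k}))|+1$. Setting
\[
C=\max_{\mathcal E\in\mathcal V}\ \max_{1\le m\le\ncker_k(\mathcal E)} |\cKer_{k,m}(\mathcal E)|,
\]
which is a maximum of finitely many finite numbers, yields a constant depending only on $\E$, $\varphi$, and $k$, bounding the length of every composite central kernel of every evolution of $k$-blocks in $\al$. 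The second assertion of the lemma is immediate: the set of abstract words arising as $\cKer_{k,m}(\mathcal E)$ is contained in the finite union $\bigcup_{\mathcal E\in\mathcal V}\{\cKer_{k,m}(\mathcal E):1\le m\le\ncker_k(\mathcal E)\}$, which is finite.

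There is essentially no serious obstacle here; the only point that requires a brief check is that the sequence of cores $\Cr_k(\mathcal E_l)$ for $l\ge 3k$ really is an evolution of stable $(k-1)$-multiblocks in the sense of the excerpt, so that Lemma \ref{compkernelfixed} is applicable. This is straightforward from Corollary \ref{regpartstablekminusone} (stability) and from the compatibility of the descendant operation with the core, both of which follow directly from the definitions of $\Cr_k$ and of $\Su_k$ on $(k-1)$-multiblocks that has already been unwound in the preceding discussion.
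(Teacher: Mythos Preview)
Your proof is correct and follows essentially the same approach as the paper: both use Corollary~\ref{finite-number-of-evolutions} to reduce to finitely many abstract evolutions, observe that the composite central kernels are already determined at the first stable stage $\mathcal E_{3k}$ (you via Lemma~\ref{compkernelfixed}, the paper via the definition established just before the lemma), and then bound each kernel by the length of $\mathcal E_{3k}$ (or its core). Your write-up is slightly more detailed in verifying that the sequence of cores forms an evolution of stable $(k-1)$-multiblocks, but this is the same argument.
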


\begin{proof}
The proof is similar to the proof of Corollary \ref{finitelrprep}.
By Corollary \ref{finite-number-of-evolutions}, there are only finitely many 
sequences of abstract words that can be evolutions in $\al$, so there exists 
a single constant $x$ that depends on $\E$, $\varphi$ and $k$ only
such that if $\mathcal E$ is an evolution of $k$-blocks, then $|\mathcal E_{3k}|\le x$.
By definition, $\cKer_{k,m}(\mathcal E)=\cKer_{k,m}(\mathcal E_{3k})$ 
for $1\le m\le \ncker_k(\mathcal E)$, and
$\cKer_{k,m}(\mathcal E_{3k})$ is a subword of 
$\mathcal E_{3k}$. Therefore, $|\cKer_{k,m}(\mathcal E)|\le x$.
\end{proof}

Again, as in the proof of Corollary \ref{finitelrprep}, we do not 
claim that if 
two evolutions of $k$-blocks equal as sequences of abstract words,
then their composite central kernels are equal.

\section{Continuously Periodic Evolutions}\label{sectionevolutions}

In this section we will define and study continuously periodic evolutions, which will enable us 
to formulate a criterion for subword complexity of morphic sequences.
We will use the coding $\psi$ a lot as well as the morphism $\varphi$, 
so we will use some obvious properties of codings without mentioning every time 
that $\psi$ is a coding. For example, if $\gamma$ is a finite word, then $|\gamma|=|\psi(\gamma)|$, 
and if $0\le i\le |\gamma|-1$, then $\psi(\gamma_i)=\psi(\gamma)_i$.
Also, if $i\in\ZZ_{\ge 0}$, then $\psi(\al_i)=\psi(\al)_i$. If $\gamma_{i\ldots j}$ 
is an occurrence in a finite word $\gamma$, then $\psi(\gamma_{i\ldots j})=\psi(\gamma)_{i\ldots j}$.
And if $\swa ij$ is an occurrence in $\al$, then $\psi(\swa ij)=\psi(\al)_{i\ldots j}$.

% But 
Before we will be able to
define continuously periodic evolutions, we need to introduce two more technical notions, namely, we need to define 
left and right bounding sequences of an evolution of $k$-blocks and 
weak left and right evolutional periods of $k$-multiblocks.

The construction of the left and the right bounding sequences is similar to the construction
of pure morphic sequences themselves. Let us construct the right bounding sequence, 
the construction for the left bounding sequence is symmetric.
Let $\mathcal E$ be an evolution of $k$-blocks such that Case II holds at the right. 
First, consider the following sequence of abstract words:
$\RB(\mathcal E), \varphi(\RB(\mathcal E)), \varphi^2(\RB(\mathcal E)), \ldots$
Since $\varphi$ is strongly 1-periodic and images of letters of order $\le k$ 
consist of letters of order $\le k$, the leftmost letter of order $>k$ in each 
of these words is $\RB(\mathcal E)$. Temporarily denote by $\gamma_l$ (resp.\ $\delta_l$) the prefix (resp.\ the suffix)
of $\varphi^l(\RB(\mathcal E))$ to the left (resp.\ to the right) from the leftmost occurrence of $\RB(\mathcal E)$
(not including this occurrence of $\RB(\mathcal E)$). In other words, write 
$\varphi^l(\RB(\mathcal E))=\gamma_l\RB(\mathcal E)\delta_l$, where $\gamma_l$ consists 
of letters of order $\le k$ only. In particular, $\gamma_0$ and $\delta_0$ are the empty word.

\begin{remark}\label{rbsremarki}
If $\RB(\mathcal E_m)=\al_i$, where $m\ge 1$, $\RB(\mathcal E_{m_l})=\al_j$, where $l\ge 0$, 
and $\varphi^l(\al_i)=\swa st$ as an occurrence in $\al$, then $\swa{j+1}t=\delta_l$ as an abstract word.
\end{remark}

\begin{lemma}\label{rbslemmai}
For all $l\ge 0$ we have $\delta_{l+1}=\delta_1\varphi(\delta_l)$.
\end{lemma}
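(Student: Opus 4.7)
The plan is to compute $\varphi^{l+1}(\RB(\mathcal{E}))$ in two ways and compare the resulting decompositions.

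First, I would apply $\varphi$ directly to the given decomposition $\varphi^l(\RB(\mathcal E))=\gamma_l\,\RB(\mathcal E)\,\delta_l$, obtaining
\[
\varphi^{l+1}(\RB(\mathcal E)) \;=\; \varphi(\gamma_l)\,\varphi(\RB(\mathcal E))\,\varphi(\delta_l).
\]
Using the $l=1$ case of the decomposition, $\varphi(\RB(\mathcal E))=\gamma_1\,\RB(\mathcal E)\,\delta_1$, this becomes
\[
\varphi^{l+1}(\RB(\mathcal E)) \;=\; \varphi(\gamma_l)\,\gamma_1\,\RB(\mathcal E)\,\delta_1\,\varphi(\delta_l).
\]
On the other hand, by definition $\varphi^{l+1}(\RB(\mathcal E))=\gamma_{l+1}\,\RB(\mathcal E)\,\delta_{l+1}$, where the displayed occurrence of $\RB(\mathcal E)$ is the leftmost letter of order $>k$.

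Next, I would verify that the occurrence of $\RB(\mathcal E)$ in the middle of $\varphi(\gamma_l)\,\gamma_1\,\RB(\mathcal E)\,\delta_1\,\varphi(\delta_l)$ is itself the leftmost letter of order $>k$ in this word. The key observation is that $\gamma_l$ and $\gamma_1$ both consist of letters of order $\le k$ (by the definition of $\gamma_l$ as the prefix to the left of the leftmost letter of order $>k$), and images under $\varphi$ of letters of order $\le k$ consist of letters of order $\le k$ only. Hence $\varphi(\gamma_l)\,\gamma_1$ is a word of letters of order $\le k$, and the first letter of order $>k$ in the concatenation is exactly the $\RB(\mathcal E)$ shown between $\gamma_1$ and $\delta_1$.

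By uniqueness of the decomposition around the leftmost letter of order $>k$, one concludes $\gamma_{l+1}=\varphi(\gamma_l)\,\gamma_1$ and, crucially, $\delta_{l+1}=\delta_1\,\varphi(\delta_l)$, which is the desired identity. The only subtlety worth double-checking is the order-preservation argument in the previous paragraph; everything else is a bookkeeping computation with the two expressions for $\varphi^{l+1}(\RB(\mathcal E))$.
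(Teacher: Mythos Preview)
Your proposal is correct and follows essentially the same approach as the paper: both compute $\varphi^{l+1}(\RB(\mathcal E))$ two ways, observe that $\varphi(\gamma_l)\gamma_1$ contains no letters of order $>k$, and read off $\delta_{l+1}=\delta_1\varphi(\delta_l)$ from the uniqueness of the decomposition around the leftmost letter of order $>k$. The only cosmetic difference is that the paper phrases it as ``leftmost occurrence of $\RB(\mathcal E)$'' rather than ``leftmost letter of order $>k$,'' but these coincide here by strong 1-periodicity.
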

\begin{proof}
We have $\gamma_{l+1}\RB(\mathcal E)\delta_{l+1}=\varphi^{l+1}(\RB(\mathcal E))=
\varphi(\varphi^l(\RB(\mathcal E)))=\varphi(\gamma_l\RB(\mathcal E)\delta_l)=
\varphi(\gamma_l)\varphi(\RB(\mathcal E))\varphi(\delta_l)=\varphi(\gamma_l)\gamma_1\RB(\mathcal E)\delta_1\varphi(\delta_l)$.
Note that $\varphi(\gamma_l)$ and $\gamma_1$ do not contain letters of order $>k$,
so the leftmost occurrence of $\RB(\mathcal E)$ in 
$\varphi(\gamma_l)\gamma_1\RB(\mathcal E)\delta_1\varphi(\delta_l)$
is the occurrence mentioned in this formula explicitly.
On the other hand, by the definition of $\gamma_{l+1}$ and $\delta_{l+1}$, 
the leftmost occurrence of $\RB(\mathcal E)$ in 
$\gamma_{l+1}\RB(\mathcal E)\delta_{l+1}$
is also the occurrence mentioned in this formula explicitly.
Hence, $\delta_{l+1}=\delta_1\varphi(\delta_l)$.
\end{proof}

\begin{lemma}
For all $l\ge 0$, $\delta_l$ is a prefix of $\delta_{l+1}$.
\end{lemma}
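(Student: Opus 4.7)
The plan is to prove this by induction on $l$, using the recursion $\delta_{l+1}=\delta_1\varphi(\delta_l)$ established in the previous lemma.

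For the base case $l=0$, recall that $\delta_0$ is the empty word, so it is trivially a prefix of $\delta_1$.

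For the inductive step, suppose $\delta_l$ is a prefix of $\delta_{l+1}$; I want to show that $\delta_{l+1}$ is a prefix of $\delta_{l+2}$. By the previous lemma, $\delta_{l+1}=\delta_1\varphi(\delta_l)$ and $\delta_{l+2}=\delta_1\varphi(\delta_{l+1})$. Since $\varphi$ is a morphism, the fact that $\delta_l$ is a prefix of $\delta_{l+1}$ implies that $\varphi(\delta_l)$ is a prefix of $\varphi(\delta_{l+1})$. Prepending the common word $\delta_1$ on the left then yields that $\delta_1\varphi(\delta_l)$ is a prefix of $\delta_1\varphi(\delta_{l+1})$, i.e.\ $\delta_{l+1}$ is a prefix of $\delta_{l+2}$, as required.

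There is no real obstacle here; the work was already done in Lemma \ref{rbslemmai}. The only subtlety worth noting is that the argument uses nothing about periodicity or about Case II holding at the right beyond what was already invoked to obtain the recursion $\delta_{l+1}=\delta_1\varphi(\delta_l)$. In particular, the statement will allow us in subsequent sections to form a well-defined infinite word as the limit (or union) of the $\delta_l$, which is exactly the purpose of introducing the right bounding sequence.
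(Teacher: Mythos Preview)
Your proof is correct and follows essentially the same approach as the paper: induction on $l$, base case from $\delta_0$ being empty, and inductive step using the recursion $\delta_{l+1}=\delta_1\varphi(\delta_l)$ from the preceding lemma together with the fact that morphisms preserve the prefix relation.
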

\begin{proof}
Let us prove this by induction on $l$. For $l=0$ this is clear since 
$\delta_0$ is the empty word. Suppose that 
$\delta_l$ is a prefix of $\delta_{l+1}$. Then 
$\varphi(\delta_l)$ is a prefix of $\varphi(\delta_{l+1})$.
By Lemma \ref{rbslemmai}, $\delta_{l+1}=\delta_1\varphi(\delta_l)$
and $\delta_{l+2}=\delta_1\varphi(\delta_{l+1})$, 
so $\delta_{l+1}$ is a prefix of $\delta_{l+2}$.
\end{proof}

\begin{lemma}
For all $l\ge 0$, we have $|\delta_{l+1}|>|\delta_l|$.
\end{lemma}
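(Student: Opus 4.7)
First I would apply Lemma \ref{rbslemmai} to write $\delta_{l+1}=\delta_1\varphi(\delta_l)$. Since $\varphi$ is nonerasing, $|\varphi(\delta_l)|\ge|\delta_l|$, and therefore $|\delta_{l+1}|\ge|\delta_1|+|\delta_l|$. Thus the lemma reduces to the single claim that $|\delta_1|\ge 1$, which I would prove by contradiction.

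Suppose $\delta_1$ is empty, so that $\varphi(\RB(\mathcal E))=\gamma_1\RB(\mathcal E)$. Under the standing hypothesis that Case II holds at the right for $\mathcal E$ (together with $\RB(\mathcal E_l)=\RB(\mathcal E)$ for $l\ge 1$, which follows from strong 1-periodicity), the leftmost letter of order $>k-1$ in $\varphi(\RB(\mathcal E))$ is $\RB(\mathcal E)$ itself; hence every letter of $\gamma_1$ has order at most $k-1$. Iterating the identity gives
$$\varphi^n(\RB(\mathcal E))=\varphi^{n-1}(\gamma_1)\varphi^{n-2}(\gamma_1)\cdots\varphi(\gamma_1)\gamma_1\RB(\mathcal E).$$
All letters in any $\varphi^i(\gamma_1)$ still have order $\le k-1$, so $|\varphi^i(\gamma_1)|=O(i^{k-2})$ (vacuously $0$ when $k=1$, since no letter of order $\le 0$ exists). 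Summing over $i$ gives $|\varphi^n(\RB(\mathcal E))|=O(n^{k-1})$. On the other hand, $\RB(\mathcal E)$ has order strictly greater than $k$, so by Lemma \ref{rates} its length under iteration is either $\Theta(n^{r-1})$ for some $r\ge k+1$, or $\Omega(q^n)$ for some $q>1$; both strictly dominate $n^{k-1}$, the desired contradiction.

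The main subtlety is invoking the Case II hypothesis correctly: strong 1-periodicity alone only pins the leftmost letter of order $>k$ in $\varphi(\RB(\mathcal E))$, and I additionally need to know there are no intervening letters of order exactly $k$ sitting in $\gamma_1$. It is precisely Case II at the right that rules these out, confining $\gamma_1$ to letters of order $\le k-1$ and so forcing the growth estimate $O(n^{k-1})$ that clashes with the order of $\RB(\mathcal E)$.
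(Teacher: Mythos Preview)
Your proof is correct and follows essentially the same approach as the paper. Both arguments reduce via Lemma~\ref{rbslemmai} and nonerasingness to showing $\delta_1$ is nonempty, use Case~II at the right to bound the orders in $\gamma_1$ by $k-1$, and then obtain a contradiction with the order of $\RB(\mathcal E)$; the paper phrases the contradiction slightly more generally (assuming $\delta_1$ consists only of letters of order $<k$, thereby concluding $\delta_1$ actually contains a letter of order $\ge k$) whereas you assume $\delta_1$ empty and compute the growth rate $O(n^{k-1})$ explicitly, but the substance is the same.
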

\begin{proof}
Recall that we have started with an evolution $\mathcal E$ such that Case II holds at the right.
By the definitions of Case II and of right atoms, $\gamma_1$ cannot contain 
letters of order $k$, it consists of letters of smaller orders (or is empty if $k=1$).
But then, if $\delta_1$ also had consisted of letters of order $<k$ only,
$\RB(\mathcal E)$ would have been a letter of order $k$ or less. So, 
$\delta_1$ contains at least one letter of order $\ge k$, in particular, 
$\delta_1$ is nonempty. Since $\varphi$ is nonerasing, 
$|\varphi(\delta_l)|\ge |\delta_l|$, and 
$|\delta_{l+1}|=|\delta_1|+|\varphi(\delta_l)|>|\delta_l|$.
\end{proof}

So, we have constructed an infinite sequence of words $\delta_l$, whose lengths strictly 
increase, and each of them is a prefix of the next one. Let us add $\RB(\mathcal E)$
at the left of each of these words. We get an infinite sequence 
of words $\RB(\mathcal E), \RB(\mathcal E)\delta_1, \RB(\mathcal E)\delta_2, \ldots, \RB(\mathcal E)\delta_l, \ldots$
whose lengths strictly increase, and each of them is a prefix of the next one.
So, there
exists a unique
infinite (to the right) word such that all these 
words $\RB(\mathcal E)\delta_l$ (for all $l\ge 0$) are its prefixes. 
We call this infinite word \textit{the right bounding sequence of $\mathcal E$}
and denote it by $\RBS_k(\mathcal E)$.

With this definition, Remark \ref{rbsremarki} 
can be reformulated as follows:

\begin{remark}\label{rbsremarkii}
Let $\mathcal E$ be an evolution of $k$-blocks such that Case II holds at the right, 
$m\ge 1$ and $l\ge 0$. Suppose that $\RB(\mathcal E_m)=\al_i$, 
$\RB(\mathcal E_{m+l})=\al_j$, and $\varphi^l(\al_i)=\swa st$
as an occurrence in $\al$. Then $\swa{j+1}t$ as an abstract word is a prefix of $\RBS_k(\mathcal E)$.
\end{remark}

$\RBS_k(\mathcal E)$ is an abstract infinite word, it is not an occurrence in $\al$. However, we can prove
the following lemma.
\begin{lemma}
For all $l\ge 0$, if $\mathcal E_{l+2}=\swa ij$ as an occurrence in $\al$, 
then $\swa i{j+|\RB(\mathcal E)\delta_l|}=\mathcal E_{l+2}\RB(\mathcal E)\delta_l$.
%is a subword in $\al$.
\end{lemma}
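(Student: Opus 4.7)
The plan is to decompose the target occurrence $\swa i{j+|\RB(\mathcal E)\delta_l|}$ into two pieces: the $k$-block $\mathcal E_{l+2}=\swa ij$ itself, occupying the first $|\mathcal E_{l+2}|$ positions, and the trailing segment $\swa{j+1}{j+1+|\delta_l|}$ of length $1+|\delta_l|=|\RB(\mathcal E)\delta_l|$. The first piece is $\mathcal E_{l+2}$ by definition, so it suffices to identify the trailing segment with the abstract word $\RB(\mathcal E)\delta_l$.

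First I would show $\al_{j+1}=\RB(\mathcal E)$ as abstract letters. By the definition of a $k$-block, $\al_{j+1}=\RB(\mathcal E_{l+2})$; and since $l+2\ge 1$, the stabilization of borders across an evolution for indices $\ge 1$ (which holds because $\varphi$ is strongly 1-periodic, as noted in the paragraph introducing $\LB(\mathcal E)$ and $\RB(\mathcal E)$) gives $\RB(\mathcal E_{l+2})=\RB(\mathcal E)$ as abstract letters.

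Second I would identify $\swa{j+2}{j+1+|\delta_l|}$ with $\delta_l$ by invoking Remark \ref{rbsremarki} with $m=2$ and the same $l$. Let $\RB(\mathcal E_2)=\al_{i_0}$ and write $\varphi^l(\al_{i_0})=\swa{s_0}{t_0}$ as an occurrence in $\al$; since $\RB(\mathcal E_{2+l})=\RB(\mathcal E_{l+2})=\al_{j+1}$, the remark yields $\swa{j+2}{t_0}=\delta_l$ as abstract words, whence $t_0=j+1+|\delta_l|$. Concatenating the two pieces then gives $\swa i{j+1+|\delta_l|}=\mathcal E_{l+2}\,\RB(\mathcal E)\,\delta_l$ as abstract words, and the identity $j+|\RB(\mathcal E)\delta_l|=j+1+|\delta_l|$ rewrites the left-hand side as $\swa i{j+|\RB(\mathcal E)\delta_l|}$, closing the argument.

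I do not anticipate any real obstacle: the lemma is essentially a repackaging of Remark \ref{rbsremarki} once one observes that $\al_{j+1}$ is the right border of $\mathcal E_{l+2}$. The only care needed is choosing an admissible value of $m$ in the application of the remark (any $m\ge 1$ works, and $m=2$ lines up with the index $l+2$ cleanly) and tracking indices so that the abstract-word identification matches the claimed length of the target occurrence.
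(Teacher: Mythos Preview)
Your argument is correct, but it takes a different route from the paper's own proof. The paper proceeds by induction on $l$: the base case $l=0$ is immediate from $\delta_0=\emptyset$ and $\RB(\mathcal E_2)=\RB(\mathcal E)$; for the inductive step, one applies $\varphi$ to the occurrence $\swa i{j+|\RB(\mathcal E)\delta_l|}=\mathcal E_{l+2}\RB(\mathcal E)\delta_l$, expands $\varphi(\RB(\mathcal E))=\gamma_1\RB(\mathcal E)\delta_1$, and uses Lemma~\ref{rbslemmai} ($\delta_{l+1}=\delta_1\varphi(\delta_l)$) to read off the statement for $l+1$. In contrast, you bypass the induction entirely by decomposing the target occurrence into $\mathcal E_{l+2}$, the single letter $\al_{j+1}=\RB(\mathcal E)$, and a tail which you identify with $\delta_l$ via a single application of Remark~\ref{rbsremarki} with $m=2$.

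Your approach is shorter and arguably cleaner, since Remark~\ref{rbsremarki} already packages exactly the information you need. The paper's induction is more self-contained in that it does not rely on that remark (which is stated without proof) and instead reproduces its content in situ; in effect, the paper's inductive step is what one would write down to justify Remark~\ref{rbsremarki} in the first place. Either route is fine here.
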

\begin{proof}
We prove this by induction on $l$. If $l=0$, then $|\delta_l|=0$ and, by the definition 
of $\RB(\mathcal E)$, $\RB(\mathcal E_2)=\RB(\mathcal E)$, and the claim is clear.

Suppose that $\mathcal E_{l+2}=\swa ij$ and $\swa i{j+|\RB(\mathcal E)\delta_l|}=\mathcal E_{l+2}\RB(\mathcal E)\delta_l$.
Let $s$ and $t$ be the indices such that $\swa st=\Su_k(\swa ij)=\mathcal E_{l+3}$ as an occurrence in $\al$.
Denote also by $i'$ and $j'$ the indices such that $\swa{i'}{j'}=\varphi(\swa ij)$.
By the definition of the descendant of a $k$-block, $\swa{i'}{j'}$ is a suboccurrence of $\swa st$.
Consider also the following occurrence in $\al$ starting from position $i'$: 
$\varphi(\swa i{j+|\delta_l|})=\varphi(\swa ij\RB(\mathcal E)\delta_l)=\swa{i'}{j'}\gamma_1\RB(\mathcal E)\delta_1\varphi(\delta_l)=
\swa{i'}{j'}\gamma_1\RB(\mathcal E)\delta_{l+1}$. Since $\gamma_1$ consists of letters of order $\le k$ only, 
and $\RB(\mathcal E)$ is a letter of order $>k$, the occurrence of $\RB(\mathcal E)$
mentioned explicitly in this formula is the right border of $\mathcal E_{l+3}$.
In other words, the occurrence of $\gamma_1$ mentioned explicitly here is $\swa{j'+1}t$, 
and the occurrence of $\RB(\mathcal E)$ mentioned explicitly here is $\al_{t+1}$.
Hence, $\swa{t+1}{t+|\RB(\mathcal E)\delta_{l+1}|}=\RB(\mathcal E)\delta_{l+1}$, 
and $\swa s{t+|\RB(\mathcal E)\delta_{l+1}|}=\mathcal E_{l+3}\RB(\mathcal E)\delta_{l+1}$
\end{proof}

This lemma still uses the notation $\delta_l$ we have introduced temporarily, 
but the next corollary does not use any temporary notation anymore.

\begin{corollary}\label{presenceofrbs}
Let $\mathcal E$ be an evolution of $k$-blocks such that Case II holds at the right, and 
let $\delta$ be an arbitrary finite prefix of $\RBS_k(\mathcal E)$.

Then there exists $l_0\in\NN$ (that depends on $\mathcal E$ and $\delta$)
such that if $l\ge l_0$ and $\mathcal E_l=\swa ij$ as an occurrence in $\al$, 
then 
$\swa i{j+|\delta|}=\mathcal E_l\delta$.
%$\mathcal E_l\delta$ is a subword of $\al$.
\end{corollary}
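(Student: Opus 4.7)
The plan is to derive the corollary directly from the lemma immediately preceding it, together with the fact that $\RBS_k(\mathcal E)$ was constructed as the unique infinite word having all the words $\RB(\mathcal E)\delta_l$ (for $l\ge 0$) as prefixes.

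First, I would observe that since the lengths $|\RB(\mathcal E)\delta_l|$ form a strictly increasing sequence of positive integers, they tend to infinity, so there exists $m\in\NN$ with $|\RB(\mathcal E)\delta_m|\ge|\delta|$. By the construction of $\RBS_k(\mathcal E)$, both $\delta$ and $\RB(\mathcal E)\delta_m$ are prefixes of the same infinite word, hence $\delta$ is a prefix of $\RB(\mathcal E)\delta_m$ as an abstract word. I would then set $l_0=m+2$.

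For $l\ge l_0$, writing $\mathcal E_l=\swa ij$, the lemma preceding the corollary (applied with $l$ in place of $l+2$) gives $\swa i{j+|\RB(\mathcal E)\delta_{l-2}|}=\mathcal E_l\RB(\mathcal E)\delta_{l-2}$ as an abstract word. Since $l-2\ge m$ and the $\delta_n$ are nested prefixes (this was proved earlier in the section by induction on $n$), $\RB(\mathcal E)\delta_m$ is a prefix of $\RB(\mathcal E)\delta_{l-2}$, and, combined with the previous step, $\delta$ is a prefix of $\RB(\mathcal E)\delta_{l-2}$ as an abstract word. Truncating the occurrence $\swa i{j+|\RB(\mathcal E)\delta_{l-2}|}$ at length $|\mathcal E_l|+|\delta|$ then yields $\swa i{j+|\delta|}=\mathcal E_l\delta$, as required.

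I do not expect any real obstacle here: the substantive content sits in the lemma and in the definition of $\RBS_k(\mathcal E)$, and the corollary merely restates the conclusion in terms of an arbitrary finite prefix $\delta$ of the bounding sequence rather than the particular prefixes $\RB(\mathcal E)\delta_l$ coming from the construction.
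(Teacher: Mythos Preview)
Your proposal is correct and follows essentially the same approach as the paper's proof: both pick an index (your $m$, the paper's $l'$) large enough that $\delta$ is a prefix of $\RB(\mathcal E)\delta_m$, set $l_0=m+2$, invoke the preceding lemma to get $\swa i{j+|\RB(\mathcal E)\delta_{l-2}|}=\mathcal E_l\RB(\mathcal E)\delta_{l-2}$, and then truncate. Your write-up is slightly more explicit about the nesting of the prefixes, but the argument is the same.
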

\begin{proof}
Since the lengths of the words $\delta_l$ strictly grow as $l\to \infty$, 
there exists $l'\in\NN$ such that if $l\ge l'$, then $\delta$ is a prefix 
of $\RB(\mathcal E)\delta_l$.

Set $l_0=l'+2$. Then if $\mathcal E_l=\swa ij$ as an occurrence in $\al$,
then by the previous lemma, $\swa i{j+|\RB(\mathcal E)\delta_{l-2}|}=\mathcal E_l\RB(\mathcal E)\delta_{l-2}$.
Since $\delta$ is a prefix of $\RB(\mathcal E)\delta_{l-2}$, 
we have $\swa i{j+|\delta|}=\mathcal E_l\delta$.
\end{proof}

Similarly, if Case II holds at the left for an evolution $\mathcal E$ of $k$-blocks, 
we define its \textit{left bounding sequence} and denote it by $\LBS(\mathcal E)$.
$\LBS_k(\mathcal E)$ is a word infinite to the left, and the symmetric version of Corollary \ref{presenceofrbs}
for left bounding sequences can be formulated as follows.

\begin{corollary}\label{presenceoflbs}
Let $\mathcal E$ be an evolution of $k$-blocks such that Case II holds at the left, and 
let $\delta$ be an arbitrary finite suffix of $\RBS_k(\mathcal E)$.

Then there exists $l_0\in\NN$ (that depends on $\mathcal E$ and $\delta$)
such that if $l\ge l_0$ and $\mathcal E_l=\swa ij$ as an occurrence in $\al$,
then $i\ge|\delta|$ and
$\swa {i-|\delta|}j=\delta\mathcal E_l$.\qed
%$\delta\mathcal E_l$ is a subword of $\al$.\qed
\end{corollary}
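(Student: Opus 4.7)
The plan is to mirror the entire construction that produced $\RBS_k(\mathcal E)$ and Corollary \ref{presenceofrbs}, with "left" and "right" swapped throughout. First I would define $\LBS_k(\mathcal E)$ by observing that, since $\varphi$ is strongly 1-periodic, the rightmost letter of order $>k$ in each $\varphi^l(\LB(\mathcal E))$ is $\LB(\mathcal E)$ itself; writing $\varphi^l(\LB(\mathcal E))=\gamma_l\LB(\mathcal E)\delta_l$ with $\delta_l$ the suffix to the right of this rightmost occurrence, $\delta_l$ consists of letters of order $\le k$ while $\gamma_l$ carries the higher-order material.

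Next I would verify the duals of the three technical lemmas from the $\RBS$ construction: (i) the recurrence $\delta_{l+1}=\delta_1\varphi(\delta_l)$ and $\gamma_{l+1}=\varphi(\gamma_l)\gamma_1$, obtained by expanding $\varphi^{l+1}(\LB(\mathcal E))=\varphi(\gamma_l)\varphi(\LB(\mathcal E))\varphi(\delta_l)=\varphi(\gamma_l)\gamma_1\LB(\mathcal E)\delta_1\varphi(\delta_l)$ and noting that every letter past the displayed middle $\LB(\mathcal E)$ has order $\le k$; (ii) $\gamma_l$ is a suffix of $\gamma_{l+1}$, by induction on $l$ starting from $\gamma_0$ empty; and (iii) $|\gamma_{l+1}|>|\gamma_l|$, which reduces to showing $\gamma_1$ is nonempty. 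The last point is where Case II at the left enters: it forces $\delta_1$ to contain no letter of order $k$ (such a letter would constitute a nonempty left atom $\LA_{k,l+1}(\mathcal E_{l+1})$), so since $\LB(\mathcal E)$ itself has order $>k$ the image $\varphi(\LB(\mathcal E))$ must contain a letter of order $\ge k$ somewhere, and that somewhere has to be $\gamma_1$. These three facts give a strictly growing chain of suffixes $\gamma_l\LB(\mathcal E)$ determining a unique abstract word infinite to the left, which I call $\LBS_k(\mathcal E)$.

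The main substantive step, and the only place where real bookkeeping is needed, is the analog of the lemma immediately preceding Corollary \ref{presenceofrbs}: if $l\ge 0$ and $\mathcal E_{l+2}=\swa ij$ as an occurrence in $\al$, then $\swa{i-1-|\gamma_l|}{i-1}=\gamma_l\LB(\mathcal E)$ as an occurrence in $\al$. I would prove this by induction on $l$. The base case $l=0$ holds because $\gamma_0$ is empty and $\LB(\mathcal E_2)=\LB(\mathcal E)=\al_{i-1}$ by stabilization of borders. For the induction step, I apply $\varphi$ to the occurrence $\al_{i-1-|\gamma_l|}\ldots\al_j=\gamma_l\LB(\mathcal E)\mathcal E_{l+2}$, expand $\varphi(\LB(\mathcal E))=\gamma_1\LB(\mathcal E)\delta_1$, and identify the rightmost letter of order $>k$ in the relevant region as the left border $\LB(\mathcal E_{l+3})$ of the descendant $\mathcal E_{l+3}=\Su_k(\mathcal E_{l+2})$; using $\gamma_{l+1}=\varphi(\gamma_l)\gamma_1$ gives exactly the claimed equality for $l+1$.

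Finally, the corollary follows immediately. Given a finite suffix $\delta$ of $\LBS_k(\mathcal E)$, the growth of $|\gamma_l\LB(\mathcal E)|$ guarantees an $l'$ for which $\delta$ is a suffix of $\gamma_{l'}\LB(\mathcal E)$; set $l_0=l'+2$. For any $l\ge l_0$, writing $\mathcal E_l=\swa ij$, the previous step yields $\swa{i-1-|\gamma_{l-2}|}{i-1}=\gamma_{l-2}\LB(\mathcal E)$, and restricting to the rightmost $|\delta|$ letters gives both $i\ge|\delta|$ and $\swa{i-|\delta|}{j}=\delta\mathcal E_l$. The hardest part is essentially notational, keeping the index offsets straight when mirroring the right-hand argument; there are no genuinely new ideas beyond those already used for $\RBS_k$.
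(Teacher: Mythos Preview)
Your proposal is correct and follows exactly the approach the paper intends: the paper merely states this corollary as the symmetric version of Corollary~\ref{presenceofrbs} and marks it with $\qed$, and you have carried out that mirror construction in detail. One small slip worth tightening: when arguing that $\gamma_1$ is nonempty, the sentence ``the image $\varphi(\LB(\mathcal E))$ must contain a letter of order $\ge k$ somewhere, and that somewhere has to be $\gamma_1$'' forgets that the displayed middle $\LB(\mathcal E)$ is itself such a letter; the clean argument (dual to the paper's) is that if $\gamma_1$ were empty then, since $\delta_1$ has only letters of order $<k$, the growth of $\varphi^n(\LB(\mathcal E))$ would be $O(n^{k-1})$, contradicting that $\LB(\mathcal E)$ has order $>k$.
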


The symmetric version of Remark \ref{rbsremarkii} can be formulated as follows.

\begin{remark}\label{lbsremarkii}
Let $\mathcal E$ be an evolution of $k$-blocks such that Case II holds at the left, 
$m\ge 1$ and $l\ge 0$. Suppose that $\LB(\mathcal E_m)=\al_i$, 
$\LB(\mathcal E_{m+l})=\al_j$, and $\varphi^l(\al_i)=\swa st$
as an occurrence in $\al$. Then $\swa s{j-1}$ as an abstract word is a suffix of $\LBS_k(\mathcal E)$.
\end{remark}

Now let us define left and right weak evolutional periods for evolutions $\mathcal F$ of stable nonempty $k$-multiblocks 
($k\ge 1$). The 
definition will use two indices, $m$ and $m'$ ($0\le m < m'\le \nker_k(\mathcal F)+1$). 
A final period $\lambda$ is called a
\textit{left} (resp.\ \textit{right}) \textit{weak evolutional period}
of an evolution $\mathcal F$ of stable $k$-multiblocks ($k\ge 1$) for a pair of indices $(m,m')$ 
($0\le m < m'\le \nker_k(\mathcal F)+1$) if
\begin{enumerate}
\item For each $l\ge 0$, $\psi(\IpR_{k,m,m'}(\mathcal F_l))$ is a weakly 
%$|\lambda|$-periodic word with left (resp.\ right) period $\lambda$.
left (resp.\ right) $\lambda$-periodic word.
\item The residue of 
%the length of the left pseudoregular part of $\mathcal E_l$ 
$|\IpR_{k,m,m'}(\mathcal F_l)|$
(for $l\ge 0$) modulo $|\lambda|$, 
i.~e.\ the length of the incomplete occurrence in the previous condition, does not depend on $l$.
\end{enumerate}

The second condition here enables us to formulate the following remark:
\begin{remark}\label{contpermultcycle}
If $\lambda$ is a left weak evolutional period
of an evolution $\mathcal F$ of stable nonempty $k$-multiblocks ($k\ge 1$) for a pair of indices $(m,m')$
($0\le m < m'\le \nker_k(\mathcal F)+1$) and $r$ is the residue of 
$|\IpR_{k,m,m'}(\mathcal F_l)|$ modulo $|\lambda|$ (for any $l$), then 
% $\lambda'=\lambda_{r\ldots |\lambda|-1}\lambda_{0\ldots r-1}$
% (the cyclic shift of $\lambda$ that begins with the last $|\lambda|-r$ letters of $\lambda$ and 
% ends with the first $r$ letters of $\lambda$)
$\lambda'=\Cyc_r(\lambda)=\Cyc_{|\IpR_{k,m,m'}(\mathcal F_l)|}(\lambda)$
is a right weak evolutional period of $\mathcal F$ for the pair $(m,m')$.

If $\lambda$ is a right weak evolutional period
of an evolution $\mathcal F$ of stable nonempty $k$-multiblocks ($k\ge 1$) for a pair of indices $(m,m')$
($0\le m < m'\le \nker_k(\mathcal F)+1$) and $r$ is the residue of 
$|\IpR_{k,m,m'}(\mathcal F_l)|$ modulo $|\lambda|$ (for any $l$), then 
% $\lambda'=\lambda_{|\lambda|-r\ldots |\lambda|-1}\lambda_{0\ldots |\lambda|-r-1}$
% (the cyclic shift of $\lambda$ that begins with the last $r$ letters of $\lambda$ and 
% ends with the first $|\lambda|-r$ letters of $\lambda$)
$\lambda'=\Cyc_{-r}(\lambda)=\Cyc_{-|\IpR_{k,m,m'}(\mathcal F_l)|}(\lambda)$
is a left weak evolutional period of $\mathcal F$ for the pair $(m,m')$.
\end{remark}

\begin{lemma}\label{contperweakuniquewideleft}
Let $\mathcal F$ be an evolution of stable $k$-multiblocks ($k\ge 1$). Suppose that $\nker_k(\mathcal F)>1$. Let 
$m,m',m''\in\ZZ$ be three indices such that $0\le m<\nker_k(\mathcal F)$, $m<m'$, $m<m''$, 
$1<m'\le\nker_k(\mathcal F)+1$, and $1<m''\le\nker_k(\mathcal F)+1$.

If $\lambda$ is a left weak evolutional period of $\mathcal F$ for the pair $(m,m')$, 
and $\lambda'$ is a left weak evolutional period of $\mathcal F$ for the pair $(m,m'')$, 
then $\lambda=\lambda'$, and $\lambda$ is the minimal left period of 
$\psi(\IpR_{k,m,m'}(\mathcal F_l))$ for all $l\ge 0$.
\end{lemma}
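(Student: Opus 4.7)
The plan is to reduce this to a direct application of Lemma \ref{finwordperiods} and Lemma \ref{finalnomorethanonce}, once we arrange that both $\lambda$ and $\lambda'$ become left periods of the same word whose length is large enough compared to both. Without loss of generality I would assume $m'\le m''$ (the case $m''\le m'$ is symmetric). From the recursive definition of inner pseudoregular parts, $\IpR_{k,m,m'}(\mathcal F_l)$ is a prefix of $\IpR_{k,m,m''}(\mathcal F_l)$ as an occurrence in $\al$ for every $l\ge 0$, so, since $\psi$ preserves length and is applied letter-by-letter, $\psi(\IpR_{k,m,m'}(\mathcal F_l))$ is a prefix of $\psi(\IpR_{k,m,m''}(\mathcal F_l))$. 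A prefix of a weakly left $\lambda'$-periodic word is obviously weakly left $\lambda'$-periodic (truncate the representation $\lambda'\lambda'\ldots\lambda'\lambda'_{0\ldots r-1}$), so $\psi(\IpR_{k,m,m'}(\mathcal F_l))$ is simultaneously weakly left $\lambda$-periodic and weakly left $\lambda'$-periodic.

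Next I would check that the hypotheses of Lemma \ref{finwordperiods} are satisfied. The pair $(m,m')$ fits the assumptions of Corollary \ref{betweenkernelgrowsgood} (this is exactly where I use $\nker_k(\mathcal F)>1$, together with the inequalities on $m$ and $m'$ supplied in the lemma), so $|\IpR_{k,m,m'}(\mathcal F_l)|\ge 2\finmax$ for every $l\ge 0$. Since $\lambda$ and $\lambda'$ are final periods, $|\lambda|\le\finmax$ and $|\lambda'|\le\finmax$, hence $2|\lambda|\le|\IpR_{k,m,m'}(\mathcal F_l)|$ and $2|\lambda'|\le|\IpR_{k,m,m'}(\mathcal F_l)|$. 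Lemma \ref{finwordperiods} then produces a finite word $\mu$ such that $\lambda$ is $\mu$ repeated $a$ times and $\lambda'$ is $\mu$ repeated $b$ times for some $a,b\in\mathbb N$. Since $\lambda$ and $\lambda'$ are final periods, Lemma \ref{finalnomorethanonce} forces $a=b=1$, so $\lambda=\mu=\lambda'$.

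For the minimality claim, fix $l\ge 0$ and suppose for contradiction that $\psi(\IpR_{k,m,m'}(\mathcal F_l))$ is weakly $p$-periodic for some $p<|\lambda|$. Then $2p<2|\lambda|\le 2\finmax\le|\IpR_{k,m,m'}(\mathcal F_l)|$, so the word is long enough to apply Lemma \ref{finwordperiods} to the left period of length $p$ and to $\lambda$. This yields a common base word of length $\gcd(p,|\lambda|)\le p<|\lambda|$ that is repeated more than once to form $\lambda$, again contradicting Lemma \ref{finalnomorethanonce}. So $\lambda$ is the minimal left period of $\psi(\IpR_{k,m,m'}(\mathcal F_l))$ for every $l\ge 0$.

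The main obstacle is not really combinatorial but bookkeeping: one has to verify carefully that in the lemma hypotheses the pair $(m,m')$ really does satisfy the slightly awkward constraints of Corollary \ref{betweenkernelgrowsgood}, in particular that $m<\nker_k(\mathcal F)$ and $m'>1$ together with $\nker_k(\mathcal F)>1$ rule out exactly the edge cases $(0,1)$ and $(\nker_k(\mathcal F),\nker_k(\mathcal F)+1)$ where $\IpR_{k,m,m'}(\mathcal F_l)$ would be empty. Once this is in hand, the length bound $2\finmax$ is what couples the purely local periodicity lemma to the global properties of final periods, and the whole argument collapses into the two one-line appeals to Lemmas \ref{finwordperiods} and \ref{finalnomorethanonce}.
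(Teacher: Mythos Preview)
Your argument is correct and matches the paper's proof essentially line for line: reduce via the prefix relation (WLOG $m'\le m''$) to having both $\lambda$ and $\lambda'$ as left periods of $\psi(\IpR_{k,m,m'}(\mathcal F_l))$, invoke Corollary~\ref{betweenkernelgrowsgood} for the $2\finmax$ length bound, and then combine Lemma~\ref{finwordperiods} with Lemma~\ref{finalnomorethanonce} to force $\lambda=\lambda'$ and minimality. Your closing remark about verifying that the hypotheses on $m,m'$ rule out the degenerate empty cases of Corollary~\ref{betweenkernelgrowsgood} is exactly the bookkeeping the paper leaves implicit.
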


\begin{proof}
Both $\lambda$ and $\lambda'$ are final periods, so $|\lambda|\le\finmax$ and $|\lambda'|\le\finmax$.
Without loss of generality, $m'\le m''$, so $\IpR_{k,m,m'}(\mathcal F_l)$
is a prefix of $\IpR_{k,m,m''}(\mathcal F_l)$ for all $l\ge 0$.
Hence, for all $l\ge 0$, $\psi(\IpR_{k,m,m'}(\mathcal F_l))$ is both 
% $|\lambda|$-periodic word with 
% left period $\lambda$ 
a weakly left $\lambda$-periodic word 
and
% $|\lambda'|$-periodic word with 
% left period $\lambda'$. 
a weakly left $\lambda'$-periodic word. 
By Corollary \ref{betweenkernelgrowsgood}, 
$|\IpR_{k,m,m'}(\mathcal F_l)|\ge 2\finmax$.
Since $\lambda$ and $\lambda'$ are final periods, 
none of them can be written as a word repeated more than once by Lemma \ref{finalnomorethanonce}. 
Then Lemma \ref{finwordperiods}
implies that $\lambda=\lambda'$.

Moreover, if $\psi(\IpR_{k,m,m'}(\mathcal F_l))$ were (for some $l\ge 0$) a weakly left $\lambda''$-periodic 
word, where $|\lambda''|<|\lambda|$, then Lemma \ref{finwordperiods}
would again imply that $\lambda$ can be written as another word repeated several times, 
a contradiction.
Therefore, $\lambda$ is the minimal left period of
$\psi(\IpR_{k,m,m'}(\mathcal F_l))$ for all $l\ge 0$.
\end{proof}

%an evolutional period is unique if it exists???
\begin{corollary}\label{contperweakuniqueleft}
Let $\mathcal F$ be an evolution of stable $k$-multiblocks ($k\ge 1$). Suppose that $\nker_k(\mathcal F)>1$. Let 
$m,m'\in\ZZ$ be two indices such that $0\le m<\nker_k(\mathcal F)$, $m<m'$ and $1<m'\le\nker_k(\mathcal F)+1$.

If there exists a left weak evolutional period of $\mathcal F$ for the pair $(m,m')$, 
then it is unique and is the minimal left period of
$\psi(\IpR_{k,m,m'}(\mathcal F_l))$ for all $l\ge 0$.\qed
\end{corollary}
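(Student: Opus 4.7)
The plan is to obtain Corollary \ref{contperweakuniqueleft} as the diagonal specialization $m'' = m'$ of Lemma \ref{contperweakuniquewideleft}. The two statements in the corollary (uniqueness and minimality) will both fall out of this single instantiation, so there is essentially no new work to do beyond checking that the hypotheses of the lemma survive the specialization.

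First I would handle minimality. Given a left weak evolutional period $\lambda$ of $\mathcal F$ for the pair $(m,m')$, I apply Lemma \ref{contperweakuniquewideleft} with the triple of indices $(m, m', m'')$ set to $(m, m', m')$ and with $\lambda' = \lambda$. The conditions on $m''$ in the lemma, namely $m < m''$ and $1 < m'' \le \nker_k(\mathcal F) + 1$, are the same as the conditions on $m'$ and so hold automatically. The lemma's conclusion then includes precisely the statement that $\lambda$ is the minimal left period of $\psi(\IpR_{k,m,m'}(\mathcal F_l))$ for every $l \ge 0$.

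Second I would handle uniqueness: if $\lambda_1$ and $\lambda_2$ are two left weak evolutional periods of $\mathcal F$ for $(m,m')$, I again apply Lemma \ref{contperweakuniquewideleft} with $m'' = m'$, taking $\lambda = \lambda_1$ and $\lambda' = \lambda_2$, and read off $\lambda_1 = \lambda_2$ from the conclusion. (Alternatively, uniqueness is immediate from minimality, since a minimal left period of a fixed finite word is determined by that word.) I do not anticipate any obstacle; the only point worth a line of verification is that the specialization $m'' = m'$ indeed satisfies the index hypotheses of the lemma, and that the two inner pseudoregular parts coincide in this case, so the ``prefix of'' step inside the lemma's proof becomes trivial equality — the rest of that argument (invoking Lemmas \ref{finwordperiods} and \ref{finalnomorethanonce}) is unaffected.
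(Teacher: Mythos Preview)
Your proposal is correct and matches the paper's approach exactly: the corollary is marked with a bare \qed\ in the paper, indicating it is an immediate specialization of Lemma~\ref{contperweakuniquewideleft} with $m'' = m'$, just as you describe.
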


\begin{lemma}\label{contperweakuniquewideright}
Let $\mathcal F$ be an evolution of stable $k$-multiblocks ($k\ge 1$). Suppose that $\nker_k(\mathcal F)>1$. Let 
$m'',m',m\in\ZZ$ be three indices such that $0\le m''<\nker_k(\mathcal F)$, $0\le m'<\nker_k(\mathcal F)$, $m''<m$, $m'<m$, 
and $1<m\le\nker_k(\mathcal F)+1$.

If $\lambda$ is a right weak evolutional period of $\mathcal F$ for the pair $(m'',m)$, 
and $\lambda'$ is a right weak evolutional period of $\mathcal F$ for the pair $(m',m)$, 
then $\lambda=\lambda'$, and $\lambda$ is the minimal right period of 
$\psi(\IpR_{k,m,m'}(\mathcal F_l))$ for all $l\ge 0$.
\end{lemma}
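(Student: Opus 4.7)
The proof will be completely symmetric to the proof of Lemma \ref{contperweakuniquewideleft}, with right periods, suffixes, and the right-period counterpart of Lemma \ref{finwordperiods} (mentioned in the paper as provable in the same way) replacing their left-side analogues.

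First I would reduce to a single word carrying both periodicities. Without loss of generality assume $m''\le m'$; then $\IpR_{k,m',m}(\mathcal F_l)$ is a suffix of $\IpR_{k,m'',m}(\mathcal F_l)$ for every $l\ge 0$. A suffix of a weakly right $p$-periodic word is again weakly right $p$-periodic, directly from the definition: the trailing complete copies of the period survive, and only the initial incomplete occurrence gets replaced by a (possibly different) incomplete occurrence of the same period. Therefore $\psi(\IpR_{k,m',m}(\mathcal F_l))$ is simultaneously weakly right $\lambda$-periodic and weakly right $\lambda'$-periodic.

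Next I would invoke the length bound. Because $\nker_k(\mathcal F)>1$, Corollary \ref{betweenkernelgrowsgood} applied to the pair $(m',m)$ yields $|\IpR_{k,m',m}(\mathcal F_l)|\ge 2\finmax\ge 2\max(|\lambda|,|\lambda'|)$, using that $\lambda$ and $\lambda'$ are final periods and hence of length at most $\finmax$. The right-period version of Lemma \ref{finwordperiods} then furnishes an abstract word $\mu$ with $|\mu|=\gcd(|\lambda|,|\lambda'|)$ such that $\lambda$ is $\mu$ repeated $|\lambda|/|\mu|$ times and $\lambda'$ is $\mu$ repeated $|\lambda'|/|\mu|$ times. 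Since $\lambda$ and $\lambda'$ are final periods, Lemma \ref{finalnomorethanonce} forbids either from being written as a shorter word repeated more than once, so $\mu=\lambda=\lambda'$.

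For the minimality claim I would argue by contradiction: if $\psi(\IpR_{k,m',m}(\mathcal F_l))$ were weakly right $p$-periodic for some $p<|\lambda|$, then the right-period version of Lemma \ref{finwordperiods} applied to this period together with $\lambda$ would express $\lambda$ as a strictly shorter word repeated more than once, again contradicting Lemma \ref{finalnomorethanonce}. Hence $\lambda$ is the minimal right period of $\psi(\IpR_{k,m',m}(\mathcal F_l))$ for every $l\ge 0$. There is no substantive obstacle beyond what was already dispatched for the left case; the only place requiring care is the orientation bookkeeping—suffixes versus prefixes, right versus left periods, and the direction of the cyclic shifts relating them (cf.\ Remark \ref{contpermultcycle})—which is mechanical.
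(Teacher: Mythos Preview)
Your proposal is correct and follows exactly the same approach as the paper's own proof, which simply states that the argument is completely symmetric to that of Lemma \ref{contperweakuniquewideleft}. You have faithfully carried out that symmetrization: the WLOG reduction to a suffix, the application of Corollary \ref{betweenkernelgrowsgood} for the $2\finmax$ length bound, and the combination of Lemma \ref{finwordperiods} with Lemma \ref{finalnomorethanonce} are precisely the mirrored steps.
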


\begin{proof}
The proof is completely symmetric to the proof of Lemma \ref{contperweakuniquewideleft}.
\end{proof}

%an evolutional period is unique if it exists???
\begin{corollary}\label{contperweakuniqueright}
Let $\mathcal F$ be an evolution of stable $k$-multiblocks ($k\ge 1$). Suppose that $\nker_k(\mathcal F)>1$. Let 
$m',m\in\ZZ$ be two indices such that $0\le m'<\nker_k(\mathcal F)$, $m'<m$ and $1<m\le\nker_k(\mathcal F)+1$.

If there exists a right weak evolutional period of $\mathcal F$ for the pair $(m',m)$, 
then it is unique and is the minimal right period of 
$\psi(\IpR_{k,m,m'}(\mathcal F_l))$ for all $l\ge 0$.\qed
\end{corollary}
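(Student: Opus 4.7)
The plan is to derive this corollary directly from Lemma \ref{contperweakuniquewideright}, exactly mirroring the way Corollary \ref{contperweakuniqueleft} is deduced from Lemma \ref{contperweakuniquewideleft}. The whole point of stating the ``wide'' lemma with two index pairs sharing a common endpoint is precisely so that the uniqueness corollary falls out as a special case when the two pairs coincide.

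Concretely, I would argue as follows. Suppose $\lambda_1$ and $\lambda_2$ are both right weak evolutional periods of $\mathcal F$ for the pair $(m',m)$. I apply Lemma \ref{contperweakuniquewideright} with the choice $m''=m'$, taking the lemma's $\lambda$ to be $\lambda_1$ and the lemma's $\lambda'$ to be $\lambda_2$. The hypotheses of the lemma, namely $0\le m''<\nker_k(\mathcal F)$, $0\le m'<\nker_k(\mathcal F)$, $m''<m$, $m'<m$, and $1<m\le\nker_k(\mathcal F)+1$, all reduce to the hypotheses of the corollary on $m'$ and $m$. The lemma then yields $\lambda_1=\lambda_2$, establishing uniqueness, and simultaneously identifies this common value as the minimal right period of $\psi(\IpR_{k,m',m}(\mathcal F_l))$ for every $l\ge 0$.

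Since the argument is just an instantiation of an already-proved lemma, there is no real obstacle. The only thing to be careful about is the indexing convention: the ``right'' versions of these statements pair up index ranges sharing the right endpoint $m$ (one has left endpoint $m''$, the other $m'$), while the ``left'' versions share the left endpoint. This asymmetry is what makes the right-period version of Lemma \ref{finwordperiods} applicable after passing through $\psi$, and it is the reason the proof of Lemma \ref{contperweakuniquewideright} had to be done separately from Lemma \ref{contperweakuniquewideleft} rather than obtained from it by formal symmetry. Once Lemma \ref{contperweakuniquewideright} is in hand, the present corollary is a one-line specialization and can be written simply as \emph{``Apply Lemma \ref{contperweakuniquewideright} with $m''=m'$.''}
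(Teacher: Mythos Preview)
Your proposal is correct and matches the paper's approach: the paper gives no proof at all (just \qed), indicating the corollary is an immediate specialization of Lemma~\ref{contperweakuniquewideright} with $m''=m'$, exactly as you describe. One small inaccuracy in your commentary: you claim the proof of Lemma~\ref{contperweakuniquewideright} ``had to be done separately \ldots\ rather than obtained from it by formal symmetry,'' but in fact the paper's proof of that lemma is literally ``The proof is completely symmetric to the proof of Lemma~\ref{contperweakuniquewideleft}''; this doesn't affect the validity of your argument for the corollary.
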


Now we define left and right pseudoregular parts and continuous evolutional periods 
for evolutions of $k$-blocks. The definition is similar to the definition of inner 
pseudoregular parts and weak evolutional periods for evolutions of $k$-multiblocks, 
but is not entirely the same. Let $\mathcal E$ be an evolution of $k$-blocks, 
and let $m$ be an index ($1\le m\le\ncker_k(\mathcal E)$). 
For each 
$l\ge 0$ we define the left and the right pseudoregular parts of a stable $k$-block $\mathcal E_l$ 
for index $m$ 
as follows. Let $i$ and $j$ be indices such that $\mathcal E_l=\swa ij$. Suppose also that 
$\cKer_{k,m}(\mathcal E_l)=\swa st$, 
$\Fg(\LpreP_k(\swa ij))=\swa i{i'}$, 
and $\Fg(\RpreP_k(\swa ij))=\swa {j'}j$. Then \textit{the left (resp.\ right) pseudoregular part of 
$\mathcal E_l$ for index $m$} is $\swa {i'+1}{s-1}$ (resp.\ $\swa {t+1}{j'-1}$). In other words,
the left (resp.\ right) pseudoregular part of $\mathcal E_l$ for index $m$ is 
the suboccurrence of $\mathcal E_l$ between the forgetful occurrence of the 
left preperiod and the $m$th composite central kernel (resp.\ between the $m$th 
composite central kernel and the forgetful occurrence of the right preperiod).
Denote it by $\LpR_{k,m}(\mathcal E_l)$ (resp.\ by $\RpR_{k,m}(\mathcal E_l)$).

The following remark shows how the definitions of left and right pseudoregular parts of $k$-multiblocks
and inner pseudoregular parts of $k$-multiblocks are connected.

\begin{remark}
Let $\mathcal E$ be an evolution of $k$-blocks, $l_0\ge 3k$, and let $\mathcal F$ be the evolution of 
$k$-multiblocks defined by $\mathcal F_l=\mathcal E_{l+l_0}$. Let $m$ be an index 
($1\le m\le\ncker_k(\mathcal E)$). Let $m'$ ($1\le m'\le \nker_k(\mathcal F)$)
be the index such that $\cKer_{k,m}(\mathcal E_{l+l_0})=\Ker_{k,m'}(\mathcal F_l)$ 
as an occurrence in $\al$ for all $l\ge 0$. In other words, 
if Case I holds for $\mathcal E$ at the left, then $m'=m+1$, 
otherwise $m'=m$.

Then
\begin{enumerate}
\item
\begin{enumerate}
\item If Case I holds for $\mathcal E$ at the left, then $\LpR_{k,m}(\mathcal E_{l+l_0})=\IpR_{k,1,m'}(\mathcal F_l)$.
\item If Case II holds for $\mathcal E$ at the left, then $\LpR_{k,m}(\mathcal E_{l+l_0})=\IpR_{k,0,m'}(\mathcal F_l)$.
\end{enumerate}
\item
\begin{enumerate}
\item If Case I holds for $\mathcal E$ at the right, then $\LpR_{k,m}(\mathcal E_{l+l_0})=\IpR_{k,m',\nker_k(\mathcal F)}(\mathcal F_l)$.
\item If Case II holds for $\mathcal E$ at the right, then $\LpR_{k,m}(\mathcal E_{l+l_0})=\IpR_{k,m',\nker_k(\mathcal F)+1}(\mathcal F_l)$.
\end{enumerate}
\end{enumerate}
\end{remark}

% \begin{lemma}\label{pseudoreggrows}
% Let $\mathcal E$ be an evolution of $k$-blocks, and $m$ be an index ($1\le m\le \ncker_k(\mathcal E)$).
% 
% If $m>1$ (resp.\ $m<\ncker_k(\mathcal E)$) or Case I holds 
% for $\mathcal E$ at the left (resp.\ at the right), then 
% $|\LpR_{k,m}(\mathcal E_l)|\ge 2\finmax$
% (resp.\ $|\RpR_{k,m}(\mathcal E_l)|\ge 2\finmax$) for $l\ge 3k$, 
% $|\LpR_{k,m}(\mathcal E_l)|$ (resp.\ $|\RpR_{k,m}(\mathcal E_l)|$)
% strictly grows as $l$ grows, and there exists $k'$ ($1\le k'\le k$)
% such that 
% $|\LpR_{k,m}(\mathcal E_l)|$ (resp.\ $|\RpR_{k,m}(\mathcal E_l)|$)
% is $\Theta(l^{k'})$ for $l\to\infty$.
% 
% If $m=1$ (resp.\ $m=\ncker_k(\mathcal E)$)
% and Case II holds 
% for $\mathcal E$ at the left (resp.\ at the right),
% then $\LpR_{k,m}(\mathcal E_l)$ (resp $\RpR_{k,m}(\mathcal E_l)$)
% is an empty occurrence.
% \end{lemma}
% \begin{proof}
% For the first statement, consider the following evolution $\mathcal F$ of 
% stable $k$-multiblocks: $\mathcal F_l$ consists of 
% a single $k$-block, which is $\mathcal E_{l+3k}$.
% Then the first statement follows directly from Lemma \ref{betweenkernelgrows}.
% 
% The second statement follows directly from Lemmas \ref{borderkernelsleft}
% and \ref{borderkernelsright}.
% \end{proof}

A final period $\lambda$ is called a \textit{left} (resp.\ \textit{right}) \textit{continuous evolutional period}
of an evolution of $k$-blocks $\mathcal E$ for an index $m$ if the following two conditions hold:
\begin{enumerate}
\item\label{contperstd}
\begin{enumerate}
\item For each $l\ge 3k$ (i.~e.\ if $\mathcal E_l$ is a stable $k$-block), 
$\psi(\LpR_{k,m}(\mathcal E_l))$ (resp.\ $\psi(\RpR_{k,m}(\mathcal E_l))$)
is a weakly 
% $|\lambda|$-periodic 
% word with left (resp.\ right) period $\lambda$.
left (resp.\ right) $\lambda$-periodic word.
\item The residue of 
$|\LpR_{k,m}(\mathcal E_l)|$ (resp.\ of $|\RpR_{k,m}(\mathcal E_l)|$)
(for $l\ge 3k$) modulo $|\lambda|$, 
i.~e.\ the length of the incomplete occurrence in the previous condition, does not depend on $l$.
\end{enumerate}
\item\label{contperlrbs} If Case II holds for $\mathcal E$ at the left (resp.\ at the right)
and $m>1$ (resp.\ $m<\ncker_k(\mathcal E)$) (i.~e.\ $\cKer_{k,m}(\mathcal E)$ is not the 
leftmost (resp.\ rightmost) composite central kernel for $\mathcal E$), 
then
$\psi(\LBS_k(\mathcal E))$ (resp.\ $\psi(\RBS_k(\mathcal E))$)
(this is a sequence infinite to the 
left (resp.\ to the right)) is 
%$|\lambda|$-periodic 
periodic 
with 
%("right") (resp.\ "left") 
period $\lambda$, 
i.~e.\ $\psi(\LBS_k(\mathcal E))=\ldots\lambda\ldots\lambda\lambda$
(resp.\ $\psi(\RBS_k(\mathcal E))=\lambda\lambda\ldots\lambda\ldots$).
\end{enumerate}

The following remark shows connection between the definitions of a continuous evolutional
period of an evolution of $k$-blocks and a weak evolutional period of 
an evolution of stable $k$-multiblocks.

\begin{remark}\label{evperblockmulrel}
Let $\mathcal E$ be an evolution of $k$-blocks, let $\lambda$ be a final period, 
and let $\mathcal F$ be the evolution of 
$k$-multiblocks defined by $\mathcal F_l=\mathcal E_{l+3k}$. Let $m$ be an index 
($1\le m\le\ncker_k(\mathcal E)$). Again, let $m'$ ($1\le m'\le \nker_k(\mathcal F)$)
be the index such that $\cKer_{k,m}(\mathcal E_{l+3k})=\Ker_{k,m'}(\mathcal F_l)$ 
as an occurrence in $\al$ for all $l\ge 0$. In other words, 
if Case I holds for $\mathcal E$ at the left, then $m'=m+1$, 
otherwise $m'=m$.

Then
\begin{enumerate}
\item
\begin{enumerate}
\item If Case I holds for $\mathcal E$ at the left, then $\lambda$ is a left continuous evolutional period 
of $\mathcal E$ for index $m$ if and only if 
$\lambda$ is a left weak evolutional period of $\mathcal F$ for pair $(1,m')$.
\item If Case II holds for $\mathcal E$ at the left, then Condition \ref{contperstd} in the definition 
of a left continuous evolutional period 
is satisfied for $\lambda$ if and only if 
$\lambda$ is a left weak evolutional period of $\mathcal F$ for pair $(0,m')$.
\end{enumerate}
\item
\begin{enumerate}
\item If Case I holds for $\mathcal E$ at the right, then $\lambda$ is a right continuous evolutional period 
of $\mathcal E$ for index $m$ if and only if 
$\lambda$ is a right weak evolutional period of $\mathcal F$ for pair $(m',\nker_k(\mathcal F))$.
\item If Case II holds for $\mathcal E$ at the right, then Condition \ref{contperstd} in the definition 
of a right continuous evolutional period 
is satisfied for $\lambda$ if and only if 
$\lambda$ is a right weak evolutional period of $\mathcal F$ for pair $(m',\nker_k(\mathcal F)+1)$.
\end{enumerate}
\end{enumerate}
\end{remark}

%Again, as in Remark \label{contpermultcycle}, we can reformulate the notions of left an right continuous
%evolutional periods in terms of right and ;eft periods (respectively) of some words.
%
%\begin{remark}
%Suppose that $\lambda$ is a left weak evolutional period
%of an evolution $\mathcal E$ of $k$-blocks for an index $m$
%($1\le m \le \ncker_k(\mathcal E)$) and $r$ is the residue of 
%$|\LpR_{k,m}(\mathcal E_l)|$ modulo $|\lambda|$ (for any $l\ge 3k$).
%
%, then 
%$\lambda'=\lambda_{r\ldots |\lambda|-1}\lambda_{0\ldots r-1}$
%(the cyclic shift of $\lambda$ that begins with the last $|\lambda|-r$ letters of $\lambda$ and 
%ends with the first $r$ letters of $\lambda$)
%is a right weak evolutional period of $\mathcal F$ for the pair $(m,m')$.
%\end{remark}

Let us make several obvious remarks about this definition. First, if Case II holds at the left 
for $\mathcal E$, then the left 
component is empty, in particular, the left preperiod is empty. 
So, $\LpR_{k,m}(\mathcal E_l)$ is a prefix of $\mathcal E_l$, and 
by Corollary \ref{presenceoflbs}, a suffix of the sequence 
$\psi(\LBS_k(\mathcal E)\LpR_{k,m}(\mathcal E_l))$ 
mentioned in Condition \ref{contperlrbs}
is a subword of $\psi(\al)$ if $l$ is large enough.
Informally speaking, Condition \ref{contperlrbs} says that
$\psi(\LBS_k(\mathcal E))$ is periodic and that the periods of $\psi(\LpR_{k,m}(\mathcal E_l))$ and 
of $\psi(\LBS_k(\mathcal E))$ "agree well".
%So, the word $\psi(\swa i{s-1})$, 
%whose weak $|\lambda|$-periodicity is implied by Condition \ref{contperlbs}, 
%is the same as the word $\psi(\swa {i'+1}{s-1})$ mentioned in Coldition \ref{contperstdleft}, and 
%the new information contained in Condition \ref{contperlbs} is that 
%$\psi(\LBS_k(\mathcal E))$ is periodic and that the periods of $\psi(\swa i{s-1})$ and 
%of $\psi(\LBS_k(\mathcal E))$ "agree well", so that 
%$\psi(\LBS_k(\mathcal E)\swa i{s-1})$
%is also $|\lambda|$-periodic with period $\lambda$.

If Case II holds at the left 
for $\mathcal E$ (and the left 
component is empty), and we are trying to figure out whether $\lambda$ is a 
left continuous evolutional period of $\mathcal E$ for index 1 (for the first composite central kernel), then 
it follows from Lemma \ref{borderkernelsleft}
%in the notation used in the definition, 
%we have $s=i$. Since the left preperiod is also empty, we have $i'=i-1$, so 
that 
the word 
$\psi(\LpR_{k,m}(\mathcal E_l))$,
%$\psi(\swa {i'+1}{s-1})$, 
whose periodicity is required by Condition \ref{contperstd}, 
is actually empty in this case, and the condition is always satisfied. 
Condition \ref{contperlrbs} does not say anything about this case, it only applies if 
$m>1$.

Finally, we are ready to give the definition of a continuously periodic evolution.
An evolution $\mathcal E$ of $k$-blocks is called \textit{continuously periodic for an 
index $m$ ($1\le m\le\ncker_k(\mathcal E)$)} if there exist two final periods $\lambda$ and 
$\lambda'$ such that $\lambda$ is a left continuous evolutional period
of $\mathcal E$ for the index $m$ and 
$\lambda'$ is a right continuous evolutional period
of $\mathcal E$ for the index $m$.

An evolution $\mathcal E$ of $k$-blocks is called \textit{continuously periodic}
if there exists an index $m$ ($1\le m\le\ncker_k(\mathcal E)$)
such that $\mathcal E$ is continuously periodic for $m$.

\begin{remark}\label{onecontper}
By Lemma \ref{regpartabslargeone}, all evolutions of 1-blocks are continuously periodic for index 1
(recall that there is only one composite central kernel of a 1-block). 
\end{remark}

One more important 
case when an evolution of $k$-blocks is continuously
periodic for index 1 is the case when the number of 
the composite central kernels of an evolution is one, 
and Case II holds both at the left and at the right. Then all blocks 
in this evolution in fact consist of letters of order 1, but 
this does not necessarily mean that $k=1$, $k$ may be bigger than 1 
if the left and the right border of this evolution are letters of order $>2$.
Condition \ref{contperstd} in the definition of a left and a right continuous 
evolutional period
says in this case that some empty words have to be weakly periodic, 
and this is always true, and Condition
\ref{contperlrbs}
does not apply since there is only one composite central kernel, 
so such an evolution is always continuously periodic for index 1.

After we gave the definition of a continuously periodic evolution,
the statements of 
% Propositions~\ref{largecompl} and~\ref{smallcompl}
Propositions~\ref{largecompl}--\ref{infordercomplstop}
are completely formulated. Before we will be able to prove them, 
we need to give some more definitions.

A sequence $\mathcal H=\mathcal H_0,\mathcal H_1,\mathcal H_2,\ldots$ of 
occurrences in $\al$ is called a $k$-series of obstacles if there exists 
%a final period $\lambda$ 
a number $p\le \finmax$
such that:
\begin{enumerate}
\item Each word $\psi(\mathcal H_l)$ is a weakly 
%$|\lambda|$-periodic 
$p$-periodic
word.
%with left period $\lambda$, 
%and the length of the incomplete
%occurrence of the period (i.~e. the remainder of $|\mathcal H_l|$ modulo $|\lambda|$) 
%does not depend on $l$.
\item The length of $\mathcal H_l$ strictly grows as $l$ grows, 
$|\mathcal H_l|\ge 2\finmax$ for all $l\ge 0$, and there exists $k'$ ($1\le k'\le k$)
such that $|\mathcal H_l|$ is $\Theta(l^{k'})$ for $l\to\infty$.
\item If $\mathcal H_l=\swa ij$,
%and $r$ is the residue of $|\mathcal H_l|$ modulo 
%$|\lambda|$, 
then 
%$\psi(\al_{j+1})\ne \lambda_r$. If $i>0$, then 
%$\psi(\al_{i-1})\ne \lambda_{|\lambda|-1}$.
%In other words, the images under $\psi$ of the subwords 
%$\swa i{j+1}$ and (if $i>0$) $\swa{i-1}j$ 
$\psi(\swa i{j+1})$ and (if $i>0$) $\psi(\swa{i-1}j)$ 
are not weakly 
%$|\lambda|$-periodic.
$p$-periodic words.
\end{enumerate}

We will also need some notion of weak and continuous periodicity for $k$-multiblocks.
%though it will not be so complicated as continuous periodicity for 
%k$-blocks.
Let $\mathcal F$ be an evolution of stable nonempty $k$-multiblocks ($k\ge 1$).
%Recall that by Remark \ref{sucompkerbijectionnot}, the number 
%of composite kernels of $\mathcal F_l$ does not depend on $l$,
%so, we can denote it by $\nker_k(\mathcal F)$. It also follows from 
%Lemma \ref{borderkernelsleft} that $\nker_k(\mathcal F)\ge 1$.
The multiblocks $\mathcal F_l$ are nonempty in the multiblock sense, 
in other words, it is not allowed that $\mathcal F_l$ contains no $k$-blocks and 
no letters of order $k$, but it is allowed that $\mathcal F_l$ consists 
of a single empty $k$-block if this $k$-block is stable.

We call $\mathcal F$ \textit{weakly periodic for a sequence of 
indices $m_0=0,m_1,\ldots, m_{n-1},m_n=\nker_k(\mathcal F)+1$} ($1\le m_1<m_2<\ldots<m_{n-1}\le \nker_k(\mathcal F)$) 
if there exist final periods $\lambda^{(0)},\ldots,\lambda^{(n-1)}$ 
such that $\lambda^{(i)}$ is a left weak evolutional period of $\mathcal F$ 
for pair $(m_i,m_{i+1})$ for all $i$ ($0\le i<n$). By Remark \ref{contpermultcycle}, 
the left weak evolutional periods in this definition can be replaced by right
weak evolutional periods, moreover, this can be done independently for each index $i$.
We call $\mathcal F$ \textit{weakly periodic}
if there exists a sequence of indices 
$m_0=0,m_1,\ldots, m_{n-1},m_n=\nker_k(\mathcal F)+1$
($1\le m_1<m_2<\ldots<m_{n+1}\le \nker_k(\mathcal F)$) 
such that $\mathcal F$ is weakly periodic for 
$m_0,m_1,\ldots, m_{n-1},m_n$.

We call $\mathcal F$ 
\textit{continuously periodic for an index $m$} ($1\le m\le \nker_k(\mathcal F)$)
if it is weakly periodic for indices $m_0=0,m_1=m,m_2=\nker_k(\mathcal F)+1$, i.~e.\ 
if there exist two final periods $\lambda$ and 
$\lambda'$ such that $\lambda$ is a left weak evolutional period of 
$\mathcal F$ for pair $(0,m)$, and 
$\lambda'$ is a right weak evolutional period of 
$\mathcal F$ for pair $(m,\nker_k(\mathcal F)+1)$.
We call $\mathcal F$ 
\textit{continuously periodic} if there exists an index $m$ 
($1\le m\le \nker_k(\mathcal F)$)
such that 
$\mathcal F$ 
is continuously periodic for $m$.

Note that in this definition, unlike in the definition for $k$-blocks, we don't have any left and 
right preperiods or a replacement for them, and the words whose periodicity we require 
(the $(0,m)$th and the $(m,\nker_k(\mathcal F)+1)$th inner pseudoregular parts)
are a prefix and a suffix (respectively) of (the forgetful occurrence of) a $k$-multiblock.
Also, we don't introduce any left and right bounding sequence, and speak only about periodicity
of subwords of (the forgetful occurrence of) a $k$-multiblock itself.

%Let $\mathcal F$ be an evolution of stable nonempty $k$-multiblocks ($k\ge 1$).
%Let $\lambda$ be a final period. $\lambda$ is called a \textit{total evolutional period}
%of $\mathcal F$
%if $\psi(\Fg(\mathcal F_l))$ is a complete $|\lambda|$-periodic word with period $\lambda$ 
%for all $l\ge 0$.

%Let $\mathcal F$ be an evolution of stable nonempty $k$-multiblocks ($k\ge 1$).
Finally, 
a final period $\lambda$ is called 
a \textit{total left (resp.\ right) evolutional period}
of $\mathcal F$
if it is a weak left (resp.\ right) evolutional period of $\mathcal F$ 
for the pair $(0, \nker_k(\mathcal F)+1)$.
$\mathcal F$ is called \textit{totally periodic} 
if it is weakly periodic for the indices $m_0=0,m_1=\nker_k(\mathcal F)+1$.
%if there exists a final period $\lambda$
%such that $\lambda$ is a total evolutional period
%of $\mathcal F$.

\begin{remark}\label{nogrowthcontper}
Let $\mathcal F$ be an evolution of stable nonempty $k$-blocks ($k\ge 1$)
such that $\nker_k(\mathcal F)=1$. Then $\mathcal F$ is weakly periodic for sequence 
$0,1,2$ and is continuously periodic for index 1. The corresponding final periods can be chosen 
arbitrarily since the inner pseudoregular parts in question in this case are empty occurrences.
\end{remark}

%The following lemma generalizes Lemma \ref{pseudoreggrows} for $k$-multiblocks instead of $k$-blocks.
%
%\begin{lemma}\label{pseudoreggrowsmul}
%Let $\mathcal E$ be an evolution of stable nonempty $k$-multiblocks, and 
%$m$ be an index ($1\le m\le \ncker_k(\mathcal E)$).
%(We do not require that $\mathcal F$ is continuously periodic for $m$.)
%
%If $m>1$ (resp.\ $m<\nker_k(\mathcal E)$), then 
%$|\LpR_{k,m}(\mathcal F_l)|\ge 2\finmax$
%(resp.\ $|\RpR_{k,m}(\mathcal F_l)|\ge 2\finmax$), 
%$|\LpR_{k,m}(\mathcal F_l)|$ (resp.\ $|\RpR_{k,m}(\mathcal F_l)|$)
%strictly grows as $l$ grows, and there exists $k'$ ($1\le k'\le k$)
%such that 
%$|\LpR_{k,m}(\mathcal F_l)|$ (resp.\ $|\RpR_{k,m}(\mathcal F_l)|$)
%is $\Theta(l^{k'})$ for $l\to\infty$.
%
%If $m=1$ (resp.\ $m=\nker_k(\mathcal F)$),
%then $\LpR_{k,m}(\mathcal F_l)$ (resp $\RpR_{k,m}(\mathcal F_l)$)
%is an empty occurrence.
%\end{lemma}
%
%\begin{proof}
%The first statement follows directly from Lemma \ref{betweenkernelgrows}, 
%from Lemma \ref{borderkernelsleft} (resp.\ \ref{borderkernelsright})
%and from the fact that $|\Ker_{k,1}(\mathcal F_l)|$ (resp.\ $|\Ker_{k,\nker_k(\mathcal F)}(\mathcal F_l)|$)
%does not depend on $l$.
%
%The second stament is a direct corollary of Lemmas \ref{borderkernelsleft} and \ref{borderkernelsright}.
%\end{proof}

\begin{lemma}\label{weakcontperinsleft}
Let $\mathcal F$ be an evolution of stable nonempty $k$-multiblocks ($k\ge 1$).
Suppose that 
$\mathcal F$ is weakly periodic for a sequence of 
indices $m_0=0,m_1,\ldots, m_{n-1},m_n=\nker_k(\mathcal F)+1$.
Suppose that $m_1>1$. Then 
$\mathcal F$ is also weakly periodic for sequence
$m_0=0,1,m_1,\ldots, m_{n-1},m_n=\nker_k(\mathcal F)+1$.
\end{lemma}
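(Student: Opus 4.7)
The plan is to exhibit explicit left weak evolutional periods for each of the pairs $(0,1), (1,m_1), (m_1,m_2),\ldots,(m_{n-1},m_n)$ in the new index sequence. For the pairs $(m_i,m_{i+1})$ with $i\ge 1$, I simply reuse the periods $\lambda^{(1)},\ldots,\lambda^{(n-1)}$ that witness the weak periodicity for the original sequence, so nothing needs to be checked. The remaining two pairs $(0,1)$ and $(1,m_1)$ are the substance of the argument.

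For the pair $(0,1)$, recall from the definition of an inner pseudoregular part that $\IpR_{k,0,1}(\mathcal F_l)$ is by definition the empty occurrence at the start of $\Fg(\mathcal F_l)$. Hence $\psi(\IpR_{k,0,1}(\mathcal F_l))$ is the empty word, which is trivially weakly left $\lambda$-periodic for any final period $\lambda$, and the residue modulo $|\lambda|$ is $0$ for all $l$. Since final periods exist (this was established after Lemma \ref{finalnomorethanonce} using the assumption that periodic letters of order $2$ are in $\E$), we may take any of them as the left weak evolutional period for $(0,1)$.

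The key step is the pair $(1,m_1)$. Since $\IpR_{k,0,1}(\mathcal F_l)$ is empty, we have the decomposition
\[
\IpR_{k,0,m_1}(\mathcal F_l) \;=\; \Ker\nolimits_{k,1}(\mathcal F_l)\;\IpR_{k,1,m_1}(\mathcal F_l)
\]
as occurrences in $\al$. By Lemma \ref{compkernelfixed}, the length $r := |\Ker_{k,1}(\mathcal F)|$ does not depend on $l$. Since $\lambda^{(0)}$ is a left weak evolutional period of $\mathcal F$ for $(0,m_1)$, the word $\psi(\IpR_{k,0,m_1}(\mathcal F_l))$ is weakly left $\lambda^{(0)}$-periodic. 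The suffix obtained after deleting the first $r$ letters is therefore weakly left $\Cyc_{r}(\lambda^{(0)})$-periodic, as follows from the general fact about periodicity (and is used implicitly in the proofs in Section \ref{prelim}, cf.\ the remark about cyclic shifts): shifting the starting point of a weakly left $\delta$-periodic word by $r$ positions yields a word weakly left $\Cyc_{r}(\delta)$-periodic. Since $\lambda^{(0)}$ is a final period, so is its cyclic shift $\Cyc_r(\lambda^{(0)})$.

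It remains to verify the residue condition for the candidate period $\Cyc_r(\lambda^{(0)})$, which has length $|\lambda^{(0)}|$. The residue of $|\IpR_{k,0,m_1}(\mathcal F_l)|$ modulo $|\lambda^{(0)}|$ is independent of $l$ by hypothesis, and subtracting the $l$-independent constant $r$ preserves this independence; hence the residue of $|\IpR_{k,1,m_1}(\mathcal F_l)| = |\IpR_{k,0,m_1}(\mathcal F_l)| - r$ modulo $|\lambda^{(0)}|$ is also independent of $l$. Thus $\Cyc_r(\lambda^{(0)})$ is a left weak evolutional period of $\mathcal F$ for $(1,m_1)$, completing the construction and the proof. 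I do not foresee a serious obstacle here; the only point that needs care is tracking the cyclic shift correctly and using Lemma \ref{compkernelfixed} to ensure $r$ is $l$-independent.
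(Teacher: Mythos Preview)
Your proof is correct and follows essentially the same approach as the paper's. The only difference is that the paper works with a \emph{right} weak evolutional period $\lambda$ for the pair $(0,m_1)$: since a suffix of a weakly right $\lambda$-periodic word is again weakly right $\lambda$-periodic with the same period, no cyclic shift is needed and $\lambda$ itself serves as a right weak evolutional period for $(1,m_1)$. Your choice of left periods forces you to track the cyclic shift $\Cyc_r(\lambda^{(0)})$, which you do correctly (and cyclic shifts of final periods are final periods by definition), but the right-period version is marginally cleaner.
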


\begin{proof}
Let 
$\lambda$ be a right weak evolutional period of $\mathcal F$ 
for pair $(0,m_1)$. By definition this means that 
$\psi(\IpR_{k,0,m_1}(\mathcal F_l))$
% is a $|\lambda|$-periodic word with right period 
% $\lambda$ 
is a weakly right $\lambda$-periodic word
for all $l\ge 0$, and the residue of 
$|\IpR_{k,0,m_1}(\mathcal F_l)|$ modulo 
$|\lambda|$ does not depend on $l$. But 
$\IpR_{k,0,m_1}(\mathcal F_l)=
\Ker_{k,1}(\mathcal F_l)\IpR_{k,1,m_1}(\mathcal F_l)$, 
so $\psi(\IpR_{k,1,m_1}(\mathcal F_l))$
is also 
% a $|\lambda|$-periodic word with right period 
% $\lambda$, 
a weakly right $\lambda$-periodic word, 
and the length of 
$\Ker_{k,1}(\mathcal F_l)$ does not depend on $l$, so the residue of 
$|\IpR_{k,1,m_1}(\mathcal F_l)|=
|\IpR_{k,0,m_1}(\mathcal F_l)|-|\Ker_{k,1}(\mathcal F_l)|$
modulo $|\lambda|$
does not depend on $l$ either. So, 
$\lambda$ is also 
a right weak evolutional period of $\mathcal F$ 
for pair $(1,m_1)$.
Also, any final period is a (right) weak evolutional period of $\mathcal F$ for pair $(0,1)$
since $\IpR_{k,0,1}(\mathcal F_l)$ is always an empty occurrence.
Hence, we can insert an index 1 in the sequence for which $\mathcal F$ is weakly periodic.
\end{proof}

\begin{lemma}\label{weakcontperinsright}
Let $\mathcal F$ be an evolution of stable nonempty $k$-multiblocks ($k\ge 1$).
Suppose that 
$\mathcal F$ is weakly periodic for a sequence of 
indices $m_0=0,m_1,\ldots, m_{n-1},m_n=\nker_k(\mathcal F)+1$.
Suppose that $m_{n-1}<\nker_k(\mathcal F)$. Then 
$\mathcal F$ is also weakly periodic for sequence
$m_0=0,m_1,\ldots, m_{n-1},\nker_k(\mathcal F),m_n=\nker_k(\mathcal F)+1$.
\end{lemma}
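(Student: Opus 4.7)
The proof will be entirely symmetric to that of Lemma \ref{weakcontperinsleft}, just with left/right swapped. The plan is to take a left weak evolutional period witnessing the given weak periodicity at the last pair, observe that truncating a prefix-suffix equality argument on the right preserves left-periodicity, and then note that the trivially empty last piece admits any final period as a weak evolutional period.

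More concretely, by hypothesis $\mathcal F$ is weakly periodic for $0=m_0, m_1,\ldots,m_{n-1},m_n=\nker_k(\mathcal F)+1$, so there exist final periods $\lambda^{(0)},\ldots,\lambda^{(n-1)}$ such that $\lambda^{(i)}$ is a left weak evolutional period of $\mathcal F$ for the pair $(m_i,m_{i+1})$. By Remark \ref{contpermultcycle}, we can equivalently work with left periods throughout, and we only need to modify the choice of period for the last pair $(m_{n-1},\nker_k(\mathcal F)+1)$. Set $\lambda=\lambda^{(n-1)}$, so for all $l\ge 0$ the word $\psi(\IpR_{k,m_{n-1},\nker_k(\mathcal F)+1}(\mathcal F_l))$ is weakly left $\lambda$-periodic and the residue of $|\IpR_{k,m_{n-1},\nker_k(\mathcal F)+1}(\mathcal F_l)|$ modulo $|\lambda|$ does not depend on $l$.

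Now decompose, using the definition of the inner pseudoregular parts,
$$\IpR_{k,m_{n-1},\nker_k(\mathcal F)+1}(\mathcal F_l)=\IpR_{k,m_{n-1},\nker_k(\mathcal F)}(\mathcal F_l)\,\Ker_{k,\nker_k(\mathcal F)}(\mathcal F_l).$$
Since $\psi(\IpR_{k,m_{n-1},\nker_k(\mathcal F)}(\mathcal F_l))$ is a prefix of a weakly left $\lambda$-periodic word, it is itself weakly left $\lambda$-periodic; and because $|\Ker_{k,\nker_k(\mathcal F)}(\mathcal F_l)|$ is independent of $l$ (by Lemma \ref{compkernelfixed}), the residue of $|\IpR_{k,m_{n-1},\nker_k(\mathcal F)}(\mathcal F_l)|$ modulo $|\lambda|$ also does not depend on $l$. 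Thus $\lambda$ is a left weak evolutional period of $\mathcal F$ for the pair $(m_{n-1},\nker_k(\mathcal F))$.

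It remains to handle the pair $(\nker_k(\mathcal F),\nker_k(\mathcal F)+1)$. By definition $\IpR_{k,\nker_k(\mathcal F),\nker_k(\mathcal F)+1}(\mathcal F_l)$ is the empty occurrence at the end of $\Fg(\mathcal F_l)$, so its $\psi$-image is the empty word, which is weakly left $\mu$-periodic for any final period $\mu$ with residue $0$ independent of $l$. Since final periods exist (for example any of $\lambda^{(0)},\ldots,\lambda^{(n-1)}$), we can pick any such $\mu$ as a left weak evolutional period for $(\nker_k(\mathcal F),\nker_k(\mathcal F)+1)$. Combining this with the $\lambda^{(0)},\ldots,\lambda^{(n-2)}$ from the hypothesis and the $\lambda$ just produced for $(m_{n-1},\nker_k(\mathcal F))$ gives the required list of final periods witnessing weak periodicity of $\mathcal F$ for the inserted sequence $0,m_1,\ldots,m_{n-1},\nker_k(\mathcal F),\nker_k(\mathcal F)+1$. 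No real obstacle arises; the only point requiring care is to invoke left (rather than right) periods so that truncating on the right preserves periodicity, which is the mirror of the previous lemma.
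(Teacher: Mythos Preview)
Your proof is correct and is precisely the mirror argument the paper intends: the paper's own proof says only ``The proof is completely similar to the proof of the previous lemma,'' and what you wrote is exactly that symmetric version (using left weak evolutional periods so that truncating the suffix $\Ker_{k,\nker_k(\mathcal F)}(\mathcal F_l)$ preserves left $\lambda$-periodicity, then handling the empty $\IpR_{k,\nker_k(\mathcal F),\nker_k(\mathcal F)+1}$ trivially).
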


\begin{proof}
The proof is completely similar to the proof of the previous lemma.
\end{proof}

Our next goal is to prove for every $k\in\NN$ that if all evolutions of $k$-blocks present in $\al$
are continuously periodic, then either there exists a $k$-series of obstacles in $\al$, or 
all evolutions of $(k+1)$-blocks present in $\al$ are continuously periodic. In order to prove this, 
we prove several lemmas.

%\begin{lemma}
%Let $k\in\NN$. Suppose that all evolutions of $k'$-blocks in $\al$ for all $k'\le k$ 
%are continuously periodic. Let $\mathcal F$ be an evolution of stable $k$-multiblocks. 
%Let $m$ be an index ($0\le m \le\nker_k(\mathcal F)$). Then there exists a 
%final period $\lambda$ such that $\lambda$ is a weak left evolutional period 
%of $\mathcal F$ for pair $(m,m+1)$
%\end{lemma}
%
%\begin{proof}
%If $m=0$ or $m=\nker_k(\mathcal F)$, then the claim is obvious since $\IpR_{k,m,m+1}(\mathcal F_l)$
%is always an empty occurrence in this case.
%
%Suppose that $1\le m<\nker_k(\mathcal F)$. Then by Lemma \ref{betweenkernelsreg}, 
%there exists an evolution of $k'$-blocks $\mathcal E$ ($k'\le k$)
%\end{proof}

Let $\mathcal F$ and $\mathcal F'$ be two consecutive evolutions of stable nonempty $k$-multiblocks ($k\ge 1$).
Suppose that $\mathcal F$ is weakly periodic for a sequence of indices 
$m_0=0,m_1,\ldots,m_{n-1}=\nker_k(\mathcal F),m_n=\nker_k(\mathcal F)+1$ and 
$\mathcal F'$ is weakly periodic for a sequence of indices 
$m'_0=0,m'_1=1,\ldots,m'_{n'-1},m'_{n'}=\nker_{k,m}(\mathcal F')+1$.
Denote the concatenation of $\mathcal F$ and $\mathcal F'$ by $\mathcal F''$.

Consider the following sequence: $m''_0=0,m''_1=m_1,\ldots, m''_{n-1}=m_{n-1}=\nker_k(\mathcal F)=\nker_k(\mathcal F)-1+m'_1,
m''_n=\nker_k(\mathcal F)-1+m'_2,\ldots,m''_{n-2+n'-1}=\nker_k(\mathcal F)-1+m'_{n'-1},m''_{n-2+n'}=\nker_k(\mathcal F)-1+m'_{n'}=
\nker_k(\mathcal F)-1+\nker_k(\mathcal F')+1=\nker_k(\mathcal F'')+1$ (the last equality is Lemma \ref{concatcompker}).
In other words, we did the following. We removed the last entry $\nker_k(\mathcal F)+1$ from the sequence $m$ 
and the first entry 0 from the sequence $m'$. Then we added $\nker_k(\mathcal F)-1$ to each remaining entry 
of the second sequence. The last remaining entry 
in the first sequence now coincides with the first remaining entry in the second sequence and 
equals $\nker_k(\mathcal F)$. We remove one of these two coinciding entries 
and take the concatenation of the resulting two sequences.

We are going to prove that $\mathcal F''$ is weakly periodic for the 
sequence $m''_0,\ldots,m''_{n-2+n'}$.

\begin{lemma}\label{concatcontper}
Let $\mathcal F$ and $\mathcal F'$ be two consecutive evolutions of stable nonempty $k$-multiblocks ($k\ge 1$)
such that $\mathcal F$ is weakly periodic for a sequence of indices 
$m_0=0,m_1,\ldots,m_{n-1}=\nker_k(\mathcal F),m_n=\nker_k(\mathcal F)+1$ and 
$\mathcal F'$ is weakly periodic for a sequence of indices 
$m'_0=0,m'_1=1,\ldots,m'_{n'-1},m'_{n'}=\nker_{k,m}(\mathcal F')+1$.

Then the concatenation $\mathcal F''$ of $\mathcal F$ and $\mathcal F'$ is weakly periodic
for the sequence 
$m''_0=0,m''_1=m_1,\ldots, m''_{n-1}=m_{n-1},
m''_n=\nker_k(\mathcal F)-1+m'_2,\ldots,m''_{n-2+n'-1}=\nker_k(\mathcal F)-1+m'_{n'-1},m''_{n-2+n'}=\nker_k(\mathcal F)-1+m'_{n'}=
\nker_k(\mathcal F)-1+\nker_k(\mathcal F')+1=\nker_k(\mathcal F'')+1$.
\end{lemma}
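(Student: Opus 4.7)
The plan is to verify the definition of weak periodicity for $\mathcal F''$ at the sequence $m''$ directly: for each consecutive pair $(m''_i, m''_{i+1})$ exhibit a final period $\lambda^{(i)}$ that is a left weak evolutional period of $\mathcal F''$ for that pair. The whole argument will rest on rewriting each inner pseudoregular part of $\mathcal F''$ in terms of one of $\mathcal F$ or $\mathcal F'$ via Lemma \ref{concatcompker}(3), and then noting that both conditions in the definition of a left weak evolutional period depend only on the occurrence in $\al$ and its length modulo $|\lambda|$, so they transfer verbatim whenever the inner pseudoregular parts agree as occurrences.

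For the indices $0 \le i \le n-2$ one has $m''_i = m_i$ and $m''_{i+1} = m_{i+1}$ with $m''_{i+1} \le m_{n-1} = \nker_k(\mathcal F)$, so the first sub-case of Lemma \ref{concatcompker}(3) gives $\IpR_{k, m''_i, m''_{i+1}}(\mathcal F''_l) = \IpR_{k, m_i, m_{i+1}}(\mathcal F_l)$ for all $l \ge 0$. The final period witnessing weak periodicity of $\mathcal F$ for $(m_i, m_{i+1})$ is then automatically a left weak evolutional period of $\mathcal F''$ for $(m''_i, m''_{i+1})$.

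For the indices $n-1 \le i \le n-2+n'-1$ both $m''_i$ and $m''_{i+1}$ are at least $\nker_k(\mathcal F)$, and a direct index count gives $m''_i - \nker_k(\mathcal F) + 1 = m'_{i-n+2}$ and $m''_{i+1} - \nker_k(\mathcal F) + 1 = m'_{i-n+3}$. The second sub-case of Lemma \ref{concatcompker}(3) then yields $\IpR_{k, m''_i, m''_{i+1}}(\mathcal F''_l) = \IpR_{k, m'_{i-n+2}, m'_{i-n+3}}(\mathcal F'_l)$ for all $l \ge 0$, so the final period witnessing weak periodicity of $\mathcal F'$ for $(m'_{i-n+2}, m'_{i-n+3})$ transfers unchanged.

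The key design feature making this work is that no pair $(m''_i, m''_{i+1})$ ever straddles the junction index $\nker_k(\mathcal F)$, so the third sub-case of Lemma \ref{concatcompker}(3), which would express an inner pseudoregular part of $\mathcal F''$ as a concatenation of one from $\mathcal F$ and one from $\mathcal F'$, is never invoked. This is ensured precisely by the hypotheses that $m_{n-1} = \nker_k(\mathcal F)$ and $m'_1 = 1$: these two entries collide at $\nker_k(\mathcal F)$ under the shift, and removing the duplicate is exactly what the construction of $m''$ does. The only thing that could go wrong is an indexing slip, but once the shift $m'_j \mapsto \nker_k(\mathcal F) - 1 + m'_j$ is matched against the case distinction of Lemma \ref{concatcompker}(3) there is nothing left to do.
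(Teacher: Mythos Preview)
Your proof is correct and follows essentially the same approach as the paper: you use Lemma~\ref{concatcompker}(3) to identify each $\IpR_{k,m''_i,m''_{i+1}}(\mathcal F''_l)$ with the corresponding inner pseudoregular part of either $\mathcal F$ or $\mathcal F'$, and then transfer the witnessing final periods unchanged. The paper's proof does exactly this, with the same case split at $i=n-1$ and the same index bookkeeping.
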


\begin{proof}
Let $\lambda^{(0)},\ldots,\lambda^{(n-1)}$ be final periods
such that $\lambda^{(i)}$ is a left weak evolutional period of $\mathcal F$ 
for pair $(m_i,m_{i+1})$, and let 
$\lambda'^{(0)},\ldots,\lambda'^{(n'-1)}$ 
be final periods 
such that $\lambda'^{(i)}$ is a left weak evolutional period of $\mathcal F'$ 
for pair $(m'_i,m'_{i+1})$.

Consider the following sequence of final periods: 
$\lambda''^{(0)}=\lambda^{(0)},\ldots,\lambda''^{(n-2)}=\lambda^{(n-2)},\lambda''^{(n-2+1)}=\lambda'^{(1)},\lambda''^{(n-2+n'-1)}=\lambda'^{(n'-1)}$.
In other words, we the sequence of final periods
$\lambda^{(0)},\ldots,\lambda^{(n-1)}$, removed the last entry, 
took the sequence of final periods
$\lambda'^{(0)},\ldots,\lambda'^{(n'-1)}$,
removed the first entry, and took the concatenation of the resulting two sequences.

Set $n''=n-2+n'$.
By Lemma \ref{concatcompker} we see that if $0\le i<n-1$, then $\IpR_{k,m''_i,m''_{i+1}}(\mathcal F''_l)=
\IpR_{k,m_i,m_{i+1}}(\mathcal F_l)$ as an occurrence in $\al$ for all $l\ge 0$. And if $n-1\le i<n''$, 
then $\IpR_{k,m''_i,m''_{i+1}}(\mathcal F''_l)=\IpR_{k,m'_{i-(n-2)},m'_{i+1-(n-2)}}(\mathcal F'_l)$
as an occurrence in $\al$ for all $l\ge 0$.
So, if $0\le i<n-1$, then 
$\lambda''^{(i)}=\lambda^{(i)}$ 
is a left weak evolutional period of $\mathcal F$ for pair 
$(m_i,m_{i+1})$, hence it is also a 
a left weak evolutional period of $\mathcal F''$ for pair 
$(m''_i,m''_{i+1})$. And if $n-1\le i<n''$, 
then $\lambda''^{(i)}=\lambda'^{(i-(n-2))}$ 
is a left weak evolutional period of $\mathcal F'$ for pair 
$(m'_{i-(n-2)},m'_{i+1-(n-2)})$, hence 
it is also a 
a left weak evolutional period of $\mathcal F''$ for pair 
$(m''_i,m''_{i+1})$.
Therefore, $\mathcal F''$ is weakly periodic 
for the sequence 
$m''_0,\ldots,m''_{n''}$.
\end{proof}

\begin{corollary}\label{concatcontpergen}
Let $\mathcal F$ and $\mathcal F'$ be two consecutive evolutions of stable nonempty $k$-multiblocks ($k\ge 1$).
If $\mathcal F$ and $\mathcal F'$ are weakly periodic, then 
the concatenation of $\mathcal F$ and $\mathcal F'$ is also weakly periodic.
\end{corollary}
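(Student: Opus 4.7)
The plan is to reduce the general case to the special case already handled by Lemma \ref{concatcontper}, which requires the sequence for $\mathcal F$ to end with $\nker_k(\mathcal F),\nker_k(\mathcal F)+1$ and the sequence for $\mathcal F'$ to begin with $0,1$. We accomplish this reduction using Lemmas \ref{weakcontperinsright} and \ref{weakcontperinsleft}, which allow us to insert the missing indices $\nker_k(\mathcal F)$ and $1$ into the corresponding sequences while preserving weak periodicity.

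More precisely, suppose $\mathcal F$ is weakly periodic for a sequence $m_0=0,m_1,\ldots,m_{n-1},m_n=\nker_k(\mathcal F)+1$ and $\mathcal F'$ is weakly periodic for a sequence $m'_0=0,m'_1,\ldots,m'_{n'-1},m'_{n'}=\nker_k(\mathcal F')+1$. First, if $m_{n-1}<\nker_k(\mathcal F)$, I would apply Lemma \ref{weakcontperinsright} to obtain weak periodicity of $\mathcal F$ for the sequence $0,m_1,\ldots,m_{n-1},\nker_k(\mathcal F),\nker_k(\mathcal F)+1$; otherwise $m_{n-1}=\nker_k(\mathcal F)$ already (note that since $\mathcal F$ consists of nonempty stable $k$-multiblocks, by Lemmas \ref{borderkernelsleft} and \ref{borderkernelsright} we have $\nker_k(\mathcal F)\ge 1$, so this situation is well-defined). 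Symmetrically, if $m'_1>1$, I would apply Lemma \ref{weakcontperinsleft} to obtain weak periodicity of $\mathcal F'$ for the sequence $0,1,m'_1,\ldots,m'_{n'-1},\nker_k(\mathcal F')+1$; otherwise $m'_1=1$ already.

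After these at most two insertions, the sequences for $\mathcal F$ and $\mathcal F'$ satisfy the hypotheses of Lemma \ref{concatcontper} exactly: the sequence for $\mathcal F$ ends with $\nker_k(\mathcal F),\nker_k(\mathcal F)+1$, and the sequence for $\mathcal F'$ begins with $0,1$. Applying Lemma \ref{concatcontper} yields weak periodicity of the concatenation $\mathcal F''$ of $\mathcal F$ and $\mathcal F'$ for the merged sequence constructed there, which proves that $\mathcal F''$ is weakly periodic.

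There is no real obstacle here; the work has essentially been done in the preparatory lemmas. The only point requiring a moment of care is the edge case where one of the sequences is already in the ``canonical'' form (so no insertion is needed), and the trivial observation that $\nker_k(\mathcal F)\ge 1$ for nonempty stable multiblocks, which guarantees that inserting $\nker_k(\mathcal F)$ into a sequence ending with $0,\nker_k(\mathcal F)+1$ indeed produces a valid strictly increasing sequence.
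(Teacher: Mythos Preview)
Your proposal is correct and follows essentially the same approach as the paper: use Lemmas \ref{weakcontperinsright} and \ref{weakcontperinsleft} to adjust the index sequences so that $m_{n-1}=\nker_k(\mathcal F)$ and $m'_1=1$, then invoke Lemma \ref{concatcontper}. Your version is marginally more explicit about the edge cases and the reason $\nker_k(\mathcal F)\ge 1$, but otherwise it is the paper's proof.
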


\begin{proof}
Let $m_0=0,m_1,\ldots,m_{n-1},m_n=\nker_k(\mathcal F)+1$ be a sequence of indices such that 
$\mathcal F$ is weakly periodic for $m_0,\ldots, m_n$.
Let $m'_0=0,m'_1,\ldots,m'_{n'-1},m'_{n'}=\nker_{k,m}(\mathcal F')+1$ be a sequence of indices such that 
$\mathcal F'$ is weakly periodic for $m'_0,\ldots, m'_{n'}$.
By Lemma \ref{weakcontperinsright},
without loss of generality (possibly increasing $n$ by 1), 
we may suppose that $m_{n-1}=\nker_k(\mathcal F)$.
Similarly, by Lemma \ref{weakcontperinsleft}, possibly increasing $n'$ by 1, without loss of generality 
we may suppose that $m'_1=1$. The claim now follows from Lemma \ref{concatcontper}.
\end{proof}

\begin{lemma}\label{multweakper}
Let $k\in\NN$.
Suppose that all evolutions of $k$-blocks in $\al$ are continuously periodic.
Let $\mathcal F$ be an evolution of nonempty stable $k$-multiblocks.
Then $\mathcal F$ is weakly periodic.
\end{lemma}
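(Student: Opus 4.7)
The plan is to decompose $\mathcal F$ into ``atomic'' sub-evolutions, verify weak periodicity for each one, then glue them back together using Corollary \ref{concatcontpergen}. First, I would split each stable $k$-multiblock $\mathcal F_l$ into a concatenation of atomic $k$-multiblocks, each consisting either of a single stable $k$-block or of a single periodic letter of order $k+1$. Because $\mathcal F$ consists of stable multiblocks, the descendant operation establishes a bijection between the $k$-blocks (respectively, letters of order $k+1$) of $\mathcal F_l$ and those of $\mathcal F_{l+1}$, preserving their left-to-right order and the adjacent delimiter signs. This produces consecutive sub-evolutions $\mathcal F^{(1)}, \ldots, \mathcal F^{(N)}$ whose concatenation is $\mathcal F$.

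Second, I would show each $\mathcal F^{(i)}$ is weakly periodic. If $\mathcal F^{(i)}_l$ is a single periodic letter of order $k+1$ for each $l$, then $\nker_k(\mathcal F^{(i)}) = 1$ and Remark \ref{nogrowthcontper} gives weak periodicity immediately. If $\mathcal F^{(i)}_l$ is a single stable $k$-block belonging to an evolution $\mathcal E^{(i)}$ of $k$-blocks, then by hypothesis $\mathcal E^{(i)}$ is continuously periodic for some index $m$. Applying Remark \ref{evperblockmulrel} (adapted to the arbitrary shift $l_0 \ge 3k$ at which $\mathcal F^{(i)}$ begins), the left and right continuous evolutional periods of $\mathcal E^{(i)}$ become left and right weak evolutional periods of $\mathcal F^{(i)}$ for pairs $(a, m')$ and $(m', b)$, where $a \in \{0, 1\}$ and $b \in \{\nker_k(\mathcal F^{(i)}), \nker_k(\mathcal F^{(i)})+1\}$ depend on which of Case I and Case II holds at each end of $\mathcal E^{(i)}$. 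Whenever $a = 1$ or $b = \nker_k(\mathcal F^{(i)})$, the boundary indices $0$ and $\nker_k(\mathcal F^{(i)})+1$ still need to be included in the witnessing sequence; I would insert them using Lemmas \ref{weakcontperinsleft} and \ref{weakcontperinsright}, exploiting the fact that $\IpR_{k,0,1}$ and $\IpR_{k,\nker_k(\mathcal F^{(i)}),\nker_k(\mathcal F^{(i)})+1}$ are empty occurrences and so admit any final period as a weak evolutional period.

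Third, I would apply Corollary \ref{concatcontpergen} inductively to the consecutive weakly periodic sub-evolutions $\mathcal F^{(1)}, \ldots, \mathcal F^{(N)}$ to conclude that $\mathcal F$ itself is weakly periodic. I expect the main obstacle to be the bookkeeping in the second step: enumerating the four Case I/Case II combinations at the left and right of $\mathcal E^{(i)}$, matching the resulting boundary types $(a, m', b)$ with the correct composite-kernel indexing of $\mathcal F^{(i)}$, and handling the degenerate subcases in which $\nker_k(\mathcal F^{(i)}) = 1$ (including empty $k$-blocks) so that Remark \ref{nogrowthcontper} takes over. Once that case analysis is in place, the first and third steps are essentially formal and the lemma follows.
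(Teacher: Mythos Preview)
Your proposal is correct and follows essentially the same route as the paper: decompose $\mathcal F$ into single-letter and single-block sub-evolutions, handle each via Remark~\ref{nogrowthcontper} or Remark~\ref{evperblockmulrel}, and concatenate. The paper compresses your entire second step into one sentence (``it follows from Remark~\ref{evperblockmulrel} that there exists an index $m'$ such that $\mathcal F$ is weakly periodic for the sequence $0, m', \nker_k(\mathcal F)+1$''), glossing over exactly the Case~I/Case~II boundary bookkeeping that you flag as the main obstacle; your plan to invoke Lemmas~\ref{weakcontperinsleft} and~\ref{weakcontperinsright} to patch in the indices $0$ and $\nker_k(\mathcal F)+1$ is the honest way to justify that sentence.
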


\begin{proof}
If each $k$-multiblock in $\mathcal F$ consists of a single periodic letter of order $k+1$, 
then the claim is clear by Remark \ref{nogrowthcontper}.

If each $k$-multiblock in $\mathcal F$ consists of a single $k$-block, then there exists 
an evolution $\mathcal E$ of $k$-blocks and a number $l_0\ge 3k$ such that 
$\mathcal F_l$ consists of $\mathcal E_{l+l_0}$ for all $l\ge 0$. 
By assumption, $\mathcal E$ is continuously periodic, and there exists an 
index $m$ ($1\le m\le \ncker_k(\mathcal E)$)
such that $\mathcal E$ is continuously periodic for the index $m$.
Now it follows from Remark \ref{evperblockmulrel}
that there exists an index $m'$ (it can equal $m$ or $m+1$)
such that $\mathcal F$ is weakly periodic for the sequence $0, m', \nker_k(\mathcal F)+1$.

Finally, the claim for general $\mathcal F$ follows from Lemma \ref{concatcontper}.
\end{proof}

\begin{lemma}\label{contperexpandright}
Let $\mathcal F$ be an evolution of stable $k$-multiblocks.
Suppose that there exists a final period $\lambda$ such that 
$\lambda$ is a left weak evolutional period of $\mathcal F$ 
for a pair $(m,m')$ ($0\le m<m'\le \nker_k(\mathcal F)$). 
Suppose also that there exists a final period $\mu$ such that 
$\mu$ is a left weak evolutional period of $\mathcal F$ 
for a pair $(m',m'')$ ($m'<m''\le \nker_k(\mathcal F)+1$). 
Then exactly one of the following two statements is true:
\begin{enumerate}
\item $\lambda$ is a left weak evolutional period of $\mathcal F$ 
for pair $(m,m'')$.
\item There exists a number $s\in\NN$ such that the following is true for all $l\ge 0$.
Suppose that $\Fg(\mathcal F_l)=\swa ij$ and $\IpR_{k,m,m'}(\mathcal F_l)=\swa {i'}{j'}$. Then:
\begin{enumerate}
\item $\psi(\swa{i'}{j'+s-1})$ is a 
% weakly $|\lambda|$-periodic word with left period $\lambda$, 
weakly left $\lambda$-periodic word, 
and 
$\psi(\swa{i'}{j'+s})$ is not a 
% weakly $|\lambda|$-periodic word with left period $\lambda$.
weakly left $\lambda$-periodic word.
\item $s\le |\Ker_{k,m'}(\mathcal F)|+2\finmax$.
\item $j'+s\le j$, i.~e.\ $\al_{j'+s}$ is a letter in $\Fg(\mathcal F_l)$.
\end{enumerate}
\end{enumerate}
\end{lemma}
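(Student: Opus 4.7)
The plan is to decompose $\IpR_{k,m,m''}(\mathcal F_l) = A_l K B_l$, where $A_l = \IpR_{k,m,m'}(\mathcal F_l)$, $K = \Ker_{k,m'}(\mathcal F)$ has $l$-independent length $q$ by Lemma \ref{compkernelfixed}, and $B_l = \IpR_{k,m',m''}(\mathcal F_l)$; and to track how far the weak left $\lambda$-periodicity of $\psi(A_l)$ extends across $\psi(K)$ and into $\psi(B_l)$. Let $r = |A_l| \bmod |\lambda|$, which is $l$-independent by hypothesis. The letter expected by the $\lambda$-pattern at offset $|A_l|+j$ of the concatenation is $\lambda_{(r+j) \bmod |\lambda|}$.

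Since $K$, $\lambda$, and $r$ are all $l$-independent, so is the largest $s_K \in \{0,\ldots,q\}$ satisfying $\psi(K)_j = \lambda_{(r+j) \bmod |\lambda|}$ for all $0 \le j < s_K$. If $s_K < q$, I take $s = s_K + 1$: then for every $l$, $\psi(\swa{i'}{j'+s-1})$ is weakly left $\lambda$-periodic by construction, $\psi(\swa{i'}{j'+s})$ adds the mismatched letter $\psi(K)_{s_K}$ so it is not, and the bounds $s \le q \le q + 2\finmax$ and $j'+s \le j'+q \le j$ hold immediately, giving Case 2.

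Now suppose $s_K = q$, so the $\lambda$-pattern survives all of $\psi(K)$. Using $\psi(B_l)_j = \mu_{j \bmod |\mu|}$, let $i_*$ denote the smallest non-negative integer $j$ with $\mu_{j \bmod |\mu|} \ne \lambda_{(r+q+j) \bmod |\lambda|}$, and set $i_* = \infty$ if no such $j$ exists. The main technical step is the dichotomy that either $i_* < 2\finmax$ or $i_* = \infty$. Indeed, if the equality held for all $0 \le j < 2\finmax$, the resulting length-$2\finmax$ word $w$ would be simultaneously weakly left $\mu$-periodic and weakly left $\Cyc_{r+q}(\lambda)$-periodic, with $|w| \ge 2\max(|\mu|,|\lambda|)$ since $|\mu|,|\lambda| \le \finmax$. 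Lemma \ref{finwordperiods} then furnishes a common subperiod $\nu$ such that both $\mu$ and $\Cyc_{r+q}(\lambda)$ are powers of $\nu$. Since $\mu$ is a final period and $\Cyc_{r+q}(\lambda)$, being a cyclic shift of the final period $\lambda$, is itself a final period, Lemma \ref{finalnomorethanonce} forces $\nu = \mu = \Cyc_{r+q}(\lambda)$; in particular $|\lambda| = |\mu|$. Unwinding then gives $\mu_{j \bmod |\mu|} = \lambda_{(r+q+j) \bmod |\lambda|}$ for every $j \ge 0$, contradicting $i_* < \infty$.

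If $i_* = \infty$, then $\psi(A_l K B_l)$ is entirely weakly left $\lambda$-periodic, and using $|\lambda| = |\mu|$, the residue $|A_l|+q+|B_l| \equiv r + q + |B_l| \pmod{|\lambda|}$ is $l$-independent because $|B_l| \bmod |\mu|$ is, so Case 1 holds. If $i_* < 2\finmax$, I put $s = q + 1 + i_*$: then (a) holds by construction, (b) is the bound $s \le q + 2\finmax$, and (c) follows from $|B_l| \ge 2\finmax$ via Corollary \ref{betweenkernelgrowsgood} or Lemma \ref{betweenkernelgrows}; the sole degenerate configuration is $m' = \nker_k(\mathcal F)$, $m'' = m' + 1$, where $B_l$ is always empty, but then $\IpR_{k,m,m''}(\mathcal F_l) = A_l K$ matches the $\lambda$-pattern throughout and has constant length residue, so Case 1 again holds independently of $\mu$. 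The two cases are mutually exclusive because Case 2's mismatch position $j'+s$ lies inside $\IpR_{k,m,m''}(\mathcal F_l)$ by virtue of the very same bounds, contradicting the global $\lambda$-periodicity asserted in Case 1. The main obstacle is the Fine--Wilf-type reduction in the third paragraph, where the joint use of $|\mu|,|\lambda| \le \finmax$ and the primitivity of final periods (Lemma \ref{finalnomorethanonce}) is essential.
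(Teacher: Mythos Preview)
Your proof is correct and follows essentially the same approach as the paper: decompose $\IpR_{k,m,m''}(\mathcal F_l)$ as $A_l\,K\,B_l$, track how far the left $\lambda$-periodicity of $\psi(A_l)$ extends through $\psi(K)$ and into $\psi(B_l)$, and use the Fine--Wilf reduction (Lemma~\ref{finwordperiods}) together with primitivity of final periods (Lemma~\ref{finalnomorethanonce}) to force the dichotomy $i_*<2\finmax$ or $i_*=\infty$. The paper does the same thing, only it phrases the pattern-tracking via explicit cyclic shifts $\lambda'=\Cyc_r(\lambda)$ and $\lambda''=\Cyc_{|\Ker_{k,m'}(\mathcal F)|}(\lambda')$ rather than your index arithmetic, and it disposes of the degenerate case $m'=\nker_k(\mathcal F)$ immediately after verifying that $\psi(K)$ matches, rather than at the end. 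One small point in your favour: you explicitly argue mutual exclusivity of the two alternatives (using that the mismatch position $j'+s$ lies inside $\IpR_{k,m,m''}(\mathcal F_l)$), which the paper's proof leaves implicit despite the statement saying ``exactly one''.
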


\begin{proof}
Denote the remainder of $|\IpR_{k,m,m'}(\mathcal F_l)|$ modulo $|\lambda|$ by $r$ 
(by the definition of a weak left evolutional period, $r$ does not depend on $l$).
% Set $\lambda'=\lambda_{r\ldots |\lambda|-1}\lambda_{0\ldots r-1}$
% (this the cyclic shift of $\lambda$ that begins with the last $|\lambda|-r$ letters of $\lambda$ and 
% ends with the first $r$ letters of $\lambda$)
Set $\lambda'=\Cyc_r(\lambda)$.
By Remark \ref{contpermultcycle}, this is a right weak evolutional 
period of $\mathcal F$ for the pair $(m,m')$.

We have two possibilities for $\Ker_{k,m'}(\mathcal F)$: either 
$\psi(\Ker_{k,m'}(\mathcal F))$ is a 
% a weakly $|\lambda'|$-periodic word
% with left period $\lambda'$, 
weakly left $\lambda'$-periodic,
or $\psi(\Ker_{k,m'}(\mathcal F))$ is not 
% a weakly $|\lambda'|$-periodic word
% with left period $\lambda'$.
weakly left $\lambda'$-periodic.

If $\psi(\Ker_{k,m'}(\mathcal F))$ is not 
% a weakly $|\lambda'|$-periodic word
% with left period $\lambda'$, 
weakly left $\lambda'$-periodic, 
denote by 
$\delta$ the 
% $|\lambda'|$-periodic word of length $|\Ker_{k,m'}(\mathcal F)|$
% with left period $|\lambda'|$, 
weakly left $\lambda'$-periodic word of length $|\Ker_{k,m'}(\mathcal F)|$,
and denote by 
$t$ the minimal 
("the leftmost") index such that 
$\delta_t\ne \psi(\Ker_{k,m'}(\mathcal F))_t$. Then 
$\psi(\Ker_{k,m'}(\mathcal F)_{0\ldots t-1})$ is 
% a weakly $|\lambda'|$-periodic word
% with left period $\lambda'$, 
a weakly left $\lambda'$-periodic word,
and 
$\psi(\Ker_{k,m'}(\mathcal F)_{0\ldots t})$ is not a 
% weakly $|\lambda'|$-periodic word
% with left period $\lambda'$. 
weakly left $\lambda'$-periodic word.
Hence, 
$\psi(\IpR_{k,m,m'}(\mathcal F_l)\Ker_{k,m'}(\mathcal F)_{0\ldots t-1})$ is a 
% weakly $|\lambda'|$-periodic word
% with left period $\lambda'$
weakly left $\lambda$-periodic word
for all $l\ge 0$, and 
$\psi(\IpR_{k,m,m'}(\mathcal F_l)\Ker_{k,m'}(\mathcal F)_{0\ldots t})$ is not a 
% weakly $|\lambda'|$-periodic word
% with left period $\lambda'$ 
weakly left $\lambda$-periodic word
for all $l\ge 0$.
Set $s=t+1$. 
If $\IpR_{k,m,m'}(\mathcal F_l)=\swa {i'}{j'}$, then 
$\IpR_{k,m,m'}(\mathcal F_l)\Ker_{k,m'}(\mathcal F)_{0\ldots t-1}=\swa{i'}{j'+s-1}$
and 
$\IpR_{k,m,m'}(\mathcal F_l)\Ker_{k,m'}(\mathcal F)_{0\ldots t}=\swa{i'}{j'+s}$
as an occurrence in $\al$. The largest possible value of $t$ is 
$|\Ker_{k,m'}(\mathcal F)|-1$, so $s\le |\Ker_{k,m'}(\mathcal F)|\le |\Ker_{k,m'}(\mathcal F)|+2\finmax$.
$\al_{j'+s}$ is a letter in $\Ker_{k,m'}(\mathcal F_l)$, so $j'+s\le j$.

Suppose now that $\psi(\Ker_{k,m'}(\mathcal F))$ is 
% a weakly $|\lambda'|$-periodic word
% with left period $\lambda'$. 
weakly left $\lambda'$-periodic.
Then for all $l\ge 0$, 
$\psi(\IpR_{k,m,m'}(\mathcal F_l)\Ker_{k,m'}(\mathcal F))$ 
is 
% a weakly $|\lambda|$-periodic word with left period $\lambda$.
a weakly left $\lambda$-periodic word.
If $m'=\nker_k(\mathcal F)$, then $m''=\nker_k(\mathcal F)+1$, and we are done since 
in this case $\IpR_{k,m,m'}(\mathcal F_l)\Ker_{k,m'}(\mathcal F)=\IpR_{k,m,m''}(\mathcal F_l)$
for all $l\ge 0$, and $|\Ker_{k,m'}(\mathcal F)|$ and the remainder of 
$|\Ker_{k,m'}(\mathcal F)|$ modulo $|\lambda|$ do not depend on $l$.

Otherwise, 
% denote by $r'$ the residue of $|\Ker_{k,m'}(\mathcal F)|$
% modulo $|\lambda'|$ and denote 
% $\lambda''=\lambda'_{r'\ldots |\lambda'|-1}\lambda'_{0\ldots r'-1}$
% (again, this the cyclic shift of $\lambda'$ that begins with the last $|\lambda'|-r'$ letters of $\lambda'$ and 
% ends with the first $r'$ letters of $\lambda'$).
denote 
$\lambda''=\Cyc_{|\Ker_{k,m'}(\mathcal F)|}(\lambda')$.
Then $\psi(\Ker_{k,m'}(\mathcal F))$ is a 
% weakly $|\lambda''|$-periodic word with right period $\lambda''$ 
weakly right $\lambda''$-periodic word
and 
$\psi(\IpR_{k,m,m'}(\mathcal F_l)\Ker_{k,m'}(\mathcal F))$ 
also is a 
% weakly $|\lambda''|$-periodic word with right period 
% $\lambda''$. 
weakly right $\lambda''$-periodic word.
% Moreover, recall that 
% % % $\lambda$ is the cyclic shift of $\lambda''$ such that 
% % $r$ is the residue of $|\IpR_{k,m,m'}(\mathcal F_l)|$
% % modulo $|\lambda|=|\lambda'|$ for all $l\ge 0$, and that 
% % $\lambda=\Cyc_{-r}(\lambda')$, so
% $\psi(\IpR_{k,m,m'}(\mathcal F_l)\Ker_{k,m'}(\mathcal F))$
% is also a 
% % weakly $|\lambda|$-periodic word with left period $\lambda$.
% weakly left $\lambda$-periodic word.

Denote by $\delta'$ 
% the $|\lambda''|$-periodic word of length $2\finmax$ with 
% left period $\lambda''$, 
the weakly left $\lambda''$-periodic word of length $2\finmax$, 
and denote by $\gamma$ 
% the $|\mu|$-periodic word 
% of length $2\finmax$ with left period $\mu$
the weakly left $\mu$-periodic word 
of length $2\finmax$
(recall that $\mu$ is a left weak evolutional period of $\mathcal F$ 
for the pair $(m',m'')$). Now we are considering the 
case when $m'<\nker_k(\mathcal F)$, and we also have $m''>m'>m$, so
$m''>1$, and 
% by Lemma \ref{betweenkernelgrows}, 
% $|\IpR_{k,m',m'+1}(\mathcal F_l)|\ge 2\finmax$ for all $l\ge 0$, so
by Corollary \ref{betweenkernelgrowsgood},
$|\IpR_{k,m',m''}(\mathcal F_l)|\ge 2\finmax$ 
% as well 
for all $l\ge 0$.
Since $\mu$ is a left weak evolutional period of $\mathcal F$ 
for the pair $(m',m'')$, $\gamma$ is a prefix of $\psi(\IpR_{k,m',m''}(\mathcal F_l))$ for all 
$l\ge 0$. Again, we have two possibilities: $\gamma=\delta'$ or $\gamma\ne\delta'$.

If $\gamma\ne\delta'$, denote by $t$ the smallest index such that $\gamma_t\ne \delta'_t$.
Then 
$\gamma_{0\ldots t-1}$ is 
% a weakly $|\lambda''|$-periodic word with left period $\lambda''$,
weakly left $\lambda''$-periodic,
and $\gamma_{0\ldots t}$ is not 
% a weakly $|\lambda''|$-periodic word with left period $\lambda''$.
weakly left $\lambda''$-periodic.
Fix a number $l\ge 0$. Let $i$ and $j$ be the indices such that $\Fg(\mathcal F_l)=\swa ij$, 
and let $i'$ and $j'$ be the indices 
such that $\IpR_{k,m,m'}(\mathcal F_l)=\swa{i'}{j'}$.
Recall that $\gamma$ is a prefix of $\psi(\IpR_{k,m',m''}(\mathcal F_l))$, 
that $\psi(\IpR_{k,m,m'}(\mathcal F_l)\Ker_{k,m'}(\mathcal F))$ 
% is a weakly $|\lambda''|$-periodic word with right period 
% $\lambda''$ for all $l\ge 0$, 
is a weakly right $\lambda''$-periodic word 
and is a 
% weakly $|\lambda|$-periodic word with left period $\lambda$.
weakly left $\lambda$-periodic word for all $l\ge 0$.
Hence, $\psi(\swa{i'}{j'+|\Ker_{k,m'}(\mathcal F)|+t})=\psi(\IpR_{k,m,m'}(\mathcal F_l)\Ker_{k,m'}(\mathcal F))\gamma_{0\ldots t-1}$
is 
% a weakly $|\lambda|$-periodic word with left period $\lambda$, 
a weakly left $\lambda$-periodic word, 
and 
$\psi(\swa{i'}{j'+|\Ker_{k,m'}(\mathcal F)|+t+1})=\psi(\IpR_{k,m,m'}(\mathcal F_l)\Ker_{k,m'}(\mathcal F))\gamma_{0\ldots t}$
is 
% a weakly $|\lambda|$-periodic word with left period $\lambda$, 
not a weakly left $\lambda$-periodic word.
So, we can set $s=|\Ker_{k,m'}(\mathcal F)|+t+1$ and see that the claim is true in this case
since $t$ does not depend on $l$, $s=|\Ker_{k,m'}(\mathcal F)|+t+1\le |\Ker_{k,m'}(\mathcal F)|+|\gamma|=|\Ker_{k,m'}(\mathcal F)|+2\finmax$,
and $\gamma$ and hence $\gamma_{0\ldots t}$ are prefixes of 
$\psi(\IpR_{k,m',m''}(\mathcal F_l))$, so $j'+s\le j$.

Finally, suppose that $\gamma=\delta'$.
Since $\lambda''$ and $\mu$ are final periods, 
none of them can be written as a word repeated more than once by Lemma \ref{finalnomorethanonce}. 
Then Lemma \ref{finwordperiods}
implies that $\lambda''=\mu$.
Hence, $\psi(\IpR_{k,m',m''}(\mathcal F_l))$
% is weakly $|\lambda''|$-periodic with right period $\lambda''$ 
is weakly left $\lambda''$-periodic
for all $l\ge 0$
and the remainder of $|\IpR_{k,m',m''}(\mathcal F_l)|$ modulo $|\lambda''|$ does not depend on 
$l$. 
%Denote this remainder by $r'$.
Again, recall that 
$\psi(\IpR_{k,m,m'}(\mathcal F_l)\Ker_{k,m'}(\mathcal F))$ 
% is a weakly $|\lambda''|$-periodic word with right period 
% $\lambda''$ 
is a weakly right $\lambda''$-periodic word
and 
is a 
% weakly $|\lambda|$-periodic word with left period $\lambda$.
weakly left $\lambda$-periodic word for all $l\ge 0$.
Therefore, 
$\psi(\IpR_{k,m,m'}(\mathcal F_l)\Ker_{k,m'}(\mathcal F)\IpR_{k,m',m''}(\mathcal F_l))=\psi(\IpR_{k,m,m''}(\mathcal F_l))$
is also a 
% weakly $|\lambda|$-periodic word with left period $\lambda$.
weakly left $\lambda$-periodic word.
The remainder of 
$\IpR_{k,m,m''}(\mathcal F_l)=|\IpR_{k,m,m'}(\mathcal F_l)|+|\Ker_{k,m'}(\mathcal F)|+|\IpR_{k,m',m''}(\mathcal F_l)|$
modulo $|\lambda|$ does not depend on $l$ since the remainder of each summand modulo
$|\lambda|=|\lambda''|$ does not depend on $l$.
So, in this case $\lambda$ is a weak left evolutional period of $\mathcal F$ for the pair $(m,m'')$
\end{proof}

\begin{corollary}\label{contperexpandstartright}
Let $\mathcal F$ be an evolution of stable $k$-multiblocks.
Suppose that there exist a final period $\lambda$ such that 
$\lambda$ is a right weak evolutional period of $\mathcal F$ 
for a pair $(m,m')$ ($0\le m<m'\le \nker_k(\mathcal F)$). 
and $\lambda$ is a right weak evolutional period of $\mathcal F$ 
for a pair $(m_0,m')$ ($0\le m_0<m'\le \nker_k(\mathcal F)$). 

Suppose also that there exists a final period $\mu$ such that 
$\mu$ is a left weak evolutional period of $\mathcal F$ 
for a pair $(m',m'')$ ($m'<m''\le \nker_k(\mathcal F)+1$). 

Then there exists a left weak evolutional period of $\mathcal F$ 
for pair $(m,m'')$ if and only if 
there exists a left weak evolutional period of $\mathcal F$ 
for pair $(m_0,m'')$.
\end{corollary}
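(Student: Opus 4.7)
The plan is to reduce this corollary to two parallel applications of Lemma~\ref{contperexpandright}, exploiting that both pairs $(m,m')$ and $(m_0,m')$ end at the same index $m'$ with the same right period $\lambda$, so the obstruction to extending across $\Ker_{k,m'}(\mathcal F)$ and into $\IpR_{k,m',m''}$ is the same in both cases.

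First I would convert the given hypotheses into left weak evolutional periods via Remark~\ref{contpermultcycle}. Let $r_1$ and $r_2$ denote the ($l$-independent) residues of $|\IpR_{k,m,m'}(\mathcal F_l)|$ and $|\IpR_{k,m_0,m'}(\mathcal F_l)|$ modulo $|\lambda|$; then $\lambda_1=\Cyc_{-r_1}(\lambda)$ and $\lambda_2=\Cyc_{-r_2}(\lambda)$ are left weak evolutional periods of $\mathcal F$ for $(m,m')$ and $(m_0,m')$ respectively. Both of these abstract words are well-defined (independent of $l$) precisely because the residues $r_1,r_2$ are $l$-independent. By construction, when one follows the $\lambda'=\Cyc_r(\lambda_i)$ step inside the proof of Lemma~\ref{contperexpandright}, both applications recover the same right period $\lambda$ at the boundary $m'$.

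Next I would apply Lemma~\ref{contperexpandright} twice, with the triple $(\lambda_1,\mu)$ for the extension $(m,m')\to (m,m'')$ and the triple $(\lambda_2,\mu)$ for $(m_0,m')\to (m_0,m'')$. Inspecting that lemma's proof, the dichotomy between its two cases is controlled entirely by (a) whether $\psi(\Ker_{k,m'}(\mathcal F))$ is a weakly left $\lambda$-periodic word, and (b) if so and $m'<\nker_k(\mathcal F)$, whether $\mu$ coincides with the specific cyclic shift $\Cyc_{|\Ker_{k,m'}(\mathcal F)|}(\lambda)$. Both conditions depend only on $\lambda$, $\Ker_{k,m'}(\mathcal F)$ and $\mu$, which are identical across the two applications. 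Therefore ``Case~1'' (the extension succeeds) holds for both applications or for neither: $\lambda_1$ is a left weak evolutional period of $\mathcal F$ for $(m,m'')$ if and only if $\lambda_2$ is a left weak evolutional period of $\mathcal F$ for $(m_0,m'')$.

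Finally I would upgrade to the ``some period exists'' formulation using Lemma~\ref{contperweakuniquewideleft}. If $\nu$ is any left weak evolutional period of $\mathcal F$ for $(m,m'')$, then that lemma applied to $\lambda_1$ (for $(m,m')$) and $\nu$ (for $(m,m'')$) forces $\nu=\lambda_1$, so $\lambda_1$ itself is a left weak evolutional period of $\mathcal F$ for $(m,m'')$; by the previous paragraph, $\lambda_2$ is then a left weak evolutional period of $\mathcal F$ for $(m_0,m'')$. The reverse implication is symmetric. The uniqueness lemma requires $\nker_k(\mathcal F)>1$, but in the remaining case $\nker_k(\mathcal F)=1$ the constraints on $m,m_0,m',m''$ force $m=m_0=0$, $m'=1$, $m''=2$, so the two pairs $(m,m'')$ and $(m_0,m'')$ literally coincide and the statement is trivial. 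The main step to check carefully is the claim in the second paragraph that the obstruction data is truly shared: one must trace through the proof of Lemma~\ref{contperexpandright} to verify that every quantity mentioned in its case analysis depends only on the right endpoint $m'$ and on the recovered right period $\lambda$, not on the left endpoint of the pair; this is exactly why it was important to do the cyclic-shift conversion with the $l$-independent residues in the first step.
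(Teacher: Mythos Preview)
Your argument is correct and follows essentially the same strategy as the paper: convert the shared right period $\lambda$ at $m'$ into left periods via Remark~\ref{contpermultcycle}, then observe that the obstruction in Lemma~\ref{contperexpandright} depends only on data at $m'$ (namely $\lambda$, $\Ker_{k,m'}(\mathcal F)$, and $\mu$), not on the left endpoint $m$ or $m_0$, and finish via uniqueness. The paper does the last step slightly more concretely: instead of arguing abstractly that the case split in the lemma's proof depends only on shared data, it rewrites the failure condition ``$\psi(\swa{i'}{j'+s})$ is not weakly left $\lambda'$-periodic'' as ``$\psi(\swa{j'+1}{j'+s})$ is not weakly left $\lambda$-periodic,'' which visibly does not involve $i'$ or $m$.

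One small gap: your edge-case handling covers $\nker_k(\mathcal F)=1$, but Lemma~\ref{contperweakuniquewideleft} also needs $m'>1$. If $m'=1$ (with $\nker_k(\mathcal F)>1$) then $m=m_0=0$ is forced and the statement is trivial, so you should dispose of this case alongside (or instead of) the $\nker_k(\mathcal F)=1$ case, as the paper does by first noting that $m\ne m_0$ implies $m'>1$.
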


\begin{proof}
%Without loss of generality, it is sufficient to prove that if 
%a left weak evolutional period of $\mathcal F$ 
%for the pair $(m,m'')$ does not exist,
%then 
%a left weak evolutional period of $\mathcal F$ 
%for the pair $(m_0,m'')$ does not exist.

If $m=m_0$, then everything is clear, so suppose that $m\ne m_0$.
Then $m'$ cannot be equal to 1, and $m'>1$, hence, $m''>1$. Also, $m'<m''$, so 
$m'\le \nker_k(\mathcal F)$, hence $m<\nker_k(\mathcal F)$ 
and $m_0<\nker_k(\mathcal F)$.

Denote by $r$ the remainder of $|\IpR_{k,m,m'}(\mathcal F_l)|$
modulo $|\lambda|$ (for any $l\ge 0$) and set 
% $\lambda'=\lambda_{|\lambda|-r\ldots |\lambda|-1}\lambda_{0\ldots |\lambda|-r-1}$
% (this is the cyclic shift of $\lambda$ that begins with the last $r$ letters of $\lambda$ and 
% ends with the first $|\lambda|-r$ letters of $\lambda$).
$\lambda'=\Cyc_{-r}(\lambda)$.
By Remark \ref{contpermultcycle},
$\lambda'$ is a left weak evolutional period of $\mathcal F$ 
for the pair $(m,m')$.
By Lemma \ref{contperweakuniqueleft}, if there exists a weak left evolutional
period of $\mathcal F$ for the pair $(m,m'')$, it equals $\lambda'$.
By Lemma \ref{contperexpandright}, 
$\lambda'$ is not a left weak evolutional period of $\mathcal F$ 
for the pair $(m,m'')$ if and only if there exists $s\in\NN$ such that the following is true:

For each $l\ge 0$, suppose that 
$\Fg(\mathcal F_l)=\swa ij$ and $\IpR_{k,m,m'}(\mathcal F_l)=\swa {i'}{j'}$. Then:
\begin{enumerate}
\item\label{perstucks} $\psi(\swa{i'}{j'+s-1})$ is 
% a weakly $|\lambda'|$-periodic word with left period $\lambda'$, 
a weakly left $\lambda'$-periodic word, 
and 
$\psi(\swa{i'}{j'+s})$ is not 
% a weakly $|\lambda'|$-periodic word with left period $\lambda'$.
a weakly left $\lambda'$-periodic word.
\item $s\le |\Ker_{k,m'}(\mathcal F)|+2\finmax$.
\item $j'+s\le j$, i.~e.\ $\al_{j'+s}$ is a letter in $\Fg(\mathcal F_l)$.
\end{enumerate}

Since $r$ is the remainder of $|\IpR_{k,m,m'}(\mathcal F_l)|$
modulo $|\lambda|=|\lambda'|$, 
% $\lambda$ is the cyclic
% shift of $\lambda'$ that begins with the last $|\lambda'|-r$ letters of $\lambda'$ and 
% ends with the first $r$ letters of $\lambda'$, 
$\lambda=\Cyc_r(\lambda')$, 
and 
$\psi(\swa{i'}{j'})$ is 
% a weak $|\lambda'|$-periodic 
% word with left period $\lambda'$, 
a weakly left $\lambda'$-periodic 
word, 
Condition \ref{perstucks}
in the list above is equivalent to the following condition:
$\psi(\swa{j'+1}{j'+s-1})$ is 
% a weakly $|\lambda|$-periodic word with left period $\lambda$, 
a weakly left $\lambda$-periodic word, 
and 
$\psi(\swa{j'+1}{j'+s})$ is not 
a weakly left $\lambda$-periodic word.

Therefore, a weak left evolutional period of $\mathcal F$ for the pair $(m,m'')$
does not exist if and only if there exists $s\in\NN$ such that the following is true:

For each $l\ge 0$, suppose that 
$\Fg(\mathcal F_l)=\swa ij$ and $\IpR_{k,m,m'}(\mathcal F_l)=\swa {i'}{j'}$. Then:
\begin{enumerate}
\item 
$\psi(\swa{j'+1}{j'+s-1})$ is 
% a weakly $|\lambda|$-periodic word with left period $\lambda$,
a weakly left $\lambda$-periodic word, 
and 
$\psi(\swa{j'+1}{j'+s})$ is not 
% a weakly $|\lambda|$-periodic word with left period $\lambda$.
a weakly left $\lambda$-periodic word.
\item $s\le |\Ker_{k,m'}(\mathcal F)|+2\finmax$.
\item $j'+s\le j$, i.~e.\ $\al_{j'+s}$ is a letter in $\Fg(\mathcal F_l)$.
\end{enumerate}

But these conditions do not use the indices $m$ and $i'$, so we can repeat the 
arguments above for $m_0$ instead of $m$ and conclude that the nonexistence of a 
weak left evolutional period of $\mathcal F$ for the pair $(m_0,m'')$
is equivalent to the same list of conditions.
\end{proof}

\begin{lemma}\label{contperexpandleft}
Let $\mathcal F$ be an evolution of stable $k$-multiblocks.
Suppose that there exists a final period $\lambda$ such that 
$\lambda$ is a right weak evolutional period of $\mathcal F$ 
for a pair $(m',m)$ ($1\le m'<m\le \nker_k(\mathcal F)+1$). 
Suppose also that there exists a final period $\mu$ such that 
$\mu$ is a right weak evolutional period of $\mathcal F$ 
for a pair $(m'',m')$ ($0\le m''<m'$). 
Then one of the following two statements is true:
\begin{enumerate}
\item $\lambda$ is a right weak evolutional period of $\mathcal F$ 
for pair $(m'',m)$.
\item There exists a number $s\in\NN$ such that the following is true for all $l\ge 0$.
Suppose that $\Fg(\mathcal F_l)=\swa ij$ and $\IpR_{k,m',m}(\mathcal F_l)=\swa {i'}{j'}$. Then:
\begin{enumerate}
\item $\psi(\swa{i'-s+1}{j'})$ is 
% a weakly $|\lambda|$-periodic word with right period $\lambda$, 
a weakly right $\lambda$-periodic word, 
and 
$\psi(\swa{i'-s}{j'})$ is not 
% a weakly $|\lambda|$-periodic word with right period $\lambda$.
a weakly right $\lambda$-periodic word.
\item $s\le |\Ker_{k,m}(\mathcal F)|+2\finmax$.
\item $i'-s\ge i$, i.~e.\ $\al_{i'-s}$ is a letter in $\Fg(\mathcal F_l)$.
\end{enumerate}
\end{enumerate}
\end{lemma}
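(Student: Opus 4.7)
The plan is to prove this statement by complete mirror symmetry with Lemma~\ref{contperexpandright}, with every occurrence of ``left'' and ``right'', ``prefix'' and ``suffix'', and $\Cyc_r$ and $\Cyc_{-r}$ swapped. Accordingly, I will only indicate the main ingredients. Let $r$ denote the residue of $|\IpR_{k,m',m}(\mathcal F_l)|$ modulo $|\lambda|$ (independent of $l$ by the definition of a right weak evolutional period), and put $\lambda'=\Cyc_{-r}(\lambda)$. By Remark~\ref{contpermultcycle}, $\lambda'$ is a left weak evolutional period of $\mathcal F$ for the pair $(m',m)$. The proof then splits according to whether $\psi(\Ker_{k,m'}(\mathcal F))$ is weakly right $\lambda'$-periodic, mirroring the case split used in Lemma~\ref{contperexpandright}.

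In the case where $\psi(\Ker_{k,m'}(\mathcal F))$ is not weakly right $\lambda'$-periodic, I would locate the largest (``rightmost'') index at which the weakly right $\lambda'$-periodic word of length $|\Ker_{k,m'}(\mathcal F)|$ disagrees with $\psi(\Ker_{k,m'}(\mathcal F))$, and set $s$ so that $\al_{i'-s}$ is precisely the mismatched letter inside $\Ker_{k,m'}(\mathcal F_l)$. All three required properties follow: $s\le|\Ker_{k,m'}(\mathcal F)|\le|\Ker_{k,m'}(\mathcal F)|+2\finmax$, the index $i'-s$ lies inside $\Fg(\mathcal F_l)$ since $\al_{i'-s}$ sits in $\Ker_{k,m'}(\mathcal F_l)$, and the weak right $\lambda$-periodicity of $\psi(\swa{i'-s+1}{j'})$ together with its failure at position $i'-s$ is immediate from the construction (using that the length residue $r$ was chosen so that the right $\lambda$-periodicity of $\psi(\IpR_{k,m',m}(\mathcal F_l))$ extends leftward by a $\lambda'$-periodic continuation).

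In the remaining case, $\psi(\Ker_{k,m'}(\mathcal F))$ is weakly right $\lambda'$-periodic, so $\psi(\Ker_{k,m'}(\mathcal F_l)\cdot \IpR_{k,m',m}(\mathcal F_l))$ is weakly right $\lambda$-periodic for every $l$. If $m'=1$, then $m''=0$ and $\IpR_{k,0,1}(\mathcal F_l)$ is empty, so the above concatenation equals $\IpR_{k,0,m}(\mathcal F_l)$, exhibiting $\lambda$ already as a right weak evolutional period of $\mathcal F$ for $(m'',m)$. Otherwise $m'>1$, so Corollary~\ref{betweenkernelgrowsgood} gives $|\IpR_{k,m'',m'}(\mathcal F_l)|\ge 2\finmax$. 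Setting $\lambda''=\Cyc_{-|\Ker_{k,m'}(\mathcal F)|}(\lambda')$, the shift of $\lambda'$ that describes the leftward continuation past $\Ker_{k,m'}(\mathcal F_l)$, I compare the last $2\finmax$ letters of $\psi(\IpR_{k,m'',m'}(\mathcal F_l))$ (weakly right $\mu$-periodic by hypothesis) with the weakly right $\lambda''$-periodic word of length $2\finmax$. Either they agree, in which case Lemmas~\ref{finalnomorethanonce} and~\ref{finwordperiods} applied to that window force $\mu=\lambda''$, and then $\lambda$ is a right weak evolutional period of $\mathcal F$ for $(m'',m)$; or they disagree, in which case the rightmost disagreement yields the required $s=|\Ker_{k,m'}(\mathcal F)|+t+1$ with $s\le|\Ker_{k,m'}(\mathcal F)|+2\finmax$, and $\al_{i'-s}$ lies inside $\Fg(\mathcal F_l)$ because it sits in $\IpR_{k,m'',m'}(\mathcal F_l)$.

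The only substantive obstacle in executing this plan is bookkeeping: ensuring that the various cyclic shifts $\Cyc_{\pm r}$ and $\Cyc_{\pm|\Ker_{k,m'}(\mathcal F)|}$ are composed consistently so that the periodicities of the three concatenated pieces $\IpR_{k,m'',m'}(\mathcal F_l)$, $\Ker_{k,m'}(\mathcal F_l)$, and $\IpR_{k,m',m}(\mathcal F_l)$ remain compatible for every $l\ge 0$. Once these alignments are set up, the residue-independence of each piece modulo $|\lambda|$ makes the whole concatenation's length residue independent of $l$, and the symmetric versions of the estimates in the proof of Lemma~\ref{contperexpandright} apply verbatim.
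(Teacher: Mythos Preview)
Your proposal is correct and matches the paper's own proof exactly: the paper simply states that the proof is completely symmetric to that of Lemma~\ref{contperexpandright}, and what you have written is precisely that mirror argument carried out in detail.

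One minor remark: your argument naturally yields the bound $s\le|\Ker_{k,m'}(\mathcal F)|+2\finmax$ (with $m'$, the kernel at the junction), whereas the statement as printed has $|\Ker_{k,m}(\mathcal F)|$; this appears to be a typo in the paper, and the bound you obtain is the one actually used downstream (only the fact that $s$ is a constant independent of $l$ matters in the applications).
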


\begin{proof}
The proof is completely symmetric to the proof of Lemma \ref{contperexpandright}.
\end{proof}

\begin{corollary}\label{contperexpandstartleft}
Let $\mathcal F$ be an evolution of stable $k$-multiblocks.
Suppose that there exist a final period $\lambda$ such that 
$\lambda$ is a left weak evolutional period of $\mathcal F$ 
for a pair $(m',m)$ ($1\le m'<m\le \nker_k(\mathcal F)+1$). 
and $\lambda$ is a left weak evolutional period of $\mathcal F$ 
for a pair $(m',m_0)$ ($1\le m'<m_0\le \nker_k(\mathcal F)+1$). 

Suppose also that there exists a final period $\mu$ such that 
$\mu$ is a right weak evolutional period of $\mathcal F$ 
for a pair $(m'',m')$ ($0\le m''<m'$). 

Then there exists a right weak evolutional period of $\mathcal F$ 
for pair $(m'',m)$ if and only if 
there exists a right weak evolutional period of $\mathcal F$ 
for pair $(m'',m_0)$.
\end{corollary}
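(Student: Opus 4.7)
The plan is to mirror the proof of Corollary \ref{contperexpandstartright} step by step, swapping ``left'' and ``right'' throughout and invoking Lemma \ref{contperexpandleft} and Corollary \ref{contperweakuniqueright} in place of their left-sided counterparts. First I would dispose of the trivial case $m=m_0$. Assuming $m\ne m_0$, the hypotheses force $m'<\nker_k(\mathcal F)+1$ on both sides, and since $m,m_0>m'\ge 1$ and $m''<m'$, we have $m,m_0>1$ and $m''<\nker_k(\mathcal F)$; these bounds are what is needed to apply the uniqueness corollary.

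Next I would let $r$ be the remainder of $|\IpR_{k,m',m}(\mathcal F_l)|$ modulo $|\lambda|$ (independent of $l$ by definition of a left weak evolutional period) and set $\lambda'=\Cyc_r(\lambda)$. By Remark \ref{contpermultcycle}, $\lambda'$ is a right weak evolutional period of $\mathcal F$ for the pair $(m',m)$; and by Corollary \ref{contperweakuniqueright}, any right weak evolutional period of $\mathcal F$ for $(m'',m)$ must equal $\lambda'$. This reduces the question of existence to the question of whether $\lambda'$ itself works.

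I would then apply Lemma \ref{contperexpandleft} to $\lambda'$ (on the pair $(m',m)$) and to $\mu$ (on $(m'',m')$). The lemma gives the dichotomy: either $\lambda'$ is a right weak evolutional period for $(m'',m)$, or there is $s\in\NN$ such that, writing $\IpR_{k,m',m}(\mathcal F_l)=\swa{i'}{j'}$ and $\Fg(\mathcal F_l)=\swa ij$, the word $\psi(\swa{i'-s+1}{j'})$ is weakly right $\lambda'$-periodic while $\psi(\swa{i'-s}{j'})$ is not, with $s\le|\Ker_{k,m}(\mathcal F)|+2\finmax$ and $i'-s\ge i$. The central step is to rewrite this condition using the fact that $\psi(\swa{i'}{j'})$ is already weakly right $\lambda'$-periodic and that $\lambda=\Cyc_{-r}(\lambda')$, so that the condition becomes: $\psi(\swa{i'-s+1}{i'-1})$ is weakly right $\lambda$-periodic while $\psi(\swa{i'-s}{i'-1})$ is not. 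The transformed condition no longer references $m$ or $j'$; it only involves data to the left of position $i'$, the abstract word $\Ker_{k,m'}(\mathcal F)$, and $\mu$.

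Since the rewritten list of conditions is independent of $m$, running the same argument with $m_0$ in place of $m$ yields exactly the same conditions. Hence $\lambda'$ fails to be a right weak evolutional period of $\mathcal F$ for $(m'',m)$ if and only if the rewritten conditions hold if and only if the analogous $\lambda'$ (same cyclic shift) fails for $(m'',m_0)$; combined with the uniqueness from Corollary \ref{contperweakuniqueright}, this yields the stated equivalence. I expect the only real obstacle to be bookkeeping: verifying that the cyclic-shift identity $\lambda=\Cyc_{-r}(\lambda')$ interacts correctly with right-periodicity and that the bound $s\le|\Ker_{k,m}(\mathcal F)|+2\finmax$ and the inequality $i'-s\ge i$ survive the rewrite into $m$-independent form, but since every such step has an exact mirror in the proof of Corollary \ref{contperexpandstartright}, the check is routine.
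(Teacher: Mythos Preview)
Your proposal is correct and follows exactly the approach the paper intends: its entire proof reads ``The proof is completely symmetric to the proof of Corollary \ref{contperexpandstartright},'' and you have carried out that mirror faithfully. One small point of bookkeeping: the bound you quote from Lemma \ref{contperexpandleft} as $s\le|\Ker_{k,m}(\mathcal F)|+2\finmax$ should, by the symmetry with Lemma \ref{contperexpandright}, actually read $|\Ker_{k,m'}(\mathcal F)|$ (this appears to be a typo in the paper's statement of Lemma \ref{contperexpandleft}), and with that correction the rewritten conditions are indeed independent of $m$ exactly as in the right-handed case.
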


\begin{proof}
The proof is completely symmetric to the proof of Corollary \ref{contperexpandstartright}.
\end{proof}

\begin{lemma}\label{contperexpandmultiright}
Let $\mathcal F$ be an evolution of stable nonempty $k$-multiblocks ($k\ge 1$). 
Suppose that $\mathcal F$ is weakly periodic for 
a sequence of 
indices $m_0=0,m_1,\ldots, m_{n-1},m_n=\nker_k(\mathcal F)+1$, where $n\ge 2$.
Let $q$ be an index ($1\le q\le n-1$) and let $\lambda$ be a right weak 
evolutional period of $\mathcal F$ for pair $(m_q,m_{q+1})$.
Suppose also that $\lambda$ is not a 
right weak 
evolutional period of $\mathcal F$ for pair $(m_{q-1},m_{q+1})$.
Then there are two possibilities:
\begin{enumerate}
\item $\mathcal F$ is weakly periodic for the following sequence of indices:
$m_0=0,m_1,\ldots, m_{q-1},m_q,\nker_k(\mathcal F)+1$.
\item There exists a $k$-series of obstacles in $\al$.
\end{enumerate}
\end{lemma}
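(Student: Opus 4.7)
The plan is to attempt to extend $\lambda$ as a right weak evolutional period step by step from the pair $(m_q, m_{q+1})$ through $(m_q, m_{q+2}), \ldots$, up to $(m_q, m_n)$. If this extension succeeds all the way, then combining the new period on the pair $(m_q, \nker_k(\mathcal F)+1)$ with the unchanged periods on the pairs $(m_0,m_1),\ldots,(m_{q-1},m_q)$ coming from the assumed weak periodicity gives Case~1 directly. If the extension fails at some step, I pair that right-side failure with the left-side failure forced by the hypothesis to assemble a $k$-series of obstacles. (If $q = n-1$ there is nothing to extend, and Case~1 holds trivially by the given $\lambda$, so from now on assume $q \le n-2$.)

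To extract the left break, let $\mu$ be the right weak evolutional period of $\mathcal F$ for $(m_{q-1},m_q)$ obtained via Remark~\ref{contpermultcycle} from the left period supplied by the weak periodicity of $\mathcal F$. The hypothesis that $\lambda$ is not a right weak evolutional period for $(m_{q-1},m_{q+1})$ forces Lemma~\ref{contperexpandleft} into its second alternative, yielding a constant $s_L \le |\Ker_{k,m_q}(\mathcal F)| + 2\finmax$ such that, writing $\IpR_{k,m_q,m_{q+1}}(\mathcal F_l) = \swa{i_l}{j_l}$, for every $l \ge 0$ the word $\psi(\swa{i_l - s_L + 1}{j_l})$ is weakly right $\lambda$-periodic, $\psi(\swa{i_l - s_L}{j_l})$ is not, and $i_l - s_L$ lies inside $\Fg(\mathcal F_l)$. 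For the right side, convert $\lambda$ to its left form $\tilde\lambda$ via Remark~\ref{contpermultcycle} and apply Lemma~\ref{contperexpandright} iteratively with $\mu^{(q+j)}$ the left weak evolutional period of $\mathcal F$ for $(m_{q+j},m_{q+j+1})$, for $j = 1, \ldots, n-q-1$. If every step takes the first alternative, then $\tilde\lambda$ is a left weak evolutional period of $\mathcal F$ for $(m_q,\nker_k(\mathcal F)+1)$ and Case~1 follows. Otherwise let $j$ be the least failing index; the second alternative of Lemma~\ref{contperexpandright} supplies a constant $s_R \le |\Ker_{k,m_{q+j}}(\mathcal F)| + 2\finmax$ such that, writing $\IpR_{k,m_q,m_{q+j}}(\mathcal F_l) = \swa{i_l}{j'_l}$, the word $\psi(\swa{i_l}{j'_l + s_R - 1})$ is weakly left $\tilde\lambda$-periodic, $\psi(\swa{i_l}{j'_l + s_R})$ is not, and $j'_l + s_R$ lies inside $\Fg(\mathcal F_l)$.

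Set $\mathcal H_l = \swa{i_l - s_L + 1}{j'_l + s_R - 1}$. The two weakly $|\lambda|$-periodic pieces produced by the two expansion lemmas share the overlap $\IpR_{k,m_q,m_{q+1}}(\mathcal F_l)$, whose length is at least $2\finmax \ge 2|\lambda|$ by Corollary~\ref{betweenkernelgrowsgood}; since both pieces use period length $|\lambda|$, Corollary~\ref{overlapperiod} (with $\gcd(|\lambda|,|\lambda|) = |\lambda|$) yields that $\psi(\mathcal H_l)$ is weakly $|\lambda|$-periodic. Extending $\mathcal H_l$ by one letter on the left would produce a word containing $\psi(\swa{i_l - s_L}{j_l})$ as a subword; if that extended word were weakly $|\lambda|$-periodic, the subword would be too, and then, since its own long suffix $\psi(\swa{i_l - s_L + 1}{j_l})$ has length $\ge 2|\lambda|$ (for $l$ large enough, by Corollary~\ref{betweenkernelgrowsgood}) and already has right period $\lambda$, uniqueness of the right period of that length forces $\psi(\swa{i_l - s_L}{j_l})$ to be weakly right $\lambda$-periodic, contradicting Step~1. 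The right-side break is analogous. Finally, $|\mathcal H_l| = |\IpR_{k,m_q,m_{q+j}}(\mathcal F_l)| + s_L + s_R - 1$ is $\Theta(l^{k'})$ for some $1 \le k' \le k$ by Corollary~\ref{betweenkernelgrowsgood}, is strictly increasing in $l$, and exceeds $2\finmax$ for all $l$ after discarding finitely many initial indices. Thus $(\mathcal H_l)$ is a $k$-series of obstacles with $p = |\lambda| \le \finmax$, establishing Case~2.

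The main technical point I expect to require care is the gluing step and the propagation of ``non-$\lambda$-periodic'' in the expansion lemmas to genuine non-weak-$|\lambda|$-periodicity of the glued word. This is handled by Corollary~\ref{overlapperiod} for the gluing, and by the uniqueness of the right (resp.\ left) period of a fixed length on sufficiently long words, which in turn rests on Lemma~\ref{finwordperiods} and the fact that the final period $\lambda$ cannot be written as a proper power (Lemma~\ref{finalnomorethanonce}).
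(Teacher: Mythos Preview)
Your proof is correct and follows essentially the same strategy as the paper: extract a left break from the hypothesis via Lemma~\ref{contperexpandleft}, push the (cyclically shifted) period $\lambda$ rightward through the given weak-periodicity data using Lemma~\ref{contperexpandright}, and if the push ever fails, sandwich the resulting weakly $|\lambda|$-periodic stretch between the two breaks to get a $k$-series of obstacles.

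A few minor remarks on presentation. First, your gluing via Corollary~\ref{overlapperiod} is correct but heavier than needed: the paper simply observes that $\psi(\swa{i_l-s_L+1}{i_l-1})$ has right period $\lambda'$ and $\psi(\swa{i_l}{j'_l+s_R-1})$ has left period $\lambda'$ (where $\lambda'$ is the common cyclic shift at position $i_l$), so their concatenation is weakly $|\lambda|$-periodic directly, and the boundary letters can then be checked against $\lambda''=\Cyc_{-(s_L-1)}(\lambda')$. Your subword-and-uniqueness argument for non-extendability is fine, but this explicit letter comparison is shorter. Second, your caveats ``for $l$ large enough'' and ``after discarding finitely many initial indices'' are unnecessary: since $1\le m_q<m_{q+j}\le\nker_k(\mathcal F)$ (because $1\le q$ and $q+j\le n-1$), Lemma~\ref{betweenkernelgrows} (rather than Corollary~\ref{betweenkernelgrowsgood}) gives $|\IpR_{k,m_q,m_{q+j}}(\mathcal F_l)|\ge 2\finmax$ for \emph{all} $l\ge 0$, so the obstacle conditions hold from the start.
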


\begin{proof}
If $q=n-1$, then everything is clear. Suppose that $q<n-1$.

Since $\lambda$ is not a 
right weak 
evolutional period of $\mathcal F$ for the pair $(m_{q-1},m_{q+1})$, it follows from 
Lemma \ref{contperexpandleft} that there exists $s\in\NN$ ($s$ does not depend on $l$) 
such that for all $l\ge 0$, if $\IpR_{k,m_q,m_{q+1}}(\mathcal F_l)=\swa{i'}{j'}$, then 
$\psi(\swa{i'-s+1}{j'})$ is 
% a weakly $|\lambda|$-periodic word with right period $\lambda$, 
a weakly right $\lambda$-periodic word, 
and 
$\psi(\swa{i'-s}{j'})$ is not 
% a weakly $|\lambda|$-periodic word with right period $\lambda$.
a weakly right $\lambda$-periodic word.
Again, denote the residue of $|\IpR_{k,m_q,m_{q+1}}(\mathcal F_l)|$ for any $l\ge 0$
by $r$ and denote 
% the cyclic shift of $\lambda$ that begins with 
% the last $r$ letters of $\lambda$ and ends with the first $|\lambda|-r$ letters of $\lambda$ by $\lambda'$
% ($\lambda'=\lambda_{|\lambda|-r\ldots |\lambda|-1}\lambda_{0\ldots |\lambda|-r-1}$).
$\lambda'=\Cyc_{-r}(\lambda)$.
Then $\lambda'$ is a weak left evolutional period of $\mathcal F$ for pair $(m_q,m_{q+1})$, and,
if $\IpR_{k,m_q,m_{q+1}}(\mathcal F_l)=\swa{i'}{j'}$, then 
$\psi(\swa{i'-s+1}{i'-1})$ is 
% a weakly $|\lambda'|$-periodic word with right period $\lambda'$, 
a weakly right $\lambda'$-periodic word, 
and 
$\psi(\swa{i'-s}{i'-1})$ is not 
% a weakly $|\lambda'|$-periodic word with right period $\lambda'$.
a weakly right $\lambda'$-periodic word.

Now let $q'$ be the maximal index ($q+1\le q'\le n$) such that $\lambda'$ is a weak left 
evolutional period of $\mathcal F$ for pair $(m_q,m_{q'})$. If $q'=n$, then
$\mathcal F$ is weakly periodic for the sequence 
$m_0=0,m_1,\ldots, m_{q-1},m_q,\nker_k(\mathcal F)+1$, and 
we are done. 

Suppose that $q'<n$. Then, since $q'$ is maximal,
it follows from Lemma \ref{contperexpandright} that there exists $s'\in\NN$ ($s'$ does not depend on $l$) 
such that for all $l\ge 0$, if $\IpR_{k,m_q,m_{q'}}(\mathcal F_l)=\swa{i''}{j''}$, 
then 
$\psi(\swa{i''}{j''+s'-1})$ is 
% a weakly $|\lambda'|$-periodic word with left period $\lambda'$, 
a weakly left $\lambda'$-periodic word, 
and 
$\psi(\swa{i''}{j''+s'})$ is not 
% a weakly $|\lambda'|$-periodic word with left period $\lambda'$.
a weakly left $\lambda'$-periodic word.
Note also that if $\IpR_{k,m_q,m_{q+1}}(\mathcal F_l)=\swa{i'}{j'}$ and $\IpR_{k,m_q,m_{q'}}(\mathcal F_l)=\swa{i''}{j''}$, 
then, by the definition of an inner pseudoregular part, $i'=i''$.

Summarizing, we have the following weakly periodic and non-weakly periodic words.
Fix $l\ge 0$ and let $i'$ and $j''$ be the indices such that $\IpR_{k,m_q,m_{q'}}(\mathcal F_l)=\swa{i'}{j''}$.
The following two occurrences in $\psi(\al)$ are weakly $|\lambda'|$-periodic: 
$\psi(\swa{i'-s+1}{i'-1})$ with right period $\lambda'$ and 
$\psi(\swa{i'}{j''+s'-1})$ with left period $\lambda'$.
And the following two occurrences are \textbf{not} weakly $|\lambda'|$-periodic with right and left period $\lambda'$, respectively:
$\psi(\swa{i'-s}{i'-1})$ with right period $\lambda'$ and 
$\psi(\swa{i'}{j''+s'})$ with left period $\lambda'$.

% Denote by $r'$ the residue of $s-1$ modulo $|\lambda'|$ and denote 
% by $\lambda''$ 
% the cyclic shift of $\lambda'$ that begins with 
% the last $r'$ letters of $\lambda'$ and ends with the first $|\lambda'|-r'$ letters of $\lambda'$
% ($\lambda''=\lambda'_{|\lambda'|-r'\ldots |\lambda'|-1}\lambda'_{0\ldots |\lambda'|-r'-1}$).
Denote $\lambda''=\Cyc_{-(s-1)}(\lambda')$.
Then $\psi(\swa{i'-s+1}{j''+s'-1})$ is 
% a weakly $|\lambda''|$-periodic word 
% with left period $\lambda''$, 
a weakly left $\lambda''$-periodic word,
$\al_{i'-s}\ne\lambda''_{|\lambda''|-1}$, 
and $\psi(\swa{i'-s+1}{j''+s'})$ is not 
% a weakly $|\lambda''|$-periodic word 
% with left period $\lambda''$. 
a weakly left $\lambda''$-periodic word.
In other words, if $r'$ is the residue of $(j''+s')-(i'-s+1)$ modulo
$|\lambda''|$, then $\psi(\al_{j''+s'})\ne \lambda''_{r'}$.

Set $p=|\lambda''|=|\lambda|$ and set $\mathcal H_l=\swa{i'-s+1}{j''+s'-1}$. Let us prove that 
$\mathcal H=\mathcal H_0,\mathcal H_1,\mathcal H_2,\ldots$ is a $k$-series of obstacles in $\al$.
We have $|\mathcal H_l|=|\swa{i'-s+1}{i'-1}|+|\swa{i'}{j''}|+|\swa{j''+1}{j''+s'-1}|=(s-1)+|\IpR_{k,m_q,m_{q'}}(\mathcal F_l)|+(s'-1)$.
By Lemma \ref{betweenkernelgrows} (here we use the fact that $0<q<q'<n$, so $1\le m_q<m_{q'}\le\nker_k(\mathcal F)$), 
$|\IpR_{k,m_q,m_{q'}}(\mathcal F_l)|$ strictly grows as $l$ grows, 
$|\IpR_{k,m_q,m_{q'}}(\mathcal F_l)|\ge 2\finmax$, and 
there exists $k'\in \NN$ ($1\le k'\le k$) such that $|\IpR_{k,m_q,m_{q'}}(\mathcal F_l)|$ is $\Theta(l^{k'})$ for $l\to \infty$.
Since $s,s'\in\NN$ do not depend on $l$, $|\mathcal H_l|$ also strictly grows as $l$ grows, 
$|\mathcal H_l|\ge 2\finmax$, and $|\mathcal H_l|$ is $\Theta(l^{k'})$ for $l\to \infty$.

Now let us check the required weak $p$-periodicity for the definition of a $k$-series of obstacles.
We already know that $\psi(\mathcal H_l)$ is a weakly $p$-periodic word with left period $\lambda''$.
Since $\lambda$ is a final period, $|\mathcal H_l|\ge 2\finmax\ge|\lambda|=|\lambda''|=p$, 
and $\psi(\al_{i'-s+1+p-1})=\lambda''_{p-1}$, while $\psi(\al_{i'-s})\ne\lambda''_{p-1}$, so 
$\psi(\swa{i'-s}{j''+s'-1})$ is not a weakly $p$-periodic word. 
Similarly, if $r'$ is the residue of $(j''+s')-(i'-s+1)$ modulo
$|\lambda''|$, then $\psi(\al_{j''+s'})\ne \lambda''_{r'}$, but 
since $|\mathcal H_l|\ge p$, we have $\psi(\al_{j''+s'-p})=\lambda''_{r'}$, 
so $\psi(\swa{i'-s+1}{j''+s'})$ is not a weakly $p$-periodic word. 
(We knew before that 
$\psi(\swa{i'-s+1}{j''+s'})$ is not a weakly $p$-periodic word with 
left period $\lambda''$, but it is important here that $|\mathcal H_l|\ge p$, 
otherwise we could have $|\psi(\swa{i'-s+1}{j''+s'})|\le p$, and then 
$\psi(\swa{i'-s+1}{j''+s'})$ would be a weakly $p$-periodic word with another left period.)
\end{proof}

\begin{lemma}\label{contperexpandmultileft}
Let $\mathcal F$ be an evolution of stable nonempty $k$-multiblocks ($k\ge 1$). 
Suppose that $\mathcal F$ is weakly periodic for 
a sequence of 
indices $m_0=0,m_1,\ldots, m_{n-1},m_n=\nker_k(\mathcal F)+1$, where $n\ge 2$.
Let $q$ be an index ($1\le q\le n-1$) and let $\lambda$ be a left weak 
evolutional period of $\mathcal F$ for pair $(m_{q-1},m_q)$.
Suppose also that $\lambda$ is not a 
left weak 
evolutional period of $\mathcal F$ for pair $(m_{q-1},m_{q+1})$.
Then there are two possibilities:
\begin{enumerate}
\item $\mathcal F$ is weakly periodic for the following sequence of indices:
$0,m_q,m_{q+1},\ldots,m_{n-1},m_n=\nker_k(\mathcal F)+1$.
\item There exists a $k$-series of obstacles in $\al$.
\end{enumerate}
\end{lemma}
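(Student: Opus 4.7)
The plan is to carry out the symmetric counterpart of the proof of Lemma~\ref{contperexpandmultiright}, interchanging the roles of ``left'' and ``right'' and of the two ends of the pseudoregular parts. First I would dispose of the trivial case $q=1$: here the desired sequence $0,m_q,m_{q+1},\ldots,m_n$ coincides with $0,m_1,\ldots,m_n$, so weak periodicity is given. From now on assume $q\ge 2$.

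Next, since $\lambda$ is a left weak evolutional period of $\mathcal F$ for $(m_{q-1},m_q)$ but not for $(m_{q-1},m_{q+1})$, Lemma~\ref{contperexpandright} supplies an integer $s\in\NN$, independent of $l$, so that for every $l\ge 0$, writing $\IpR_{k,m_{q-1},m_q}(\mathcal F_l)=\swa{i'}{j'}$, the occurrence $\psi(\swa{i'}{j'+s-1})$ is weakly left $\lambda$-periodic but $\psi(\swa{i'}{j'+s})$ is not. I would let $r$ be the residue of $|\IpR_{k,m_{q-1},m_q}(\mathcal F_l)|$ modulo $|\lambda|$ (independent of $l$ by definition of a weak period) and set $\lambda'=\Cyc_r(\lambda)$, so that by Remark~\ref{contpermultcycle} $\lambda'$ is a right weak evolutional period of $\mathcal F$ for $(m_{q-1},m_q)$, and the failure of periodicity translates into: $\psi(\swa{j'+1}{j'+s-1})$ is weakly right $\lambda'$-periodic while $\psi(\swa{j'+1}{j'+s})$ is not.

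Now let $q'$ be the \emph{minimal} index with $0\le q'\le q-1$ such that $\lambda'$ is a right weak evolutional period of $\mathcal F$ for $(m_{q'},m_q)$. If $q'=0$, the sequence $0,m_q,m_{q+1},\ldots,m_n$ witnesses weak periodicity of $\mathcal F$ (using $\lambda'$ as a right weak evolutional period for $(0,m_q)$, recoverable as a left weak evolutional period via Remark~\ref{contpermultcycle}, and the original periods for the remaining pairs), and we are done. Otherwise $q'\ge 1$, so by minimality and Lemma~\ref{contperexpandleft} applied to the pair $(m_{q'-1},m_{q'})$ (using as $\mu$ the weak evolutional period already supplied by the hypothesis that $\mathcal F$ is weakly periodic for $m_0,\ldots,m_n$) there is an integer $s'\in\NN$, independent of $l$, such that if $\IpR_{k,m_{q'},m_q}(\mathcal F_l)=\swa{i''}{j''}$ (so that $j''=j'$), then $\psi(\swa{i''-s'+1}{j''})$ is weakly right $\lambda'$-periodic and $\psi(\swa{i''-s'}{j''})$ is not.

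The final step is to construct the $k$-series of obstacles. Set $p=|\lambda|=|\lambda'|$, let $\lambda''=\Cyc_{s-1}(\lambda')$ (chosen so that the two weakly periodic tails glue into a single weakly left $\lambda''$-periodic occurrence), and define $\mathcal H_l=\swa{i''-s'+1}{j'+s-1}$. The length is $(s'-1)+|\IpR_{k,m_{q'},m_q}(\mathcal F_l)|+(s-1)$; since $1\le m_{q'}<m_q\le \nker_k(\mathcal F)$, Lemma~\ref{betweenkernelgrows} gives $|\IpR_{k,m_{q'},m_q}(\mathcal F_l)|\ge 2\finmax$, strictly growing in $l$, and of size $\Theta(l^{k'})$ for some $1\le k'\le k$; the added constants $s-1,s'-1$ preserve all three properties. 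By construction $\psi(\mathcal H_l)$ is weakly $p$-periodic with left period $\lambda''$, and the failure conditions on both sides, together with $|\mathcal H_l|\ge 2\finmax\ge p$, prevent $\psi$ of either one-letter extension of $\mathcal H_l$ from being weakly $p$-periodic (the same one-letter-extension argument as at the end of the proof of Lemma~\ref{contperexpandmultiright} applies symmetrically). Hence $\mathcal H$ is a $k$-series of obstacles in $\al$.

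The main obstacle is purely bookkeeping: keeping track of which cyclic shift of $\lambda$ plays the role of left versus right period on which side of $\IpR_{k,m_{q'},m_q}(\mathcal F_l)$, and verifying that the one-letter extensions on \emph{both} ends fail $p$-periodicity outright (not merely $\lambda''$-periodicity), which relies crucially on $|\mathcal H_l|\ge p$.
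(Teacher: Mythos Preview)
Your proposal is correct and follows exactly the approach the paper intends: the paper's own proof of Lemma~\ref{contperexpandmultileft} consists of the single sentence ``The proof is completely symmetric to the proof of the previous lemma,'' and you have faithfully carried out that symmetrization (trivial case $q=1$; Lemma~\ref{contperexpandright} for the right-side obstruction $s$; pass to the right period $\lambda'$; take $q'$ minimal with $\lambda'$ a right weak evolutional period for $(m_{q'},m_q)$; if $q'>0$ use Lemma~\ref{contperexpandleft} for $s'$; assemble the $k$-series of obstacles and invoke Lemma~\ref{betweenkernelgrows}). One small bookkeeping slip: the tail $\psi(\swa{j'+1}{j'+s-1})$ should be described as weakly \emph{left} $\lambda'$-periodic (it is the continuation to the right of a word with right period $\lambda'$), not weakly right $\lambda'$-periodic; this does not affect the argument, since what you actually need is that the concatenation $\psi(\swa{i''-s'+1}{j'+s-1})$ is weakly $p$-periodic, which follows either way.
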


\begin{proof}
The proof is completely symmetric to the proof of the previous lemma.
\end{proof}

\begin{lemma}\label{contperexpandtotalright}
Let $\mathcal F$ and $\mathcal F'$ be two consecutive evolutions of stable nonempty $k$-multiblocks ($k\ge 1$), 
and let $\mathcal F''$ be their concatenation. 
Suppose that $\nker_k(\mathcal F)>1$, and $\mathcal F$ is weakly periodic for 
a sequence of 
indices $m_0=0,m_1,\ldots, m_{n-1},m_n=\nker_k(\mathcal F)+1$, where 
%$n\ge 2$, and 
$m_{n-1}<\nker_k(\mathcal F)$
($n\in \NN$, and $n=1$ is allowed).
Suppose that $\mathcal F''$ is also weakly 
periodic for the following sequence:
$m'_0=m_0=0,m'_1=m_1,\ldots, m'_{n-1}=m_{n-1},m'_n=\nker_k(\mathcal F'')+1$.

Then 
$\mathcal F'$ is totally periodic, moreover, 
if $\lambda$ is the (unique by Corollary \ref{contperweakuniqueright}) 
right weak evolutional period of $\mathcal F$ for 
the pair $(m_{n-1},\nker_k(\mathcal F)+1)$, then 
$\lambda$ is also 
%a left weak evolutional period
a total left evolutional period 
of $\mathcal F'$.
%for the pair $(0,\nker_k(\mathcal F')+1)$.
\end{lemma}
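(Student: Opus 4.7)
The plan is to compare the right weak evolutional period $\lambda$ of $\mathcal F$ with the analogous period for $\mathcal F''$, and then transfer the resulting periodicity information from the concatenated word to its $\mathcal F'$ portion. First, by Remark \ref{contpermultcycle} applied to the hypothesis on $\mathcal F''$, one obtains a right weak evolutional period $\mu$ of $\mathcal F''$ for the pair $(m_{n-1},\nker_k(\mathcal F'')+1)$; this is the only additional data I need to introduce. Using Lemma \ref{concatcompker} (the third claim, in the case $0\le m_{n-1}<\nker_k(\mathcal F)<\nker_k(\mathcal F'')+1$), I decompose
\[
\IpR_{k,m_{n-1},\nker_k(\mathcal F'')+1}(\mathcal F''_l)
=\IpR_{k,m_{n-1},\nker_k(\mathcal F)+1}(\mathcal F_l)\cdot\Fg(\mathcal F'_l),
\]
so the ``$\mathcal F$-part'' sits as a prefix of the ``$\mathcal F''$-part'', and $\Fg(\mathcal F'_l)$ sits as a suffix.

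Next, both these occurrences have weakly right periodic $\psi$-images (by $\lambda$ and $\mu$ respectively), and the prefix, being a prefix of a weakly right $\mu$-periodic word, is automatically weakly $|\mu|$-periodic. By Corollary \ref{betweenkernelgrowsgood} (applicable because $\nker_k(\mathcal F)>1$, $m_{n-1}<\nker_k(\mathcal F)$, and $\nker_k(\mathcal F)+1>1$), the prefix has length at least $2\finmax\ge 2\max(|\lambda|,|\mu|)$. Lemma \ref{finwordperiods} combined with Lemma \ref{finalnomorethanonce} (which prevents either final period from factoring as a shorter repetition) then forces $|\lambda|=|\mu|$ and the two left periods of the prefix, computed from $\lambda$ and $\mu$ respectively via the standard cyclic-shift formulas, to coincide. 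Unwinding these cyclic shifts, using $L_l-L^{(1)}_l=|\Fg(\mathcal F'_l)|$, yields the clean relation
\[
\mu=\Cyc_{|\Fg(\mathcal F'_l)|}(\lambda)\qquad\text{for all }l\ge 0,
\]
and since $\lambda$ is not a cyclic self-shift of itself (Lemma \ref{finalnomorethanonce}), the residue $|\Fg(\mathcal F'_l)|\bmod|\lambda|$ must be constant in $l$.

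With this relation in hand, I finish as follows. The occurrence $\Fg(\mathcal F'_l)$ is a suffix of the weakly right $\mu$-periodic big word, hence itself weakly right $\mu$-periodic. Substituting $\mu_j=\lambda_{(j+|\Fg(\mathcal F'_l)|)\bmod|\lambda|}$ into $\psi(\Fg(\mathcal F'_l))_i=\mu_{(i-|\Fg(\mathcal F'_l)|)\bmod|\mu|}$ gives $\psi(\Fg(\mathcal F'_l))_i=\lambda_{i\bmod|\lambda|}$, i.e.\ $\psi(\Fg(\mathcal F'_l))$ is weakly left $\lambda$-periodic. Together with the constancy of $|\Fg(\mathcal F'_l)|\bmod|\lambda|$ already established, this is exactly the statement that $\lambda$ is a left weak evolutional period of $\mathcal F'$ for the pair $(0,\nker_k(\mathcal F')+1)$, which is precisely the definition of a total left evolutional period. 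Hence $\mathcal F'$ is totally periodic and $\lambda$ serves as a total left evolutional period, as required.

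The main technical hurdle will be the cyclic-shift bookkeeping: I must keep careful track of which shift of $\lambda$ becomes the left period of the prefix, and how this interacts with the analogous shift of $\mu$ inherited from the longer word. This is where the correct alignment $\mu=\Cyc_{|\Fg(\mathcal F'_l)|}(\lambda)$ has to be extracted, and it is also the place where the uniqueness statement of Lemma \ref{finalnomorethanonce} does essential work — without it, the shift could a priori drift with $l$ and the residue argument would fail.
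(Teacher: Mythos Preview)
Your argument is correct and reaches the same conclusion, but it is organized differently from the paper's proof. The paper introduces the \emph{left} period $\lambda'$ of $\mathcal F$ for $(m_{n-1},\nker_k(\mathcal F)+1)$, inserts the intermediate index $\nker_k(\mathcal F)$ via Lemma~\ref{weakcontperinsright}, and then applies Lemma~\ref{contperweakuniquewideleft} twice (once in $\mathcal F$, once in $\mathcal F''$) to identify $\lambda'$ as the left period of $\mathcal F''$ for $(m_{n-1},\nker_k(\mathcal F'')+1)$; the suffix $\Fg(\mathcal F'_l)$ then inherits the left period $\Cyc_r(\lambda')=\lambda$ directly, and residue constancy is read off from the two evolutional-period definitions. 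You instead work entirely with right periods, compare $\lambda$ and $\mu$ on the common prefix via Lemma~\ref{finwordperiods} and Lemma~\ref{finalnomorethanonce}, extract the relation $\mu=\Cyc_{|\Fg(\mathcal F'_l)|}(\lambda)$, and push this back to left $\lambda$-periodicity of $\psi(\Fg(\mathcal F'_l))$. Both routes are essentially the same period-alignment idea; the paper's packaging via the uniqueness lemma avoids the explicit cyclic-shift bookkeeping, while yours is more self-contained.

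One small point: your claim that ``$\lambda$ is not a cyclic self-shift of itself (Lemma~\ref{finalnomorethanonce})'' is true but is not literally that lemma's statement; it needs the short argument that $\Cyc_r(\lambda)=\lambda$ with $0<r<|\lambda|$ makes $\lambda\lambda$ weakly $r$-periodic, whence Lemma~\ref{finwordperiods} forces $\lambda$ to be a proper power (cf.\ the proof of Lemma~\ref{leftrightcompleteperiod}). You can bypass this entirely: since $\lambda$ and $\mu$ are weak evolutional periods, the residues of $L_l^{(1)}$ and $L_l$ modulo $|\lambda|=|\mu|$ are each constant in $l$ by definition, so $|\Fg(\mathcal F'_l)|=L_l-L_l^{(1)}$ has constant residue without invoking any self-shift property.
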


\begin{proof}
Let $\lambda'$ be the (unique by Corollary \ref{contperweakuniqueleft})
left weak evolutional period of $\mathcal F$ for the pair 
$(m_{n-1},\nker_k(\mathcal F)+1)$. By Lemma \ref{weakcontperinsright}, 
$\mathcal F$ is weakly periodic for the sequence
$m_0,m_1,\ldots, m_{n-1},\nker_k(\mathcal F),\nker_k(\mathcal F)+1$, 
and by Lemma \ref{contperweakuniquewideleft}, 
the left weak evolutional period of $\mathcal F$ for the pair 
$(m_{n-1},\nker_k(\mathcal F))$
also equals $\lambda'$. By Lemma \ref{concatcompker}, 
$\IpR_{k,m_{n-1}, \nker_k(\mathcal F)}(\mathcal F_l)=\IpR_{k,m_{n-1}, \nker_k(\mathcal F)}(\mathcal F''_l)$
as an occurrence in $\al$ for all $l\ge 0$, so 
$\lambda'$ is also 
the left weak evolutional period of $\mathcal F''$ for the pair 
$(m_{n-1},\nker_k(\mathcal F))$.
Now, by Lemma \ref{contperweakuniquewideleft} again, 
the left weak evolutional period of $\mathcal F''$ for the pair 
$(m_{n-1},\nker_k(\mathcal F'')+1)$ (it exists by assumption)
also equals $\lambda'$.

Denote the residue of $|\IpR_{k,m_{n-1},\nker_k(\mathcal F)+1}(\mathcal F_l)|$
modulo $|\lambda'|$ by $r$.
By Remark \ref{contpermultcycle}, 
% $\lambda$ is the cyclic shift of $\lambda'$ 
% that begins with the last $|\lambda'|-r$ letters of $\lambda'$ and ends with the first 
% $r$ letters of $\lambda'$ ($\lambda=\lambda'_{r\ldots |\lambda'|-1}\lambda'_{0\ldots r-1}$).
$\lambda=\Cyc_r(\lambda')$.
By Lemma \ref{concatcompker}, 
$\IpR_{k,m_{n-1},\nker_k(\mathcal F'')+1}(\mathcal F''_l)=
\IpR_{k,m_{n-1},\nker_k(\mathcal F)+1}(\mathcal F_l)\Fg(\mathcal F'_l)$.
We know that $\psi(\IpR_{k,m_{n-1},\nker_k(\mathcal F'')+1}(\mathcal F''_l))$ 
is 
% a weakly $|\lambda'|$-periodic word with left period $\lambda'$, 
a weakly left $\lambda'$-periodic word, 
hence $\psi(\Fg(\mathcal F'_l))$ is 
% a weakly $|\lambda|$-periodic 
% word with left period $\lambda$.
a weakly left $\lambda$-periodic word.
Since $\lambda'$ is both the weak left evolutional period of $\mathcal F$ 
for the pair 
$(m_{n-1},\nker_k(\mathcal F)+1)$
and 
the weak left evolutional period of $\mathcal F''$
for the pair
$(m_{n-1},\nker_k(\mathcal F'')+1)$, 
the residues of 
$|\IpR_{k,m_{n-1},\nker_k(\mathcal F)+1}(\mathcal F_l)|$
and of
$|\IpR_{k,m_{n-1},\nker_k(\mathcal F'')+1}(\mathcal F''_l)|$
modulo $|\lambda'|=|\lambda|$
do not depend on $l$. Hence, the residue of 
$|\Fg(\mathcal F'_l)|=
|\IpR_{k,m_{n-1},\nker_k(\mathcal F'')+1}(\mathcal F''_l)|-
|\IpR_{k,m_{n-1},\nker_k(\mathcal F)+1}(\mathcal F_l)|$
modulo $|\lambda|$ does not depend on $l$, 
and $\lambda$ is a 
left weak evolutional period of $\mathcal F'$ 
for the pair $(0,\nker_k(\mathcal F')+1)$.
\end{proof}

\begin{lemma}\label{contperexpandtotalleft}
Let $\mathcal F'$ and $\mathcal F$ be two consecutive evolutions of stable nonempty $k$-multiblocks ($k\ge 1$), 
and let $\mathcal F''$ be their concatenation. 
Suppose that $\nker_k(\mathcal F)>1$, and $\mathcal F$ is weakly periodic for 
a sequence of 
indices $m_0=0,m_1,\ldots, m_{n-1},m_n=\nker_k(\mathcal F)+1$, where 
%$n\ge 2$, and 
$m_1>1$
($n\in \NN$, and $n=1$ is allowed).
Suppose that $\mathcal F''$ is also weakly 
periodic for the following sequence:
$m'_0=m_0=0,m'_1=m_1+\nker_k(\mathcal F)-1,\ldots, m'_{n-1}=m_{n-1}+\nker_k(\mathcal F)-1,m'_n=\nker_k(\mathcal F'')+1$.

Then 
$\mathcal F'$ is totally periodic, moreover, 
if $\lambda$ is the (unique by Corollary \ref{contperweakuniqueleft}) 
left weak evolutional period of $\mathcal F$ for 
the pair $(0,m_1)$, then 
$\lambda$ is also 
%a right weak evolutional period
a total right evolutional period 
of $\mathcal F'$.
%for the pair $(0,\nker_k(\mathcal F')+1)$.
\end{lemma}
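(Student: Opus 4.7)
The plan is to mirror the proof of Lemma \ref{contperexpandtotalright} with the roles of ``left'' and ``right'' and the roles of ``first'' and ``last'' pair swapped throughout. In that earlier proof the right weak evolutional period of $\mathcal F$ for its last pair $(m_{n-1},\nker_k(\mathcal F)+1)$ was transferred into a total left evolutional period of the block $\mathcal F'$ sitting to its right; dually, here the left weak evolutional period $\lambda$ of $\mathcal F$ for its first pair $(0,m_1)$ will be transferred into a total right evolutional period of the block $\mathcal F'$ sitting to its left. Accordingly, I would work throughout with the right weak evolutional period produced from $\lambda$ by Remark \ref{contpermultcycle}.

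Concretely, I would first use Lemma \ref{weakcontperinsleft} to insert the index $1$ into the sequence for $\mathcal F$, making $\mathcal F$ weakly periodic for $0,1,m_1,\ldots,m_{n-1},\nker_k(\mathcal F)+1$. Let $r$ be the residue of $|\IpR_{k,0,m_1}(\mathcal F_l)|$ modulo $|\lambda|$ and set $\lambda'=\Cyc_r(\lambda)$, which by Remark \ref{contpermultcycle} is a right weak evolutional period of $\mathcal F$ for $(0,m_1)$; by Lemma \ref{contperweakuniquewideright} applied to $\mathcal F$ with indices $0,1,m_1$, it is also the right weak evolutional period of $\mathcal F$ for $(1,m_1)$. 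The second case of the third claim of Lemma \ref{concatcompker} then yields $\IpR_{k,1,m_1}(\mathcal F_l)=\IpR_{k,\nker_k(\mathcal F'),\,m_1+\nker_k(\mathcal F')-1}(\mathcal F''_l)$ as occurrences in $\al$, so $\lambda'$ is also the right weak evolutional period of $\mathcal F''$ for the pair $(\nker_k(\mathcal F'),\,m_1+\nker_k(\mathcal F')-1)$. Since by hypothesis $\mathcal F''$ admits a left weak evolutional period for $(0,\,m_1+\nker_k(\mathcal F')-1)$, Remark \ref{contpermultcycle} produces a corresponding right one, and since $\nker_k(\mathcal F'')>1$, Lemma \ref{contperweakuniquewideright} applied to $\mathcal F''$ with indices $0,\nker_k(\mathcal F'),\,m_1+\nker_k(\mathcal F')-1$ forces this right period to coincide with $\lambda'$ as well.

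To finish, the third case of the third claim of Lemma \ref{concatcompker} gives $\IpR_{k,0,\,m_1+\nker_k(\mathcal F')-1}(\mathcal F''_l)=\Fg(\mathcal F'_l)\cdot\IpR_{k,0,m_1}(\mathcal F_l)$. Both this whole occurrence and its right factor $\IpR_{k,0,m_1}(\mathcal F_l)$ map under $\psi$ to weakly right $\lambda'$-periodic words whose length residues modulo $|\lambda'|$ do not depend on $l$; a short bookkeeping (the dual of the one used for the left case in Lemma \ref{contperexpandtotalright}) then shows that $\psi(\Fg(\mathcal F'_l))$ is weakly right $\Cyc_{-r}(\lambda')$-periodic and that its length residue modulo $|\lambda'|$ is likewise independent of $l$. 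Since $\Cyc_{-r}(\lambda')=\Cyc_{-r}(\Cyc_r(\lambda))=\lambda$, this is precisely the statement that $\lambda$ is a total right evolutional period of $\mathcal F'$. The only real care-point is tracking cyclic shifts so that the final right period is $\lambda$ itself rather than some other cyclic shift of it; this is forced by the choice $\lambda'=\Cyc_r(\lambda)$ with $r$ equal to the residue of $|\IpR_{k,0,m_1}(\mathcal F_l)|$ modulo $|\lambda|=|\lambda'|$ that appears symmetrically on both sides of the argument.
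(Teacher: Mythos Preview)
Your proposal is correct and is precisely the left/right mirror of the proof of Lemma~\ref{contperexpandtotalright}, which is exactly what the paper does (it simply says ``The proof is completely similar to the proof of the previous lemma''). One small remark: you silently correct what appears to be a typo in the lemma's statement, writing the index shift as $\nker_k(\mathcal F')-1$ rather than the stated $\nker_k(\mathcal F)-1$; this is indeed the shift that makes the hypothesis compatible with Lemma~\ref{concatcompker} and with the intended symmetry.
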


\begin{proof}
The proof is completely similar to the proof of the previous lemma.
\end{proof}

\begin{lemma}\label{contperexpandtotalrightgood}
Let $k\in\NN$.
Suppose that all evolutions of $k$-blocks in $\al$ are continuously periodic.
Let $\mathcal F$ and $\mathcal F'$ be two consecutive evolutions of stable nonempty $k$-multiblocks, 
and let $\mathcal F''$ be their concatenation. 
Suppose that $\nker_k(\mathcal F)>1$, and $\mathcal F$ is weakly periodic for 
a sequence of 
indices $m_0=0,m_1,\ldots, m_{n-1},m_n=\nker_k(\mathcal F)+1$, where $n\ge 2$, and 
$m_{n-1}<\nker_k(\mathcal F)$.
Let $\lambda$ be the (unique by Corollary \ref{contperweakuniqueright}) 
right weak evolutional period of $\mathcal F$ for 
the pair $(m_{n-1},\nker_k(\mathcal F)+1)$
Suppose also that $\lambda$ is not a 
right weak 
evolutional period of $\mathcal F$ for pair $(m_{n-2},m_n)$.
Then there are two possibilities:
\begin{enumerate}
\item $\mathcal F'$ is totally periodic, moreover, 
%if $\lambda$ is the (unique by Corollary \ref{contperweakuniqueright}) 
%right weak evolutional period of $\mathcal F$ for 
%the pair $(m_{n-1},\nker_k(\mathcal F)+1)$, then 
$\lambda$ is 
%also 
%a left weak evolutional period
a left total evolutional period 
of $\mathcal F'$.
%for the pair $(0,\nker_k(\mathcal F')+1)$.
\item There exists a $k$-series of obstacles in $\al$.
\end{enumerate}
\end{lemma}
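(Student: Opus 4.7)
The plan is to establish a weakly periodic structure for $\mathcal F''$ of exactly the shape demanded by Lemma \ref{contperexpandtotalright}, or else to extract a $k$-series of obstacles directly. As a first step, I would invoke Lemma \ref{multweakper}, which under our assumption that every evolution of $k$-blocks in $\al$ is continuously periodic yields that $\mathcal F'$ is weakly periodic. Using Lemma \ref{weakcontperinsright} to insert $\nker_k(\mathcal F)$ at the end of $\mathcal F$'s sequence (legitimate since $m_{n-1}<\nker_k(\mathcal F)$) and Lemma \ref{weakcontperinsleft} if needed to insert $1$ at the start of $\mathcal F'$'s sequence, Lemma \ref{concatcontper} then produces a weakly periodic sequence for $\mathcal F''$ of the form $0,m_1,\dots,m_{n-1},\nker_k(\mathcal F),M_{n+1},\dots,\nker_k(\mathcal F'')+1$, with the tail inherited from $\mathcal F'$.

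To cut away this tail, I would apply Lemma \ref{contperexpandmultiright} to this $\mathcal F''$ sequence with index $q=n-1$. This requires identifying the right weak evolutional period $\lambda^*$ of $\mathcal F''$ for the pair $(m_{n-1},\nker_k(\mathcal F))$ and showing that $\lambda^*$ is not a right weak evolutional period of $\mathcal F''$ for $(m_{n-2},\nker_k(\mathcal F))$. Since by Lemma \ref{concatcompker} the corresponding inner pseudoregular parts in $\mathcal F''$ agree with those in $\mathcal F$, both questions reduce to statements about $\mathcal F$.

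The main obstacle lies in this verification. By Lemma \ref{contperweakuniquewideleft} applied inside the extended weakly periodic sequence of $\mathcal F$, the left weak evolutional periods of $\mathcal F$ for the pairs $(m_{n-1},\nker_k(\mathcal F)+1)$ and $(m_{n-1},\nker_k(\mathcal F))$ coincide. I then apply Corollary \ref{contperexpandstartleft} with $m'=m_{n-1}$, $m=\nker_k(\mathcal F)+1$, $m_0=\nker_k(\mathcal F)$, and $m''=m_{n-2}$ (the required right weak evolutional period $\mu$ for $(m_{n-2},m_{n-1})$ coming from the given weak periodicity of $\mathcal F$): this yields the equivalence that a right weak evolutional period of $\mathcal F$ for $(m_{n-2},\nker_k(\mathcal F)+1)$ exists if and only if one for $(m_{n-2},\nker_k(\mathcal F))$ exists. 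By uniqueness (Corollary \ref{contperweakuniqueright}), any such period for $(m_{n-2},\nker_k(\mathcal F)+1)$ would have to equal $\lambda$, and the hypothesis rules this out; consequently no right weak evolutional period exists for $(m_{n-2},\nker_k(\mathcal F))$ either, and in particular $\lambda^*$ is not one.

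With the hypothesis of Lemma \ref{contperexpandmultiright} verified, either $\mathcal F''$ is weakly periodic for the truncated sequence $0,m_1,\dots,m_{n-1},\nker_k(\mathcal F'')+1$, or a $k$-series of obstacles exists in $\al$. In the first case, Lemma \ref{contperexpandtotalright} applies directly to $\mathcal F,\mathcal F'$ and yields that $\mathcal F'$ is totally periodic with $\lambda$ as a left total evolutional period; the second case is itself the alternative conclusion of our lemma.
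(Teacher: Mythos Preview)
Your proposal is correct and follows essentially the same architecture as the paper's proof: build a weakly periodic sequence for $\mathcal F''$ via Lemmas \ref{multweakper}, \ref{weakcontperinsright}/\ref{weakcontperinsleft}, and \ref{concatcontper}; verify the hypothesis of Lemma \ref{contperexpandmultiright} at the index corresponding to $m_{n-1}$; then finish with Lemma \ref{contperexpandtotalright}. The only difference is in the verification step: the paper argues by contradiction and uses Corollary \ref{contperexpandstartright} (together with Remark \ref{contpermultcycle}) to produce a forbidden right period for $(m_{n-2},\nker_k(\mathcal F)+1)$, whereas you use Corollary \ref{contperexpandstartleft} directly to transfer non-existence from $(m_{n-2},\nker_k(\mathcal F)+1)$ to $(m_{n-2},\nker_k(\mathcal F))$; these are symmetric variants of the same idea.
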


\begin{proof}
By Lemma \ref{multweakper},
$\mathcal F'$ is weakly periodic for some 
sequence of indices $m'_0=0,m'_1,\ldots, m'_{n'-1},m'_{n'}=\nker_k(\mathcal F')+1$.
By Lemma \ref{weakcontperinsleft}, 
without loss of generality we may suppose that $m'_1=1$. 
By Lemma \ref{weakcontperinsright}, 
$\mathcal F$ is also weakly periodic for the 
sequence 
$0,m_1,\ldots, m_{n-1},\nker_k(\mathcal F),\nker_k(\mathcal F)+1$. 
By Lemma \ref{concatcontper}, 
$\mathcal F''$ is weakly periodic for the sequence 
$m''_0=0,m''_1=m_1,\ldots, m''_{n-1}=m_{n-1},m''_n=\nker_k(\mathcal F),
m''_{n+1}=\nker_k(\mathcal F)-1+m'_2,\ldots,m''_{n-1+n'-1}=\nker_k(\mathcal F)-1+m'_{n'-1},m''_{n-1+n'}=\nker_k(\mathcal F'')+1$.
In particular, there exists 
a right weak evolutional period $\lambda'$ of 
$\mathcal F''$ for the pair $(m_{n-1},\nker_k(\mathcal F))$.

We are going to use Lemma \ref{contperexpandmultiright}.
To use it, we have to prove that 
$\lambda'$ is not a weak right 
evolutional period of $\mathcal F''$ for the pair $(m_{n-2},\nker_k(\mathcal F))$.
Assume the contrary.
%first that $\lambda'$ is also a weak right 
%evolutional period of $\mathcal F''$ for the pair $(m_{n-2},\nker_k(\mathcal F))$.
By Lemma \ref{concatcompker}, 
$\IpR_{k,m_{n-2}, \nker_k(\mathcal F)}(\mathcal F_l)=\IpR_{k,m_{n-2}, \nker_k(\mathcal F)}(\mathcal F''_l)$
as an occurrence in $\al$ for all $l\ge 0$.
Hence, $\lambda'$ is then also a 
a weak right 
evolutional period of $\mathcal F$ for the pair $(m_{n-2},\nker_k(\mathcal F))$.
We already know that 
$\mathcal F$ is weakly periodic for the 
sequence 
$0,m_1,\ldots, m_{n-1},\nker_k(\mathcal F),\nker_k(\mathcal F)+1$, 
%so now we see that
%$\mathcal F$ is also weakly periodic for the 
%sequence 
%$0,m_1,\ldots, m_{n-2},\nker_k(\mathcal F),\nker_k(\mathcal F)+1$.
so there exists a weak right evolutional period of $\mathcal F$ 
for the pair $(m_{n-1},\nker_k(\mathcal F))$, 
and by Lemma \ref{contperweakuniquewideright},
this period must also be $\lambda'$. 
% Since $\lambda$ is a 
% weak right evolutional period of $\mathcal F$ 
% for the pair $(m_{n-1},\nker_k(\mathcal F)+1)$, 
% by Remark \ref{contpermultcycle} there exists a 
% left weak evolutional period $\mu$ of 
% $\mathcal F$ 
% for the pair $(m_{n-1},\nker_k(\mathcal F)+1)$.
Since $\IpR_{k,\nker_k(\mathcal F), \nker_k(\mathcal F)+1}(\mathcal F_l)$
is always an empty occurrence, any final period $\mu$ is a 
left weak evolutional period of 
$\mathcal F$ 
for the pair $(\nker_k(\mathcal F),\nker_k(\mathcal F)+1)$.
Now $\lambda'$ and $\mu$ satisfy the conditions of 
Corollary \ref{contperexpandstartright}, 
and it implies that 
there exists a left weak evolutional period of $\mathcal F$ 
for the pair $(m_{n-2},\nker_k(\mathcal F)+1)$ if and only if 
there exists a left weak evolutional period of $\mathcal F$ 
for pair $(m_{n-1},\nker_k(\mathcal F)+1)$.
By Remark \ref{contpermultcycle},
we can replace left periods with right ones in this statement, 
in other words, 
there exists a right weak evolutional period of $\mathcal F$ 
for the pair $(m_{n-2},\nker_k(\mathcal F)+1)$ if and only if 
there exists a right weak evolutional period of $\mathcal F$ 
for pair $(m_{n-1},\nker_k(\mathcal F)+1)$.
But we know that 
$\lambda$ is a 
weak right evolutional period of $\mathcal F$ 
for the pair $(m_{n-1},\nker_k(\mathcal F)+1)$, 
so a 
weak right evolutional period of $\mathcal F$ 
for the pair $(m_{n-2},\nker_k(\mathcal F)+1)$
also exists, and by Lemma \ref{contperweakuniquewideright},
it also equals $\lambda$, but this contradicts the conditions of the lemma.

Therefore, $\lambda'$ is not a weak right 
evolutional period of $\mathcal F''$ for the pair $(m_{n-2},\nker_k(\mathcal F))$.
By Lemma \ref{contperexpandmultiright},
either there exists a $k$-sequence of obstacles in $\al$, 
or $\mathcal F''$ is weakly periodic 
for the sequence of indices 
$m''_0=0,m''_1=m_1,\ldots, m''_{n-1}=m_{n-1},\nker_k(\mathcal F'')+1$.
%If there exists a $k$-sequence of obstacles in $\al$, we are done, 
%otherwise we can use 
In the latter case the claim follows directly from Lemma \ref{contperexpandtotalright}.
\end{proof}

\begin{lemma}\label{contperexpandtotalleftgood}
Let $k\in\NN$.
Suppose that all evolutions of $k$-blocks in $\al$ are continuously periodic.
Let $\mathcal F'$ and $\mathcal F$ be two consecutive evolutions of stable nonempty $k$-multiblocks, 
and let $\mathcal F''$ be their concatenation. 
Suppose that $\nker_k(\mathcal F)>1$, and $\mathcal F$ is weakly periodic for 
a sequence of 
indices $m_0=0,m_1,\ldots, m_{n-1},m_n=\nker_k(\mathcal F)+1$, where $n\ge 2$, and 
$m_1>1$.
Let $\lambda$ be the (unique by Corollary \ref{contperweakuniqueleft}) 
left weak evolutional period of $\mathcal F$ for 
the pair $(0,m_1)$
Suppose also that $\lambda$ is not a 
left weak 
evolutional period of $\mathcal F$ for pair $(m_0,m_2)$.
Then there are two possibilities:
\begin{enumerate}
\item $\mathcal F'$ is totally periodic, moreover, 
%if $\lambda$ is the (unique by Corollary \ref{contperweakuniqueright}) 
%left weak evolutional period of $\mathcal F$ for 
%the pair $(0,m_1)$, then 
$\lambda$ is 
%also 
%a right weak evolutional period
a right total evolutional period 
of $\mathcal F'$.
%for the pair $(0,\nker_k(\mathcal F')+1)$.
\item There exists a $k$-series of obstacles in $\al$.
\end{enumerate}
\end{lemma}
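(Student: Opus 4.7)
The plan is to mirror the proof of Lemma~\ref{contperexpandtotalrightgood} using the symmetric (left-right reflected) counterparts of its ingredients. First I would apply Lemma~\ref{multweakper} to obtain that $\mathcal F'$ is weakly periodic for some sequence $m'_0=0,m'_1,\ldots,m'_{n'-1},m'_{n'}=\nker_k(\mathcal F')+1$; Lemma~\ref{weakcontperinsright} lets me assume $m'_{n'-1}=\nker_k(\mathcal F')$. Simultaneously, Lemma~\ref{weakcontperinsleft} allows me to assume $\mathcal F$ is weakly periodic for $0,1,m_1,\ldots,m_{n-1},\nker_k(\mathcal F)+1$. Applying Lemma~\ref{concatcontper} with $\mathcal F'$ in the role of the ``left'' evolution then shows that $\mathcal F''$ is weakly periodic for the concatenated sequence, in which the entries coming from $\mathcal F$ are shifted by $\nker_k(\mathcal F')-1$. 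In particular, by Corollary~\ref{contperweakuniqueleft} there is a unique left weak evolutional period $\lambda'$ of $\mathcal F''$ for the pair $(\nker_k(\mathcal F'),\,\nker_k(\mathcal F')-1+m_1)$, and by Lemma~\ref{concatcompker} the corresponding inner pseudoregular part in $\mathcal F''$ coincides with $\IpR_{k,1,m_1}(\mathcal F_l)$.

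The crucial step is to establish that $\lambda'$ is \emph{not} a left weak evolutional period of $\mathcal F''$ for the pair $(\nker_k(\mathcal F'),\,\nker_k(\mathcal F')-1+m_2)$. I would argue by contradiction: if it were, then by Lemma~\ref{concatcompker} $\lambda'$ would also be a left weak evolutional period of $\mathcal F$ for the pair $(1,m_2)$, and a similar identification gives $\lambda'$ as the left weak evolutional period of $\mathcal F$ for $(1,m_1)$. Because $\IpR_{k,0,1}(\mathcal F_l)$ is always the empty occurrence, any final period, in particular $\lambda$, serves as a right weak evolutional period of $\mathcal F$ for $(0,1)$. Corollary~\ref{contperexpandstartleft}, applied with $\lambda'$ at the pairs $(1,m_1)$ and $(1,m_2)$ and $\lambda$ at the pair $(0,1)$, then implies that a right weak evolutional period of $\mathcal F$ for $(0,m_1)$ exists if and only if one for $(0,m_2)$ exists. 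The former does exist (take $\lambda$ and use Remark~\ref{contpermultcycle}), so the latter exists too; Corollary~\ref{contperweakuniqueleft} combined with Lemma~\ref{finwordperiods} (using $|\IpR_{k,0,m_1}(\mathcal F_l)|\ge 2\finmax$ and the prefix relation between $\IpR_{k,0,m_1}(\mathcal F_l)$ and $\IpR_{k,0,m_2}(\mathcal F_l)$) forces this period to coincide with $\lambda$. This contradicts the hypothesis that $\lambda$ is not a left weak evolutional period of $\mathcal F$ for $(0,m_2)$.

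Equipped with this non-extension, I apply Lemma~\ref{contperexpandmultileft} to $\mathcal F''$ at the index $q$ for which $m''_q=\nker_k(\mathcal F')-1+m_1$. The dichotomy it produces is precisely the one we want: either a $k$-series of obstacles exists in $\al$, giving the second conclusion of the lemma, or $\mathcal F''$ is weakly periodic for the shorter sequence $0,\nker_k(\mathcal F')-1+m_1,\ldots,\nker_k(\mathcal F')-1+m_{n-1},\nker_k(\mathcal F'')+1$, and in this latter case Lemma~\ref{contperexpandtotalleft} (with the shift $\nker_k(\mathcal F')-1$ appropriate to $\mathcal F'$ being the left evolution) immediately yields that $\mathcal F'$ is totally periodic with $\lambda$ as a total right evolutional period. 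The main obstacle is the contradiction step in the middle paragraph: the bookkeeping of how pairs of indices translate through the concatenation has to be carefully combined with the chaining of the two uniqueness results, and one has to keep track of the difference between ``starting at $0$'' in $\mathcal F$ and ``starting at $\nker_k(\mathcal F')$'' (which, via Lemma~\ref{concatcompker}, corresponds to ``starting at $1$'' in $\mathcal F$). The remaining steps are a formal left-right reflection of the argument used to prove Lemma~\ref{contperexpandtotalrightgood}.
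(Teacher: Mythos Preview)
Your proposal is correct and follows exactly the approach the paper intends: the paper's own proof is literally ``The proof is completely symmetric to the proof of the previous lemma,'' and you have faithfully carried out that reflection, swapping Lemmas~\ref{weakcontperinsleft}/\ref{weakcontperinsright}, Lemmas~\ref{contperexpandmultiright}/\ref{contperexpandmultileft}, Lemmas~\ref{contperexpandtotalright}/\ref{contperexpandtotalleft}, and Corollaries~\ref{contperexpandstartright}/\ref{contperexpandstartleft} in the expected way. The one cosmetic difference is that in the contradiction step you cite Corollary~\ref{contperweakuniqueleft} together with Lemma~\ref{finwordperiods}, whereas the direct mirror of the original proof would invoke Lemma~\ref{contperweakuniquewideleft}; since the latter is proved from exactly those two ingredients, this is equivalent.
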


\begin{proof}
The proof is completely symmetric to the proof of the previous lemma.
\end{proof}

\begin{lemma}\label{contperallmultiblocks}
Let $k\in\NN$.
Suppose that all evolutions of $k$-blocks in $\al$ are continuously periodic.
Let $\mathcal F$ be an evolution of stable nonempty $k$-multiblocks.
Then at least one of 
the following is true:
\begin{enumerate}
\item $\mathcal F$ is continuously periodic.
\item There exists a $k$-series of obstacles in $\al$.
\end{enumerate}
\end{lemma}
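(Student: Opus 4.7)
The plan is to first apply Lemma \ref{multweakper} to obtain a weakly periodic sequence of indices $m_0=0, m_1, \ldots, m_{n-1}, m_n=\nker_k(\mathcal F)+1$ for $\mathcal F$, and then iteratively shorten this sequence until either its length forces continuous periodicity or a $k$-series of obstacles appears.

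For the base cases $n \le 2$: if $n = 2$, the sequence $(0, m_1, \nker_k(\mathcal F)+1)$ itself witnesses continuous periodicity for index $m_1$; if $n = 1$ then $m_1 = \nker_k(\mathcal F)+1 \ge 2 > 1$, so Lemma \ref{weakcontperinsleft} inserts $1$ and gives the sequence $(0, 1, m_1)$, witnessing continuous periodicity for index $1$. So assume $n \ge 3$.

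For the reduction step, let $\lambda$ denote the left weak evolutional period of $\mathcal F$ for the pair $(m_{n-2}, m_{n-1})$; this exists by the weak periodicity of the original sequence and is the unique such period by Corollary \ref{contperweakuniqueleft}. I split into two cases. If $\lambda$ is also a left weak evolutional period for the combined pair $(m_{n-2}, m_n)$, then $\mathcal F$ is weakly periodic for the shorter sequence obtained by deleting $m_{n-1}$, and I recurse with $n$ decreased by one. Otherwise, the hypotheses of Lemma \ref{contperexpandmultileft} are satisfied with $q = n-1$ (since $n \ge 3$ gives $1 \le q \le n-1$), and its conclusion yields directly either that $\mathcal F$ is weakly periodic for the length-three sequence $(0, m_{n-1}, \nker_k(\mathcal F)+1)$ --- hence continuously periodic for index $m_{n-1}$ --- or that a $k$-series of obstacles exists in $\al$. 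Either outcome completes the proof.

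Since $n$ strictly decreases along the iteration whenever the procedure has not already terminated in one of the two desired conclusions, and since $n$ is finite to start, the argument terminates after finitely many steps. The hard technical work has already been carried out in Lemmas \ref{contperexpandmultileft} and \ref{contperexpandmultiright} (the obstacle construction around a $\lambda$-periodic region whose $\lambda$-periodicity breaks on one side), so no serious new obstacle is expected here; the present argument only needs to organize these shrinking steps correctly and verify at each stage that the hypotheses of Corollary \ref{contperweakuniqueleft} and Lemma \ref{contperexpandmultileft} continue to hold, which follows from $n \ge 3$ and the fact that $m_1, \ldots, m_{n-1}$ lie strictly between $0$ and $\nker_k(\mathcal F)+1$.
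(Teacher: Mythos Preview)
Your proof is correct and follows essentially the same strategy as the paper's, just in mirror image. The paper collapses the weakly-periodic sequence from the left: it takes the maximal $q$ such that $\mathcal F$ is weakly periodic for $(m_0,m_q,m_{q+1},\ldots,m_n)$, then applies Lemma~\ref{contperexpandmultiright} at that $q$ to reach either $(0,m_q,\nker_k(\mathcal F)+1)$ or a $k$-series of obstacles. You instead collapse from the right, iteratively merging the last two intervals whenever the left period $\lambda$ for $(m_{n-2},m_{n-1})$ extends to $(m_{n-2},m_n)$, and invoking Lemma~\ref{contperexpandmultileft} the first time it fails. The paper's one-shot maximization and your step-by-step descent are equivalent organizations of the same idea; your invocation of Corollary~\ref{contperweakuniqueleft} for uniqueness of $\lambda$ is harmless but not actually needed, since any left weak period for $(m_{n-2},m_{n-1})$ would serve in the dichotomy.
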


\begin{proof}
By Lemma \ref{multweakper}, $\mathcal F$ is weakly periodic for some 
sequence of indices $m_0=0,m_1,\ldots, m_{n-1},m_n=\nker_k(\mathcal F)+1$.
Let $q$ ($1\le q\le n$) be the maximal index such that 
$\mathcal F$ is weakly periodic for the sequence 
$m_0,m_q,m_{q+1},\ldots, m_{n-1},m_n$.

If $q=n$, in other 
words, if $\mathcal F$ is weakly periodic for the sequence 
$m_0=0,m_n=\nker_k(\mathcal F)+1$, then 
by Lemma \ref{weakcontperinsright}, 
$\mathcal F$ is also weakly periodic for the sequence 
$m_0=0,\nker_k(\mathcal F),m_n=\nker_k(\mathcal F)+1$, 
and this by definition means that $\mathcal F$ is continuously periodic.
% let us check that $\mathcal F$ is continuously periodic 
% for index $\nker_k(\mathcal F)$. Indeed, let 
% $\lambda$ be a left weak evolutional period of $\mathcal F$ for 
% the pair $(0,\nker_k(\mathcal F)+1)$. Then 
% the residue of $|\IpR_{k,0,\nker_k(\mathcal F)+1}(\mathcal F_l)|$ modulo 
% $|\lambda|$ does not depend on $l$ and $\psi(\IpR_{k,0,\nker_k(\mathcal F)+1}(\mathcal F_l))=\psi(\Fg(\mathcal F_l))$
% is a weakly $|\lambda|$-periodic word with left perod $\lambda$.
% But $\psi(\IpR_{k,0,\nker_k(\mathcal F)}(\mathcal F_l))$ is a prefix of 
% $\psi(\IpR_{k,0,\nker_k(\mathcal F)+1}(\mathcal F_l))$, and 
% $|\IpR_{k,0,\nker_k(\mathcal F)+1}(\mathcal F_l)|-|\IpR_{k,0,\nker_k(\mathcal F)}(\mathcal F_l)|=
% |\Ker_{k,\nker_k(\mathcal F)}(\mathcal F)|$ does not depend on $l$, 
% so 
% $\psi(\IpR_{k,0,\nker_k(\mathcal F)}(\mathcal F_l))$ is
% also 
% a weakly $|\lambda|$-periodic word with left perod $\lambda$, 
% and 
% the residue of $|\IpR_{k,0,\nker_k(\mathcal F)}(\mathcal F_l)|$ modulo 
% $|\lambda|$ does not depend on $l$. Hence, $\lambda$ is a weak left evolutional
% period of $\mathcal F$ for the pair $(0,\nker_k(\mathcal F))$, and 
% any final period is a weak right evolutional period of $\mathcal F$ 
% for the pair $(\nker_k(\mathcal F),\nker_k(\mathcal F)+1)$
% since $\IpR_{k,\nker_k(\mathcal F),\nker_k(\mathcal F)+1}(\mathcal F_l)$ 
% is always an empty occurrence.

%If $q=n-1$, then $\mathcal F$ is weakly periodic for the sequence 
%$m_0=0,m_q,m_n=\nker_k(\mathcal F)+1$, so $\mathcal F$ is continuously 
%periodic for the index $m_q$ directly by the definition.

Now suppose that $q\le n-1$, then $n\ge 2$. Let $\lambda'$ be a final period such that 
$\lambda'$ is a weak right evolutional period of $\mathcal F$ for 
the pair $(m_q,m_{q+1})$. Then $\lambda'$ cannot be a weak right evolutional
period for the pair $(0,m_{q+1})$ as well, otherwise $\mathcal F$ would 
also be weakly periodic for the sequence 
$m_0,m_{q+1},\ldots, m_{n-1},m_n$, and this is a contradiction 
with the fact that $q$ was chosen as a maximal index such that 
$\mathcal F$ is weakly periodic for the sequence 
$m_0,m_q,m_{q+1},\ldots, m_{n-1},m_n$. So, we can use Lemma \ref{contperexpandmultiright}.
It implies that either there exists a $k$-series of obstacles in $\al$, 
or 
$\mathcal F$ is 
weakly periodic for the sequence 
$m_0=0,m_q,m_n=\nker_k(\mathcal F)+1$, so $\mathcal F$ is continuously 
periodic for the index $m_q$ directly by the definition.
\end{proof}

\begin{corollary}\label{contpercr}
Let $k\in\NN$.
Suppose that all evolutions of $k$-blocks in $\al$ are continuously periodic.
Let $\mathcal E$ be an evolution of $(k+1)$-blocks, and let $\mathcal F$ be the following evolution of
stable nonempty $k$-multiblocks: $\mathcal F_l=\Cr_k(\mathcal E_{l+3k})$.
Then at least one of 
the following is true:
\begin{enumerate}
\item $\mathcal F$ is continuously periodic.
\item There exists a $k$-series of obstacles in $\al$.
\end{enumerate}\qed
\end{corollary}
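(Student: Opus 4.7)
The plan is to simply apply Lemma \ref{contperallmultiblocks} to the evolution $\mathcal F$. The only real work is a bookkeeping check that $\mathcal F$ satisfies the hypotheses of that lemma, namely that it is a well-defined evolution of stable nonempty $k$-multiblocks; once this is done the conclusion transfers verbatim.

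First I would verify that each $\mathcal F_l$ is a stable $k$-multiblock. Since the offset $3k$ is large enough to make $\mathcal E_{l+3k}$ a stable $(k+1)$-block (using the definition of stability and Corollary \ref{regpartstablekminusone}), its core is a $k$-multiblock whose letters of order $k+1$ are all periodic (by Lemma \ref{klettersbecomeperiodic}, applied inside the first left, zeroth, and first right atoms) and whose $k$-blocks are all stable (by Corollary \ref{regpartstablekminusone}). This is exactly what it means for a $k$-multiblock to be stable.

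Next I would check nonemptyness in the $k$-multiblock sense. By Remark \ref{zerothatomsimplestruct} the zeroth atom $\CA_{k+1,0}(\mathcal E_{l+3k})$ always contains at least one (possibly empty) $k$-block, so $\Cr_{k+1}(\mathcal E_{l+3k})$ contains a $k$-block and hence is nonempty as a $k$-multiblock. Finally I would check the evolution relation $\mathcal F_{l+1}=\Su_k(\mathcal F_l)$: this is the commutation $\Su_k(\Cr_{k+1}(\mathcal E_{l+3k}))=\Cr_{k+1}(\Su_{k+1}(\mathcal E_{l+3k}))=\Cr_{k+1}(\mathcal E_{l+3k+1})$, which is immediate from the definition of the descendant of a $k$-multiblock and the fact (used implicitly throughout Section \ref{sectionstableblocks}) that the first left atom, zeroth atom and first right atom of a $k$-block map under $\Su_k$ to the corresponding atoms of its descendant.

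Once these three checks are done, Lemma \ref{contperallmultiblocks} applied to $\mathcal F$ (its hypothesis that all evolutions of $k$-blocks are continuously periodic is given) yields that either $\mathcal F$ is continuously periodic or a $k$-series of obstacles exists in $\al$, which is exactly the conclusion. There is no substantive obstacle in this proof; it is purely a definition-chasing corollary whose role in the paper is to translate hypotheses phrased for $k$-blocks into hypotheses phrased for the cores (which are $k$-multiblocks) of $(k+1)$-blocks, so that later arguments about $(k+1)$-blocks can invoke the structural results on $k$-multiblocks.
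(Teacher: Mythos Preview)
Your approach is exactly what the paper does: the corollary carries a \qed\ in the statement because it is an immediate instance of Lemma \ref{contperallmultiblocks}, and your bookkeeping (stability, nonemptiness via Remark \ref{zerothatomsimplestruct}, and the evolution relation $\Su_k(\Cr_{k+1}(\mathcal E_m))=\Cr_{k+1}(\Su_{k+1}(\mathcal E_m))$) is the right way to justify that instance.

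There is, however, one slip. You write that ``the offset $3k$ is large enough to make $\mathcal E_{l+3k}$ a stable $(k+1)$-block,'' but by the definition in Section \ref{sectionstableblocks} a $(k+1)$-block is stable only when its evolutional sequence number is at least $3(k+1)$, not $3k$; and Corollary \ref{regpartstablekminusone}, which you invoke to conclude that the $k$-blocks in the core are stable, needs the ambient $(k+1)$-block to be stable. With offset $3k$ this fails for small $l$. This is in fact a typo in the paper's own statement of the corollary (it also writes $\Cr_k$ where $\Cr_{k+1}$ is meant): every later use of Corollary \ref{contpercr} --- in Lemmas \ref{contperbothone}, \ref{contperonetwo}, \ref{contpertwoone}, \ref{contperbothtwo} --- defines $\mathcal F_l=\Cr_{k+1}(\mathcal E_{l+3(k+1)})$, and with that offset your argument goes through verbatim. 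So the fix is simply to note the typo and use the offset $3(k+1)$; nothing else changes.
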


Now we are going to prove some periodicity properties for the (left and right) regular parts of 
$(k+1)$-blocks or to find a $k$-series of obstacles 
provided that we know that all evolutions of $k$-blocks are continuously periodic.
%To do 
%that, we give one more definition for evolutions of $k$-multiblocks.

\begin{lemma}\label{atleastonetotal}
Let $k\in\NN$.
Suppose that all evolutions of $k$-blocks in $\al$ are continuously periodic.
Let $\mathcal F$, $\mathcal F'$ and $\mathcal F''$ be three
consecutive evolutions of stable nonempty $k$-multiblocks.
Suppose that $\nker_k(\mathcal F')>1$.
Then at least one of 
the following is true:
\begin{enumerate}
\item At least one of the evolutions $\mathcal F$, $\mathcal F'$, and $\mathcal F''$ is totally
periodic.
\item There exists a $k$-series of obstacles in $\al$.
\end{enumerate}
\end{lemma}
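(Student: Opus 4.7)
The plan is to reduce to the case where $\mathcal F'$ is continuously periodic but not totally periodic, and then use its one-sided period at the ``break point'' to force total periodicity on one of its neighbors via the ``expand to total'' lemmas.

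First I would apply Lemma~\ref{contperallmultiblocks} to $\mathcal F'$: either a $k$-series of obstacles already exists in $\al$ (and we are done), or $\mathcal F'$ is continuously periodic for some index $m$ with $1\le m\le\nker_k(\mathcal F')$, which means $\mathcal F'$ is weakly periodic for the sequence $0,m,\nker_k(\mathcal F')+1$. If $\mathcal F'$ is itself totally periodic, we are immediately done, so assume it is not; let $\lambda$ and $\mu$ be the left and right weak evolutional periods of $\mathcal F'$ for the pairs $(0,m)$ and $(m,\nker_k(\mathcal F')+1)$, respectively.

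The core of the argument is then a case split on $m$, made possible by the hypothesis $\nker_k(\mathcal F')>1$, which ensures that at least one of $m>1$ and $m<\nker_k(\mathcal F')$ holds (both failing would force $m=1=\nker_k(\mathcal F')$). If $m>1$, I would apply Lemma~\ref{contperexpandtotalleftgood} to the consecutive pair $(\mathcal F,\mathcal F')$: the index hypotheses are met with $n=2$ and $m_1=m>1$, and the ``failure'' hypothesis --- that $\lambda$ is not a left weak evolutional period of $\mathcal F'$ for $(0,\nker_k(\mathcal F')+1)$ --- follows from Corollary~\ref{contperweakuniqueleft} together with Lemma~\ref{contperweakuniquewideleft}, since any such total left period would be forced to coincide with $\lambda$ and would contradict the non-total periodicity of $\mathcal F'$. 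The conclusion is that $\mathcal F$ is totally periodic or a $k$-series of obstacles exists. Symmetrically, if $m<\nker_k(\mathcal F')$, I would apply Lemma~\ref{contperexpandtotalrightgood} to $(\mathcal F',\mathcal F'')$ using $\mu$ and the right-sided uniqueness statements (Corollary~\ref{contperweakuniqueright} and Lemma~\ref{contperweakuniquewideright}) to obtain the analogous conclusion for $\mathcal F''$.

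The main point of care will be verifying the failure hypothesis cleanly in each case: one must argue that when $\mathcal F'$ is continuously periodic at index $m$ but not totally periodic, the witnessing one-sided weak evolutional period genuinely cannot be extended to the full pair $(0,\nker_k(\mathcal F')+1)$. This is where the ``wide uniqueness'' lemmas do the real work. The asymmetry between Lemmas~\ref{contperexpandtotalleftgood} and~\ref{contperexpandtotalrightgood} (one requires $m_1>1$, the other $m_{n-1}<\nker_k$) is precisely what makes the case split on $m$ necessary, and the hypothesis $\nker_k(\mathcal F')>1$ is exactly what rules out the degenerate scenario in which neither of these lemmas would apply.
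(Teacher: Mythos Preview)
Your proposal is correct and follows essentially the same approach as the paper. The only cosmetic difference is that the paper splits into the cases $m>1$ versus $m=1$ (handling the ``$\mathcal F'$ totally periodic'' possibility inside each branch), whereas you first dispose of the totally periodic case and then split into $m>1$ versus $m<\nker_k(\mathcal F')$; since $\nker_k(\mathcal F')>1$ makes these exhaustive, the two organizations are equivalent. One small remark: you do not actually need the uniqueness lemmas to verify the failure hypothesis---if $\lambda$ (resp.\ $\mu$) extended to the full pair $(0,\nker_k(\mathcal F')+1)$, then $\mathcal F'$ would be totally periodic by definition, contradicting your standing assumption.
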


\begin{proof}
First, apply Lemma \ref{contperallmultiblocks} to $\mathcal F'$. 
Either there exists a $k$-series of obstacles in $\al$
(and then we are done), or there exists an index $m$ ($1\le m\le \nker_k(\mathcal F')$)
such that $\mathcal F'$ is continuously periodic for $m$.

Suppose that $m>1$. Denote the (unique by Corollary \ref{contperweakuniqueleft}) 
left weak evolutional period of $\mathcal F'$ for the pair $(0,m)$
by $\lambda$. If $\lambda$ is also a weak left 
evolutional period of $\mathcal F'$ for the pair $(0,\nker_k(\mathcal F')+1)$,
$\mathcal F'$ is totally periodic, and we are done. Otherwise, by 
Lemma \ref{contperexpandtotalleftgood},
either $\mathcal F$ is totally periodic, or there is a $k$-series of obstacles in $\al$.

Now let us consider the case when $m=1$.
Denote the (unique by Corollary \ref{contperweakuniqueright}) 
right weak evolutional period of $\mathcal F'$ for the pair $(1,\nker_k(\mathcal F)+1)$
by $\lambda'$.
If $\lambda'$ is also a weak right 
evolutional period of $\mathcal F'$ for the pair $(0,\nker_k(\mathcal F')+1)$,
$\mathcal F'$ is totally periodic.
Otherwise, by 
Lemma \ref{contperexpandtotalrightgood},
either $\mathcal F''$ is totally periodic, or there is a $k$-series of obstacles in $\al$.
\end{proof}

\begin{lemma}\label{atleastonereallytotal}
Let $\mathcal F$, $\mathcal F'$ and $\mathcal F''$ be three
consecutive evolutions of stable nonempty $k$-multiblocks ($k\ge 1$).
Suppose that $\mathcal F$, $\mathcal F'$, and $\mathcal F''$ are totally periodic. 
Suppose also that $\nker_k(\mathcal F)>1$, $\nker_k(\mathcal F')>1$, and $\nker_k(\mathcal F'')>1$.
Let $\lambda$ (resp.\ $\lambda'$) be the (unique by Corollary \ref{contperweakuniqueright})
%weak right evolutional period
total right evolutional period 
of $\mathcal F$ (resp.\ of $\mathcal F'$),
%for 
%the pair $(0,\nker_k(\mathcal F)+1)$
%(resp.\ $(0,\nker_k(\mathcal F')+1)$),
and let $\mu'$ (resp $\mu''$) 
be the (unique by Corollary \ref{contperweakuniqueleft})
%weak left evolutional period
total left evolutional period 
of $\mathcal F'$ (resp.\ of $\mathcal F''$).
%for 
%the pair $(0,\nker_k(\mathcal F')+1)$
%(resp.\ $(0,\nker_k(\mathcal F'')+1)$).

Then at least one of 
the following is true:
\begin{enumerate}
\item $\lambda=\mu'$.
\item $\lambda'=\mu''$.
\item There exists a $k$-series of obstacles in $\al$.
\end{enumerate}
\end{lemma}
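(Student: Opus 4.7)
The plan is to suppose $\lambda\ne\mu'$ and $\lambda'\ne\mu''$ and explicitly build a $k$-series of obstacles in $\psi(\al)$. Write $\Fg(\mathcal F_l)=\swa{a_l}{b_l}$, $\Fg(\mathcal F'_l)=\swa{b_l+1}{c_l}$, $\Fg(\mathcal F''_l)=\swa{c_l+1}{d_l}$. The word $\psi(\Fg(\mathcal F'_l))$ is simultaneously weakly left $\mu'$-periodic and weakly right $\lambda'$-periodic, and by Corollary \ref{betweenkernelgrowsgood} has length $\ge 2\finmax$; so Lemma \ref{finwordperiods} together with Lemma \ref{finalnomorethanonce} forces $|\mu'|=|\lambda'|=:p$, and $\lambda'=\Cyc_{r'}(\mu')$ where $r'$ is the constant residue of $|\Fg(\mathcal F'_l)|$ modulo $p$.

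First I would define $u$ as the largest integer such that the weak left $\mu'$-periodicity of $\psi(\Fg(\mathcal F'_l))$ continues leftward for $u$ letters into $\psi(\Fg(\mathcal F_l))$; because $\psi(\Fg(\mathcal F_l))$ is weakly right $\lambda$-periodic, the compatibility of the two prescriptions at position $b_l+1-i$ reduces to the equation $\lambda_{(|\lambda|-i)\bmod|\lambda|}=\mu'_{(|\mu'|-i)\bmod|\mu'|}$, purely between the abstract words $\lambda$ and $\mu'$, so $u$ is independent of $l$. Symmetrically, define $v$ on the right using $\lambda'$ and $\mu''$. To see $u<\infty$: if $u$ were unbounded then, for $l$ so large that $|\Fg(\mathcal F_l)|>u$, $\psi(\Fg(\mathcal F_l))$ would be simultaneously weakly right $\lambda$-periodic and weakly $p$-periodic with length $\ge 2\finmax$, and Lemmas \ref{finwordperiods} and \ref{finalnomorethanonce} would force $\lambda=\mu'$, contradicting the assumption; the argument on the right similarly gives $v<\infty$.

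Next I would pick $l_0$ with $|\Fg(\mathcal F_l)|>u$ and $|\Fg(\mathcal F''_l)|>v$ for all $l\ge l_0$ (possible by Corollary \ref{betweenkernelgrowsgood}) and set $\mathcal H_l=\swa{b_{l+l_0}+1-u}{c_{l+l_0}+v}$. Then $\psi(\mathcal H_l)$ is weakly $p$-periodic by construction with $p\le\finmax$; its length $u+v+|\Fg(\mathcal F'_{l+l_0})|$ is $\ge 2\finmax$, strictly grows in $l$, and is $\Theta(l^{k'})$ for some $1\le k'\le k$ by Corollary \ref{betweenkernelgrowsgood}.

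The hardest point is the non-extendability clause: I must show that both $\psi(\swa{b_{l+l_0}-u}{c_{l+l_0}+v})$ and $\psi(\swa{b_{l+l_0}+1-u}{c_{l+l_0}+v+1})$ fail to be weakly $p$-periodic for \emph{any} left period of length $p$, not just the one naturally inherited. For this I would argue that the left period of length $p$ of $\psi(\mathcal H_l)$ is unique: since $\mu'$ is a final period it is not a proper power by Lemma \ref{finalnomorethanonce}, hence no cyclic shift of it is either, so Lemma \ref{finwordperiods} applied to $\psi(\mathcal H_l)$ (which has length $\ge 2p$) rules out any second left period of length $p$. Any weak $p$-periodicity of a one-letter extension of $\mathcal H_l$ would then have a prescribed cyclic-shift left period extending the unique one of $\psi(\mathcal H_l)$; by the maximality of $u$ (resp.\ $v$), the actual letter $\psi(\al_{b_{l+l_0}-u})$ (resp.\ $\psi(\al_{c_{l+l_0}+v+1})$) disagrees with that prescription, yielding the required non-periodicity and completing the construction of the $k$-series of obstacles.
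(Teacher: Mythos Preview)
Your argument is correct and takes a genuinely different, more hands-on route than the paper. The paper does not build the obstacle directly: it forms the concatenation $\mathcal F'''$ of $\mathcal F$ and $\mathcal F'$, uses Lemma~\ref{concatcontper} to see that $\mathcal F'''$ is weakly periodic for $0,\nker_k(\mathcal F),\nker_k(\mathcal F''')+1$, and then splits on whether $\lambda'$ extends to a right weak period of $\mathcal F'''$ for the full pair $(0,\nker_k(\mathcal F''')+1)$. If it does, Lemma~\ref{contperexpandtotalleft} forces $\mu'$ to be a total right period of $\mathcal F$, whence $\lambda=\mu'$ by uniqueness; if it does not, Lemma~\ref{contperexpandtotalrightgood} (whose proof ultimately invokes Lemma~\ref{contperexpandmultiright}, where the obstacle is actually manufactured) gives either a $k$-series of obstacles or that $\lambda'$ is a total left period of $\mathcal F''$, whence $\lambda'=\mu''$.

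Your construction bypasses all of that machinery and produces the obstacle directly from the two boundary mismatches. A few remarks on points that deserve care but that you handle correctly: the claim that $u$ is determined by the abstract equation $\lambda_{(|\lambda|-i)\bmod|\lambda|}=\mu'_{(p-i)\bmod p}$ is right, and your finiteness argument works because when those equations hold for all $i$, the last $p$ letters of $\psi(\Fg(\mathcal F_l))$ are exactly $\mu'_0\cdots\mu'_{p-1}$, so the induced right period of length $p$ is $\mu'$ itself (not a nontrivial shift), and then Lemma~\ref{finwordperiods} with Lemma~\ref{finalnomorethanonce} indeed gives $\lambda=\mu'$. (Your phrasing ``for $l$ so large that $|\Fg(\mathcal F_l)|>u$'' is slightly off when $u=\infty$; what you mean is that the abstract equations already force the concrete right-$\mu'$-periodicity of $\psi(\Fg(\mathcal F_l))$ for every $l$.) For the non-extendability clause, your uniqueness-of-left-period argument is correct and in fact does not even need primitivity: Lemma~\ref{finwordperiods} with $p_1=p_2=p$ immediately gives that two left periods of the same length $p$ of a word of length $\ge 2p$ must coincide.

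What your approach buys is self-containment: you avoid Lemmas~\ref{concatcontper}, \ref{contperexpandtotalleft}, \ref{contperexpandtotalrightgood}, and \ref{contperexpandmultiright}, and in particular you sidestep the fact that the paper's invocation of Lemma~\ref{contperexpandtotalrightgood} carries a hypothesis (continuous periodicity of all $k$-block evolutions) not present in the statement of Lemma~\ref{atleastonereallytotal}. What the paper's approach buys is reuse: the obstacle construction is factored once into Lemma~\ref{contperexpandmultiright} and then cited repeatedly.
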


\begin{proof}
By Lemma \ref{weakcontperinsleft}, 
$\mathcal F$ is also weakly periodic for the sequence $0,\nker_k(\mathcal F),\nker_k(\mathcal F)+1$.
By Lemma \ref{weakcontperinsright}, 
$\mathcal F'$ is also weakly periodic for the sequence $0,1,\nker_k(\mathcal F')+1$.
Denote the concatenation of $\mathcal F$ and $\mathcal F'$ by $\mathcal F'''$.
By Lemma \ref{concatcontper}, $\mathcal F'''$ is 
weakly periodic for the sequence $0,\nker_k(\mathcal F), \nker_k(\mathcal F''')+1=\nker_k(\mathcal F)+\nker_k(\mathcal F')$.

By Lemma \ref{contperweakuniquewideright},
the right weak evolutional period of $\mathcal F'$ 
for the pair $(1,\nker_k(\mathcal F')+1)$
equals $\lambda'$.
By Lemma \ref{concatcompker}, 
$\IpR_{k,\nker_k(\mathcal F), \nker_k(\mathcal F''')+1}(\mathcal F'''_l)=
\IpR_{k,1, \nker_k(\mathcal F)+1}(\mathcal F''_l)$
as an occurrence in $\al$ for all $l\ge 0$, 
so $\lambda'$
is also a
right weak evolutional period of $\mathcal F'''$ 
for the pair
$(\nker_k(\mathcal F), \nker_k(\mathcal F''')+1)$.

Suppose first that $\lambda'$ is also a weak right evolutional 
period of $\mathcal F'''$ 
for the pair 
$(0, \nker_k(\mathcal F''')+1)$. Then, since $\mathcal F'$ 
is totally periodic, we can use Lemma \ref{contperexpandtotalleft}.
It implies that $\mu'$, which is a left weak evolutional period 
of $\mathcal F'$ for the pair $(0,\nker_k(\mathcal F')+1)$, 
is also a right total evolutional period of $\mathcal F$.
Then it follows from Corollary \ref{contperweakuniqueright}
that $\lambda=\mu'$.

Now let us consider the case when $\lambda'$ is not a 
weak right evolutional 
period of $\mathcal F'''$ 
for the pair 
$(0, \nker_k(\mathcal F''')+1)$.
Then, by Lemma \ref{contperexpandtotalrightgood},
either there exists a $k$-series of obstacles in $\al$, 
or $\lambda'$ is a left total evolutional period of $\mathcal F''$.
In the latter case, $\lambda'=\mu''$ by Corollary \ref{contperweakuniqueleft}.
\end{proof}

%We are going to consider an evolution $\mathcal E$ of $(k+1)$-blocks ($k\ge 1$) such that 
%Case I holds for $\mathcal E$ at the left
%and the following 
%evolution of $k$-multiblocks: $\mathcal F_l=\LA_{k,2}(\mathcal E_{l+3k})$. 
%These $k$-multiblocks are stable by Corollary \ref{regpartstablekminusone}
%(and by the definition of the left regular part of $\mathcal E_{l+3k}$), 
%and they are nonempty since Case I holds for $\mathcal E$ at the left.
%We are going to prove that $\mathcal F$ is totally p[eriodic.

\begin{lemma}\label{oneperenoughfortotleft}
Let $k\in\NN$.
Suppose that all evolutions of $k$-blocks in $\al$ are continuously periodic.
Let $\mathcal F$ be an evolution of stable nonempty $k$-multiblocks, and let $\lambda$ 
be a final period.
Suppose that there exists $l_0\ge 0$ such that $\psi(\Fg(\mathcal F_{l_0}))$
is 
% a weakly $|\lambda|$-periodic word with left period $\lambda$. 
a weakly left $\lambda$-periodic word. 
Then $\lambda$ is 
a left total period of $\mathcal F$.
\end{lemma}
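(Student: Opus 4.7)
My plan is to combine the weak-periodicity decomposition of $\mathcal F$ obtained from Lemma \ref{multweakper} with the observation that once $\psi(\Fg(\mathcal F_{l_0}))$ is weakly left $\lambda$-periodic, all local left weak evolutional periods appearing in that decomposition must coincide with $\lambda$ up to cyclic shift; weak left $\lambda$-periodicity of every $\psi(\Fg(\mathcal F_l))$ then follows from the residue invariance of inner pseudoregular part lengths.

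The case $\nker_k(\mathcal F)=1$ is trivial: both the $(0,1)$- and $(1,2)$-inner pseudoregular parts are empty, so $\Fg(\mathcal F_l)=\Ker_{k,1}(\mathcal F_l)$ coincides with the fixed abstract word $\Ker_{k,1}(\mathcal F)$ by Lemma \ref{compkernelfixed}, and the conclusion is immediate. Assume now $\nker_k(\mathcal F)\ge 2$. By Lemma \ref{multweakper}, $\mathcal F$ is weakly periodic for some sequence $0=m_0,m_1,\ldots,m_n=\nker_k(\mathcal F)+1$ with corresponding left weak evolutional periods $\lambda_1,\ldots,\lambda_n$. After applying Lemmas \ref{weakcontperinsleft} and \ref{weakcontperinsright}, I may assume $m_1=1$ and $m_{n-1}=\nker_k(\mathcal F)$, so that $n\ge 3$. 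For each middle index $i$ with $2\le i\le n-1$ we have $1\le m_{i-1}<m_i\le\nker_k(\mathcal F)$, so Lemma \ref{betweenkernelgrows} gives $|\IpR_{k,m_{i-1},m_i}(\mathcal F_{l_0})|\ge 2\finmax\ge 2\max(|\lambda|,|\lambda_i|)$. Let $p_i$ be the starting position of this IpR inside $\Fg(\mathcal F_{l_0})$, taken modulo $|\lambda|$. The word $\psi(\IpR_{k,m_{i-1},m_i}(\mathcal F_{l_0}))$ is weakly left $\lambda_i$-periodic by definition, and, being a suboccurrence of the weakly left $\lambda$-periodic $\psi(\Fg(\mathcal F_{l_0}))$, it is also weakly left $\Cyc_{p_i}(\lambda)$-periodic. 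Applying Lemma \ref{finwordperiods} and then Lemma \ref{finalnomorethanonce} (both $\lambda_i$ and $\Cyc_{p_i}(\lambda)$ are final periods and hence cannot be nontrivial powers) forces $\lambda_i=\Cyc_{p_i}(\lambda)$; in particular $|\lambda_i|=|\lambda|$.

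To finish, observe that $|\Ker_{k,m_j}(\mathcal F)|$ does not depend on $l$, and by the definition of a weak evolutional period the residue of $|\IpR_{k,m_{i-1},m_i}(\mathcal F_l)|$ modulo $|\lambda_i|=|\lambda|$ is also independent of $l$. Consequently the starting position of every composite kernel and every inner pseudoregular part inside $\Fg(\mathcal F_l)$ has the same residue modulo $|\lambda|$ as in $\Fg(\mathcal F_{l_0})$. Each fixed composite kernel $\psi(\Ker_{k,m_j}(\mathcal F))$ therefore sits at a position matching the $\lambda$-pattern (the match being witnessed at $l_0$), and each $\psi(\IpR_{k,m_{i-1},m_i}(\mathcal F_l))$, being weakly left $\Cyc_{p_i}(\lambda)$-periodic and starting at a position congruent to $p_i$ modulo $|\lambda|$, continues that same pattern. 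Hence $\psi(\Fg(\mathcal F_l))$ is weakly left $\lambda$-periodic for every $l\ge 0$, and $|\Fg(\mathcal F_l)|\bmod|\lambda|$ is constant; that is, $\lambda$ is a total left evolutional period of $\mathcal F$. The delicate step is the final bookkeeping with cyclic shifts and residues; everything else reduces to standard applications of Lemmas \ref{finwordperiods}, \ref{finalnomorethanonce}, and \ref{betweenkernelgrows}.
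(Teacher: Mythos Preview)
Your proof is correct, but it follows a different path from the paper's. The paper proceeds inductively: after arranging $m_1=1$, it takes the maximal $q$ for which $\lambda$ is a left weak evolutional period of $\mathcal F$ for the pair $(0,m_q)$, and if $q<n$ applies Lemma~\ref{contperexpandright} to produce a specific position $j'+s\le j$ inside $\Fg(\mathcal F_{l_0})$ at which left $\lambda$-periodicity fails, contradicting the hypothesis. So the paper reuses the already-developed extension lemma and gets a very short argument.

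You instead work ``in parallel'': you pin down each middle inner pseudoregular part at $l_0$ via Lemma~\ref{finwordperiods} and Lemma~\ref{finalnomorethanonce} to identify its weak evolutional period with the appropriate cyclic shift of $\lambda$, and then reassemble $\psi(\Fg(\mathcal F_l))$ for every $l$ by tracking starting positions modulo $|\lambda|$ (using the constancy of kernel lengths and of the residues of the IpR lengths). This avoids Lemma~\ref{contperexpandright} entirely and is more self-contained, at the price of the explicit position-residue bookkeeping you flag as the delicate step. Both routes are sound; the paper's buys brevity through the extension machinery, yours buys independence from it.
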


\begin{proof}
By Lemma \ref{multweakper},
$\mathcal F$ is weakly periodic for a
sequence of indices $m_0=0,m_1,\ldots, m_{n-1},m_n=\nker_k(\mathcal F)+1$.
By Lemma \ref{weakcontperinsleft}, 
without loss of generality we may suppose that $m_1=1$.
Since $\IpR_{k,0,1}(\mathcal F_l)$ is always an empty word, 
$\lambda$ is a weak left evolutional period of $\mathcal F$ for the pair $(0,1)$.
Let $q$ ($1\le q\le n$) be the maximal index such that 
$\lambda$ is a weak left evolutional period of $\mathcal F$ for the pair $(0,m_q)$.
If $q=n$, we are done.

Otherwise, we are going to get a contradiction using Lemma \ref{contperexpandright}.
Denote $\Fg(\mathcal F_{l_0})=\swa ij$ and $\IpR_{k,0,m_q}(\mathcal F_{l_0})=\swa i{j'}$.
If $\lambda$ is not a weak left evolutional period of $\mathcal F$
for the pair $(0,m_{q+1})$,
then Lemma \ref{contperexpandright} implies that there exists $s\in\NN$ 
such that $\psi(\swa i{j'+s})$ is not 
% a weakly $|\lambda|$-periodic 
% word with left period $\lambda$, 
a weakly left $\lambda$-periodic word, 
and $j'+s\le j$, i.~e.\ $\al_{j'+s}$ 
is a letter in $\Fg(\mathcal F_{l_0})$. But then 
$\psi(\swa i{j'+s})$ is a prefix of $\psi(\Fg(\mathcal F_{l_0}))$,
and $\psi(\Fg(\mathcal F_{l_0}))$
is
% weakly $|\lambda|$-periodic with left period $\lambda$, 
weakly left $\lambda$-periodic, 
a contradiction.
\end{proof}

The following lemma can be proved symmetrically.

\begin{lemma}\label{oneperenoughfortotright}
Let $k\in\NN$.
Suppose that all evolutions of $k$-blocks in $\al$ are continuously periodic.
Let $\mathcal F$ be an evolution of stable nonempty $k$-multiblocks, and let $\lambda$ 
be a final period.
Suppose that there exists $l_0\ge 0$ such that $\psi(\Fg(\mathcal F_{l_0}))$
% is a weakly $|\lambda|$-periodic word with right period $\lambda$. 
is a weakly right $\lambda$-periodic word. 
Then $\lambda$ is 
a right total period of $\mathcal F$.\qed
\end{lemma}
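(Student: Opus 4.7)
The plan is to mirror the proof of Lemma \ref{oneperenoughfortotleft} with all left/right orientations swapped; no new ideas are required, only careful index bookkeeping. First I would apply Lemma \ref{multweakper} to obtain that $\mathcal F$ is weakly periodic for some sequence $m_0=0,m_1,\ldots,m_{n-1},m_n=\nker_k(\mathcal F)+1$, and then use Lemma \ref{weakcontperinsright} to assume without loss of generality that $m_{n-1}=\nker_k(\mathcal F)$. Since $\IpR_{k,\nker_k(\mathcal F),\nker_k(\mathcal F)+1}(\mathcal F_l)$ is always the empty occurrence, $\lambda$ is trivially a right weak evolutional period of $\mathcal F$ for the pair $(m_{n-1},m_n)$.

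Next I would let $q$, with $0\le q\le n-1$, be the \emph{minimal} index such that $\lambda$ is a right weak evolutional period of $\mathcal F$ for the pair $(m_q,\nker_k(\mathcal F)+1)$. If $q=0$, this is by definition the statement that $\lambda$ is a total right evolutional period of $\mathcal F$, and the lemma is proved.

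Otherwise $q\ge 1$, and I want a contradiction via Lemma \ref{contperexpandleft}. Since $\mathcal F$ is weakly periodic for the whole sequence $m_0,\ldots,m_n$, Remark \ref{contpermultcycle} supplies a final period $\mu$ that is a right weak evolutional period of $\mathcal F$ for $(m_{q-1},m_q)$. By the minimality of $q$, $\lambda$ is not a right weak evolutional period of $\mathcal F$ for $(m_{q-1},\nker_k(\mathcal F)+1)$. Applying Lemma \ref{contperexpandleft} with this $\lambda$ and $\mu$ (so in its notation $m=\nker_k(\mathcal F)+1$, $m'=m_q$, $m''=m_{q-1}$), I get an integer $s\in\NN$, independent of $l$, such that for every $l\ge 0$, writing $\Fg(\mathcal F_l)=\swa{i_l}{j_l}$ and $\IpR_{k,m_q,\nker_k(\mathcal F)+1}(\mathcal F_l)=\swa{i'_l}{j_l}$, the occurrence $\psi(\swa{i'_l-s}{j_l})$ is not weakly right $\lambda$-periodic, yet $i'_l-s\ge i_l$.

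Specializing to $l=l_0$: $\swa{i'_{l_0}-s}{j_{l_0}}$ is a suffix of $\Fg(\mathcal F_{l_0})$, so $\psi(\swa{i'_{l_0}-s}{j_{l_0}})$ is a suffix of $\psi(\Fg(\mathcal F_{l_0}))$, which is weakly right $\lambda$-periodic by hypothesis. Any suffix of a weakly right $\lambda$-periodic word is weakly right $\lambda$-periodic, giving the desired contradiction. The only thing requiring care in executing the plan is getting the symmetric direction of Lemma \ref{contperexpandleft} oriented correctly (minimal rather than maximal $q$, suffix rather than prefix, and pulling $\mu$ from the sequence via Remark \ref{contpermultcycle} since the definition of weak periodicity is phrased using left periods); the substantive work has already been done in Lemmas \ref{multweakper} and \ref{contperexpandleft}.
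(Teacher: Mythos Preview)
Your proof is correct and is precisely the symmetric version of the proof of Lemma \ref{oneperenoughfortotleft}, which is exactly what the paper indicates (the statement carries a \qed\ and is introduced by ``can be proved symmetrically''). The only thing to note is cosmetic: in invoking Lemma \ref{contperexpandleft} you need $m_q\ge 1$, which holds since $q\ge 1$ and the sequence $m_0=0<m_1<\cdots$ is strictly increasing; otherwise your bookkeeping is accurate.
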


\begin{lemma}\label{atleasttwokersleft}
Let $k\in\NN$.
%Suppose that all evolutions of $k$-blocks in $\al$ are continuously periodic.
Let $\mathcal E$ be an evolution of $(k+1)$-blocks such that Case I holds at the left.
Let $m\ge 0, m'\ge 0$.
%Consider the following evolution of stable  nonempty
%$k$-multiblocks: $\mathcal F_l=\LA_{k,2}(\mathcal E_{l+3k})$. 
Then $\mathcal F=(\mathcal F_l)_{l\ge 0}$, where 
$\mathcal F_l=\LA_{k+1,2+m}(\mathcal E_{l+3(k+1)+m+m'})$,
is an 
evolution of stable nonempty
$k$-multiblocks, and
$\nker_k(\mathcal F)>1$.
\end{lemma}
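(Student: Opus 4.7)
The plan is as follows.  First I would verify that $\mathcal F$ really is an evolution of nonempty stable $k$-multiblocks.  Nonemptiness: since Case I holds for $\mathcal E$ at the left, Lemma \ref{klettersbecomeperiodic} applied to $\mathcal E$ (substituting $k+1$ for its ``$k$'', $2+m\ge 1$ for its $l$, and $l+3k+1+m'\ge 1$ for its $m$) yields at least one letter of order $k+1$ inside $\mathcal F_l=\LA_{k+1,2+m}(\mathcal E_{l+3(k+1)+m+m'})$, and the same lemma makes every such letter periodic.  Stability of the $k$-blocks inside $\mathcal F_l$ follows from Lemma \ref{evolseqnolarge} applied to $\mathcal E$ (again with ``$k$''$=k+1$): every such $k$-block has evolutional sequence number equal to $l+3k+1+m'$ or $l+3k+m'$, both $\ge 3k$.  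The recurrence $\mathcal F_{l+1}=\Su_k(\mathcal F_l)$ is immediate from the identity $\LA_{k+1,l'}(\mathcal E_n)=\Su_k^{n-l'}(\LA_{k+1,l'}(\mathcal E_{l'}))$.

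Next I would extract a useful interior $k$-block inside $\mathcal F_0$.  Apply Lemma \ref{caseoneexistsleft} to $\mathcal E$ with its ``$k$''$=k+1\ge 2$, its $l=2+m\ge 2$, and its $m=3k+1+m'\ge 4$; this yields a $k$-block $B$ inside $\mathcal F_0$ such that Case I holds for the evolution of $B$ either at the left or at the right.  By the previous paragraph, $B$ is a stable $k$-block, so $\LpreP_k(B)$, $\Cr_k(B)$, $\LR_k(B)$ and their right-hand analogues are all well defined.

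Without loss of generality assume Case I holds at the left for the evolution of $B$ (the other case is symmetric).  Then $\Fg(\LpreP_k(B))$ is by definition a prime kernel of $\mathcal F_0$.  The core $\Cr_k(B)$ is a $(k-1)$-multiblock that contains at least one prime kernel --- by Remarks \ref{zerothatomsimplestruct} and \ref{primekernelsexist} if $k>1$, and trivially (a $0$-multiblock is its own prime kernel) if $k=1$.  Every such prime kernel of $\Cr_k(B)$ is simultaneously a prime kernel of $\mathcal F_0$ by the third clause of the definition; let $P$ denote the leftmost one.  By Lemma \ref{borderkernelsleft} applied to $\Cr_k(B)$, $P$ starts at the first position of $\Fg(\Cr_k(B))$, so the stretch of $\al$ between $\Fg(\LpreP_k(B))$ and $P$ is exactly $\Fg(\LR_k(B))$, which has length $\ge 2\finmax>0$ by Lemma \ref{regpartabslargeone} if $k=1$ or Lemma \ref{regpartabslargek} if $k>1$.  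Moreover, no prime kernel of $\mathcal F_0$ can lie inside this stretch, because regular parts of $k$-blocks do not contribute prime kernels to $\mathcal F_0$.  Hence $\Fg(\LpreP_k(B))$ and $P$ are non-consecutive prime kernels of $\mathcal F_0$ with no prime kernel of $\mathcal F_0$ between them, and Lemma \ref{conskernelsurvives} places them in distinct composite kernels.  Therefore $\nker_k(\mathcal F_0)\ge 2$, and by Remark \ref{sucompkerbijection}, $\nker_k(\mathcal F)\ge 2$.

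The only real obstacle is bookkeeping: one must substitute $k+1$ for $k$ correctly in Lemmas \ref{klettersbecomeperiodic}, \ref{evolseqnolarge}, \ref{caseoneexistsleft}, and \ref{regpartabslargek}, and handle the two benign dichotomies ($k=1$ versus $k>1$; Case I at the left versus the right of $B$) uniformly.  No new combinatorial idea is required beyond the kernel/atom machinery already developed in Sections \ref{sectionstableblocks} and \ref{sectionevolutions}.
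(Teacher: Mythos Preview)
Your proof is correct but takes a different route from the paper.  Both arguments begin identically: stability and nonemptiness of $\mathcal F$ are verified (the paper cites Corollary~\ref{regpartstablekminusone}, you unpack this into Lemmas~\ref{klettersbecomeperiodic} and~\ref{evolseqnolarge}, which is fine), and Lemma~\ref{caseoneexistsleft} is invoked to produce a $k$-block $B$ in $\mathcal F_0$ whose evolution has Case~I on one side.  From here the arguments diverge.  The paper argues asymptotically: if $\nker_k(\mathcal F)=1$ then $|\Fg(\mathcal F_l)|=|\Ker_{k,1}(\mathcal F)|$ is constant in $l$, whereas the Case-I $k$-block inside $\mathcal F_l$ has length $\Theta(l^k)$ by Lemma~\ref{regpartasym}, a contradiction.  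You instead exhibit two composite kernels directly: $\Fg(\LpreP_k(B))$ and the leftmost prime kernel of $\Cr_k(B)$ are both prime kernels of $\mathcal F_0$, separated by the nonempty $\Fg(\LR_k(B))$, hence lie in distinct composite kernels.  Your approach avoids the asymptotic Lemma~\ref{regpartasym} entirely and is more constructive; the paper's is shorter but leans on growth estimates already on hand.

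Two cosmetic remarks.  First, Remark~\ref{primekernelsexist} does not actually assert that a general nonempty stable $(k-1)$-multiblock has a prime kernel; the correct reference for existence of a prime kernel in $\Cr_k(B)$ when $k>1$ is Lemma~\ref{borderkernelsleft}, which you already invoke in the next sentence anyway.  Second, Lemma~\ref{conskernelsurvives} is not the source of the conclusion that the two prime kernels lie in distinct composite kernels --- that follows straight from the \emph{definition} of composite kernels as maximal runs of consecutive prime kernels, once you know the two kernels are adjacent but not consecutive.  Neither point affects the correctness of your argument.
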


\begin{proof}
The stability follows from 
Corollary \ref{regpartstablekminusone}
(and from the definition of the left regular part of $\mathcal E_{l+3k}$), 
and the nonemptiness follows from the definition of Case I.

By Lemma \ref{caseoneexistsleft}, 
there exists a $k$-block 
$\swa ij$ in $\LA_{k+1,2+m}(\mathcal E_{3(k+1)+m+m'})$ such that Case I holds at the left or at the right 
for the evolution of $\swa ij$. Denote the evolution $\swa ij$ belongs to 
by $\mathcal E'$. 
Then, by the definitions of the descendant of a $k$-multiblock and of a left atom, 
there exists $l_0\ge 0$ such that $\LA_{k+1,2+m}(\mathcal E_{l+3(k+1)+m+m'})$ contains
$\mathcal E'_{3(k+1)+l-l_0}$ for all $l\ge 0$.
By Lemma \ref{regpartasym}, $|\mathcal E'_{3(k+1)+l-l_0}|$ is $\Theta(l^k)$ for $l\to\infty$.
Hence, $|\Fg(\mathcal F_l)|$ cannot be bounded for $l\to\infty$. But if $\nker_k(\mathcal F)=1$, 
then $\Fg(\mathcal F_l)=\Ker_{k,1}(\mathcal F)$ for all $l\ge 0$, and 
in particular, $|\Fg(\mathcal F_l)|$ is bounded for $l\to\infty$.
Therefore, $\nker_k(\mathcal F)>1$.
\end{proof}

The proof of the following lemma is completely symmetric.

\begin{lemma}\label{atleasttwokersright}
Let $k\in\NN$.
%Suppose that all evolutions of $k$-blocks in $\al$ are continuously periodic.
Let $\mathcal E$ be an evolution of $(k+1)$-blocks such that Case I holds at the right.
Let $m\ge 0, m'\ge 0$.
Then $\mathcal F=(\mathcal F_l)_{l\ge 0}$, where 
$\mathcal F_l=\RA_{k+1,2+m}(\mathcal E_{l+3(k+1)+m+m'})$,
is an 
evolution of stable nonempty
$k$-multiblocks, and
$\nker_k(\mathcal F)>1$.\qed
\end{lemma}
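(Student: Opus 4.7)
The plan is to follow the exact mirror of the proof of Lemma \ref{atleasttwokersleft}, invoking the ``right-sided'' analogues of every result used there. Since the statement and the hypotheses are obtained from those of Lemma \ref{atleasttwokersleft} by swapping ``left'' and ``right'' everywhere, and since each ingredient in that proof has a symmetric counterpart already available in the excerpt, the argument transfers without any new idea.

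First I would verify that $\mathcal F$ is indeed an evolution of stable nonempty $k$-multiblocks. That $\mathcal F_{l+1} = \Su_k(\mathcal F_l)$ follows from the definition $\RA_{k+1,2+m}(\mathcal E_{l+3(k+1)+m+m'}) = \Su_k^{\,l}(\RA_{k+1,2+m}(\mathcal E_{3(k+1)+m+m'}))$ for $l \ge 0$. Stability follows from Corollary \ref{regpartstablekminusone}: since $2+m \le 3k+m+m'$, the atom $\RA_{k+1,2+m}(\mathcal E_{l+3(k+1)+m+m'})$ sits inside the right regular part $\RR_{k+1}(\mathcal E_{l+3(k+1)+m+m'})$, so every $k$-block in it is stable and every letter of order $k+1$ in it is periodic. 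Nonemptiness in the $k$-multiblock sense is immediate from the definition of Case I at the right: each $\RA_{k+1,l'}(\mathcal E_{l'})$ with $l' > 1$ contains at least one letter of order $k+1$.

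Next, to show $\nker_k(\mathcal F) > 1$, I would apply Lemma \ref{caseoneexistsright} to $\mathcal E$ with $l = 2+m$ and $m$ there equal to the fixed offset $3k + m'$ (or any sufficiently large shift), obtaining a $k$-block $\swa ij$ inside $\RA_{k+1,2+m}(\mathcal E_{3(k+1)+m+m'})$ such that Case I holds for its evolution $\mathcal E'$ at the left or at the right. By the definition of the descendant of a $k$-multiblock and of a right atom, there exists $l_0 \ge 0$ such that $\mathcal E'_{3(k+1)+l-l_0}$ is contained in $\RA_{k+1,2+m}(\mathcal E_{l+3(k+1)+m+m'})$ for every $l \ge 0$. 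By Lemma \ref{regpartasym}, $|\mathcal E'_{3(k+1)+l-l_0}|$ is $\Theta(l^k)$, hence unbounded as $l\to\infty$, so $|\Fg(\mathcal F_l)|$ is also unbounded as $l\to\infty$.

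Finally, if instead $\nker_k(\mathcal F) = 1$ then by Lemma \ref{compkernelfixed} we would have $\Fg(\mathcal F_l) = \Ker_{k,1}(\mathcal F)$ as an abstract word for all $l$, making $|\Fg(\mathcal F_l)|$ constant in $l$, contradicting the unboundedness just established. Therefore $\nker_k(\mathcal F) > 1$, completing the proof. No step here is a real obstacle; the only thing to be careful about is bookkeeping of the indices so that Lemma \ref{caseoneexistsright} is applied with parameters in its required range, and that the shift $l_0$ is chosen so the superdescendant chain of $\swa ij$ is tracked correctly inside the superdescendants of the right atom.
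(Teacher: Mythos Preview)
Your proposal is correct and follows exactly the approach the paper intends: the paper simply marks this lemma with \qed\ in the statement, indicating the proof is the right-handed mirror of Lemma~\ref{atleasttwokersleft}, which is precisely what you wrote out. The only quibble is a harmless off-by-one in your index bookkeeping (the ``$m$'' parameter when invoking Lemma~\ref{caseoneexistsright} works out to $3k+1+m'$ rather than $3k+m'$), but since you only need it to be at least $2$ and you already flag that any sufficiently large shift suffices, this does not affect the argument.
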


\begin{lemma}\label{onelatotal}
Let $k\in\NN$.
Suppose that all evolutions of $k$-blocks in $\al$ are continuously periodic.
Let $\mathcal E$ be an evolution of $(k+1)$-blocks such that Case I holds at the left.
Consider the following evolution of stable nonempty (by Lemma \ref{atleasttwokersleft})
$k$-multiblocks: $\mathcal F_l=\LA_{k+1,2}(\mathcal E_{l+3(k+1)})$.

At least one of 
the following is true:
\begin{enumerate}
\item $\mathcal F$ is totally periodic.
\item There exists a $k$-series of obstacles in $\al$.
\end{enumerate}
\end{lemma}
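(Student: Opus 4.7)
The plan is to locate $\mathcal F$ inside a triple of three consecutive left-atom evolutions, apply Lemma \ref{atleastonetotal} to that triple, and then transfer total periodicity from whichever member of the triple it lands on back down to $\mathcal F$ itself by exploiting the fact that left atoms coincide as abstract words under a simultaneous shift of the atom index and the block index.

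First I will set $M = 3(k+1)+2$ and define, for $j\in\{2,3,4\}$ and $l\ge 0$, the auxiliary evolutions $\mathcal F^{(j)}_l = \LA_{k+1,j}(\mathcal E_{l+M})$. Applying Lemma \ref{atleasttwokersleft} with the parameters $(m,m') = (j-2,\,4-j)$, which are both nonnegative for $j\in\{2,3,4\}$, I conclude that each $\mathcal F^{(j)}$ is an evolution of stable nonempty $k$-multiblocks and that $\nker_k(\mathcal F^{(j)}) > 1$. Inside each block $\mathcal E_{l+M}$ the atoms $\LA_{k+1,4}(\mathcal E_{l+M}),\LA_{k+1,3}(\mathcal E_{l+M}),\LA_{k+1,2}(\mathcal E_{l+M})$ sit adjacently in this order, so the three evolutions $\mathcal F^{(4)},\mathcal F^{(3)},\mathcal F^{(2)}$ are consecutive.

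Next I will apply Lemma \ref{atleastonetotal} to this triple, with middle evolution $\mathcal F^{(3)}$ satisfying $\nker_k(\mathcal F^{(3)})>1$. The lemma yields either a $k$-series of obstacles in $\al$, which already proves the lemma, or total periodicity of at least one of $\mathcal F^{(4)},\mathcal F^{(3)},\mathcal F^{(2)}$. The main (though minor) technical step will be to propagate total periodicity downward from any $\mathcal F^{(j+1)}$ to $\mathcal F^{(j)}$: for this I will invoke Corollary \ref{atomsperiodicitysmallleft}, which for $j\ge 2$ and $l\ge 0$ gives the identity $\Fg(\mathcal F^{(j+1)}_{l+1}) = \Fg(\mathcal F^{(j)}_l)$ as abstract words. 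Hence these two forgetful occurrences have identical images under $\psi$ and identical lengths, so if $\mathcal F^{(j+1)}$ is totally periodic with left total evolutional period $\lambda$, then for every $l\ge 0$ the word $\psi(\Fg(\mathcal F^{(j)}_l))$ is weakly left $\lambda$-periodic and the residue of $|\Fg(\mathcal F^{(j)}_l)|$ modulo $|\lambda|$ is independent of $l$, so $\mathcal F^{(j)}$ is totally periodic as well. Iterating, whichever member of the triple turned out to be totally periodic forces $\mathcal F^{(2)}$ to be totally periodic.

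Finally, because $\mathcal F^{(2)}_l = \mathcal F_{l+2}$, total periodicity of $\mathcal F^{(2)}$ with some left total period $\lambda$ gives in particular that $\psi(\Fg(\mathcal F_2))$ is weakly left $\lambda$-periodic. Lemma \ref{oneperenoughfortotleft} then promotes this single instance of periodicity at $l_0 = 2$ to total periodicity of $\mathcal F$ itself, completing the proof.
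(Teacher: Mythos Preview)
Your proof is correct and follows essentially the same approach as the paper: define the three consecutive atom evolutions $\LA_{k+1,4},\LA_{k+1,3},\LA_{k+1,2}$ of $\mathcal E_{l+3(k+1)+2}$, apply Lemma \ref{atleastonetotal}, then use Corollary \ref{atomsperiodicitysmallleft} to transfer total periodicity back to $\mathcal F$ and close with Lemma \ref{oneperenoughfortotleft}. The only cosmetic difference is that the paper shifts by $m$ in one step (identifying $\Fg(\LA_{k+1,2+m}(\mathcal E_{l+3(k+1)+2}))$ directly with $\Fg(\mathcal F_{l+2-m})$), whereas you iterate the shift by $1$ down to $\mathcal F^{(2)}$ before invoking Lemma \ref{oneperenoughfortotleft}; the arguments are otherwise identical.
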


\begin{proof}
First, consider the following three evolutions of stable nonempty $k$-multiblocks:
$\mathcal F'_l=\LA_{k+1,4}(\mathcal E_{l+3(k+1)+2})$, 
$\mathcal F''_l=\LA_{k+1,3}(\mathcal E_{l+3(k+1)+2})$, and
$\mathcal F'''_l=\LA_{k+1,2}(\mathcal E_{l+3(k+1)+2})$.
%These $k$-multiblocks are stable by 
%Corollary \ref{regpartstablekminusone}, and they are nonempty by the definition of Case I.
By Lemma \ref{atleasttwokersleft} for $m=2,m'=0$ (resp.\ for $m=1,m'=1$, for $m=0,m'=2$), 
we have $\nker_k(\mathcal F')>1$ (resp.\ $\nker_k(\mathcal F'')>1$, $\nker_k(\mathcal F''')>1$).
Also, these three evolutions are consecutive, so, by Lemma \ref{atleastonetotal},
either there exists a $k$-series of obstacles in $\al$ (and then we are done), or 
at least one of these three evolutions is totally periodic.

Suppose now that 
at least one of the evolutions $\mathcal F'$, $\mathcal F''$, and $\mathcal F'''$ is totally periodic.
In other words, 
there exists a final period $\lambda$ and a number $m$ ($m$ can equal 0, 1, or 2)
such that for all $l\ge 0$, 
$\psi(\Fg(\LA_{k+1,2+m}(\mathcal E_{l+3(k+1)+2})))$ is 
% a weakly $|\lambda|$-periodic word with left period $\lambda$, 
a weakly left $\lambda$-periodic word, 
and the residue of $|\Fg(\LA_{k+1,2+m}(\mathcal E_{l+3(k+1)+2}))|$
modulo $|\lambda|$
does not depend on $l$.
By Corollary \ref{atomsperiodicitysmallleft}, 
$\Fg(\LA_{k+1,2+m}(\mathcal E_{l+3(k+1)+2}))=\Fg(\LA_{k+1,2}(\mathcal E_{l+3(k+1)+(2-m)}))$
as an abstract word. Therefore, $\lambda$ is a left total evolutional period of the following evolution of 
stable $k$-multiblocks: $\mathcal F_{2-m}, \mathcal F_{2-m+1}, \mathcal F_{2-m+2},\ldots$

Therefore, $\lambda$ is a total left evolutional period of $\mathcal F$ by Lemma \ref{oneperenoughfortotleft}.
\end{proof}

\begin{lemma}\label{oneratotal}
Let $k\in\NN$.
Suppose that all evolutions of $k$-blocks in $\al$ are continuously periodic.
Let $\mathcal E$ be an evolution of $(k+1)$-blocks such that Case I holds at the right.
Consider the following evolution of stable nonempty (by Lemma \ref{atleasttwokersright})
$k$-multiblocks: $\mathcal F_l=\RA_{k+1,2}(\mathcal E_{l+3(k+1)})$.

At least one of 
the following is true:
\begin{enumerate}
\item $\mathcal F$ is totally periodic.
\item There exists a $k$-series of obstacles in $\al$.
\end{enumerate}
\end{lemma}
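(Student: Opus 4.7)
The plan is to mirror the proof of Lemma \ref{onelatotal} on the right side. First I would introduce the three consecutive evolutions of stable nonempty $k$-multiblocks
$\mathcal F'_l=\RA_{k+1,2}(\mathcal E_{l+3(k+1)+2})$,
$\mathcal F''_l=\RA_{k+1,3}(\mathcal E_{l+3(k+1)+2})$, and
$\mathcal F'''_l=\RA_{k+1,4}(\mathcal E_{l+3(k+1)+2})$.
By Lemma \ref{atleasttwokersright} applied with $(m,m')=(0,2),(1,1),(2,0)$ respectively, we get $\nker_k(\mathcal F')>1$, $\nker_k(\mathcal F'')>1$, and $\nker_k(\mathcal F''')>1$. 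Since these three are consecutive evolutions of stable nonempty $k$-multiblocks and all evolutions of $k$-blocks in $\al$ are continuously periodic by hypothesis, Lemma \ref{atleastonetotal} applies and yields either a $k$-series of obstacles in $\al$ (in which case we are done) or total periodicity of at least one of $\mathcal F'$, $\mathcal F''$, $\mathcal F'''$.

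Second, assume the latter: there exist a final period $\lambda$ and an index $m\in\{0,1,2\}$ such that for all $l\ge 0$ the word $\psi(\Fg(\RA_{k+1,2+m}(\mathcal E_{l+3(k+1)+2})))$ is a weakly right $\lambda$-periodic word, and the residue of $|\Fg(\RA_{k+1,2+m}(\mathcal E_{l+3(k+1)+2}))|$ modulo $|\lambda|$ is independent of $l$. Here I invoke Corollary \ref{atomsperiodicitysmallright} to shift the evolutional sequence number: as abstract words, $\Fg(\RA_{k+1,2+m}(\mathcal E_{l+3(k+1)+2}))=\Fg(\RA_{k+1,2}(\mathcal E_{l+3(k+1)+(2-m)}))$. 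Therefore $\lambda$ is a right total evolutional period for the shifted evolution $\mathcal F_{2-m},\mathcal F_{2-m+1},\mathcal F_{2-m+2},\ldots$, and in particular $\psi(\Fg(\mathcal F_{l_0}))$ is weakly right $\lambda$-periodic for $l_0=2-m$.

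Third, to promote this one instance of right $\lambda$-periodicity to a right total evolutional period of the full evolution $\mathcal F$, I would apply Lemma \ref{oneperenoughfortotright}, which is exactly the mirror statement of Lemma \ref{oneperenoughfortotleft} used in the proof of Lemma \ref{onelatotal}. Its hypothesis (all evolutions of $k$-blocks in $\al$ are continuously periodic) is part of our assumption, so it gives that $\lambda$ is a total right evolutional period of $\mathcal F$, i.e.\ $\mathcal F$ is totally periodic.

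No step here is essentially new: the whole argument is the left-right mirror of Lemma \ref{onelatotal}, and the only place where one must be slightly careful is the bookkeeping of indices when applying Corollary \ref{atomsperiodicitysmallright} and Lemma \ref{atleasttwokersright}, ensuring that the three evolutions we consider really are consecutive and that the shift by $2-m$ stays nonnegative so that $\mathcal F_{2-m}$ makes sense; this is the only mildly delicate point, and it is handled exactly as on the left.
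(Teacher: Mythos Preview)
Your proposal is correct and is precisely the left-right mirror of the proof of Lemma \ref{onelatotal}, which is exactly what the paper does (it simply states that the proof is completely symmetric to the previous lemma). Your bookkeeping of the indices in Lemma \ref{atleasttwokersright} and Corollary \ref{atomsperiodicitysmallright}, and your use of Lemma \ref{oneperenoughfortotright} in place of Lemma \ref{oneperenoughfortotleft}, are all correct.
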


\begin{proof}
The proof is completely symmetric to the proof of the previous lemma.
\end{proof}

\begin{lemma}\label{alllatotal}
Let $k\in\NN$.
Suppose that all evolutions of $k$-blocks in $\al$ are continuously periodic.
Let $\mathcal E$ be an evolution of $(k+1)$-blocks such that Case I holds at the left.

At least one of 
the following is true:
\begin{enumerate}
\item There exists a unique final period $\lambda$ and a number $r$ ($0\le r<|\lambda|$)
such that for all $m\ge 0$, $\lambda$ is a left total period of the
evolution $\mathcal F$ of stable nonempty (by Lemma \ref{atleasttwokersright})
$k$-multiblocks defined by $\mathcal F_l=\LA_{k+1,2+m}(\mathcal E_{l+3(k+1)+m})$, 
moreover, the residue of $|\Fg(\LA_{k+1,2+m}(\mathcal E_{l+3(k+1)+m}))|$ always equals $r$
(i.~e.\ it does not depend on $m$).
\item There exists a $k$-series of obstacles in $\al$.
\end{enumerate}
\end{lemma}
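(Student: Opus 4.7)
The plan is to reduce the statement to Lemma \ref{onelatotal} by exploiting the fact, provided by Corollary \ref{atomsperiodicitysmallleft}, that all the forgetful occurrences $\Fg(\LA_{k+1,2+m}(\mathcal E_{l+3(k+1)+m}))$ coincide as abstract words as $m$ varies. First I would apply Lemma \ref{onelatotal} to $\mathcal E$: this gives either a $k$-series of obstacles in $\al$ (in which case we are done) or else a final period $\lambda$ that is a total left evolutional period of the evolution $\mathcal F^{(0)}=(\mathcal F^{(0)}_l)_{l\ge 0}$, where $\mathcal F^{(0)}_l=\LA_{k+1,2}(\mathcal E_{l+3(k+1)})$. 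Let $r$ be the common residue of $|\Fg(\mathcal F^{(0)}_l)|$ modulo $|\lambda|$ (which is independent of $l$ by definition of a left weak evolutional period).

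Next I would verify that this same pair $(\lambda,r)$ works simultaneously for all $m\ge 0$. Fix $m\ge 0$ and set $\mathcal F^{(m)}_l=\LA_{k+1,2+m}(\mathcal E_{l+3(k+1)+m})$; one checks immediately from the definition of the descendant of an atom that $\mathcal F^{(m)}=(\mathcal F^{(m)}_l)_{l\ge 0}$ is indeed an evolution of stable $k$-multiblocks, and by Lemma \ref{atleasttwokersleft} (with $m'=0$) each $\mathcal F^{(m)}_l$ is nonempty and $\nker_k(\mathcal F^{(m)})>1$. Applying Corollary \ref{atomsperiodicitysmallleft} with its ``$l$''$=2$, ``$m$''$=l+3(k+1)-2$, and ``$n$''$=m$, we get
\[
\Fg(\LA_{k+1,2+m}(\mathcal E_{l+3(k+1)+m}))=\Fg(\LA_{k+1,2}(\mathcal E_{l+3(k+1)}))
\]
as abstract words for all $l\ge 0$. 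Hence $\psi(\Fg(\mathcal F^{(m)}_l))$ and $\psi(\Fg(\mathcal F^{(0)}_l))$ coincide as abstract words, and in particular have the same length. Since $\lambda$ is a total left evolutional period of $\mathcal F^{(0)}$, the abstract-word equality implies that $\psi(\Fg(\mathcal F^{(m)}_l))$ is weakly left $\lambda$-periodic for every $l\ge 0$ and that the residue of $|\Fg(\mathcal F^{(m)}_l)|$ modulo $|\lambda|$ equals $r$; that is, $\lambda$ is a total left evolutional period of $\mathcal F^{(m)}$ with the same residue $r$.

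For uniqueness, observe that any pair $(\lambda',r')$ satisfying the first alternative of the statement must in particular make $\lambda'$ a total left evolutional period of $\mathcal F^{(0)}$ (i.e., a left weak evolutional period for the pair $(0,\nker_k(\mathcal F^{(0)})+1)$). Since $\nker_k(\mathcal F^{(0)})>1$, Corollary \ref{contperweakuniqueleft} forces $\lambda'=\lambda$, and then $r'$ is determined as the residue of $|\Fg(\mathcal F^{(0)}_l)|$ modulo $|\lambda|$, so $r'=r$. This proof is essentially bookkeeping; the only ``obstacle'' is making sure Corollary \ref{atomsperiodicitysmallleft} applies with matching indices so that the left atoms at varying depths really collapse to the same abstract word, and that the nonemptiness/multiplicity hypotheses of Corollary \ref{contperweakuniqueleft} are met uniformly in $m$, which is exactly what Lemma \ref{atleasttwokersleft} provides.
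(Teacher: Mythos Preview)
Your proof is correct and follows essentially the same approach as the paper: apply Lemma~\ref{onelatotal} to obtain the total left evolutional period $\lambda$ for the $m=0$ case, then use Corollary~\ref{atomsperiodicitysmallleft} to identify $\Fg(\LA_{k+1,2+m}(\mathcal E_{l+3(k+1)+m}))$ with $\Fg(\LA_{k+1,2}(\mathcal E_{l+3(k+1)}))$ as abstract words and thereby transfer both the weak left $\lambda$-periodicity and the constant residue $r$ to all $m\ge 0$. Your explicit uniqueness paragraph (via $\nker_k(\mathcal F^{(0)})>1$ and Corollary~\ref{contperweakuniqueleft}) is a slight elaboration over the paper's version, which folds uniqueness into the same step, but the arguments are the same.
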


\begin{proof}
By Lemma \ref{onelatotal}, either there exists a 
$k$-series of obstacles in $\al$ (and then we are done), 
or
the
evolution $\mathcal F'$ of stable nonempty
$k$-multiblocks defined by $\mathcal F'_l=\LA_{k+1,2}(\mathcal E_{l+3(k+1)})$
is totally periodic.

Suppose that $\mathcal F'$ is totally periodic.
Since $\nker_k(\mathcal F')>1$ by Lemma \ref{atleasttwokersleft}, 
it follows from Corollary \ref{contperweakuniqueleft} that the left 
total evolutional period of $\mathcal F'$ is unique, denote it by $\lambda$. 
Denote by $r$ the remainder of $|\Fg(\mathcal F'_l)|$ modulo $|\lambda|$
(it does not depend on $l$ by the definition of a weak left evolutional period).
Then $\psi(\Fg(\LA_{k+1,2}(\mathcal E_{l+3(k+1)})))$ is 
% a weakly $|\lambda|$-periodic word 
% with left period $\lambda$ 
a weakly left $\lambda$-periodic word 
for all $l\ge 0$.
By Corollary \ref{atomsperiodicitysmallleft}, 
$\Fg(\LA_{k+1,2+m}(\mathcal E_{l+3(k+1)+m}))=\Fg(\LA_{k+1,2}(\mathcal E_{l+3(k+1)}))$
as an abstract word.
Hence, for all $l\ge 0$ and $m\ge 0$,
$\psi(\Fg(\LA_{k+1,2+m}(\mathcal E_{l+3(k+1)+m})))$ is 
% a weakly $|\lambda|$-periodic word 
% with left period $\lambda$,
a weakly left $\lambda$-periodic word,
and 
the residue of $|\Fg(\LA_{k+1,2+m}(\mathcal E_{l+3(k+1)+m}))|$ equals $r$.
Therefore, 
$\lambda$ is a left total period of the
evolution $\mathcal F$ of stable nonempty
$k$-multiblocks defined by $\mathcal F_l=\LA_{k+1,2+m}(\mathcal E_{l+3(k+1)+m})$.
\end{proof}

\begin{lemma}\label{allratotal}
Let $k\in\NN$.
Suppose that all evolutions of $k$-blocks in $\al$ are continuously periodic.
Let $\mathcal E$ be an evolution of $(k+1)$-blocks such that Case I holds at the right.

At least one of 
the following is true:
\begin{enumerate}
\item There exists a unique final period $\lambda$ and a number $r$ ($0\le r<|\lambda|$)
such that for all $m\ge 0$, $\lambda$ is a right total period of the
evolution $\mathcal F$ of stable nonempty (by Lemma \ref{atleasttwokersright})
$k$-multiblocks defined by $\mathcal F_l=\RA_{k+1,2+m}(\mathcal E_{l+3(k+1)+m})$, 
moreover, the residue of $|\Fg(\RA_{k+1,2+m}(\mathcal E_{l+3(k+1)+m}))|$ always equals $r$
(i.~e.\ it does not depend on $m$).
\item There exists a $k$-series of obstacles in $\al$.
\end{enumerate}
\end{lemma}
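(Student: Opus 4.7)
The plan is to mirror the proof of Lemma \ref{alllatotal}, using the right-sided analogues of every statement invoked there. First I would apply Lemma \ref{oneratotal} (the right-hand counterpart of Lemma \ref{onelatotal}) to the single evolution of stable nonempty $k$-multiblocks $\mathcal F'_l=\RA_{k+1,2}(\mathcal E_{l+3(k+1)})$. This gives the dichotomy: either a $k$-series of obstacles exists in $\al$ (in which case we are done), or $\mathcal F'$ is totally periodic.

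Assume $\mathcal F'$ is totally periodic. By Lemma \ref{atleasttwokersright} applied with $m=0$, $m'=0$ we have $\nker_k(\mathcal F')>1$, so Corollary \ref{contperweakuniqueright} applies and yields the uniqueness of the right total evolutional period of $\mathcal F'$; call it $\lambda$. Denote by $r$ the residue of $|\Fg(\mathcal F'_l)|$ modulo $|\lambda|$, which does not depend on $l$ by the definition of a weak right evolutional period. Thus $\psi(\Fg(\RA_{k+1,2}(\mathcal E_{l+3(k+1)})))$ is a weakly right $\lambda$-periodic word with incomplete occurrence of length $r$ for every $l\ge 0$.

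To transfer this to all $m\ge 0$ I would invoke Corollary \ref{atomsperiodicitysmallright}, which says that
$\Fg(\RA_{k+1,2+m}(\mathcal E_{l+3(k+1)+m}))$
equals $\Fg(\RA_{k+1,2}(\mathcal E_{l+3(k+1)}))$ as an abstract word (and, in particular, the residues of their lengths modulo $|\lambda|$ both equal $r$). Hence, for every $l\ge 0$ and every $m\ge 0$, $\psi(\Fg(\RA_{k+1,2+m}(\mathcal E_{l+3(k+1)+m})))$ is weakly right $\lambda$-periodic with incomplete occurrence of length exactly $r$, which is precisely the statement that $\lambda$ is a right total evolutional period of the evolution $\mathcal F$ given by $\mathcal F_l=\RA_{k+1,2+m}(\mathcal E_{l+3(k+1)+m})$, with the prescribed residue $r$.

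The proof contains no real obstacle beyond carefully reusing the right-handed analogues: Lemma \ref{oneratotal} in place of \ref{onelatotal}, Lemma \ref{atleasttwokersright} in place of \ref{atleasttwokersleft}, Corollary \ref{contperweakuniqueright} in place of \ref{contperweakuniqueleft}, and Corollary \ref{atomsperiodicitysmallright} in place of \ref{atomsperiodicitysmallleft}. Since each of these right-handed statements has already been established, the argument is completely symmetric to that of Lemma \ref{alllatotal}.
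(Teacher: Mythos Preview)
Your proposal is correct and follows exactly the approach the paper intends: the paper's own proof of this lemma is simply ``The proof is completely symmetric to the proof of the previous lemma,'' and your write-up is precisely the right-handed mirror of the proof of Lemma~\ref{alllatotal}, invoking Lemma~\ref{oneratotal}, Lemma~\ref{atleasttwokersright}, Corollary~\ref{contperweakuniqueright}, and Corollary~\ref{atomsperiodicitysmallright} in place of their left-handed counterparts.
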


\begin{proof}
The proof is completely symmetric to the proof of the previous lemma.
\end{proof}

\begin{lemma}\label{leftrightcompleteperiod}
Let $\lambda$ be a final period. Let $\gamma$ be a finite word.
Suppose that $\gamma$ is weakly $|\lambda|$-periodic with 
both left and right period $\lambda$, and $|\gamma|\ge 2\finmax$. Then 
$\gamma$ is 
% a completely $|\lambda|$-periodic word with period $\lambda$.
a completely $\lambda$-periodic word.
\end{lemma}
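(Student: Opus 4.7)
The plan is to show that the length of $\gamma$ is divisible by $|\lambda|$; combined with weak $\lambda$-periodicity this forces complete $\lambda$-periodicity. Let $r$ be the residue of $|\gamma|$ modulo $|\lambda|$; I would aim to prove $r=0$.

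First I would apply Remark 2.2 (item 2) to the hypothesis that $\gamma$ is weakly left $\lambda$-periodic: this immediately yields that $\gamma$ is also weakly right $\Cyc_{|\gamma|}(\lambda)=\Cyc_r(\lambda)$-periodic. On the other hand, by hypothesis $\gamma$ is weakly right $\lambda$-periodic as well. Since the right period of length $|\lambda|$ of a word of length $\ge |\lambda|$ is uniquely determined (it is simply the last $|\lambda|$ letters of the word, and $|\gamma|\ge 2\finmax\ge 2|\lambda|$), the two right periods must coincide, so $\lambda=\Cyc_r(\lambda)$ as abstract words.

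Next I would argue that if $0<r<|\lambda|$, the equality $\lambda=\Cyc_r(\lambda)$ forces $\lambda$ to be a shorter word repeated more than once, which will contradict Lemma \ref{finalnomorethanonce}. Unrolling the cyclic shift, $\lambda=\Cyc_r(\lambda)$ says $\lambda_i=\lambda_{(i+r)\bmod|\lambda|}$ for all $i$, which makes $\lambda$ (viewed cyclically) periodic with period $d=\gcd(r,|\lambda|)$. Since $0<r<|\lambda|$, we have $d\le r<|\lambda|$, so $\lambda$ can be written as the word $\lambda_{0\ldots d-1}$ repeated $|\lambda|/d\ge 2$ times, contradicting Lemma \ref{finalnomorethanonce}. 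Hence $r=0$.

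Finally, knowing $|\gamma|$ is divisible by $|\lambda|$ and $\gamma$ is weakly left $\lambda$-periodic, the incomplete occurrence in the definition has length zero, which is exactly the definition of completely $\lambda$-periodic. The only delicate step is the uniqueness of the right period of length $|\lambda|$, but that is automatic once $|\gamma|\ge|\lambda|$, so I do not expect any genuine obstacle; the proof is essentially a one-step application of the cyclic-shift relation between left and right periods together with the non-repetition property of final periods.
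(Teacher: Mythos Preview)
Your proof is correct and follows the same overall arc as the paper's: let $r$ be the residue of $|\gamma|$ modulo $|\lambda|$, deduce $\lambda=\Cyc_r(\lambda)$, and then use Lemma~\ref{finalnomorethanonce} to force $r=0$. The difference is in how the two sub-steps are handled. To get $\lambda=\Cyc_r(\lambda)$, the paper appeals to Lemma~\ref{finwordperiods} (combined with Lemma~\ref{finalnomorethanonce}) to conclude that the two right periods of $\gamma$ coincide, whereas you simply observe that any right period of length $|\lambda|$ must equal the suffix $\gamma_{|\gamma|-|\lambda|\ldots|\gamma|-1}$, which is a cleaner argument requiring only $|\gamma|\ge|\lambda|$. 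For the contradiction from $\lambda=\Cyc_r(\lambda)$ with $r>0$, the paper builds $\delta=\lambda\lambda$, checks by a three-case computation that $\delta$ is weakly $r$-periodic, and then invokes Lemma~\ref{finwordperiods} again; you instead note directly that invariance under the cyclic shift by $r$ forces $\lambda$ to be $\gcd(r,|\lambda|)$-periodic, which is shorter and avoids Fine--Wilf entirely. Both routes are valid; yours is more elementary and self-contained, while the paper's stays within the lemma toolkit already developed.
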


\begin{proof}
Denote the remainder of $|\gamma|$ modulo $|\lambda|$ by $r$.
% Let $\lambda'$ be 
% the cyclic shift of $\lambda$ that begins with the last $|\lambda|-r$ letters of $\lambda$ and 
% ends with the first $r$ letters of $\lambda$ 
% ($\lambda'=\lambda_{r\ldots |\lambda|-1}\lambda_{0\ldots r-1}$).
Set $\lambda'=\Cyc_r(\lambda)=\Cyc_{|\gamma|}(\lambda)$.
Then $\gamma$ is 
% a weakly $|\lambda'|$-periodic word with right period $\lambda'$.
a weakly right $\lambda'$-periodic word.
Since $\lambda$ is a final period, 
% by Corollary \ref{finwordmincompleteperiod},
by Lemma \ref{finalnomorethanonce},
$\lambda$ cannot be written as a word repeated more than once, so, 
by Lemma \ref{finwordperiods}, $\lambda'=\lambda$.

Assume that $r>0$. Then the equality $\lambda=\lambda'$ means that 
$\lambda_{0\ldots r-1}=\lambda_{|\lambda|-r\ldots |\lambda|-1}$
and
$\lambda_{0\ldots |\lambda|-r-1}=\lambda_{r\ldots |\lambda|-1}$.
Consider the word $\delta=\lambda\lambda$. 
Let us check that $\delta$ is a weakly $r$-periodic word. To see this, 
we have to check that $\delta_i=\delta_{i+r}$ for $0\le i<|\delta|-r$ 
(in other words, $0\le i<2|\lambda|-r$).
Consider the following three cases for $i$:

1. $0\le i<|\lambda|-r$. Then $\delta_i=\lambda_i=\lambda_{i+r}=\delta_{i+r}$ since 
$\lambda_{0\ldots |\lambda|-r-1}=\lambda_{r\ldots |\lambda|-1}$.

2. $|\lambda|-r\le i<\lambda$. Then $\delta_i=\lambda_i=\lambda_{i-(|\lambda|-r)}=\lambda_{i+r-|\lambda|}=\delta_{i+r}$
since $\lambda_{0\ldots r-1}=\lambda_{|\lambda|-r\ldots |\lambda|-1}$.

3. $|\lambda|\le i<2|\lambda|-r$. Then $\delta_i=\lambda_{i-|\lambda|}=\lambda_{i-|\lambda|+r}=\delta_{i+r}$
since 
$\lambda_{0\ldots |\lambda|-r-1}=\lambda_{r\ldots |\lambda|-1}$.

Therefore, $\delta$ is a weakly $r$-periodic word, and $|\delta|=2|\lambda|\ge 2r$.
Clearly, $\delta$ is also a 
% weakly $|\lambda|$-periodic with left period $\lambda$ 
weakly left $\lambda$-periodic word
and $|\delta|\ge 2|\lambda|$.
Then Lemma \ref{finwordperiods} implies that there exists a finite word 
$\mu$ such that $\lambda$ is $\mu$ repeated several times (an integer number of times, so $|\lambda|$ is 
divisible by $|\mu|$), and 
$r$ is also divisible by $|\mu|$. But then $|\mu|\le r<|\lambda|$, and $\lambda$ is $\mu$ repeated 
more than once. But this contradicts 
Lemma \ref{finalnomorethanonce}.
% the definition of a final period as a complete period and 
% Corollary \ref{finwordmincompleteperiod}.
\end{proof}

\begin{lemma}\label{alllareallytotal}
Let $k\in\NN$.
Suppose that all evolutions of $k$-blocks in $\al$ are continuously periodic.
Let $\mathcal E$ be an evolution of $(k+1)$-blocks such that Case I holds at the left.

At least one of 
the following is true:
\begin{enumerate}
\item There exists a unique final period $\lambda$ 
such that for all $l\ge 0$ and $m\ge 0$, 
$\psi(\Fg(\LA_{k+1,2+m}(\mathcal E_{l+3(k+1)+m})))$
is 
% a \textbf{completely} $|\lambda|$-periodic 
% word with period $\lambda$.
a \textbf{completely} $\lambda$-periodic 
word.
\item There exists a $k$-series of obstacles in $\al$.
\end{enumerate}
\end{lemma}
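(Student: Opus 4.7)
The plan is first to apply Lemma~\ref{alllatotal} to $\mathcal E$; it will yield either a $k$-series of obstacles in $\al$, in which case we are done, or a unique final period $\lambda$ and a residue $r$ with $0\le r<|\lambda|$ such that for every $l,m\ge 0$ the word $\psi(\Fg(\LA_{k+1,2+m}(\mathcal E_{l+3(k+1)+m})))$ is weakly left $\lambda$-periodic with length congruent to $r$ modulo $|\lambda|$. By Corollary~\ref{atomsperiodicitysmallleft} the abstract word $\Fg(\LA_{k+1,i}(\mathcal E_{i+s}))$ depends only on the offset $s$, so write $v(s)$ for it; then $\psi(v(s))$ is weakly left $\lambda$-periodic with residue $r$ for every $s\ge 3k+1$. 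The task thus reduces to proving $r=0$, which automatically promotes weak to complete $\lambda$-periodicity.

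To extract a second, ``doubled'' periodicity I will apply the three-consecutive-evolution machinery of Lemma~\ref{atleastonetotal} to pairs of adjacent atoms inside a single $(k+1)$-block. Set
\begin{align*}
\mathcal G'_l &= \LA_{k+1,7}(\mathcal E_{l+3(k+1)+5})\,\LA_{k+1,6}(\mathcal E_{l+3(k+1)+5}),\\
\mathcal G''_l &= \LA_{k+1,5}(\mathcal E_{l+3(k+1)+5})\,\LA_{k+1,4}(\mathcal E_{l+3(k+1)+5}),\\
\mathcal G'''_l &= \LA_{k+1,3}(\mathcal E_{l+3(k+1)+5})\,\LA_{k+1,2}(\mathcal E_{l+3(k+1)+5}).
\end{align*}
Each individual atom appearing here is a nonempty stable $k$-multiblock with $\nker_k>1$ by Lemma~\ref{atleasttwokersleft} (Case I at the left), so by Lemma~\ref{concatcompker} each $\mathcal G^{(\cdot)}$ is an evolution of nonempty stable $k$-multiblocks with $\nker_k\ge 3$, and the three evolutions are consecutive since they sit as three consecutive pairs of adjacent atoms in the common block $\mathcal E_{l+3(k+1)+5}$. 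Lemma~\ref{atleastonetotal} will then produce either a $k$-series of obstacles in $\al$ (done) or total periodicity of one of $\mathcal G',\mathcal G'',\mathcal G'''$ with some final period $\mu$. By Corollary~\ref{atomsperiodicitysmallleft} one has $\Fg(\mathcal G'_l)=v(l+3k+1)v(l+3k+2)$, $\Fg(\mathcal G''_l)=v(l+3k+3)v(l+3k+4)$, $\Fg(\mathcal G'''_l)=v(l+3k+5)v(l+3k+6)$, so $\Fg(\mathcal G''_l)=\Fg(\mathcal G'_{l+2})$ and $\Fg(\mathcal G'''_l)=\Fg(\mathcal G'_{l+4})$ as abstract words; whichever of $\mathcal G',\mathcal G'',\mathcal G'''$ is totally periodic, a single instance of weak left $\mu$-periodicity of $\Fg(\mathcal G'_{l'})$ is obtained, and Lemma~\ref{oneperenoughfortotleft} upgrades this to total left $\mu$-periodicity of $\mathcal G'$ itself. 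Consequently $\psi(v(s)v(s+1))$ is weakly left $\mu$-periodic for every $s\ge 3k+1$.

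Finally I will compare the two periods. By Corollary~\ref{betweenkernelgrowsgood} applied to the single-atom evolution, $|v(s)|\ge 2\finmax\ge 2\max(|\lambda|,|\mu|)$. The prefix $v(s)$ of $v(s)v(s+1)$ is weakly left periodic with both $\lambda$ and $\mu$, so Lemma~\ref{finwordperiods} together with the primitivity of final periods (Lemma~\ref{finalnomorethanonce}) forces $\lambda=\mu$. Hence $\psi(v(s)v(s+1))$ is weakly left $\lambda$-periodic, and matching letters on either side of the junction at position $|v(s)|$ (using $|v(s+1)|\ge|\lambda|$) yields $\lambda_j=\lambda_{(j+r)\bmod|\lambda|}$ for every $j$, i.e.\ $\lambda=\Cyc_r(\lambda)$. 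If $r\ne 0$ then $\lambda$ would admit the proper period $\gcd(r,|\lambda|)<|\lambda|$, contradicting Lemma~\ref{finalnomorethanonce}. Therefore $r=0$, each $\psi(v(s))$ and hence each $\psi(\Fg(\LA_{k+1,2+m}(\mathcal E_{l+3(k+1)+m})))$ is completely $\lambda$-periodic, and uniqueness of $\lambda$ among final periods with this property follows from primitivity together with Corollary~\ref{finwordmincompleteperiod}.

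The main technical obstacle will be the uniform handling of the three sub-cases arising from Lemma~\ref{atleastonetotal} and the bootstrapping, via the abstract-word equalities of Corollary~\ref{atomsperiodicitysmallleft} and Lemma~\ref{oneperenoughfortotleft}, from a single instance of weak $\mu$-periodicity to weak $\mu$-periodicity of $\psi(v(s)v(s+1))$ for \emph{every} $s\ge 3k+1$ rather than only for $s$ above some larger threshold.
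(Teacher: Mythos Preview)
Your argument is correct, but it takes a noticeably longer route than the paper's.  After invoking Lemma~\ref{alllatotal} to obtain $\lambda$ and the residue $r$, the paper works with three \emph{single} atoms
\[
\mathcal F'_l=\LA_{k+1,4}(\mathcal E_{l+3(k+1)+2}),\quad
\mathcal F''_l=\LA_{k+1,3}(\mathcal E_{l+3(k+1)+2}),\quad
\mathcal F'''_l=\LA_{k+1,2}(\mathcal E_{l+3(k+1)+2}),
\]
observes that $\lambda$ is already the total left period of each (this is the content of Lemma~\ref{alllatotal}), and applies Lemma~\ref{atleastonereallytotal} directly: it yields that $\lambda$ is also the total \emph{right} period of $\mathcal F'$ or of $\mathcal F''$.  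Taking $l=0$ in that evolution gives a word $\gamma$ which is weakly $|\lambda|$-periodic with both left and right period $\lambda$ and has $|\gamma|\ge 2\finmax$, so Lemma~\ref{leftrightcompleteperiod} forces $|\gamma|\equiv 0\pmod{|\lambda|}$, hence $r=0$.

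Your approach instead builds three \emph{paired}-atom evolutions, invokes the weaker Lemma~\ref{atleastonetotal}, introduces an auxiliary period $\mu$, bootstraps it to $\mathcal G'$ via Lemma~\ref{oneperenoughfortotleft}, identifies $\mu=\lambda$ through a prefix comparison, and finally recovers $r=0$ by a junction-matching argument showing $\lambda=\Cyc_r(\lambda)$.  That last step is essentially a hand-rolled version of Lemma~\ref{leftrightcompleteperiod}.  So your proof works, but the paper's is shorter precisely because it exploits Lemma~\ref{atleastonereallytotal} (three evolutions already known to be totally periodic with the \emph{same} left period) and the ready-made Lemma~\ref{leftrightcompleteperiod}, avoiding both the pairing construction and the explicit cyclic-shift computation.
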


\begin{proof}
Suppose that $k$-series of obstacles do not exist in $\al$.
Then, by Lemma \ref{alllatotal}, there exists a final period $\lambda$ and a number $r$ ($0\le r<|\lambda|$)
such that 
for all $l\ge 0$ and $m\ge 0$, 
$\psi(\Fg(\LA_{k+1,2+m}(\mathcal E_{l+3(k+1)+m})))$
% is a weakly $|\lambda|$-periodic word with left period $\lambda$, 
is a weakly left $\lambda$-periodic word, 
and the residue of $|\Fg(\LA_{k+1,2+m}(\mathcal E_{l+3(k+1)+m}))|$ 
modulo $|\lambda|$
equals $r$. It is sufficient to prove that $r=0$.

Again, consider the following three evolutions of stable nonempty $k$-multiblocks: 
$\mathcal F'_l=\LA_{k+1,4}(\mathcal E_{l+3(k+1)+2})$, 
$\mathcal F''_l=\LA_{k+1,3}(\mathcal E_{l+3(k+1)+2})$, and
$\mathcal F'''_l=\LA_{k+1,2}(\mathcal E_{l+3(k+1)+2})$.
Then $\lambda$ is the total left evolutional period of each of them.
By Lemma \ref{atleastonereallytotal}, 
$\lambda$ is also the 
total right evolutional period of at least one of the evolutions $\mathcal F'$ or $\mathcal F''$.
So, at least one of the words
$\psi(\Fg(\LA_{k+1,4}(\mathcal E_{3(k+1)+2})))$ 
and 
$\psi(\Fg(\LA_{k+1,3}(\mathcal E_{3(k+1)+2})))$
is a weakly $|\lambda|$-periodic word with both left and right 
period $\lambda$. Denote this word by $\gamma$.
By Corollary \ref{betweenkernelgrowsgood}, 
$|\gamma|\ge 2\finmax$.
By Lemma \ref{leftrightcompleteperiod}, 
$\gamma$ is 
% a completely $|\lambda|$-periodic word with 
% period $\lambda$, 
a completely $\lambda$-periodic word,
and $|\gamma|$ is divisible by $|\lambda|$. 
But we also know that the residue of $|\gamma|$
modulo $|\lambda|$ equals $r$, so $r=0$.

Therefore, all words 
$\psi(\Fg(\LA_{k+1,2+m}(\mathcal E_{l+3(k+1)+m})))$
for all $l\ge 0$ and $m\ge 0$ 
% are completely $|\lambda|$-periodic 
% with period $\lambda$.
are completely $\lambda$-periodic.
\end{proof}

\begin{corollary}\label{lrperiodic}
Let $k\in\NN$.
Suppose that all evolutions of $k$-blocks in $\al$ are continuously periodic.
Let $\mathcal E$ be an evolution of $(k+1)$-blocks such that Case I holds at the left.

At least one of 
the following is true:
\begin{enumerate}
\item There exists a unique final period $\lambda$ 
such that for all $l\ge 0$, 
$\psi(\Fg(\LR_{k+1}(\mathcal E_{l+3(k+1)})))$
% is a completely $|\lambda|$-periodic word with period $\lambda$. 
is a completely $\lambda$-periodic word. 
$\lambda$ is also a total left and a total right period of 
the evolution $\mathcal F$ of stable nonempty 
$k$-multiblocks defined by $\mathcal F_l=\LA_{k+1,2}(\mathcal E_{l+3(k+1)})$.
\item There exists a $k$-series of obstacles in $\al$.\qed
\end{enumerate}
\end{corollary}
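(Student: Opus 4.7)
The plan is to reduce directly to Lemma \ref{alllareallytotal} and then upgrade the statement from individual atoms to the whole left regular part by concatenation. I would begin by applying Lemma \ref{alllareallytotal} to the evolution $\mathcal E$. This gives the dichotomy: either there is a $k$-series of obstacles in $\al$ (which is the second alternative of the corollary, and we are done in that case), or there exists a final period $\lambda$ such that for all $l,m \ge 0$ the word $\psi(\Fg(\LA_{k+1,2+m}(\mathcal E_{l+3(k+1)+m})))$ is completely $\lambda$-periodic. From now on I assume the second case.

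Next I would rewrite the individual atoms appearing in $\LR_{k+1}(\mathcal E_{l+3(k+1)})$ so that Lemma \ref{alllareallytotal} can be applied to each of them. By definition,
\[
\Fg(\LR_{k+1}(\mathcal E_{l+3(k+1)})) = \Fg(\LA_{k+1,l+2}(\mathcal E_{l+3(k+1)}))\ldots\Fg(\LA_{k+1,2}(\mathcal E_{l+3(k+1)})).
\]
For each $m$ with $0\le m\le l$, Corollary \ref{atomsperiodicitysmallleft} (applied with the first subscript equal to $2$ and the shift equal to $m$) identifies $\Fg(\LA_{k+1,2+m}(\mathcal E_{l+3(k+1)}))$, as an abstract word, with $\Fg(\LA_{k+1,2}(\mathcal E_{(l-m)+3(k+1)}))$, which by the $m=0$ case of Lemma \ref{alllareallytotal} is completely $\lambda$-periodic under $\psi$. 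In particular the $\psi$-image of every such atom has length divisible by $|\lambda|$, so the concatenation $\psi(\Fg(\LR_{k+1}(\mathcal E_{l+3(k+1)})))$ is itself completely $\lambda$-periodic.

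The claim about $\mathcal F$ is then essentially free: since $\psi(\Fg(\mathcal F_l))=\psi(\Fg(\LA_{k+1,2}(\mathcal E_{l+3(k+1)})))$ is completely $\lambda$-periodic, it is both weakly left and weakly right $\lambda$-periodic with residue~$0$, so $\lambda$ is simultaneously a total left and a total right evolutional period of $\mathcal F$. For uniqueness, I would note that by Lemma \ref{regpartabsgrows} (or directly from Lemma \ref{regpartabslargek}) the length $|\Fg(\LR_{k+1}(\mathcal E_{l+3(k+1)}))|$ grows without bound with $l$ and is at least $2\finmax$, so any two final periods $\lambda,\lambda'$ for which the left regular part is completely periodic would, by Lemma \ref{finwordperiods}, both be powers of a common word. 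Since final periods cannot be written as a shorter word repeated more than once (Lemma \ref{finalnomorethanonce}), this forces $\lambda=\lambda'$.

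There is no substantial obstacle in this proof; all the work has already been done in Lemma \ref{alllareallytotal} and Corollary \ref{atomsperiodicitysmallleft}. The only point that requires a brief argument is verifying that completeness of $\lambda$-periodicity on each atom transfers to completeness on the concatenation, which follows immediately from the observation that each atom $\psi$-image has length a multiple of $|\lambda|$.
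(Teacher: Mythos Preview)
Your proof is correct and follows essentially the same approach the paper intends: the corollary is marked with \qed\ because it follows immediately from Lemma~\ref{alllareallytotal} once one observes that $\LR_{k+1}(\mathcal E_{l+3(k+1)})$ is the concatenation of atoms $\LA_{k+1,2+m}(\mathcal E_{l+3(k+1)})$ for $0\le m\le l$, each of which has the form $\LA_{k+1,2+m}(\mathcal E_{(l-m)+3(k+1)+m})$ and is therefore completely $\lambda$-periodic under $\psi$. Your detour through Corollary~\ref{atomsperiodicitysmallleft} is unnecessary (Lemma~\ref{alllareallytotal} already applies directly to each atom after this reparametrization) but harmless.
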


\begin{lemma}\label{allrareallytotal}
Let $k\in\NN$.
Suppose that all evolutions of $k$-blocks in $\al$ are continuously periodic.
Let $\mathcal E$ be an evolution of $(k+1)$-blocks such that Case I holds at the right.

At least one of 
the following is true:
\begin{enumerate}
\item There exists a unique final period $\lambda$ 
such that for all $l\ge 0$ and $m\ge 0$, 
$\psi(\Fg(\RA_{k+1,2+m}(\mathcal E_{l+3(k+1)+m})))$
% is a \textbf{completely} $|\lambda|$-periodic 
% word with period $\lambda$.
is a \textbf{completely} $\lambda$-periodic 
word.
\item There exists a $k$-series of obstacles in $\al$.
\end{enumerate}
\end{lemma}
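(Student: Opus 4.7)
The plan is to carry out the exact right-hand mirror of the proof of Lemma \ref{alllareallytotal}, invoking the right-sided analogs of the lemmas used there. The asymmetry between left and right in the definitions is purely cosmetic, so every step translates directly; the task is really just to verify that none of the auxiliary ingredients are left-specific.

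First I would apply Lemma \ref{allratotal} in place of Lemma \ref{alllatotal}: assuming no $k$-series of obstacles exists, this yields a unique final period $\lambda$ and a residue $r$ ($0\le r<|\lambda|$) such that for every $l,m\ge 0$ the word $\psi(\Fg(\RA_{k+1,2+m}(\mathcal E_{l+3(k+1)+m})))$ is weakly right $\lambda$-periodic, with residue of its length modulo $|\lambda|$ equal to $r$. The whole task then reduces to showing $r=0$.

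Next I would introduce the three consecutive evolutions of stable nonempty $k$-multiblocks
$\mathcal F'_l=\RA_{k+1,2}(\mathcal E_{l+3(k+1)+2})$,
$\mathcal F''_l=\RA_{k+1,3}(\mathcal E_{l+3(k+1)+2})$,
$\mathcal F'''_l=\RA_{k+1,4}(\mathcal E_{l+3(k+1)+2})$,
which by Corollary \ref{atomsperiodicitysmallright} all have $\lambda$ as their unique total right evolutional period. Lemma \ref{atleastonereallytotal} (applied in the "opposite direction": total right for all three, plus total left for one of a pair of adjacent ones) then guarantees, assuming no $k$-series of obstacles, that $\lambda$ is additionally a total left evolutional period of at least one of $\mathcal F'$ or $\mathcal F''$. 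Pick such an evolution and pick any $l$; denote $\gamma=\psi(\Fg(\cdot_l))$ for it. Then $\gamma$ is weakly $|\lambda|$-periodic with both left and right period $\lambda$, and by Corollary \ref{betweenkernelgrowsgood} we have $|\gamma|\ge 2\finmax$.

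Now Lemma \ref{leftrightcompleteperiod} forces $\gamma$ to be completely $\lambda$-periodic, so $|\gamma|$ is divisible by $|\lambda|$. Since the residue of $|\gamma|$ modulo $|\lambda|$ is $r$, this gives $r=0$, which immediately upgrades weak right $\lambda$-periodicity to complete $\lambda$-periodicity for every word $\psi(\Fg(\RA_{k+1,2+m}(\mathcal E_{l+3(k+1)+m})))$, proving the first alternative. I do not foresee any genuine obstacle: every lemma I need (\ref{allratotal}, \ref{atomsperiodicitysmallright}, \ref{atleastonereallytotal}, \ref{betweenkernelgrowsgood}, \ref{leftrightcompleteperiod}) is stated symmetrically or is side-neutral, so the only care required is in the bookkeeping of "left" versus "right" when citing Lemma \ref{atleastonereallytotal} — one must check that the hypothesis needed there (total left for $\mathcal F$, total right for $\mathcal F''$, or the other way) is correctly supplied by the total right periodicity we already have, which it is.
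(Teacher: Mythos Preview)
Your proposal is correct and follows exactly the symmetric argument the paper intends (the paper's own proof is simply ``completely symmetric to the proof of Lemma \ref{alllareallytotal}''). One small bookkeeping slip: when you apply Lemma \ref{atleastonereallytotal} to the consecutive triple $\mathcal F',\mathcal F'',\mathcal F'''$ with $\lambda$ as the common total \emph{right} period, the conclusion is that $\lambda$ is the total \emph{left} period of $\mathcal F''$ or $\mathcal F'''$ (not $\mathcal F'$ or $\mathcal F''$), but this does not affect the rest of the argument.
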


\begin{proof}
The proof is completely symmetric to the proof of Lemma \ref{alllareallytotal}.
\end{proof}

\begin{corollary}\label{rrperiodic}
Let $k\in\NN$.
Suppose that all evolutions of $k$-blocks in $\al$ are continuously periodic.
Let $\mathcal E$ be an evolution of $(k+1)$-blocks such that Case I holds at the right.

At least one of 
the following is true:
\begin{enumerate}
\item There exists a unique final period $\lambda$ 
such that for all $l\ge 0$, 
$\psi(\Fg(\RR_{k+1}(\mathcal E_{l+3(k+1)})))$
% is a completely $|\lambda|$-periodic word with period $\lambda$. 
is a completely $\lambda$-periodic word. 
$\lambda$ is also a total left and a total right period of 
the evolution $\mathcal F$ of stable nonempty 
$k$-multiblocks defined by $\mathcal F_l=\RA_{k+1,2}(\mathcal E_{l+3(k+1)})$.
\item There exists a $k$-series of obstacles in $\al$.\qed
\end{enumerate}
\end{corollary}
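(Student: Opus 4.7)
The plan is to mirror the proof of Corollary \ref{lrperiodic}, substituting right atoms for left ones and invoking Lemma \ref{allrareallytotal} in place of Lemma \ref{alllareallytotal}. First I would apply Lemma \ref{allrareallytotal}: either a $k$-series of obstacles exists in $\al$, in which case the second alternative holds and the proof is finished, or there is a unique final period $\lambda$ such that for all $l'\ge 0$ and $m\ge 0$ the word $\psi(\Fg(\RA_{k+1,2+m}(\mathcal E_{l'+3(k+1)+m})))$ is completely $\lambda$-periodic.

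Next I would decompose the right regular part. By the definition given in Section \ref{sectionstableblocks}, $\RR_{k+1}(\mathcal E_{l+3(k+1)})$ is the concatenation of the right atoms $\RA_{k+1,2}(\mathcal E_{l+3(k+1)}),\RA_{k+1,3}(\mathcal E_{l+3(k+1)}),\ldots,\RA_{k+1,l+2}(\mathcal E_{l+3(k+1)})$. For each index $n$ with $0\le n\le l$, setting $l'=l-n\ge 0$ and $m=n$ in the statement above immediately gives that $\psi(\Fg(\RA_{k+1,2+n}(\mathcal E_{l+3(k+1)})))$ is completely $\lambda$-periodic (this is exactly the identification encoded in Corollary \ref{atomsperiodicitysmallright}). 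A concatenation of completely $\lambda$-periodic words is itself completely $\lambda$-periodic, since each factor is $\lambda$ repeated an integer number of times, so $\psi(\Fg(\RR_{k+1}(\mathcal E_{l+3(k+1)})))$ is completely $\lambda$-periodic for every $l\ge 0$.

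Finally, the claim about $\mathcal F$ falls out of the $m=0$ case of the first step: $\psi(\Fg(\mathcal F_l))=\psi(\Fg(\RA_{k+1,2}(\mathcal E_{l+3(k+1)})))$ is completely $\lambda$-periodic, so the residue of $|\Fg(\mathcal F_l)|$ modulo $|\lambda|$ is identically $0$, and $\lambda$ qualifies simultaneously as a left and as a right total evolutional period of $\mathcal F$. Uniqueness of $\lambda$ is inherited from Lemma \ref{allrareallytotal}; if one prefers a direct argument, Lemma \ref{atleasttwokersright} and Corollary \ref{betweenkernelgrowsgood} ensure $|\Fg(\mathcal F_l)|\ge 2\finmax\ge 2|\lambda|$ for all $l$, and then Lemma \ref{finalnomorethanonce} together with Lemma \ref{finwordperiods} rules out any competing final period. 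I anticipate no real difficulty beyond the routine reindexing inside Corollary \ref{atomsperiodicitysmallright}; the assumption that Case I holds at the right is what keeps the right atoms in $\RR_{k+1}$ nonempty and lets the machinery of Lemma \ref{allrareallytotal} apply in the first place.
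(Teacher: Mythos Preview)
Your proposal is correct and follows exactly the route the paper intends: the paper marks this corollary with \qed and gives no proof at all, treating it as the immediate right-side mirror of Corollary \ref{lrperiodic} obtained from Lemma \ref{allrareallytotal}. Your argument simply spells out the implicit steps---decompose $\RR_{k+1}(\mathcal E_{l+3(k+1)})$ into atoms indexed $2$ through $l+2$, apply the lemma with $(l',m)=(l-n,n)$ for each atom, concatenate, and read off the total periodicity of $\mathcal F$ from the $m=0$ case---which is precisely what the paper leaves to the reader.
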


Now we are going to prove some facts about the periodicity of left and right bounding sequences
of evolutions of $(k+1)$-blocks such that Case II holds at the right or at the left.

\begin{lemma}\label{rbsperiodic}
Let $k\in\NN$.
%Suppose that all evolutions of $k$-blocks in $\al$ are continuously periodic.
Let $\mathcal E$ be an evolution of $(k+1)$-blocks such that Case II holds at the right.
Let $l_0\ge 3(k+1)$, and let $\mathcal F$ be an evolution of stable nonempty $k$-multiblocks 
such that $\Fg(\mathcal F_l)$ is a suffix of $\mathcal E_{l+l_0}$ for all $l\ge 0$.
Suppose that $\mathcal F$ is continuously periodic for an index $m$ ($1\le m<\nker_k(\mathcal F)$).
Let $\lambda$ be the (unique by Corollary \ref{contperweakuniqueright})
weak right evolutional period of $\mathcal F$ for the pair $(m,\nker_k(\mathcal F)+1)$.

Then there are three possibilities:
\begin{enumerate}
\item $\mathcal F$ is totally periodic.
\item $\psi(\RBS_{k+1}(\mathcal E))$ is periodic with period $\lambda$.
\item There exists a $k$-series of obstacles in $\al$.
\end{enumerate}
\end{lemma}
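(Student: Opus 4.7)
The plan is to distinguish whether $\mathcal{F}$ is totally periodic. If it is, case~1 of the conclusion holds. Otherwise, let $\lambda'$ denote the (unique) left weak evolutional period of $\mathcal{F}$ for pair $(0,m)$, which exists by continuous periodicity of $\mathcal{F}$ for $m$; by Remark~\ref{contpermultcycle} an appropriate cyclic shift $\mu$ of $\lambda'$ is a right weak evolutional period of $\mathcal{F}$ for $(0,m)$. I apply Lemma~\ref{contperexpandleft} with the roles $(m'',m',m)=(0,m,\nker_k(\mathcal{F})+1)$: either $\lambda$ is a right weak evolutional period of $\mathcal{F}$ for $(0,\nker_k(\mathcal{F})+1)$, placing us in case~1, or there is a constant $s\in\NN$ such that, writing $\IpR_{k,m,\nker_k(\mathcal{F})+1}(\mathcal{F}_l)=\swa{i'}{j'}$ (where $j'$ is the last index of $\Fg(\mathcal{F}_l)$), the word $\psi(\swa{i'-s+1}{j'})$ is weakly right $\lambda$-periodic while $\psi(\swa{i'-s}{j'})$ is not, and $\al_{i'-s}$ is a letter of $\Fg(\mathcal{F}_l)$.

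In the latter case I examine how far $\psi(\swa{i'-s+1}{j'})$ can be extended into the letters of $\al$ strictly to the right of position $j'$ while preserving weak right $\lambda$-periodicity. Since $\Fg(\mathcal{F}_l)$ is a suffix of $\mathcal{E}_{l+l_0}$, Corollary~\ref{presenceofrbs} guarantees that for any finite prefix $\delta$ of $\RBS_{k+1}(\mathcal{E})$ and any sufficiently large $l$, the positions $j'+1,\ldots,j'+|\delta|$ of $\al$ are occupied by $\delta$. Let $d\in\ZZ_{\ge 0}\cup\{\infty\}$ be the largest $e$ such that appending the first $e$ letters of $\RBS_{k+1}(\mathcal{E})$ to $\psi(\swa{i'-s+1}{j'})$ on the right preserves weak right $\lambda$-periodicity; this number depends only on $\lambda$ and $\RBS_{k+1}(\mathcal{E})$, not on $l$. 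If $d=\infty$, then $\psi(\RBS_{k+1}(\mathcal{E}))=\lambda\lambda\lambda\cdots$, which is exactly case~2.

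If $d<\infty$, I set $\mathcal{H}_l=\swa{i'-s+1}{j'+d}$ for each $l$ large enough that the first $d+1$ letters of $\RBS_{k+1}(\mathcal{E})$ occupy positions $j'+1,\ldots,j'+d+1$. Then $\psi(\mathcal{H}_l)$ is weakly $|\lambda|$-periodic, and $|\mathcal{H}_l|=s+d+|\IpR_{k,m,\nker_k(\mathcal{F})+1}(\mathcal{F}_l)|$ strictly grows with $l$, is $\Theta(l^{k'})$ for some $1\le k'\le k$ by Corollary~\ref{betweenkernelgrowsgood}, and exceeds $2\finmax\ge 2|\lambda|$. The main technical hurdle will be to verify that both $\psi(\swa{i'-s}{j'+d})$ and $\psi(\swa{i'-s+1}{j'+d+1})$ fail to be weakly $|\lambda|$-periodic at all, not merely to admit the already-fixed cyclic shift of $\lambda$ as a period. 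The argument will parallel the concluding paragraph of the proof of Lemma~\ref{contperexpandmultiright}: since $|\mathcal{H}_l|\ge 2|\lambda|$ and $\lambda$ is a final period that by Lemma~\ref{finalnomorethanonce} cannot be written as a shorter word repeated, any alleged weak $|\lambda|$-periodicity of a one-letter extension would via Lemma~\ref{finwordperiods} force the period to coincide with the established cyclic shift of $\lambda$, contradicting the discrepancy at the newly added endpoint. This shows $\mathcal{H}_0,\mathcal{H}_1,\ldots$ is a $k$-series of obstacles, placing us in case~3.
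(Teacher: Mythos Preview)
Your proposal is correct and follows essentially the same route as the paper's proof: apply Lemma~\ref{contperexpandleft} to get the left ``blocking'' index $s$, then measure how far the periodicity extends into $\RBS_{k+1}(\mathcal E)$, and if it stops at some finite $d$, build $\mathcal H_l=\swa{i'-s+1}{j'+d}$ (after reindexing from a sufficiently large $l$) and verify the obstacle conditions exactly as in Lemma~\ref{contperexpandmultiright}.

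Two small remarks. First, the phrase ``preserving weak right $\lambda$-periodicity'' when you append letters on the right is not quite what you mean: appending a letter generically \emph{rotates} the right period, so the correct formulation is that the appended prefix $\psi(\RBS_{k+1}(\mathcal E))_{0\ldots d-1}$ is weakly \emph{left} $\lambda$-periodic (equivalently, the concatenation remains weakly $|\lambda|$-periodic). Your later line ``$\psi(\RBS_{k+1}(\mathcal E))=\lambda\lambda\lambda\cdots$'' shows you have the right notion in mind; just state $d$ that way. Second, the appeal to Lemma~\ref{finwordperiods} in the last step is unnecessary: once $|\mathcal H_l|\ge|\lambda|$, any weak $|\lambda|$-periodicity of a one-letter extension forces the first (or last) $|\lambda|$ letters to be the period, which are already the known cyclic shift of $\lambda$, and then the mismatched endpoint gives the contradiction directly --- this is exactly how the paper argues it.
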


\begin{proof}
Suppose that $k$-series of obstacles do not exist in $\al$ and that 
$\mathcal F$ is not totally periodic. Then, since $\mathcal F$ is 
continuously periodic for the index $m$, Lemma \ref{contperexpandleft}
implies that there exists a number $s\in\NN$ such that for all $l\ge 0$, 
if $\Fg(\mathcal F_l)=\swa ij$ and $\IpR_{k,m,\nker_k(\mathcal F)+1}(\mathcal F_l)=\swa{i'}j$, 
then
$\psi(\swa{i'-s+1}j)$ is 
% a weakly $|\lambda|$-periodic word with right period $\lambda$, 
a weakly right $\lambda$-periodic word, 
and 
$\psi(\swa{i'-s}j)$ is not 
% a weakly $|\lambda|$-periodic word with right period $\lambda$.
a weakly right $\lambda$-periodic word.
% By the definition of a weak right evolutional period, the residue of 
% $|\IpR_{k,m,\nker_k(\mathcal F)+1}(\mathcal F_l)|$
% modulo $|\lambda|$ does not depend on $l$. Then the residue
% of $s+|\IpR_{k,m,\nker_k(\mathcal F)+1}(\mathcal F_l)|$ modulo $|\lambda|$
% does not depend on $l$. But if $\IpR_{k,m,\nker_k(\mathcal F)+1}(\mathcal F_l)=\swa{i'}j$
% for some $l\ge 0$, then $|\swa{i'-s+1}j|=s+|\IpR_{k,m,\nker_k(\mathcal F)+1}(\mathcal F_l)|$.
% So, denote the residue of $s+|\IpR_{k,m,\nker_k(\mathcal F)+1}(\mathcal F_l)|$ 
% modulo $|\lambda|$ by $r$. 
% Denote 
% $\lambda'=\lambda_{|\lambda|-r\ldots |\lambda|-1}\lambda_{0\ldots |\lambda|-r-1}$
% (this is the cyclic shift of $\lambda$ that begins with the last $r$ letters of $\lambda$ and 
% ends with the first $|\lambda|-r$ letters of $\lambda$). 
% Then for all $l\ge 0$, if $\IpR_{k,m,\nker_k(\mathcal F)+1}(\mathcal F_l)=\swa{i'}j$, 
% then 
% $\psi(\swa{i'-s+1}j)$ is a weakly $|\lambda'|$-periodic word with left period $\lambda'$, and 
% $\psi(\al_{i'-s})\ne \lambda'_{|\lambda'|-1}$.

Assume that $\psi(\RBS_{k+1}(\mathcal E))$ is not an infinite periodic sequence with period $\lambda$.
Then there exists a number $s'\ge 0$ such that $\psi(\RBS_{k+1}(\mathcal E))_{0\ldots s'-1}$
% is a weakly $|\lambda|$-periodic word with left period $\lambda$, 
is a weakly left $\lambda$-periodic word, 
and
$\psi(\RBS_{k+1}(\mathcal E))_{0\ldots s'}$
is not 
% a weakly $|\lambda|$-periodic word with left period $\lambda$.
a weakly left $\lambda$-periodic word.
We are going to find a $k$-series of obstacles in $\al$.

By Corollary \ref{presenceofrbs}, there exists $l_1\ge 0$ such that if $l\ge 0$ and 
$\mathcal E_{l+l_1}=\swa{i''}j$, then $\swa{i''}{j+s'+1}=\mathcal E_{l+l_1}\RBS_{k+1}(\mathcal E)_{0\ldots s'}$.
Without loss of generality, $l_1\ge l_0$.

Fix a number $l\ge 0$. Suppose that $\mathcal E_{l+l_1}=\swa{i''}j$. 
Then $\Fg(\mathcal F_{l+l_1-l_0})$ is a suffix of $\mathcal E_{l+l_1}$, 
so there exists an index $i\ge i''$ such that $\Fg(\mathcal F_{l+l_1-l_0})=\swa ij$.
Let $i'\ge i$ be the index such that 
$\IpR_{k,m,\nker_k(\mathcal F)+1}(\mathcal F_{l+l_1-l_0})=\swa{i'}j$.
Set $\mathcal H_l=\swa{i'-s+1}{j+s'}$.

First, as an abstract word, $\psi(\mathcal H_l)=\psi(\swa{i'-s+1}j\swa{j+1}{j+s'})=
\psi(\swa{i'-s+1}j)\psi(\RBS_{k+1}(\mathcal E)_{0\ldots s'-1})$, 
and $\psi(\swa{i'-s+1}j)$ 
(resp.\ $\psi(\RBS_{k+1}(\mathcal E)_{0\ldots s'-1})$) 
% is a weakly $|\lambda|$-periodic
% word with right (resp.\ left) period $\lambda$,
is a weakly right (resp.\ left) $\lambda$-periodic
word,
hence, $\psi(\mathcal H_l)$
% is a weakly $|\lambda|$-periodic
% with left period $\lambda'$, 
is weakly left $\lambda'$-periodic,
where 
% $\lambda'$ is the appropriate cyclic shift of $\lambda$.
% $\lambda'=\Cyc_{-|\mathcal H_l|}(\lambda)$.
$\lambda'=\Cyc_{-(j-(i'-s+1)+1)}(\lambda)$.
Since 
$\psi(\swa{j+1}{j+s'+1})=\psi(\RBS_{k+1}(\mathcal E))_{0\ldots s'}$
is not 
% a weakly $|\lambda|$-periodic word with left period $\lambda$,
a weakly left $\lambda$-periodic word,
$\psi(\swa{i'-s+1}j\swa{j+1}{j+s'+1})=\psi(\swa{i'-s+1}{j+s'+1})$
% is not a weakly $|\lambda|$-periodic word
% with left period $\lambda'$.
is not a weakly left $\lambda'$-periodic word either.
And since 
$\psi(\swa{i'-s}j)$ is not 
% a weakly $|\lambda|$-periodic word with right period $\lambda$,
a weakly right $\lambda$-periodic word,
$\psi(\al_{i'-s})\ne \lambda'_{|\lambda|-1}$.

Now, since $1\le m<\nker_k(\mathcal F)$,
by Corollary \ref{betweenkernelgrowsgood}, 
$|\IpR_{k,m,\nker_k(\mathcal F)+1}(\mathcal F_{l+l_1-l_0})|\ge 2\finmax$, hence, 
$|\mathcal H_l|=(s-1)+|\IpR_{k,m,\nker_k(\mathcal F)+1}(\mathcal F_{l+l_1-l_0})|+s'\ge 2\finmax$.
In particular, $|\mathcal H_l|\ge|\lambda|=|\lambda'|$.
Since $\psi(\swa{i'-s+1}{j+s'})$
% is a weakly $|\lambda|$-periodic word
% with left period $\lambda'$, 
is a weakly left $\lambda'$-periodic word, 
$\psi(\al_{i'-s+|\lambda|})=\lambda'_{|\lambda|-1}$.
Since
$\psi(\al_{i'-s})\ne \lambda'_{|\lambda|-1}$,
$\psi(\swa{i'-s}{j+s'})$ is not a 
weakly $|\lambda|$-periodic word (with any period).
Let $r$ be the residue of $|\swa{i'-s+1}{j+s'}|$
modulo $|\lambda|$. Then, since 
$\psi(\swa{i'-s+1}{j+s'})$
% is a weakly $|\lambda|$-periodic word
% with left period $\lambda'$, 
is a weakly left $\lambda'$-periodic word, 
$\psi(\al_{j+s'-|\lambda|+1})=\lambda'_r$.
And since 
$\psi(\swa{i'-s+1}{j+s'+1})$
% is not a weakly $|\lambda|$-periodic word
% with left period $\lambda'$,
is not a weakly left $\lambda'$-periodic word,
$\psi(\al_{j+s'+1})\ne\lambda'_r$.
Therefore, 
$\psi(\swa{i'-s+1}{j+s'+1})$
is not a weakly $|\lambda|$-periodic word (with any period).

Finally, let $l\ge 0$ be arbitrary again. 
By Corollary \ref{betweenkernelgrowsgood}, 
$|\IpR_{k,m,\nker_k(\mathcal F)+1}(\mathcal F_{l+l_1-l_0})|$
strictly grows as $l$ grows, and there exists 
$k'\in\NN$ ($1\le k'\le k$) such that 
$|\IpR_{k,m,\nker_k(\mathcal F)+1}(\mathcal F_{l+l_1-l_0})|$
is $\Theta(l^{k'})$ for $l\to\infty$.
Then, since $s$ and $s'$ do not depend on $l$, 
$|\mathcal H_l|=(s-1)+|\IpR_{k,m,\nker_k(\mathcal F)+1}(\mathcal F_{l+l_1-l_0})|+s'$
also strictly grows as $l$ grows, and 
$|\mathcal H_l|$
is $\Theta(l^{k'})$ for $l\to\infty$.
\end{proof}

\begin{lemma}\label{lbsperiodic}
Let $k\in\NN$.
%Suppose that all evolutions of $k$-blocks in $\al$ are continuously periodic.
Let $\mathcal E$ be an evolution of $(k+1)$-blocks such that Case II holds at the left.
Let $l_0\ge 3(k+1)$, and let $\mathcal F$ be an evolution of stable nonempty $k$-multiblocks 
such that $\Fg(\mathcal F_l)$ is a prefix of $\mathcal E_{l+l_0}$ for all $l\ge 0$.
Suppose that $\mathcal F$ is continuously periodic for an index $m$ ($1<m\le\nker_k(\mathcal F)$).
Let $\lambda$ be the (unique by Corollary \ref{contperweakuniqueleft})
weak left evolutional period of $\mathcal F$ for the pair $(0,m)$.

Then there are three possibilities:
\begin{enumerate}
\item $\mathcal F$ is totally periodic.
\item $\psi(\LBS_{k+1}(\mathcal E))$ is periodic with period $\lambda$.
\item There exists a $k$-series of obstacles in $\al$.
\end{enumerate}
\end{lemma}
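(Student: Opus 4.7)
The plan is to mirror the proof of Lemma \ref{rbsperiodic} using the left--right symmetry throughout. Assume that $\mathcal{F}$ is not totally periodic and that $\psi(\LBS_{k+1}(\mathcal{E}))$ is not periodic with period $\lambda$; the goal is to construct a $k$-series of obstacles in $\al$. Since $\mathcal{F}$ is continuously periodic for $m$ (with $1 < m \le \nker_k(\mathcal F)$), continuous periodicity supplies a final period $\mu$ that is a right weak evolutional period of $\mathcal{F}$ for the pair $(m, \nker_k(\mathcal F)+1)$, hence, by Remark \ref{contpermultcycle}, also a left weak evolutional period of $\mathcal{F}$ for that pair. Because $\mathcal{F}$ is not totally periodic, $\lambda$ cannot be a left weak evolutional period for $(0, \nker_k(\mathcal F)+1)$; so Lemma \ref{contperexpandright}, applied to the pairs $(0,m)$ and $(m, \nker_k(\mathcal F)+1)$, must produce a number $s \in \NN$ (independent of $l$) such that whenever $\IpR_{k,0,m}(\mathcal{F}_l) = \swa{i}{j'}$ one has $\psi(\swa{i}{j'+s-1})$ weakly left $\lambda$-periodic but $\psi(\swa{i}{j'+s})$ not.

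Next, the failure of periodicity of $\psi(\LBS_{k+1}(\mathcal{E}))$ yields a minimal $s' \ge 0$ such that the $s'$-letter suffix of $\psi(\LBS_{k+1}(\mathcal{E}))$ is weakly right $\lambda$-periodic while its $(s'+1)$-letter suffix is not. By Corollary \ref{presenceoflbs}, for all $l$ beyond some $l_1 \ge l_0$ this $(s'+1)$-letter suffix appears in $\al$ immediately to the left of $\mathcal{E}_{l+l_1}$, hence immediately to the left of $\Fg(\mathcal{F}_{l+l_1-l_0})$, and in particular immediately to the left of $\IpR_{k,0,m}(\mathcal{F}_{l+l_1-l_0})$.

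Now define $\mathcal{H}_l = \swa{i-s'}{j'+s-1}$, where $\IpR_{k,0,m}(\mathcal{F}_{l+l_1-l_0}) = \swa{i}{j'}$. The word $\psi(\mathcal{H}_l)$ is a concatenation of a weakly right $\lambda$-periodic piece (coming from $\psi(\LBS_{k+1}(\mathcal{E}))$), a weakly left $\lambda$-periodic piece (namely $\psi(\swa{i}{j'+s-1})$), whose respective cyclic shifts of $\lambda$ agree at the junction by an index computation identical to the one in the proof of Lemma \ref{rbsperiodic}; hence $\psi(\mathcal{H}_l)$ is weakly $|\lambda|$-periodic with some left period $\lambda' = \Cyc_{-s'}(\lambda)$. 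Extending by one letter on the right gives a word that is not weakly left $\lambda'$-periodic (by choice of $s$), and extending by one letter on the left likewise fails (by choice of $s'$). Corollary \ref{betweenkernelgrowsgood} applied to $(0,m)$ (using $m > 1$) gives $|\IpR_{k,0,m}(\mathcal{F}_{l+l_1-l_0})| \ge 2\finmax$, so $|\mathcal H_l| \ge 2\finmax \ge |\lambda|$, and the same corollary shows that $|\mathcal{H}_l|$ is strictly increasing and $\Theta(l^{k'})$ for some $1 \le k' \le k$. Finally, the inequality $|\mathcal{H}_l| \ge |\lambda|$ upgrades the one-letter failures from "not weakly $\lambda'$-periodic" to "not weakly $|\lambda|$-periodic with any period", exactly as in the previous lemma, so $\mathcal H$ is a $k$-series of obstacles.

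The main obstacle is bookkeeping the cyclic shifts: one must verify that the cyclic shift of $\lambda$ that serves as the right period on the $\LBS$ side matches the cyclic shift serving as the left period on the $\IpR$ side at the glueing point, so that the two pieces combine into a single weakly $|\lambda|$-periodic word; and one must invoke $|\mathcal{H}_l| \ge |\lambda|$ precisely to rule out the existence of any other $|\lambda|$-period that could make the one-letter extensions still weakly $|\lambda|$-periodic. Both steps are direct translations of the corresponding steps in the proof of Lemma \ref{rbsperiodic}, so in the final write-up it suffices to say that the proof is completely symmetric.
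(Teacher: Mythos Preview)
Your proposal is correct and takes exactly the same approach as the paper: the paper's proof consists of the single sentence ``The proof is completely symmetric to the proof of the previous lemma,'' and your write-up spells out precisely that symmetric argument, down to the use of Lemma~\ref{contperexpandright} in place of Lemma~\ref{contperexpandleft}, Corollary~\ref{presenceoflbs} in place of Corollary~\ref{presenceofrbs}, and the pair $(0,m)$ in place of $(m,\nker_k(\mathcal F)+1)$. Your final remark that one may simply say the proof is completely symmetric is exactly what the paper does.
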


\begin{proof}
The proof is completely symmetric to the proof of the previous lemma.
\end{proof}

\begin{lemma}\label{lbsrbsperiodic}
Let $k\in\NN$.
%Suppose that all evolutions of $k$-blocks in $\al$ are continuously periodic.
Let $\mathcal E$ be an evolution of $(k+1)$-blocks such that Case II holds both 
at the left and at the right. Suppose that $\ncker_{k+1}(\mathcal E)>1$. 
Let $\mathcal F$ be the evolution of stable nonempty $k$-blocks defined 
by $\mathcal F_l=\Cr_{k+1}(\mathcal E_{l+3(k+1)})$.
Suppose that $\mathcal F$ is totally periodic, and let $\lambda$ (resp.\ $\lambda'$)
be the left (resp.\ the right) total evolutional period of $\mathcal F$.

Then there are three possibilities:
\begin{enumerate}
\item $\psi(\LBS_{k+1}(\mathcal E))$ is periodic with period $\lambda$.
\item $\psi(\RBS_{k+1}(\mathcal E))$ is periodic with period $\lambda'$.
\item There exists a $k$-series of obstacles in $\al$.
\end{enumerate}
\end{lemma}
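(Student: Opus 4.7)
The plan is to argue by contrapositive: I assume that neither $\psi(\LBS_{k+1}(\mathcal E))$ is periodic with period $\lambda$ nor $\psi(\RBS_{k+1}(\mathcal E))$ is periodic with period $\lambda'$, and I construct a $k$-series of obstacles in $\al$. First observe that because Case II holds at both ends, every left and right atom of $\mathcal E_{l+3(k+1)}$ of index strictly greater than $1$ is empty, so $\Fg(\mathcal F_l)=\mathcal E_{l+3(k+1)}$ as occurrences in $\al$. Since $\ncker_{k+1}(\mathcal E)=\nker_k(\mathcal F)>1$, Corollary~\ref{betweenkernelgrowsgood} applied to the pair $(0,\nker_k(\mathcal F)+1)$ gives that $|\Fg(\mathcal F_l)|\ge 2\finmax$, strictly increases in $l$, and is $\Theta(l^{k'})$ for some $1\le k'\le k$; these are exactly the growth data I will need for the obstacles.

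Next, I would pin down the relation between $\lambda$ and $\lambda'$. Total periodicity of $\mathcal F$ means $\psi(\Fg(\mathcal F_l))$ is simultaneously weakly left $\lambda$-periodic and weakly right $\lambda'$-periodic, with the residues of $|\Fg(\mathcal F_l)|$ modulo $|\lambda|$ and modulo $|\lambda'|$ both independent of $l$. By the cyclic-shift properties stated at the start of Section~\ref{prelim}, $\psi(\Fg(\mathcal F_l))$ is also weakly right $\Cyc_{|\Fg(\mathcal F_l)|}(\lambda)$-periodic. Combining $|\Fg(\mathcal F_l)|\ge 2\max(|\lambda|,|\lambda'|)$ with Lemma~\ref{finwordperiods} and Lemma~\ref{finalnomorethanonce} (neither final period can be written as a word repeated more than once) forces $|\lambda|=|\lambda'|$ and $\lambda'=\Cyc_{|\Fg(\mathcal F_l)|}(\lambda)$; this equality is consistent because the residue is independent of $l$.

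Using the failure of periodicity of the bounding sequences, I pick the minimal $s\ge 0$ such that the last $s+1$ letters of $\psi(\LBS_{k+1}(\mathcal E))$ are not weakly right $\lambda$-periodic, and the minimal $s'\ge 0$ such that the first $s'+1$ letters of $\psi(\RBS_{k+1}(\mathcal E))$ are not weakly left $\lambda'$-periodic. Corollaries~\ref{presenceoflbs} and~\ref{presenceofrbs} then produce $l_1\ge 3(k+1)$ so that for every $l\ge 0$, writing $\mathcal E_{l+l_1}=\swa ij$, the occurrence $\swa{i-s-1}{i-1}$ equals the terminal $s+1$ letters of $\LBS_{k+1}(\mathcal E)$ and the occurrence $\swa{j+1}{j+s'+1}$ equals the initial $s'+1$ letters of $\RBS_{k+1}(\mathcal E)$. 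Define $\mathcal H_l=\swa{i-s}{j+s'}$ and take the obstacle period parameter $p=|\lambda|$.

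It remains to verify that $(\mathcal H_l)_{l\ge 0}$ is indeed a $k$-series of obstacles. Two applications of the concatenation rule for matching right/left periods (property 4 of the Remark near the start of Section~\ref{prelim}) — first gluing the weakly right-$\lambda$-periodic $\psi(\swa{i-s}{i-1})$ to the weakly left-$\lambda$-periodic middle $\psi(\swa ij)$, then gluing the result (whose right period becomes $\Cyc_{|\Fg(\mathcal F_l)|}(\lambda)=\lambda'$) to the weakly left-$\lambda'$-periodic $\psi(\swa{j+1}{j+s'})$ — yield that $\psi(\mathcal H_l)$ is weakly $p$-periodic, and the length and growth bounds transfer directly from the bounds on $|\Fg(\mathcal F_l)|$. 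The main technical step — and the one that justifies introducing the parameter $p=|\lambda|$ rather than reasoning about $\lambda$ or $\lambda'$ separately — is to show that extending $\mathcal H_l$ by one letter at either side destroys weak $p$-periodicity. Since $|\mathcal H_l|\ge 2p$, any weak $p$-period of an extension would necessarily coincide with the period word already imposed on $\psi(\mathcal H_l)$ itself, so a short cyclic-shift index computation converts the minimality of $s'$ (resp.\ of $s$) into the assertion that $\psi(\al_{j+s'+1})$ (resp.\ $\psi(\al_{i-s-1})$) disagrees with the value forced by that period. This produces the desired obstacle, contradicting the assumption that no $k$-series of obstacles exists in $\al$.
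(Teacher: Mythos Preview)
Your proposal is correct and follows essentially the same route as the paper: argue by contrapositive, fix the minimal $s,s'$ where the $\lambda$- and $\lambda'$-periodicity of the two bounding sequences first fails, use Corollaries~\ref{presenceoflbs} and~\ref{presenceofrbs} to realize the relevant pieces inside $\al$, set $\mathcal H_l=\swa{i-s}{j+s'}$ with $p=|\lambda|$, and verify the three defining conditions of a $k$-series of obstacles via the cyclic-shift bookkeeping and Corollary~\ref{betweenkernelgrowsgood}. The only cosmetic difference is that the paper obtains $\lambda'=\Cyc_{|\Fg(\mathcal F_l)|}(\lambda)$ by invoking Remark~\ref{contpermultcycle} together with the uniqueness Corollary~\ref{contperweakuniqueright}, whereas you derive it directly from Lemma~\ref{finwordperiods} and Lemma~\ref{finalnomorethanonce}; both arguments are equivalent.
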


\begin{proof}
Suppose that $\psi(\LBS_{k+1}(\mathcal E))$ is not an infinite periodic sequence with period $\lambda$, 
and $\psi(\RBS_{k+1}(\mathcal E))$ is not an infinite periodic sequence with period $\lambda'$.
We are going to find a $k$-series of obstacles in $\al$.

Let $s$ be the length of the maximal 
% $|\lambda|$-periodic suffix of 
% $\psi(\LBS_{k+1}(\mathcal E))$ with right period $\lambda$, 
weakly right $\lambda$-periodic suffix of 
$\psi(\LBS_{k+1}(\mathcal E))$, 
in 
other words, $s\ge 0$ is the number such that
$\psi(\LBS_{k+1}(\mathcal E))_{-s+1\ldots 0}$ is a 
% $|\lambda|$-periodic word with right period $\lambda$, 
weakly right $\lambda$-periodic word, 
and 
$\psi(\LBS_{k+1}(\mathcal E))_{-s\ldots 0}$ is not a 
% $|\lambda|$-periodic word with right period $\lambda$.
weakly right $\lambda$-periodic word.
Similarly, let 
$s'$ be the length of the maximal 
% $|\lambda'|$-periodic prefix of 
% $\psi(\RBS_{k+1}(\mathcal E))$ with left period $\lambda'$, 
weakly left $\lambda'$-periodic prefix of 
$\psi(\RBS_{k+1}(\mathcal E))$, 
in 
other words, $s'\ge 0$ is the number such that
$\psi(\RBS_{k+1}(\mathcal E))_{0\ldots s'-1}$ is a 
% $|\lambda'|$-periodic word with left period $\lambda'$, 
weakly left $\lambda'$-periodic word, 
and 
$\psi(\RBS_{k+1}(\mathcal E))_{0\ldots s'}$ is not a 
% $|\lambda'|$-periodic word with left period $\lambda'$.
weakly left $\lambda'$-periodic word.
By Corollaries \ref{presenceofrbs} and \ref{presenceoflbs}, 
there exists $l_0\ge 0$ such that if $l\ge 0$ and 
$\mathcal E_{l+l_0}=\swa ij$, then $\swa{i-s-1}{j+s'+1}=
\LBS_{k+1}(\mathcal E)_{-s\ldots 0}\mathcal E_{l+l_0}\RBS_{k+1}(\mathcal E)_{0\ldots s'}$.
Without loss of generality, $l_0\ge 3(k+1)$.

Fix $l\ge 0$. Suppose that $\mathcal E_{l+l_0}=\Fg(\mathcal F_{l+l_0-3(k+1)})=\swa ij$.
Set $\mathcal H_l=\swa{i-s-1}{j+s'+1}$. We are going to prove that 
all $\mathcal H_l$ for $l\ge 0$ form a $k$-series of obstacles.
The argument is similar to the proof of Lemma \ref{rbsperiodic}.

First, it follows from Remark \ref{contpermultcycle} and from Corollary \ref{contperweakuniqueright}
% that if $r$ is the residue of $|\mathcal F_{l+l_0-3(k+1)}|$
% modulo $|\lambda|$, then 
% % $\lambda'=\lambda_{r\ldots |\lambda|-1}\lambda_{0\ldots r-1}$.
% $\lambda'=\Cyc_r(\lambda)$.
that $\lambda'=\Cyc_{|\Fg(\mathcal F_{l+l_0-3(k+1)})|}(\lambda)$.
We also know that $\psi(\swa ij)$ is 
% a $|\lambda|$-periodic word with 
% left period $\lambda$, 
a weakly left $\lambda$-periodic word, 
$\psi(\swa{i-s}{i-1})$
% is a $|\lambda|$-periodic word with 
% right period $\lambda$, 
is a weakly right $\lambda$-periodic word, 
and 
$\psi(\swa{i-s-1}{i-1})$
% is not a $|\lambda|$-periodic word with 
% right period $\lambda$.
is not a weakly right $\lambda$-periodic word.
Hence, 
$\psi(\swa{i-s}j)$ is 
% a $|\lambda'|$-periodic word with 
% right period $\lambda'$, 
a weakly right $\lambda'$-periodic word, 
and 
$\psi(\swa{i-s-1}j)$ is not 
% a $|\lambda'|$-periodic word with 
% right period $\lambda'$.
a weakly right $\lambda'$-periodic word.

Now, 
$\psi(\swa{j+1}{j+s'})$ is a 
% $|\lambda'|$-periodic word with left period $\lambda'$, 
weakly left $\lambda'$-periodic word, 
and 
$\psi(\swa{j+1}{j+s'+1})$ is not a 
% $|\lambda'|$-periodic word with left period $\lambda'$.
weakly left $\lambda'$-periodic word.
Therefore, if 
% $r'$ is the residue of 
% $|\swa{i-s}j|=j-(i-s)+1$ modulo $|\lambda'|$ and 
% $\lambda''=\lambda'_{|\lambda'|-r'\ldots |\lambda'|-1}\lambda'_{0\ldots |\lambda'|-r'-1}$, 
$\lambda''=\Cyc_{-|\swa{i-s}j|}(\lambda')$, 
then 
$\psi(\swa{i-s}{j+s'})=\psi(\mathcal H_l)$ is a 
% $|\lambda''|$-periodic word with left period $\lambda''$, 
weakly left $\lambda''$-periodic word, 
$\psi(\swa{i-s}{j+s'+1})$ is not a 
% $|\lambda''|$-periodic word with left period $\lambda''$,
weakly left $\lambda''$-periodic word,
and $\psi(\al_{i-s-1})\ne \lambda''_{|\lambda''|-1}$.

Since $\ncker_{k+1}(\mathcal E)=\nker_k(\mathcal F)>1$,
by Corollary \ref{betweenkernelgrowsgood}, 
$|\Fg(\mathcal F_{l+l_0-3(k+1)})|\ge 2\finmax$, hence, 
$|\mathcal H_l|=s+|\Fg(\mathcal F_{l+l_0-3(k+1)})|+s'\ge 2\finmax$.
In particular, $|\mathcal H_l|\ge|\lambda''|=|\lambda|$.
Since $\psi(\swa{i-s}{j+s'})$
% is a weakly $|\lambda''|$-periodic word
% with left period $\lambda''$, 
is a weakly left $\lambda''$-periodic word, 
$\psi(\al_{i-s+|\lambda|-1})=\lambda''_{|\lambda''|-1}$.
Since
$\psi(\al_{i-s-1})\ne \lambda''_{|\lambda''|-1}$,
$\psi(\swa{i-s-1}{j+s'})$ is not a 
weakly $|\lambda''|$-periodic word (with any period).
Let $r$ be the residue of $|\swa{i-s}{j+s'}|=(j'+s)-(i-s)+1$
modulo $|\lambda''|$. Then, since 
$\psi(\swa{i-s}{j+s'})$
% is a weakly $|\lambda''|$-periodic word
% with left period $\lambda''$, 
is a weakly left $\lambda''$-periodic word, 
$\psi(\al_{j+s'-|\lambda''|+1})=\lambda''_r$.
And since 
$\psi(\swa{i-s}{j+s'+1})$
% is not a weakly $|\lambda''|$-periodic word
% with left period $\lambda''$,
is not a weakly left $\lambda''$-periodic word,
$\psi(\al_{j+s'+1})\ne\lambda''_r$.
Therefore, 
$\psi(\swa{i-s}{j+s'+1})$
is not a weakly $|\lambda''|$-periodic word (with any period).

Finally, let $l\ge 0$ be arbitrary again. 
By Corollary \ref{betweenkernelgrowsgood}, 
$|\Fg(\mathcal F_{l+l_0-3(k+1)})|$
strictly grows as $l$ grows, and there exists 
$k'\in\NN$ ($1\le k'\le k$) such that 
$|\Fg(\mathcal F_{l+l_0-3(k+1)})|$
is $\Theta(l^{k'})$ for $l\to\infty$.
Then, since $s$ and $s'$ do not depend on $l$, 
$|\mathcal H_l|=s+|\Fg(\mathcal F_{l+l_0-3(k+1)})|+s'$
also strictly grows as $l$ grows, and 
$|\mathcal H_l|$
is $\Theta(l^{k'})$ for $l\to\infty$.
\end{proof}

Finally, we are ready to prove that 
if all evolutions of $k$-blocks are continuously periodic, 
then either there is a $k$-series of obstacles, or 
all evolutions of $(k+1)$-blocks are continuously periodic.

\begin{lemma}\label{sameperiodcontper}
Let $k\in\NN$.
Let $\mathcal E$ be an evolution of $(k+1)$-blocks such that Case I holds at the 
left (resp.\ at the right). Let $\lambda$ be a final period, and 
let $m$ be an index $1\le m\le\ncker_{k+1}(\mathcal E)$. 
Denote by $\mathcal F$ the following evolution of 
$k$-multiblocks: $\mathcal F_l=\Cr_{k+1}(\mathcal E_{l+3(k+1)})$.

Suppose that for all $l\ge 0$, 
$\psi(\Fg(\LR_{k+1}(\mathcal E_{l+3(k+1)})))$
(resp.\ $\psi(\Fg(\RR_{k+1}(\mathcal E_{l+3(k+1)})))$)
% is a completely $|\lambda|$-periodic word with period $\lambda$.
is a completely $\lambda$-periodic word.
Suppose also that $\lambda$ is a left (resp.\ right)
weak evolutional period of $\mathcal F$ for the pair $(0,m)$
(resp.\ $(m,\nker_k(\mathcal F)+1)$).

Then $\lambda$ is a left continuous evolutional period of $\mathcal E$ for the index $m$.
\end{lemma}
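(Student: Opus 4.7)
The plan is to verify directly the three defining conditions of a left continuous evolutional period for $\mathcal E$ at the index $m$. I will carry out the argument for Case I at the left; the argument for Case I at the right is completely symmetric, with $\LpR$, $\LR$, and $\IpR_{k,0,m}$ replaced by $\RpR$, $\RR$, and $\IpR_{k,m,\nker_k(\mathcal F)+1}$ respectively.

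First, I will establish the key structural decomposition. For $l \ge 3(k+1)$ the $(k+1)$-block $\mathcal E_l$ is stable and splits as the concatenation of $\Fg(\LpreP_{k+1}(\mathcal E_l))$, $\Fg(\LR_{k+1}(\mathcal E_l))$, $\Fg(\Cr_{k+1}(\mathcal E_l))$, $\Fg(\RR_{k+1}(\mathcal E_l))$, and $\Fg(\RpreP_{k+1}(\mathcal E_l))$. The $m$th composite central kernel $\cKer_{k+1,m}(\mathcal E_l)$ lies inside the core and, by the definition of composite central kernels (as composite kernels of the core viewed as a $k$-multiblock), equals $\Ker_{k,m}(\mathcal F_{l-3(k+1)})$ as an occurrence in $\al$. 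Since $\IpR_{k,0,m}(\mathcal F_{l-3(k+1)})$ is by construction the prefix of $\Fg(\mathcal F_{l-3(k+1)})$ ending just before this $m$th composite kernel, the definition of $\LpR_{k+1,m}(\mathcal E_l)$ as the part of $\mathcal E_l$ between the left preperiod and the $m$th composite central kernel yields
\[
\LpR_{k+1,m}(\mathcal E_l) \;=\; \Fg(\LR_{k+1}(\mathcal E_l)) \cdot \IpR_{k,0,m}(\mathcal F_{l-3(k+1)})
\]
as occurrences in $\al$, valid for all $l \ge 3(k+1)$.

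Second, I will apply $\psi$ and combine the two hypotheses. The first factor $\psi(\Fg(\LR_{k+1}(\mathcal E_l)))$ is completely $\lambda$-periodic by assumption, so in particular it is weakly left $\lambda$-periodic and its length is a multiple of $|\lambda|$. The second factor $\psi(\IpR_{k,0,m}(\mathcal F_{l-3(k+1)}))$ is weakly left $\lambda$-periodic by the hypothesis that $\lambda$ is a left weak evolutional period of $\mathcal F$ for the pair $(0,m)$, and the residue of its length modulo $|\lambda|$ is independent of $l$ by the same definition. Concatenating a completely $\lambda$-periodic word with a weakly left $\lambda$-periodic word produces a weakly left $\lambda$-periodic word, which establishes condition 1 in the definition of a left continuous evolutional period. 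Because the length of the first factor contributes $0$ modulo $|\lambda|$, the residue of $|\LpR_{k+1,m}(\mathcal E_l)|$ modulo $|\lambda|$ equals that of $|\IpR_{k,0,m}(\mathcal F_{l-3(k+1)})|$ and is therefore independent of $l$, establishing condition 2.

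Finally, condition 3 of the definition is conditioned on Case II holding at the left, but our hypothesis is Case I at the left, so this condition is vacuous. Together with conditions 1 and 2, this completes the verification that $\lambda$ is a left continuous evolutional period of $\mathcal E$ for the index $m$. I do not anticipate a main obstacle: the proof is essentially an unpacking of the interlocking definitions of cores, composite central kernels, and inner pseudoregular parts, combined with the elementary observation that pasting a completely periodic prefix of matching period onto a weakly left periodic suffix preserves weak left periodicity. The only point requiring real attention is the structural identity for $\LpR_{k+1,m}(\mathcal E_l)$, which rests on correctly identifying the location of the $m$th composite central kernel within the core.
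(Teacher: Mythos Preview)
Your proof is correct and follows essentially the same approach as the paper's: both establish the decomposition $\LpR_{k+1,m}(\mathcal E_{l+3(k+1)})=\Fg(\LR_{k+1}(\mathcal E_{l+3(k+1)}))\,\IpR_{k,0,m}(\mathcal F_l)$, then combine the complete $\lambda$-periodicity of the first factor with the weak left $\lambda$-periodicity of the second to obtain conditions 1(a) and 1(b), noting that condition 2 is vacuous since Case I holds at the left. Your exposition is in fact slightly more explicit than the paper's about why the bounding-sequence condition does not apply.
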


\begin{proof}
We prove the lemma for the situation when Case I holds at the left. If Case I holds at the right, 
the proof is completely symmetric.

For all $l\ge 0$, $\psi(\IpR_{k,0,m}(\mathcal F_l))$
% is a weakly $|\lambda|$-periodic word with left period $\lambda$, 
is a weakly left $\lambda$-periodic word, 
and the residue of $|\psi(\IpR_{k,0,m}(\mathcal F_l))|$
modulo $|\lambda|$ does not depend on $l$.
$\psi(\Fg(\LR_{k+1}(\mathcal E_{l+3(k+1)})))$
% is a completely $|\lambda|$-periodic word with period $\lambda$, and
is a completely $\lambda$-periodic word, and
$\LpR_{k+1,m}(\mathcal E_{l+3(k+1)})=\Fg(\LR_{k+1}(\mathcal E_{l+3(k+1)}))\IpR_{k,0,m}(\mathcal F_l)$,
so $\psi(\LpR_{k+1,m}(\mathcal E_{l+3(k+1)}))$ is also a 
% weakly $|\lambda|$-periodic word with left period $\lambda$.
weakly left $\lambda$-periodic word.
$|\Fg(\LR_{k+1}(\mathcal E_{l+3(k+1)}))|$ is divisible by $|\lambda|$ for all $l\ge 0$, 
so the residue of 
$|\LpR_{k+1,m}(\mathcal E_{l+3(k+1)})|=|\Fg(\LR_{k+1}(\mathcal E_{l+3(k+1)}))|+|\IpR_{k,0,m}(\mathcal F_l)|$
modulo $|\lambda|$
equals the residue of $|\IpR_{k,0,m}(\mathcal F_l)|$
modulo $|\lambda|$
and does not depend on $l$.
Therefore, $\lambda$ is a left continuous evolutional period of $\mathcal E$ for the index $m$.
\end{proof}

\begin{lemma}\label{corenottotcontper}
Let $k\in\NN$.
Suppose that all evolutions of $k$-blocks in $\al$ are continuously periodic.
Let $\mathcal E$ be an evolution of $(k+1)$-blocks such that Case I holds at the 
left (resp.\ at the right). 
Denote by $\mathcal F$ the following evolution of 
$k$-multiblocks: $\mathcal F_l=\Cr_{k+1}(\mathcal E_{l+3(k+1)})$.
Suppose that $\mathcal F$ 
is continuously periodic for an index $m$ ($1\le m\le \nker_k(\mathcal F)$), but
is \textbf{not} totally periodic.

Then there are two possibilities:
\begin{enumerate}
\item There exists a left (resp.\ right) continuous evolutional period of $\mathcal E$ for the index $m$.
\item There exists a $k$-series of obstacles in $\al$.
\end{enumerate}
\end{lemma}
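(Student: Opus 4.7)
I will focus on the case when Case I holds at the left; the Case I at the right version is completely symmetric, using Corollary \ref{rrperiodic}, Lemma \ref{atleasttwokersright}, and Lemma \ref{contperexpandtotalrightgood} in place of their left-hand counterparts.

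The plan is first to apply Corollary \ref{lrperiodic} to the evolution $\mathcal E$. This will either yield a $k$-series of obstacles (and we are done), or produce a unique final period $\lambda$ such that $\psi(\Fg(\LR_{k+1}(\mathcal E_{l+3(k+1)})))$ is a completely $\lambda$-periodic word for every $l \ge 0$, and such that $\lambda$ is simultaneously a left and right total evolutional period of the stable nonempty $k$-multiblock evolution $\mathcal G$ defined by $\mathcal G_l = \LA_{k+1,2}(\mathcal E_{l+3(k+1)})$. Lemma \ref{atleasttwokersleft} (applied with $m=m'=0$) gives $\nker_k(\mathcal G) > 1$, so Corollary \ref{contperweakuniqueright} implies that the right total evolutional period of $\mathcal G$ is unique. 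By Lemma \ref{sameperiodcontper}, proving the desired conclusion reduces to exhibiting this same $\lambda$ as a left weak evolutional period of $\mathcal F$ for the pair $(0,m)$.

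I will treat the case $m=1$ separately: since $\IpR_{k,0,1}(\mathcal F_l)$ is always the empty occurrence, every final period---and in particular $\lambda$---trivially qualifies as a left weak evolutional period of $\mathcal F$ for $(0,1)$. From now on assume $m > 1$. Continuous periodicity of $\mathcal F$ for index $m$ provides a (unique, by Corollary \ref{contperweakuniqueleft}) left weak evolutional period $\mu$ of $\mathcal F$ for $(0,m)$, together with a right weak evolutional period for $(m,\nker_k(\mathcal F)+1)$; in particular, $\mathcal F$ is weakly periodic for the length-$2$ sequence $0,m,\nker_k(\mathcal F)+1$, and the inequalities $m>1$ and $m\le\nker_k(\mathcal F)$ yield $\nker_k(\mathcal F) > 1$. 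Since $\mathcal F$ is not totally periodic by hypothesis, $\mu$ cannot be a left weak evolutional period of $\mathcal F$ for $(0,\nker_k(\mathcal F)+1)$.

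The heart of the argument is then an application of Lemma \ref{contperexpandtotalleftgood} to the pair $(\mathcal G,\mathcal F)$, with $\mathcal G$ playing the role of $\mathcal F'$ (the left neighbor) and our $\mathcal F$ playing the role of $\mathcal F$ in that lemma. All hypotheses hold: $\mathcal G$ and $\mathcal F$ are consecutive nonempty stable $k$-multiblock evolutions (since $\LA_{k+1,2}$ is the atom immediately to the left of the core in any stable $(k+1)$-block in Case I at the left), $\nker_k(\mathcal F) > 1$, the sequence $0,m,\nker_k(\mathcal F)+1$ with $m_1=m>1$ witnesses weak periodicity, and the unique left weak period $\mu$ for $(0,m)$ fails to be a left weak period for $(0,\nker_k(\mathcal F)+1)$. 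The lemma then offers two alternatives: either a $k$-series of obstacles appears, or $\mathcal G$ is totally periodic with $\mu$ as a right total evolutional period. Combined with the uniqueness established in the first paragraph, the second alternative forces $\mu=\lambda$. Hence $\lambda$ is a left weak evolutional period of $\mathcal F$ for $(0,m)$, and Lemma \ref{sameperiodcontper} finishes the proof.

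The main obstacle I anticipate lies in the setup rather than the deduction: one must carefully align the indexing of the composite central kernels of $\mathcal E$ with the composite kernels of the evolution $\mathcal F_l=\Cr_{k+1}(\mathcal E_{l+3(k+1)})$, confirm consecutiveness of $\mathcal G$ and $\mathcal F$ in the formal sense of Section \ref{sectionstableblocks}, and check that the weakly periodic sequence witnessing continuous periodicity of $\mathcal F$ for index $m$ fits exactly the input format of Lemma \ref{contperexpandtotalleftgood}. Once this bookkeeping is in place, the non-total-periodicity hypothesis on $\mathcal F$ together with the uniqueness of the right total period of $\mathcal G$ forces $\mu=\lambda$ essentially for free.
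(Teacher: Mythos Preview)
Your proposal is correct and follows essentially the same route as the paper's proof: apply Corollary \ref{lrperiodic} to obtain the period $\lambda$, dispose of $m=1$ trivially, and for $m>1$ invoke Lemma \ref{contperexpandtotalleftgood} on the consecutive pair $(\mathcal G,\mathcal F)$ (the paper writes $\mathcal F'$ for your $\mathcal G$), using non-total-periodicity of $\mathcal F$ to supply the required failure hypothesis and then the uniqueness of the right total period of $\mathcal G$ to force $\mu=\lambda$, before closing with Lemma \ref{sameperiodcontper}. Your write-up is in fact a bit more explicit than the paper's, spelling out the appeal to Lemma \ref{atleasttwokersleft} and the verification that $\mu$ cannot extend to $(0,\nker_k(\mathcal F)+1)$, whereas the paper compresses this into a single sentence.
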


\begin{proof}
We prove the lemma for the situation when Case I holds at the left. If Case I holds at the right, 
the proof is completely symmetric.
Suppose that $k$-series of obstacles do not exist in $\al$. We have to prove that 
there exists a left continuous evolutional period of $\mathcal E$ for the index $m$.

By Corollary \ref{lrperiodic}
there exists a final period $\lambda$ such that 
for all $l\ge 0$, $\psi(\Fg(\LR_{k+1}(\mathcal E_{l+3(k+1)})))$
% is a completely $|\lambda|$-periodic word with period $\lambda$,
is a completely $\lambda$-periodic word,
moreover,
$\lambda$ is the unique right total evolutional period of the 
evolution $\mathcal F'$ of stable nonempty $k$-multiblocks defined by 
$\mathcal F'_l=\LA_{k+1,2}(\mathcal E_{l+3(k+1)})$.
Let us check that $\lambda$ is also a weak left evolutional period of $\mathcal F$ 
for the pair $(0,m)$.
%Let us prove that $\lambda$ is a continuous left evolutional period of $\mathcal E$ 
%for the index $m$. 
If $m=1$, this is already clear since 
$\IpR_{k,0,1}(\mathcal F_l)$ is always an empty occurrence.
If $m>1$, then 
since $\mathcal F'$ and $\mathcal F$ are consecutive, 
the fact that 
the left weak evolutional period of $\mathcal F$ 
for the pair $(0,m)$
also equals $\lambda$
follows from
Lemma \ref{contperexpandtotalleftgood}.
Now, $\lambda$ is a left continuous evolutional period of $\mathcal E$ for the index $m$
by Lemma \ref{sameperiodcontper}.
\end{proof}

\begin{lemma}\label{contperbothone}
Let $k\in\NN$.
Suppose that all evolutions of $k$-blocks in $\al$ are continuously periodic.
Let $\mathcal E$ be an evolution of $(k+1)$-blocks such that Case I holds both 
at the left and at the right.

Then there are two possibilities:
\begin{enumerate}
\item $\mathcal E$ is continuously periodic.
\item There exists a $k$-series of obstacles in $\al$.
\end{enumerate}
\end{lemma}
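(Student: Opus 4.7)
The plan is to assume no $k$-series of obstacles exists in $\al$ and to deduce that $\mathcal E$ is continuously periodic. Write $\mathcal F_l=\Cr_{k+1}(\mathcal E_{l+3(k+1)})$, $\mathcal F'_l=\LA_{k+1,2}(\mathcal E_{l+3(k+1)})$ and $\mathcal F''_l=\RA_{k+1,2}(\mathcal E_{l+3(k+1)})$; these are three consecutive evolutions of stable nonempty $k$-multiblocks with $\nker_k(\mathcal F')>1$ and $\nker_k(\mathcal F'')>1$ by Lemmas \ref{atleasttwokersleft} and \ref{atleasttwokersright}, and $\ncker_{k+1}(\mathcal E)=\nker_k(\mathcal F)$ by definition of composite central kernels. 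Corollaries \ref{lrperiodic} and \ref{rrperiodic} (applicable since Case I holds at both sides) supply unique final periods $\lambda,\lambda'$ such that for every $l\ge 0$ the words $\psi(\Fg(\LR_{k+1}(\mathcal E_{l+3(k+1)})))$ and $\psi(\Fg(\RR_{k+1}(\mathcal E_{l+3(k+1)})))$ are completely $\lambda$- and $\lambda'$-periodic, respectively, with $\lambda$ (resp.\ $\lambda'$) being a total left and right evolutional period of $\mathcal F'$ (resp.\ $\mathcal F''$).

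If $\nker_k(\mathcal F)=1$, then both $\IpR_{k,0,1}$ and $\IpR_{k,1,2}$ are empty, so $\LpR_{k+1,1}(\mathcal E_{l+3(k+1)})=\Fg(\LR_{k+1}(\mathcal E_{l+3(k+1)}))$ and $\RpR_{k+1,1}(\mathcal E_{l+3(k+1)})=\Fg(\RR_{k+1}(\mathcal E_{l+3(k+1)}))$, and $\lambda,\lambda'$ serve directly as left and right continuous evolutional periods of $\mathcal E$ for $m=1$. If $\nker_k(\mathcal F)>1$, Corollary \ref{contpercr} furnishes an index $m_0$ for which $\mathcal F$ is continuously periodic. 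Should $\mathcal F$ fail to be totally periodic, Lemma \ref{corenottotcontper} applied on each side yields, in the absence of $k$-series of obstacles, left and right continuous evolutional periods of $\mathcal E$ for the same index $m_0$, finishing this subcase.

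The main obstacle is the case $\nker_k(\mathcal F)>1$ with $\mathcal F$ totally periodic; denote the unique (by Corollaries \ref{contperweakuniqueleft} and \ref{contperweakuniqueright}) total left and right evolutional periods of $\mathcal F$ by $\mu,\mu'$. Here Lemma \ref{atleastonereallytotal}, applied to the three totally periodic evolutions $\mathcal F',\mathcal F,\mathcal F''$ (each with $\nker_k>1$), forces $\lambda=\mu$ or $\mu'=\lambda'$. Assuming $\lambda=\mu$, I will take $m=\nker_k(\mathcal F)$: the right pseudoregular part collapses to $\Fg(\RR_{k+1}(\mathcal E_l))$ (since $\IpR_{k,\nker_k(\mathcal F),\nker_k(\mathcal F)+1}$ is empty) and is completely $\lambda'$-periodic, while the left pseudoregular part is the concatenation of the completely $\lambda$-periodic word $\Fg(\LR_{k+1}(\mathcal E_l))$ with the weakly left $\lambda$-periodic prefix $\IpR_{k,0,\nker_k(\mathcal F)}(\mathcal F_l)$ of $\Fg(\mathcal F_l)$; constancy of the total-length residue modulo $|\lambda|$ follows from divisibility of $|\Fg(\LR_{k+1})|$ by $|\lambda|$ and from total periodicity of $\mathcal F$ (noting $|\IpR_{k,0,\nker_k(\mathcal F)}(\mathcal F_l)|=|\Fg(\mathcal F_l)|-|\Ker_{k,\nker_k(\mathcal F)}(\mathcal F)|$). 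The symmetric choice $m=1$ handles the case $\mu'=\lambda'$, completing the proof.
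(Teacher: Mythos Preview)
Your proof is correct and follows essentially the same approach as the paper's: you split into the cases $\nker_k(\mathcal F)=1$, $\mathcal F$ continuously but not totally periodic (handled via Lemma~\ref{corenottotcontper}), and $\mathcal F$ totally periodic (handled via Lemma~\ref{atleastonereallytotal} applied to the triple $\mathcal F',\mathcal F,\mathcal F''$), exactly as the paper does. The only cosmetic differences are that the paper packages your final verification of Condition~\ref{contperstd} as Lemma~\ref{sameperiodcontper}, and derives the weak left periodicity of $\IpR_{k,0,\nker_k(\mathcal F)}(\mathcal F_l)$ via Lemmas~\ref{weakcontperinsright} and~\ref{contperweakuniquewideleft} rather than directly via the prefix observation; both routes are equivalent.
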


\begin{proof}
Suppose that $k$-series of obstacles do not exist in $\al$. We have to prove that 
$\mathcal E$ is continuously periodic.

If $\ncker_{k+1}(\mathcal E)=1$, then the claim follows from 
Corollaries \ref{lrperiodic}
and \ref{rrperiodic}. Suppose that $\ncker_{k+1}(\mathcal E)>1$.

Denote by $\mathcal F$ the evolution of stable nonempty $k$-multiblocks 
defined by $\mathcal F_l=\Cr_{k+1}(\mathcal E_{l+3(k+1)})$. Our assumption 
$\ncker_{k+1}(\mathcal E)>1$
means that 
$\nker_k(\mathcal F)>1$.
By Corollary \ref{contpercr}, $\mathcal F$ is continuously periodic. If $\mathcal F$ 
is not totally periodic, then the claim follows from Lemma \ref{corenottotcontper}.

Let us consider the case when $\mathcal F$ is totally periodic.
Again, 
By Corollaries \ref{lrperiodic}
and \ref{rrperiodic}, there exist final periods $\lambda$ and $\mu$ such that 
for all $l\ge 0$, $\psi(\Fg(\LR_{k+1}(\mathcal E_{l+3(k+1)})))$
% is a completely $|\lambda|$-periodic word with period $\lambda$
is a completely $\lambda$-periodic word
and
$\psi(\Fg(\RR_{k+1}(\mathcal E_{l+3(k+1)})))$
% is a completely $|\mu|$-periodic word with period $\mu$.
is a completely $\mu$-periodic word.
Moreover,
%Corollaries \ref{lrperiodic} and \ref{rrperiodic} also say that 
$\lambda$ (resp.\ $\mu$) is the unique right (resp.\ left) total evolutional period of the 
evolution $\mathcal F'$ (resp.\ $\mathcal F''$) of stable nonempty $k$-multiblocks defined by 
$\mathcal F'_l=\LA_{k+1,2}(\mathcal E_{l+3(k+1)})$
(resp.\ by $\mathcal F''_l=\RA_{k+1,2}(\mathcal E_{l+3(k+1)})$).
So, $\mathcal F'$, $\mathcal F$, and $\mathcal F''$ are three consecutive 
totally periodic evolutions of stable nonempty $k$-multiblocks, and
by Lemmas \ref{atleasttwokersleft} and \ref{atleasttwokersright}, 
$\nker_k(\mathcal F')>1$ and $\nker_k(\mathcal F'')>1$. We have also assumed that 
$\nker_k(\mathcal F)>1$. Now we can use Lemma \ref{atleastonereallytotal}.
It implies that either $\lambda$ is a total left evolutional period of 
$\mathcal F$, or
$\mu$ is a total right evolutional period of 
$\mathcal F$. If $\lambda$ is a total left evolutional period of 
$\mathcal F$, set $m=\nker_k(\mathcal F)$. Then $\lambda$ is also a 
left weak evolutional period of $\mathcal F$ 
for the pair $(0,m)$ by Lemmas \ref{weakcontperinsright} and \ref{contperweakuniquewideleft}, 
and $\mu$ is a weak right 
evolutional period of $\mathcal F$ 
for the pair $(m,\nker_k(\mathcal F)+1)$
since 
$\IpR_{k,\nker_k(\mathcal F),\nker_k(\mathcal F)+1}(\mathcal F_l)$ is always an empty occurrence.
Similarly, if 
$\mu$ is a total right evolutional period of 
$\mathcal F$, then set $m=1$.
Then $\mu$ is also a 
right weak evolutional period of $\mathcal F$ 
for the pair $(m,\nker_k(\mathcal F)+1)$ by Lemmas \ref{weakcontperinsleft} and \ref{contperweakuniquewideright}, and
$\lambda$ is a 
left weak evolutional period of $\mathcal F$ 
for the pair $(0,m)$
since 
$\IpR_{k,0,1}(\mathcal F_l)$ is always an empty occurrence.

% Summarizing, independently on whether $\mathcal F$ is totally periodic or not, 
% we have found an index $m$ ($1\le m\le \nker_k(\mathcal F)$)
% such that 
% $\lambda$ is a left weak evolutional period of $\mathcal F$ 
% for the pair $(0,m)$, and 
% $\mu$ is a right 
% weak evolutional period of $\mathcal F$ 
% for the pair $(m,\nker_k(\mathcal F)+1)$.
% 
% Let us prove that $\lambda$ is a continuous left evolutional period of $\mathcal E$ 
% for the index $m$. 
% 
% Similarly, $\mu$ is a right continuous evolutional period of $\mathcal E$ for the index $m$,
% and $\mathcal E$ is continuously periodic for the index $m$.

The claim now follows from Lemma \ref{sameperiodcontper}.
\end{proof}

\begin{lemma}\label{corenottotcontperii}
Let $k\in\NN$.
Let $\mathcal E$ be an evolution of $(k+1)$-blocks such that Case II holds at the 
left (resp.\ at the right). 
Denote by $\mathcal F$ the following evolution of 
$k$-multiblocks: $\mathcal F_l=\Cr_{k+1}(\mathcal E_{l+3(k+1)})$.

Suppose that $\mathcal F$ 
is continuously periodic for an index $m$ ($1\le m\le \nker_k(\mathcal F)$), but
is \textbf{not} totally periodic.

Then there are two possibilities:
\begin{enumerate}
\item There exists a left (resp.\ right) continuous evolutional period of $\mathcal E$ for the index $m$.
\item There exists a $k$-series of obstacles in $\al$.
\end{enumerate}
\end{lemma}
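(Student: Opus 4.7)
The plan is to adapt the proof of Lemma~\ref{corenottotcontper} by replacing the role of the (now empty) left/right regular parts with the role of the left/right bounding sequences; accordingly Corollary~\ref{lrperiodic} (resp.\ Corollary~\ref{rrperiodic}) is replaced by Lemma~\ref{lbsperiodic} (resp.\ Lemma~\ref{rbsperiodic}). I will treat the Case II at the left situation; the other is completely symmetric.

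First I check that Lemma~\ref{lbsperiodic} is applicable. Because $\mathcal E_{l+3(k+1)}$ is a stable $(k+1)$-block, Corollary~\ref{regpartstablekminusone} tells us that all letters of order $k+1$ in $\Cr_{k+1}(\mathcal E_{l+3(k+1)})$ are periodic and all $k$-blocks in it are stable, and Remark~\ref{zerothatomsimplestruct} says the core is nonempty as a $k$-multiblock; hence each $\mathcal F_l$ is a stable nonempty $k$-multiblock. Moreover, because Case II holds at the left for $\mathcal E$, the left preperiod and the left regular part of $\mathcal E_{l+3(k+1)}$ are both empty occurrences, so the core begins at the leftmost position of $\mathcal E_{l+3(k+1)}$; in particular $\Fg(\mathcal F_l)$ is a prefix of $\mathcal E_{l+3(k+1)}$ for every $l\ge 0$.

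Suppose first that $m=1$. Applying Lemma~\ref{borderkernelsleft} to $\mathcal F_l$, the first composite kernel of $\mathcal F_l$ (which is the first composite central kernel of $\mathcal E_{l+3(k+1)}$) begins at the leftmost position of $\Fg(\mathcal F_l)$; combined with the preceding observation this is the leftmost position of $\mathcal E_{l+3(k+1)}$. So $\LpR_{k+1,1}(\mathcal E_{l+3(k+1)})$ is an empty occurrence for every $l\ge 0$. Condition~\ref{contperstd} of the definition of a left continuous evolutional period is then vacuously satisfied by every final period, and Condition~\ref{contperlrbs} is not required when $m=1$; so any final period (they exist because $\varphi$ is strongly 1-periodic with long images and a periodic letter of order~2 is present in $\E$) serves as a left continuous evolutional period of $\mathcal E$ for the index~$1$.

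Now let $m>1$. Let $\lambda$ be the left weak evolutional period of $\mathcal F$ for the pair $(0,m)$, which exists by hypothesis and is unique by Corollary~\ref{contperweakuniqueleft}. Apply Lemma~\ref{lbsperiodic} with this $\mathcal F$, with $l_0=3(k+1)$, and with the index~$m$: its three alternatives are that $\mathcal F$ is totally periodic (ruled out by hypothesis), that $\psi(\LBS_{k+1}(\mathcal E))$ is periodic with period $\lambda$, or that a $k$-series of obstacles exists in $\al$ (which is the second alternative of the present lemma). In the middle case Condition~\ref{contperlrbs} holds for $\lambda$. For Condition~\ref{contperstd}, note that since Case II holds at the left, the forgetful occurrence of $\LpR_{k+1,m}(\mathcal E_{l+3(k+1)})$ coincides with $\IpR_{k,0,m}(\mathcal F_l)$ as an occurrence in $\al$; both the weak left $\lambda$-periodicity of its $\psi$-image and the $l$-independence of the residue of its length modulo $|\lambda|$ are then precisely the content of $\lambda$ being a left weak evolutional period of $\mathcal F$ for $(0,m)$. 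The only substantive ingredient beyond routine bookkeeping is the already proved Lemma~\ref{lbsperiodic}, which manufactures the obstacle sequence in the unlucky case; I expect no additional difficulty.
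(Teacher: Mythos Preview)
Your proof is correct and follows essentially the same approach as the paper's own proof: handle the boundary index ($m=1$ for the left case, $m=\ncker_{k+1}(\mathcal E)$ for the right case) by noting the relevant pseudoregular part is empty, and in the remaining case invoke the bounding-sequence lemma (Lemma~\ref{lbsperiodic}/\ref{rbsperiodic}) to either obtain periodicity of $\psi(\LBS_{k+1}(\mathcal E))$ (resp.\ $\psi(\RBS_{k+1}(\mathcal E))$) with the required period or a $k$-series of obstacles. The paper writes out the right-hand (Case~II at the right) version and you wrote the left-hand one, but the two are mirror images of each other.
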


\begin{proof}
We prove the lemma for the situation when Case II holds at the right. If Case II holds at the left, 
the proof is completely symmetric.
Suppose that $k$-series of obstacles do not exist in $\al$. We have to prove that 
there exists a right continuous evolutional period of $\mathcal E$ for the index $m$.

If $m=\nker_k(\mathcal F)=\ncker_{k+1}(\mathcal E)$, then,
as we have already noted after the definition of a continuous evolutional period of 
an evolution of $k$-blocks,
any final period is a right continuous evolutional period of $\mathcal E$ for the index $m$.

Suppose that $m<\nker_k(\mathcal F)$. Denote by $\lambda$ the (unique by Corollary \ref{contperweakuniqueright})
weak right evolutional period of $\mathcal F$ for the pair $(m,\nker_k(\mathcal F)+1)$.
Since Case II holds for $\mathcal E$ at the right, $\Fg(\Cr_{k+1}(\mathcal E_l))$ 
is a suffix of $\mathcal E_l$ for all $l\ge 1$, 
and $\IpR_{k,m,\nker_k(\mathcal F)+1}(\mathcal F_l)=\RpR_{k+1,m}(\mathcal E_{l+3(k+1)})$.
By Lemma \ref{rbsperiodic}, $\psi(\RBS_{k+1}(\mathcal E))$ is periodic with period $\lambda$, 
so $\lambda$ is a right continuous evolutional period of $\mathcal E$ 
directly by definition.
\end{proof}

\begin{lemma}\label{contperonetwo}
Let $k\in\NN$.
Suppose that all evolutions of $k$-blocks in $\al$ are continuously periodic.
Let $\mathcal E$ be an evolution of $(k+1)$-blocks such that Case I holds 
at the left and Case II holds at the right.

Then there are two possibilities:
\begin{enumerate}
\item $\mathcal E$ is continuously periodic.
\item There exists a $k$-series of obstacles in $\al$.
\end{enumerate}
\end{lemma}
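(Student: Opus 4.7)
My plan is to parallel the proof of Lemma \ref{contperbothone}, but to replace the role that Corollary \ref{rrperiodic} plays on the right (which is unavailable here because Case II holds at the right) with information extracted either from Lemma \ref{rbsperiodic} or, when that fails, from a direct construction of obstacles. Assume throughout that no $k$-series of obstacles exists in $\al$; I will produce an index $m$ and final periods witnessing continuous periodicity of $\mathcal E$ for $m$.

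If $\ncker_{k+1}(\mathcal E) = 1$, I take $m = 1$. Corollary \ref{lrperiodic} supplies a final period $\pi_L$ making $\psi(\Fg(\LR_{k+1}(\mathcal E_{l+3(k+1)})))$ completely $\pi_L$-periodic, and Lemma \ref{sameperiodcontper} then certifies $\pi_L$ as a left continuous evolutional period for $m = 1$ (the pair $(0,1)$ being trivial). On the right, Case II combined with $m = \ncker_{k+1}(\mathcal E)$ forces $\RpR_{k+1,1}(\mathcal E_l)$ to be empty and suppresses the bounding-sequence clause, so any final period works. If $\ncker_{k+1}(\mathcal E) > 1$, set $\mathcal F_l = \Cr_{k+1}(\mathcal E_{l+3(k+1)})$; by Corollary \ref{contpercr} some index $m_0$ makes $\mathcal F$ continuously periodic. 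When $\mathcal F$ is not totally periodic, Lemmas \ref{corenottotcontper} and \ref{corenottotcontperii} directly supply left- and right-sided continuous evolutional periods of $\mathcal E$ for $m_0$.

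The remaining subcase --- $\mathcal F$ totally periodic with left and right total evolutional periods $\mu_\ell$, $\mu_r$ --- is where the proof departs from Lemma \ref{contperbothone}. Let $\pi_L$ again be the period from Corollary \ref{lrperiodic}. If $\pi_L = \mu_\ell$, I take $m = \nker_k(\mathcal F) = \ncker_{k+1}(\mathcal E)$: Lemmas \ref{weakcontperinsright} and \ref{contperweakuniquewideleft} identify $\pi_L = \mu_\ell$ as the left weak evolutional period of $\mathcal F$ for pair $(0,\nker_k(\mathcal F))$, Lemma \ref{sameperiodcontper} produces the left continuous evolutional period, and the right-side conditions degenerate to triviality because $m = \ncker_{k+1}(\mathcal E)$. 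If $\pi_L \ne \mu_\ell$, I try $m = 1$: the left conditions are satisfied by $\pi_L$ via Lemma \ref{sameperiodcontper} on the trivial pair $(0,1)$, and Lemmas \ref{weakcontperinsleft} and \ref{contperweakuniquewideright} identify $\mu_r$ as the only candidate for the right continuous evolutional period. The obstruction is that the bounding-sequence clause now applies (since $1 < \nker_k(\mathcal F) = \ncker_{k+1}(\mathcal E)$), and Lemma \ref{rbsperiodic} merely tells us that at least one of --- $\mathcal F$ totally periodic (which we have), $\psi(\RBS_{k+1}(\mathcal E))$ periodic with period $\mu_r$, or obstacles --- holds, without resolving between them.

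To close the argument I must show that in this last subcase, if $\psi(\RBS_{k+1}(\mathcal E))$ is not periodic with period $\mu_r$, then a $k$-series of obstacles exists, contradicting the hypothesis. The construction parallels that in the proof of Lemma \ref{lbsrbsperiodic}, with the completely $\pi_L$-periodic left regular part $\psi(\Fg(\LR_{k+1}(\mathcal E_{l+3(k+1)})))$ playing the role that $\psi(\LBS_{k+1}(\mathcal E))$ plays there. The assumed failure yields a bounded index $s'$ with $\psi(\RBS_{k+1}(\mathcal E))_{0\ldots s'-1}$ weakly left $\mu_r$-periodic but $\psi(\RBS_{k+1}(\mathcal E))_{0\ldots s'}$ not, which Corollary \ref{presenceofrbs} realizes as a concrete break inside $\al$. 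On the left, since $\pi_L$ and $\mu_\ell$ are distinct final periods --- hence by Lemma \ref{finalnomorethanonce} neither is a proper repetition --- while $|\Fg(\LR_{k+1}(\mathcal E_{l+3(k+1)}))| \to \infty$ by Lemma \ref{regpartasym}, Lemma \ref{finwordperiods} prevents the $|\mu_r|$-periodicity of $\psi(\Fg(\mathcal F_l))$ from extending leftward into the $\pi_L$-periodic region by more than a bounded number $T'$ of positions, independent of $l$. Writing $\Fg(\mathcal F_l) = \swa{j^-}{j}$ for $l$ large, the occurrences $\mathcal H_l'' = \swa{j^- - T'}{j + s' - 1}$ are then weakly $|\mu_r|$-periodic, have length $\ge |\Fg(\mathcal F_l)| \ge 2\finmax$ growing as $\Theta(l^{k'})$ for some $1 \le k' \le k$ by Corollary \ref{betweenkernelgrowsgood}, and fail to remain weakly $|\mu_r|$-periodic under a one-step extension at either end. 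This yields the desired $k$-series of obstacles. The hardest step is this verification that $T'$ is truly bounded and that the one-step extensions genuinely break $|\mu_r|$-periodicity --- i.e., the Fine--Wilf-type conclusion extracted from Lemmas \ref{finwordperiods} and \ref{finalnomorethanonce}.
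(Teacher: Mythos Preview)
Your approach is correct in outline and largely parallels the paper's proof through the preliminary reductions ($\ncker_{k+1}(\mathcal E)=1$; $\mathcal F$ not totally periodic). The divergence is in the final subcase where $\mathcal F$ is totally periodic.

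The paper handles this subcase more economically. Instead of applying Lemma~\ref{rbsperiodic} to $\mathcal F$ (where, as you correctly observe, the ``$\mathcal F$ totally periodic'' branch makes the lemma inconclusive), the paper applies it to the \emph{concatenation} $\mathcal F''$ of $\mathcal F'$ (where $\mathcal F'_l=\LA_{k+1,2}(\mathcal E_{l+3(k+1)})$) and $\mathcal F$. One checks that $\mathcal F''$ is continuously periodic for index $\nker_k(\mathcal F')$ with right weak evolutional period $\mu_r$ on the pair $(\nker_k(\mathcal F'),\nker_k(\mathcal F'')+1)$. The case split is then on whether $\mathcal F''$ is totally periodic. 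If it is, Lemma~\ref{contperexpandtotalleft} forces $\pi_L=\mu_\ell$, and one takes $m=\nker_k(\mathcal F)$ exactly as you do. If it is not, Lemma~\ref{rbsperiodic} applied to $\mathcal F''$ now has its first branch ruled out, so (absent obstacles) it yields directly that $\psi(\RBS_{k+1}(\mathcal E))$ is periodic with period $\mu_r$, and one takes $m=1$. No hand-built obstacle construction is needed.

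Your direct construction in the $\pi_L\ne\mu_\ell$ subcase does work, but the write-up has two gaps you should close. First, you describe $T'$ as a \emph{bound} on the leftward extension; for the one-step left extension of $\mathcal H_l''$ to genuinely break $p$-periodicity you need $T'$ to be the \emph{exact} maximal extension, and you need it to be the \emph{same} for all $l$. This is true --- the comparison $\psi(\al_{j^--t})\stackrel{?}{=}\psi(\al_{j^--t+p})$ depends only on $\pi_L$, $\mu_\ell$, and $t$ (using that $\psi(\Fg(\LR_{k+1}(\mathcal E_{l+3(k+1)})))$ is \emph{completely} $\pi_L$-periodic, so the pattern immediately left of $j^-$ is $l$-independent) --- but it should be stated. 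Second, there is an off-by-one on the right: since $\RBS_{k+1}(\mathcal E)_0$ corresponds to $\al_{j+1}$, the ``good'' prefix $\RBS_{0\ldots s'-1}$ sits at $\swa{j+1}{j+s'}$, so $\mathcal H_l''$ should end at $j+s'$, not $j+s'-1$.
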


\begin{proof}
Suppose that $k$-series of obstacles do not exist in $\al$. We have to prove that 
$\mathcal E$ is continuously periodic.

If $\ncker_{k+1}(\mathcal E)=1$, then the claim follows from 
Corollary \ref{lrperiodic}.
Suppose that $\ncker_{k+1}(\mathcal E)>1$.

Again, denote by $\mathcal F$ the evolution of stable nonempty $k$-multiblocks 
defined by $\mathcal F_l=\Cr_{k+1}(\mathcal E_{l+3(k+1)})$. 
% Again, now we have supposed that $\nker_k(\mathcal F)>1$.
By Corollary \ref{contpercr}, $\mathcal F$ is continuously periodic.
%for an index $m$ ($1\le m\le \nker_k(\mathcal F)=\ncker_k(\mathcal E)$). 
If $\mathcal F$ 
is not totally periodic, then the claim follows from Lemmas \ref{corenottotcontper} and \ref{corenottotcontperii}.

Suppose that $\mathcal F$ is totally periodic.
Denote by $\lambda$ and $\mu$
the (unique by Corollaries \ref{contperweakuniqueleft} and \ref{contperweakuniqueright} 
since $\nker_k(\mathcal F)=\ncker_{k+1}(\mathcal E)>1$)
left and right (respectively) total evolutional periods of $\mathcal F$.
It follows from Lemmas \ref{weakcontperinsright} and \ref{contperweakuniquewideleft}
that $\lambda$ is also a weak left evolutional period of $\mathcal F$ 
for the pair $(0,\nker_k(\mathcal F))$, and 
it follows from Lemmas \ref{weakcontperinsleft} and \ref{contperweakuniquewideright}
that $\mu$ is also a weak right evolutional period of $\mathcal F$ 
for the pair $(1,\nker_k(\mathcal F)+1)$.

Consider the 
evolution $\mathcal F'$ of stable nonempty $k$-multiblocks defined by 
$\mathcal F'_l=\LA_{k+1,2}(\mathcal E_{l+3(k+1)})$. $\mathcal F'$ and $\mathcal F$
are consecutive, denote their concatenation by $\mathcal F''$.
By Corollary \ref{lrperiodic}, there exists a final period $\lambda'$ 
such that $\lambda'$ is both left and right total evolutional 
period of $\mathcal F'$.
By Lemma \ref{weakcontperinsright}, $\mathcal F'$ is also 
weakly periodic for the sequence $0,\nker_k(\mathcal F'),\nker_k(\mathcal F')+1$.
Now Lemma \ref{concatcontper} says 
that $\mathcal F''$ is weakly periodic for the sequence $0,\nker_k(\mathcal F'),\nker_k(\mathcal F'')+1$.

First, let us consider the case when $\mathcal F''$ is not totally periodic.
We know that $\mu$ is a weak right evolutional period of $\mathcal F$ 
for the pair $(1,\nker_k(\mathcal F)+1)$. By Lemma \ref{concatcompker},
$\IpR_{k,\nker_k(\mathcal F'),\nker_k(\mathcal F'')+1}(\mathcal F'')=\IpR_{k,1,\nker_k(\mathcal F)+1}(\mathcal F)$
for all $l\ge 0$
as an occurrence in $\al$.
Hence,
$\mu$ is also a weak right evolutional period of $\mathcal F''$ 
for the pair $(\nker_k(\mathcal F'),\nker_k(\mathcal F'')+1)$.
By Lemma \ref{rbsperiodic}, $\psi(\RBS_{k+1}(\mathcal E))$
is periodic with period $\mu$.
Since Case II holds for $\mathcal E$ at the right, 
$\IpR_{k,1,\nker_k(\mathcal F)+1}(\mathcal F_l)=\RpR_{k+1,1}(\mathcal E_{l+3(k+1)})$.
Therefore, $\mu$ is a right continuous evolutional period of $\mathcal E$ for index 1
by definition. It also follows from Corollary \ref{lrperiodic}
that $\lambda'$ is a left continuous evolutional period of $\mathcal E$ for index 1, 
and $\mathcal E$ is continuously periodic.

Now suppose that $\mathcal F''$ is totally periodic.
We know that $\nker_k(\mathcal F)=\ncker_{k+1}(\mathcal E)>1$, 
that $\mathcal F'$ and $\mathcal F$ are totally periodic, 
and that
$\lambda$ is a weak left evolutional period of $\mathcal F$ 
for the pair $(0,\nker_k(\mathcal F)+1)$. By Lemma \ref{contperexpandleft},
$\lambda$ is also a total right evolutional period of $\mathcal F'$.
By Lemma \ref{atleasttwokersleft}, $\nker_k(\mathcal F')>1$,
so by Corollary \ref{contperweakuniqueright}, 
$\lambda=\lambda'$. Now recall that 
$\lambda$ is also a weak left evolutional period of $\mathcal F$ 
for the pair $(0,\nker_k(\mathcal F))$
and that
Corollary \ref{lrperiodic} also says that
for all $l\ge 0$, 
$\psi(\Fg(\LR_{k+1}(\mathcal E_{l+3(k+1)})))$
% is a completely $|\lambda|$-periodic word with period $\lambda$.
is a completely $\lambda$-periodic word.
Therefore, by Lemma \ref{sameperiodcontper},
$\lambda$ is a left continuous evolutional period of $\mathcal E$ for the index $\nker_k(\mathcal F)$.
And again, since Case II holds for $\mathcal E$ at the right, any final period is 
a right continuous evolutional period 
of $\mathcal E$ for the index $\ncker_{k+1}(\mathcal F)=\nker_k(\mathcal F)$, 
and $\mathcal E$ is continuously periodic.
\end{proof}

\begin{lemma}\label{contpertwoone}
Let $k\in\NN$.
Suppose that all evolutions of $k$-blocks in $\al$ are continuously periodic.
Let $\mathcal E$ be an evolution of $(k+1)$-blocks such that Case II holds 
at the left and Case I holds at the right.

Then there are two possibilities:
\begin{enumerate}
\item $\mathcal E$ is continuously periodic.
\item There exists a $k$-series of obstacles in $\al$.
\end{enumerate}
\end{lemma}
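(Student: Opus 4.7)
The plan is to imitate the proof of Lemma \ref{contperonetwo} verbatim with all left-right roles swapped, so below I lay out the sequence of steps and pinpoint where the symmetry is genuine and where we must consult Case~I at the right rather than at the left.

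First I would assume $k$-series of obstacles do not exist in $\al$ and dispose of the easy case $\ncker_{k+1}(\mathcal E)=1$ by applying Corollary \ref{rrperiodic}: it provides a final period which is simultaneously a right and left total evolutional period of $\RA_{k+1,2}(\mathcal E_{l+3(k+1)})$, hence (since Case II holds at the left and there is only one composite central kernel) serves as the right continuous evolutional period and the left continuous evolutional period of $\mathcal E$ for index~$1$ by the standing convention that any final period works on the Case-II side when $m$ is the extremal kernel index.

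From now on assume $\ncker_{k+1}(\mathcal E)>1$. Let $\mathcal F$ be the evolution of stable nonempty $k$-multiblocks with $\mathcal F_l=\Cr_{k+1}(\mathcal E_{l+3(k+1)})$; by Corollary \ref{contpercr} it is continuously periodic (no obstacles), and if it is not totally periodic the conclusion follows directly from Lemmas \ref{corenottotcontper} and \ref{corenottotcontperii}. So assume $\mathcal F$ is totally periodic with left and right total evolutional periods $\lambda$ and $\mu$ (unique by Corollaries \ref{contperweakuniqueleft} and \ref{contperweakuniqueright}, using $\nker_k(\mathcal F)=\ncker_{k+1}(\mathcal E)>1$); with the help of Lemmas \ref{weakcontperinsleft}, \ref{weakcontperinsright}, \ref{contperweakuniquewideleft} and \ref{contperweakuniquewideright} the period $\lambda$ also serves on the pair $(0,\nker_k(\mathcal F))$ and $\mu$ on the pair $(1,\nker_k(\mathcal F)+1)$.

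Now consider the consecutive evolution $\mathcal F'_l=\RA_{k+1,2}(\mathcal E_{l+3(k+1)})$, which by Corollary \ref{rrperiodic} admits a final period $\mu'$ that is both left and right total evolutional period of~$\mathcal F'$, and let $\mathcal F''$ be the concatenation $\mathcal F \mathcal F'$. By Lemma \ref{weakcontperinsleft} and Lemma \ref{concatcontper}, $\mathcal F''$ is weakly periodic for the sequence $0,1,\nker_k(\mathcal F''),\nker_k(\mathcal F'')+1$. Then I split on whether $\mathcal F''$ is totally periodic. If not, Lemma \ref{concatcompker} gives $\IpR_{k,0,1}(\mathcal F''_l)=\IpR_{k,0,1}(\mathcal F_l)$ (empty) and $\IpR_{k,1,\nker_k(\mathcal F'')+1}$ agrees suitably, so $\lambda$ is a left weak evolutional period of $\mathcal F''$ on the relevant pair, and since Case II holds for $\mathcal E$ at the left, $\Fg(\Cr_{k+1}(\mathcal E_l))$ appears as a prefix of $\mathcal E_l$; Lemma \ref{lbsperiodic} then concludes that $\psi(\LBS_{k+1}(\mathcal E))$ is periodic with period $\lambda$, so $\lambda$ becomes a left continuous evolutional period of $\mathcal E$ for index $\ncker_{k+1}(\mathcal E)$ (or for index $1$ after a matching argument), while $\mu'$ from Corollary \ref{rrperiodic} serves as the right continuous evolutional period, giving $\mathcal E$ continuous periodicity.

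If instead $\mathcal F''$ is totally periodic, I would apply the right-handed analogue of the argument used in the proof of Lemma \ref{contperonetwo}: by Lemma \ref{contperexpandright} (applied to the pair $(1,\nker_k(\mathcal F)+1)$ extended across the junction into $\mathcal F'$) $\mu$ is forced to be a total left evolutional period of~$\mathcal F'$, and by uniqueness (Corollary \ref{contperweakuniqueleft}, using $\nker_k(\mathcal F')>1$ from Lemma \ref{atleasttwokersright}) we get $\mu=\mu'$. Combined with Corollary \ref{rrperiodic} which says $\psi(\Fg(\RR_{k+1}(\mathcal E_{l+3(k+1)})))$ is completely $\mu$-periodic, Lemma \ref{sameperiodcontper} (used on the right) produces a right continuous evolutional period of $\mathcal E$ for the index~$1$, and the Case II convention on the left supplies a matching left continuous evolutional period. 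The main obstacle is bookkeeping the indices carefully across the concatenation step and the alignment of residues when invoking Lemma \ref{sameperiodcontper} in its right-sided form, since the asymmetric definition of continuous evolutional periods (Condition \ref{contperlrbs}) involves the left bounding sequence only when Case II actually holds at the left, which it does here, so Lemma \ref{lbsperiodic} must be applied rather than its trivial substitute.
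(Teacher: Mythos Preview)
Your proposal is correct and takes exactly the approach the paper intends: the paper's own proof is the single sentence ``The proof is completely symmetric to the proof of the previous lemma,'' and you have spelled out that symmetry (Case~II left~/ Case~I right, Corollary~\ref{rrperiodic} in place of Corollary~\ref{lrperiodic}, Lemma~\ref{lbsperiodic} in place of Lemma~\ref{rbsperiodic}, the concatenation $\mathcal F\mathcal F'$ with $\mathcal F'_l=\RA_{k+1,2}(\mathcal E_{l+3(k+1)})$, and index $\ncker_{k+1}(\mathcal E)$ replacing index~$1$). A couple of your index bookkeeping details are slightly off---for instance, the direct symmetric application of Lemma~\ref{concatcontper} yields weak periodicity of $\mathcal F''$ for the sequence $0,\nker_k(\mathcal F),\nker_k(\mathcal F'')+1$ rather than $0,1,\nker_k(\mathcal F''),\nker_k(\mathcal F'')+1$, and it is at the index $\nker_k(\mathcal F)$ that one invokes Lemma~\ref{lbsperiodic}---but these are cosmetic and do not affect the argument.
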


\begin{proof}
The proof is completely symmetric to the proof of the previous lemma.
\end{proof}

\begin{lemma}\label{contperbothtwo}
Let $k\in\NN$.
Suppose that all evolutions of $k$-blocks in $\al$ are continuously periodic.
Let $\mathcal E$ be an evolution of $(k+1)$-blocks such that Case II holds 
both at the left and at the right.

Then there are two possibilities:
\begin{enumerate}
\item $\mathcal E$ is continuously periodic.
\item There exists a $k$-series of obstacles in $\al$.
\end{enumerate}
\end{lemma}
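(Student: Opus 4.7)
The plan is to mimic the case analysis of Lemmas \ref{contperbothone}, \ref{contperonetwo}, and \ref{contpertwoone}, with the main difference being that now we have no left or right regular part to produce an ``internal'' period; we must rely instead on Lemma \ref{lbsrbsperiodic} applied to the bounding sequences of $\mathcal{E}$. We may assume throughout that no $k$-series of obstacles exists in $\al$, since otherwise we are done immediately. First dispose of the trivial case $\ncker_{k+1}(\mathcal{E})=1$: as noted after the definition of continuous periodicity, an evolution of $(k+1)$-blocks with only one composite central kernel and Case II on both sides is automatically continuously periodic for index $1$. So from now on assume $\ncker_{k+1}(\mathcal{E})>1$ and set $\mathcal{F}_l=\Cr_{k+1}(\mathcal{E}_{l+3(k+1)})$, an evolution of stable nonempty $k$-multiblocks with $\nker_k(\mathcal{F})=\ncker_{k+1}(\mathcal{E})>1$.

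By Corollary \ref{contpercr}, $\mathcal{F}$ is continuously periodic (otherwise we obtain a $k$-series of obstacles). Suppose first that $\mathcal{F}$ is continuously periodic for some index $m$ but is \emph{not} totally periodic. Apply Lemma \ref{corenottotcontperii} to $\mathcal{E}$ twice, once using Case II at the left and once using Case II at the right, with the same index $m$; this produces a left and a right continuous evolutional period of $\mathcal{E}$ for the same index $m$, so $\mathcal{E}$ is continuously periodic for $m$.

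Now suppose $\mathcal{F}$ is totally periodic. Since $\nker_k(\mathcal{F})>1$, Corollaries \ref{contperweakuniqueleft} and \ref{contperweakuniqueright} give unique left and right total evolutional periods $\lambda$ and $\mu$ of $\mathcal{F}$. Apply Lemma \ref{lbsrbsperiodic}: either $\psi(\LBS_{k+1}(\mathcal{E}))$ is periodic with period $\lambda$, or $\psi(\RBS_{k+1}(\mathcal{E}))$ is periodic with period $\mu$ (the third alternative, a $k$-series of obstacles, is excluded). In the first case, take $m=\ncker_{k+1}(\mathcal{E})=\nker_k(\mathcal{F})$. Then by Lemmas \ref{weakcontperinsright} and \ref{contperweakuniquewideleft}, $\lambda$ is also the left weak evolutional period of $\mathcal{F}$ for the pair $(0,\nker_k(\mathcal{F}))$; since Case II holds at the left, $\LpR_{k+1,m}(\mathcal{E}_{l+3(k+1)})=\IpR_{k,0,\nker_k(\mathcal{F})}(\mathcal{F}_l)$, so Condition 1 in the definition of a left continuous evolutional period holds with period $\lambda$, and Condition 2 holds by the assumed periodicity of $\psi(\LBS_{k+1}(\mathcal{E}))$. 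On the right side, Case II together with $m=\ncker_{k+1}(\mathcal{E})$ makes $\RpR_{k+1,m}(\mathcal{E}_{l+3(k+1)})$ empty and makes Condition 2 vacuous, so any final period serves as a right continuous evolutional period for $m$. Hence $\mathcal{E}$ is continuously periodic for $m$. The second case (periodicity of $\psi(\RBS_{k+1}(\mathcal{E}))$ with period $\mu$) is handled symmetrically by taking $m=1$: the left conditions become trivial because $\LpR_{k+1,1}(\mathcal{E}_{l+3(k+1)})=\IpR_{k,0,1}(\mathcal{F}_l)$ is empty and Condition 2 does not apply, while on the right, $\mu$ serves by Lemmas \ref{weakcontperinsleft} and \ref{contperweakuniquewideright} together with the assumed periodicity of $\psi(\RBS_{k+1}(\mathcal{E}))$.

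The main potential obstacle is purely bookkeeping: checking, in each subcase, that the chosen index $m$ satisfies both clauses in the definition of a continuous evolutional period on each side simultaneously, taking full advantage of the fact that under Case II many of the required periodicities either collapse to empty words or reduce to statements about bounding sequences already supplied by Lemma \ref{lbsrbsperiodic}. No new combinatorial input beyond the lemmas cited above is needed.
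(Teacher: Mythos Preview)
Your proof is correct and follows essentially the same route as the paper's own argument: reduce to the core evolution $\mathcal F$, invoke Corollary~\ref{contpercr}, handle the non-totally-periodic case via Lemma~\ref{corenottotcontperii} (applied on both sides), and in the totally periodic case apply Lemma~\ref{lbsrbsperiodic} and read off the required continuous evolutional periods at index $1$ or $\ncker_{k+1}(\mathcal E)$. The only cosmetic difference is that you dispose of the case $\ncker_{k+1}(\mathcal E)=1$ at the outset, whereas the paper postpones it to the totally periodic branch; your ordering is slightly cleaner but logically equivalent.
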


\begin{proof}
Suppose that $k$-series of obstacles do not exist in $\al$. We have to prove that 
$\mathcal E$ is continuously periodic.

Again, consider the evolution $\mathcal F$ of stable nonempty $k$-multiblocks 
defined by $\mathcal F_l=\Cr_{k+1}(\mathcal E_{l+3(k+1)})$. 
% Again, now we have supposed that $\nker_k(\mathcal F)>1$.
By Corollary \ref{contpercr}, $\mathcal F$ is continuously periodic.
%for an index $m$ ($1\le m\le \nker_k(\mathcal F)=\ncker_k(\mathcal E)$). 
If $\mathcal F$ 
is not totally periodic, then the claim follows from Lemma \ref{corenottotcontperii}.

Suppose that $\mathcal F$ is totally periodic. If $\ncker_k(\mathcal E)=\nker_k(\mathcal F)=1$, then 
$\mathcal E$ is automatically continuously periodic, 
as we have noted right after the definition of a continuously
periodic evolution of $k$-multiblocks.

If $\ncker_k(\mathcal E)=\nker_k(\mathcal F)>1$, denote the 
(unique by Corollaries \ref{contperweakuniqueleft} and \ref{contperweakuniqueright})
left and right total evolutional periods of $\mathcal F$ by $\lambda$ and $\mu$, 
respectively.
By Lemma \ref{lbsrbsperiodic}, either $\psi(\LBS_{k+1}(\mathcal E))$
is periodic with period $\lambda$, 
or 
$\psi(\RBS_{k+1}(\mathcal E))$
is periodic with period $\mu$.
If $\psi(\LBS_{k+1}(\mathcal E))$
is periodic with period $\lambda$, 
then $\lambda$ is a left continuous
evolutional period of $\mathcal E$ for the index $\ncker_{k+1}(\mathcal E)$, 
and any final period is a right continuous 
evolutional period of $\mathcal E$ for the index $\ncker_{k+1}(\mathcal E)$.
If $\psi(\RBS_{k+1}(\mathcal E))$
is periodic with period $\mu$,
then $\mu$ is a right continuous
evolutional period of $\mathcal E$ for the index 1, 
and any final period is a left continuous 
evolutional period of $\mathcal E$ for the index 1.
\end{proof}

\begin{proposition}\label{contpergreatalt}
Let $k\in\NN$.
Suppose that all evolutions of $k$-blocks in $\al$ are continuously periodic.
Then either all evolutions of $(k+1)$-blocks in $\al$ are continuously periodic, 
or there exists a $k$-series of obstacles in $\al$.
\end{proposition}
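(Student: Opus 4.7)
The plan is to reduce the proof to a direct case analysis using the four lemmas established immediately before the statement, namely Lemmas~\ref{contperbothone}, \ref{contperonetwo}, \ref{contpertwoone}, and \ref{contperbothtwo}. Each of these lemmas treats one of the four combinations of Case~I/Case~II at the left and at the right of an evolution of $(k+1)$-blocks, and each concludes that either the evolution in question is continuously periodic, or else a $k$-series of obstacles exists in $\al$. Since Case~I and Case~II are mutually exclusive and exhaustive at each side (this follows from Corollary~\ref{atomsperiodicitysmallleft} together with Remark~\ref{atomsimplestructleft} on the left, and from the symmetric statements on the right), these four lemmas together cover every evolution of $(k+1)$-blocks arising in $\al$.

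More precisely, I would argue as follows. Fix an arbitrary evolution $\mathcal E$ of $(k+1)$-blocks in $\al$. Depending on which of Case~I and Case~II holds for $\mathcal E$ at the left, and which holds at the right, apply the appropriate lemma among the four listed above. In each of the four possible combinations the conclusion is the same dichotomy: either $\mathcal E$ is continuously periodic, or there is a $k$-series of obstacles in $\al$. Thus for every evolution $\mathcal E$ of $(k+1)$-blocks, at least one of the two alternatives of the proposition holds. Consequently, if there exists any evolution of $(k+1)$-blocks in $\al$ that is not continuously periodic, then already applying the relevant lemma to that particular evolution produces a $k$-series of obstacles in $\al$, which gives the alternative of the proposition. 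Otherwise every evolution of $(k+1)$-blocks is continuously periodic, which is the first alternative.

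There is no genuine obstacle in this final step, since all the real work has been carried out in the preceding sequence of lemmas (in particular in the reduction steps through Lemmas~\ref{contperexpandtotalrightgood}--\ref{contperbothtwo}, which funnel the four geometric cases through the periodicity analysis of $\Cr_{k+1}(\mathcal E_{l+3(k+1)})$ and of the left/right bounding sequences of $\mathcal E$). The only thing to verify explicitly is that the four cases are exhaustive, which is immediate from the definitions of Cases~I and II for an evolution. Therefore the proof of Proposition~\ref{contpergreatalt} reduces to a brief statement of this case analysis and a citation of the four lemmas above.
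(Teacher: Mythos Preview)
Your proposal is correct and follows exactly the same approach as the paper's own proof, which simply states that the proposition follows directly from Lemmas~\ref{contperbothone}, \ref{contperonetwo}, \ref{contpertwoone}, and \ref{contperbothtwo}. You have merely spelled out in more detail the (evident) logical step of combining the four per-evolution dichotomies into the global dichotomy of the proposition.
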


\begin{proof}
This follows directly from Lemmas \ref{contperbothone}, \ref{contperonetwo}, 
\ref{contpertwoone}, and \ref{contperbothtwo}.
\end{proof}

\section{Subword complexity}\label{sectionsubwordcompl}

In this section, we will prove 
% Propositions \ref{largecompl} and \ref{smallcompl} 
Propositions \ref{largecompl}--\ref{infordercomplstop}
and Theorem \ref{maintheorem}.

\begin{lemma}\label{obstcompl}
Suppose that there exists a $k$-series of obstacles $\mathcal H$ in $\al$.
Then the subword complexity of $\beta=\psi(\al)$ is $\Omega(n^{1+1/k})$.
\end{lemma}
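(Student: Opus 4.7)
My plan is to produce, for each sufficiently large $n$, an injectively parameterized family of $\Omega(n^{1+1/k})$ distinct length-$n$ subwords of $\beta=\psi(\al)$. The parameters will be pairs $(l,s)$, where $l$ indexes an obstacle in a carefully chosen length window and $s$ is an interior offset placing $\mathcal H_l$ inside a length-$n$ word.

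Concretely, I set $\mathcal L_n:=\{l:n/2+p\le|\mathcal H_l|\le 3n/4\}$. The hypothesis $|\mathcal H_l|=\Theta(l^{k'})$ with $1\le k'\le k$ yields $|\mathcal L_n|=\Omega(n^{1/k'})\ge\Omega(n^{1/k})$ for $n$ large. For each $l\in\mathcal L_n$ with $\mathcal H_l=\swa{i_l}{j_l}$ (discarding the obstacles, if any, that start too close to position $0$ of $\al$ for the window to fit), and each offset $s$ with $1\le s\le n-1-|\mathcal H_l|$, I would define
\[
\gamma_{l,s}:=\psi\bigl(\swa{i_l-s}{i_l-s+n-1}\bigr),
\]
a length-$n$ subword of $\beta$ that carries $\psi(\mathcal H_l)$ at positions $[s,\,s+|\mathcal H_l|-1]$. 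Since $|\mathcal H_l|\le 3n/4$, there are $\Omega(n)$ valid offsets per $l$, giving $\Omega(n^{1+1/k})$ parameter pairs in total.

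The heart of the argument is to show that $(l,s)\mapsto\gamma_{l,s}$ is injective. Suppose $\gamma_{l_1,s_1}=\gamma_{l_2,s_2}=:\gamma$ as abstract words. By the defining obstacle property (both one-letter extensions of $\psi(\mathcal H_{l_i})$ destroy weak $p$-periodicity) and the interior conditions $s_i\ge 1$, $s_i+|\mathcal H_{l_i}|\le n-1$, each embedding $\psi(\mathcal H_{l_i})$ is a \emph{maximal} weakly $p$-periodic factor of $\gamma$ of length $|\mathcal H_{l_i}|$. Assume WLOG $s_1\le s_2$. If the two factors were disjoint, they would fit inside positions $[1,n-2]$, forcing $|\mathcal H_{l_1}|+|\mathcal H_{l_2}|\le n-2$, contradicting $|\mathcal H_{l_i}|\ge n/2+p$. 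Hence they overlap, with overlap length at least $|\mathcal H_{l_1}|+|\mathcal H_{l_2}|+2-n\ge 2p+2\ge 2p$. Corollary~\ref{overlapperiod} then forces the union of the two factors to be weakly $p$-periodic; by maximality both factors must coincide with this union, so $s_1=s_2$ and $|\mathcal H_{l_1}|=|\mathcal H_{l_2}|$, and strict growth of $|\mathcal H_l|$ forces $l_1=l_2$.

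The main obstacle of the proof is calibrating the threshold $|\mathcal H_l|\ge n/2+p$ so that it simultaneously rules out disjoint maximal factors (on width grounds) and forces any overlap to have length at least $2p$ (so that Corollary~\ref{overlapperiod} applies cleanly to merge the factors); maximal weakly $p$-periodic factors can in principle coexist when they overlap by less than $2p$, so this calibration is essential. With injectivity established, the family $\{\gamma_{l,s}\}$ yields $\Omega(n^{1+1/k})$ distinct length-$n$ subwords of $\beta$, giving the claimed lower bound on $p_\beta(n)$.
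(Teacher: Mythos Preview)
Your overall approach---produce a two-parameter family $\gamma_{l,s}$ of length-$n$ subwords with $\psi(\mathcal H_l)$ sitting strictly inside, and use maximality of the embedded weakly $p$-periodic factor together with Corollary~\ref{overlapperiod} to prove injectivity---is exactly the paper's strategy, and your ``maximal $p$-periodic factor'' framing of the distinctness argument is in fact a bit cleaner than what the paper writes out. However, there is a genuine gap.

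The sentence ``$|\mathcal H_l|=\Theta(l^{k'})$ \ldots\ yields $|\mathcal L_n|=\Omega(n^{1/k'})$'' is not justified and is in general false. Writing $x l^{k'}\le|\mathcal H_l|\le y l^{k'}$ for $l\ge l_0$, the interval of $l$'s with $|\mathcal H_l|\in[n/2+p,\,3n/4]$ is only guaranteed to have length $\Omega(n^{1/k'})$ when $3/(4y)>1/(2x)$, i.e.\ $y<\tfrac32 x$; nothing in the definition of a $k$-series of obstacles rules out $y\gg x$. Concretely, one can build a strictly increasing sequence with $|\mathcal H_l|=\Theta(l)$ that skips every interval $(2\cdot 4^m,\,4^{m+1})$; then for $n=\tfrac{16}{3}\cdot 4^m$ the window $[n/2+p,\,3n/4]\subset(2\cdot 4^m,\,4^{m+1}]$ contains at most one obstacle, for infinitely many $n$. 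The paper avoids this by choosing the window in $l$-space rather than in length-space: it sets $l_1=(n/(4y))^{1/k'}$ and $l_2=(n/(2y))^{1/k'}$, so that $l_2-l_1=\Omega(n^{1/k'})$ automatically, and then uses \emph{both} the upper and lower $\Theta$-constants to get $|\mathcal H_l|\in\bigl[(x/(4y))n,\,n/2\bigr]$ for such $l$. You should make the same move; once you do, your injectivity argument goes through unchanged (with the offset range adjusted to $[0,\,(x/(4y))n-2\finmax]$, say).

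A secondary point: the parenthetical ``discarding the obstacles, if any, that start too close to position $0$'' needs to be argued. You must show that at most one $l$ in your window has $i_l$ too small for the full offset range to fit. This is precisely the same overlap argument you use for injectivity, applied to the two obstacles as occurrences in $\alpha$ (if two of them had small $i_l$, they would overlap in $\alpha$ by at least $2p$, and Corollary~\ref{overlapperiod} would make $\psi(\swa{i_{l'}-1}{j_{l'}})$ weakly $p$-periodic for the one with larger start, contradicting the obstacle condition). The paper does this explicitly by excluding the single index $m$ with $i_m=\min_l i_l$.
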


\begin{proof}
% xl^k<Hl<yl^k
% Want Hl at most n/2
% largest Hl = yl^k < n/2
% l^k<n/2y
% largest l = kV(n/(2y))
% smallest Hl for this l : x (kV(n/(2y)))^k=x(n/(2y))=n * (x/2y)
% smallest l: kV(n/(4y)) inequalities must be already satisfied
% inequalities sat for l>l0
% kV(n/(4y))>l0
% n/(4y)>l0^k
% n> 4y l0^k
Let $k'\in\NN$ ($1\le k'\le k$) be the number such
that $|\mathcal H_l|=\Theta(l^{k'})$ for $l\to\infty$.
This means that there exist 
$l_0\in\NN$ and $x,y\in\R_{>0}$ such that 
if $l\ge l_0$, then $xl^{k'}<|\mathcal H_l|<yl^{k'}$.

Fix an arbitrary $n\in\NN$, $n>4yl_0^{k'}$. We are going to find a 
lower estimate for the amount of different subwords of $\beta$ of length $n$.
Set 
$$
l_1=\sqrt[k']{\frac n{4y}},\quad l_2=\sqrt[k']{\frac n{2y}},\quad 
l_3=l_2-l_1=\left(\sqrt[k']{\frac12}-\sqrt[k']{\frac14}\right)\sqrt[k']{\frac ny}.
$$
Then $l_0<l_1<l_2$, and there exist at least $l_3-1$ indices $l\in\NN$ such that $l_1\le l\le l_2$.
Consider the occurrences $\mathcal H_l$ in $\al$ for $l_1\le l\le l_2$. 
Since $|\mathcal H_l|$
strictly grows as $l$ grows, all these occurrences have different lengths. 
Moreover, 
if $l_1\le l\le l_2$, then
$$
|\mathcal H_l|>xl^{k'}\ge xl_1^{k'}=\frac x{4y}n,
$$
and 
$$
|\mathcal H_l|<yl^{k'}\le yl_2^{k'}=\frac12 n.
$$
Denote
$$
n_0=\frac x{4y}n.
$$
Then if $l_1\le l\le l_2$, then $n_0<|\mathcal H_l|<n/2$.

For each $l\in\NN$, $l_1\le l\le l_2$, denote by $i_l$ and 
$j_l$ the indices such that $\mathcal H_l=\swa{i_l}{j_l}$.
Since $\mathcal H_l$ is a series of obstacles, there exists 
$p\in\NN$ ($p\le\finmax$) such that 
all words $\psi(\swa{i_l}{j_l})$
are weakly $p$-periodic, and all words 
$\psi(\swa{i_l}{j_l+1})$
and 
(if $i_l>0$)
$\psi(\swa{i_l-1}{j_l})$
are not.
Since all words $\mathcal H_l$ have different lengths, $i_l$ cannot
coincide with $i_{l'}$ if $l\ne l'$ ($l_1\le l,l'\le l_2$).
Denote by $m$ ($l_1\le m\le l_2$) the index such that 
$i_m=\min_{l_1\le l\le l_2}i_l$. Let us check
that if $l\ne m$, $l_1\le l\le l_2$, then 
$i_l>n_0-2\finmax$.

Indeed, assume that $i_l\le n_0-2\finmax$. Then $i_m<n_0-2\finmax$
since $i_m<i_l$. But $j_m=j_m-i_m+1+i_m-1=|\mathcal H_m|+i_m-1>n_0-1$, 
Similarly, $j_l>n_0-1$. So, if $t=\min(j_m,j_l)$, then $t>n_0-1$, 
so $t\ge i_l$ and $|\swa{i_l}t|=t-i_l+1>n_0-1-n_0+2\finmax+1=2\finmax\ge 2p$.
Denote $t'=\max(j_m,j_l)$. By Corollary \ref{overlapperiod},
$\psi(\swa{i_m}{t'})$ is a weakly $p$-periodic word.
In particular, $\psi(\swa{i_m}{j_l})$ is a weakly 
$p$-periodic word, 
but this contradicts the assumption that 
$\psi(\swa{i_l-1}{j_l})$ is not a $p$-periodic word.

Consider the following occurrences in $\al$: $\swa{i_l-s}{i_l-s+n-1}$, where 
$0\le s\le n_0-2\finmax$ and $l_1\le l\le l_2$, $l\ne m$. We already know that if 
$l_1\le l\le l_2$, $l\ne m$, then $i_l>n_0-2\finmax$, so if 
$0\le s\le n_0-2\finmax$ and $l_1\le l\le l_2$, $l\ne m$, then $i_l-s>0$.
Clearly, all these occurrences have length $n$. Let us prove that 
all words $\psi(\swa{i_l-s}{i_l-s+n-1})$ are different \textit{abstract words}.
(If $n_0-2\finmax<0$, then we have no occurrences, but $n_0-2\finmax\ge0$ if $n$ is 
large enough. During the proof that all these abstract words are different, we suppose that 
$n_0-2\finmax\ge0$, and we have at least one word.)

Denote $\mathcal T_{s,l}=\psi(\swa{i_l-s}{i_l-s+n-1})$.
Temporarily fix an index $s$ ($0\le s\le n_0-2\finmax$)
and an index $l$ ($l_1\le l\le l_2$, $l\ne m$). Denote 
$\gamma=\mathcal T_{s,l}$.
Then $\gamma_v=\psi(\al_{i_l-s+v})$
for $0\le v\le n-1$.
We have $j_l-i_l+1=|\mathcal H_l|<n/2$ and 
$s\le n_0-2\finmax<n/2$, so $n/2<n-s$, $j_l-i_l+1<n/2<n-s$, 
and $s+j_l-i_l+1<n$.
Hence, $\gamma_{s\ldots s+j_l-i_l}$, 
$\gamma_{s\ldots s+j_l-i_l+1}$,
and (if $s>0$)
$\gamma_{s-1\ldots s+j_l-i_l}$
are occurrences in $\gamma$.
We have 
$\gamma_{s\ldots s+j_l-i_l}=\psi(\swa{i_l-s+s}{i_l-s+s+j_l-i_l})=\psi(\swa{i_l}{j_l})$.
Similarly,
$\gamma_{s\ldots s+j_l-i_l+1}=\psi(\swa{i_l}{j_l+1})$
and
(if $s>0$)
$\gamma_{s-1\ldots s+j_l-i_l}=\psi(\swa{i_l-1}{j_l})$
(the notation $\swa{i_l-1}{j_l}$ is well-defined since $i_l>n_0-2\finmax\ge 0$).
Therefore, $\gamma_{s\ldots s+j_l-i_l}$
is a weakly $p$-periodic word, and 
$\gamma_{s\ldots s+j_l-i_l+1}$
and (if $s>0$)
$\gamma_{s-1\ldots s+j_l-i_l}$
are not.

Now assume that $\mathcal T_{s,l}=\mathcal T_{s',l'}$ as an abstract word,
where $s\ne s'$ or $l\ne l'$. 
(Here $0\le s,s'\le n_0-2\finmax$, $l_1\le l,l'\le l_2$, $l\ne m$, and $l'\ne m$.)
Denote $\gamma=\mathcal T_{s,l}=\mathcal T_{s',l'}$.
First, let us consider the case when $s=s'$ and $l\ne l'$.
Without loss of generality, $l<l'$, so 
$j_l-i_l+1=|\mathcal H_l|<|\mathcal H_{l'}|=j_{l'}-i_{l'}+1$, 
and $j_l-i_l+1\le j_{l'}-i_{l'}$.
Then $\gamma_{s\ldots s+j_l-i_l+1}$ is a prefix of 
$\gamma_{s\ldots s+j_{l'}-i_{l'}}$, 
but $\gamma_{s\ldots s+j_{l'}-i_{l'}}$ is a $p$-periodic word, 
and $\gamma_{s\ldots s+j_l-i_l+1}$ is not, so we have a contradiction.
% $\delta_v\ne\dela_v$, where $v=j_l-i_l+1+u$

Now suppose that $s\ne s'$. Without loss of generality, $s'<s$, so $s>0$.
Since $|\mathcal H_l|=j_l-i_l+1\ge 2\finmax$, $(s+j_l-i_l)-s+1\ge 2\finmax$.
Since $|\mathcal H_{l'}|>n_0$ and $s'\ge 0$, we also have 
$s'+j_{l'}-i_{l'}+1>n_0$. Since $s\le n_0-2\finmax$, 
$(s'+j_{l'}-i_{l'})-s+1>n_0-n_0+2\finmax=2\finmax$. Therefore, if 
$t=\min(s+j_l-i_l,s'+j_{l'}-i_{l'})$,
then $t-s+1\ge 2\finmax\ge 2p$. Now we can use Corollary \ref{overlapperiod}.
Recall that $\gamma_{s\ldots s+j_l-i_l}$
and $\gamma_{s'\ldots s'+j_{l'}-i_{l'}}$
are weakly $p$-periodic words. Denote $t'=\max(s+j_l-i_l,s'+j_{l'}-i_{l'})$.
By Corollary \ref{overlapperiod},
$\gamma_{s'\ldots t'}$ is a $p$-periodic word. Hence, 
$\gamma_{s-1\ldots t'}$ is also a $p$-periodic word (recall that $s'<s$ and $s>0$),
and $\gamma_{s-1\ldots s+j_l-i_l}$ is also a $p$-periodic word.
But previously we have seen that 
$\gamma_{s-1\ldots s+j_l-i_l}$ is not a $p$-periodic word,
so we have a contradiction.

Let us count how many words $\mathcal T_{s,l}$ we have. If $n_0<2\finmax$, 
then we have none of them,  and if
$$
n_0\ge 2\finmax\Leftrightarrow \frac x{4y}n\ge 2\finmax\Leftrightarrow n\ge \frac{8y\finmax}x,
$$
then there are $n_0-2\finmax+1$ possibilities for $s$ and at least $l_2-l_1-2=l_3-2$
possibilities for $l$. Hence, we have at least
$$
(n_0-2\finmax+1)(l_2-l_1-2)=
\left(\frac x{4y}n-2\finmax+1\right)\left(\left(\sqrt[k']{\frac12}-\sqrt[k']{\frac14}\right)\sqrt[k']{\frac ny}-2\right)
$$
different subwords of $\beta=\psi(\al)$, and the subword complexity of $\beta$ is $\Omega(n^{1+1/k'})$ for $n\to\infty$.
But $k'\le k$, so the subword complexity of $\beta$ is also $\Omega(n^{1+1/k})$ for $n\to\infty$.
\end{proof}

Note that in the proof of this lemma, we proved in fact that if $|\mathcal H_l|$ is 
$\Theta(l^{k'})$ for $l\to\infty$, 
then
the subword complexity of $\beta=\psi(\al)$ is $\Omega(n^{1+1/k'})$ for $n\to\infty$, 
and $n^{1+1/k'}$ is $\omega(n^{1+1/k})$ if $k'<k$.
Later, after we prove Proposition \ref{smallcompl},
% and \ref{finordercomplstop},
we will see that this is not possible if 
% all evolutions of $k$-blocks present in $\al$ are continuously 
% periodic.
evolutions of $k$-blocks really exist in $\al$ and all of them are continuously 
periodic.
Therefore, if $k$-blocks really exist in $\al$, then 
all $k$-series of obstacles obtained from Proposition \ref{contpergreatalt}
actually satisfy $|\mathcal H_l|=\Theta(l^{k})$ for $l\to\infty$. However, it was not very 
convenient to prove this directly, so in the definition of a $k$-series of obstacles
we allowed $|\mathcal H_l|=\Theta(l^{k'})$ for some $k'\le k$.

\begin{proof}[Proof of Proposition \ref{largecompl}]
We know that there exists a non-continuously periodic 
evolution of $k$-blocks, and we also know (see Remark \ref{onecontper})
that all evolutions of 1-blocks arising in $\al$ are continuously
periodic. Let $k'\in\NN$ be the largest
number such that all evolutions of $k'$-blocks arising in $\al$ are continuously
periodic. Then $k'\le k-1$. By Proposition \ref{contpergreatalt}, 
there exists a $k'$-series of obstacles in $\al$.
By Lemma \ref{obstcompl}, the subword complexity of 
$\be=\psi(\al)$ is $\Omega(n^{1+1/k'})$. But $k'\le k-1$, so 
$n^{1+1/k'}\ge n^{1+1/(k-1)}$, and 
the subword complexity of 
$\be$ is $\Omega(n^{1+1/(k-1)})$.
\end{proof}

The proof of Proposition \ref{smallcompl} is based on the following lemma.
\begin{lemma}\label{smallcompllemma}
Let $k\in\NN$.
Suppose that $a\in\E$ is a letter of order at least $k+2$ such that $\varphi(a)=a\gamma$ for some $\gamma\in\E^*$, 
and all evolutions of $k$-blocks arising in $\al=\varphi^\infty(a)$ are continuously periodic.

Let 
% $f\colon \ZZ_{\ge 0}\to \R$ 
$f\colon \NN\to \R$ 
be a 
%strictly growing 
function and $n_0\in\NN$, $n_0>1$ be a number such that:
\begin{enumerate}
\item If $n\ge n_0$, then $f(n)\ge 3k$.
% \item If $\mathcal E$ is an evolution of $k$-blocks such that Case I holds at the left or at the right
% and $l\ge f(n)$ for some $l,n\in\NN$, $n\ge n_0$, then 
% $|\mathcal E_l|>n$.
\item If $\mathcal E$ is an evolution of $k$-blocks such that Case I holds at the left (resp.\ at the right)
and $l\ge f(n)$ for some $l,n\in\NN$, $n\ge n_0$, then 
$|\Fg(\LR_k(\mathcal E_l))|>n$ (resp.\ $|\Fg(\RR_k(\mathcal E_l))|>n$).
\item If $b$ is a letter of order $>k$ 
and $l\ge f(n)$ for some $l,n\in\NN$, $n\ge n_0$, then 
$|\varphi^l(b)|>n$.
\end{enumerate}

Then the subword complexity of $\be=\psi(\al)$ is $O(nf(n))$.
\end{lemma}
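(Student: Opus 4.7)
The plan is to fix $n\ge n_0$, set $L=\lceil f(n)\rceil$ (so $L\ge 3k$), and bound the number of distinct length-$n$ factors of $\al$ itself, which dominates the subword complexity of $\be=\psi(\al)$. Using the level-$L$ decomposition $\al=\varphi^L(\al_0)\varphi^L(\al_1)\varphi^L(\al_2)\cdots$, I attach to each length-$n$ window $w=\swa{i}{i+n-1}$ an \emph{ancestor interval}: let $s=s(i)$ be the unique index with $\al_i\in\varphi^L(\al_s)$, let $s'=s'(i)$ be the analogous index for $\al_{i+n-1}$, and let $o=o(i)$ be the offset of position $i$ inside $\varphi^L(\al_s)$. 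The first key observation is that by hypothesis~(3), every $\al_t$ with $s<t<s'$ must have order $\le k$: otherwise $|\varphi^L(\al_t)|>n$ would contradict $\varphi^L(\al_t)\subseteq w$. Hence $\al_{s+1},\ldots,\al_{s'-1}$ are consecutive letters of order $\le k$, which by Lemma~\ref{splitconcatenation} lie in a single $k$-block $B$ of $\al$ with $|B|=s'-s-1\le n-1$.

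Next, hypothesis~(2) controls $B$'s evolutional sequence number $l$: if Case~I holds on either side of $B$'s evolution and $l\ge f(n)$, then $|\Fg(\LR_k(\mathcal E_l))|>n$ or $|\Fg(\RR_k(\mathcal E_l))|>n$, forcing $|B|>n$; so either $l<f(n)$ or Case~II holds on both sides of the evolution, in which case Lemma~\ref{kblocklengtho} together with the bounded preperiod lengths (Corollary~\ref{finitelrprep}) shows that $|B|$ is bounded by a constant independent of $l$. Combining this with Corollary~\ref{finite-number-of-evolutions}, the abstract $k$-block $B$ takes at most $O(f(n))$ values; adding the $O(1)$ choices for the abstract border letters $\al_s$ and $\al_{s'}$, the abstract ancestor $P=\al_s\cdot B\cdot\al_{s'}$ ranges over $O(f(n))$ possibilities.

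It remains to count the distinct length-$n$ factors of $\al$ sharing a given abstract ancestor $P$, parametrized by an offset $o\in[0,|\varphi^L(\al_s)|)$. I split on the order of $\al_s$. If $\al_s$ has order $\le k$, then $|\varphi^L(\al_s)|=O(L^{k-1})=O(f(n)^{k-1})$, so this case contributes at most $O(f(n)\cdot f(n)^{k-1})=O(f(n)^k)$ distinct windows; since Lemma~\ref{regpartasym} combined with hypothesis~(2) forces $f(n)^k=\Omega(n)$ in the presence of any Case~I evolution of $k$-blocks (and otherwise the entire Case~I bookkeeping collapses), this is $O(nf(n))$ in all the settings where the lemma will be applied. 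If instead $\al_s$ has order $>k$, then $|\varphi^L(\al_s)|$ can be much larger than $n$, but the abstract word $\varphi^L(\al_s)$ is a stable $k$-multiblock whose constituent $k$-block evolutions are continuously periodic by hypothesis; I bound its distinct length-$n$ factors using the structural theory of Sections~\ref{sectionblocks}--\ref{sectionevolutions}. Concretely, inside any single $k$-block the continuously periodic structure (regular parts periodic with a final period, preperiods and cores of bounded length) yields only $O(1)$ distinct length-$n$ factors, and a length-$n$ window straddling composite central kernels or letters of order $>k$ is parametrized by a canonical position and a bounded offset under continuous periodicity. Counting such windows across $\varphi^L(\al_s)$ gives a contribution of $O(nf(n))$.

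The main obstacle is the detailed accounting in the Case~B situation above: one must show that, within the stable $k$-multiblock $\varphi^L(\al_s)$, the distinct length-$n$ factors are parametrized essentially by the position of a canonical structural event (a letter of order $>k$ or a transition between regular/preperiod/core parts) inside the window, plus a bounded offset controlled by the common final period, and that the total count of such events producing genuinely new windows is governed by $L=f(n)$ via the bounds on evolutional sequence numbers forced by hypotheses (2) and (3). Once this bound is in place for Case~B, summing the contributions from Case~A and Case~B over all $O(f(n))$ abstract ancestors $P$ yields the final bound $O(nf(n))$ on the subword complexity of $\be$.
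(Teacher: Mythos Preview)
Your approach has a genuine gap in the step bounding the number of abstract ancestors $P=\al_s\cdot B\cdot\al_{s'}$. You write that $\al_{s+1},\ldots,\al_{s'-1}$ ``lie in a single $k$-block $B$ of $\al$ with $|B|=s'-s-1$'', but this conflates the occurrence $\swa{s+1}{s'-1}$ with the $k$-block containing it: when $\al_s$ or $\al_{s'}$ has order $\le k$ (which you explicitly allow in your Case~A), the $k$-block $B$ properly contains $\swa{s+1}{s'-1}$ and $|B|$ is not bounded by $n$. Your deduction that $l<f(n)$ from hypothesis~(2) then fails, since the regular part of $B$ may exceed $n$ even though $\swa{s+1}{s'-1}$ does not. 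Even when $\al_s,\al_{s'}$ both have order $>k$ so that $B=\swa{s+1}{s'-1}$, your Case~II claim that ``$|B|$ is bounded by a constant independent of $l$'' is false for $k\ge 2$: when Case~II holds on both sides one has $\mathcal E_l=\Fg(\Cr_k(\mathcal E_l))$, and Lemma~\ref{kblocklengtho} only gives $|\Fg(\Cr_k(\mathcal E_l))|=O(l^{k-1})$, which is unbounded. So the $O(f(n))$ bound on abstract ancestors is not established, and the rest of the counting collapses.

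The paper sidesteps this with a different split. It also starts at level $l_0=\lceil f(n)\rceil+1$, but if the window lies inside a single $\varphi^{l_0}(\al_{s'})$ it keeps descending to the largest level $q<l_0$ at which the window still straddles two ancestors; this forces the ancestor word $\swa st$ to have length $\le|\varphi|$, a constant, and the $q<l_0$ case then yields $O(nf(n))$ windows directly (bounded-length ancestor, $O(f(n))$ choices of $q$, $O(n)$ for the offset). In the $q=l_0$ case the paper does \emph{not} reason about the ancestor $k$-block; it passes to the descendant $\swa uv=\Su_k^{l_0}(\swa{u'}{v'})$ at the window's own level. That block has evolutional sequence number $\ge l_0>f(n)$, so by hypothesis~(2) its regular parts already exceed $n$; continuous periodicity then shows the window meets at most one composite central kernel (or a preperiod, or a bounding sequence), flanked by final-period-periodic stretches, giving only $O(n)$ distinct $\psi$-images---independently of $l_0$. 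Your Case~B sketch gestures at this structure, but trying to enumerate ancestors at level $L$ rather than exploiting the long regular parts of the descendant at the window level is precisely what makes the bookkeeping break.
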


\begin{proof}
Denote the total number of all abstract words that can equal the 
forgetful occurrences of all left and right preperiods of stable $k$-blocks or composite central kernels of $k$-blocks 
by $M$ (by Corollary \ref{finitelrprep} and by Lemma \ref{finitecker}, this number is finite). 
Denote the maximal length of the forgetful occurrence of a left or a right 
preperiod or of a composite central kernel of a stable $k$-block by $P$.
Denote the number of all final periods we have by $N$.

Fix a number $n\in \NN$ ($n\ge n_0$ and $n>P$.)
Let $\swa ij$ be an occurrence in $\al$ of length $n\ge n_0$. Set $l_0=\lceil f(n)\rceil+1$.
Since $\al=\varphi^\infty(a)$, $\al$ can be written as $\al=\varphi(\al)$
and as $\al=\varphi^l(\al)=\varphi^l(\al_0)\varphi^l(\al_1)\varphi^l(\al_2)\ldots$
for all $l\ge 0$.

Let $s'$ and $t'$ be the indices such that $\al_i$ (resp.\ $\al_j$) is contained in
$\varphi^{l_0}(\al_{s'})$ (resp.\ in $\varphi^{l_0}(\al_{t'})$) as an occurrence in $\al$.
Clearly, $s'\le t'$. If $s'<t'$, then set $s=s'$, $t=t'$, and $q=l_0$.

If $s'=t'$, then for each $l$ ($0\le l\le l_0$) denote by $s''_l$ and $t''_l$ the indices 
such that
$\al_i$ (resp.\ $\al_j$) is contained in
$\varphi^l(\al_{s''_l})$ (resp.\ in $\varphi^l(\al_{t''_l})$) as an occurrence in $\al$.
Let $q$ be the maximal value of $l$ such that $s''_l<t''_l$. Then $s''_{q+1}=t''_{q+1}$, 
and both $\al_{s''_{q}}$ and $\al_{t''_{q}}$ are contained in $\varphi(\al_{s''_{q+1}})$
as an occurrence in $\al$. So, $|\swa{s''_q}{t''_q}|=t''_q-s''_q+1\le |\varphi|$. Set $s=s''_q$ and $t=t''_q$.

Summarizing, we have found indices $s$ and $t$ and a number $q\in\ZZ_{\ge 0}$ ($0\le q\le l_0$)
such that:
\begin{enumerate}
\item $s<t$.
\item $\al_i$ (resp.\ $\al_j$) is contained in
$\varphi^q(\al_s)$ (resp.\ in $\varphi^q(\al_t)$) as an occurrence in $\al$.
\item If $q<l_0$, then $t-s<|\varphi|$.
\end{enumerate}

We are going to estimate the amount of different words that can be equal to $\psi(\swa ij)$
as abstract words (for different $i$ and $j$ such that $|\swa ij|=j-i+1=n$). We will consider the cases 
$q=l_0$ and $q<l_0$ separately.

First, suppose that $q=l_0$. Then we don't have any explicit upper estimates for $|\swa st|$
so far, but we can say that if $|\swa st|>2$, then $\varphi^q(\swa{s+1}{t-1})$
is a suboccurrence in $\swa ij$. So, $|\varphi^q(\swa{s+1}{t-1})|\le n$, and 
$\swa{s+1}{t-1}$ cannot contain letters of order $>k$. Since $\al_0=a$ is a letter of order at least 
$k+2$, by Lemma \ref{splitconcatenation},
$\al$ can be split into a concatenation of $k$-blocks and letters of order $>k$, so
$\swa{s+1}{t-1}$
is a suboccurrence in a $k$-block. 
% Moreover, if both $\al_s$ and $\al_t$ are letters of 
% order $>k$ (i.~e.\ if this $k$-block is $\swa{s+1}{t-1}$), then Case II holds for the evolution of 
% $\swa{s+1}{t-1}$ both at the 
Denote this $k$-block by $\swa{u'}{v'}$. Then $\varphi^q(\swa{s+1}{t-1})$
is a nonempty suboccurrence in both $\swa ij$ and $\Su_k^q(\swa{u'}{v'})$.
Let $u$ and $v$ be the indices such that $\Su_k^q(\swa{u'}{v'})=\swa uv$.
If $u>i$, then 
% there is a letter of order $>k$ in $\swa i{u'-1}$
$\al_{u-1}$ is a letter of order $>k$, and this letter must be contained 
in $\varphi^q(\al_s)$, so $\al_s$ must be a letter of order $>k$, and $u'=s+1$.
Similarly, if $v<j$, then 
% there is a letter of order $>k$ in $\swa{v'+1}j$
$\al_{v+1}$ is a letter of order $>k$, and this letter must be contained 
in $\varphi^q(\al_t)$, so $\al_t$ must be a letter of order $>k$, and $v'=t-1$.

Summarizing, we have the following cases:
\begin{enumerate}
\item\label{smallcomplcasei} $|\swa st|=2$, and $t=s+1$.
\item\label{smallcomplcaseiigen} $|\swa st|>2$. There exists a (unique) nonempty 
$k$-block $\swa{u'}{v'}$ such that $\swa{s+1}{t-1}$ is a suboccurrence 
in $\swa{u'}{v'}$. Denote $\Su_k^q(\swa{u'}{v'})=\swa uv$. Then there are the following possibilities:
\begin{enumerate}
\item\label{smallcomplcaseii} $u\le i$ and $v\ge j$, so $\swa ij$ is a suboccurrence in $\swa uv$.
\item\label{smallcomplcaseiii} $u>i$, but $v\ge j$, then $u'=s+1$ and $\al_s$ is a letter of order $>k$.
\item\label{smallcomplcaseiv} $u\le i$, but $v<j$, then $v'=t-1$ and $\al_t$ is a letter of order $>k$.
\item\label{smallcomplcasev} $u>i$ and $v<j$, then $u'=s+1$, $v'=t-1$, and both $\al_s$ and $\al_t$ are letters of order $>k$.
\end{enumerate}
\end{enumerate}
Let us consider these cases one by one.

\underline{Case \ref{smallcomplcasei}.} There exists $x\in\NN$ ($1\le x\le n-1$)
such that $\swa ij$ is the concatenation of the suffix of $\varphi^q(\al_s)$
of length $x$ and the prefix of $\varphi^q(\al_t)$ of length $n-x$. 
There are at most $|\E|^2(n-1)$ possibilities for $\swa ij$ as an abstract 
word.
Therefore, there are at most $|\E|^2(n-1)$ possibilities for $\psi(\swa ij)$ as an
abstract word.

\underline{Case \ref{smallcomplcaseiigen}.} Observe that, since $\swa uv=\Su_k^q(\swa uv)$, 
the evolutional sequence number of $\swa uv$ is at least $q=l_0>f(n)$. In particular, 
$q\ge 3k$, and $\swa uv$ is a stable $k$-block. 
Denote the evolution $\swa uv$ belongs to by $\mathcal E$.
If Case I holds for $\mathcal E$
at the left (resp.\ at the right), then $|\Fg(\LR_k(\swa uv))|>n$
(resp.\ $|\Fg(\RR_k(\swa uv))|>n$). 
If Case I holds for $\mathcal E$
at the left or at the right, then 
$|\Fg(\LR_k(\swa uv))\Fg(\Cr_k(\swa uv))\Fg(\RR_k(\swa uv))|>n$,
and 
$|\swa uv|=v-u+1>n$.
We know that all evolutions of $k$-blocks in $\al$ are continuously periodic, 
so let $m$ ($1\le m\le \ncker_k(\mathcal E)$) be an index such that 
$\mathcal E$ is continuously periodic for the index $m$.
Let $\lambda$ (resp.\ $\mu$) be a left (resp.\ right) continuous evolutional 
period of $\mathcal E$ for the index $m$.

Let us check that if Case II holds for $\mathcal E$ at the left and $u>i$, then 
$\swa i{u-1}$ is a suffix of $\LBS_k(\mathcal E)$. 
Recall that in this case, $u'-1=s$.
Denote $\swa{u''}{v''}=\Su_k(\swa{u'}{v'})$. Then $\al_{u''-1}$
is the rightmost letter of order $>k$ in $\varphi(\al_s)$.
% Recall that $\swa{u'}{v'}$ is a nonempty $k$-block, so $\swa$
Let $\al_w$ be the rightmost letter in $\varphi^{q-1}(\al_{u''-1})$, 
and let $\al_{w'}$ be the leftmost letter in $\varphi^q(\al_{u'})$.
Then $\al_w$ is contained in $\varphi^q(\al_s)$, so 
$w<w'$. We have $s<u'<t$, so $\varphi^q(\al_{u'})$ is a suboccurrence in $\swa ij$, 
hence $\al_{w'}$ is contained in $\swa ij$, 
$w'\le j$, and $w<j$. On the other hand, $\al_{w+1}$ is contained in 
$\varphi^{q-1}(\al_{u''})$, so $\al_{w+1}$ is contained in 
$\Su_k^{q-1}(\swa{u''}{v''})=\Su_k^q(\swa{u'}{v'})=\swa uv$, 
$w+1\ge u>i$, and $w\ge i$. Therefore, either $\varphi^{q-1}(\al_{u''-1})$
is a subword in $\swa ij$, or $\swa iw$ is a suffix of $\varphi^{q-1}(\al_{u''-1})$.
But $\varphi^{q-1}(\al_{u''-1})$ cannot be a subword of $\swa ij$ since 
$\al_{u''-1}$ is a letter of order $>k$ and $q-1=\lceil f(n)\rceil\ge f(n)$, 
so $|\varphi^{q-1}(\al_{u''-1})|>n$. Therefore, 
$\swa iw$ is a suffix of $\varphi^{q-1}(\al_{u''-1})$.
% Recall also that $w\ge u-1\ge i$, and $\al_{u-1}=\LB(\swa uv)=\LB(\Su_k^{q-1}(\swa{u''}{v''}))$, 
% so $\al_{u-1}$ is the rightmost letter of 
We have $\al_{u''-1}=\LB(\swa{u''}{v''})$, $\al_{u-1}=\LB(\swa uv)$, and 
$\swa uv=\Su_k^{q-1}(\swa{u''}{v''})$. We also have $\swa{u''}{v''}=\Su_k(\swa{u'}{v'})$, 
so the evolutional sequence number of $\swa{u''}{v''}$ is at least 1. 
Now it follows from Remark \ref{lbsremarkii} that 
$\swa i{u-1}$ is a suffix of $\LBS_k(\mathcal E)$.

Note that we could not use $\swa{u'}{v'}$
instead of $\swa{u''}{v''}$ in this argument since the evolutional sequence number of $\swa{u'}{v'}$
could equal 0.

Similarly, if Case II holds for $\mathcal E$ at the right and $v<j$, then 
$\swa{v+1}j$ is a prefix of $\RBS_k(\mathcal E)$.

If $u>i$, but Case I holds for $\mathcal E$ at the left, then we did not define 
any left bounding sequence, but we know that independently on whether Case I or II 
holds for $\mathcal E$ at the left, if $u>i$, then $\al_{u-1}$ is the rightmost letter 
of order $>k$ in $\varphi(\al_s)$. So, if $u>i$ (and $\al_s=\LB(\swa{u'}{v'})$ is a letter
of order $>k$), denote by $\gamma$ the prefix of $\varphi^q(\al_s)$ that ends with 
the rightmost letter of order $>k$ in $\varphi^q(\al_s)$. Then $\swa i{u-1}$ is a suffix of $\gamma$. 
Clearly, 
$\gamma$ as an abstract word depends only on $q$ and on $\al_s$ as an 
abstract letter.

Similarly, if $v<j$, denote by $\gamma'$
the suffix of $\varphi^q(\al_t)$ that begins with the leftmost letter of 
order $>k$ in $\varphi^q(\al_t)$. Then $\swa{v+1}j$ is a prefix of $\gamma'$, and 
$\gamma'$ as an abstract word depends only on $q$ and on $\al_t$ as an 
abstract letter.

Now let us consider cases \ref{smallcomplcaseii}--\ref{smallcomplcasev} one by one.

\underline{Case \ref{smallcomplcaseii}.}
% We have four possibilities for the evolution of $\swa uv$: 
% either Case I holds both at the left and at the right, or Case I holds at the left and 
% Case II holds at the right, or 
% Case II holds at the left and 
% Case I holds at the right,
% or 
% Case II holds both at the left and 
% at the right.
We can write $\swa uv$ as 
$$
\swa uv=\Fg(\LpreP_k(\swa uv))\LpR_{k,m}(\swa uv)\cKer_{k,m}(\swa uv)\RpR_{k,m}(\swa uv)\Fg(\RpreP_k(\swa uv)).
$$
If Case I holds at the left, then $|\LpR_{k,m}(\swa uv)|\ge|\Fg(\LR_k(\swa uv))|>n$, 
and
$\swa ij$ is a suboccurrence either in 
$\Fg(\LpreP_k(\swa uv))\LpR_{k,m}(\swa uv)$, or in 
$\LpR_{k,m}(\swa uv)\cKer_{k,m}(\swa uv)\RpR_{k,m}(\swa uv)\Fg(\RpreP_k(\swa uv))$.
If Case II holds at the left, then 
$\Fg(\LpreP_k(\swa uv))$ is empty, and 
$\swa ij$ is a suboccurrence in 
$\LpR_{k,m}(\swa uv)\cKer_{k,m}(\swa uv)\RpR_{k,m}(\swa uv)\Fg(\RpreP_k(\swa uv))$.
So, independently on whether Case I or Case II holds at the left, 
$\swa ij$ is a suboccurrence either in 
$\Fg(\LpreP_k(\swa uv))\LpR_{k,m}(\swa uv)$, or in 
$\LpR_{k,m}(\swa uv)\cKer_{k,m}(\swa uv)\RpR_{k,m}(\swa uv)\Fg(\RpreP_k(\swa uv))$.

If 
$\swa ij$ is a suboccurrence in 
$\LpR_{k,m}(\swa uv)\cKer_{k,m}(\swa uv)\RpR_{k,m}(\swa uv)\Fg(\RpreP_k(\swa uv))$, 
and Case I holds at the right, then 
$|\RpR_{k,m}(\swa uv)|\ge|\Fg(\RR_k(\swa uv))|>n$, 
and
$\swa ij$ is a suboccurrence either in 
$\LpR_{k,m}(\swa uv)\cKer_{k,m}(\swa uv)\RpR_{k,m}(\swa uv)$, or in 
$\RpR_{k,m}(\swa uv)\Fg(\RpreP_k(\swa uv))$.
If 
$\swa ij$ is a suboccurrence in 
$\LpR_{k,m}(\swa uv)\cKer_{k,m}(\swa uv)\RpR_{k,m}(\swa uv)\Fg(\RpreP_k(\swa uv))$, 
and Case II holds at the right, then 
$\Fg(\RpreP_k(\swa uv))$ is empty, and 
$\swa ij$ is a suboccurrence in 
$\LpR_{k,m}(\swa uv)\cKer_{k,m}(\swa uv)\RpR_{k,m}(\swa uv)$ anyway.

Therefore, independently on whether Case I or II holds at the left or at the right,
there are three possibilities for $\swa ij$:
% If Case I holds both at the left and at the right, then $\swa uv$ can be written as 
% $\swa uv=\Fg(\LpreP_k(\swa uv))\LpR_{k,m}(\swa uv)\cKer_{k,m}(\swa uv)\RpR_{k,m}(\swa uv)\Fg(\RpreP_k(\swa uv))$.
% Since $|\LpR_{k,m}(\swa uv)|\ge|\Fg(\LR_k(\swa uv))|>n$
% and $|\RpR_{k,m}(\swa uv)|\ge|\Fg(\LR_k(\swa uv))|>n$, 
$\swa ij$ is a suboccurrence either in 
$\Fg(\LpreP_k(\swa uv))\LpR_{k,m}(\swa uv)$, or in 
$\LpR_{k,m}(\swa uv)\cKer_{k,m}(\swa uv)\RpR_{k,m}(\swa uv)$, or in 
$\RpR_{k,m}(\swa uv)\Fg(\RpreP_k(\swa uv))$.

If $\swa ij$ is a suboccurrence of 
$\Fg(\LpreP_k(\swa uv))\LpR_{k,m}(\swa uv)$, then 
$\psi(\swa ij)$ is the concatenation of a suffix 
of $\psi(\LpreP_k(\mathcal E))$ of length $x\le P$ and the 
weakly $|\lambda|$-periodic word of length $n-x$ with a left 
period $\lambda'$, which is a cyclic shift of $\lambda$ (this cyclic shift can be nontrivial if $x=0$, and 
$\swa ij$ is actually a suboccurrence in $\LpR_{k,m}(\swa uv)$), so 
$\lambda'$ is a final period as well. Recall that $n>P$, so 
$\swa ij$ cannot be a suboccurrence of $\Fg(\LpreP_k(\swa uv))$. We have at most 
$M(P+1)N$ different words that can equal $\psi(\swa ij)$.

The situation when 
$\swa ij$ is a suboccurrence of
$\RpR_{k,m}(\swa uv)\Fg(\RpreP_k(\swa uv))$
is considered similarly and gives us at most 
$M(P+1)N$ more possibilities for $\psi(\swa ij)$ as an abstract word.

If $\swa ij$ is a suboccurrence of
% $\Fg(\LpreP_k(\swa uv))\LpR_{k,m}(\swa uv)$, 
$\LpR_{k,m}(\swa uv)\cKer_{k,m}(\swa uv)\RpR_{k,m}(\swa uv)$,
but is not a suboccurrence
of $\Fg(\LpreP_k(\swa uv))\LpR_{k,m}(\swa uv)$ or $\RpR_{k,m}(\swa uv)\Fg(\RpreP_k(\swa uv))$, 
then $\psi(\swa ij)$ is a subword of the 
word $\delta\psi(\cKer_{k,m}(\mathcal E))\delta'$,
%of length $2n+|\cKer_{k,m}(\swa uv)|$
where $\delta$ (resp.\ $\delta'$) is the 
weakly $|\lambda|$-periodic (resp.\ $|\mu|$-periodic) word
of length $n$ with right (resp.\ left) period $\lambda'$ (resp.\ $\mu'$), which 
is a cyclic shift of $\lambda$ (resp.\ of $\mu$), and is a final period as well.
We have $|\delta\psi(\cKer_{k,m}(\mathcal E))\delta'|=2n+|\psi(\cKer_{k,m}(\mathcal E))|\le 2n+P$, 
and there are at most $N^2M(n+P+1)$ different possibilities for $\psi(\swa ij)$.

Totally, we have $2M(P+1)N+N^2M(n+P+1)$ possibilities for $\psi(\swa ij)$ in Case \ref{smallcomplcaseii}.

\underline{Case \ref{smallcomplcaseiii}.} Again write 
$$
\swa uv=\Fg(\LpreP_k(\swa uv))\LpR_{k,m}(\swa uv)\cKer_{k,m}(\swa uv)\RpR_{k,m}(\swa uv)\Fg(\RpreP_k(\swa uv)).
$$
This time $u>i$, so if Case I holds at the right, then 
$|\Fg(\LpreP_k(\swa uv))\LpR_{k,m}(\swa uv)\cKer_{k,m}(\swa uv)\RpR_{k,m}(\swa uv)|\ge |\Fg(\RR_k(\swa uv))|>n$, 
and $\swa uj$ is a 
suboccurrence in 
$\Fg(\LpreP_k(\swa uv))\LpR_{k,m}(\swa uv)\cKer_{k,m}(\swa uv)\RpR_{k,m}(\swa uv)$.
And again, if Case II holds at the right, then 
$\swa uv=\Fg(\LpreP_k(\swa uv))\LpR_{k,m}(\swa uv)\cKer_{k,m}(\swa uv)\RpR_{k,m}(\swa uv)$,
and 
$\swa uj$ is also a 
suboccurrence in 
$\Fg(\LpreP_k(\swa uv))\LpR_{k,m}(\swa uv)\cKer_{k,m}(\swa uv)\RpR_{k,m}(\swa uv)$.
So, independently on whether Case I or Case II holds at the right, 
$\swa uj$ is always a 
suboccurrence in 
$\Fg(\LpreP_k(\swa uv))\LpR_{k,m}(\swa uv)\cKer_{k,m}(\swa uv)\RpR_{k,m}(\swa uv)$.

If Case I holds at the left, then 
$|\LpR_{k,m}(\swa uv)|\ge|\Fg(\LR_k(\swa uv))|>n$, 
and
$\swa uj$ is a 
suboccurrence in 
$\Fg(\LpreP_k(\swa uv))\LpR_{k,m}(\swa uv)$.
Therefore, $\psi(\swa ij)$ is the concatenation 
of the suffix of 
% $\psi(\varphi^q(\al_s))$
$\psi(\gamma)$
of length $u-i<n$ and 
the prefix of length $n-(u-i)$
of the word 
$\psi(\LpreP_k(\mathcal E))\delta$, where $\delta$ is 
% the $|\lambda|$-periodic
% word of length $n$ with left period $\lambda$.
the weakly left $\lambda$-periodic
word of length $n$.
We have at most $|\E|(n-1)MN$ possibilities for $\psi(\swa ij)$.

If Case II holds at the left, then 
$\swa uj$ is a 
suboccurrence in 
$\LpR_{k,m}(\swa uv)\cKer_{k,m}(\swa uv)\RpR_{k,m}(\swa uv)$.
If $m=1$, then 
$\swa uj$ is a 
suboccurrence in 
$\cKer_{k,m}(\swa uv)\RpR_{k,m}(\swa uv)$.
Then 
$\psi(\swa ij)$
is the concatenation 
of the suffix of 
% $\psi(\varphi^q(\al_s))$
$\psi(\gamma)$
of length $u-i<n$ and 
the prefix of length $n-(u-i)$
of the word 
$\psi(\cKer_{k,m}(\mathcal E))\delta$, where $\delta$ is the weakly $|\mu|$-periodic
word of length $n$ with left period $\mu'$, which is a cyclic shift of $\mu$.
Again, we have at most $|\E|(n-1)MN$ possibilities for $\psi(\swa ij)$.

If Case II holds at the left and $m>1$, then $\psi(\LBS_k(\mathcal E))$ is a periodic sequence 
infinite to the left with period $\lambda$. As we have checked previously, 
$\swa i{u-1}$ is a suffix of $\psi(\LBS_k(\mathcal E))$, so $\psi(\swa i{u-1})$
is 
% a weakly $|\lambda|$-periodic word with right period $\lambda$. 
a weakly right $\lambda$-periodic word. 
$\psi(\LpR_{k,m}(\swa uv))$ is a 
% weakly $|\lambda|$-periodic word with left period $\lambda$, 
weakly left $\lambda$-periodic word, 
so $\psi(\swa ij)$ is a subword in a word of the form 
$\delta\psi(\cKer_{k,m}(\mathcal E))\delta'$, 
where $\delta$ (resp.\ $\delta'$) is the weakly $|\lambda|$-periodic 
(resp.\ $|\mu|$-periodic)
word of length $n$ with a right (resp.\ a left) period $\lambda'$ (resp.\ $\mu'$), 
which is a cyclic shift of $\lambda$ (resp.\ of $\mu$), 
(so $\lambda'$ and $\mu'$ are final periods). We have at most 
$N^2M(n+P+1)$ possibilities for $\psi(\swa ij)$.

In total, Case \ref{smallcomplcaseiii} gives us at most 
$2|\E|(n-1)MN+N^2M(n+P+1)$ possibilities for $\psi(\swa ij)$.

\underline{Case \ref{smallcomplcaseiv}} is symmetric to Case \ref{smallcomplcaseiii} and
gives at most 
$2|\E|(n-1)MN+N^2M(n+P+1)$ more possibilities for $\psi(\swa ij)$.

\underline{Case \ref{smallcomplcasev}.} This time $\swa uv$ is a suboccurrence in 
$\swa ij$, so Case II must hold for $\mathcal E$ both at the left and at the right, 
otherwise, $|\swa uv|>n$. So, $\swa uv=\Fg(\Cr_k(\swa uv))=\LpR_{k,m}(\swa uv)\cKer_{k,m}(\swa uv)\RpR_{k,m}(\swa uv)$,
and we have several possibilities for the value of $m$.

First, if $m=1$ and $\ncker_k(\mathcal E)=1$, then $\swa uv=\cKer_{k,1}(\mathcal E)$
as an abstract word, and there exists a number $x\in\NN$ ($1\le x <n$) 
such that $\swa ij$ is the concatenation of the suffix of 
% $\varphi^q(\al_s)$
$\gamma$
of length $x$, the abstract word $\cKer_{k,1}(\mathcal E)$, 
and the prefix of 
% $\varphi^q(\al_t)$
$\gamma'$
of length $n-x-|\cKer_{k,1}(\mathcal E)|$.
So, there are at most $|\E|^2(n-1)M$ possibilities for $\swa ij$ and at most
$|\E|^2(n-1)M$ possibilities for $\psi(\swa ij)$ in this case.

If $m=1$, but $\ncker_k(\mathcal E)>1$, then 
$\swa uv$ can be written as
$\swa uv=\cKer_{k,1}(\swa uv)\RpR_{k,1}(\swa uv)$,
and $\psi(\RBS_k(\mathcal E))$ is a periodic sequence infinite to the right 
with period $\mu$. $\psi(\RpR_{k,1}(\swa uv))$ is 
% a weakly $|\mu|$-periodic word with right period $\mu$.
a weakly right $\mu$-periodic word.
As we have checked previously, 
$\swa{v+1}j$ is a prefix of $\RBS_k(\mathcal E)$, 
so $\psi(\swa uj)$ is a prefix of a word of the form $\psi(\cKer_{k,1}(\mathcal E))\delta$, 
where $\delta$ is the weakly $|\mu|$-periodic word of length $n$ 
with left period $\mu'$, which is a cyclic shift of $\mu$ (and is a final period as well).
And $\psi(\swa i{u-1})$ is the suffix of length $u-i$ ($0<u-i<n$)
of $\psi(\gamma)$. We get at most $|\E|(n-1)MN$ possibilities for $\psi(\swa ij)$.

The situation when $m=\ncker_k(\mathcal E)$ and $\ncker_k(\mathcal E)>1$
is symmetric to the situation when $m=1$ and $\ncker_k(\mathcal E)>1$, 
so it gives us at most $|\E|(n-1)MN$ more possibilities for $\psi(\swa ij)$.

Finally, if $1<m<\ncker_k(\mathcal E)$, then $\psi(\LBS_k(\mathcal E))$
is a periodic sequence infinite to the left with period $\lambda$,
$\psi(\RBS_k(\mathcal E))$
is a periodic sequence infinite to the right with period $\mu$, 
$\swa i{u-1}$ is a suffix of $\LBS_k(\mathcal E)$, and 
$\swa{v+1}j$ is a prefix of $\RBS_k(\mathcal E)$.
Also, $\psi(\LpR_{k,m}(\swa uv))$ is 
% a weakly $|\lambda|$-periodic word with left period $\lambda$, 
a weakly left $\lambda$-periodic word, 
and 
$\psi(\RpR_{k,m}(\swa uv))$ is 
% a weakly $|\mu|$-periodic word with right period $\mu$.
a weakly right $\mu$-periodic word.
Therefore, $\psi(\swa ij)$ as an abstract word is a subword of a 
word of the form $\delta\psi(\cKer_{k,m}(\mathcal E))\delta'$, 
where $\delta$ is the weakly $|\lambda|$-periodic word of length $n$ 
with right period $\lambda'$, which is a cyclic shift of $\lambda$,
and 
where $\delta'$ is the weakly $|\mu|$-periodic word of length $n$ 
with left period $\mu'$, which is a cyclic shift of $\mu$.
$\lambda'$ and $\mu'$ are final periods, $|\delta\psi(\cKer_{k,m}(\mathcal E))\delta'|\le 2n+P$, 
so we have at most 
$N^2M(n+P+1)$ possibilities for $\psi(\swa ij)$.

Totally, in Case \ref{smallcomplcasev} we have at most 
$|\E|^2(n-1)M+2|\E|(n-1)MN+N^2M(n+P+1)$ possibilities for $\psi(\swa ij)$.

Summarizing, if $q=l_0$, then we have at most 
$|\E|^2(n-1)+2M(P+1)N+N^2M(n+P+1)+2(2|\E|(n-1)MN+N^2M(n+P+1))+|\E|^2(n-1)M+2|\E|(n-1)MN+N^2M(n+P+1)$
possibilities for $\psi(\swa ij)$ as an abstract word. Since $|\E|$, $M$, $N$, and $P$ 
do not depend on $n$, this number is $O(n)$ for $n\to\infty$.

Now let us consider the case when $q<l_0$. Recall that in this case,
$2\le |\swa st|\le |\varphi|$. Denote by $w$ the index such that $\al_w$ 
is the rightmost letter in $\varphi(\al_s)$. Then $\swa ij$ is 
the concatenation of the suffix of $\varphi^q(\al_s)$ of length $x=|\swa iw|=w-i+1<n$
and the prefix of $\varphi^q(\swa{s+1}t)$ of length $n-x$. So, 
$\swa ij$ as an abstract word is determined by the following data: 
a word of length at least two and at most $|\varphi|$, which will be $\swa st$, 
and two numbers, $q\in\ZZ_{\ge 0}$ ($q\le \lceil f(n)\rceil+1\le f(n)+2$) and $x\in\NN$ ($1\le x<n$).
(This time we need to know $q$ since it is not determined by $n$ uniquely anymore.)
There are at most $(|\E|^2+|\E|^3+\ldots+|\E|^{|\varphi|})(f(n)+2)n$ possibilities for $\swa ij$,
and hence at most $(|\E|^2+\ldots+|\E|^{|\varphi|})(f(n)+2)n$ possibilities for $\psi(\swa ij)$.
$|\E|$ and $|\varphi|$ do not depend on $n$, so this number is $O(n(f(n)+1))$ for $n\to\infty$.

Overall, we have at most $O(n)+O(n(f(n)+1))=O(n(f(n)+1))$ possibilities for $\psi(\swa ij)$ as an abstract word.
Since $f(n)\ge 3k$ if $n\ge n_0$, we can say that the function $n\mapsto 1$ is also $O(f(n))$ for $n\to\infty$, 
and $O(n(f(n)+1))=O(nf(n))$.
\end{proof}

\begin{proof}[Proof of Proposition \ref{smallcompl}]
Consider the following sequences depending on $l\in\ZZ_{\ge 0}$. Previously we have seen that 
all of them have asymptotic $\Omega(l^k)$ for $l\to\infty$.
\begin{enumerate}
% \item The sequences $|\mathcal E_l|$ for all evolutions of $k$-blocks such that Case I holds for 
% $\mathcal E$ at the left or at the right (see Lemma \ref{regpartasym}).
\item The sequences 
$|\Fg(\LR_k(\mathcal E_l))|$ (resp.\ $|\Fg(\RR_k(\mathcal E_l))|$) 
for all evolutions of $k$-blocks such that Case I holds for 
$\mathcal E$ at the left (resp.\ at the right) (see Lemma \ref{regpartasym}).
\item The sequences $|\varphi^l(b)|$, where $b\in\E$ is a letter of order $>k$,
including letters of order $\infty$ (see the definition of the order of a letter).
\end{enumerate}
For the sequences 
% $|\mathcal E_l|$,
$|\Fg(\LR_k(\mathcal E_l))|$ and
$|\Fg(\RR_k(\mathcal E_l))|$
mentioned here, Lemma \ref{regpartasym}
actually says that the asymptotic of these 
sequences is $\Theta(l^k)$, and
constants in the $\Theta$-notation
do not depend on $\mathcal E$. So, we may suppose that the constants in the $\Omega$-notation
do not depend on $\mathcal E$. 
As for the sequences $|\varphi^l(b)|$, where $b\in\E$ is a letter of order $>k$,
there are only finitely many of them since there are only finitely many letters in $\al$, 
so we may also suppose that the constants in the $\Omega$-notation do not depend on $b$.
Therefore, there exist $l_0\in\ZZ_{\ge 0}$ and $x\in\R_{>0}$
such that for all $l\ge l_0$ the following is true:
\begin{enumerate}
% \item If $\mathcal E$ is an 
% evolution of $k$-blocks such that Case I holds for 
% $\mathcal E$ at the left or at the right, then $|\mathcal E_l|>xl^k$.
\item If $\mathcal E$ is an 
evolution of $k$-blocks such that Case I holds for 
$\mathcal E$ at the left (resp.\ at the right),
then
$|\Fg(\LR_k(\mathcal E_l))|>xl^k$ (resp.\ $|\Fg(\RR_k(\mathcal E_l))|>xl^k$).
\item If $b\in\E$ is a letter of order $>k$, then $|\varphi(b)|>xl^k$.
\end{enumerate}
Without loss of generality, we will suppose that $l_0\ge 3k$.
Set $n_0=\lceil xl_0^k\rceil$. Consider the following 
function $f\colon \NN\to\R$: $f(n)=\sqrt[k]{n/x}$.
% Clearly, this function strictly grows as $n$ grows.
If $n\ge n_0$, then $f(n)=\sqrt[k]{n/x}\ge \sqrt[k]{n_0/x}\ge \sqrt[k]{(xl_0^k)/x}=l_0\ge 3k$.
If $n\ge n_0$, $l\in\NN$, and $l\ge f(n)$, then 
$l\ge \sqrt[k]{n/x}$, so $l^k\ge n/x$, $xl^k\ge n$, and, since $l\ge f(n)\ge l_0$, we have the following 
inequalities:
\begin{enumerate}
% \item If $\mathcal E$ is an 
% evolution of $k$-blocks such that Case I holds for 
% $\mathcal E$ at the left or at the right, then $|\mathcal E_l|>xl^k$.
\item If $\mathcal E$ is an 
evolution of $k$-blocks such that Case I holds for 
$\mathcal E$ at the left (resp.\ at the right),
then
$|\Fg(\LR_k(\mathcal E_l))|>xl^k\ge n$ (resp.\ $|\Fg(\RR_k(\mathcal E_l))|>xl^k\ge n$).
\item If $b\in\E$ is a letter of order $>k$, then $|\varphi(b)|>xl^k\ge n$.
\end{enumerate}
Therefore, $f$ satisfies the conditions of Lemma \ref{smallcompllemma}, and the subword complexity
of $\be=\psi(\al)$ is $O(nf(n))=O(n^{1+1/k})$.
\end{proof}

Now we can check that if evolutions of $k$-blocks really exist in $\al$ and all of them are continuously
periodic, then all $k$-series of 
obstacles $\mathcal H$ in $\al$ actually satisfy $|\mathcal H_l|=\Theta(l^k)$.
(Actually, we do not need this fact to prove any subsequent lemmas, propositions or theorems.)
Indeed, if $\al=\varphi^\infty(a)$ and $k$-blocks exist in $\al$, then by Lemma \ref{splitconcatenation}, 
$a$ is a letter of order $\ge k+2$. If, in addition, all evolutions of $k$-blocks
are continuously periodic, 
then by Proposition \ref{smallcompl}, the subword complexity of $\be=\psi(\al)$
is $O(n^{1+1/k})$. But we also have seen in the proof of Lemma \ref{obstcompl}
that if $\mathcal H$ is a $k$-series of obstacles such that 
$|\mathcal H_l|$ is $\Theta(l^{k'})$ for $l\to\infty$ and $k'<k$, 
then the subword complexity of $\be$ is $\Omega(n^{1+1/k'})$, so it is 
$\omega(n^{1+1/k})$, and we have a contradiction. Therefore, 
$|\mathcal H_l|$ is in fact $\Theta(l^k)$ for $l\to\infty$.

To prove Proposition \ref{finordercomplstop}, we first prove the following lemma.

\begin{lemma}\label{finorderstoplemma}
Let $k\in\NN$.
Suppose that $\al=\varphi^\infty(a)$, where $a$ is a letter of order $k+2$.
Let $l\in\ZZ_{\ge 0}$.
Let $\al_s$ (resp.\ $\al_i$,$\al_j$) be the rightmost letter of order $\ge k+1$ in $\varphi(a)$
(resp.\ in $\varphi^l(a)$, in $\varphi^{l+1}(a)$).

Then $i<j$, 
$\al[<,i+1\ldots j,<]_k$ and $\al[<,1\ldots s,<]_k$
are $k$-multiblocks, each of them begins with 
a (possibly empty) $k$-block and ends with a letter of order $k+1$, and 
% we have the following equality of $k$-multiblocks in $\al$: 
$\al[<,i+1\ldots j,<]_k=\Su_k^l(\al[<,1\ldots s,<]_k)$.
\end{lemma}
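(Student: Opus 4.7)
The plan is to prove the lemma by induction on $l$ after first establishing some structural facts. Since $\varphi(a)=a\gamma$ begins with $a$ and $a$ has order $k+2\geq 2$, $a$ is a periodic letter of order $k+2$; by Lemma~\ref{splitconcatenation}, the only occurrence of $a$ in $\al$ is $\al_0$, so every other letter of order $\ge k+1$ in $\al$ has order exactly $k+1$. By the last claim of Lemma~\ref{rates}, $\varphi(a)$ contains at least one letter of order $k+1$, so $s\ge 1$ and $\al_s$ has order $k+1$. Strong 1-periodicity gives $\RL_k(\varphi^n(a))=\al_s$ as an abstract letter for every $n\ge 1$; taking $n=2$ and noting that in $\varphi^2(a)=\varphi(a)\varphi(\gamma)$ the rightmost letter of order $>k$ is contributed by $\RL_k(\varphi(\al_s))$ yields $\RL_k(\varphi(\al_s))=\al_s$, so $\al_s$ appears in its own image and is therefore a periodic letter of order $k+1$. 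By the same strong 1-periodicity argument, for every $l\ge 1$ the letter $\al_i$ and for every $l\ge 0$ the letter $\al_j$ coincide with $\al_s$ as abstract letters.

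For the base case $l=0$, we have $\varphi^0(a)=a$, so $i=0$ and $j=s\ge 1$, giving $i<j$, and the claimed equality $\al[<,1\ldots s,<]_k=\Su_k^0(\al[<,1\ldots s,<]_k)$ is trivial. The multiblock $\al[<,1\ldots s,<]_k$ is well-defined since $\al_0$ and $\al_s$ are both of order $>k$; a case analysis on whether $\al_1$ has order $>k$ (giving the empty $k$-block $\swa 10$ as first component, since both $\al_0$ and $\al_1$ have order $>k$) or order $\leq k$ (giving a non-empty $k$-block starting at position $1$) shows it begins with a (possibly empty) $k$-block, and it ends with $\al_s$, a letter of order $k+1$.

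For the inductive step, write $i_l,j_l$ for the indices at level $l$, so $j_l=i_{l+1}$. Because images of letters of order $\le k$ stay of order $\le k$, the rightmost letter of order $\ge k+1$ in $\varphi^{l+2}(a)=\varphi(\varphi^{l+1}(a))$ must lie in $\varphi(\al_{j_l})$; by Remark~\ref{descendantofperiodic} applied to the periodic letter $\al_{j_l}$ of order $k+1$, the unique letter of order $k+1$ in $\varphi(\al_{j_l})$ coincides abstractly with $\al_{j_l}$, sits at some position $j_{l+1}$ inside $\varphi(\al_{j_l})$, and is the only letter of order $>k$ there; since $|\varphi(\al_0)|\ge 2$, this position satisfies $j_{l+1}>j_l=i_{l+1}$. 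To compute $\Su_k(\al[<,i_l+1\ldots j_l,<]_k)$, apply the definition of the descendant of a non-empty $k$-multiblock: its first component is a (possibly empty) $k$-block $B$ with left border $\al_{i_l}$, and by definition of $\Su_k$ for a $k$-block, $\Su_k(B)$ has left border $\RL_k(\varphi(\al_{i_l}))$; by the same Remark (for $l\ge 1$) or by definition of $s$ (for $l=0$), this left border sits at position $j_l$, forcing $\Su_k(B)$ to begin at position $j_l+1=i_{l+1}+1$. The last component $\al_{j_l}$ has descendant $\al_{j_{l+1}}$, and the $<$ signs at both ends are preserved. Therefore $\Su_k(\al[<,i_l+1\ldots j_l,<]_k)=\al[<,i_{l+1}+1\ldots j_{l+1},<]_k$, which combined with the inductive hypothesis yields $\al[<,i_{l+1}+1\ldots j_{l+1},<]_k=\Su_k^{l+1}(\al[<,1\ldots s,<]_k)$; the structural claim for the new multiblock follows exactly as in the base case, now using that $\al_{j_l}$ and $\al_{j_{l+1}}$ are letters of order $>k$.

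The main obstacle will be the careful position bookkeeping in the inductive step: verifying that the left border of the descendant of the initial $k$-block is located at position $j_l$ in $\al$ (which then forces $\Su_k(B)$ to start at $j_l+1$). This amounts to unwinding the definition of the descendant of a single $k$-block together with Remark~\ref{descendantofperiodic}, confirming that $\RL_k(\varphi(\al_{i_l}))$ and the rightmost letter of order $\geq k+1$ in $\varphi^{l+1}(a)$ coincide both as abstract letters and as positions in $\al$.
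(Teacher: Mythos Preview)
Your proof is correct and follows essentially the same approach as the paper: both establish the base case $l=0$ trivially (where $i=0$, $j=s$), then induct by showing that the descendant of the initial $k$-block has left border at position $j_l$ (since $\RL_k(\varphi(\al_{i_l}))$ coincides with the rightmost letter of order $\ge k+1$ in $\varphi^{l+1}(a)$) and that the descendant of the final letter $\al_{j_l}$ ends at position $j_{l+1}$. The only cosmetic difference is that you take the extra step of proving $\al_s$ is a periodic letter of order $k+1$ in order to invoke Remark~\ref{descendantofperiodic}, whereas the paper works directly from the definition of the descendant of a single letter and of a $k$-block without needing periodicity; both routes reach the same conclusion.
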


\begin{proof}
% First, let us check this for $l=1$. The $k$-multiblock 
% $\al[<,0,0,<]_k$ consists of a
First, note that the $k$-multiblocks in the statement of the lemma really exist and are nonempty.
% Indeed, Lemma \ref{splitconcatenation}
Indeed, Lemma \ref{splitconcatenation} says in this case that the only letter of 
order $k+2$ in $\al$ is $\al_0=a$, and all other letters in $\al$ are of order $\le k+1$.
If $\varphi(a)=a\gamma$, then for all $n\in\NN$ we have
$\varphi^n(a)=\varphi^{n-1}(a)\varphi^{n-1}(\gamma)=a\gamma\varphi(\gamma)\ldots\varphi^{n-1}(\gamma)$.
Since $\varphi(a)$ contains $a$, $a$ can only be a periodic letter of order $k+2$, 
not a preperiodic letter of order $k+2$. Then $\gamma$ contains at least one letter of 
order $k+1$, and $\varphi^n(\gamma)$ contains at least one letter of order $k+1$ for all $n\in\NN$.
Hence, $\varphi^n(a)$ contains at least one letter of order $k+1$. Moreover, for all $n\in\NN$, 
the rightmost letter of 
order $k+1$ in $\varphi^n(a)$ is the rightmost letter of order $k+1$ in 
$\varphi^{n-1}(\gamma)$ since $\varphi^{n-1}(\gamma)$ contains at least one letter of order $k+1$.
Hence, $i<j$, $1<s$. 

Since $\al_0$ and $\al_i$ are letters of order $\ge k+1$, there really 
exist (possibly empty) $k$-blocks of the form $\swa{i+1}{j'}$ and $\swa1{s'}$, so
the notations $<,i+1$ and $<,1$ really denote delimiters. Since $\al_j$ and $\al_s$ 
are also letters of order $\ge k+1$, $j,<$ and $s,<$ also denote delimiters, and 
the delimiter $j,<$ (resp.\ $s,<$) is located at the right-hand side of $<,i+1$
(resp.\ of $<,1$). Therefore, $\al[<,i+1\ldots j,<]_k$ and $\al[<,1\ldots s,<]_k$
are really nonempty $k$-multiblocks, and each of them begins with a (possibly empty) 
$k$-block and ends with a letter of order $k+1$.

If $l=0$, then $i=0$ and $j=s$, so the statement of the lemma is trivial. 
%Suppose that $l>0$.
Otherwise, we prove the statement by induction on $l$. Let $\al_t$ be the rightmost letter of order $\ge k+1$ in 
$\varphi^{l+2}(a)$. 
We have to prove that $\al[<,j+1\ldots t,<]_k=\Su_k(\al[<,i+1\ldots j,<]_k)$.
$\al_t$ is also the rightmost letter of order $\ge k+1$ in $\varphi^{l+1}(\gamma)$, 
so, since $\al_j$ is the rightmost letter of order $\ge k+1$ in $\varphi^l(\gamma)$, and 
the image of a letter of order $\le k$ consists of letters of order $\le k$ only, 
$\al_t$ is also the rightmost letter of order $\ge k+1$ in $\varphi(\al_j)$. Similarly, 
$\al_j$ is the rightmost letter of order $\ge k+1$ in $\varphi(\al_i)$.

Therefore, $\Su_k(\al[>,j\ldots j,<]_k)$ is a $k$-multiblock that ends with the letter $\al_t$ 
of order $\ge k+1$. If $\swa{i+1}{j'}$ is a $k$-block, then $\al_i=\LB(\swa{i+1}{j'})$, 
and $\LB(\Su_k(\swa{i+1}{j'}))$ is the rightmost letter of order $>k$ in $\varphi(\al_i)$,
i.~e.\ $\LB(\Su_k(\swa{i+1}{j'}))=\al_j$. Hence, $\Su_k(\swa{i+1}{j'})$ is a $k$-block of the 
form $\swa{j+1}{t'}$ for some $t'\ge j$, and, by the definition of the descendant of a $k$-multiblock, 
$\Su_k(\al[<,i+1\ldots j,<]_k)$ is the $k$-multiblock that begins with the $k$-block 
$\swa{j+1}{t'}$ and ends with the letter $\al_t$ of order $\ge k+1$.
Therefore, $\al[<,j+1\ldots t,<]_k=\Su_k(\al[<,i+1\ldots j,<]_k)$.
\end{proof}

\begin{proof}[Proof of Proposition \ref{finordercomplstop}]
For each $l\ge 0$, denote by $s_l$ the index such that $\al_{s_l}$
is the rightmost letter of order $\ge k+1$ in $\varphi^l(a)$.
By Lemma \ref{finorderstoplemma}, 
% $\swa{s_l+1}{s_{l+1}}=\Fg(\Su_k^l(\al[<,1,s_1,<]_k))$
$\al[<,s_l+1\ldots s_{l+1},<]_k=\Su_k^l(\al[<,1\ldots s_1,<]_k)$ for all $l\ge 0$.
By Lemma \ref{splitconcatenation}, the only letter of order $k+2$ in $\al$ is 
$\al_0=a$, so all letters of order $\ge k+1$ in 
$\al[<,1\ldots s_1,<]_k$ are actually of order $k+1$.
All letters of order $\ge k+1$ in 
$\Su_k^l(\al[<,1\ldots s_1,<]_k)$ are contained in the images 
under $\varphi^l$
of the letters of order $k+1$ from $\al[<,1\ldots s_1,<]_k$, 
so, if $l\ge 1$, then all letters of order 
$\ge k+1$ in 
$\Su_k^l(\al[<,1\ldots s_1,<]_k)=\al[<,s_l+1\ldots s_{l+1},<]_k$
are actually \textit{periodic} letters of order $k+1$.

In particular, this is true for $l=1$. Therefore, if $l\ge 1$, then 
all $k$-blocks in
$\al[<,s_l+1\ldots s_{l+1},<]_k=\Su_k^l(\al[<,1\ldots s_1,<]_k)=\Su_k^{l-1}(\Su_k(\al[<,1\ldots s_1,<]_k))=
\Su_k^{l-1}(\al[<,s_1+1\ldots s_2,<]_k)$ are the $(l-1)$th 
superdescendants of the $k$-blocks in
$\al[<,s_1+1,s_2,<]_k$. Hence, if $l\ge 1$, then the evolutional sequence 
number of each $k$-block contained in $\al[<,s_l+1\ldots s_{l+1},<]_k$
is always at least $l-1$. So, if $l\ge 3k+1$, then 
all $k$-blocks contained in $\al[<,s_l+1\ldots s_{l+1},<]_k$
are stable, and all letters of order $\ge k+1$ contained in 
$\al[<,s_l+1\ldots s_{l+1},<]_k$ are periodic letters of order $k+1$. 
Therefore, if $l\ge 3k+1$, then 
$\al[<,s_l+1\ldots s_{l+1},<]_k$ is a stable $k$-multiblock. It is nonempty by Lemma \ref{finorderstoplemma}.

Consider the following evolution $\mathcal F$ of stable nonempty 
$k$-multiblocks: $\mathcal F_0=\al[<,s_{3k+1}+1\ldots$\linebreak$s_{3k+2},<]_k$, 
$\mathcal F_l=\Su_k^l(\al[<,s_{3k+1}+1\ldots s_{3k+2},<]_k)=\Su_k^l(\Su_k^{3k+1}(\al[<,1\ldots s_1,<]_k))=
\Su_k^{l+3k+1}(\al[<,1\ldots s_1,<]_k)=\al[<,s_{l+3k+1}+1\ldots s_{l+3k+2},<]_k$
for $l\ge 0$.
Then we can write $\al=\swa0{s_{3k+1}}\swa{s_{3k+1}+1}{s_{3k+2}}\swa{s_{3k+2}+1}{s_{3k+3}}\ldots\swa{s_{l+3k+1}+1}{s_{l+3k+2}}\ldots=
\swa0{s_{3k+1}}\Fg(\mathcal F_0)\Fg(\mathcal F_1)\ldots\Fg(\mathcal F_l)\ldots$. 
Note that $\Fg(\mathcal F_l)$ contains a letter (namely, the rightmost letter of 
order $\ge k+1$ in $\varphi^{l+3k+2}(a)$, which we denote by $\al_{s_{l+3k+2}}$), 
which does not belong to $\varphi^{l+3k+1}(a)$ by Lemma \ref{finorderstoplemma}.
Hence, $|\swa0{s_{3k+1}}\Fg(\mathcal F_0)\Fg(\mathcal F_1)\ldots\Fg(\mathcal F_l)|>|\varphi^{l+3k+1}(a)|$, 
and the infinite concatenation
$\swa0{s_{3k+1}}\Fg(\mathcal F_0)\Fg(\mathcal F_1)\ldots\Fg(\mathcal F_l)\ldots$
is really an infinite word, and it covers the\linebreak whole $\al$.

First, let us check that $\nker_k(\mathcal F)>1$.
Indeed, otherwise $\Fg(\mathcal F_l)=\Ker_{k,1}(\mathcal F_l)$ for all $l\ge 0$, and
by Lemma \ref{compkernelfixed}, all words $\Fg(\mathcal F_l)$
coincide as abstract words. But then 
$|\swa0{s_{3k+1}}\Fg(\mathcal F_0)\ldots\Fg(\mathcal F_l)|$ is $O(l)$ for $l\to\infty$.
On the other hand, we know that
$|\swa0{s_{3k+1}}\Fg(\mathcal F_0)\ldots\Fg(\mathcal F_l)|>|\varphi^{l+3k+1}(a)|$, 
and $a$ is a letter of order $k+2\ge 3$ since $k\in\NN$, so
$|\varphi^{l+3k+1}(a)|$ is $\Theta(l^{k-1})$ for $l\to\infty$, 
and we have a contradiction.

So, $\nker_k(\mathcal F)>1$. 
Note that the indices $\al_i$ and $\al_j$ from the statement of the proposition can now 
be written as $i=s_{3k+1}$ and $j=s_{3k+2}$, so $\Fg(\mathcal F_0)=\Fg(\al[<,s_{3k+1}+1\ldots s_{3k+2},<]_k)=
\swa{s_{3k+1}+1}{s_{3k+2}}=\swa{i+1}j$.
First, let us consider the case when there exists a final 
period $\lambda$ such that $\psi(\swa{i+1}j)=\psi(\Fg(\mathcal F_0))$
% is a completely $|\lambda|$-periodic word with period $\lambda$. 
is a completely $\lambda$-periodic word. 
By Lemma \ref{oneperenoughfortotleft}, in this case 
$\lambda$ is a total left evolutional period of $\mathcal F$.
By the definition of a total left evolutional period, 
the remainder of $|\mathcal F_l|$ modulo $|\lambda|$ does not depend 
on $l$, so it equals
the remainder of $|\mathcal F_0|$ modulo $|\lambda|$, 
which is zero. Therefore, all words $\psi(\Fg(\mathcal F_l))$
% are completely $|\lambda|$-periodic words with period $\lambda$, 
are completely $\lambda$-periodic words, 
and 
$\be=\psi(\al)=\psi(\swa0{s_{3k+1}})\psi(\Fg(\mathcal F_0))\ldots\psi(\Fg(\mathcal F_l))\ldots$
is an eventually periodic word with period $\lambda$, and its subword complexity is $O(1)$.

Now suppose that there exist no final 
period $\lambda$ such that $\psi(\swa{i+1}j)$
% is a completely $|\lambda|$-periodic word with period $\lambda$.
is a completely $\lambda$-periodic word.
Since all evolutions of $k$-blocks present in $\al$ are continuously periodic, 
Proposition \ref{smallcompl} guarantees that the subword complexity 
of $\be$ is $O(n^{1+1/k})$.
We are going to prove that there exists a $k$-series of 
obstacles in $\al$. Assume the contrary.

Consider the following evolutions of $k$-multiblocks:
$\mathcal F'$ defined by $\mathcal F'_l=\mathcal F_{l+1}$,
and $\mathcal F''$ defined by $\mathcal F''_l=\mathcal F_{l+2}$.
Each $k$-multiblock $\mathcal F'_l=\mathcal F_{l+1}$ begins with a
$k$-block, so $\mathcal F_l$ and $\mathcal F'_l$ are consecutive
as $k$-multiblocks (there cannot be an empty $k$-block between them).
So, the evolutions $\mathcal F$ and $\mathcal F'$ are consecutive. 
Similarly, $\mathcal F'$ and $\mathcal F''$ are consecutive.
By Lemma \ref{atleastonetotal}, at least one of the evolutions $\mathcal F$, 
$\mathcal F'$, and $\mathcal F''$ is totally periodic.
Hence, there exists a final period $\lambda$ such that 
at least one of the words 
$\psi(\Fg(\mathcal F_0))$, 
$\psi(\Fg(\mathcal F'_0))=\psi(\Fg(\mathcal F_1))$, 
or
$\psi(\Fg(\mathcal F''_0))=\psi(\Fg(\mathcal F_2))$
% is a weakly $|\lambda|$-periodic word with left period $\lambda$.
is a weakly left $\lambda$-periodic word.

By Lemma \ref{oneperenoughfortotleft}, this means that $\lambda$
is a total left evolutional period of $\mathcal F$. Now it follows directly 
form the definition of a total left evolutional period that 
$\lambda$
is also a total left evolutional period of $\mathcal F'$ and of $\mathcal F''$.
Then Lemma \ref{atleastonereallytotal} implies that 
$\lambda$ is either a right total evolutional period of $\mathcal F$, 
or a right total evolutional period of $\mathcal F'$. In particular, 
at least one of the words 
$\psi(\Fg(\mathcal F_0))$ or
$\psi(\Fg(\mathcal F'_0))=\psi(\Fg(\mathcal F_1))$
% is a weakly $|\lambda|$-periodic word with right period $\lambda$.
is a weakly right $\lambda$-periodic word.
By Lemma \ref{oneperenoughfortotright}, $\lambda$
is a total right evolutional period of $\mathcal F$.

Now we know that $\lambda$ is both left and right total evolutional
period of $\mathcal F$.
In particular, 
$\psi(\Fg(\mathcal F_0))$
is weakly $|\lambda|$-periodic word with both left and right period $\lambda$.
Since $\nker_k(\mathcal F)>1$, by Corollary \ref{betweenkernelgrowsgood}, 
$|\Fg(\mathcal F_0)|\ge 2\finmax$. So, by Lemma \ref{leftrightcompleteperiod}, 
$\psi(\Fg(\mathcal F_0))=\psi(\swa{i+1}j)$
% is a completely $|\lambda|$-periodic word with period $\lambda$,
is a completely $\lambda$-periodic word,
and we have a contradiction (we are considering the case when
there exist no final period $\lambda$ such that
$\psi(\swa{i+1}j)$
% is a completely $|\lambda|$-periodic word with period $\lambda$).
is a completely $\lambda$-periodic word).

Therefore, there exists a $k$-series of obstacles in $\al$, and, by 
Lemma \ref{obstcompl}, the subword complexity of $\be=\psi(\al)$ 
is $\Omega(n^{1+1/k})$. We already know that 
the subword complexity of $\be$ 
is $O(n^{1+1/k})$, so the subword complexity of $\be$ is $\Theta(n^{1+1/k})$.
\end{proof}

\begin{proof}[Proof of Proposition \ref{ordertwocompl}]
Write $\varphi(a)=a\gamma$. Then for all $l\in\NN$ we have
$\varphi^l(a)=\varphi^{l-1}(a)\varphi^{l-1}(\gamma)=a\gamma\varphi(\gamma)\ldots \varphi^{l-1}(\gamma)$
and $\al=\varphi^\infty(a)=a\gamma\varphi(\gamma)\ldots \varphi^l(\gamma)\ldots$.
Since $a$ is contained in $\varphi(a)$, $a$ must be a periodic letter of order 2, 
not a preperiodic letter of order 2. Then all letters in $\gamma$ have order 1.
Since we have assumed that $\varphi$ is (in particular)
weakly 1-periodic morphism, $\varphi(\gamma)$ consists of periodic letters of order 1 only, and 
$\varphi^2(\gamma)=\varphi(\gamma)$. Therefore, 
$\al=a\gamma\varphi(\gamma)\varphi(\gamma)\ldots\varphi(\gamma)\ldots$, 
$\al$ is an eventually periodic sequence with period 
$\varphi(\gamma)$, so $\be=\psi(\al)$ is 
an eventually periodic sequence with period 
$\psi(\varphi(\gamma))$, and the subword complexity of $\be$ is $O(1)$.
\end{proof}

\begin{proof}[Proof of Proposition \ref{infordercomplstop}]
% Let $k$ be the maximal \textit{finite} order of a letter in $\E$.
% (Recall that we have suppoosed that there is at least one letter of order 1 and at least one letter of order 2 in $\E$, 
% however, these letters may never occur in $\al$.)
By Lemma \ref{splitconcatenation}, $\al$ can be split into a concatenation 
of $k$-blocks and letters of order $>k$ (i.~e.\ letters of order $\infty$ in this case). 
In particular, $k$-blocks still exist (although it is possible that all of them
are empty occurrences), but Case II must hold for all evolutions
of $k$-blocks since there are no letters of order $k$.

We are going to use Lemma \ref{smallcompllemma}.
Recall that if $b$ is a letter of order $\infty$, then there exists $q\in\R$, $q>1$
such that $|\varphi^l(b)|$ is $\Omega(q^l)$ for $l\to\infty$.
Let $q_0$ be the minimal number $q$ for all letters of order $\infty$. 
Since there are finitely many letters in $\E$, there exists $l_0\in \NN$ 
and $x\in\R_{>0}$ such that if $l\ge l_0$ and $b$ is a letter of order $\infty$, then $|\varphi^l(b)|>xq_0^l$.
Without loss of generality, $l_0\ge 3k$. Set $n_0=\lceil xq_0^{l_0}\rceil$, 
and consider the following function $f\colon \NN\to\R$: 
$f(n)=\log_{q_0}(n/x)$. If $n\ge n_0$, then 
$f(n)=\log_{q_0}(n/x)\ge \log_{q_0}(n_0/x)\ge\log_{q_0}((xq_0^{l_0})/x)=\log_{q_0}(q_0^{l_0})=l_0\ge 3k$, 
so the first condition in Lemma \ref{smallcompllemma}
is satisfied. We do not have to check the second condition in Lemma \ref{smallcompllemma}
since Case II holds both at the left and at the right for all evolutions of $k+1$-blocks
in $\al$. For the third condition we observe that if 
$l,n\in\NN$, $n\ge n_0$, $l\ge f(n)$, and $b\in\E$ is a letter of order $>k+1$, then 
$b$ is a letter of order $\infty$, $l\ge l_0$, so $|\varphi^l(b)|>xq_0^l$.
We also have $l\ge \log_{q_0}(n/x)$, so $q_0^l\ge n/x$, $xq_0^l\ge n$, and 
$|\varphi^l(b)|>n$. Therefore, we can use Lemma \ref{smallcompllemma}.
By Lemma \ref{smallcompllemma}, the subword complexity of $\be=\psi(\al)$ is 
$O(n\log_{q_0}(n/x))=O(n\log_{q_0}n-n\log_{q_0}x)=O(n\log n)$.
\end{proof}

\begin{proof}[Proof of Theorem \ref{maintheorem}]
Let $a\in\E$ be a letter such that $\varphi(a)=a\gamma$ for a nonempty word $\gamma$, 
and let $\al=\varphi^\infty(a)$. Consider the following three cases: $a$ can be either a letter of order 2, or 
a letter of finite order $K>2$, 
or a letter of order $\infty$.

If $a$ is a letter of order 2, then the subword complexity of $\be=\psi(\al)$ is $O(1)$ by Proposition \ref{ordertwocompl}.

If $a$ is a letter of a finite order $K>2$, 
then denote by $k\in\NN$ the maximal number among the numbers $1,2,\ldots,K-2$
such that all evolutions of $k$-blocks arising in $\al$ are continuously
periodic. (Recall that all evolutions of 1-blocks are always continuously periodic, see 
Remark \ref{onecontper}.) By Proposition \ref{smallcompl}, the subword complexity of $\be=\psi(\al)$ is 
$O(n^{1+1/k})$.
If $k<K-2$, then 
by Lemma \ref{splitconcatenation}, 
$\al$ can be split into a concatenation of $(k+1)$-blocks and letters of order $>k+1$, 
so there exist evolutions of $(k+1)$-blocks in $\al$, and there exists 
a non-continuously periodic evolution of $(k+1)$-blocks in $\al$. 
So,
by Proposition \ref{largecompl}, the subword complexity of $\be$ is $\Omega(n^{1+1/k})$, therefore 
it is $\Theta(n^{1+1/k})$. If $k=K-2$, then by Proposition \ref{finordercomplstop}, 
the subword complexity of $\be$ is either $\Theta(n^{1+1/k})$, or it is $O(1)$.

Finally, suppose that $a$ is a letter of order $\infty$. Let $K$ be the maximal \textit{finite} order
of letters occurring in $\al$ (i.~e.\ all letters occurring in $\al$ are either of order $\le K$, or of order $\infty$).
Denote by $k\in\NN$ the maximal number among the numbers $1,2,\ldots,K+1$
such that all evolutions of $k$-blocks in $\al$ are continuously
periodic. Again, by Proposition \ref{smallcompl}, the subword complexity 
of $\be=\psi(\al)$ is $O(n^{1+1/k})$. If $k<K+1$, then there exists 
an evolution $\mathcal E$ of $k+1$-blocks such that $\mathcal E$ is 
not continuously periodic. By Proposition \ref{largecompl}, 
the subword complexity of $\be$ is $\Omega(n^{1+1/k})$.
Therefore, the subword complexity of $\be$ is $\Theta(n^{1+1/k})$.
If $k=K+1$, then by Proposition \ref{infordercomplstop}, the subword complexity of $\be$ is $O(n\log n)$.
\end{proof}

\section{An Example}\label{An_Example}

Here we give an example of a sequence with complexity $\T(n^{3/2})$.
The easiest way to construct such a sequence is to use Proposition \ref{finordercomplstop}.

Let $\E = \{1,2,3,4\}$. Consider the following morphism $\varphi$:
$\varphi(4) = 43, \varphi(3) = 32, \varphi(2) = 21, \varphi(1) = 1$
and the following coding $\psi$:
$\psi(4)=4,\psi(3)=3,\psi(2)=\psi(1)=1$. 
Here all letters are periodic, $4$ is a letter of order 4, 
$3$ is a letter of order 3, $2$ is a letter of order 2, and $1$ is a letter of order 1.
$\varphi$ is a weakly 1-periodic morphism, but it is not a strongly
1-periodic morphism.
% with long images.
However, for $\varphi^4$ we have 
$\varphi^4(4)=433232213221211$,
$\varphi^4(3)=32212112111$,
$\varphi^4(2)=21111$, and
$\varphi^4(1)=1$, 
so $\varphi^4$ is a strongly 1-periodic morphism.
The only final period we have here consists of a single letter $1$, 
so $\finmax=1$, and $\varphi^4$ is also a strongly 1-periodic morphism with long images.

To construct the pure morphic sequence, we can use $\varphi$ as well as $\varphi^4$, the resulting 
sequence will be the same.

So, we have $\varphi^\infty(4) =
\al = 433232213221211322121121113\ldots$, and 
%let
$\psi(\varphi^\infty(4)) = \be = 433131113111111311111111113\ldots$
% In $\al$ there is a unique evolution $\mathcal E$ of 2-blocks,
% whose origin is the occurrence of the empty word between 4 and 3.
% Case I holds at left,
% so $|\mathcal E_l| = \T(l^2)$. It is clear that $\mathcal E$ is
% continuously periodic evolution, since $\psi(\mathcal E_l)$ consists
% of repeating letters 1. Proposition \ref{smallcompl} asserts $p_\be
% = O(n^{3/2})$. Let us show $p_\be = \Om(n^{3/2})$.
% 
% Indeed, $\be$ consists of alternating letters 3 and 2-blocks
% $\mathcal E_l$.
% Given a subword $\be_{i\ldots j}$ of length $n$, consider the
% longest 2-block $\be_{s\ldots t}$ that is completely inside it.
% If the longest 2-block in another subword $\be_{i'\ldots j'}$ is of
% another length or if it is of the same length but starts from the
% different location in $\be_{i'\ldots j'}$ than $\be_{s\ldots t}$ in
% $\be_{i\ldots j}$, the words $\be_{i\ldots j}$ and $\be_{i'\ldots
% j'}$ are obviously distinct.
% 
% Assume $3n/4 > |\be_{s\ldots t}| > n/2$. Then there are $\T(\sqrt
% n)$ possibilities for its length. There are $\T(n)$ possibilities
% for its location in $\be_{i\ldots j}$, thus we obtain $\T(n\sqrt n)$
% different subwords of length $n$ in $\be$. Therefore, $p_\be(n) =
% \Om(n^{3/2})$, and hence $p_\be(n) = \T(n^{3/2})$.
Here we have several evolutions of 2-blocks (when we speak about evolutions of 2-blocks, we should use the morphism $\varphi^4$, 
not just $\varphi$, to compute the descendants since when we defined descendants, we assumed that 
$\varphi$ is a strongly 1-periodic morphism with long images),
and it is clear that all left borders and all right borders of all these evolutions equal $3$ 
as abstract letters. Therefore, Case I holds for all evolutions of 2-blocks at the left.
The images under $\psi$ of all 2-blocks here consist of letters $1$ only, 
so each evolution $\mathcal E$ of 2-blocks here is continuously periodic 
for the index $\nker_2(\mathcal E)$. Therefore, we can apply Proposition \ref{finordercomplstop}
and say that the subword complexity is $\Theta(n^{3/2})$.

\subsection*{Acknowledgments}
The author thanks Yu. Pritykin, Yu. Ulyashkina, and N. Vereshchagin for a useful
discussion.


\begin{thebibliography}{4}

\bibitem{AlShall03}
J.-P.~Allouche and J.~Shallit. \textit{Automatic Sequences}. Cambridge
University Press, 2003.

\bibitem{Fer}
S.~Ferenczi. \textit{Complexity of sequences and dynamical systems}. Discrete Math. 206,
no. 1-3, pages 145--154, 1999.

\bibitem{Pans84}
J.-J.~Pansiot. \textit{Complexit{\'e} des facteurs des mots infinis
engendr{\'e}s par morphimes it{\'e}r{\'e}s}. In \emph{Proceedings of
ICALP'84}, volume 172 of Lecture Notes in Computer Science,
pages 380--389. Springer-Verlag, 1984.

\bibitem{NicMasters}
F.~Nicolas. Master's thesis.

\end{thebibliography}
\end{document}